\documentclass[11pt,a4paper]{article}
\usepackage{amsmath}
\usepackage{amsfonts}
\usepackage{amssymb}
\usepackage{amsthm}
\usepackage{bm}
\usepackage[left=2.5cm,right=2.5cm,top=2cm,bottom=2cm]{geometry}
\usepackage[colorlinks=true,linkcolor=black,urlcolor=black,citecolor=black]{hyperref}
\usepackage{authblk}
\usepackage{apptools}
\usepackage{graphics}
\usepackage{type1cm}

\theoremstyle{plain}
\newtheorem{theorem}{Theorem}
\newtheorem{assumption}[theorem]{Assumption}

\newtheorem{proposition}[theorem]{Proposition}
\newtheorem{definition}[theorem]{Definition}
\newtheorem{lemma}[theorem]{Lemma}
\newtheorem{remark}[theorem]{Remark}
\newtheorem{corollary}[theorem]{Corollary}
\newtheorem{notation}[theorem]{Notation}

\newcommand{\e}{\varepsilon}
\newcommand{\indiq}{{\bm 1}}
\newcommand{\R}{\mathbb{R}}
\newcommand{\N}{\mathbb{N}}
\newcommand{\HH}{\mathbb{H}}
\newcommand{\QQ}{\mathbb{Q}}
\newcommand{\PP}{\mathbb{P}}
\newcommand{\DD}{\mathbb{D}}
\newcommand{\Sp}{\mathbb{S}}
\newcommand{\E}{\mathbb{E}}
\newcommand{\JS}{{\bm{\mathrm{J}}_1}}
\newcommand{\MS}{{\bm{\mathrm{M}}_1}}
\newcommand{\cA}{\mathcal{A}}
\newcommand{\cF}{\mathcal{F}}
\newcommand{\cB}{\mathcal{B}}
\newcommand{\cH}{\mathcal{H}}
\newcommand{\cG}{\mathcal{G}}
\newcommand{\cL}{\mathcal{L}}
\newcommand{\cK}{\mathcal{K}}
\newcommand{\cR}{\mathcal{R}}
\newcommand{\cP}{\mathcal{P}}
\newcommand{\cE}{\mathcal{E}}
\newcommand{\cI}{\mathcal{I}}
\newcommand{\cZ}{\mathcal{Z}}
\newcommand{\cEc}{\cE_0}
\newcommand{\cEd}{\cE_+}
\newcommand{\be}{\mathbf{e}}
\newcommand{\bbf}{\mathbf{f}}
\newcommand{\bbg}{\mathbf{g}}
\newcommand{\bn}{\mathbf{n}}
\newcommand{\cl}{\bar \ell}
\newcommand{\cu}{\mathfrak{u}}
\newcommand{\co}{\mathfrak{o}}
\newcommand{\bg}{\bar g}
\newcommand{\bd}{\bar d}
\newcommand{\tell}{\tilde \ell}
\newcommand{\tx}{{\tilde x}}
\newcommand{\bv}{{\bar v}}
\newcommand{\bmm}{{\bar m}}
\newcommand{\dr}{\mathrm{d}}
\newcommand{\II}{\mathrm{I}}
\newcommand{\JJ}{\mathrm{J}}
\newcommand{\nn}{\mathfrak{n}}
\newcommand{\Dd}{\mathcal{D}}
\newcommand{\closure}[2][3]{{}\mkern#1mu\overline{\mkern-#1mu#2}}
\newcommand{\cDd}{\closure{\Dd}}
\newcommand{\pDd}{\partial \Dd}
\newcommand{\sm}{{s-}}
\newcommand{\um}{{u-}}
\newcommand{\vm}{{v-}}
\newcommand{\bE}{\bm{E}}
\newcommand{\bX}{\bm{X}}
\newcommand{\bM}{\bm{M}}
\newcommand{\bV}{\bm{V}}
\newcommand{\brv}{\bm{v}}
\newcommand{\brw}{\bm{w}}
\newcommand{\brx}{\bm{x}}
\newcommand{\rF}{\mathrm{F}}
\newcommand{\rG}{\mathrm{G}}
\newcommand{\rH}{\mathrm{H}}
\newcommand{\rM}{\mathrm{M}}
\newcommand{\rN}{\mathrm{N}}
\newcommand{\rK}{\mathrm{K}}
\newcommand{\rP}{\mathrm{P}}
\newcommand{\bw}{\mathbf{w}}
\newcommand{\bov}{\mathbf{v}}
\newcommand{\vip}{\vskip5pt}

\usepackage[dvipsnames]{xcolor}
\definecolor{applegreen}{rgb}{0.55, 0.71, 0.0}

\author[3]{Lo\"ic B\'ethencourt\textsuperscript{1} and Nicolas Fournier\textsuperscript{2}\\
Appendix~\ref{ageo} by Laurent Mazet}
\affil[1]{\footnotesize Laboratoire d'Analyse, Géométrie et Applications, UMR CNRS 7539, 
Université Sorbonne Paris Nord, France. }
\affil[2]{\footnotesize Sorbonne Université, CNRS, Laboratoire de Probabilités, Statistiques et 
Modélisation, 75005 Paris, France.}
\affil[3]{\footnotesize Université de Tours, Université d’Orléans, CNRS, IDP, UMR 7013, Tours, France.}

\title{{\bf Fractional diffusion in convex domains and reflected isotropic stable processes}}

\date{}

\begin{document}

\maketitle

\begin{abstract}
We establish the fractional diffusion limit of the kinetic scattering equation 
with diffusive boundary condition in a strongly convex bounded domain $\Dd\subset\R^d$. 
According to the nature of the 
boundary condition, two types of fractional heat equations may arise at the limit,
corresponding to two types of isotropic 
stable processes reflected in $\Dd$.
In both cases, when the process tries to jump across the boundary, it is stopped 
at the unique point where $\pDd$ intersects the line segment defined by the attempted jump.
It then leaves the boundary either continuously (for the first type) or by a power-law distributed jump
(for the second type).
The construction of these processes is done via an Itô synthesis: we concatenate 
their excursions in the domain,
which are obtained by translating, rotating and stopping the excursions of some stable processes reflected in
the half-space. The key ingredient in this procedure is the construction
of the boundary processes, \textit{i.e.} the processes time-changed by their local time on the 
boundary,
which solve stochastic differential equations driven by some Poisson measures 
of excursions. The well-posedness of these boundary processes relies on delicate estimates involving 
some geometric inequalities and the
laws of the undershoot and overshoot of the excursion when it leaves the domain.
We show that these reflected Markov processes are Markov and Feller, we
study their infinitesimal generator and we write down the reflected fractional heat equations 
satisfied by their time-marginals. \\[5pt]
\textit{Keywords:}  Kinetic scattering equation in a domain,
Reflected processes, Markov processes in domains, Isotropic stable processes, Excursion theory,
Itô synthesis in a domain, Fractional heat equation in a domain. \\[5pt]
\textit{MSC2020 AMS classification:} 60J25, 60J50, 60G52, 35R11.
\end{abstract}

\tableofcontents

\section{Introduction}

We study the fractional diffusion limit of some {\it scattering processes} reflected in a strongly
convex domain $\Dd \subset \mathbb{R}^d$, $d \geq2$, with diffusive boundary conditions. Two 
types of stable processes reflected in $\Dd$ may arise at the limit, 
with the following behavior: (i) within the domain $\Dd$, the 
processes follow the dynamics of an isotropic $\alpha$-stable process for some $\alpha \in(0,2)$, 
(ii) when they try to jump across the boundary, they are stopped at the intersection of the boundary 
$\pDd$ and of the line segment defined by the attempted jump and (iii) they either leave continuously 
(first type) the boundary or jump into the domain (second type) according to the measure 
$|x|^{-\beta - d}\dr x$ for some $\beta \in(0,\alpha/2)$.
In this introduction, we call $(R_t^*)_{t\geq0}$ (resp. $(R_t^{(\beta)})_{t\geq0}$) a process 
of the first (resp. second) type. We will
see right after~\eqref{se1} why $\beta$ is restricted to $(0,\alpha/2)$.

\subsection{Fractional diffusion limit of kinetic equations}\label{ssmm}

We are mainly motivated by the study 
of linear kinetic equations in domains with heavy-tailed equilibria. Such 
equations describe the position and velocity of a random particle subject
to some environment. In the last two decades, many works from both the probabilistic and the P.D.E. communities 
have
established \textit{fractional diffusion limits} for these equations. In a nutshell, such results 
state that the density of the position of the particle can be approximated, when the collision rate increases 
to infinity, by the solution of the fractional heat equation, \textit{i.e.} by the law of an $\alpha$-stable 
process. The works of Mellet~\cite{mellet2010fractional}, Ben Abdallah, Mellet and Puel~\cite{ben2011anomalous}, 
Mellet, Mischler and Mouhot~\cite{mellet2011fractional} and Komorowski, Olla and Ryzhik~\cite{komo_olla} 
concern some toy
linear Boltzmann equations, 
whereas Cattiaux, Nasreddine and Puel~\cite{cattiaux2019diffusion}, Lebeau and Puel~\cite{lebeau2019diffusion},
Fournier and Tardif~\cite{fournier2018one,fournier_dimension}, Bouin and Mouhot~\cite{bouin2020quantitative},
Dechicha and Puel~\cite{dechicha2023construction, dechicha2023fractional} and Dechicha~\cite{dechicha2024spectral}
deal with the kinetic Fokker-Planck equation. All these papers assume that $\Dd=\R$ or $\Dd = \mathbb{R}^d$.

\vip
Let us now focus on the kinetic scattering equation, which is the simplest toy linear Boltzmann equation,
in the asymptotic of frequent collisions. 
From a probabilistic point of view, the solution to this P.D.E. describes  
the law of the stochastic process 
$(\bX_t^\varepsilon, \bV_t^\varepsilon)_{t\geq0}$ representing the position and velocity of a particle. 
The velocity process is defined as $\bV_t^\varepsilon = W_{N_t^\varepsilon}$
where $(N_t^\varepsilon)_{t\geq0}$ is a Poisson process of parameter $\varepsilon^{-1}$ and $(W_n)_{n\geq0}$ 
is an i.i.d. sequence of $\R^d$-valued random variables distributed according 
to some rotationally invariant equilibrium $\mathrm{F}$ 
with heavy tail, \textit{i.e.} such that $\mathrm{F}(v)\sim |v|^{-\alpha - d}$ as $|v|\to\infty$ 
for some $\alpha \in(0,2)$. 
The position process is then defined as $\bX_t^\varepsilon 
= x_0 + \varepsilon^{1/\alpha - 1}\int_0^t \bV_s^\varepsilon \dr s$. 
It is almost immediate to prove, 
using the stable central limit theorem, that $(\bX_t^\varepsilon)_{t\geq 0}$ converges in law, in
the sense of finite-dimensional distributions, to an
isotropic $\alpha$-stable process as $\varepsilon\to0$.
\vip
Let us now consider the same kinetic equation in some domain $\Dd$ with diffusive boundary conditions. 
These conditions state that when the position process $\bX_t^\varepsilon$ hits the boundary 
(at some point $x\in \pDd$),
it is restarted with a new velocity, distributed according to $2\mathrm{G}(v)\indiq_{\{v\cdot  \bn_x >0\}}$, 
where $\mathrm{G}$ 
is a given velocity distribution and where $\bn_x$ is the inward normal vector to $\Dd$ at $x$. 
When in the interior of $\Dd$,
the process has the same dynamics as previously described, with the equilibrium $\mathrm{F}$.
In the limit $\varepsilon \to 0$, one should see an isotropic $\alpha$-stable process, but each jump 
(which corresponds to a line segment when $\e>0$) that makes it leave $\Dd$ is cut at the boundary.
We thus should expect
the limit process (when $\varepsilon\to0$) to satisfy (i) and (ii). When the velocities 
distributed according to $\mathrm{G}$ are relatively small compared to the ones distributed as $\mathrm{F}$, 
there is no reason for the limit process to leave the boundary by a jump and it should come out continuously. 
But if this is not the case, for instance if $\mathrm{G}$ is also heavy-tailed, we can expect a 
jump in the limit, which would necessarily have a power law and this explains (iii).

\vip
This particular problem is treated from a P.D.E. point of view in the works of 
Cesbron, Mellet and Puel~\cite{cesbron2020fractional, cesbron2021fractional} when $\Dd=(0,1)$ or 
$\Dd = \HH:=(0,\infty) \times \mathbb{R}^{d-1}$, when 
$\alpha > 1$ and when $\mathrm{G}(v) \sim |v|^{1-\alpha - d}$. 
They find a limit P.D.E., which corresponds
to the law of the reflected stable process leaving the boundary with a jump distributed as $|x|^{-\beta - d}\dr x$ 
for $\beta = \alpha -1$, \textit{i.e.} to the law of $(R_t^{\alpha - 1})_{t\geq0}$.
As mentioned in~\cite{cesbron2020fractional, cesbron2021fractional}, this special value of $\beta$
is particularly natural from a scaling point of view.

\vip
In the present paper, see Theorem~\ref{mr} and Corollary~\ref{pdev},  we extend the results 
of~\cite{cesbron2020fractional, cesbron2021fractional} to general convex domains (with some assumptions) and
to a wide range of boundary distributions $\mathrm{G}$:  when $\mathrm{G}$ has a moment of 
order $\alpha /2$, the position process $(\bX_t^\varepsilon)_{t\geq 0}$ 
converges in law to $(R_t^*)_{t\geq0}$ while when 
$\mathrm{G}(v) \sim |v|^{-\beta - d}$ for some 
$\beta \in (0, \alpha/2)$, it converges in law to $(R_t^{(\beta)})_{t\geq0}$.

\vip
Although it might seem surprising, the scaling exponent is the same for 
$(R_t^*)_{t\geq0}$ and $(R_t^{(\beta)})_{t\geq0}$ 
for all $\beta\in (0,\alpha/2)$, see Proposition~\ref{scal}.
When $\beta$ is small (resp. large), the process jumps far away from (resp. close to) the boundary,
but consequently the return time to the boundary is large (resp. small), resulting in a perfect overall balance.
This fact is rather classical: Lamperti~\cite{MR307358} (see also Yano~\cite{MR2543582}) has shown 
that any continuous positive self-similar Markov process behaves as a Bessel process in $(0,\infty)$, 
and is either reflected continuously, or by a power law jump according to $x^{-\beta-1}\dr x$ with 
some $\beta \in(0,1)$. We also refer to the work of Vuolle-Apiala~\cite{MR1288123}.

\vip

It is likely that similar results should hold for the kinetic Fokker-Planck equation, although the study 
of this equation is more difficult. In the half-line, it was recently shown in~\cite{bethencourt2022fractional} 
that, for this equation (with a diffusive boundary condition involving some distribution $\mathrm{G}$ with
a finite moment of suitable order),
the limit process is the stable process reflected on its infimum, corresponding 
in our situation to the process $(R^*_t)_{t\geq 0}$.

\vip

Let us also mention other works for the scattering equation with different boundary conditions. 
Cesbron~\cite{cs2020refelctive} extends the works~\cite{cesbron2020fractional, cesbron2021fractional} 
in the half-space to treat the case of Maxwell boundary conditions, which mix diffusive and 
specular reflections. In dimension $1$, and from a probabilistic perspective, Komorowski, 
Olla and Ryzhik~\cite{komo_olla} and Bogdan, Komorowski and Marino~\cite{bogdan2022anomalous} 
considered reflective/transmissive/absorbing boundary conditions.

\vip

The aim of the present paper is twofold. First, we construct the limiting reflected stable processes, 
which is a difficult task, as we shall see. We believe that the construction method is of 
independent interest. Second, we show the convergence in law of the position of the scattering process towards 
one of these limiting processes, according to the boundary conditions.

\subsection{Overview of the literature about reflected processes}

The study of stochastic processes constrained in a domain by a reflection has a long history, and one 
of the main methods to construct such processes is to solve a Skorokhod type problem, starting with the work of 
Skorokhod~\cite{MR145598} in the half-line. Tanaka~\cite{MR529332} extended this work for the 
\textit{normal reflection} to convex regions in higher dimensions, and Lions and Sznitman~\cite{MR745330} were 
then able to treat general smooth domains, allowing for \textit{oblique reflections}. See also 
Costantini~\cite{MR1142761} and Dupuis and Ishii~\cite{MR1207237} for non-smooth domains, as well as
the contributions of Stroock and Varadhan~\cite{MR277037} through submartingale problems. While this 
overview is far from exhaustive, and many works have been handled since then, most of these 
focus on continuous processes. 

\vip 
The situation for jump processes is richer than for continuous processes. 
When a jump process $(Z_t)_{t\geq0}$ exits a domain $\Dd$, say at time $\sigma$, it may in most of the 
situations do it by a jump, so that $Z_{\sigma-}\in \Dd$ and $Z_{\sigma}\in\Dd^c$. To reflect this process, 
one needs to choose the point $R_{\sigma}$ at which it is restarted, and one can make this point 
depend on $Z_{\sigma-}$, on $Z_{\sigma}$ or on both. This dependence can be deterministic or random. 
In~\cite{anulova1991diffusional}, Anulova and Lipster extend Tanaka's work~\cite{MR529332} to handle 
càdlàg paths in convex domains for the normal reflection, corresponding to $R_{\sigma} = p_\Dd(Z_{\sigma})$, 
where $p_\Dd$ is the orthogonal projection on $\cDd$. In this case, the construction of the reflected process is 
more or less straightforward because, as noticed by Tanaka, the normal reflection enjoys some  
contraction property,  namely, $|p_\Dd(z)- p_\Dd(z')|\leq |z-z'|$, see~\cite[Lemma~2.2]{MR529332}. 
In our case, we have
$R_\sigma = [Z_{\sigma-}, Z_\sigma] \cap \pDd$, and such a contraction property does not seem to hold true, 
see Subsection~\ref{sdebof}.
In some recent works, Bogdan and Kunze~\cite{MR4779596, bogdan2024stable} constructed a reflected stable 
process in a Lipschitz domain, in the case where $R_\sigma$ is reset randomly in 
$\Dd$ through a kernel $\mu(Z_\sigma, \dr y)$.
This reflection can be considered as a special case of concatenation of Markov process, 
since each time the process is reflected, it requires a positive amount of time to be reflected again. 
Note that they suppose a condition on the kernel $\mu$ ensuring that the resulting process
never hits the boundary. We refer to Meyer~\cite{MR415784} and more 
recently to Werner~\cite{MR4247975} about concatenation of Markov processes. In the present paper,
the reflected stable process indeed hits the boundary and, like the Brownian motion, hits it uncountably
many times, which makes the situation much more involved. When $d=1$, Bogdan, Fafu{\l}a
and Sztonyk~\cite{bogdan2024nonlocal} modify the reflection mechanism of~\cite{MR4779596, bogdan2024stable}, 
so that the process
may reach the boundary in finite time, and this indeed happens when $\alpha \in(1, 2)$.
But the process is not extended 
beyond this time.
\vip
Let us also mention the works~\cite{MR2006232,MR2214908} on censored stable processes, where the jumps 
of the stable process which make it leave the domain are removed. In~\cite{MR2006232}, Bogdan, Burdzy and Chen  
construct this process through 
Dirichlet forms and show that the censored stable process hits the boundary 
if and only if $\alpha > 1$. In this case, they extend the process beyond its lifetime 
using the theory of  \textit{actively reflected} Dirichlet forms.
In~\cite{MR2214908}, 
Guan and Ma compute the generator of this process and decompose it as a semimartingale.

\vip

A recent emphasis has been put on the study of fractional P.D.E.s in domains.
Barles, Chasseigne, Georgelin and Jakobsen~\cite{MR3217703} solve a non-local P.D.E. with Neumann boundary 
conditions in the half-space, including our reflection mechanism (ii), 
which they call \textit{fleas on the window}.
They construct the resolvent of the underlying Markov process, but they do not build
the associated Markov semigroup, nor the Markov process, which is known to be a hard task. 
It seems that we obtain different boundary
conditions. Moreover, as we shall see, studying the half-space is much 
easier than dealing with general convex domains.
We also refer to the works of Defterli, D'Elia, Du,
Gunzburger, Lehoucq and Meerschaert~\cite{MR3323906}, of Baeumer, Kov\'{a}cs, 
Meerschaert and Sankaranarayanan~\cite{MR3787705}, of Baeumer, Kov\'{a}cs, Meerschaert, 
Schilling and Straka~\cite{MR3413862} and finally, to the recent work from Bogdan, Fafu{\l}a, 
Sztonyk~\cite{bogdan2024nonlocal}. The laws of the processes we construct in this article 
provide some new examples of such P.D.E.

\subsection{About Dirichlet forms}\label{adf}

As seen a few lines above, reflected Markov processes may be built \textit{via} the powerful theory of 
Dirichlet forms. It can either be done with the traditional theory of Dirichlet forms, or with the 
theory of \textit{actively reflected} Dirichlet forms which was initiated by Silverstein~\cite{silverstein}, 
Fukushima~\cite{MR236998} and Le Jan~\cite{lejan1,lejan2}, see also Chen and Fukushima~\cite{MR2849840}. 
Very roughly, this theory enables to extend a process beyond its lifetime, and to reflect 
it in a unique manner. In the case of the Brownian motion, this gives the classical reflected 
Brownian motion. The reflection mechanisms treated in those works do not allow one 
to take into account the value of $Z_\sigma$, with the notation of the previous subsection, 
and thus does not contain our reflection mechanisms. However, we believe that when $\alpha>1$ and 
$\beta=\alpha-1$, the Markov process $(R^{\alpha-1}_t)_{t\geq 0}$ may be symmetric, see 
Cesbron, Mellet and Puel~\cite[Proposition 3.4]{cesbron2020fractional} when $\Dd=\HH$.
The well-posedness result~\cite[Theorem 1.2]{cesbron2020fractional}, based on the ellipticity 
estimate~\cite[Proposition 4.1]{cesbron2020fractional}, strongly suggests that one could build
the process $R^{\alpha-1}$ \text{via} the classical theory of 
Dirichlet forms. This would not give us the Feller property, 
which is an important property to obtain the process
as limit of \textit{e.g.} the scattering equation. Although stated in a different way, this is mentioned 
in~\cite{cesbron2020fractional} just after Theorem 2.1, where they explain why they only get convergence of a 
subsequence (this has been fixed in dimension $1$ in~\cite{cesbron2021fractional}).
Finally, we believe that a construction based on Dirichlet forms would really use that $\beta=\alpha-1$: 
in other cases, it seems the
process cannot be symmetric. Le Jan~\cite{lejan1,lejan2} does not consider only symmetric processes,
but it seems that a strong asymmetry is an issue. At least, $(R^*_t)_{t\geq 0}$ is 
highly non-symmetric: it hits the boundary by a jump and leaves it continuously.

\subsection{A stochastic differential equation}\label{sdebof}

Although we do not show it explicitly, because it would be quite tedious, 
we believe that, when $\alpha \in (0,1)$, $(R_t^*)_{t\geq 0}$ should solve the following 
S.D.E.: for some Poisson measure on 
$\mathbb{R}_+ \times \mathbb{R}^d$ with intensity $\dr s |z|^{-\alpha - d}\dr z$ (describing the jumps 
of an isotropic $\alpha$-stable process),
\begin{equation}\label{eq:sde_etoile}
 R_t^* = R_0^* + \int_0^t\int_{\mathbb{R}^d}\big[\Lambda(R_\sm^*, R_\sm^* + z) - R_\sm^*\big]N(\dr s, \dr z),
\end{equation}
where for $r \in \cDd$ and $z \in \R^d$, $\Lambda(r,z)=r+z$ if $r+z\in \cDd$ and $\Lambda(r,z)=[r,r+z]\cap\pDd$
otherwise. A compensated version of this S.D.E. should be considered when $\alpha \in [1,2)$.
Concerning $(R^{(\beta)}_t)_{t\geq 0}$, an additional complicated term should be added to 
take into account the jumps outside $\pDd$.
\vip
While weak existence of a solution to~\eqref{eq:sde_etoile} is not hard, we did not manage to show uniqueness
in law. Hence we cannot show that the resulting process is Markov, and even less that it is Feller,
which is a crucial property to derive this equation from a {\it discrete} process such as the kinetic scattering process described in Subsection~\ref{ssmm}.
\vip
In the simplest case, concerning $(R_t^*)_{t\geq 0}$ when $\alpha\in (0,1)$, the well-posedness (and Feller property)
of the solution to~\eqref{eq:sde_etoile} should require a Lipschitz estimate like: for all $r,r'\in \cDd$,
$$
\int_{\R^d} (|\Lambda(r,r+z)-\Lambda(r',r'+z)|- |r-r'|) |z|^{-\alpha - d}\dr z \leq C  |r-r'|.
$$
Such a property obviously holds true when $d=1$ and $\Dd=(0,1)$ or $\Dd=(0,\infty)$, but one can show that
it fails to be true when $d\geq 2$ and $\Dd=\HH$ is the half-space. It also seems to fail in a smooth 
strongly convex domain.
One can however not 
exclude that a similar inequality holds true, using another notion of distance or any other trick,
but we did not succeed. Let us mention that in the special case where $\Dd=\HH$,
we can show the well-posedness of~\eqref{eq:sde_etoile}, studying first the first coordinate and exploiting that
the other coordinates are determined by the first one.
\vip
We will rather use the following approach: we first build the excursions of the processes reflected
in $\HH$ and then use these excursions (that we translate, rotate and stop) to build our processes reflected in $\Dd$.

\subsection{Itô's program}
We construct our processes $(R_t^*)_{t\geq0}$ and $(R_t^{(\beta)})_{t\geq0}$ by gluing  their excursions 
inside the domain $\closure{\Dd}$, 
following Itô's original idea and completing what is known as \textit{Itô's program}. 
Given a standard Markov process $(X_t)_{t\geq0}$ living in a space $\mathrm{E}$ and some point 
$b\in\mathrm{E}$, it is now well-known how to extract from $(X_t)_{t\geq0}$ a countable family of 
pieces of trajectories away from the point $b$, called \textit{excursions} away from $b$. 
Thanks to the theory of local times for Markov processes, this \textit{extraction} can be done in 
such a way that the collection of excursions, indexed by the inverse of the local time, 
forms a Poisson point process. Its intensity $\nn(\dr e)$, called \textit{excursion measure}, 
corresponds to the law of an excursion. In most cases, the measure $\nn$ has infinite mass,
reflecting the fact that, starting from $b$, the process $(X_t)_{t\geq0}$ visits $b$ infinitely 
many times immediately. We refer to the book of Blumenthal~\cite{blu1992} for a detailed 
account on excursion theory. 

\vip

One can also go the other way: given an excursion measure 
$\nn$, one can build the Poisson point process of excursions, the local time of 
the process at the point $b$, and finally concatenate the excursions and construct the process, 
see for instance~\cite[Chapter V]{blu1992}. This is called \textit{Itô's synthesis theorem}. 
In the case of excursions away from a point, this procedure is straightforward, but
to show that the resulting process is indeed a Markov process is more challenging. 

\vip
Maisonneuve~\cite{MR400417} studied excursions away from a set. His theory of 
\textit{exit systems} explains how to extract excursions of a Markov process away from a set 
in a suitable way. Let $(X_t)_{t\geq0}$ be a standard Markov process 
living in some domain $\closure{\Dd} \subset \mathbb{R}^d$. Maisonneuve first constructs 
the local time of the process on the boundary $\pDd$, and then extracts the collection of 
excursions away from the boundary (\textit{i.e.} pieces of trajectories between two 
\textit{successive} returns to the boundary), indexed again by the inverse of the local time. 
Of course, the resulting point process is not a Poisson point process, but only a point process of Markov type.
Nevertheless, Maisonneuve shows the existence of a family of measures $(\nn_x)_{x\in\pDd}$ 
corresponding to the law of an excursion starting from $x\in\pDd$, which are related by 
the \textit{excursion formula}, see~\cite[Theorem 4.1]{MR400417}.

\vip

Extending Itô's synthesis theorem to sets seems much more involved, as it is not clear how to 
construct the point process of excursions and to glue the excursions together. As far as we know, 
the only general result in this direction is that of Motoo~\cite{MR220349}, also detailed 
in~\cite[Chapter VII]{blu1992}. Without going into details, Motoo's theory is rather analytical and 
requires lots of technical assumptions which may be hard to check in practice, but it yields a 
construction theorem. However, quoting Blumenthal~\cite[page 258]{blu1992}, his result 
\textit{is not a synthesis theorem, in the sense that the desired $X$ is not constructed by 
hooking together paths}. More importantly, Motoo's theory requires the existence of the 
boundary process, \textit{i.e.} the \textit{successive} points on the boundary visited by 
the process, which is obtained with a time-change of the process $(X_t)_{t\geq0}$ by the inverse 
of its local time on the boundary. The existence of this process in our case is far from being easy,
except when $\Dd$ is a Euclidean ball or the half-space.

\subsection{Main ideas of the construction of the reflected stable processes}
In our setting, the point process of excursions is not Poisson, but we may use the following facts: 

\vip\noindent
(i) an excursion of an isotropic stable process in the half-space 
immediately enters a smooth domain tangent to the half-space at the starting point, see Lemma~\ref{imp}; 

\vip\noindent
(ii) the law of an isotropic stable process is invariant by any isometry.

\vip
These two facts imply that the law of an excursion starting from a point $x\in \pDd$
can be obtained by translating and rotating an excursion from the half-space starting at $0$, 
which is then stopped when leaving the domain. In other words, an excursion issued from $x$ 
can be obtained by stopping an excursion in the half-space $\HH_x$ tangent to $\pDd$ at $x$
when leaving 
the domain; and the latter one is itself obtained by translating and rotating an excursion 
in $\HH$. 

\vip
We give ourselves a family $(A_x)_{x\in\pDd}$ of isometries such that 
for every $x\in\pDd$, $A_x$ sends $\be_1$ to $\bn_x$, where $\be_1 = (1, 0, \ldots, 0)$ and $\bn_x$ 
is the inward unit normal vector to $\Dd$ at $x$. We now introduce the space $\cE$ of half-space excursions, 
\textit{i.e.} of càdlàg paths which take values in the closure of 
$\mathbb{H}$, and which are stopped when leaving it. We also give ourselves a  Poisson measure 
$\Pi = \sum_{u\in\mathrm{J}}\delta_{(u,e_u)}$
of stable excursions in $\HH$ starting from $0$. This is a random measure on
$\mathbb{R}_+ \times \cE$ with intensity $\dr u \nn(\dr e)$ for some excursion measure $\nn$
in the half-space. Then, for every $x\in\pDd$ and every $u\in\mathrm{J}$, 
the excursion in $\HH_x$ is $e^x(s) = x+ A_x e(s)$. Its law does 
not depend on the choice of $A_x$ because $\nn$ is left invariant by any isometry sending $\be_1$ to $\be_1$.
\vip
 
We will consider two different kinds of excursion measures. 
\vip
\noindent $\bullet$ A first one, $\nn_*$,
under which the excursion always starts at $0$ and with which we 
will build $(R_t^*)_{t\geq0}$. This measure $\nn_*$ is defined as the intensity of the Poisson measure of excursions
of the isotropic $\alpha$-stable process reflected in the half-space. Although we do not prove it because
it would be useless for our study, 
we believe that for $Z$ the isotropic $\alpha$-stable process starting from $x$ under $\PP_x$,
for some constant $a_*\in (0,\infty)$, in a sense to be precised,
\begin{equation}\label{newformula}
\nn_*=a_*\lim_{x\in \HH,x\to 0} \frac{\PP_x((Z_{t\land \ell(Z)})_{t\geq 0}\in \cdot)}{(x\cdot\be_1)^{\alpha/2}},
\quad \text{where} \quad \ell(Z)=\inf\{t>0 : Z_t \notin \HH\}.
\end{equation}
Such a formula holds true in dimension $1$, see Chaumont and Doney~\cite[Corollary 1]{chdo}.

\vip
\noindent $\bullet$ A second one, $\nn_\beta$, indexed by
$\beta\in(0,\alpha/2)$ and with which we will build $(R_t^{(\beta)})_{t\geq0}$. Under $\nn_\beta$,
the excursion is simply an isotropic $\alpha$-stable process starting from a 
$|x|^{-\beta-d}\indiq_{\{x\in \HH\}} \dr x$-distributed point, stopped when it leaves $\HH$.

\vip
One crucial point for piecing together excursions, and this is actually the only deep issue, 
is to construct the \textit{boundary process} $(b_u)_{u\geq0}$, \textit{i.e.} the \textit{successive} 
points on the boundary visited by the process. For $x\in\pDd$ and $e\in\cE$, we set 
$\bar{\ell}_x(e) = \inf\{t \geq0, \: x+ A_xe(t) \notin \Dd \}$ and  
$\cu(x, e) = x + A_xe(\bar{\ell}_x(e)-)$ and $\co(x,e) = x + A_xe(\bar{\ell}_x(e))$.
Then $\cu(x, e)$ (resp. $\co(x, e)$) is the position of the excursion just 
before (resp. after) leaving the domain. We now introduce 
$g_x(e) = \Lambda(\cu(x,e), \co(x,e))$, which is the point intersecting $\pDd$ and $[\cu(x,e), \co(x,e)]$.
Knowing $b_{u-}$, the next position should be $g_{b_{u-}}(e_u)$, so that the boundary process should solve
(if $R_0=x\in \pDd$)
\begin{equation}\label{eq:boundary_intro}
 b_u = x + \int_0^u\int_{\mathcal{E}}\big[g_{b_{v-}}(e) - b_{v-}\big]\Pi(\dr v, \dr e).
\end{equation}
One of the main achievements of this paper is to show that, at least when $d=2$ (a weaker but sufficient
result holds when $d\geq 3$), this S.D.E. admits
a pathwise unique solution, continuous with respect to its initial condition, 
when the domain $\Dd$ is smooth and strongly convex. 

\vip
To prove that~\eqref{eq:boundary_intro} is well-posed, we show, see 
Proposition~\ref{ttoopp}, the following Lipschitz estimate:
\begin{equation}\label{lipes}
\text{for all $x,x'\in \pDd$,}\quad 
 \int_\cE \Big||g_x(e)-g_{x'}(e)|- |x-x'|\Big| 
\nn (\dr e) \leq C (|x-x'|+||A_x-A_{x'}||)
\end{equation}
for some $C >0$. Here are the two main ingredients which enable us to obtain this inequality.

\vip\noindent
(a) Recalling that $g_x(e) = \Lambda(\cu(x,e), \co(x,e))$, we first obtain estimates 
on the joint law of $(\cu(x,e), \co(x,e))$ under $\nn$, see Proposition~\ref{underover},
obtained from sharp known estimates on the Green function of 
isotropic stable processes found in Chen~\cite{chen1999}.

\vip\noindent
(b) We establish some geometric inequalities (see Proposition~~\ref{tyvmmm} and Appendix~\ref{ageo}) 
that allow us to bound~$||g_x(e)-g_{x'}(e)|- |x-x'||$ by a quantity that we can then integrate 
with respect to the previous estimates. 

\vip
To conclude that~\eqref{eq:boundary_intro} is well-posed, it would be convenient to choose the 
family $(A_x)_{x\in\pDd}$ so that $x \mapsto A_x$ is Lipschitz. This is possible when $d=2$, but
not when e.g. $d=3$, due to the hairy-ball theorem. However, for any $y \in \pDd$, one can find a 
family of isometries $(A_x^y)_{x\in\pDd}$ such $x \mapsto A_x^y$ is locally Lipschitz on $\pDd\setminus \{y\}$. 
Therefore, for each $y\in\pDd$,~\eqref{eq:boundary_intro} is well-posed 
(with the choice $(A_x^y)_{x\in\pDd}$) until the hitting time of $y$ of the boundary process. 
Using that the law of $(b_u)_{u\geq0}$ actually does not depend on the isometries
and that  $(b_u)_{u\geq0}$ a.s. has finite variations (so that the dimension of its image is smaller than $1$), 
we are able to show that for almost every $y \in \pDd$, this hitting time is infinite. Hence, 
the equation is well-posed in law for a.e. $y \in \pDd$.

\vip

From this boundary process, one then builds the inverse of the local time at the boundary,
which is obtained by summing the lengths of the excursions:
\begin{equation}\label{eq:inv_local}
 \tau_u = \int_0^u\int_\cE\bar{\ell}_{b_{v-}}(e)\Pi(\dr v, \dr e).
\end{equation}
As we shall see, when the considered excursion measure is
$\nn_\beta$, it is well-defined only for $\beta < \alpha /2$. We then define the local
time on the boundary $(L_t)_{t\geq0}$ as the right-continuous inverse of $(\tau_u)_{u\geq0}$. 
We can now define the process $(R_t)_{t\geq0}$ starting from $x\in\pDd$ by setting
\[
 R_t = b_{L_t-} + A_{b_{L_t-}}e_{L_t}(t- \tau_{L_t-})
\]
when $\tau_{L_t} > t$, \textit{i.e.} when $t$ lies inside the excursion which starts at time $\tau_{L_t-}$.
Otherwise, 
this means that $R_t$ is on the boundary and we set $R_t = b_{L_t}$. 

\vip

When the starting point 
$x \in \Dd$ is not on the boundary, we give ourselves a stable process $(Z_t)_{t\geq0}$ starting
from $x$, then set $\sigma = \inf\{t> 0, \: Z_t \notin \closure{\Dd}\}$, $R_t = Z_t$ for 
$t\in[0, \sigma)$ and $R_\sigma = \Lambda(Z_{\sigma-}, Z_\sigma) \in \pDd$. Finally $(R_{t+\sigma})_{t\geq0}$ 
is constructed as above, starting from $R_{\sigma}\in \pDd$.

\vip
We show that the law of $(R_t)_{t\geq0}$ starting from $x\in\closure{\Dd}$, that we denote 
$\mathbb{Q}_x$, is uniquely defined and does not depend on the choice of $(A_x)_{x\in\pDd}$. 
Our main result states that the family of laws $(\mathbb{Q}_x)_{x\in\closure{\Dd}}$ defines a strong Markov 
process on the space of $\closure{\Dd}$-valued càdlàg functions, which is Feller. We study 
its generator and the P.D.E. satisfied by its semigroup.

\subsection{Other related works on reflected Markov processes}

In~\cite{wata}, Watanabe uses a similar procedure to build some continuous 
diffusion processes with Wentzell boundary conditions in the half-space. 
He is able, under a few assumptions, to reduce to the case where the first coordinate (the one to be reflected)
is a Brownian motion. 
The reflection is governed by a linear operator, involving 
first and second order derivatives, as well as a non-local part. The boundary process then solves
some S.D.E. with diffusion, drift and jumps. The well-posedness of this boundary process
is quickly checked, since he works in the half-space and since the first coordinate 
consists of a Brownian motion, so that the law of the length of the excursion does not depend on
the starting point in $\partial \HH$.

\vip
In~\cite{hsup}, Hsu starts from a
Brownian motion $(X_t)_{t\geq 0}$ reflected in a bounded smooth domain, 
and describes the conditional law of $(X_t)_{t\geq 0}$ knowing its boundary process $(b_u)_{u\geq 0}$.
This allows him to build $(X_t)_{t\geq 0}$ from $(b_u)_{u\geq 0}$, by 
sampling some excursions, conditionally on $(b_u)_{u\geq 0}$.

\subsection{Main ideas of the anomalous diffusion limit}

Let us quickly explain the main steps we use to prove that the position $(\bX^\e_t)_{t\geq 0}$ 
of the scattering process converges to $(R^*_t)_{t\geq 0}$ or $(R^{(\beta)}_t)_{t\geq 0}$,
depending on the boundary conditions.
\vip

We first show that up to a small time change, $(\bX^\e_t)_{t\geq 0}$ is the linear interpolation of a
Markov process $(R^\e_t)_{t\geq 0}$
(the {\it couple} $(\bX^\e_t,\bV^\e_t)_{t\geq 0}$ is Markov, but not $(\bX^\e_t)_{t\geq 0}$
{\it alone}). It thus suffices to study the convergence of $(R^\e_t)_{t\geq 0}$.
\vip
The process $(R^\e_t)_{t\geq 0}$ actually solves a S.D.E. which is close to~\eqref{eq:sde_etoile}. 
We show that it shares the same structure as  $(R^*_t)_{t\geq 0}$ or $(R^{(\beta)}_t)_{t\geq 0}$: 
it can be built by translating/rotating/stopping the excursions from the half-space 
of a continuous-time random walk $(Z^\e_t)_{t\geq 0}$.
This random walk lies in the domain of attraction of the isotropic $\alpha$-stable process and starts, roughly,
from $Z^\e_0=\e^{1/\alpha}O$, where $O\sim2\rG(v)\indiq_{\{v\cdot \be_1>0\}}\dr v$, recall that 
$\rG$ appears in the boundary condition of the scattering equation. We then proceed as follows.
\vip
\noindent (i) We study the excursion measure of $(Z^\e_t)_{t\geq 0}$, which is proportional to the
 law of $(Z^\e_t)_{t\geq 0}$ stopped when exiting $\HH$. We show that, by choosing correctly the 
multiplicative coefficient, it converges in some sense to $\nn_*$
if $\rG$ has a finite moment of order $\alpha/2$ or to $\nn_\beta$ 
if $\mathrm{G}(v) \sim |v|^{-\beta - d}$ for some $\beta \in (0, \alpha/2)$. 
This is done in Section~\ref{stech} and is particularly delicate in the case of $\nn_*$. 
We have to adapt many ideas found in Doney~\cite{doney1985conditional} to our 
situation where $d\geq 2$ and where
the random walk does not start from $0$.

\vip
\noindent (ii) We next prove that the boundary process $(b^\e_t)_{t\geq 0}$, 
representing the successive points of $\pDd$ visited by 
$(R^\e_t)_{t\geq 0}$, converges in law to the boundary process $(b_t)_{t\geq 0}$ of 
$(R^*_t)_{t\geq 0}$ or $(R^{(\beta)}_t)_{t\geq 0}$. For this, we proceed by tightness/uniqueness, because
we are far from being able to prove an $\e$-version of the Lipschitz estimate \eqref{lipes}. 
Of course, the well-posedness of the S.D.E. \eqref{eq:boundary_intro} satisfied by 
$(b_t)_{t\geq 0}$ is crucial in this step.

\vip
\noindent (iii) Finally, we prove the convergence of the whole process $(R^\e_t)_{t\geq 0}$
by studying what happens inside the excursions.

\vip

It might be possible, with more work, to treat the critical case 
$\mathrm{G}(v) \sim |v|^{-\alpha/2 - d}$
or even $\mathrm{G}(v) \sim |v|^{-\alpha/2 - d}\ell(|v|)$ for some slowly varying function $\ell$: 
we expect that $(\bX^\e_t)_{t\geq 0}\to (R^*_t)_{t\geq 0}$ in such a case.
It might also be possible to replace the condition  $\mathrm{G}(v) \sim |v|^{-\beta - d}$ by  
$\mathrm{G}(v) \sim |v|^{-\beta - d}\ell(|v|)$ when 
$\beta \in (0,\alpha/2)$, without affecting the normalization nor the limiting process $(R^{(\beta)}_t)_{t\geq 0}$.
One might finally assume that $\rF(v)\sim |v|^{-\alpha-d}\ell(|v|)$ instead of 
$\rF(v)\sim |v|^{-\alpha-d}$, if one modifies suitably the normalization.

\subsection{Last comments}

We do not treat the case of the half-space:
since $\HH$ is unbounded, this would add some small technical issues, but
this case would be considerably easier from many other points of view.

\vip
Although our proofs rely on the strong convexity of 
the domain, we believe that this assumption is not essential, and the construction
might be carried out for arbitrary smooth domains. However, there is at least one place where we deeply use 
the strong convexity of $\Dd$: the proof of the geometric inequalities stated in Proposition~\ref{ttoopp}.
As shown in Remark~\ref{strange},
these inequalities fail to be true for general (non strongly) convex domains, 
although they are obviously satisfied
in the (flat) half-space. The situation is thus rather intricate.

\subsection{Summary}

Let us summarize the main achievements of this work. We build two kinds of isotropic 
$\alpha$-stable processes reflected 
in strongly convex domains and show that these processes arise as scaling limits
of the kinetic scattering model with diffusive 
boundary conditions, as the collision rate tends to infinity. This convergence result
extends the work of Cesbron, 
Mellet and Puel~\cite{cesbron2020fractional, cesbron2021fractional} to general convex domains 
(not only $\Dd=\HH$),  and to a broader range of boundary velocity distributions $\mathrm{G}$. 
We establish the  existence of a \textit{subcritical} regime, when $\mathrm{G}$ has a moment of order $\alpha /2$, 
and a \textit{supercritical} regime, where all the values  of $\beta \in(0,\alpha /2)$ can be taken. 

\vip
Beyond its connection to kinetic models, our method of constructing these processes by concatenating 
translated and rotated excursions appears to be novel. To our knowledge, the only 
results in this direction are those of Motoo~\cite{MR220349}, which is not really an Itô synthesis, 
and of Watanabe~\cite{wata}, which treats the case of the half-space for continuous processes.  
It seems that we handle the first Itô synthesis for a Markov process inside a domain which 
is not the half-space. This might give 
a new perspective on the construction of reflected jump Markov processes.

\subsection{Plan of the paper}
In Section~\ref{sec:main_result}, we introduce some notations and state our main 
results. In Section~\ref{sec:excursion}, we establish a few 
properties of the excursion measures (and of the stable process), that will be used in the whole paper.
Section~\ref{sec:crucial} is dedicated to the proof of our main Lipschitz estimate, which is crucial 
to show the well-posedness (and Feller property) of the limiting boundary processes. As outlined above, 
this estimate is straightforward when the domain $\Dd$ is a Euclidean ball, so that this 
delicate section may be skipped at first reading. In Section~\ref{sec:start_bounda}, 
we show the existence and uniqueness of the reflected stable processes, when starting from a 
point on the boundary, and establish their continuity with respect to their initial position. 
In Section~\ref{any}, we introduce the reflected stable processes when starting from anywhere,
show that they are Markov and Feller, and prove a few other properties. We study in
Section~\ref{sec:pde} the infinitesimal generators of our processes and establish some P.D.E.s
satisfied by their laws. In Section~\ref{scatM}, we establish a few properties of the scattering process, 
introduce a modified Markov
scattering process, and show that it is sufficient to prove 
the convergence 
of this modified process. The convergence of the modified process is shown in Section~\ref{sconvmark}, 
admitting a few 
results on conditioned random walks, that are established in Section~\ref{stech}.
Appendix~\ref{ageo} is dedicated 
to the proofs of the geometric inequalities used in Section~\ref{sec:crucial}, of two lemmas concerning the
parameterization of the domain, of a result about regular families of isometries, 
and of the continuity of the cutoff function $\Lambda$.
In Appendix~\ref{tf}, we discuss our sets of test functions. We recall some more or less well-known facts about 
Skorokhod's $\JS$ and $\MS$ topologies in Appendix~\ref{sko}. Finally, we show the link between the 
scattering process and the scattering P.D.E. in Appendix~\ref{deredp}.

\subsection*{Acknowledgements}

We warmly thank Quentin Berger, Thomas Duquesne and Camille Tardif for many fruitful discussions.

\section{Main results}\label{sec:main_result}

\subsection{Notation}\label{nono}

We fix $d\geq2$ and $\alpha \in (0,2)$ for the whole paper and introduce some notation
of constant use.

\vip

\noindent {\bf Stable process.} 
A (normalized) $d$-dimensional isotropic $\alpha$-stable
process $(Z_t)_{t\geq 0}$ issued from $x\in \R^d$, an ISP$_{\alpha,x}$ in short, is given by
\begin{equation}\label{eqs}
Z_t=x+ \int_0^t\int_{\{|z|\leq 1\}} z \tilde N(\dr s,\dr z) + \int_0^t\int_{\{|z|>1\}}z 
N(\dr s,\dr z),
\end{equation}
where $N$ is a Poisson measure on $\R_+\times\R^d\setminus\{0\}$ with intensity measure 
$\dr s |z|^{-d-\alpha} \dr z$ and where $\tilde N$ is the associated compensated
Poisson measure. We call $\PP_x=\cL((Z_t)_{t\geq 0})$ its law.
\vip

\noindent {\bf Excursion spaces.} We denote by $\HH = (0,\infty) \times \R^{d-1}=\{ x \in \R^d : x\cdot \be_1>0\}$ 
the upper half-space. Here $\be_1$ is the first vector of the canonical basis.
For $e\in\DD(\R_+,\R^d)$, the space of càdlàg functions from $\R_+$ to $\R^d$, we set 
$\ell(e) = \inf\{t> 0, \: e(t) \notin {\HH}\}$. 
We introduce
$$
 \mathcal{E} = \big\{e \in \DD(\R_+,\R^d) \; : \; \ell(e)\in(0,\infty) \text{ and for any }t\geq\ell(e), \: 
e(t) = e(\ell(e))\big\}.
$$
This space is endowed with the usual Skorokhod $\JS$-topology, see Appendix~\ref{sko},
and the associated Borel 
$\sigma$-field. We consider the two subspaces 
$$
\cEc=\{e \in \cE \; : \; e(0)=0\} \qquad \hbox{and} \qquad \cEd=\{e \in \cE \; : \; e(0)\in \HH\}.
$$

\noindent {\bf A first excursion measure: $\nn_*$.}
We consider, on a filtered probability space $(\Omega, \mathcal{F}, (\mathcal{F}_t)_{t\geq0}, \PP)$, 
an ISP$_{\alpha,0}$ $(Z_t)_{t\geq0}$ and define its excursions in $\HH$.
We denote by $Z^1_t=Z_t\cdot \be_1$ the first coordinate of $Z_t \in \R^d$. It is a classical 
fact, see Lemma~\ref{oncemore}, that the process 
$(Y_t)_{t\geq0}=(Z_t^1 - \inf_{s\in[0,t]}Z_s^1)_{t\geq0}$ is Markov, possesses a local time $(\xi_t)_{t\geq0}$ 
at $0$, 
and that this local time is uniquely defined if we impose $\E[\int_0^\infty e^{-t} \dr \xi_t]=1$.
Its right-continuous inverse $(\gamma_u=\inf\{s\geq 0 : \xi_s > u\})_{u\geq0}$ is a $(1/2)$-stable 
$\R_+$-valued subordinator, 
see Lemma~\ref{oncemore}. We introduce $\JJ = \{u\geq0, \: \Delta\gamma_u > 0\}$ and set, for $u \in \JJ$,
\begin{equation}\label{et}
e_u = (Z_{(\gamma_{u-} + s)\wedge\gamma_u} - Z_{\gamma_{u-}})_{s\geq0}.
\end{equation}
We will see in Lemma~\ref{oncemore} that a.s., $e_u\in \cEc$ and $\ell(e_u)=\Delta \gamma_u$ 
for all $u \in \JJ$.
The strong Markov property and the Lévy character of $(Z_t)_{t\geq0}$
classically imply that $\Pi_* = \sum_{u\in \JJ}\delta_{(u, e_u)}$
is a time-homogeneous $(\mathcal{F}_{\gamma_u})_{u\geq0}$-Poisson measure on $\R_+ \times \cE$.
Its intensity measure is thus of the form $\dr u \nn_*(\dr e)$, for some $\sigma$-finite measure $\nn_*$ on $\cE$
(carried by $\cE_0$).  
\vip
Note that in the definition of $e_u$, we keep track of the last jump of $Z$,
{\it i.e.} we set $e_u(\ell(e_u))=Z_{\gamma_u}-Z_{\gamma_u-}$, unlike what is usually done for positive 
excursions of  one-dimensional Lévy processes. This will be important for our purpose, see~\eqref{g1} below.

\vip

\noindent {\bf A second excursion measure: $\nn_\beta$.}  
For all $x \in \HH$, under $\PP_x$, $(Z_{t\land \ell(Z)})_{t\geq 0}$
a.s. belongs to $\cEd$: $Z_0=x \in \HH$, $\ell(Z)>0$ by right-continuity of the paths, and
$\ell(Z)<\infty$ a.s. by recurrence of the first coordinate of $(Z_t)_{t\geq 0}$. 
For $\beta>0$, we introduce the measure $\nn_\beta$
on $\cE$ (carried by $\cEd$) defined, for all Borel subset $A$ of $\cE$, by 
\begin{equation}\label{nnb}
\nn_\beta(A) = \int_{x \in \HH} |x|^{-d-\beta} \PP_x\big((Z_{t\land \ell(Z)})_{t\geq 0} \in A\big)\dr x.
\end{equation}
Admitting~\eqref{newformula}, one could show that
$\nn_*=\lim_{\beta \to (\alpha/2)-} c_{\beta,\alpha}^{-1} a_* \nn_\beta$ in a sense to be precised, where
$c_{\beta,\alpha}=\int_{x\in\HH, |x|<1}|x|^{-d-\beta}x_1^{\alpha/2}\dr x$ blows up as $\beta$ increases to $\alpha/2$.

\vip

\noindent {\bf Some estimates}. For $e \in \cE$, we set $M(e)=\sup_{t \in [0,\ell(e)]} |e(t)|$.
By Lemmas~\ref{qdist} and~\ref{qdist2} below,
\begin{equation}\label{se1}
\text{if $\beta \in \{*\}\cup(0,\alpha/2)$,} \qquad
\int_\cE  [\ell(e)\land 1 + M(e)\land 1] \nn_\beta(\dr e) <\infty,
\end{equation}
but $\nn_\beta(\ell>1)=\infty$ when $\beta \geq \alpha/2$.

\vip

\noindent {\bf Notation about the domain.} We consider an open strictly
convex domain $\Dd \subset \R^d$ at least 
of class $C^2$. For $x \in \pDd$, let $\bn_x$ be the inward unit normal vector.
For $y \in \cDd$ and $z\in \R^d$, we set
\begin{equation}\label{Lambda}
\Lambda(y,z)= \left\{\begin{array}{ll}
z & \hbox{if }\; z \in \cDd \\[4pt]
(y,z] \cap \pDd & \hbox{if } \; z \notin \cDd \hbox{ and } (y,z] \cap \pDd \neq \emptyset\\[4pt]
y & \hbox{if }\; z \notin \cDd \hbox{ and } (y,z] \cap \pDd = \emptyset
\end{array} \right\} \in \cDd,
\end{equation}
where $(y,z]=\{y+ \theta (z-y) : \theta \in (0,1]\}$.
This definition makes sense: since $\Dd$ is strictly convex, $(y,z]\cap \pDd$ has at most one element. We will see in Lemma~\ref{Lambdacon} that $\Lambda$ is continuous on $\cDd\times\R^d$.

\vip

{\it The main idea is that $\Lambda(y,z)$ is the {\it post-jump} position of our process when it tries 
to jump from $y\in \cDd$ to $z$, as if such a try was done by
crossing the segment $(y,z]$ at infinite speed: if it hits the boundary, it actually jumps to the point
$(y, z]\cap \pDd$.
Such a point does not exist if $y \in \pDd$ and if $z-y$ is unfavorably oriented,
in which case the process does not jump.}

\vip

\noindent{\bf Some isometries.} For $x \in \pDd$, we call $\cI_x$ the set of linear isometries
of $\R^d$ sending $\be_1$ to $\bn_x$. 
Observe that $\be_1$ is the unit inward normal vector of the domain $\HH$, at any point of its boundary.
For $x \in \pDd$, $A\in \cI_x$ and $u \in \R^d$, we set $h_x(A,u)=x+Au$.
For $x \in \pDd$, we introduce $\HH_x=h_x(A,\HH)$: it is the half-space tangent 
to $\pDd$ at $x$ and it does not depend on 
$A\in \cI_x$. Since $\Dd$ is convex, it holds that $\Dd \subset \HH_x$.

\vip
 
{\it We will build our process from excursions in the half-space $\HH$. We thus need, for each $x\in \pDd$, 
to map $\HH$ to $\HH_x$, which is done
through the map $h_x$. When $d=2$,
one can find a family $(A_x)_{x\in \pDd}$ in such a way that
$A_x \in \cI_x$ and $x\mapsto A_x$ is Lipschitz continuous.
However, if e.g. $d=3$ and if $\Dd$ is a ball (or any open and bounded convex domain), 
it is impossible to find a family $(A_x)_{x\in \pDd}$ such that $x\mapsto A_x$ is continuous: 
this follows from the hairy ball theorem.}

\vip

\noindent{\bf More notation about the domain.}
For all $x \in \pDd$, $A \in \cI_x$ and $e \in \cE$, we introduce
\begin{equation}\label{lx}
\cl_x(A,e)=\inf \{t> 0, \: h_x(A,e(t)) \notin {\Dd}\}.
\end{equation}
Since $\{y \in \R^d : h_x(A,y)\in\Dd\} \subset \HH$ by convexity of $\Dd$, we have
$\cl_x(A,e) \leq \ell(e)$. We also set
\begin{equation}\label{g1}
g_x(A,e) = \Lambda\Big(h_x(A,e(\cl_x(A, e)-), h_x(A,e(\cl_x(A,e)))\Big) \in \pDd,
\end{equation}
with the convention that $e(0-)=0$, so that $h_x(A,e(0-))=x$.
\vip
{\it The point $g_x(A,e)$ is built as follows. First, $h_x(A,e)$ is the image of $e$ by the isometry  
$h_x(A,\cdot)$ mapping $\HH$ to $\HH_x$. 
Then $\cl_x(A,e)$ and $g_x(A,e)$ are the instant and position at which $h_x(A,e)$ exits
from $\Dd$, doing
as if the jump at time $\cl_x(A,e)$ was performed by crossing at infinite speed the segment
$[h_x(A,e(\cl_x(A, e)-), h_x(A,e(\cl_x(A,e)))]$. If $h_x(A,e(0)) \notin \Dd$, 
we naturally have $\cl_x(A,e)=0$ and $g_x(A,e)=\Lambda(x,h_x(A,e(0)))$.}

\vip

\noindent{\bf Final notation.} We denote by $(\be_1,\dots,\be_d)$ the canonical basis of $\R^d$, 
by $|\cdot|$ the Euclidean norm on $\R^d$ and by
$\Sp_{d-1}=\{w \in \R^d : |w|=1\}$ the Euclidean sphere.
For $n\in \N_*$, $u\in \R^n$ and $r>0$, 
let $B_n(u,r)=\{w \in \R^n : |w-u|<r\}$.
For $A:\R^d \to \R^d$ a linear mapping, we set $||A||=\sup_{u \in \Sp_{d-1}} |Au|$.

\subsection{The reflected stable processes}\label{secrsp}
We will work under the following conditions on the domain.

\begin{assumption}\label{as}
The set $\Dd\subset \R^d$ is $C^3$, non-empty, open, bounded and strongly convex.
\end{assumption}

\begin{remark}\label{asp}
Let us make precise what we mean in Assumption~\ref{as}:
$\Dd\subset \R^d$ is non-empty, open, bounded, and there exist
$\e_0\in (0,1)$ and $\eta>0$ such that for each $x \in \pDd$, there exists $A \in \cI_x$ and 
$\psi_x:B_{d-1}(0,\e_0)\to \R_+$ of class $C^3$ such that $\psi_x(0)=0$, $\nabla \psi_x(0)=0$, 
Hess $\psi_x(v)\geq \eta I_{d-1}$ and $|D^2\psi_x(v)|+|D^3\psi_x(v)|\leq \eta^{-1}$ for all $v \in B_{d-1}(0,\e_0)$ and 
\begin{equation}\label{casp}
\Dd \cap B_{d}(x,\e_0)=\{h_x(A,u) : u\in B_d(0,\e_0) \;\; \text{and} \;\; u_1>\psi_x(u_2,\dots,u_d)\}.
\end{equation}
\end{remark}

\begin{remark}\label{imp4}
Under Assumption~\ref{as}, there is $r>0$ such that for all 
$x \in \pDd$, ${B}_d(x+r\bn_x,r)\subset \Dd$. For $A\in \cI_x$ and $y \in B_d(r\be_1,r)$, 
$h_x(A,y) \in {B}_d(x+r\bn_x,r)$
because $|h_x(A,y)-x-r\bn_x|=|Ay-r\bn_x|=|Ay-rA\be_1|=|y-r\be_1|$.
Thus $\cl_x(A,e)\geq \ell_r(e):=\inf\{t>0 : e(t) \notin B_d(r\be_1,e)\}$ for all $e\in \cE$.
As we will see in Lemma~\ref{imp}, $\ell_r(e)>0$ for $\nn_*$-a.e. $e\in \cE$.
\end{remark}

Let us now define the reflected stable processes starting from a point on the boundary $\pDd$.

\begin{definition}\label{dfr1}
Fix
$\beta \in \{*\}\cup(0,\alpha/2)$, $x\in \pDd$ and suppose Assumption~\ref{as}.
We say that $(R_t)_{t\geq 0}$ is an $(\alpha,\beta)$-stable process reflected in $\cDd$ issued from $x$ if
there exists a filtration $(\cG_u)_{u\geq 0}$, a $(\cG_u)_{u\geq 0}$-Poisson measure 
$\Pi_\beta=\sum_{u\in \JJ}\delta_{(u,e_u)}$ on $\R_+\times\cE$ with intensity measure $\dr u \nn_{\beta}(\dr e)$,
a càdlàg $(\cG_u)_{u\geq 0}$-adapted $\pDd$-valued process $(b_u)_{u\geq 0}$ and a $(\cG_u)_{u\geq 0}$-predictable 
process $(a_u)_{u \geq 0}$ such that a.s., for all $u \geq 0$, $a_u \in \cI_{b_{u-}}$ and
\begin{equation}\label{sdeb}
b_u = x + \int_0^u\int_{\mathcal{E}}\Big(g_{b_\vm}(a_{v}, e) - b_\vm\Big)\Pi_\beta (\dr v, \dr e)
\end{equation}
such that, introducing the càdlàg increasing $(\cG_u)_{u\geq 0}$-adapted $\R_+$-valued process 
\begin{equation}\label{sdet}
\tau_u = \int_0^u\int_{\mathcal{E}}\cl_{b_\vm}(a_v, e)\Pi_\beta(\dr v, \dr e)
\end{equation}
and its generalized inverse $L_t=\inf\{u\geq 0 : \tau_u>t\}$ for all $t\geq 0$, we have, for all $t\geq 0$,
\begin{equation}\label{defR}
R_t = 
\begin{cases}
h_{b_{L_t -}}(a_{L_t},e_{L_t}(t-\tau_{L_t -})) & \text{if }  \tau_{L_t} > t, \\
g_{b_{L_t-}}(a_{L_t},e_{L_t}) & \text{if }   L_t \in \JJ \text{ and } \tau_{L_t} = t, \\
b_{L_t} &\text{if } L_t \notin  \JJ.
\end{cases}
\end{equation}
\end{definition}

As already mentioned, in dimension $2$, we can find
a family $(A_y)_{y \in \pDd}$ such that $A_y \in \cI_y$ and $y\mapsto A_y$ is Lipschitz-continuous.
In such a case, the choice $a_u=A_{b_{u-}}$ is the simplest one. In the general case, we will choose $(a_u)_{u\geq 0}$
in a quite complicated way, see Proposition~\ref{newb}-(b). We thus prefer to allow for any predictable process 
$(a_u)_{u\ge 0}$ such that $a_u\in \cI_{b_{u-}}$, and 
to show that the law of the resulting process does not depend on this choice.

\vip

The Poisson integrals in~\eqref{sdeb} and~\eqref{sdet} make sense, by~\eqref{se1} and since
$|g_x(A,e)-x|\leq M(e)$ and $\cl_x(A,e) \leq \ell(e)$. Moreover, we will see in Lemma~\ref{tti} 
that $(\tau_u)_{u\geq 0}$ is automatically a.s. strictly increasing 
and that $\lim_{u\to \infty} \tau_u=\infty$ a.s., which classically implies the following.

\begin{remark}\label{rkr2}
The map $t\mapsto L_t$ is continuous from $\R_+$ into $\R_+$.
For all $t\geq 0$, $t\in [\tau_{L_t-},\tau_{L_t}]$.
For all $t\geq 0$, all $u\geq 0$, $L_t=u$ if and only if $t\in [\tau_{u-},\tau_u]$. For
all $u\geq 0$, $L_{\tau_u}=L_{\tau_{u-}}=u$.
\end{remark}

Although we build things in the reverse
way, the main idea is that $(L_t)_{t\geq 0}$ is the local time of $(R_t)_{t\geq 0}$ 
at $\pDd$, that
$(\tau_u)_{u\geq 0}$ is its generalized inverse and that $b_u=R_{\tau_u}$ is the {\it boundary process} 
describing the successive positions, in a suitable time-scale, of $R$ when hitting $\pDd$.

\vip

If $L_t \notin \JJ$, then $R_t\in \pDd$ and $R_t=b_{L_t}=b_{L_{t}-}$, since
$L_t$ is a continuity point of $b$.
If $L_t \in \JJ$, there are two possibilities. If first $\tau_{L_t}>t$, 
which means that 
$t$ is not at the right extremity of an excursion, $R_t$ is built by using the 
excursion $e_{L_t}$, mapped to $\HH_{b_{L_t-}}$, by setting $R_t=h_{b_{L_t-}}(a_{L_t},e_{L_t}(t-\tau_{L_t-}))$.
If next $\tau_{L_t}=t$, which means that 
$t$ is precisely the end of an excursion, we set $R_t= g_{b_{L_t-}}(a_{L_t},e_{L_t})$, and
we actually also have $R_t=b_{L_t}$, because 
$b_{L_t}=b_{L_{t}-}+(g_{b_{L_t}-}(a_{L_t},e_{L_t})-b_{L_{t}-}) =g_{b_{L_t-}}(a_{L_t},e_{L_t})$ by~\eqref{sdeb}. 

\vip
As seen in Remark~\ref{imp4}, for all $x\in\pDd$, all $A\in \cI_x$, for $\nn_*$-a.e.
$e\in \cE$, $\cl_x(A,e)>0$. Thus when $\beta=*$, 
for all $t\geq 0$ such that $L_t\in \JJ$, we have $\tau_{L_t}>\tau_{L_t-}$
(because $\Delta \tau_{L_t}=\cl_{b_{L_t-}}(a_{L_t},e_{L_t})$) and thus if $t=\tau_{L_t-}$, we have 
$R_t=h_{b_{L_t-}}(a_{L_t},e_{L_t}(0))=b_{L_t-}$ (because $e_{L_t}(0)=0$ since $e_{L_t} \in \cE_0$).

\vip

For $e \in \cEd$, we naturally have $\cl_x(A,e)>0$ if and only if $h_x(A,e(0)) \in \Dd$. When 
$\beta \in (0,\alpha/2)$, $\nn_\beta$ is carried by $\cEd$. Thus when $\beta \in (0,\alpha/2)$,
if $L_t \in \JJ$ and $t=\tau_{L_t-}$ and $\tau_{L_t}>\tau_{L_t-}$, which means that 
$\cl_{b_{L_t-}}(a_{L_t},e_{L_t})>0$, {\it i.e.} that $h_{b_{L_t-}}(a_{L_t},e_{L_t}(0)) \in \Dd$, 
we have $R_t=h_{b_{L_t-}}(a_{L_t},e_{L_t}(0))\neq b_{L_t-}$. If now $L_t \in \JJ$ and $t=\tau_{L_t-}$ and 
$\tau_{L_t}=\tau_{L_t-}$, which means that $\cl_{b_{L_t-}}(a_{L_t},e_{L_t}(0))=0$, {\it i.e.} that 
$h_{b_{L_t-}}(a_{L_t},e_{L_t}(0))\notin \Dd$, then $R_t=g_{b_{L_t-}}(a_{L_t},e_{L_t})=
\Lambda(b_{L_t-},h_{b_{L_t-}}(a_{L_t},e_{L_t}(0)))\neq b_{L_t-}$.

\vip

Let us summarize all this for future reference.

\begin{remark}\label{rkr}
For all $t\geq 0$, we have $t\in [\tau_{L_t-},\tau_{L_t}]$ and
\vip
(a) $R_t=b_{L_t}=b_{L_{t}-}$ if $L_t \notin \JJ$,
\vip
(b) $R_t=h_{b_{L_t-}}(a_{L_t},e_{L_t}(t-\tau_{L_t-}))\in \Dd$ if $L_t \in \JJ$ and $t \in (\tau_{L_t-},\tau_{L_t})$,
\vip
(c) $R_t=b_{L_t-}$ if $L_t\in \JJ$ and $t=\tau_{L_t-}$ when $\beta=*$,
\vip
(d) $R_t=h_{b_{L_t-}}(a_{L_t},e_{L_t}(0))\neq b_{L_t-}$  
if $L_t\in \JJ$ and $t=\tau_{L_t-}< \tau_{L_t}$ when $\beta \in (0,\alpha/2)$,
\vip
(e)  $R_t=\Lambda(b_{L_t-},h_{b_{L_t-}}(a_{L_t},e_{L_t}(0)))\neq b_{L_t-}$
if $L_t\in \JJ$ and $t=\tau_{L_t-}= \tau_{L_t} $ when $\beta \in (0,\alpha/2)$,
\vip
(f) $R_t=g_{b_{L_t-}}(a_{L_t},e_{L_t})=b_{L_t}$ if $L_t\in \JJ$ and $t=\tau_{L_t}$.
\end{remark}

Our first result concerns the existence and uniqueness in law of such a process.
Let $(X^*_t)_{t\geq 0}$ be the canonical process on the set of càdlàg $\cDd$-valued functions
$\Omega^*=\DD(\R_+,\cDd)$, defined by $X_t^*(w)=w(t)$.
We endow $\Omega^*$ with its canonical $\sigma$-field $\cF^*$ and its canonical filtration $(\cF^*_t)_{t\geq 0}$.

\begin{theorem}\label{mr1}
Fix $\beta \in \{*\}\cup(0,\alpha/2)$ and suppose Assumption~\ref{as}.
\vip
(a) For all $x \in \pDd$, there exists an $(\alpha,\beta)$-stable process $(R_t)_{t\geq 0}$
reflected in $\cDd$ issued from $x$. It is a.s. càdlàg and $\cDd$-valued.
\vip
(b) For all $x\in \pDd$, all the $(\alpha,\beta)$-stable processes reflected in $\cDd$ issued from $x$ have
the same law, which we denote by $\QQ_x$. It is a probability measure on the canonical space $(\Omega^*,\cF^*)$.
\vip
(c) If $t_n \in \R_+$ and $x_n \in \pDd$ satisfy $\lim_{n} t_n=t\geq0$ and $\lim_n x_n=x \in \pDd$, then
$$
\text{for all } \varphi \in C_b(\cDd), \qquad \lim_n \QQ_{x_n}[\varphi(X^*_{t_n})]=\QQ_x[\varphi(X_t^*)].
$$

(d) For all $B \in \cB(\DD(\R_+,\cDd))$, the map $x\mapsto \QQ_x(B)$ is measurable from $\pDd$ into $[0,1]$.
\end{theorem}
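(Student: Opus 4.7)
All four parts reduce to a well-posedness result for the boundary SDE \eqref{sdeb} together with continuity of its law in the starting point. Indeed, once \eqref{sdeb} is solved, the Poisson integral \eqref{sdet} defines $\tau$ (finite by \eqref{se1} since $\cl_x(A,e)\leq \ell(e)$), which by Lemma~\ref{tti} is a.s.\ strictly increasing and unbounded, so $(L_t)_{t\geq 0}$ has the properties of Remark~\ref{rkr2}. The process $R$ is then a deterministic measurable functional of $(\Pi_\beta,b,a)$ via \eqref{defR}, and the case analysis of Remark~\ref{rkr} shows it is càdlàg with values in $\cDd$. Hence (a) reduces to existence of $b$, (b) to showing the law of $b$ is independent of $(a_u)_{u\geq 0}$, (c) to an $L^1$-type continuity of $x\mapsto b^x$, and (d) to measurability of the construction.

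In dimension $d=2$ one picks a globally Lipschitz family $(A_x)_{x\in\pDd}$ with $A_x\in\cI_x$ and takes $a_u=A_{b_{u-}}$. The central analytic input is the Lipschitz estimate of Proposition~\ref{ttoopp},
\[
\int_\cE \bigl||g_x(A_x,e)-g_{x'}(A_{x'},e)|-|x-x'|\bigr|\,\nn_\beta(\dr e) \leq C|x-x'|,
\]
from which a Picard iteration on the finite-intensity part $\{M(e)>\e\}$ followed by $\e\downarrow 0$ yields pathwise existence and uniqueness of the $\pDd$-valued càdlàg solution, together with a Gr\"onwall bound $\E[|b^x_u-b^{x'}_u|]\leq e^{Cu}|x-x'|$. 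In dimension $d\geq 3$ the hairy-ball theorem forbids a continuous global family $(A_x)_{x\in\pDd}$; one instead fixes $y\in\pDd$, uses a family $(A^y_x)_{x\in\pDd\setminus\{y\}}$ that is locally Lipschitz off $y$ (Proposition~\ref{newb}-(b)), and runs the same argument to get a solution $b^y$ up to the hitting time $\sigma^y$ of $y$. The crucial point is that rotational invariance of $\nn_\beta$ makes the law of $g_x(A,e)$ independent of $A\in\cI_x$, so the law of $b$ is independent of the chosen isometry family. Since $u\mapsto b^y_u$ has a.s.\ finite variation on compacts (controlled via $\int_\cE M(e)\land 1\,\nn_\beta(\dr e)<\infty$), its range has vanishing Lebesgue measure in the $(d-1)$-dimensional manifold $\pDd$, and a Fubini argument gives $\sigma^y=\infty$ a.s.\ for Lebesgue-a.e.\ $y\in\pDd$. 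A continuity argument in the starting point then extends the construction to every $x\in\pDd$.

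Assertions (a) and (b) then follow by assembling $R$ through \eqref{defR}. For (c), decompose $\QQ_{x_n}[\varphi(X^*_{t_n})]-\QQ_x[\varphi(X^*_t)]$ into a spatial and a temporal contribution. The spatial part uses the $L^1$ continuity of $x\mapsto b^x$ (from the Gr\"onwall bound), pushed through the measurable map \eqref{defR}, combined with dominated convergence for $\varphi\in C_b(\cDd)$. The temporal part rests on $\QQ_x(\Delta R_t\neq 0)=0$ for any fixed $t\geq 0$: the jumps of $R$ are contained in the countable random set $\{\tau_{u-},\tau_u : u\in\JJ\}$ together with the jump times of the current excursion, all of which have diffuse laws under $\PP$. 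A standard combination of both yields (c). Finally, (d) is immediate, as the construction $(\Pi_\beta,x)\mapsto R$ is a measurable functional into $\DD(\R_+,\cDd)$, so $x\mapsto\QQ_x(B)$ is measurable. The main obstacle is the $d\geq 3$ step: combining the analytic estimate of Proposition~\ref{ttoopp} with the isometry invariance of the law of $b$ and a Fubini genericity argument to a.s.\ avoid the singular point $y$; everything else amounts to bookkeeping once Proposition~\ref{ttoopp} and Lemma~\ref{tti} are in hand.
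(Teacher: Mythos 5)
Your overall route is the same as the paper's: reduce everything to the well-posedness of the boundary equation \eqref{sdeb} (Proposition~\ref{newb}, built on Proposition~\ref{ttoopp}), use isometry invariance to get uniqueness in law over all admissible $(a_u)$, couple solutions driven by the same Poisson measure for the continuity statement, and get (d) from measurability/continuity of the construction. However, three steps that you dismiss as bookkeeping hide the real work, and as written they do not go through. First, the claim that ``the case analysis of Remark~\ref{rkr} shows $R$ is càdlàg'' is not correct: Remark~\ref{rkr} only identifies the value of $R_t$ in each regime. Right-continuity at the right endpoint $\tau_u$ of an excursion, left limits at $\tau_{u-}$, and continuity at times with $L_t\notin\JJ$ all require showing that the amplitudes of nearby excursions vanish, i.e. $\lim_{v\to u,v\neq u}\indiq_{\{v\in\JJ\}}M(e_v)=0$ for all $u$ simultaneously; the paper proves this via $\E[\sup_{v\in\JJ\cap[0,T]}M(e_v)\indiq_{\{\ell(e_v)\leq\delta\}}\land 1]\leq T\int_\cE (M\land1)\indiq_{\{\ell\leq\delta\}}\dr\nn_\beta\to0$ together with the continuity of $L$ and the relations $L_{\tau_u+h}>u$, $L_{\tau_{u-}-h}<u$. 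Second, for (b) the observation that the law of $g_x(A,e)$ does not depend on $A\in\cI_x$ is not sufficient when $(a_u)_{u\geq0}$ is an arbitrary predictable process: what is needed is that $\sum_{u\in\JJ}\delta_{(u,\Theta_u e_u)}$, with $\Theta_u=(A^z_{b_{u-}})^{-1}a_u$ a predictable isometry-valued process fixing $\be_1$, is again a Poisson measure with intensity $\dr u\,\nn_\beta(\dr e)$ (the paper's Lemma~\ref{ttaann}, proved through compensators); only then can the equation be rewritten as one driven by the fixed family $A^z$ and strong uniqueness invoked.

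Third, and most seriously, your treatment of (c) underestimates the difficulty. $R_t$ is not a continuous functional of $b$ alone: it depends on $\tau$, $L$ and on the excursion evaluated at $t-\tau_{L_t-}$, and the exit functionals $\cl_{x}(A,\cdot)$, $g_x(A,\cdot)$ must be shown to be a.s. continuous in $(x,A)$ under $\nn_\beta$. The paper needs Lemma~\ref{relou}-(ii) for this (that $\cl_{x_n}(A_n,e)=\cl_x(A,e)$ eventually, which itself rests on the path property \eqref{eer} that the killed stable process stays away from the boundary before exiting and exits into the open exterior), then deduces uniform convergence of $\tau^n$ and $L^n$, then the atomless statements $\PP(L_t\in\JJ,\tau_{L_t-}=t)=\PP(L_t\in\JJ,\tau_{L_t}=t)=0$ (using Lemma~\ref{relou}-(i)), and finally a case analysis near excursion endpoints — delicate in particular when $\beta\in(0,\alpha/2)$, where excursions of zero duration ($\tau_u=\tau_{u-}$) occur. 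Your substitute argument, that ``$\QQ_x(\Delta R_t\neq0)=0$ since jump times have diffuse laws'', is plausible but does not by itself produce the joint convergence $\QQ_{x_n}[\varphi(X^*_{t_n})]\to\QQ_x[\varphi(X^*_t)]$: one must show that with high probability the perturbed process lies in the same excursion ($L^n_{t_n}=L_t$) and that the positions inside that excursion converge, which is exactly where the exit-time continuity and the atomless lemmas are consumed. Also note that for $d\geq3$ your Gr\"onwall bound only holds with the localized Lipschitz constant away from the singular point $z$, so the continuity argument must be run up to the stopping times $\rho^k$ and combined with $\rho^k\to\infty$, as in the paper's Step 3.2.
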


Finally, we extend Definition~\ref{dfr1} to the case where $x\in \Dd$. 

\begin{definition}\label{dfr2}
Fix $\beta \in \{*\}\cup(0,\alpha/2)$, $x\in \Dd$ and suppose Assumption~\ref{as}.
Consider an isotropic $\alpha$-stable process $(Z_t)_{t\geq 0}$ issued from $x$ and set
$$
\tell(Z)=\inf\{t>0 : Z_t \notin \Dd\}\quad \hbox{and} \quad Y= \Lambda(Z_{\tell(Z)-}, Z_{\tell(Z)}) \in \pDd.
$$
Conditionally on $(Z_{t\land\tell(Z)})_{t\geq 0}$, pick some $\QQ_{Y}$-distributed
process $(S_t)_{t\geq 0}$ and set
\begin{equation}
R_t =
\begin{cases}
Z_t & \text{if }t < \tell(Z), \\
S_{t-\tell(Z)} &  \text{if }t \geq \tell(Z).
\end{cases}
\end{equation}
We say that $(R_t)_{t\geq 0}$ is an $(\alpha,\beta)$-stable process reflected in $\cDd$ issued from $x$.
We denote by $\QQ_x=\cL((R_t)_{t\geq0})$ the resulting law. By Theorem~\ref{mr1}-(a),
$\QQ_x$ is carried by $\DD(\R_+,\cDd)$.
\end{definition}

By Theorem~\ref{mr1}-(d), this definition makes sense.
Here is the first main result of this paper.

\begin{theorem}\label{mr2}
Fix $\beta \in \{*\}\cup(0,\alpha/2)$ and suppose Assumption~\ref{as}.
The quintuple
$$(\Omega^*,\cF^*,(\cF_t^*)_{t\geq0},(\QQ_x)_{x\in \cDd}, (X^*_t)_{t\geq 0})$$ 
defines a Feller Markov process. 
Moreover, for all $x\in \cDd$, it holds that
$\QQ_x[\int_0^\infty \indiq_{\{X_t^* \in \pDd\}} \dr t]=0$.
\end{theorem}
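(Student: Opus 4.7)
The plan is to derive the three conclusions from the explicit construction of $\QQ_x$ in Definitions~\ref{dfr1} and~\ref{dfr2}, exploiting the strong Markov property of the driving Poisson measure $\Pi_\beta$ (which governs the sequence of excursions) and that of the isotropic stable process (which governs the motion inside each excursion). The main obstacle will be the case when $t$ falls strictly inside an excursion, i.e.\ case (b) of Remark~\ref{rkr}: one must then glue the tail of the current excursion to a freshly-started reflected process at the subsequent boundary return and verify that this concatenation coincides in law with $\QQ_y$ for $y=X_t^*\in \Dd$.

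\textbf{Markov property.} Start with $x\in\pDd$. Since $\{L_t\le u\}=\{\tau_u\ge t\}\in \cG_u$, the random index $L_t$ is a $(\cG_u)_{u\ge 0}$-stopping time, and by the classical strong Markov property for Poisson measures, $\widetilde\Pi_\beta(\dr v,\dr e):=\Pi_\beta(L_t+\dr v,\dr e)$ is a Poisson measure with the same intensity $\dr v\,\nn_\beta(\dr e)$, independent of $\cG_{L_t}$. I split according to Remark~\ref{rkr}. In cases (a) and (f), $X_t^*=b_{L_t}$ and $t=\tau_{L_t}$, so the continuation of \eqref{sdeb}--\eqref{sdet} started from $b_{L_t}$ and driven by $\widetilde\Pi_\beta$ yields, by the uniqueness in Theorem~\ref{mr1}(b), a $\QQ_{b_{L_t}}=\QQ_{X_t^*}$-distributed process, independent of $\cG_{L_t}$. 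In the main case (b), set $u=L_t$ and $y=X_t^*\in \Dd$; the strong Markov property of the stable process underlying $e_u$ applied at the internal time $t-\tau_{u-}$ shows that $(e_u((t-\tau_{u-})+s))_{s\ge 0}$ is, conditionally on the past, an independent stable process started at $e_u(t-\tau_{u-})$ and stopped on leaving $\HH$; after transport by $h_{b_{u-}}(a_u,\cdot)$ this becomes a stable process from $y$ stopped on leaving $\HH_{b_{u-}}\supset \Dd$, which is therefore stopped at its first exit from $\Dd$ at the point $b_u\in\pDd$. The tail after $\tau_u$ is then produced from the atoms of $\widetilde\Pi_\beta$ with index $>0$, which by the preceding paragraph provides a $\QQ_{b_u}$-distributed continuation; Definition~\ref{dfr2} identifies this concatenation with $\QQ_y$. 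The remaining cases (c)--(e) occur only at the countably many times $\tau_{u-}$ and are handled analogously, either by shifting to $u-\varepsilon$ and letting $\varepsilon\to 0^+$, or by using the explicit law of $e_u(0)$ under $\nn_\beta$. The case $x\in \Dd$ then follows from Definition~\ref{dfr2} and the strong Markov property of the stable process at $\tell(Z)$.

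\textbf{Feller property.} Fix $\varphi\in C(\cDd)$ and $t\ge 0$; I need to show that $P_t\varphi(x)=\QQ_x[\varphi(X_t^*)]$ is continuous in $x\in\cDd$. For $x\in\pDd$, this is given directly by Theorem~\ref{mr1}(c). For $x_n\to x$ with $x\in \Dd$, I would couple the stable processes $Z^n$ and $Z$ of Definition~\ref{dfr2} to a common Poisson measure; the paths converge uniformly on $[0,T]$, and since $\tell(Z)$ is a.s.\ a jump time of the driving measure with $Z_{\tell-}\in \Dd$ and $Z_\tell\notin\cDd$, the pair $(Z_{\tell-},Z_\tell)$ lies in the continuity set of $\Lambda$; hence $\tell(Z^n)\to\tell(Z)$ and $R^n_{\tell(Z^n)}\to R_{\tell(Z)}$ in probability, and applying Theorem~\ref{mr1}(c) to the reflected tails issued from $R^n_{\tell(Z^n)}$ concludes. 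The mixed regime $x_n\in \Dd\to x\in\pDd$ is analogous, since then $\tell(Z^n)\to 0$ and $R^n_{\tell(Z^n)}\to x$. Continuity at $t=0$: the canonical process is right-continuous with $X_0^*=x$ under $\QQ_x$, so dominated convergence yields $P_t\varphi(x)\to\varphi(x)$; combined with $P_t C(\cDd)\subset C(\cDd)$, the contraction property and the compactness of $\cDd$, a classical Hille--Yosida argument upgrades this to strong continuity on $C(\cDd)$.

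\textbf{Lebesgue-zero time on $\pDd$.} By Remark~\ref{rkr}, $\{t\ge 0:X_t^*\in\pDd\}\subset\{t\ge 0:L_t\notin\JJ\}\cup\{\tau_{u-},\tau_u:u\in\JJ\}$, and the second set is countable, hence Lebesgue-negligible. The process $\tau_u=\int_0^u\int_\cE \cl_{b_\vm}(a_v,e)\Pi_\beta(\dr v,\dr e)$ is by construction a pure-jump process whose jumps occur only at times $u\in\JJ$; thus for any $U>0$,
\[
\int_0^{\tau_U}\indiq_{\{L_t\notin\JJ\}}\dr t \;=\; \tau_U-\sum_{u\in\JJ,\,u\le U}\Delta\tau_u \;=\; 0.
\]
Letting $U\to\infty$ (using $\tau_U\to\infty$ a.s., as will be recorded in Lemma~\ref{tti}) yields the claim for $x\in\pDd$. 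For $x\in\Dd$, Definition~\ref{dfr2} reduces the assertion to the boundary case, using additionally that the stable process $Z$ spends Lebesgue-zero time in $\pDd$ before $\tell(Z)$, which follows from the absolute continuity of its one-dimensional marginals.
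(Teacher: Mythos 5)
Your overall route is the same as the paper's: work from the explicit construction of Definitions~\ref{dfr1} and~\ref{dfr2}, split a fixed time $t$ according to Remark~\ref{rkr}, restart the Poisson measure of excursions at the stopping time $L_t$ and invoke the uniqueness of Theorem~\ref{mr1}-(b) for the part after $d_t=\tau_{L_t}$, prove the Feller property by coupling the interior stable pieces and applying Theorem~\ref{mr1}-(c), and deduce the zero occupation of $\pDd$ from the pure-jump structure of $(\tau_u)_{u\geq0}$; these last two parts, and the reduction of $x\in\Dd$ to the boundary case, do match the paper's arguments.

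The genuine gap is in your case (b), which you correctly identify as the main obstacle but then dispatch in one sentence. The claim that ``the strong Markov property of the stable process underlying $e_u$ applied at the internal time $t-\tau_{u-}$ shows that, conditionally on the past, $(e_u((t-\tau_{u-})+s))_{s\geq0}$ is an independent stable process started at $e_u(t-\tau_{u-})$'' is not available as stated. First, for $\beta=*$ the excursion is governed by the $\sigma$-finite measure $\nn_*$ with a singular entrance behaviour at $0$, so there is no underlying stable process issued from a point whose strong Markov property you can quote; the correct tool is the Markov property under the excursion measure itself (Lemma~\ref{mark}). Second, and more seriously, $e_{L_t}$ is an atom of the Poisson measure selected through the event $\{\tau_{L_t-}\leq t<\tau_{L_t}\}$, which depends on the excursion via $\cl_{b_{L_t-}}(a_{L_t},e_{L_t})$, and in the filtration $(\cG_u)_{u\geq0}$ the whole excursion is already revealed at time $L_t$; thus ``conditionally on the past'' must be made precise, and the classical strong Markov property of a single process does not apply to this randomly selected atom. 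The paper's Step~2.3 in the proof of Proposition~\ref{markoff} is exactly the rigorous version of your sentence: one writes the expectation as a sum over $u\in\JJ$, applies the compensation formula (using that $a_u$ and the pre-$\tau_{u-}$ quantities are predictable and that the straddling indicator is a functional of the excursion killed at time $t-\tau_u$), then applies Lemma~\ref{mark} at time $t-\tau_u$ inside the $\nn_\beta$-integral, and finally uses translation and rotation invariance of the stable law to identify the resulting kernel with $\QQ_{X_t^*}$ through Definition~\ref{dfr2}. That computation is the heart of the theorem and is absent from your proposal. A smaller point: your cases (c)--(e) require no limiting argument in $\varepsilon$; for fixed $t$ they occur with probability zero, since $\PP(L_t\in\JJ,\tau_{L_t-}=t)=\PP(L_t\in\JJ,\tau_{L_t}=t)=0$ (Step~3.5 of the proof of Theorem~\ref{mr1}), which is how the paper reduces the analysis to the dichotomy $L_t\notin\JJ$ or $\tau_{L_t}>t$.
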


We do not know how to clearly state uniqueness of the process built in Theorem~\ref{mr2}, 
except by saying that the boundary process is unique, and that there is only one natural way to concatenate some 
rotated/translated excursions once this boundary process is built. 
As we will see in Theorem~\ref{mr} below,
we have enough structure (and ``uniqueness'') to ensure that the (position of the) scattering process 
converges to this 
process, without extracting subsequences. That being said, when $\beta = *$, it might possible to show 
that the Markov process from Theorem~\ref{mr2} is the only $\closure{\Dd}$-valued strong Markov process 
satisfying \textit{(i)} when started from inside the domain, the stopped process 
(when hitting $\pDd$) has the same law as $(R_{t\wedge \tell(Z)})_{t\geq0}$ from Definition~\ref{dfr2}; 
\textit{(ii)} it comes out continuously from the boundary and \textit{(iii)} 
it spends zero Lebesgue time on $\pDd$.

\vip

We will also check that when $\beta = *$, the reflected process exits the boundary continuously whereas 
when $\beta \in(0,\alpha /2)$, it comes out with a jump.

\begin{proposition}\label{exit}
Fix $\beta \in \{*\}\cup(0,\alpha/2)$ and grant Assumption~\ref{as}. 
Consider an $(\alpha,\beta)$-stable process  $(R_t)_{t\geq 0}$ reflected in $\cDd$.
Introduce $\cZ=\closure{\{t\geq 0 : R_t \in \pDd\}}$ and write its complementary set as a countable union
of disjoint intervals: $\cZ^c=\cup_{n\in\N}(g_n,d_n)$.
\vip
(i) When $\beta =*$, $\pDd$ is a continuous exit set: a.s., for all $n\in\N$, $R_{g_n}=R_{g_n-}$.
\vip
(ii) When $\beta \in (0,\alpha/2)$, $\pDd$ is a discontinuous exit set: a.s., for all $n \in \N$,
$R_{g_n} \neq R_{g_n-}$.
\end{proposition}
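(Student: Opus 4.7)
The plan is to (i) identify the excursion intervals $(g_n,d_n)$ in terms of the time-change $\tau$ and the Poisson measure $\Pi_\beta$, (ii) show that $R_{g_n-}=b_{u-}$ for both values of $\beta$, and (iii) evaluate $R_{g_n}$ directly. For step (i), from the cases (a)--(f) of Remark~\ref{rkr}, $R_t\notin\pDd$ can only occur in case (b), or additionally at $t=\tau_{L_t-}$ of case (d) when $\beta\in(0,\alpha/2)$. Setting $\JJ_+=\{u\in \JJ : \tau_u>\tau_{u-}\}$---which equals $\JJ$ a.s.\ when $\beta=*$ (Remark~\ref{imp4}), and equals $\{u\in \JJ : h_{b_{u-}}(a_u,e_u(0))\in\Dd\}$ when $\beta\in(0,\alpha/2)$---the set $\{t\geq 0 : R_t\notin\pDd\}$ decomposes as a disjoint union of intervals indexed by $u\in \JJ_+$. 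Taking the closure in the definition of $\cZ$ puts every endpoint $\tau_{u-}$ and $\tau_u$ back in, since $\tau_{u-}=\lim_{v\nearrow u}\tau_v$ and $R_{\tau_v}=b_v\in\pDd$. Hence $\cZ^c=\bigsqcup_{u\in \JJ_+}(\tau_{u-},\tau_u)$ and each $g_n=\tau_{u-}$ for some $u\in \JJ_+$.

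For step (ii), I would take $t_k\nearrow g_n=\tau_{u-}$. By continuity of $t\mapsto L_t$ (Remark~\ref{rkr2}), $v_k:=L_{t_k}\nearrow u$ with $v_k<u$. If $v_k\notin \JJ$, case (a) gives $R_{t_k}=b_{v_k}=b_{v_k-}$. If $v_k\in \JJ$, then $\tau_{v_k-},\tau_{v_k}\to\tau_{u-}$ and case-by-case inspection of (b)--(f) shows
\[
|R_{t_k}-b_{v_k-}|\leq M(e_{v_k}):
\]
in cases (b), (c), (d), (e), $R_{t_k}$ is either $b_{v_k-}$ itself or of the form $b_{v_k-}+a_{v_k}e_{v_k}(s)$ (possibly projected onto $\pDd$ via $\Lambda$), while in case (f) the jump $|b_{v_k}-b_{v_k-}|$ is bounded by $M(e_{v_k})$, since $g_{b_{v_k-}}(a_{v_k},e_{v_k})$ lies on a segment whose endpoints are at distance $\leq M(e_{v_k})$ from $b_{v_k-}$. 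From~\eqref{se1}, $\nn_\beta(M>\delta)\leq (\delta\wedge 1)^{-1}\int (M\wedge 1)\dr\nn_\beta<\infty$ for every $\delta>0$, so $\Pi_\beta$ a.s.\ charges $[0,u]\times\{M>\delta\}$ only finitely often; hence $M(e_{v_k})\leq\delta$ for $k$ large. Combined with the left-limit property $b_{v_k-}\to b_{u-}$, this yields $R_{t_k}\to b_{u-}$, so $R_{g_n-}=b_{u-}$.

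For step (iii), at $g_n=\tau_{u-}$ with $u\in \JJ_+$: when $\beta=*$, case (c) of Remark~\ref{rkr} gives $R_{g_n}=b_{u-}=R_{g_n-}$, proving~(i). When $\beta\in(0,\alpha/2)$, we are in case (d), hence $R_{g_n}=b_{u-}+a_u e_u(0)$; under $\nn_\beta$, $e(0)$ has density $|x|^{-\beta-d}\indiq_{\HH}(x)$ on $\R^d$, so a.s.\ $e_u(0)\neq 0$, and since $a_u$ is an isometry, $a_u e_u(0)\neq 0$; therefore $R_{g_n}\neq b_{u-}=R_{g_n-}$, proving~(ii). The main substance lies in step (ii): while the excursion containing $t_k$ has vanishing duration as $k\to\infty$, it could a priori carry $R$ far from $b_{v_k-}$ through a single large jump, and ruling this out relies precisely on the integrability estimate~\eqref{se1}, which is why the restriction $\beta<\alpha/2$ is imposed on the excursion measure.
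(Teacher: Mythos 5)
Your proof is correct and follows essentially the same route as the paper's: identify $\cZ^c=\cup_{u\in\JJ}(\tau_{u-},\tau_u)$ so that each $g_n=\tau_{u-}$, show $R_{g_n-}=b_{u-}$, and read off from \eqref{defR} (Remark~\ref{rkr}-(c)-(d)) that $R_{g_n}=b_{u-}+a_u e_u(0)$, which equals $R_{g_n-}$ exactly when $\beta=*$; the paper simply cites the first two facts from \eqref{zdur} and from Step 1 of the proof of Theorem~\ref{mr1} (the làg property of $R$ at $\tau_{u-}$), whereas you re-derive them, using the same estimate $|R_t-b_{L_t-}|\leq M(e_{L_t})$ and the integrability \eqref{se1}. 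The only omission is the one-line reduction, via the strong Markov property, to the case $R_0\in\pDd$, since your argument is phrased entirely within the framework of Definition~\ref{dfr1} while the statement allows an interior starting point (Definition~\ref{dfr2}).
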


The reflected process inherits, in some sense, 
the scaling property of the $\alpha$-stable process.

\begin{proposition}\label{scal}
Fix $\beta \in \{*\}\cup(0,\alpha/2)$ and grant Assumption~\ref{as}.
For $(R_t)_{t\geq 0}$ an $(\alpha,\beta)$-stable process reflected in $\cDd$ issued 
from $x\in\cDd$ and for $\lambda>0$,  $(\lambda^{1/\alpha} R_{t/\lambda})_{t\geq 0}$ is an $(\alpha,\beta)$-stable 
process reflected in $\lambda^{1/\alpha}\cDd=\{\lambda^{1/\alpha}y : y \in \cDd\}$ issued 
from $\lambda^{1/\alpha}x$.
\end{proposition}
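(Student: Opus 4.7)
The plan is to invoke the uniqueness in law of Theorem~\ref{mr1}(b) (and Definition~\ref{dfr2} for $x\in\Dd$): starting from an $(\alpha,\beta)$-stable process $(R_t)_{t\geq 0}$ reflected in $\cDd$ from $x$, with underlying data $(\Pi_\beta,(b_u),(\tau_u),(L_t),(a_u))$, I would construct rescaled data $(\Pi'_\beta,(b'_u),(\tau'_u),(L'_t),(a'_u))$ realizing an $(\alpha,\beta)$-stable process reflected in $\cDd':=\lambda^{1/\alpha}\cDd$ from $\lambda^{1/\alpha}x$, and whose associated process via \eqref{defR} is exactly $R'_t:=\lambda^{1/\alpha}R_{t/\lambda}$. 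All the geometric objects of Subsection~\ref{nono} behave covariantly under dilations: $\pDd'=\lambda^{1/\alpha}\pDd$, $\bn_{\lambda^{1/\alpha}y}^{\Dd'}=\bn_y^{\Dd}$ and hence $\cI_{\lambda^{1/\alpha}y}^{\Dd'}=\cI_y^{\Dd}$.

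The first step, and essentially the only delicate one, is a scaling identity for the excursion measures. With the dilation $\phi_\lambda:\cE\to\cE$ defined by $\phi_\lambda(e)(s):=\lambda^{1/\alpha}e(s/\lambda)$, I would establish
\[
(\phi_\lambda)_* \nn_\beta = \mu\, \nn_\beta, \qquad \text{where } \mu:=\lambda^{1/2} \text{ if } \beta=*, \qquad \mu:=\lambda^{\beta/\alpha} \text{ if } \beta\in(0,\alpha/2).
\]
For $\beta\in(0,\alpha/2)$ this follows from the formula \eqref{nnb}, the scaling property of ISP$_\alpha$ (under $\PP_x$, the process $(\lambda^{1/\alpha}Z_{t/\lambda})_t$ has law $\PP_{\lambda^{1/\alpha}x}$) and the change of variable $y=\lambda^{1/\alpha}x$, which produces the prefactor $\lambda^{\beta/\alpha}$. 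For $\beta=*$, writing $Z'_t:=\lambda^{1/\alpha}Z_{t/\lambda}$, the reflected first coordinate satisfies $Y'_t=\lambda^{1/\alpha}Y_{t/\lambda}$; the $(1/2)$-stability of the inverse local time $\gamma$ together with the normalization $\E[\int_0^\infty e^{-t}\dr L_t]=1$ forces the pathwise identity $L'_t=\lambda^{1/2}L_{t/\lambda}$, whence $\gamma'_u=\lambda\gamma_{u/\sqrt\lambda}$ and $e'_u=\phi_\lambda(e_{u/\sqrt\lambda})$; matching the intensities of $\Pi'_*$ and $\Pi_*$ then yields the announced identity.

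With this scaling in hand I set $\Pi'_\beta$ to be the image of $\Pi_\beta$ under $(v,e)\mapsto (\mu v,\phi_\lambda(e))$ (so that $\Pi'_\beta$ has intensity $\dr u\,\nn_\beta(\dr e)$), $b'_u:=\lambda^{1/\alpha}b_{u/\mu}$ and $a'_u:=a_{u/\mu}$. The verification of \eqref{sdeb}--\eqref{defR} on $\cDd'$ reduces to three elementary identities, valid for $y\in\pDd$, $A\in\cI_y$, $z\in\R^d$ and $e\in\cE$:
\[
\Lambda^{\Dd'}(\lambda^{1/\alpha}y,\lambda^{1/\alpha}z)=\lambda^{1/\alpha}\Lambda^{\Dd}(y,z),\quad \cl_{\lambda^{1/\alpha}y}^{\Dd'}(A,\phi_\lambda(e))=\lambda\,\cl_y^{\Dd}(A,e),\quad g_{\lambda^{1/\alpha}y}^{\Dd'}(A,\phi_\lambda(e))=\lambda^{1/\alpha}g_y^{\Dd}(A,e).
\]
Performing the change of variable $v=w/\mu$ in the Poisson integrals yields $\tau'_u=\lambda\tau_{u/\mu}$, hence $L'_t=\mu L_{t/\lambda}$, and a case-by-case comparison based on Remark~\ref{rkr} produces $R'_t=\lambda^{1/\alpha}R_{t/\lambda}$ in each of the cases (a)--(f).

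Finally, for $x\in\Dd$, Definition~\ref{dfr2} reduces the claim to the straightforward scaling of ISP$_\alpha$, the identity $\tell^{\Dd'}(Z')=\lambda\,\tell^{\Dd}(Z)$ for exit times, and $\Lambda^{\Dd'}(Z'_{\tell^{\Dd'}-},Z'_{\tell^{\Dd'}})=\lambda^{1/\alpha}\Lambda^{\Dd}(Z_{\tell^{\Dd}-},Z_{\tell^{\Dd}})$, concatenated with the boundary case already treated. The main obstacle is the correct identification of the scaling factor $\mu$ for $\nn_*$, which requires carefully tracking the interplay between the $(1/2)$-stability of $\gamma$ and the exponential normalization of the local time; everything else is essentially bookkeeping based on the elementary compatibility of $\Lambda,h,\cl,g$ with dilations.
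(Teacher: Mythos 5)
Your proposal is correct and follows essentially the same route as the paper: the scaling identity for the excursion measures that you single out as the delicate step is exactly the paper's Lemma~\ref{scaling} (proved via the $(1/2)$-stability of $\gamma$ and the exponential normalization of the local time for $\beta=*$, and via \eqref{nnb} plus the stable scaling for $\beta\in(0,\alpha/2)$), after which the paper likewise rescales $\Pi_\beta$, $(b_u)$, $(a_u)$, $(\tau_u)$, $(L_t)$ using the covariance of $\Lambda$, $\cl$ and $g$ under dilations and identifies $\lambda^{1/\alpha}R_{t/\lambda}$ through \eqref{defR}, treating $x\in\Dd$ by concatenation via Definition~\ref{dfr2} exactly as you do.
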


\subsection{Infinitesimal generators and associated P.D.E.s}\label{ssig}

We introduce some fractional Laplacian operator in the domain $\Dd$.

\begin{definition}\label{opbd}
Grant Assumption~\ref{as}.
For $\varphi \in C(\cDd)\cap C^2(\Dd)$, let
$$
\cL\varphi(x)= \int_{\R^d} [\varphi(\Lambda(x,x+z))-\varphi(x) - z \cdot \nabla \varphi(x) \indiq_{\{|z|<1\}}]
\frac{\dr z}{|z|^{d+\alpha}},
\qquad x\in\Dd,
$$
which is well-defined and continuous on $\Dd$, see Remark~\ref{lcont}.
We say that $\varphi \in D_\alpha$ if it holds that $\sup_{x \in \Dd} |\cL \varphi(x)|<\infty$.
\end{definition}
We will show the following in Appendix~\ref{tf}.

\begin{remark}\label{dnonvide}
(a) If $\alpha \in (0,1)$, then for all $\e>0$, $C^2(\Dd)\cap C^{\alpha+\e}(\cDd) \subset D_\alpha$.
\vip
(b) If $\alpha \in [1,2)$, then for all $\e>0$, 
$\{\varphi \in C^2(\Dd)\cap C^{\alpha+\e}(\cDd) : \forall \; x \in \pDd,\;
\nabla\varphi(x)\cdot \bn_x=0\} \subset D_\alpha$.
\end{remark}

We next introduce some test functions satisfying some suitable boundary conditions.

\begin{definition}\label{test}
Grant Assumption~\ref{as}.
\vip
(a) Fix $\beta \in (0,\alpha/2)$. We say that $\varphi \in H_\beta$ if $\varphi \in C(\cDd)$
and if 
\begin{gather*}
\lim_{\e\to 0} \sup_{x \in \pDd} |\cH_{\beta,\e}\varphi(x)|=0,
\; \text{where}\;\;
\cH_{\beta,\e}\varphi(x)=
\int_{\HH \setminus B_d(0,\e)}\!\! [\varphi(\Lambda(x,h_x(A,z)))- \varphi(x)] \frac{\dr z}{|z|^{d+\beta}},
\end{gather*}
the value of $\cH_{\beta,\e}\varphi(x)$ not depending on the choice of $A\in \cI_x$.
\vip

(b) Fix $r>0$ such that $B_d(x+r\bn_x,r)\subset \Dd$ for all $x \in \pDd$ as in Remark~\ref{imp4}
and define $G_{r,\e}=B_2(r\be_1,r)\cap B_2(0,\e)$.
Let $\Sp_*=\{\rho \in \R^d :  |\rho|=1, \rho\cdot \be_1=0\}$ be endowed with its
(normalized) uniform measure $\varsigma$.
We say that $\varphi \in H_*$ if $\varphi \in C(\cDd)$ and if
\begin{gather*}
\lim_{\e\to 0}  \!\sup_{x\in \pDd, h \in G_{r,\e}}\!\!\!\!|\cH_{*}\varphi(x,h)|\!=\!0,\;
\text{where}\;\;
\cH_{*}\varphi(x,h)\!=\! \frac{1}{|h|^{\alpha/2}}\int_{\Sp_*}\!\! [\varphi(x+A(h_1\be_1+h_2 \rho))-\varphi(x)] 
\varsigma(\dr \rho),
\end{gather*}
the value of $\cH_{*}\varphi(x,h)$ not depending on the choice of $A \in \cI_x$.
Note that for all $x \in \pDd$, all $A\in \cI_x$, all $h \in B_2(r\be_1,r)$, all $\rho \in \Sp_*$,
we have $x+A(h_1\be_1+h_2 \rho) \in B_d(x+r\bn_x,r)\subset \Dd$.
\end{definition}

Let us mention that in (b), when $d=2$,
$\Sp_*=\{-\be_2,\be_2\}$ and $\varsigma=\frac12(\delta_{-\be_2}+\delta_{\be_2})$.

\vip
Concerning (a), we may also write, using the substitution $y= h_x(A,z)$,
$$
\cH_{\beta,\e}\varphi(x) = \int_{\HH_x} [\varphi(\Lambda(x,y))- \varphi(x)]\indiq_{\{|y-x|\geq \e\}} 
\frac{\dr y}{|y-x|^{d+\beta}},
$$
where $\HH_x$ the half-space tangent to $\pDd$ at $x$ containing $\Dd$. Thus 
$\lim_{\e \to 0} \cH_{\beta,\e}\varphi(x)$ is a kind of fractional derivative of $\varphi$ of order
$\beta$, and the condition $\lim_{\e \to 0} \cH_{\beta,\e}\varphi(x)=0$ for all $x \in \pDd$
may be interpreted as a fractional Neumann condition. As the process exits the boundary by a jump 
when $\beta \in(0,\alpha /2)$, the boundary condition is naturally {\it non-local}.
\vip
Concerning (b), for $\Sp_{x}=\{\rho \in \R^d : |\rho|=1, \rho\cdot \bn_x=0\}$ and
for $\varsigma_{x}$ the uniform measure on $\Sp_x$,
$$
\cH_{*}\varphi(x,h)= \frac{1}{|h|^{\alpha/2}}\int_{\Sp_{x}} [\varphi(x+h_1\bn_x+ h_2 \rho)-\varphi(x)] 
\varsigma_x(\dr \rho),
$$
Under some regularity conditions on $\varphi$ in the directions of $\bn_x^\perp$, we have
$$
\lim_{h\in B_2(r\be_1,r),|h|\to 0} \cH_{*}\varphi(x,h) = 
\lim_{u\to 0+} \frac {\varphi(x+u\bn_x)-\varphi(x)}{u^{\alpha/2}}. 
$$
Hence the condition 
$\lim_{h\in B_2(r\be_1,r),|h|\to 0} \cH_{*}\varphi(x,h)=0$ for all $x\in \pDd$ is a 
{\it local} fractional Neumann condition, which is natural since the process 
leaves the boundary continuously when $\beta=*$.

\vip

We can now give some information about the infinitesimal generator of our process.
We recall the notion of {\it bounded pointwise} convergence, classical in the
framework of generators, see e.g. Ethier and Kurtz~\cite[Appendix 3]{ek}: $\psi:\R_+\times \Dd\to \R$ 
is said to converge bounded pointwise to 
$\pi:\Dd\to \R$
as $t\to 0$ if $\sup_{t\geq 0, x \in \Dd} |\psi(t,x)|<\infty$ and if for all $x \in \Dd$,
$\lim_{t\to 0} \psi(t,x)=\pi(x)$.

\begin{theorem}\label{ig}
Fix $\beta \in \{*\}\cup(0,\alpha/2)$, suppose Assumption~\ref{as}
and consider the family $(\QQ_x)_{x\in \cDd}$ introduced in Definitions~\ref{dfr1} and~\ref{dfr2}.
For any $\varphi \in D_\alpha \cap H_\beta$,
$$
\frac{\QQ_x[\varphi(X_t^*)-\varphi(x)]}t\quad {\longrightarrow} \quad
\cL\varphi(x)\quad \text{bounded pointwise as $t \to 0$.}
$$
\end{theorem}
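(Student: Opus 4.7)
I would treat the cases $x\in \Dd$ and $x\in \pDd$ separately.

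\textit{Interior case.} For $x\in \Dd$, set $\tell=\tell(Z)$, which is a.s.\ positive. On $[0,\tell]$ the process $R$ is pure jump: at each atom $(s,z)$ of the underlying Poisson measure $N$ with intensity $|z|^{-d-\alpha}\dr z \dr s$, one has $R_s=\Lambda(R_{s-},R_{s-}+z)$. Setting $F(y,z)=\varphi(\Lambda(y,y+z))-\varphi(y)$ and $G(y,z)=z\cdot\nabla\varphi(y)\indiq_{\{|z|<1\}}$, the quantity $|F(y,z)-G(y,z)|$ is $O(|z|^2)$ on $\{y+z\in \Dd\}$ and bounded elsewhere, so it is integrable against $|z|^{-d-\alpha}\dr z$. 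Since $\int_{|z|<1} z\,|z|^{-d-\alpha}\dr z=0$ by symmetry, compensating yields
\[
\varphi(R_{t\wedge \tell})-\varphi(x)=\int_0^{t\wedge\tell}\!\!\int_{\R^d}(F-G)\tilde N(\dr s,\dr z)+\int_0^{t\wedge\tell}\!\!\int_{\R^d} G\,\tilde N(\dr s,\dr z)+\int_0^{t\wedge\tell}\!\cL\varphi(R_s)\dr s,
\]
and the first two terms are true martingales thanks to $\cL\varphi$ being bounded ($\varphi \in D_\alpha$). Continuity of $\cL\varphi$ at $x$ (Remark~\ref{lcont}) together with $\PP_x(\tell\leq t)=O(t)$ give $\E_x[\varphi(R_{t\wedge\tell})]-\varphi(x)=t\,\cL\varphi(x)+o(t)$, and the correction $\E_x[(\varphi(R_t)-\varphi(R_\tell))\indiq_{\{\tell\leq t\}}]$ is $o(t)$ by the strong Markov property at $\tell$ and the Feller continuity of $(Q_s)_{s\geq 0}$ at $s=0$ proved in Theorem~\ref{mr2}. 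Boundedness of the ratio follows from $\|\cL\varphi\|_\infty<\infty$.

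\textit{Boundary case.} For $x\in \pDd$, I would exploit the excursion construction of Definition~\ref{dfr1} and write
\[
\varphi(R_t)-\varphi(x)=\bigl[\varphi(b_{L_t})-\varphi(x)\bigr]+\bigl[\varphi(R_t)-\varphi(b_{L_t})\bigr].
\]
For the first bracket, the SDE~\eqref{sdeb} yields
\[
\varphi(b_{L_t})-\varphi(x)=\int_0^{L_t}\!\!\int_\cE\bigl[\varphi(g_{b_{v-}}(a_v,e))-\varphi(b_{v-})\bigr]\Pi_\beta(\dr v,\dr e).
\]
Since $\nn_\beta$ has infinite mass, the compensator is only defined through the truncation $\cH_{\beta,\e}\varphi$ appearing in Definition~\ref{test}, and the hypothesis $\varphi \in H_\beta$ is precisely what allows to pass to the limit $\e\to 0$ uniformly in $x \in \pDd$; the corresponding "boundary generator" $\mathcal{A}_\beta\varphi$ is then well-defined and vanishes at the boundary. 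The second bracket only sees the current, incomplete excursion, whose length tends to $0$ with $t$. Reparametrizing from the $u$-scale to the $t$-scale via \eqref{sdet} and identifying $\Lambda(x,x+z)$ for $z\in \HH_x$ with $\Lambda(x,h_x(A,u))$ after pulling back the excursion from $\HH$, the interior contribution along each excursion together with the boundary contribution reassembles exactly
\[
\cL\varphi(x)=\int_{\R^d}\bigl[\varphi(\Lambda(x,x+z))-\varphi(x)-z\cdot\nabla\varphi(x)\indiq_{\{|z|<1\}}\bigr]\frac{\dr z}{|z|^{d+\alpha}},
\]
as $t \to 0$. For $\beta=*$, the analogous role is played by $\cH_{*}\varphi(x,h)$ which controls the limit coming from excursions of very small extent in the tangential directions.

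\textit{Main obstacle.} The crux of the proof is the boundary case, and the key technical point is the identification of the limiting boundary operator with the restriction of $\cL$ to the tangent half-space $\HH_x$. The infinite mass of $\nn_\beta$ forces one to work with compensated or truncated integrals whose convergence, uniformly in $x$, is precisely guaranteed by the boundary condition $\varphi \in H_\beta$. Establishing this matching together with the requisite uniform bounds (to secure bounded pointwise, and not merely pointwise, convergence) is what makes the delicate choice of test-function space $D_\alpha\cap H_\beta$ exactly the right one.
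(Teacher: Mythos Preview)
The theorem, as stated, only asks for convergence on $\Dd$ (the notion of bounded pointwise convergence introduced just before Theorem~\ref{ig} is explicitly restricted to $x\in\Dd$, and $\cL\varphi$ is not even defined on $\pDd$). So your separate ``boundary case'' is not needed as a case in its own right, and the claim that the limit there equals $\cL\varphi(x)$ does not make sense.

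That said, your interior argument has a genuine gap precisely at the point where the boundary behaviour enters. Your decomposition gives, for $x\in\Dd$,
\[
\frac{\QQ_x[\varphi(X_t^*)]-\varphi(x)}{t}
=\frac1t\,\E_x\!\Big[\int_0^{t\wedge\tell}\cL\varphi(R_s)\,\dr s\Big]
+\frac1t\,\E_x\big[(\varphi(R_t)-\varphi(R_{\tell}))\indiq_{\{\tell\le t\}}\big].
\]
The first term is fine. For the second, strong Markov yields the bound $\tfrac1t\sup_{y\in\pDd,\,s\le t}|\QQ_y[\varphi(X_s^*)]-\varphi(y)|$, and Feller continuity on the compact set $\pDd$ only gives that this supremum is $o(1)$, not $O(t)$. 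Combined with $\PP_x(\tell\le t)\le C\delta(x)^{-\alpha}t$, you do get pointwise convergence for each fixed $x\in\Dd$, but the implied constant blows up as $x\to\pDd$, so you do \emph{not} obtain the uniform bound $\sup_{t\ge 0,\,x\in\Dd}|\psi(t,x)|<\infty$ that ``bounded pointwise'' requires. To close this you would need $\sup_{y\in\pDd}|\QQ_y[\varphi(X_s^*)]-\varphi(y)|=O(s)$, which is exactly the Dynkin identity from boundary points.

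This is how the paper proceeds: it first proves the global formula
\[
\QQ_x[\varphi(X_t^*)]=\varphi(x)+\QQ_x\!\Big[\int_0^t\cL\varphi(X_s^*)\,\dr s\Big]
\qquad\text{for all }x\in\cDd
\]
(Lemma~\ref{lg2}), from which Theorem~\ref{ig} is a two-line corollary (divide by $t$; the integrand is bounded by $\|\cL\varphi\|_\infty$ and converges to $\cL\varphi(x)$ by right-continuity). Establishing Lemma~\ref{lg2} for $x\in\pDd$ is where all the work lies and where $H_\beta$ is actually used: the paper approximates $\nn_\beta$ by finite excursion measures $\nn_{j_\beta^n}$, derives an exact It\^o-type formula for the approximating processes containing an explicit boundary term $\cK_{j_\beta^n}\varphi$, identifies this term with $\cH_{\beta,1/n}\varphi$ (or its $*$-analogue), and uses $\varphi\in H_\beta$ to kill it in the limit. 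Your ``boundary case'' sketch points in this direction but does not carry it out; in particular, the step ``reassembles exactly into $\cL\varphi(x)$'' hides the entire difficulty.
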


We can write down (some weak formulations of) the corresponding P.D.E.s.

\begin{proposition}\label{pde}
Fix $\beta \in \{*\}\cup(0,\alpha/2)$, suppose Assumption~\ref{as}
and consider the family $(\QQ_x)_{x\in \cDd}$ introduced in Definitions~\ref{dfr1} and~\ref{dfr2}.
For $t\geq 0$ and $x \in \cDd$, consider the probability measure $f(t,x,\dr y)=\QQ_x(X_t^* \in \dr y)$
on $\cDd$. For all $x \in \cDd$, it holds that $f(t,x,\pDd)=0$ for a.e. $t>0$ and,
for any $\varphi \in D_\alpha \cap H_\beta$, any $t> 0$,
\begin{equation}\label{wpde}
\int_{\cDd} \varphi(y) f(t,x,\dr y) = \varphi(x)
+ \int_0^t \int_{\cDd}\cL \varphi(y) f(s,x,\dr y) \dr s.
\end{equation}
Moreover, for any $t\geq 0$, the map $x \mapsto f(t,x,\dr y)$ is weakly continuous.
\end{proposition}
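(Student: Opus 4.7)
The plan naturally splits into three items: the weak continuity of $x\mapsto f(t,x,\cdot)$, the fact that $f(t,x,\pDd)=0$ for a.e.\ $t>0$, and the integrated identity~\eqref{wpde}. The first two are immediate from Theorem~\ref{mr2}. For weak continuity, I would invoke the Feller property: for any $\varphi \in C_b(\cDd)$, $x\mapsto \int\varphi(y) f(t,x,\dr y) = \QQ_x[\varphi(X_t^*)]$ is continuous in $x$ (this is also contained in Theorem~\ref{mr1}-(c)). For the second, Fubini applied to $\QQ_x[\int_0^\infty \indiq_{\{X_t^* \in \pDd\}}\dr t]=0$ yields $\int_0^\infty f(t,x,\pDd)\dr t=0$, so $f(t,x,\pDd)=0$ for Lebesgue-a.e.\ $t>0$.

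For \eqref{wpde}, I would set $P_s\varphi(x) = \int\varphi(y) f(s,x,\dr y)$ and $\psi_h = h^{-1}(P_h\varphi-\varphi)$. The Markov property from Theorem~\ref{mr2} gives $P_{s+h}=P_sP_h$, and a change of variables produces, for every $T>0$ and $h>0$, the identity
\begin{equation*}
\int_0^T P_s\psi_h(x)\,\dr s = \frac{1}{h}\int_T^{T+h}P_s\varphi(x)\,\dr s - \frac{1}{h}\int_0^h P_s\varphi(x)\,\dr s.
\end{equation*}
I would then let $h\to 0^+$ on both sides. The right-hand side tends to $P_T\varphi(x) - \varphi(x)$ because $s\mapsto P_s\varphi(x)$ is right-continuous (Feller property applied to $\varphi\in C_b(\cDd)$ together with right-continuous paths under $\QQ_x$).

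For the left-hand side, Theorem~\ref{ig} provides $C:=\sup_{h>0,\,y\in\Dd}|\psi_h(y)|<\infty$ and $\psi_h(y)\to\cL\varphi(y)$ for every $y\in\Dd$. For each $s$ with $f(s,x,\pDd)=0$ (a full-measure set of times, by the previous step), the integration can be restricted to $\Dd$ and a first dominated convergence gives $P_s\psi_h(x)=\int_\Dd\psi_h(y)f(s,x,\dr y)\to\int_\Dd\cL\varphi(y)f(s,x,\dr y)$; since $|P_s\psi_h(x)|\leq C$ for those $s$ and all $h$, a second dominated convergence in $s$ over $[0,T]$ yields
\begin{equation*}
\int_0^T P_s\psi_h(x)\,\dr s \;\longrightarrow\; \int_0^T\int_\Dd\cL\varphi(y)f(s,x,\dr y)\,\dr s.
\end{equation*}
Equating this with $P_T\varphi(x)-\varphi(x)$ is exactly~\eqref{wpde}.

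The main delicate point, and where some care is required, lies in the interplay between Theorem~\ref{ig}, which only asserts bounded pointwise convergence of $\psi_h$ on $\Dd$ (the function $\cL\varphi$ is not a priori defined on $\pDd$, and the ratio $h^{-1}|P_h\varphi(y)-\varphi(y)|$ could a priori blow up at boundary points, particularly when $\beta\in(0,\alpha/2)$ since the process jumps immediately from $\pDd$), and the integration against $f(s,x,\cdot)$. This is precisely why the absence of boundary mass $f(s,x,\pDd)=0$, established first, is genuinely used inside both dominated convergence arguments, and why the two assertions of the proposition are naturally proved together.
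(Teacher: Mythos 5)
Your proof is correct, but it takes a genuinely different route from the paper's. The paper disposes of this proposition in two lines: the first two assertions are quoted from Theorem~\ref{mr2} (as you do), and \eqref{wpde} is read off directly from Lemma~\ref{lg2}, the identity $\QQ_x[\varphi(X_t^*)]=\varphi(x)+\QQ_x[\int_0^t\cL\varphi(X_s^*)\,\dr s]$ for $\varphi\in D_\alpha\cap H_\beta$, which is the real content of Section~\ref{sec:pde} and is proved via an It\^o formula up to the hitting time of $\pDd$ (Lemma~\ref{lg1}) and an approximation by reflected processes with finite excursion measures (Lemmas~\ref{dtc} and~\ref{itod}), the $H_\beta$/$H_*$ conditions serving to kill the boundary terms in the limit. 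You instead reconstruct the integrated identity from the generator statement of Theorem~\ref{ig} together with the Markov and Feller properties of Theorem~\ref{mr2}, through the standard semigroup computation $\int_0^T P_s\psi_h(x)\,\dr s=h^{-1}\int_T^{T+h}P_s\varphi(x)\,\dr s-h^{-1}\int_0^h P_s\varphi(x)\,\dr s$ followed by two dominated convergences; your remark that the bounded pointwise convergence in Theorem~\ref{ig} only lives on $\Dd$, so that $f(s,x,\pDd)=0$ for a.e.\ $s$ must be injected into both dominated convergence steps, is exactly the right point of care, and the argument goes through. Note that in the paper's organization Theorem~\ref{ig} is itself deduced from Lemma~\ref{lg2}, so your derivation inverts the paper's implication (generator $\Rightarrow$ integrated identity instead of the converse); this is not circular, since Lemma~\ref{lg2} is proved independently of Proposition~\ref{pde}, but it does not bypass the hard analysis, it merely consumes it through Theorem~\ref{ig} — its merit is that it is a routine Dynkin-type argument using only the publicly stated results. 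A last cosmetic point: identifying $P_t\varphi(x)=\QQ_x[\varphi(X_t^*)]$ with $\int_\Dd\varphi(y)f(t,x,\dr y)$ at the fixed time $t$ tacitly uses $f(t,x,\pDd)=0$ for that particular $t$, which is only known for a.e.\ $t$; the paper's own proof has the same harmless imprecision, since Lemma~\ref{lg2} is stated with $\QQ_x[\varphi(X_t^*)]$ on the left-hand side.
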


We will check the following remark in Appendix~\ref{tf}. We restrict ourselves to the case where
$\Dd$ is a Euclidean ball, the general case being much more intricate.

\begin{remark}\label{pdi}
Assume that $\Dd=B_d(0,1)$.
\vip
(a)  Let $\varphi \in C^2(\Dd)$ such that $\varphi(x)=[d(x,\Dd^c)]^{\alpha/2}$ 
as soon as $d(x,\Dd^c)\leq1/2$.
Then $\varphi \in D_\alpha$.
\vip
(b) Let $\beta\in (0,\alpha/2)$. For any pair of nonnegative, non identically zero, radially symmetric
$\varphi_1,\varphi_2 \in C^{\alpha/2}(\cDd)$ such that
$\varphi_1|_{\pDd}=\varphi_2|_{\pDd}=0$, there is 
$a>0$ such that $\varphi_1-a\varphi_2 \in H_\beta$.
\vip
(c) For all $\beta,\beta' \in \{*\}\cup\{0,\alpha/2\}$ with $\beta\neq \beta'$,
$(D_\alpha \cap H_\beta)\setminus H_{\beta'}\neq \emptyset$.
\end{remark}

\subsection{The scattering process}

The scattering process is kinetic model describing the motion of a particle, of which the velocity is 
reset at (high) constant
rate, according to some given isotropic distribution $\rF$.
We endow this equation with a diffusive
boundary condition, meaning that when the particle reaches the boundary, it is restarted with a new velocity 
distributed according to some other given isotropic distribution $\rG$ (restricted to the set of 
admissible directions).

\begin{assumption}\label{assump_equi}
The two probability densities $\mathrm{F}$ and $\mathrm{G}$ on $\R^d$ are radially symmetric, and there exist
$\alpha \in  (0,2)$, $\kappa_\rF>0$ and $C_\rF>0$ such that 
\[
\rF(v)\leq \frac{C_\rF}{(1+|v|)^{d+\alpha}} \quad \text{for all $v \in \R^d$ and} \quad
\mathrm{F}(v) \sim \frac{\kappa_{\mathrm{F}}}{|v|^{d+\alpha}}  \quad \text{as} \quad |v|\to \infty.
\]
\end{assumption}

We introduce, for each $\e \in (0,1]$, the kinetic equation with nonnegative unknown $f^\e_t(x,v)$:
\begin{equation}\label{rescale_scattering_eq}
\left\lbrace
\begin{aligned}
\e&^{(\alpha-1)/\alpha}\partial_t f_t^\e(x,v) + v\cdot\nabla_x f_t^\e(x,v) 
= \e^{-1/\alpha} \cA f_t^\e(x,v), 
\quad &\:\: (t,x,v)\in(0,\infty) \times \Dd \times \mathbb{R}^d, \\
(v & \cdot \bn_x) f_t^\e(x,v) = 2\mathrm{G}(v)\int_{\{w\cdot \bn_x < 0\}}|w\cdot \bn_x| f_t^\e(x,w) \dr w, 
& \:\: t>0, x \in \partial\Dd, v\cdot \bn_x > 0, \\
f_0^\e &(x,v) = f_{in}(x,v) & \:\: (x,v) \in \Dd \times \mathbb{R}^d.
\end{aligned}
\right.
\end{equation}
The initial condition is a given probability density $f_{in}$ on $\Dd\times \R^d$. 
For $x\in \pDd$, $\bn_x$ denotes, as previously, the inward unit normal vector. 
Finally, the scattering operator $\cA$
acts only on the velocity variable $v$ and is defined, for $f:\R^d\to \R$ and $v \in \R^d$, by
\[
  \cA f(v) = \mathrm{F}(v)\int_{\mathbb{R}^d}f(w) \dr w - f(v).
\]
Observe that $2\int_{v\cdot \bn_x>0} \mathrm{G}(v)\dr v = 1$ for any $x \in \partial \Dd$, 
by rotational invariance of $\mathrm{G}$, and the reader familiar with such equations will deduce 
that~\eqref{rescale_scattering_eq} {\it a priori} preserves mass and positivity, so that 
$f^\e_t$ should be a probability density on $\Dd\times \R^d$ for each $t\geq 0$. The solution we will build
indeed enjoys this property.
The scaling factors $\e^{(\alpha-1)/\alpha}$ and $\e^{-1/\alpha}$ in~\eqref{rescale_scattering_eq}
are the same as in Cesbron, Mellet and Puel~\cite[Equation 5]{cesbron2020fractional}. 
Actually, in their notation, $s=\alpha/2$,
their $\e$ corresponds to our $\e^{1/\alpha}$, and their $\bn_x$ corresponds to our $-\bn_x$.
Note that these scaling factors do depend on the tail parameter $\alpha$
of the equilibrium $\mathrm{F}$.
We refer to~\cite{cesbron2020fractional,cesbron2021fractional} for some motivations and explanations 
about the chosen scales, we will also comment on these scales in a few lines.

\vip

We will study~\eqref{rescale_scattering_eq} through the underlying stochastic process, whose time marginals 
will be given by
$(f_t^\e)_{t\geq 0}$.
Consider
\begin{gather}
\bE=(\Dd\times\R^d)\cup\{(x,v) : x \in \partial D, v \in \R^d, v\cdot \bn_x >0\}, \label{E}\\
\bE_-=(\Dd\times\R^d)\cup\{(x,v) : x \in \partial D, v \in \R^d, v\cdot \bn_x <0\}. \label{Emoins}
\end{gather}
For $(x,v)\in \bE$ and $s>0$, we introduce $\lambda(x,v,s)\in (0,s]$ defined by
\begin{equation}\label{lambda}
\lambda(x,v,s)=\begin{cases} s & \text{if $x+vs \in \Dd$}, \\
\inf\{u>0 : x+vu\notin \Dd \}& \text{otherwise}.\end{cases}
\end{equation}
Observe at once that for $(x,v)\in \bE$ and $s>0$, recalling~\eqref{Lambda},
\begin{equation}\label{Lamlam}
x+\lambda(x,v,s)v=\Lambda(x,x+vs).
\end{equation}
Let us also introduce the probability density $\rG_+(v)= 2 \rG(v) \indiq_{\{v\cdot \be_1 > 0\}}$ and,
for $x\in\partial\mathcal{D}$, the probability density $\rG_x(v) = 
2 \rG(v) \indiq_{\{v\cdot \bn_x > 0\}}$. It should be clear that if $W$ 
is $\rG_+$-distributed, then $A W$ is $\rG_x$-distributed for any $A \in \cI_x$.

\begin{definition}\label{dfsp}
Grant Assumptions~\ref{as} and~\ref{assump_equi} and let $(A_y)_{y \in \pDd}$ be a measurable family
such that $A_y  \in \cI_y$
for each $y \in \pDd$.
Fix $\e\in (0,1]$, $(x,v) \in \bE$ and consider an i.i.d. $\mathrm{Exp}(\e^{-1}$)-distributed sequence 
$(E_n^\e)_{n\geq 1}$, an i.i.d. $\rF$-distributed sequence $(U_n)_{n\geq 1}$ and an i.i.d. $\rG_+$-distributed
sequence $(W_n)_{n\geq 1}$, all these objects being independent.
We introduce the $\e$-\emph{scattering process} $(\bX_t^\e,\bV_t^\e)_{t\geq 0}$ starting 
from $(x,v)$, valued in $\bE$, defined by induction as follows.
\vip
We set $(\bX_0^\e,\bV_0^\e)=(x,v)$ and $T_1^\e=\lambda(\bX_0^\e,\e^{(1-\alpha)/\alpha}\bV_0^\e,E^\e_1)$ and
$$
\text{for all $t\in [0,T_{1}^{\e})$,}\quad
\bV^\e_t=\bV^\e_{0} \quad \text{and} \quad \bX^\e_{t}=\bX^\e_{0}+\e^{(1-\alpha)/\alpha}\bV^\e_{0}t.
$$

Assuming that $T_n^\e$ and $(\bX^\e_t,\bV^\e_t)_{t\in [0,T_n^\e)}$ have been built for some $n\geq 1$, we set
\begin{gather*}
\bX^\e_{T_n^\e}=\bX^\e_{T_n^\e-} \quad \text{and} \quad  
\bV^\e_{T_n^\e}=U_n \indiq_{\{\bX^\e_{T_n^\e} \in \Dd\}} +A_{\bX^\e_{T_n^\e}}W_n \indiq_{\{\bX^\e_{T_n^\e} \in \pDd\}},\\
T_{n+1}^\e = T_n^\e + \lambda(\bX^\e_{T_n^\e},\e^{(1-\alpha)/\alpha}\bV^\e_{T_n^\e},E^\e_{n+1}), \\
\text{for all $t\in [T_n^\e,T_{n+1}^{\e})$,} \quad \bV^\e_t=\bV^\e_{T_n^\e} \quad \text{and} \quad 
\bX^\e_{t}=\bX^\e_{T_n^\e}+\e^{(1-\alpha)/\alpha}\bV^\e_{T_n^\e}(t-T_n^\e).
\end{gather*}
Observe that the process $(\bX_{t-}^\e, \bV_{t-}^\e)_{t\geq0}$ is valued in $\bE_-$
and that the law of $(\bX_{t}^\e, \bV_{t}^\e)_{t\geq0}$ does not depend on the
choice of the family $(A_y)_{y \in \pDd}$ (since for $y\in \pDd$, for $W \sim \rG_+$
and for $A,B \in \cI_y$, $AW$ and $BW$ have the same law $\rG_y$).
\end{definition}

By memorylessness of $(E_n^\e)_{n\geq1}$, we can summarize the dynamics of 
$(\bX_t^\e,\bV_t^\e)_{t\geq 0}$ as follows:

\vip

\noindent $\bullet$ the position process $\bX^\e$ moves according to its velocity $\e^{(1-\alpha)/\alpha}\bV^\e$;
\vip
\noindent $\bullet$  the velocity process $\bV^\e$ is refreshed at rate
$\e^{-1}$ and its new value is chosen according to $\rF$;
\vip
\noindent $\bullet$  when $\bX^\e$ reaches $\partial \mathcal{D}$ 
(at some $y \in \pDd$), $\bV^\e$ it is restarted according to $\rG_{y}$.

\vip

We will check the following easy observation.

\begin{remark}\label{scwd}
The sequence $(T_n^\e)_{n\geq1}$ introduced in Definition~\ref{dfsp} a.s. strictly increases to infinity.
For all $T>0$, $\E[\bM^\e_T]<\infty$, where $\bM^\e_T=\sum_{n\geq 1} \indiq_{\{T_n^\e\leq T\}}$.
\end{remark}

We will verify the following, see Appendix~\ref{deredp} for a precise notion of weak solutions
to~\eqref{rescale_scattering_eq}.

\begin{remark}\label{edp}
Consider the $\e$-scattering process $(\bX_t^\e, \bV_t^\e)_{t\geq0}$ issued from $(x_0,v_0)\in \bE$.
For $t\geq0$, let $f^\e_t(\dr x,\dr v)=\PP(\bX_t^\e \in \dr x, \bV_t^\e \in \dr v)$. 
Then $(f^\e_t)_{t\geq 0}$
is a weak solution to~\eqref{rescale_scattering_eq} with $f_0^\e=\delta_{(x_0,v_0)}$.
\end{remark}

Let us mention at once that when $\Dd=\R^d$, we have $T_{n+1}^\e = T_n^\e + E^\e_{n+1}$ 
and $\bV^\e_{T_n^\e}=U_n$ for all $n\geq 1$, and one may check that setting
$N^\e_t=\sum_{n\geq 1} \indiq_{\{T_n^\e\leq t\}}$,
$$
\bX^\e_t=x+\e^{(1-\alpha)/\alpha}E_1^\e \bV_0+ \e^{(1-\alpha)/\alpha}\sum_{n=1}^{N^\e_t - 1} 
E_{n+1}^\e U_n+(t-T_{N_t^\e}^\e)U_{N_t^\e},
$$
at least if $t>T_1^\e$. But $(N^\e_t)_{t\geq 0}$ is a Poisson process with rate $\e^{-1}$, so that 
$N^\e_t\simeq \e^{-1}t$. Moreover, $E_n^\e$ is of order $\e$, so that $\e^{(1-\alpha)/\alpha}E_n^\e\simeq \e^{1/\alpha}$.
Admitting that we can neglect the last term in the above expression,
we end with $\bX_t^\e\simeq x+\e^{1/\alpha}\sum_{n=1}^{\e^{-1}t}U_{n}$,
which classically converges in law to a radially symmetric $\alpha$-stable process $Z_t$ 
(issued from $x$)  as $\e\to 0$ under Assumption~\ref{assump_equi}.
We hope that this explains why the scalings
in Definition~\ref{dfsp} are relevant.

\vip

Concerning the boundary velocity distribution $\rG$, we will assume one of the following.

\begin{assumption}\label{assump:moments}
(a) It holds that $\int_{\R^d} |v|^{\alpha/2}\mathrm{G}(v)\dr v<\infty$.
\vip
(b) There exists $\beta \in(0, \alpha/2)$ and  some constants $\kappa_{\mathrm{G}}>0$ and $C_\rG>0$ such that 
$$
\rG(v)\leq \frac{C_\rG}{(1+|v|)^{d+\beta}} \quad \text{for all $v \in \R^d$ and} \quad
\rG(v) \sim \frac{\kappa_{\mathrm{G}}}{|v|^{d+\beta}}  \quad \text{as} \quad |v|\to \infty.
$$
\end{assumption}

Here is the second main result of this paper. See Appendix~\ref{sko} about the $\MS$-topology.

\begin{theorem}\label{mr}
Grant Assumption~\ref{as} and Assumption~\ref{assump_equi} with some $\alpha \in (0,2)$ and with 
$\kappa_\rF=1/\Gamma(\alpha+1)$. Grant either Assumption~\ref{assump:moments}-(a)
(in which case, set $\beta=*$) or (b) (in which case $\beta \in (0,\alpha/2)$).
Consider the family $(\QQ_x)_{x \in \cDd}$ as in Theorem~\ref{mr2} with these values of $\alpha$ and $\beta$ 
and, for each $\e\in (0,1]$, 
consider the $\e$-scattering process $(\bX_t^\e, \bV_t^\e)_{t\geq0}$ issued from $(x,v)\in \bE$. Then
$$
(\bX_t^\e)_{t\geq0} \quad \text{converges in law to} \quad \QQ_x \quad
\text{as $\e\to 0$}
$$
in $\DD(\R_+,\cDd)$ endowed with the $\MS$-topology.
\end{theorem}

This result holds true for any value of $\kappa_\rF>0$, modifying the 
definition of $(\QQ_x)_{x \in \cDd}$ as follows: use  
$\kappa_\rF\Gamma(\alpha+1)\dr s |z|^{-\alpha-d} \dr z$ for the 
intensity of the Poisson measure $N$ appearing in~\eqref{eqs}.

\vip

Let us emphasize that the limit law depends on $\rG$ only through $\beta\in \{*\}\cup(0,\alpha/2)$.
For example, when $\beta \in (0,\alpha/2)$, it does not depend on $\kappa_\rG>0$.

\vip

Finally, let us present a P.D.E. version of this result.

\begin{corollary}\label{pdev}
Adopt the same assumptions and notations as in Theorem~\ref{mr}. For each $t\geq 0$,
set $f^\e_t(\dr y,\dr v)=\PP(\bX^\e_t \in \dr y,\bV^\e_t \in \dr v)$, 
$\rho^\e_t(\dr y)= \int_{v \in \R^d}f^\e_t(\dr y,\dr v)$ and $f_t(\dr y)=\QQ_x(X_t^* \in \dr y)$.
We know from Remark~\ref{edp} that $(f^\e_t)_{t\geq 0}$ is a weak solution to~\eqref{rescale_scattering_eq},
while Proposition~\ref{pde} tells us that $(f_t)_{t\geq 0}$ solves~\eqref{wpde}.
For a.e. $t\geq 0$, $\rho^\e_t$ tends to $f_t$ as $\e\to 0$ for
the weak convergence of probability measures on $\R^d$.
\end{corollary}

\begin{proof}
Theorem~\ref{mr} implies that $\rho^\e_t$ (which is the law of $\bX^\e_t$) weakly goes to $f_t$
for any $t\geq 0$ such that $\QQ_x(\Delta X^*_t \neq 0)=0$. Indeed, 
$X_t^*:\DD(\R_+,\cDd)\to \cDd$ is continuous for the $\MS$-topology 
at any $w\in \DD(\R_+,\cDd)$ such that $\Delta X^*_t(w)=0$,
so that $\Pi_t:\cP(\DD(\R_+,\cDd))\to \cP(\cDd)$ defined by $\Pi_t(q)=q(X^*_t \in \cdot)$ is continuous
at any $q\in \cP(\DD(\R_+,\cDd))$ such that $q(\Delta X^*_t\neq 0)=0$.
For any $q\in \cP(\DD(\R_+,\cDd))$, the set $\{t\geq 0 : q(\Delta X^*_t\neq 0)>0\}$
is classically Lebesgue-null.
\end{proof}

\section{Preliminaries: properties of the excursion measures}\label{sec:excursion}

In this section, we establish a few properties of the excursion measures $\nn_\beta$, for $\beta \in \{*\}\cup 
(0,\alpha/2)$ and of the stable process. First, we make precise the paragraph around~\eqref{et}.

\begin{lemma}\label{oncemore}
For  $(Z_t)_{t\geq0}$ an ISP$_{\alpha,0}$ and $Z^1_t=Z_t\cdot \be_1$, the process
$(Y_t)_{t\geq0}=(Z_t^1 - \inf_{s\in[0,t]}Z_s^1)_{t\geq0}$ is Markov, possesses a local time 
$(\xi_t)_{t\geq0}$ at $0$, and this local time is uniquely defined if we impose the condition 
$\E[\int_0^\infty e^{-t} \dr  \xi_t ]=1$. Its right-continuous inverse 
$(\gamma_u=\inf\{s\geq 0 : \xi_s  > u\})_{u \geq0}$ 
is a $(1/2)$-stable $\R_+$-valued subordinator. 
We introduce $\JJ = \{u\geq0, \: \Delta\gamma_u > 0\}$ and, for $u \in \JJ$,
$e_u = (Z_{(\gamma_{u-} + s)\wedge\gamma_u} - Z_{\gamma_{u-}})_{s\geq0}$.
Almost surely, $e_u \in \cEc$ and $\ell(e_u)=\Delta \gamma_u$ for all $u \in \JJ$.
\end{lemma}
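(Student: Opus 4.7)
The plan is to identify $(Z^1_t)_{t\geq 0}$ as a one-dimensional symmetric $\alpha$-stable Lévy process and then to invoke the classical fluctuation theory for such processes. Writing $Z^1_t = Z_t\cdot \be_1$ and using~\eqref{eqs} (or equivalently computing characteristic functions), one sees that $Z^1$ is a symmetric $\alpha$-stable Lévy process on $\R$: the characteristic exponent of $Z$ is proportional to $|\xi|^\alpha$, so projection on $\be_1$ yields $|\xi_1|^\alpha$. In particular, $Z^1$ satisfies the scaling property $(cZ^1_{t/c^\alpha})_{t\geq 0}\stackrel{d}{=}(Z^1_t)_{t\geq 0}$ and has positivity parameter $\rho=\PP(Z^1_1>0)=1/2$ by symmetry.

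By classical fluctuation theory for Lévy processes (see e.g.~Blumenthal~\cite{blu1992} and the references therein), $Y_t=Z^1_t-I_t$ with $I_t=\inf_{s\leq t}Z^1_s$ is a Markov process; by symmetry and by absolute continuity of the one-dimensional laws of $Z^1$, the origin is regular for itself for $Y$, so that $Y$ admits a continuous local time $L$ at $0$, unique up to a positive multiplicative constant, and the normalization $\E\bigl[\int_0^\infty e^{-t}\dr L_t\bigr]=1$ pins it down. The right-continuous inverse $(\gamma_u)_{u\geq 0}$ is then a subordinator by the standard duality argument. To identify its law, I would combine the scaling of $Z^1$ (which is transported to $Y$ and acts on $L$ up to a multiplicative constant) with the uniqueness of the normalized $L$ to force $\gamma$ to be stable; equivalently, by the general theory of the ladder time process of a stable Lévy process of positivity parameter $\rho$, $\gamma$ is $\rho$-stable, which here gives $(1/2)$-stable.

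It remains to verify the excursion structure. For $u\in\JJ$, the inverse-local-time identities $L_{\gamma_{u-}}=L_{\gamma_u}=u$, combined with the fact that the measure $\dr L$ is carried by $\{s:Y_s=0\}$, imply that $\dr L$ puts no mass on $(\gamma_{u-},\gamma_u)$, so that $Y_s>0$ there (this step relies on the right-continuity of $Y$ and is standard, see~\cite[Chapter V]{blu1992}). Translating in terms of $Z$, one gets $Z^1_s>Z^1_{\gamma_{u-}}=I_{\gamma_{u-}}$ for $s\in(\gamma_{u-},\gamma_u)$, hence $e_u(s)\cdot\be_1>0$, \textit{i.e.}~$e_u(s)\in\HH$, for $s\in(0,\Delta\gamma_u)$. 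Moreover $e_u(0)=0$, and the endpoint $e_u(\Delta\gamma_u)=Z_{\gamma_u}-Z_{\gamma_{u-}}$ has first coordinate $Z^1_{\gamma_u}-Z^1_{\gamma_{u-}}=I_{\gamma_u}-I_{\gamma_{u-}}\leq 0$, so $e_u(\Delta\gamma_u)\notin\HH$. This proves $\ell(e_u)=\Delta\gamma_u$, and by construction $e_u$ is constant on $[\Delta\gamma_u,\infty)$, so $e_u\in\cEc$.

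The only delicate point in the last step is upgrading ``$Y_s=0$ on at most a Lebesgue-null subset of $(\gamma_{u-},\gamma_u)$'' to ``$Y_s>0$ strictly on this whole open interval''; this is the usual technical subtlety of excursion theory, handled via the right-continuity of $Y$ and the fact that the support of $\dr L$ coincides with $\{s:Y_s=0\}$.
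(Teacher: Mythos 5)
Your high-level route is the same as the paper's: classical fluctuation theory gives the Markov property of $Y$, the normalized local time and its inverse subordinator, and the $(1/2)$-stability of $\gamma$ (the paper simply cites Bertoin for the ladder-time law of a symmetric stable process; your positivity-parameter argument recovers this). The problem is in the excursion-structure step, which is where the paper's proof actually does its work. From $Y_s>0$ on $(\gamma_{u-},\gamma_u)$ you only get $Z^1_s>I_s$ with $I_s=\inf_{r\in[0,s]}Z^1_r$; to conclude $e_u(s)\in\HH$, i.e. $Z^1_s>Z^1_{\gamma_{u-}}$, you need two further facts: (i) $I_s=I_{\gamma_{u-}}$ for $s$ in the excursion interval (monotonicity of $I$ gives the inequality in the wrong direction, so this requires a short c\`adl\`ag argument showing that a decrease of the infimum inside the interval would force a zero of $Y$ there), and (ii) $Y_{\gamma_{u-}}=0$, i.e. $Z^1_{\gamma_{u-}}=\inf_{r\in[0,\gamma_{u-}]}Z^1_r$, which you write as if it were immediate.

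Point (ii) is a genuine gap. The time $\gamma_{u-}$ is only known to lie in the \emph{closure} of $\{s: Y_s=0\}$, and for a general L\'evy process reflected at its infimum the excursions may begin with a jump (take, e.g., a compound Poisson process with negative drift), in which case $Y_{\gamma_{u-}}>0$; your argument would then break down and one could have $\ell(e_u)<\Delta\gamma_u$, since the excursion path could dip below the level $Z^1_{\gamma_{u-}}$ strictly before $\gamma_u$. The paper establishes $Y_{\gamma_{u-}}=0$ precisely by invoking that $0$ is regular for both $(0,\infty)$ and $(-\infty,0)$ for the symmetric stable process $Z^1$, with explicit references; this is the step your proposal is missing. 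By contrast, the subtlety you do flag at the end (passing from ``no local time mass'' to ``$Y>0$ on the whole open interval'', via the identification of the support of $\dr L$ with the \emph{closure} of the zero set and the Blumenthal--Getoor description of its complement as $\cup_{u\in\JJ}(\gamma_{u-},\gamma_u)$) is the easier of the two issues; the left-endpoint regularity is the one your argument cannot do without.
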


\begin{proof} It is classical, see Bertoin~\cite[Chapter VI, pages 156-157]{bertoin1996levy},
that $(Y_t)_{t\geq0}$ is Markov and possesses a local time 
$(\xi_t)_{t\geq0}$ at $0$, \textit{i.e.} a continuous nondecreasing 
additive functional whose support equals 
$\cZ_Y=\closure{\{t\geq 0 : Y_t=0\}}$. It is unique up to a multiplicative constant and we impose the condition 
$\E[\int_0^\infty e^{-t} \dr \xi_t ]=1$. Since $(Z_t^1)_{t\geq0}$ is a 
symmetric stable process, the process
$(\gamma_u)_{u\geq 0}$ is a $(1/2)$-stable subordinator, see Bertoin~\cite[Section VIII, page 218]{bertoin1996levy}.

\vip

It is also classical, see for instance Blumenthal and Getoor~\cite[equation 2.4 page 58]{MR165569}, that
$\R\setminus\cZ_Y = \cup_{u \in \JJ} (\gamma_{u-},\gamma_u)$
and that for any $u\in\JJ$, $Y_{\gamma_u} = 0$. This implies that for $u\in\JJ$,
$Z^1_{\gamma_u}=\inf_{s\in[0,\gamma_{u}]}Z_s^1$ and thus 
$Z_{\gamma_{u-}}^1\geq \inf_{s\in[0,\gamma_{u-}]}Z_s^1 \geq \inf_{s\in[0,\gamma_{u}]}Z_s^1=Z^1_{\gamma_u}$.
\vip
Let us now show that for all $u \in \JJ$, all $t\in (\gamma_{u-},\gamma_u)$, we have $Z^1_t>Z^1_{\gamma_{u-}}$.
Since $(\gamma_{u-},\gamma_u)\subset \R\setminus\cZ_Y$, we have 
$\inf_{s\in [0,\gamma_u)}Z_s^1=\inf_{s\in[0,\gamma_{u-}]}Z_s^1$
and for all $t\in (\gamma_{u-},\gamma_u)$, $Z^1_t>\inf_{s\in[0,t]}Z_s^1=\inf_{s\in[0,\gamma_{u-}]}Z_s^1$.
But $Z^1_{\gamma_{u-}}=\inf_{s\in[0,\gamma_{u-}]}Z_s^1$, because $Y_{\gamma_{u-}}=0$:
by e.g.~\cite[Lemma A.5]{persistence_bbt} or~\cite[Section 2]{MR4146890}, this 
follows from the fact that for the process $(Z_t^1)_{t\geq0}$, 
the point $0$ is regular for both $(0,\infty)$ and $(-\infty, 0)$, meaning that for 
$T_{+} = \inf\{t\geq0, \: Z_t^1 > 0\}$ and $T_{-} = \inf\{t\geq0, \: Z_t^1 < 0\}$, 
we have $\mathbb{P}_0(T_{+} = T_{-} = 0) = 1$.
\vip
We now fix $u\in \JJ$ and show that $e_u \in \cEc$ and $\ell(e_u)=\Delta\gamma_u$. 
We have $e_u(0)=0$ by definition. We have seen that $Z_t^1 > Z_{\gamma_{u-}}^1$ for any 
$t\in(\gamma_{u-}, \gamma_u)$, implying that $e_u(s)=Z_{\gamma_{u-}+s}-Z_{\gamma_{u-}}\in\HH$
for all $s\in (0,\Delta\gamma_u)$. 
Finally, $e_u(\Delta \gamma_u)\notin \HH$, since $e_{u}^1(\Delta\gamma_u)=Z^1_{\gamma_u}-Z^1_{\gamma_{u-}}\leq 0$.
\end{proof}

Let us now show that the excursion measures possess a scaling property.

\begin{lemma}\label{scaling}
For $\lambda>0$, let $\Phi_\lambda : \cE \to \cE$ be defined by $\Phi_\lambda(e)(t)=\lambda^{1/\alpha}e(t/\lambda)$.
It holds that
$$
\Phi_\lambda \# \nn_* = \lambda^{1/2} \nn_* \qquad \hbox{and} \qquad \Phi_\lambda \# \nn_\beta = \lambda^{\beta/\alpha} 
\nn_\beta \quad \text{for all $\beta>0$}.
$$
\end{lemma}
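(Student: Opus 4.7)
The plan is to treat the two identities separately, exploiting in both cases the simple transformation rule $\ell(\Phi_\lambda(e))=\lambda\ell(e)$: indeed $\Phi_\lambda(e)^1(t)=\lambda^{1/\alpha}e^1(t/\lambda)\leq 0$ iff $e^1(t/\lambda)\leq 0$. In particular, $\Phi_\lambda$ sends $\cE$ into $\cE$, $\cEc$ into $\cEc$ and $\cEd$ into $\cEd$, so the statement makes sense.

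For $\nn_\beta$ with $\beta>0$, a direct change of variables suffices. By $\alpha$-stability of $(Z_t)_{t\geq 0}$, under $\PP_x$ the process $(\lambda^{1/\alpha}Z_{t/\lambda})_{t\geq 0}$ is an ISP$_{\alpha,\lambda^{1/\alpha}x}$, and combined with $\ell(\Phi_\lambda(Z))=\lambda\ell(Z)$ this gives $\Phi_\lambda\#\PP_x((Z_{t\wedge\ell(Z)})_{t\geq 0}\in\cdot)=\PP_{\lambda^{1/\alpha}x}((Z_{t\wedge\ell(Z)})_{t\geq 0}\in\cdot)$. Plugging this into \eqref{nnb} and substituting $y=\lambda^{1/\alpha}x$, so that $\dr x=\lambda^{-d/\alpha}\dr y$ and $|x|^{-d-\beta}=\lambda^{(d+\beta)/\alpha}|y|^{-d-\beta}$, one immediately obtains $\Phi_\lambda\#\nn_\beta=\lambda^{\beta/\alpha}\nn_\beta$.

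For $\nn_*$, the plan is to set $Z'_t=\lambda^{1/\alpha}Z_{t/\lambda}$, which is again an ISP$_{\alpha,0}$, and to note that its reflected process is $Y'_t=\lambda^{1/\alpha}Y_{t/\lambda}$. Uniqueness of the local time at $0$ up to a multiplicative constant then yields $L'_t=c\,L_{t/\lambda}$ for some $c=c(\lambda)>0$. To pin down $c$, one uses Lemma~\ref{oncemore}: writing $1=\E[\int_0^\infty e^{-t}\dr L_t]=\E[\int_0^\infty e^{-\gamma_u}\dr u]=1/\Psi(1)$, where $\Psi$ is the Laplace exponent of the $1/2$-stable subordinator $\gamma$, forces $\Psi(q)=q^{1/2}$; applying the same normalization to $L'$ gives $1=c/\Psi(\lambda)=c\lambda^{-1/2}$, hence $c=\lambda^{1/2}$.

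Inverting $L'$ gives $\gamma'_u=\lambda\gamma_{u\lambda^{-1/2}}$, and a direct computation from \eqref{et} yields $e'_u=\Phi_\lambda(e_{u\lambda^{-1/2}})$ for $u\in\JJ'=\lambda^{1/2}\JJ$; hence
\[
\Pi'_*=\sum_{u\in\JJ'}\delta_{(u,e'_u)}=\sum_{v\in\JJ}\delta_{(\lambda^{1/2}v,\Phi_\lambda(e_v))}.
\]
The left-hand side has intensity $\dr u\,\nn_*(\dr e)$ (since $Z'$ is an ISP$_{\alpha,0}$), while the right-hand side, by the pushforward rule for Poisson point measures, has intensity $\lambda^{-1/2}\dr u\,(\Phi_\lambda\#\nn_*)(\dr e)$; identifying the two yields $\Phi_\lambda\#\nn_*=\lambda^{1/2}\nn_*$. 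The only delicate point is pinning down the constant $c=\lambda^{1/2}$ from the normalization of Lemma~\ref{oncemore}; everything else is bookkeeping.
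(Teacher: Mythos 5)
Your proof is correct and follows essentially the same route as the paper: the $\beta>0$ case by direct change of variables in \eqref{nnb}, and the $\beta=*$ case by scaling the stable process, identifying the normalized local time of the rescaled reflected process as $\lambda^{1/2}L_{t/\lambda}$, and comparing the two expressions for the intensity of the transformed excursion point measure. The only cosmetic difference is that you pin the constant $c=\lambda^{1/2}$ via the Laplace exponent $\Psi(q)=q^{1/2}$ of the inverse local time, whereas the paper checks the normalization $\E[\int_0^\infty e^{-t}\dr L^\lambda_t]=1$ directly from the self-similarity of the $(1/2)$-stable subordinator; both verifications are equivalent.
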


\begin{proof}
We start with the case where $\beta=*$. We recall that $\nn_*$ was defined in the paragraph around~\eqref{et},
starting from an ISP$_{\alpha,0}$ $(Z_t)_{t\geq 0}$. By the scaling property of the stable 
process, $(Z^\lambda_t=\lambda^{1/\alpha}Z_{t/\lambda})_{t\geq 0}$ has the same law as $(Z_t)_{t\geq0}$.
For any $a>0$,  $(a\xi_{t/\lambda})_{t\geq 0}$  is a local time of the 
first coordinate of $(Z^\lambda_t)_{t\geq 0}$  reflected on its infimum, as in Lemma~\ref{oncemore}.
But $(\xi^\lambda_t=\lambda^{1/2}\xi_{t/\lambda})_{t\geq 0}$
is the only choice such that, recalling that $\gamma_u=\inf\{t\geq 0 : \xi_t>u\}$ and observing
that $\gamma^\lambda_u:=\lambda \gamma_{u/\lambda^{1/2}}=\inf\{t\geq 0 : \lambda^{1/2} \xi_{t/\lambda} >u\}$,
$$
\E\Big[\int_0^\infty e^{-t}\dr \xi^\lambda_t\Big] = 
\E\Big[\int_0^\infty e^{-\gamma^\lambda_u} \dr u  \Big]
= \E\Big[\int_0^\infty e^{-\gamma_u} \dr u  \Big]=\E\Big[\int_0^\infty e^{-t}\dr  \xi_{t} \Big]=1.
$$
We used the substitution $ \xi^\lambda_{t} = u$ for the first equality, that 
$(\gamma_u)_{u\geq 0}$ is $(1/2)$-stable for the second one,
and the substitution $u= \xi_t$ for the third one. Hence the triples
$((Z_t)_{t\geq 0}, ( \xi_t)_{t\geq0},(\gamma_u)_{u\geq 0})$ and 
$((Z_t^\lambda)_{t\geq 0}, ( \xi_t^\lambda )_{t\geq0},(\gamma_u^\lambda)_{u\geq 0})$
have the same law. Setting  now $\JJ^\lambda = \{u\geq0, \: \Delta\gamma_u^\lambda > 0\}
= \{\lambda^{1/2}u : u \in \JJ\}$ 
and, for $u \in \JJ^\lambda$,
$$
e_u^\lambda = \Big(Z^\lambda_{(\gamma^\lambda_{u-} + s)\wedge\gamma_u^\lambda} - Z^\lambda_{\gamma^\lambda_{u-}}\Big)_{s\geq0},
$$
we conclude that $\Pi_*^\lambda = \sum_{u \in \JJ^\lambda} \delta_{(u,e^\lambda_u)}$ has the same law as 
$\Pi_*= \sum_{u \in \JJ} \delta_{(u,e_u)}$: it is a Poisson measure on
$\mathbb{R}_+\times\cE$ with intensity $\dr u \nn_*(\dr e)$. 

\vip

On the other hand, for $u=\lambda^{1/2}v \in \JJ^\lambda$, we have
$e_u^\lambda=\Phi_\lambda(e_v)$. Thus $\Pi_*^\lambda =  \sum_{v \in \JJ} \delta_{(\lambda^{1/2}v, \Phi_\lambda(e_v))}$, 
of which the intensity is $\lambda^{-1/2}\dr u ( \Phi_\lambda\# \nn_*) (\dr e)$. 
Hence  $\lambda^{-1/2} \Phi_\lambda\# \nn_*=\nn_*$ as desired.

\vip

We carry on with the case where $\beta>0$, which is simpler. For a measurable $\varphi : \cE \to \R_+$ we simply
write, recalling~\eqref{nnb} and using the scaling property of the stable process,
\begin{align*}
\int_\cE \varphi(\Phi_\lambda(e)) \nn_\beta(\dr e) = \int_\HH |x|^{-d-\beta}\E_x[\varphi(\Phi_\lambda(Z_{\cdot \land \ell(Z)}))]
\dr x = \int_\HH |x|^{-d-\beta}\E_{\lambda^{1/\alpha}x}[\varphi(Z_{\cdot \land \ell(Z)})]\dr x.
\end{align*}
Using the change of variables $y=\lambda^{1/\alpha}x$, we conclude that 
$$
\int_\cE \varphi(\Phi_\lambda(e)) \nn_\beta(\dr e) 
=\lambda^{\beta/\alpha} \int_\HH |y|^{-d-\beta}\E_{y}[\varphi(Z_{\cdot \land \ell(Z)})]\dr y
=\lambda^{\beta/\alpha}\int_\cE \varphi(e) \nn_\beta(\dr e)
$$
as desired.
\end{proof}

We then give some tail estimates of the excursion measure $\nn_*$.

\begin{lemma}\label{qdist} 
Recall that 
$\ell(e) = \inf\{t> 0, \: e(t) \notin {\HH}\}$ and $M(e)=\sup_{t\in[0,\ell(e)]}|e(t)|$
and set $M_1(e)=\sup_{t\in[0,\ell(e))}e_1(t)$ for all
$e \in \cE$, where $e_1(t)=e(t)\cdot \be_1$. There are some constants $c_*,d_*,e_* \in (0,\infty)$ 
such that for all $t>0$, all $m>0$,
\begin{equation}\label{ssc}
\nn_*(\ell>t) = c_* t^{-1/2}, \quad \nn_*(M>m) = d_* m^{-\alpha/2}
\quad \hbox{and} \quad \nn_*(M_1>m) = e_* m^{-\alpha/2}.
\end{equation}
Moreover, \eqref{se1} holds true when $\beta=*$.
\end{lemma}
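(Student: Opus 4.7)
My approach is to first derive the functional forms of the three tails via scaling, then to verify that the three constants are in $(0,\infty)$. Since $\ell(\Phi_\lambda e) = \lambda \ell(e)$, $M(\Phi_\lambda e) = \lambda^{1/\alpha} M(e)$ and $M_1(\Phi_\lambda e) = \lambda^{1/\alpha} M_1(e)$, applying $\Phi_\lambda\#\nn_* = \lambda^{1/2}\nn_*$ from Lemma~\ref{scaling} to the indicators $\indiq_{\ell>t}$, $\indiq_{M>m}$ and $\indiq_{M_1>m}$ yields \eqref{ssc} with $c_* = \nn_*(\ell>1)$, $d_* = \nn_*(M>1)$ and $e_* = \nn_*(M_1>1)$. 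For $c_*$, a standard compensation argument (equating $\E[\sum_{u\in\JJ, u\leq 1}\indiq_{\ell(e_u)\in A}]$ computed through $\Pi_*$ and through the jumps of $(\gamma_u)_{u\geq 0}$) identifies $\ell\#\nn_*$ with the Lévy measure of the subordinator $(\gamma_u)_{u\geq 0}$. Since by Lemma~\ref{oncemore} this subordinator is a non-trivial $(1/2)$-stable one, its Lévy measure is $\kappa t^{-3/2}\dr t$ on $(0,\infty)$ with $\kappa\in(0,\infty)$, whence $c_* = 2\kappa \in (0,\infty)$.

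For the lower bound $e_* > 0$: under $\nn_*$, every excursion $e$ satisfies $e_1(s)>0$ for $s\in(0,\ell(e))$ by construction (see the proof of Lemma~\ref{oncemore}), so $M_1(e)>0$ for $\nn_*$-a.e. $e$. Since $\nn_*(\cE) = \lim_{t\to 0^+}\nn_*(\ell>t) = \infty$ by the previous step, we obtain $\nn_*(M_1>0)=\infty$, which combined with $\nn_*(M_1>m)=e_* m^{-\alpha/2}$ forces $e_*>0$. The finiteness $e_*\leq d_* < \infty$ will follow from $M_1\leq M$ and the finiteness of $d_*$, treated next.

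The main difficulty is to show $d_* < \infty$, which I plan to prove by contradiction. Suppose $\nn_*(M>1)=\infty$. Then, by the Poisson property of $\Pi_*$, almost surely for every $\e>0$ there are infinitely many $v\in[0,\e]\cap\JJ$ with $M(e_v)>1$. Using the a.s.\ right-continuity of $Z$ at $0$ with $Z_0=0$, pick a random $\delta>0$ such that $\sup_{t\in[0,\delta]}|Z_t|<1/4$, then choose $\e>0$ small enough that $\gamma_\e<\delta$. Any such $v$ has $\gamma_{v-}\in[0,\gamma_\e]\subset[0,\delta]$, so $|Z_{\gamma_{v-}}|<1/4$, and the existence of some $t\in[\gamma_{v-},\gamma_v]$ with $|Z_t-Z_{\gamma_{v-}}|>1$ forces $|Z_t|>3/4$, so $t>\delta$ and thus $\gamma_v>\delta$. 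Every such (pairwise disjoint) interval $[\gamma_{v-},\gamma_v]$ then contains $\delta$, which is impossible for infinitely many of them. The lower bound $d_*\geq e_*>0$ is then immediate from $M\geq M_1$.

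Finally, \eqref{se1} for $\beta=*$ follows by direct integration of the tails: $\int(\ell\wedge 1)\dr\nn_* = c_*\int_0^1 t^{-1/2}\dr t = 2c_*$ and $\int(M\wedge 1)\dr\nn_* = d_*\int_0^1 m^{-\alpha/2}\dr m = 2d_*/(2-\alpha)$, both finite since $\alpha<2$.
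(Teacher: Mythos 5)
Your proof is correct and follows essentially the same route as the paper: scaling identifies $c_*=\nn_*(\ell>1)$, $d_*=\nn_*(M>1)$, $e_*=\nn_*(M_1>1)$, these are shown to lie in $(0,\infty)$ (the paper handles $c_*$ by "otherwise $\ell=0$ a.e." plus immediate explosion of $\gamma$, and $d_*<\infty$ by the impossibility of infinitely many large excursions before $\gamma_1$, which is the same idea as your localization near $t=0$), and \eqref{se1} follows by integrating the tails. Your identification of $\ell\#\nn_*$ with the L\'evy measure of the $(1/2)$-stable subordinator $(\gamma_u)_{u\geq0}$ is a clean minor variant that yields $c_*\in(0,\infty)$ in one stroke, and your appeal to $\gamma_\e<\delta$ for small $\e$ implicitly uses the standard fact that the local time increases immediately ($\gamma_{0+}=0$), which is harmless here.
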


\begin{proof}
Using the notations of Lemma~\ref{scaling}, we have $\ell(\Phi_\lambda(e))=\lambda \ell(e)$,
$M(\Phi_\lambda(e))=\lambda^{1/\alpha}M(e)$ and $M_1(\Phi_\lambda(e))=\lambda^{1/\alpha}M_1(e)$. 
By Lemma~\ref{scaling}, for any $\lambda>0$,
\begin{gather*}
\nn_*(\ell>t)=\frac{\nn_*(\lambda \ell>t)}{\lambda^{1/2}},\quad
\nn_*(M>m)=\frac{\nn_*(\lambda^{1/\alpha}M>m)}{\lambda^{1/2}},\quad
\nn_*(M_1>m)=\frac{\nn_*(\lambda^{1/\alpha}M_1>m)}{\lambda^{1/2}}.
\end{gather*}
Choosing $\lambda=t$ and $\lambda=m^{\alpha}$, we find~\eqref{ssc},
with $c_*=\nn_*(\ell>1)$, $d_*=\nn_*(M>1)$ and $e_*=\nn_*(M_1>1)$.
First, $c_*>0$, because otherwise, we would have $\ell(e)=0$ for $\nn_*$-a.e. $e \in \cE$.
Similarly, $e_*>0$ (whence $d_*>0$), because otherwise, we would have  $M_1(e)=0$ for $\nn_*$-a.e. $e \in \cE$.
Next, if $c_*=\infty$, the process $\gamma_u=\int_0^u\int_\cE \ell(e)\Pi_* (\dr v,\dr e)$ 
(this formula follows from
the construction of $\Pi_*$,
see the paragraph around~\eqref{et}) explodes immediately, which is not possible.
Moreover, $e_*<d_*$ and $d_*$ is finite: otherwise, we would have $\nn_*(M>A)=\infty$ for all $A>0$;
hence a.s., for any $A\in \mathbb{N}$, there would be infinitely many 
$u \in \JJ\cap[0,1]$ such that $M(e_u)>A$,
making explode $Z$ during $[0,\gamma_1]$.
Finally,
$$
\int_\cE [\ell(e)\land 1+M(e)\land 1]\nn_*(\dr e)= 
\int_0^1 \nn_*(\ell>t) \dr t + \int_0^1 \nn_*(M>m) \dr m,
$$
which is finite by~\eqref{ssc}: \eqref{se1} holds true when $\beta=*$.
\end{proof}

We will need the following property concerning the entrance law of $\nn_*$. 

\begin{lemma}\label{encore1}
For $t> 0$, let $k_t=\nn_*(e(t) \in \cdot,\ell(e)>t)$. There
are some constants $c_0,c_1\in (0,\infty)$ such that 
for all $\varphi \in C_b(\closure{\HH})$,
$$
\lim_{t\to 0} \int_{\HH} \varphi(a) a_1^{\alpha/2} k_t(\dr a) = c_0 \varphi(0)
\quad \text{and}\quad \lim_{t\to 0} \int_{\HH} \varphi(a) |a|^{\alpha/2} k_t(\dr a) = c_1 \varphi(0).
$$
\end{lemma}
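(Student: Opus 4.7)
The plan is to use the scaling property of $\nn_*$ (Lemma~\ref{scaling}) to reduce the $t\to 0$ limit to a moment computation against the fixed entrance measure $k_1$, and then verify finiteness of those moments. Since $\Phi_\lambda\#\nn_*=\lambda^{1/2}\nn_*$, for any Borel $\psi:\HH\to[0,\infty)$ one has
\[
\int_\HH\psi(a)\,k_t(\dr a)\;=\;t^{-1/2}\int_\HH\psi(t^{1/\alpha}b)\,k_1(\dr b).
\]
Applied with $\psi(a)=\varphi(a)\,a_1^{\alpha/2}$ (respectively $\varphi(a)\,|a|^{\alpha/2}$), the $\alpha/2$-homogeneity in $a$ cancels the $t^{-1/2}$ prefactor and yields
\[
\int_\HH\varphi(a)\,a_1^{\alpha/2}\,k_t(\dr a)\;=\;\int_\HH\varphi(t^{1/\alpha}b)\,b_1^{\alpha/2}\,k_1(\dr b),
\]
with an analogous identity for $|a|^{\alpha/2}$. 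Since $\varphi(t^{1/\alpha}b)\to\varphi(0)$ pointwise and $|\varphi|\le\|\varphi\|_\infty$, dominated convergence gives the claim with
\[
c_0=\int_\HH b_1^{\alpha/2}\,k_1(\dr b)=\nn_*(e_1(1)^{\alpha/2};\ell>1),\qquad c_1=\int_\HH|b|^{\alpha/2}\,k_1(\dr b)=\nn_*(|e(1)|^{\alpha/2};\ell>1),
\]
provided these are finite. Positivity is immediate: $k_1(\HH)=\nn_*(\ell>1)=c_*>0$ by Lemma~\ref{qdist}, and both integrands are strictly positive on $\HH$.

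\textbf{Finiteness of $c_0$.} The Markov property of $\nn_*$ at time $1$ gives, for $s>0$,
\[
\nn_*(\ell>1+s)=\int_\HH\PP_b(\tau^\HH>s)\,k_1(\dr b),
\]
where $\tau^\HH=\inf\{u:Z_u^1\le 0\}$ depends only on the first coordinate $Z^1$, a one-dimensional symmetric $\alpha$-stable process. The classical one-sided exit asymptotics (positivity parameter $1/2$) give $s^{1/2}\PP_b(\tau^\HH>s)\to C_0\,b_1^{\alpha/2}$ as $s\to\infty$ for some $C_0>0$. Combined with $s^{1/2}\nn_*(\ell>1+s)\to c_*$ (Lemma~\ref{qdist}) and Fatou's lemma, this yields $c_*\ge C_0\,c_0$, whence $c_0<\infty$.

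\textbf{Finiteness of $c_1$ and main obstacle.} Write $b^\perp=(b_2,\dots,b_d)$. Since $\alpha/2\in(0,1)$, one has $|b|^{\alpha/2}\le b_1^{\alpha/2}+|b^\perp|^{\alpha/2}$, so it suffices to bound $c_1^\perp=\int|b^\perp|^{\alpha/2}k_1(\dr b)$. By the Markov property of $\nn_*$ at time $1/2$,
\[
c_1^\perp=\int_\HH k_{1/2}(\dr a)\,\E_a\!\left[|Z_{1/2}^\perp|^{\alpha/2}\indiq_{\{\tau^\HH>1/2\}}\right].
\]
Choosing an exponent $p\in(1,2)$ (so that $\E_a[|Z_{1/2}^\perp|^{\alpha p/2}]<\infty$), Hölder's inequality together with the uniform bound $\PP_a(\tau^\HH>1/2)\le C\,a_1^{\alpha/2}$ yields an integrand bounded by $C'a_1^{\alpha/(2q)}(1+|a^\perp|^{\alpha/2})$; a further Hölder step then reduces matters to $k_{1/2}$-integrals controllable via $c_0<\infty$. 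This finiteness of $c_1$ is the main obstacle: the scaling identity forces $\int|a|^{\alpha/2}k_t(\dr a)$ to be constant in $t$, but the crude bound $|e(t)|\le M(e)$ yields only the divergent estimate $\nn_*(M^{\alpha/2};\ell>1)=\infty$. One must instead exploit the sharper decay $\nn_*(|e(1)|>v,\ell>1)=O(v^{-\alpha})$, reflecting the fact that, on $\{\ell>1\}$, large values of $|e(1)|$ arise only through a single big jump just before time $1$.
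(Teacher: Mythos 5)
Your scaling reduction is exactly the paper's Step 2, and your finiteness argument for $c_0$ (Markov property at time $1$, the large-time asymptotic $s^{1/2}\PP_b(\tau^\HH>s)\to C_0 b_1^{\alpha/2}$, the exact tail $\nn_*(\ell>t)=c_*t^{-1/2}$, and Fatou) is a correct and genuinely different route from the paper, which instead deduces $c_0<\infty$ from a tail estimate on $|e(1)|$.

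However, for $c_1$ there is a genuine gap, and it sits precisely where the paper puts its main technical effort. Your H\"older/Markov reduction is circular: by the same scaling identity you use, $\int_\HH |a^\perp|^{\alpha/2}k_{1/2}(\dr a)$ is \emph{equal} to $c_1^\perp=\int_\HH |b^\perp|^{\alpha/2}k_1(\dr b)$, so after bounding the integrand by $C' a_1^{\alpha/(2q)}(1+|a^\perp|^{\alpha/2})$ you must control the mixed term $\int a_1^{\alpha/(2q)}|a^\perp|^{\alpha/2}k_{1/2}(\dr a)$; bounding $a_1^{\alpha/(2q)}\le 1+a_1^{\alpha/2}$ returns $c_1^\perp$ itself with coefficient $1$ (no contraction), while any further H\"older split produces a perpendicular moment of $k_{1/2}$ of order strictly larger than $\alpha/2$, which is not controlled by $c_0$ and is in fact only finite because of the very tail decay you have not yet established. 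Your fallback, the bound $\nn_*(|e(1)|>v,\ell>1)=O(v^{-\alpha})$ (the paper proves $O(v^{-\alpha}\log v)$, which suffices), is asserted with only a heuristic ("a single big jump just before time $1$"), but proving it is the heart of the lemma: the paper's Step 1 does this by representing $e(1)$ under $\nn_*(\cdot\,,\ell>1)$ as $Z_{\gamma_{\sigma-}+1}-Z_{\gamma_{\sigma-}}$ for the excursion straddling length $>1$ (Revuz--Yor), showing $\gamma_{\sigma-}$ has an exponential moment of some order $\theta_*$, combining this with the maximal inequality $\PP(Z_1^*>z)\le Mz^{-\alpha}$, and optimizing $t\sim\log x$. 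Without that estimate (or an equivalent), the finiteness of $c_1$ — and hence the domination needed for the second limit — is unproved, so the proposal as written does not close.
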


\begin{proof} We divide the proof in two steps.
\vip
\textit{Step 1.} Here we show that there exists a constant $C \in (0,\infty)$ such that for any $x>0$,
\begin{equation}\label{eqent}
 \nn_*\big(|e(1)|> x, \ell(e) > 1\big) \leq C  (x\lor 3)^{-\alpha} \log(x\lor 3).
\end{equation}

First recall the definition of $\Pi_*$, see the paragraph around~\eqref{et} and Lemma~\ref{oncemore}: 
consider an ISP$_{\alpha,0}$ $(Z_t)_{t\geq0}$, the right-continuous inverse
$(\gamma_u)_{u\geq0}$ of its local time at $\partial \HH$  
and set $e_u = (Z_{(\gamma_{u-} + s)\wedge \gamma_u} - Z_{\gamma_{u-}})_{s\geq0}$ 
for $u\in \JJ = \{u \geq0, \:  \Delta\gamma_u > 0\}$. Then $\Pi_* = \sum_{u\in\JJ}\delta_{(u, e_u)}$ 
is a Poisson measure on $\mathbb{R}_+ \times \cE$ with intensity $\dr u\nn_*(\dr e)$. Let us set 
$\sigma = \inf\{u \in \JJ, \: \ell(e_u) > 1\}$. Then, see for instance 
Revuz and Yor~\cite[Chapter XII, Lemma 1.13]{revuz2013continuous}, we have
\begin{equation}\label{aa}
\nn_*\big(|e(1)|> x, \ell(e) > 1\big) = \nn_*(\ell(e) > 1)
\mathbb{P}\big(|Z_{\gamma_{\sigma -} + 1} - Z_{\gamma_{\sigma -}}| > x\big).
\end{equation}
Let us now introduce
$\tilde{\gamma}_u = \int_0^u \int_{\mathcal{E}}\ell(e) \indiq_{\{\ell(e) \leq 1\}}\Pi_*(\dr v, \dr e)$, 
which is a subordinator whose Lévy measure is 
$\nu(\dr r) := \nn_*(\ell(e) \in \dr r, \ell(e) \leq 1) =\frac{c_*}{2} r^{-3/2}\indiq_{\{r \leq 1\}}\dr r$ 
by Lemma~\ref{qdist}.  Therefore, $\mathbb{E}[e^{\theta \tilde{\gamma}_u}] = e^{c_\theta u}$ for any $\theta>0$, 
any $u\geq0$, where $c_\theta = \frac{c_*}2\int_0^1(e^{\theta r} - 1)r^{-3/2} \dr r$.
\vip
Now, recalling the definition of $\sigma$ and that 
$\gamma_u = \int_0^u \int_{\mathcal{E}}\ell(e) \Pi_*(\dr v, \dr e)$, 
we observe that $\gamma_{\sigma-} = \tilde{\gamma}_\sigma$. Moreover, $\sigma$ is independent of $(\tilde{\gamma}_u)_{u\geq0}$ because the two Poisson measures 
$\indiq_{\{\ell(e) > 1\}}\Pi_*(\dr v, \dr e)$ 
and $\indiq_{\{\ell(e) \leq 1\}}\Pi_*(\dr v, \dr e)$ are independent, and $\sigma$ is a functional of the first 
one whereas $(\tilde{\gamma}_u)_{u\geq0}$ is a functional of the second one. Finally, $\sigma$ is 
exponentially distributed with parameter $\lambda:=\nn_*(\ell(e) > 1)$. All in all,
$$
\E[e^{\theta \gamma_{\sigma-}}]=\E[e^{\theta \tilde \gamma_{\sigma}}]=\lambda \int_0^\infty \E[e^{\theta \tilde \gamma_{u}}]e^{-\lambda u}
\dr u= \lambda \int_0^\infty e^{c_\theta u - \lambda u}\dr u.
$$
Since $\lim_0 c_\theta=0$, there is $\theta_* \in (0,\alpha)$ such that 
$\mathbb{E}[e^{\theta_* \gamma_{\sigma -}}] < \infty$. We write, for $x>1$ and $t\geq 1$,
\[
 \mathbb{P}\big(|Z_{\gamma_{\sigma -} + 1} - Z_{\gamma_{\sigma -}}| > x\big) \leq \mathbb{P}(\gamma_{\sigma-} > t) 
+ \mathbb{P}( Z_{2t}^* > x / 2).
\]
where $Z_t^* = \sup_{s\in[0,t]}|Z_s|$. By scale invariance of the stable process, we have 
$\mathbb{P}(Z_{t}^* > x) = \mathbb{P}(Z_1^* > x / t^{1/\alpha})$. Moreover, there is
$M > 0$ such that $\mathbb{P}(Z_1^* > z) \leq M z^{-\alpha}$, see for instance 
Bertoin~\cite[Chapter VIII, Proposition 4]{bertoin1996levy} in dimension one (which is enough). We conclude that
\[
 \mathbb{P}\big(|Z_{\gamma_{\sigma -} + 1} - Z_{\gamma_{\sigma -}}| > x\big) 
\leq e^{-\theta_* t} \mathbb{E}[e^{\theta_* \tau_{\sigma-}}] + 2^{\alpha+1}Mtx^{-\alpha},
\]
If $x> 3> e^{\theta_*/\alpha}$, we choose $t = \alpha \log(x) / \theta_*\geq 1$ and find, for some constant $C>0$,
$$
\mathbb{P}\big(|Z_{\gamma_{\sigma -} + 1} - Z_{\gamma_{\sigma -}}| > x\big) \leq C x^{-\alpha}(1+ \log x)
\leq 2C x^{-\alpha}\log x.
$$
Recalling~\eqref{aa}, this shows~\eqref{eqent} when $x>3$. Since $\nn_*(\ell>1)<\infty$,
the case $x\leq 3$ is obvious.

\vip
\textit{Step 2.} 
By Lemma~\ref{scaling}, for any $\lambda>0$, since $\ell(\Phi_\lambda(e))=\lambda\ell(e)$,
\begin{align*}
\int_{\HH} \varphi(a) a_1^{\alpha/2} k_t(\dr a)=&\int_\cE \varphi(e(t))(e_1(t))^{\alpha/2}\indiq_{\{\ell(e)>t\}}
\nn_*(\dr e)\\
=&\frac1{\lambda^{1/2}}\int_\cE \varphi(\lambda^{1/\alpha}e(t/\lambda))(\lambda^{1/\alpha}e_1(t/\lambda))^{\alpha/2}
\indiq_{\{\ell(e)>t/\lambda\}}
\nn_*(\dr e).
\end{align*}
Choosing $\lambda=t$, we find
$$
\int_{\HH} \varphi(a) a_1^{\alpha/2} k_t(\dr a)=\int_\cE \varphi(t^{1/\alpha}e(1)) 
[e_1(1)]^{\alpha/2} \indiq_{\{\ell(e)>1\}} \nn_*(\dr e)
=\int_\HH \varphi(t^{1/\alpha}a) 
a_1^{\alpha/2} k_{1}(\dr a)\to c_0\varphi(0)
$$
as $t\to 0$, where $c_0=\int_\HH a_1^{\alpha/2} k_{1}(\dr a)= \int_{\cE} [e_1(1)]^{\alpha/2}\indiq_{\{\ell(e)>1\}} 
\nn_*(\dr e)$. We have $c_0>0$, since $\ell(e)>1$ implies $e_1(1)>0$ and since $\nn_*(\ell>1)>0$ 
by Lemma~\ref{qdist}. We next write
\[
 c_0 = \int_0^\infty \nn_*(e_1(1) > x^{2/\alpha}, \ell(e) > 1) \dr x,
\]
which is finite by~\eqref{eqent}. The very same arguments 
show that  $\lim_{t\to0}\int_{\HH} \varphi(a) |a|^{\alpha/2} k_t(\dr a) = 
c_1\varphi(0)$ for any $\varphi \in C_b(\closure{\HH})$, where 
$c_1 := \int_{\HH}|a|^{\alpha / 2}k_1(\dr a) \in (0,\infty)$.
\end{proof}

The following result describes the strong Markov property of the excursion measures.

\begin{lemma}\label{mark}
Fix $\beta \in \{*\}\cup(0,\alpha/2)$.
Recall that $(Z_t)_{t\geq 0}$ is, under $\PP_x$, an ISP$_{\alpha,x}$.
Endow $\cE$ with its canonical filtration $\cG_t=\sigma(X_s, s\in [0,t])$, where $X_s(e)=e(s)$ 
is the canonical process. Consider $\rho : \cE \to \R_+\cup\{\infty\}$ a $(\cG_t)_{t\geq 0}$-stopping time, that is,
for all $t\geq 0$, $\{e \in \cE : \rho(e)\leq t\} \in \cG_t$. If $\beta=*$, assume moreover that
$\rho(e)>0$ for $\nn_*$-a.e. $e\in \cE$.
For any pair $\psi_1,\psi_2$ of
measurable functions from $\cE$ into $\R_+$,
\begin{align*}
\int_\cE \psi_1[(e(s\land \rho(e)))_{s\geq 0}]&\psi_2[(e((\rho(e)+u)\land\ell(e)))_{u\geq 0}]
\indiq_{\{\ell(e)>\rho(e)\}} 
\nn_\beta(\dr e)\\
=&\int_\cE \psi_1[(e(s\land \rho(e)))_{s\geq 0}]\E_{e(\rho(e))}[\psi_2[(Z_{u\land \ell(Z)})_{u\geq 0}]]
\indiq_{\{\ell(e)>\rho(e)\}} \nn_\beta(\dr e).
\end{align*}
\end{lemma}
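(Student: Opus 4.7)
The plan is to reduce both cases $\beta\in(0,\alpha/2)$ and $\beta=*$ to the strong Markov property of the underlying isotropic stable process $Z$, but in quite different ways: in the first case the reduction is direct from the definition \eqref{nnb}, while in the second it requires exploiting the Poisson structure of $\Pi_*$ given by Lemma~\ref{oncemore}.

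For $\beta\in(0,\alpha/2)$, I would write, using \eqref{nnb}, that for any measurable $\Psi:\cE\to\R_+$,
\[
\int_\cE \Psi(e)\,\nn_\beta(\dr e) = \int_\HH |x|^{-d-\beta}\,\E_x\bigl[\Psi\bigl((Z_{t\land \ell(Z)})_{t\geq 0}\bigr)\bigr]\dr x.
\]
Applying this to the integrand appearing in the statement, it is enough to show that, for each $x\in \HH$, the $\PP_x$-expectation of $\psi_1[(Z_{s\land\rho\land\ell(Z)})_{s\geq 0}]\psi_2[(Z_{(\rho+u)\land\ell(Z)})_{u\geq 0}]\indiq_{\{\ell(Z)>\rho\}}$ equals the corresponding expression with the inner $\psi_2$ replaced by $\E_{Z_\rho}[\psi_2[(Z_{u\land \ell(Z)})_{u\geq 0}]]$. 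Here $\rho$ is being applied to the stopped path $(Z_{\cdot\land\ell(Z)})$; since $\ell(Z)$ is itself a stopping time for the natural filtration of $Z$, the $(\cG_t)$-stopping-time property of $\rho$ implies that $\tau := \rho((Z_{\cdot\land\ell(Z)}))\land\ell(Z)$ is a stopping time for $Z$, equal to $\rho$ on $\{\rho<\ell(Z)\}$. The strong Markov property of $Z$ at $\tau$, together with the fact that $\ell(Z)$ is invariant under the shift on $\{\tau<\ell(Z)\}$, yields the claim.

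For $\beta=*$ the idea is to use Campbell's formula: with $\Pi_*=\sum_{u\in\JJ}\delta_{(u,e_u)}$ of Lemma~\ref{oncemore},
\[
\int_\cE \Psi(e)\,\nn_*(\dr e)=\E\Big[\sum_{u\in \JJ\cap(0,1]} \Psi(e_u)\Big],
\]
and to apply the strong Markov property of $Z$ inside each excursion interval $[\gamma_{u-},\gamma_u)$. The key point is that on $\{\rho(e_u)<\ell(e_u)\}=\{\rho(e_u)<\Delta\gamma_u\}$, the random time $\gamma_{u-}+\rho(e_u)$ lies strictly inside $(\gamma_{u-},\gamma_u)$, where $Z$ is unmodified, so that $(Z_{\gamma_{u-}+\rho(e_u)+s}-Z_{\gamma_{u-}})_{s\geq 0}$ is, by the strong Markov property of $Z$ at a suitable stopping time, a fresh ISP$_{\alpha,e_u(\rho(e_u))}$; the first time $\ell(e_u)-\rho(e_u)$ at which it exits $\HH$ is precisely what replaces $\ell(Z)$ in the right-hand side. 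It is convenient to first prove the identity for a deterministic $\rho(e)=t$ by decomposing the Poisson sum on the set $\{u\in\JJ\cap(0,1]:\ell(e_u)>t\}$, and then extend it to $(\cG_t)$-stopping times by approximation with the dyadic stopping times $\rho_n=(\lceil 2^n\rho\rceil/2^n)\land n$ and a monotone class argument. The main obstacle, which makes this case genuinely harder than the other, is the bookkeeping needed to turn the random time $\gamma_{u-}+\rho(e_u)$ into a stopping time of the underlying filtration of $Z$ and to exploit the independence between disjoint excursions implied by the Poisson character of $\Pi_*$; this is a standard feature of Itô excursion measures of strong Markov processes (cf.\ \cite[Chapter~IV]{blu1992}), and we would either implement it by hand with the explicit construction of Lemma~\ref{oncemore} or simply cite the corresponding general excursion-theoretic result.
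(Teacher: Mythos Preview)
Your proposal is correct and follows the same route as the paper. For $\beta\in(0,\alpha/2)$ the paper simply says the identity is ``directly inherited from the strong Markov property of the stable process'' via \eqref{nnb}, which is exactly your argument; for $\beta=*$ the paper does not write out the Poisson-sum/Campbell argument you sketch but instead cites the general result for It\^o excursion measures (Blumenthal~\cite[Theorem~3.28, p.~102]{blu1992}), which is precisely the ``corresponding general excursion-theoretic result'' you mention as the alternative to doing it by hand.
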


\begin{proof}
Recalling the definition~\eqref{nnb} of $\nn_\beta$, this formula is directly inherited from the strong 
Markov property of the stable process $(Z_t)_{t\geq 0}$ when $\beta \in (0,\alpha/2)$. 
When $\beta=*$, the proof is exactly the same as when $d=1$, 
see e.g. Blumenthal~\cite[Theorem 3.28 page 102]{blu1992}.
\end{proof}

We next give some estimates concerning the stable process.

\begin{lemma}\label{pr5}
Recall that $(Z_t)_{t\geq 0}$ is, under $\PP_x$, an ISP$_{\alpha,x}$.
There are some constants $c,C\in (0,\infty)$ such that
for any $x \in \HH$, denoting by $x_1=x\cdot \be_1$ and 
recalling that $\ell(Z)=\inf\{t>0 : Z_t \notin {\HH}\}$,
$$
\PP_x(\ell(Z)>1) \sim c \; x_1^{\alpha/2} \quad \hbox{ as $|x|\to 0$}\quad \hbox{ and } \quad
\E_x\Big[\sup_{t\in [0,\ell(Z)]} |Z_t-Z_0| \land 1 \Big] \leq C (x_1^{\alpha/2}\land 1).
$$
\end{lemma}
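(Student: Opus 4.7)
The plan is to exploit that $\ell(Z)=\inf\{t>0:Z^1_t\leq 0\}$ depends only on $Z^1_t=Z_t\cdot\be_1$, a one-dimensional symmetric $\alpha$-stable process started from $x_1:=x\cdot\be_1>0$, while $(Z_t-Z_0)_{t\geq 0}$ is translation invariant in directions orthogonal to $\be_1$. Both quantities in the lemma thus depend on $x$ only through $x_1$; combined with the scaling invariance of the isotropic stable process, everything reduces to one-dimensional facts and a single universal tail estimate.

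For the first claim, scaling gives $\PP_x(\ell(Z)>1)=\PP_1(T_0>x_1^{-\alpha})$, where $T_0$ is the first hitting time of $(-\infty,0]$ by a one-dimensional symmetric $\alpha$-stable process issued from $1$. The classical asymptotic $\PP_1(T_0>t)\sim c\,t^{-1/2}$ as $t\to\infty$ (a consequence of the Wiener--Hopf factorization: the ascending ladder-time subordinator of a symmetric $\alpha$-stable process is $1/2$-stable) yields $\PP_x(\ell(Z)>1)\sim c\,x_1^{\alpha/2}$ as $x_1\to 0$; this asymptotic can equally well be extracted from $\nn_*(\ell>t)=c_*t^{-1/2}$ in Lemma~\ref{qdist}.

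For the second claim, the inequality is trivial when $x_1\geq 1$, so assume $x_1\leq 1$. By scaling, if $\tilde Z$ is an ISP$_{\alpha,\be_1}$, $\tilde T:=\ell(\tilde Z)$ and $M:=\sup_{s\leq\tilde T}|\tilde Z_s-\be_1|$ under $\PP_{\be_1}$, the random variable $\sup_{t\leq\ell(Z)}|Z_t-Z_0|$ has, under $\PP_x$, the same law as $x_1 M$. Hence
\[
\E_x\Big[\sup_{t\leq\ell(Z)}|Z_t-Z_0|\wedge 1\Big]=x_1\!\int_0^{1/x_1}\!\PP_{\be_1}(M>v)\,dv,
\]
so the claimed bound reduces (since $\alpha/2<1$) to the sharp tail
\begin{equation}\label{tail_plan}
\PP_{\be_1}(M>v)\leq C v^{-\alpha/2}\qquad (v\geq 1);
\end{equation}
integrating $\indiq_{\{v\leq 1\}}+Cv^{-\alpha/2}\indiq_{\{v>1\}}$ over $[0,1/x_1]$ and multiplying by $x_1$ produces $C'x_1^{\alpha/2}$.

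The main obstacle is \eqref{tail_plan}, which I plan to derive from the excursion-measure tail $\nn_*(M_*>v)=d_* v^{-\alpha/2}$ of Lemma~\ref{qdist} (with $M_*=\sup_{s\leq\ell}|e(s)|$). Let $\sigma:=\inf\{s>0:e^1(s)\geq 1\}$ and set $\mu(dy):=\nn_*(e(\sigma)\in dy,\sigma<\ell)$, a finite measure on $\{y\in\R^d:y_1\geq 1\}$ with total mass $\nn_*(M_1>1)=e_*$. The strong Markov property of $\nn_*$ (Lemma~\ref{mark}) applied at $\sigma$, combined with the scaling and rotational/translational invariance yielding $\PP_y(\sup_{t\leq\tilde T}|\tilde Z_t-y|>r)=\PP_{\be_1}(M>r/y_1)$ for $y_1>0$, together with the elementary bound $|e(s)|\geq|e(s)-e(\sigma)|-|e(\sigma)|$, gives
\[
d_* v^{-\alpha/2}=\nn_*(M_*>v)\geq\int_{\{|y|\leq v\}}\mu(dy)\,\PP_{\be_1}(M>2v/y_1)\geq\mu(\{|y|\leq v\})\,\PP_{\be_1}(M>2v),
\]
where the last step uses that $1\leq y_1\leq|y|\leq v$ forces $2\leq 2v/y_1\leq 2v$ and that $r\mapsto\PP_{\be_1}(M>r)$ is non-increasing. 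Since $\mu$ is finite, $\mu(\{|y|\leq v\})\to e_*$, hence $\mu(\{|y|\leq v\})\geq e_*/2$ for $v$ large enough, giving \eqref{tail_plan} after renaming constants. Note that a more naive approach—splitting via a deterministic time horizon or via the Lévy-system compensation of the expected number of large jumps in $[0,\tilde T]$—only yields a $v^{-\alpha/3}$ tail, because $\E_{\be_1}[\tilde T]=\infty$; the excursion-measure comparison above is what produces the critical exponent $\alpha/2$.
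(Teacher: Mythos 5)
Your proof is correct and essentially follows the paper's own route: the first asymptotic is the classical one-dimensional fact (the paper phrases it as $\PP_0(\sup_{t\in[0,1]}Z^1_t<x_1)$ via Bertoin, you as the first-passage tail $\PP_1(T_0>t)\sim ct^{-1/2}$, which is the same statement after reflection and scaling), and the supremum bound is obtained, exactly as in the paper, by combining the tails $\nn_*(M>m)=d_*m^{-\alpha/2}$ and $\nn_*(M_1>m)=e_*m^{-\alpha/2}$ of Lemma~\ref{qdist} with the strong Markov property of $\nn_*$ (Lemma~\ref{mark}) at the first passage of the excursion's first coordinate above a level, the only cosmetic difference being that you normalize to the starting point $\be_1$ by scaling and prove the tail $\PP_{\be_1}(M>v)\leq Cv^{-\alpha/2}$, whereas the paper works directly at level $x_1$ and splits on $|e(\rho)|\gtrless m/2$ instead of restricting to $\{|e(\sigma)|\leq v\}$. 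One harmless imprecision: the total mass of your $\mu$ is $\nn_*(\sigma<\ell)$, which need not equal $e_*=\nn_*(M_1>1)$ because $M_1$ also records the terminal undershoot $|e_1(\ell(e))|$; your argument only needs this mass to be finite and strictly positive, which follows from $\{\sigma<\ell\}\subset\{M_1\geq 1\}$ together with the scaling of $\nn_*$ (Lemma~\ref{scaling}), so the constant $e_*/2$ should simply be replaced by half of this mass.
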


\begin{proof}
The estimate concerning $\PP_x(\ell(Z)>1)$ can be found in 
Bertoin~\cite[Chapter VIII, Proposition 2 page 219]{bertoin1996levy}, 
because by symmetry,
$\PP_x(\ell(Z)>1)=\PP_0(\sup_{t\in [0,1]}Z_t^1 <x_1)$.
For the second estimate, we fix $x\in \HH$ and use Lemmas~\ref{qdist} and~\ref{mark}.
We introduce the stopping time $\rho(e)=\inf\{t\geq0 : e_1(t) > x_1\}$, 
where $e_1(t)=e(t)\cdot\be_1$.
Observe that $\rho$ is $\nn_*$-a.e. positive (because $\nn_*$ is carried by $\cE_0$) and
that for any $m>0$,
$$
\{\rho(e)<\ell(e)\}\cap\Big\{\sup_{t\in [\rho(e),\ell(e)]} |e(t)-e(\rho(e))|>m\Big\} \subset \{M(e)>m/2\}..
$$
Indeed, if $|e(\rho(e))|>m/2$, then clearly $M(e)>m/2$, while if $|e(\rho(e))|\leq m/2$ then 
$M(e) \geq \sup_{t\in [\rho(e),\ell(e)]} |e(t)|\geq \sup_{t\in [\rho(e),\ell(e)]} |e(t)-e(\rho(e))|-m/2 > m/2$.
Consequently,
\begin{align*}
\nn_*(M>m/2)\geq& \int_\cE \indiq_{\{\ell(e)>\rho(e)\}} \indiq_{\{\sup_{t\in [\rho(e),\ell(e)]} |e(t)-e(\rho(e))|>m \}}\nn_*(\dr e)\\
=&\int_\cE \indiq_{\{\ell(e)>\rho(e)\}}\PP_{e(\rho(e))}\Big(\sup_{t\in [0,\ell(Z)]}|Z_t-Z_0|>m\Big)\nn_*(\dr e)
\end{align*}
by Lemma~\ref{mark}. But for all $e\in \cE$ such that $\ell(e)>\rho(e)$, we have $e_1(\rho(e))>x_1$, whence clearly
$$
\PP_{e(\rho(e))}\Big(\sup_{t\in [0,\ell(Z)]}|Z_t-Z_0|>m\Big)\geq \PP_{x}\Big(\sup_{t\in [0,\ell(Z)]}|Z_t-Z_0|>m\Big).
$$
Next,
$\int_\cE \indiq_{\{\ell(e)>\rho(e)\}}\nn_*(\dr e) = \nn_*(M_1>x_1)$, where
$M_1(e)=\sup_{t\in [0,\ell(e))}e_1(t)$. All this gives
$$
\PP_{x}\Big(\sup_{t\in [0,\ell(Z)]}|Z_t-Z_0|>m\Big)\leq \frac{\nn_*(M>m/2)}{\nn_*(M_1>x_1)}
=\frac{2^{\alpha/2}d_* x_1^{\alpha/2}}{e_* m^{\alpha/2}}
$$
by Lemma~\ref{qdist}. We conclude that
$$
\E_x\Big[ \sup_{t\in [0,\ell(Z)]}|Z_t-Z_0|\land 1 \Big]=\int_0^1 \PP_{x}\Big(\sup_{t\in [0,\ell(Z)]}|Z_t-Z_0|>m\Big)\dr m
\leq C x_1^{\alpha/2},
$$
which completes the proof since the left hand side is of course also bounded by $1$.
\end{proof}

We can now compute some tail distributions under the excursion measure $\nn_\beta$.

\begin{lemma}\label{qdist2}  
Fix $\beta \in (0,\alpha/2)$. Recall that for $e\in \cE$, 
$\ell(e) = \inf\{t> 0, \: e(t) \notin {\HH}\}$ and $M(e)=\sup_{t\in[0,\ell(e)]}|e(t)|$. 
There  are $c_\beta,d_\beta \in (0,\infty)$ such that for all $t>0$, all $m>0$,
\begin{equation}\label{scc}
\nn_\beta(\ell>t) = c_\beta t^{-\beta/\alpha}\quad \hbox{and} \quad \nn_*(M>m) = d_\beta m^{-\beta}.
\end{equation}
Moreover, \eqref{se1} holds true when $\beta \in (0,\alpha/2)$. Finally, $\nn_\beta(\ell>1)=\infty$ when
$\beta\geq \alpha/2$.
\end{lemma}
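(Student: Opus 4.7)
The plan is to reduce everything to the scaling relation of Lemma~\ref{scaling} and to a careful analysis of a single integral over $\HH$.

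First, I would observe that $\ell(\Phi_\lambda e) = \lambda \ell(e)$ and $M(\Phi_\lambda e) = \lambda^{1/\alpha} M(e)$, so the scaling $\Phi_\lambda \# \nn_\beta = \lambda^{\beta/\alpha}\nn_\beta$ from Lemma~\ref{scaling} immediately gives $\nn_\beta(\ell>t) = t^{-\beta/\alpha}\nn_\beta(\ell>1)$ and $\nn_\beta(M>m) = m^{-\beta}\nn_\beta(M>1)$ for all $t,m>0$. Thus \eqref{scc} reduces to showing $c_\beta := \nn_\beta(\ell>1)$ and $d_\beta := \nn_\beta(M>1)$ lie in $(0,\infty)$ when $\beta\in(0,\alpha/2)$, and that $c_\beta=\infty$ when $\beta\geq\alpha/2$.

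For $c_\beta$, I would use definition \eqref{nnb}. Since $\ell(Z)$ depends only on the first coordinate, which is a one-dimensional symmetric $\alpha$-stable process, $\PP_x(\ell(Z)>1)$ is a function of $x_1$ alone. Writing $x=r\omega$ with $r=|x|$ and $\omega$ in the upper hemisphere $\Sp^{d-1}_+$, and using Lemma~\ref{pr5} which yields $\PP_{s\be_1}(\ell(Z)>1)=:q(s)\sim c\,s^{\alpha/2}$ as $s\to 0$ and $q(s)\leq 1$, the change of variable $s=r\omega_1$ gives
\[
c_\beta \;=\; \Bigl(\int_{\Sp^{d-1}_+}\!\omega_1^{\beta}\,\dr\sigma(\omega)\Bigr)\Bigl(\int_0^\infty s^{-1-\beta}q(s)\,\dr s\Bigr).
\]
The first factor is in $(0,\infty)$ since $0<\omega_1\leq 1$. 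The second splits: near $0$, $s^{-1-\beta}q(s)\sim c s^{\alpha/2-1-\beta}$ is integrable iff $\beta<\alpha/2$; near $\infty$, $s^{-1-\beta}q(s)\leq s^{-1-\beta}$ is integrable iff $\beta>0$. This proves finiteness and positivity of $c_\beta$ for $\beta\in(0,\alpha/2)$, and divergence of the near-$0$ integral when $\beta\geq \alpha/2$, whence $\nn_\beta(\ell>1)=\infty$ in that range.

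For $d_\beta$, I would split the integral $d_\beta=\int_\HH |x|^{-d-\beta}\PP_x(M(Z_{\cdot\wedge\ell(Z)})>1)\dr x$ at $|x|=1/2$. On $\{|x|>1/2\}\cap\HH$, bounding the probability by $1$ gives a finite contribution since $\beta>0$. On $\{|x|\leq 1/2\}\cap\HH$, the inequality $\PP_x(M>1)\leq 2\E_x[\sup_{t\in[0,\ell(Z)]}|Z_t-Z_0|\wedge 1]\leq 2Cx_1^{\alpha/2}$ from Lemma~\ref{pr5} (applied via Markov) reduces the estimate to exactly the same integral as for $c_\beta$, which is finite for $\beta<\alpha/2$. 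Positivity of $d_\beta$ is trivial since $\PP_x(M>1)=1$ when $|x|>1$. Finally, \eqref{se1} follows from
\[
\int_\cE[\ell\wedge 1+M\wedge 1]\nn_\beta(\dr e) = c_\beta\!\int_0^1\! t^{-\beta/\alpha}\dr t + d_\beta\!\int_0^1\! m^{-\beta}\dr m,
\]
both integrals being finite because $\beta/\alpha<1$ and $\beta<\alpha/2<1$.

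The main obstacle is the near-boundary analysis in step two: one must recognize that $\PP_x(\ell(Z)>1)$ is a function of $x_1$ alone, so that Lemma~\ref{pr5}'s asymptotic (which is stated as $|x|\to 0$) can be applied as $x_1\to 0$ uniformly in the tangential directions, allowing the polar-type change of variable to decouple the angular and radial integrations cleanly.
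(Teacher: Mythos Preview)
Your proof is correct and follows essentially the same route as the paper's: reduce to $t=m=1$ via the scaling of Lemma~\ref{scaling}, then plug in definition~\eqref{nnb} and invoke Lemma~\ref{pr5} to control the integrand. The only notable difference is cosmetic: the paper bounds $\PP_x(\ell(Z)>1)\leq C(1\wedge x_1^{\alpha/2})$ and $\PP_x(M(Z)>1)\leq C(1\wedge |x|^{\alpha/2})$ and integrates directly, whereas you perform the polar change of variable and the substitution $s=r\omega_1$ to factor $c_\beta$ as a product of an angular and a radial integral, which makes the threshold $\beta=\alpha/2$ (and the divergence for $\beta\geq\alpha/2$) appear very cleanly from the behavior of $\int_0^\delta s^{\alpha/2-1-\beta}\dr s$. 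Both arguments are equivalent; yours is slightly more explicit. One minor remark: the parenthetical ``(applied via Markov)'' is unnecessary, since the second estimate of Lemma~\ref{pr5} already gives the bound on $\E_x[\sup_{[0,\ell(Z)]}|Z_t-Z_0|\wedge 1]$ directly.
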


\begin{proof}
Proceeding as in the proof of Lemma~\ref{qdist}, we find
$$
\nn_\beta(\ell>t)=\lambda^{-\beta/\alpha}\nn_\beta(\ell>t/\lambda) \quad \hbox{and} \quad 
\nn_\beta(M>m)=\lambda^{-\beta/\alpha}\nn_\beta(M>m/\lambda^{1/\alpha}).
$$
Choosing $\lambda=t$ and $\lambda=m^{\alpha}$, we get~\eqref{scc}
with $c_\beta=\nn_\beta(\ell>1)$ and $d_\beta=\nn_\beta(M>1)$. Recalling~\eqref{nnb},
$c_\beta = \int_\HH |x|^{-d-\beta} \PP_x(\ell(Z)>1)\dr x$
is positive and finite, since $\PP_x(\ell(Z)>1)\leq C(1\land x_1^{\alpha/2})$ by Lemma~\ref{pr5} and since
$\beta \in (0,\alpha/2)$.
Similarly, $d_\beta= \int_\HH |x|^{-d-\beta} \PP_x(M(Z)>1)\dr x$ is positive and finite, because
$$
\PP_x(M(Z)>1)\leq \E_x\Big[\sup_{t\in [0,\ell(Z)]}|Z_t| \land 1\Big]\leq 
\E_x\Big[\Big(|x|+ \sup_{t\in [0,\ell(Z)]}|Z_t-Z_0|\Big) \land 1\Big],
$$
which is controlled thanks to Lemma~\ref{pr5} by $|x|\land 1+C(x_1^{\alpha/2}\land 1) \leq C (|x|^{\alpha/2} \land 1)$.
\vip
Moreover, \eqref{se1} holds true since
$$
\int_\cE [\ell(e)\land 1+M(e)\land 1]\nn_*(\dr e)= 
\int_0^1 \nn_*(\ell>t) \dr t + \int_0^1 \nn_*(M>m) \dr m < \infty
$$
by~\eqref{scc}. Finally, if $\beta\geq \alpha/2$,
$$
\nn_\beta(\ell>1)=\int_\HH |x|^{-d-\beta} \PP_x(\ell(Z)>1)\dr x
$$
is infinite, since $\PP_x(\ell(Z)>1)\sim c x_1^{\alpha/2}$ as $|x|\to 0$.
\end{proof}

We now show that
$\nn_*$-a.e. $e \in \cE$ does not instantaneously leave a ball tangent to $\partial \HH$ at $0$.

\begin{lemma}\label{imp}
Fix $r>0$ and set $\ell_r(e)=\inf\{t>0 : e(t)\notin {B}_d(r\be_1,r)\}$. Then $\nn_*(\ell_r=0)=0$.
\end{lemma}

\begin{proof}
We have $\HH\setminus B_d(r\be_1,r)=\{x \in \HH : 2rx_1 \leq |x|^2\}\subset \{x \in \HH : rx_1<|x|^2\}=:D_r$,
so that $\ell_r(e)\geq \rho_r(e)$, where $\rho_r(e)=\inf\{t>0 : e(t)\in D_r\}$. It thus suffices
that $\nn_*(\rho_r=0)=0$. Since $\ell>0$ on $\cE$, it suffices that
$\nn_*(\rho_r=0,\ell>\delta)=0$ for all $\delta>0$ and, by 
scaling  (see Lemma~\ref{scaling}), we may assume that $\delta=2$.
We would like to apply the Markov property (Lemma~\ref{mark}) at time $\rho_r$
but this would require that $\rho_r>0$, which is precisely what we want to check.

\vip

For $n\geq1$, we set $D_{r,n}=\{x \in \HH : rx_1<|x|^2-1/n\}$ and $\rho_{r,n}(e)=\inf\{t>0 : e(t) \in D_{r,n}\}$.
It suffices to check that
\begin{equation}\label{bpfqa}
\lim_{\eta\to 0}\liminf_{n} \nn_*(\rho_{r,n}<\eta, \ell>2)=0.
\end{equation}
Indeed, for each $\eta>0$, $\{\rho_r<\eta\} \subset 
\liminf_{n} \{\rho_{r,n}<\eta\}$, because $\rho_r(e)<\eta$ implies that there is $t\in [0,\eta)$
such that $e(t) \in D_r$, so that there is $n_0\geq 1$ such that $e(t) \in \cap_{n\geq n_0} D_{r,n}$,
whence $\rho_{r,n}(e)<\eta$ for all $n\geq n_0$.
Consequently,
$\nn_*(\rho_{r}<\eta, \ell>2)\leq \liminf_{n} \nn_*(\rho_{r,n}<\eta, \ell>2)$. Thus
$\nn_*(\rho_{r}=0, \ell>2)\leq \lim_{\eta\to 0}\liminf_{n} \nn_*(\rho_{r,n}<\eta, \ell>2)$.

\vip
Recall that $\nn_*$ is carried by  $\cE_0=\{e \in \cE, e(0)=0\}$.
For each $n\geq 1$, each $e\in \cE_0$, there is 
$\e>0$ such that for all $t\in (0,\e]$, $e_1(t)>0$ and $|e(t)|^2<1/n$, 
implying that $\rho_{r,n}(e)\geq \e>0$. Thus we may apply Lemma~\ref{mark} to write, if $\eta \in (0,1]$,
\begin{align*}
\nn_*(\rho_{r,n}<\eta, \ell>2)=&\int_{\cE} \indiq_{\{\rho_{r,n}(e)<\ell(e),\rho_{r,n}(e)<\eta\}}
\PP_{e(\rho_{r,n}(e))}(\ell(Z)>2-\rho_{r,n}(e)) \nn_*(\dr e)\\
\leq&\int_{\cE} \indiq_{\{\rho_{r,n}(e)<\eta\land \ell(e)\}}\PP_{e(\rho_{r,n}(e))}(\ell(Z)>1)\nn_*(\dr e).
\end{align*}
Recalling Lemma~\ref{pr5} and using that $e(\rho_{r,n}(e)) \in D_{r,n}\subset D_r$,
we conclude that 
$$
\PP_{e(\rho_{r,n}(e))}(\ell(Z)>1) \leq C [e_1(\rho_{r,n}(e))  \land 1 ]^{\alpha/2}
\leq C [|e(\rho_{r,n}(e))|^2\land 1]^{\alpha/2}=C [|e(\rho_{r,n}(e))|^\alpha\land 1], 
$$
the constant $C$ being allowed to vary and to depend on $r$. Thus if $\rho_{r,n}(e)<\eta\land \ell(e)$,
$$
\PP_{e(\rho_{r,n}(e))}(\ell(Z)>1) \leq C \Big(\sup_{t\in [0,\eta \land \ell(e)]} |e(t)|^\alpha \land 1\Big).
$$
Consequently,
$$
\nn_*(\rho_{r,n}<\eta, \ell>2)
\leq  C \int_{\cE} \Big(\sup_{t\in [0,\eta \land \ell(e)]} |e(t)|^\alpha \land 1\Big) \nn_*(\dr e).
$$
This last quantity does not depend on $n\geq 1$ and tends to $0$ as $\eta\to 0$ by dominated convergence, since 
$\sup_{t\in [0,\eta \land \ell(e)]} |e(t)|^\alpha \to 0$ for all $e\in \cE_0$ and since
$$
\int_{\cE} \Big(\sup_{t\in [0,\ell(e)]} |e(t)|^\alpha \land 1\Big) \nn_*(\dr e)=
\int_{\cE} ((M(e))^\alpha\land 1) \nn_*(\dr e)=\int_0^1 \nn_*(M>m^{1/\alpha})\dr m,
$$
which is finite since $\nn_*(M>m^{1/\alpha})=d_* m ^{-1/2}$ 
by Lemma~\ref{qdist}. We have proved~\eqref{bpfqa}.
\end{proof}

Finally, the following result is almost immediate.

\begin{lemma}\label{ai}
Suppose Assumption~\ref{as}, fix $x\in\pDd$ and recall that $Z$ is, under $\PP_x$, an $ISP_{\alpha,x}$.
Then $\PP_x(\tell(Z)=0)=1$, where $\tell(Z)=\inf\{t>0 : Z_t \notin \Dd\}$.
\end{lemma}

\begin{proof} The process $Y_t=(Z_t - x)\cdot \bn_x$ is a one-dimensional symmetric $\alpha$-stable process 
issued from $0$,
and by convexity of $\Dd$, $\tell(Z)\leq \inf\{t>0 : Y_t\leq 0\}=:\rho(Y)$.
Using Bertoin~\cite[Theorem~5 p~222]{bertoin1996levy}, $\liminf_{t\to 0} t^{-1/\alpha}Y_t=-\infty$,
which implies that $\rho(Y)=0$.
\end{proof}

\section{A crucial estimate}\label{sec:crucial}

The goal of this section is to establish the following Lipschitz estimate, which is necessary to show that
the S.D.E.~\eqref{sdeb} defining the boundary process $(b_u)_{u\geq 0}$ is well-posed, 
with a continuous dependence in the initial condition.

\begin{proposition}\label{ttoopp}
Fix $\beta \in \{*\}\cup (0,\alpha/2)$ and suppose Assumption~\ref{as}.
There is a constant $C>0$ such that for all $x,x' \in \pDd$, all
$A \in \cI_x$, $A'\in\cI_{x'}$,
$$
\Delta(x,x',A,A'):= \int_\cE \Big||g_x(A,e)-g_{x'}(A',e)|- |x-x'|\Big| 
\nn_\beta (\dr e) \leq C (|x-x'|+||A-A'||).
$$
\end{proposition}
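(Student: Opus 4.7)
By the reverse triangle inequality applied with $u=g_x(A,e)-g_{x'}(A',e)$ and $v=x-x'$, the integrand is bounded by $|\delta_x(A,e)-\delta_{x'}(A',e)|$, where $\delta_y(B,e):=g_y(B,e)-y$ is the displacement of the exit point from the base point. Telescoping gives
\begin{equation*}
\delta_x(A,e)-\delta_{x'}(A',e) = [\delta_x(A,e)-\delta_x(A',e)] + [\delta_x(A',e)-\delta_{x'}(A',e)],
\end{equation*}
so it suffices to control the $\nn_\beta$-integral of each bracket by $C\,||A-A'||$ and $C\,|x-x'|$ respectively. The overall plan is to obtain a pointwise geometric estimate for each bracket and to integrate using the tail estimates of Lemmas~\ref{qdist}--\ref{qdist2} and the sharp estimates on the joint law of the undershoot $\cu(x,e)$ and overshoot $\co(x,e)$ provided by Proposition~\ref{underover}.

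For the rotation bracket, since $h_x(A,\cdot)-h_x(A',\cdot)=(A-A')\cdot$, a direct comparison of the two segments of which $\Lambda$ takes the intersection with $\pDd$, together with a control of the possible mismatch of the exit times $\cl_x(A,e)$ and $\cl_x(A',e)$ via strong convexity, yields a pointwise bound $|\delta_x(A,e)-\delta_x(A',e)|\le C\,||A-A'||\cdot(M(e)\wedge \mathrm{diam}(\Dd))$. Its $\nn_\beta$-integrability is then immediate from \eqref{se1}.

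For the translation bracket, which is the heart of the matter, I would exploit the $C^3$ local parameterization of $\pDd$ furnished by Remark~\ref{asp} and the uniform lower curvature bound $\eta$ of strong convexity to compare, for the \emph{same} excursion $e$, the exit configurations under two nearby base points $x,x'$. The geometric inequalities of Proposition~\ref{tyvmmm} (proved in Appendix~\ref{ageo}), which genuinely require strong convexity and fail for merely convex domains (Remark~\ref{strange}), should yield a pointwise bound of the form
\begin{equation*}
\bigl|\delta_x(A',e)-\delta_{x'}(A',e)\bigr| \le C\,|x-x'|\cdot \Phi\bigl(\cu(x,e),\co(x,e)\bigr),
\end{equation*}
for a nonnegative geometric weight $\Phi$ depending only on the undershoot-overshoot pair. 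Proposition~\ref{underover}, which gives sharp two-sided estimates on the joint law of $(\cu(x,e),\co(x,e))$ under $\nn_\beta$ via Green function bounds for the isotropic stable process in $\HH$, then yields $\int_\cE \Phi(\cu(x,e),\co(x,e))\,\nn_\beta(\dr e)<\infty$ uniformly in $x\in\pDd$, closing the bound.

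The main obstacle, and the reason Section~\ref{sec:crucial} is technically heavy, is that $\nn_\beta$ has infinite mass, so the naive bound $\big||g_x(A,e)-g_{x'}(A',e)|-|x-x'|\big|\le 2\,\mathrm{diam}(\Dd)$ is useless: the pointwise inequality must gain decay for both short excursions (where the linearizations of $\pDd$ at $x$ and at $x'$ differ only at second order, which is precisely where the uniform curvature bound $\eta$ is used) and large excursions (where the overshoot lies far from $\pDd$). A split into a local and a global regime at a threshold tuned to $|x-x'|$, treated respectively by the Appendix inequalities and by crude geometry plus the tail bounds $\nn_\beta(M>m)\lesssim m^{-\alpha/2}$ or $m^{-\beta}$, should close the argument, but the matching so that every resulting integral is absorbed by Proposition~\ref{underover} is where the delicate bookkeeping lies.
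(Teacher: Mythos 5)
Your plan correctly identifies the three ingredients (the geometric inequalities of Proposition~\ref{tyvmmm}, the undershoot/overshoot law of Proposition~\ref{underover}, the tail bounds \eqref{se1}), but the decomposition you build on them has two genuine gaps. First, the telescoping term $\delta_x(A',e)=g_x(A',e)-x$ pairs the base point $x$ with an isometry $A'\in\cI_{x'}\setminus\cI_x$. Every estimate you intend to invoke is proved only for matched pairs: for $A\in\cI_x$ one has $\Dd\subset\HH_x$, $\{y:h_x(A,y)\in\Dd\}\subset\HH$, $B_d(x+r\bn_x,r)\subset\Dd$ (Remark~\ref{imp4}), and these facts underlie Proposition~\ref{underover}, Proposition~\ref{tyvmmm} and even the statement $\cl_x(A,e)>0$ for $\nn_*$-a.e.\ $e$. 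With the tilted pair $(x,A')$ the image half-space is no longer tangent at $x$, part of $\Dd$ near $x$ lies outside it, and none of the cited machinery applies to your ``rotation bracket'' as written.

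Second, and decisively, neither of your claimed pointwise bounds holds on the event where the two exit times differ, and this event is exactly where the work is. If the excursion exits $\Dd_{x,A}$ by a jump landing in the thin sliver $\Dd_{x',A'}\setminus\Dd_{x,A}$ (or its $A$-versus-$A'$ analogue), it keeps running inside the second domain and can travel a distance of order one before exiting it; on that event $|\delta_x(A,e)-\delta_{x'}(A',e)|$ is \emph{not} bounded by $C(|x-x'|+||A-A'||)$ times any weight depending only on the first undershoot/overshoot pair, so no inequality of the form $C\,||A-A'||\,(M(e)\wedge D)$ or $C\,|x-x'|\,\Phi(\cu(x,e),\co(x,e))$ can close the argument, and ``strong convexity controls the mismatch of exit times'' is not a substitute. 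The smallness of this contribution is only in expectation: one applies the strong Markov property of $\nn_\beta$ at the first exit time (Lemma~\ref{mark}), bounds the conditional displacement of the continued excursion by $C[\delta(h_{x'}(A',z))]^{\alpha/2}$ using Lemma~\ref{pr5} (this is \eqref{ttc}), and then uses that the overshoot charges the sliver with total weight $O(|x-x'|+||A-A'||)$, which is the content of \eqref{jjab3} and \eqref{jjab2} via Lemmas~\ref{paradurf} and~\ref{paradur}. This is the term $\Delta_{22}$ in the paper's proof; your proposal assumes it away. Note also that Proposition~\ref{tyvmmm} compares $\bg_x(A,y,z)$ and $\bg_{x'}(A',y,z)$ only under joint inside/outside constraints on the \emph{same} pair $(y,z)$ (parts (i)--(iii)), with a separate statement (ii) precisely for the mismatched case, and that for $\beta\in(0,\alpha/2)$ the atom $\delta_0(\dr y)$ in $q_x$ (excursions starting already outside $\Dd_{x,A}$) needs the separate treatment via (iii)--(iv) and \eqref{jjab5}, \eqref{jjab2}; your local/global split tuned to $|x-x'|$ does not reproduce either of these mechanisms.
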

The proof of this proposition is long and tedious, and relies on fine estimates on the excursion 
measure $\nn_\beta$, as well as on a geometric inequality which will be proved in Appendix~\ref{ageo}. However, 
this long section (as well as the whole Appendix~\ref{ageo}) can be omitted when $\Dd$ is a Euclidean ball, 
in which case this inequality is quite straightforward to obtain.
\begin{proof}[Proof of Proposition~\ref{ttoopp} when $\Dd=B_d(0,1)$]
Let us observe that 
for $x \in \partial B_d(0,1)$ and $A \in \cI_x$, we have 
$\{y \in \R^d : h_x(A,y) \in {B}_d(0,1)\}={B}_d(\be_1,1)$. Indeed, since $|x|=1$ and
$\bn_x=-x$, we have $|x+A y|^2< 1$ if and only if $|Ay|^2+2x\cdot Ay <0$ if and only if
$|y|^2-2 \be_1\cdot y < 0$ (because $|Ay|=|y|$ and $x\cdot Ay=-A\be_1 \cdot Ay=-\be_1\cdot y$)
if and only if $|y-\be_1|^2<1$. Thus
$$
\cl_x(A,e)=\inf\{t>0 : h_x(A, e(t)) \notin {B}_d(0,1)\}
=\inf\{t>0 : e(t) \notin {B}_d(\be_1,1)\}=:\tell(e),
$$
and $g_x(A,e)=(h_x(A,e(\tell(e)-)),h_x(A,e(\tell(e)))]\cap \partial B_d(0,1)=
x+A \tilde g(e)$, where 
$$
\tilde g(e)= (e(\tell(e)-),e(\tell(e))]\cap \partial B_d(\be_1,1).
$$
As a consequence, $g_x(A,e)-g_{x'}(A',e)=x-x' + (A-A')\tilde g(e)$, so that 
$$
\Big||g_x(A,e)-g_{x'}(A',e)|-|x-x'|\Big| \leq |(A-A')\tilde g(e)|\leq ||A-A'|||\tilde g(e)|.
$$
Since $|\tilde g(e)| \leq M(e) \land 2$, we find 
$\Delta(x,x',A,A')\leq ||A-A'||\int_{\cE} (M(e)\land 2)\nn_\beta(\dr e)$ and
the conclusion follows from~\eqref{se1}.
\end{proof}
We will frequently use the following lemma.

\begin{lemma}\label{paradurf}
Grant Assumption~\ref{as}.
For $x\in \pDd$ and $A\in\cI_x$, let $\Dd_{x,A}=\{y \in \R^d : h_x(A,y)\in\Dd\}$.
\vip
(i) For all $x \in \pDd$, all $A\in \cI_x$, all $y \in \cDd_{x,A}$, $|y| \leq \mathrm{diam}(\Dd)$.

\vip

(ii) There is $C>0$ such that for all $x,x' \in \pDd$, all $A \in \cI_x$, all $A'\in \cI_{x'}$,
$$
{\rm Vol}_d(\Dd_{x',A'}\setminus\Dd_{x,A})\leq C \rho_{x,x',A,A'}
$$
where $\rho_{x,x',A,A'}=|x-x'|+||A-A'||$, and for all $z \in \cDd_{x',A'}\setminus \Dd_{x,A}$,
\begin{gather*}
d(z,\pDd_{x,A})=d(h_{x}(A,z),\pDd) \leq C \rho_{x,x',A,A'},\\
d(z,\pDd_{x',A'})=d(h_{x'}(A',z),\pDd) \leq C \rho_{x,x',A,A'}.
\end{gather*}
\end{lemma}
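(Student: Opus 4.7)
The plan is to reduce the lemma to a single Lipschitz-in-$(x,A)$ bound together with one standard tube-formula estimate for $\pDd$. Part (i) is essentially free: since $A \in \cI_x$ is orthogonal, $h_x(A,\cdot)$ is a rigid isometry sending $0$ to $x$, so $|y| = |h_x(A,y) - x|$ for every $y \in \R^d$. When $y \in \cDd_{x,A}$, continuity of $h_x(A,\cdot)$ gives $h_x(A,y) \in \cDd$, and since $x \in \pDd \subset \cDd$ both points lie in $\cDd$; hence $|y| \leq \mathrm{diam}(\Dd)$.

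For (ii), I will write $\rho = \rho_{x,x',A,A'}$ and, for $y \in \R^d$, set $y^{*} = h_x(A,y) = x + Ay$ and $y^{**} = h_{x'}(A',y) = x' + A'y$. The algebraic identity $y^{*} - y^{**} = (x-x') + (A-A')y$, combined with part (i), produces the uniform bound
$$
|y^{*} - y^{**}| \leq |x-x'| + ||A - A'|| \cdot |y| \leq (1 + \mathrm{diam}(\Dd))\,\rho
$$
valid for every $y$ lying in $\cDd_{x,A} \cup \cDd_{x',A'}$. Now fix $z \in \cDd_{x',A'} \setminus \Dd_{x,A}$; then $z^{**} \in \cDd$ while $z^{*} \notin \Dd$, so the segment $[z^{*}, z^{**}]$ must cross $\pDd$ at some point $p$, and therefore $d(z^{*}, \pDd) \leq |z^{*} - p| \leq |z^{*} - z^{**}| \leq C\rho$ and likewise $d(z^{**}, \pDd) \leq C\rho$. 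Since $h_x(A,\cdot)$ is an isometry mapping $\pDd_{x,A}$ bijectively onto $\pDd$, we have $d(z, \pDd_{x,A}) = d(z^{*}, \pDd)$, and analogously for the primed quantities; this gives the two pointwise distance estimates in the statement.

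For the volume bound, the inclusion just obtained yields
$$
\Dd_{x',A'} \setminus \Dd_{x,A} \subset \bigl\{\, y \in \R^d : y^{**} \in \cDd,\ d(y^{**}, \pDd) \leq C\rho \,\bigr\}.
$$
Since $y \mapsto y^{**}$ preserves Lebesgue measure, the volume of the right-hand side equals $\mathrm{Vol}_d(\{z \in \cDd : d(z, \pDd) \leq C\rho\})$, which is $O(\rho)$ by the standard tube-formula estimate for the compact $C^3$ hypersurface $\pDd$ (for $\rho$ bounded away from zero one instead uses the trivial bound $\mathrm{Vol}_d(\Dd)$, absorbed into the constant). The only non-trivial input is therefore this tube-formula estimate, which relies only on compactness and $C^2$-regularity of $\pDd$ and is classical under Assumption~\ref{as}; in particular strong convexity plays no role in this lemma, and I do not anticipate any real obstacle beyond carefully tracking that the constant $C$ depends solely on $\Dd$ and not on $x, x', A, A'$.
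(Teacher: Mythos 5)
Your proof is correct and follows essentially the same route as the paper: part (i) and the pointwise distance bounds are identical (use the isometry, then the segment joining $h_{x'}(A',z)\in\cDd$ to $h_x(A,z)\notin\Dd$ must meet $\pDd$, so both distances are bounded by $|x-x'|+||A-A'||\,|z|\leq C\rho_{x,x',A,A'}$). The only divergence is the collar-volume step: you cite the classical tube estimate for the compact $C^2$ hypersurface $\pDd$ (inner collar, transported by $h_{x'}(A',\cdot)$), whereas the paper keeps this self-contained by pushing the outer collar through its bi-Lipschitz diffeomorphism $\Phi:\Dd\to B_d(0,1)$ of Lemma~\ref{parafac} and computing an explicit annulus volume -- both yield the required uniform $O(\rho_{x,x',A,A'})$ bound.
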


\begin{proof} 
For (i), take $y \in \cDd_{x,A}$. Then $|y|=|Ay|=|h_x(A,y)-x| \leq \mathrm{diam}(\Dd)$,
since $h_x(A,y) \in \cDd$.
\vip
For (ii), take $z \in \cDd_{x',A'}\setminus \Dd_{x,A}$, {\it i.e.} $h_{x'}(A',z)\in \cDd$ but 
$h_{x}(A,z)\notin \Dd$. Thus
$$
d(h_{x}(A,z),\pDd)\lor d(h_{x'}(A',z),\pDd) \leq |h_{x'}(A',z)-h_{x}(A,z)|
\leq |x-x'|+||A-A'||\,|z|\leq C  \rho_{x,x',A,A'},
$$
where $C=1\lor \mathrm{diam}(\Dd)$, since $|z|\leq \mathrm{diam}(\Dd)$ by (i).
Since $\pDd$ is Lebesgue-null, we deduce that
$$
\mathrm{Vol}_d(\Dd_{x',A'}\setminus \Dd_{x,A})\leq \int_{\R^d} \indiq_{\{0<d(h_x(A,z),\Dd)\leq C \rho_{x,x',A,A'}\}}\dr z
=\int_{\R^d} \indiq_{\{0<d(u,\Dd)\leq C \rho_{x,x',A,A'}\}}\dr u,
$$
since $h_x(A,\cdot)$ is an isometry. Substituting $u=\Phi^{-1}(v)$ with $\Phi$ defined in 
Lemma~\ref{parafac}, using that the Jacobian is bounded and that 
$\kappa^{-1} d(v,{B}_d(0,1))\leq d(\Phi^{-1}(v),\Dd)\leq \kappa d(v,{B}_d(0,1))$, we get
$$
\mathrm{Vol}_d(\Dd_{x',A'}\setminus \Dd_{x,A})\leq C\int_{\R^d} \indiq_{\{0<d(v,{B}_d(0,1))\leq \kappa C \rho_{x,x',A,A'}\}}
\dr v
= C \int_{\Sp_{d-1}} \dr \sigma \int_1^{1+\kappa C  \rho_{x,x',A,A'}}r^{d-1}\dr r,
$$
which is smaller than $C\rho_{x,x',A,A'}$ as desired since $\rho_{x,x',A,A'}$ is uniformly bounded 
(recall that $x,x'$ lie at the boundary of a bounded domain and that $A,A'$ are isometries).
\end{proof}

\subsection{Joint law of the undershoot and overshoot}

For $x \in \pDd$, $A \in \cI_x$ and $y,z \in \R^d$ such that $h_x(A,y)\in\cDd$, we set
\begin{equation}\label{bg}
\bg_x(A,y,z) = \Lambda\Big(h_x(A,y),h_x(A,z)\Big).
\end{equation}
When $h_x(A,z) \notin \Dd$, $\bg_x(A,y,z) =(h_x(A,y),h_x(A,z)]\cap\pDd$.
To motivate this section, let us start with the following observation,
that immediately follows from~\eqref{g1} and~\eqref{bg}.

\begin{remark}\label{ggbar}
For any $x\in \pDd$, any $A \in \cI_x$, any $e\in \cE$, recalling that $h_x(A,y)=x+Ay$, 
that $\cl_x(A,e)=\inf\{t>0 : h_x(A,e(t))\notin \Dd\}$
and introducing $\cu(x,A,e)=e(\cl_x(A,e)-)$ and $\co(x,A,e)=e(\cl_x(A,e))$, it holds that
$$
g_x(A,e)=\bg_x(A,\cu(x,A,e),\co(x,A,e)).
$$
\end{remark}

Observe that $h_x(A,\cu(x,A,e))$ (resp. $h_x(A,\co(x,A,e))$) is the position of $h_x(A,e)$ 
just before (resp. just after) exiting $\Dd$. As $g_x(A,e)$ is a deterministic function of 
$(\cu(x,A,e),\co(x,A,e))$, a first step towards the proof of Proposition~\ref{ttoopp} is to obtain 
estimates on the law of $(\cu(x,A,e),\co(x,A,e))$ under $\nn_\beta$. For $y\in \R^d$, we introduce $\delta(y)=d(y,\pDd)$.

\begin{proposition}\label{underover}
Fix $\beta \in \{*\}\cup (0,\alpha/2)$ and grant Assumption~\ref{as}.
There is a constant $C$ such that for all $x \in \pDd$, all $A\in \cI_x$,
\begin{align*}
q_x(\dr y,\dr z&):=\nn_\beta(\cu(x,A,e) \in \dr y, \co(x,A,e)\in \dr z) \\
\leq& \frac{C[\delta(h_x(A,y))]^{\alpha/2}}{|z-y|^{d+\alpha}|y|^{d}}\indiq_{\{h_x(A,y)\in \Dd,h_x(A,z)\notin \Dd\}} 
\dr y \dr z
\!+\!\indiq_{\{\beta \neq *\}} \frac{C}{|z|^{d+\beta}}\indiq_{\{z\in \HH,h_x(A, z)\notin \Dd\}} \delta_0(\dr y) \dr z,
\end{align*}
where $\dr y$ and $\dr z$ stand for the Lebesgue measure on $\R^d$ and $\delta_0$ for the Dirac mass at $0$.
\end{proposition}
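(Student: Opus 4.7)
The plan is to view $(\cu,\co)$ as the pre- and post-jump positions at the first jump of the excursion across $\partial D$, where $D := \Dd_{x,A} \subset \HH$, to apply the Ikeda--Watanabe / compensation formula for the jumps of the underlying stable process, and to finish with Green-function bounds for the process killed upon exiting $D$.

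First I would split $q_x = q_x^{(0)} + q_x^{(1)}$ according to whether $\cl_x(A,e) = 0$ or $\cl_x(A,e) > 0$. The part $q_x^{(0)}$ vanishes under $\nn_*$ by Lemma~\ref{imp}(iii) and Remark~\ref{imp4}, since then $\cl_x(A,e) \geq \ell_r(e) > 0$ for $\nn_*$-a.e.\ $e$. Under $\nn_\beta$ with $\beta \in (0,\alpha/2)$, the definition \eqref{nnb} (law of $e(0)$ equal to $|u|^{-d-\beta}\indiq_\HH(u)\,\dr u$) together with the convention $e(0-)=0$ yields
\[
q_x^{(0)}(\dr y,\dr z) = |z|^{-d-\beta}\,\indiq_{\{z\in\HH,\, h_x(A,z)\notin\Dd\}}\,\delta_0(\dr y)\,\dr z,
\]
which is precisely the second term of the claimed bound (with $C=1$).

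On $\{\cl_x(A,e)>0\}$, Lemma~\ref{mark} applied at any $\e > 0$ (then let $\e\to 0$) shows that the excursion evolves as an ISP killed on exit from $\HH$, so that the exit from $D$ is a pure jump with L\'evy kernel $|z-y|^{-d-\alpha}\dr z$. The compensation / Ikeda--Watanabe formula then gives
\[
q_x^{(1)}(\dr y,\dr z) = g_{x,A}^{\beta}(y)\,\frac{\dr y\,\dr z}{|z-y|^{d+\alpha}}\,\indiq_{\{y\in D,\, z\notin D\}},
\]
where $g_{x,A}^{\beta}(y)\,\dr y := \int_\cE \nn_\beta(\dr e)\int_0^{\cl_x(A,e)} \indiq_{\{e(s)\in\dr y\}}\,\dr s$ is a Green-type density. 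For $\beta \in (0,\alpha/2)$, \eqref{nnb} directly gives $g_{x,A}^{\beta}(y) = \int_D G^D(u,y)\,|u|^{-d-\beta}\,\dr u$, where $G^D$ is the Green function of the $\alpha$-stable process killed on exit from $D$. For $\beta=*$, Lemma~\ref{mark} at time $\e$ combined with the asymptotic $\int \varphi(w)\,w_1^{\alpha/2}\,k_\e(\dr w) \to c_0\varphi(0)$ of Lemma~\ref{encore1} produces $g_{x,A}^{*}(y) = c_0\lim_{w\to 0,\,w\in D} w_1^{-\alpha/2}\,G^D(w,y)$, namely a constant multiple of the Martin kernel of $D$ at $0$ evaluated at $y$.

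The statement thus reduces to the uniform bound $g_{x,A}^{\beta}(y) \leq C\,[\delta_D(y)]^{\alpha/2}/|y|^d$, where $\delta_D(y) = \delta(h_x(A,y))$ since $h_x(A,\cdot)$ is an isometry. I would use the sharp two-sided Green-function estimates available for $C^{1,1}$ bounded open sets, $G^D(u,y) \lesssim \min\bigl(|u-y|^{\alpha-d},\,[\delta_D(u)\delta_D(y)]^{\alpha/2}|u-y|^{-d}\bigr)$, and split the $u$-integral into the three regions $\{|u|\le |y|/2\}$, $\{|u-y|\le |y|/2\}$, and the intermediate annulus. Strong convexity of $D$ at $0 \in \partial D$ tangent to $\partial\HH$ forces $\delta_D(u) \lesssim |u|$ for $u$ near $0$, so each piece is bounded by $C[\delta_D(y)]^{\alpha/2}/|y|^d$, integrability at $0$ being ensured precisely by $\beta < \alpha/2$; the $\beta=*$ case is the boundary-point asymptotic of the same Green-function bound. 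The main obstacle is getting these Green-function bounds with constants \emph{uniform} across the family $\{\Dd_{x,A}\}_{x\in\pDd,\,A\in\cI_x}$: the uniform $C^3$ parametrization provided by Assumption~\ref{as} and Remark~\ref{asp}, together with strong convexity, yields uniform interior and exterior ball conditions for all these isometric copies of $\Dd$, and it is exactly this uniformity that rescues the estimate.
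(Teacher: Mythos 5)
Your proposal is correct and follows essentially the same route as the paper: split off the $\cl_x=0$ mass as the Dirac term, apply the Ikeda--Watanabe/compensation formula together with Chen's sharp Green-function bounds for the killed stable process (this is the paper's Lemma~\ref{uos}), bound the resulting Green-type density by $C[\delta(h_x(A,y))]^{\alpha/2}|y|^{-d}$ using $\delta(h_x(A,u))\leq |u|$ and the integrability provided by $\beta<\alpha/2$, and for $\beta=*$ pass to the boundary point via the entrance-law asymptotics of Lemma~\ref{encore1}. The only point you leave implicit --- and which the paper spells out --- is the uniform-integrability argument needed to exchange the entrance-law limit with the integration of the kernel $a\mapsto a_1^{-\alpha/2}G^{D}(a,y)$, which is unbounded near $a=y$.
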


We believe that these bounds are sharp when $\beta=*$ 
as they are derived from sharp estimates on the Green function 
of isotropic stable processes in $C^{1,1}$ domains. Moreover, we also believe that this inequality is an equality 
when $\beta=*$ and when $\Dd$ is a ball. 
The following result is likely to be well-known, although we found no precise reference.

\begin{lemma}\label{uos} 
Grant Assumption~\ref{as}. 
Recall that $(Z_t)_{t\geq 0}$ is, under $\PP_x$, an ISP$_{\alpha,x}$. There is a constant $C>0$
such that for all $x \in \Dd$, setting $\tell(Z)=\inf\{t> 0 : Z_t \notin \Dd\}$,
$$
\PP_x(Z_{\tell(Z)-}\in \dr y,Z_{\tell(Z)}\in \dr z ) \leq C \indiq_{\{y \in \Dd, z \notin \Dd\}}
K(x,y,z) \dr y \dr z,
$$
where
$$
K(x,y,z)=|z-y|^{-d-\alpha}|x-y|^{\alpha-d} \min \Big(\Big[\frac{\delta(x)\delta(y)}{|x-y|^2}\Big]^{\alpha/2},1 \Big).
$$
\end{lemma}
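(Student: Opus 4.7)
The plan is to combine two classical tools: the Ikeda--Watanabe formula, which expresses the joint law of the undershoot and overshoot at the exit time of a domain in terms of the Green function of the killed process and the L\'evy density of the jumps; and the sharp two-sided Green function estimates for isotropic $\alpha$-stable processes in bounded $C^{1,1}$ domains. Assumption~\ref{as} in particular guarantees that $\Dd$ is a bounded $C^{1,1}$ domain, so both tools are available.

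First, I would apply the Ikeda--Watanabe formula. Since the L\'evy measure of $Z$ has density $c_{d,\alpha}|w|^{-d-\alpha}$, one has, for Borel sets $U\subset\Dd$ and $V\subset\cDd^c$,
$$
\PP_x\bigl(Z_{\tilde\ell(Z)-}\in U,\; Z_{\tilde\ell(Z)}\in V\bigr)=c_{d,\alpha}\int_U G_\Dd(x,y)\,\dr y \int_V \frac{\dr z}{|z-y|^{d+\alpha}},
$$
where $G_\Dd$ is the Green function of $Z$ killed upon leaving $\Dd$. No continuous exit contribution is missed, since $\pDd$ is polar for a pure-jump isotropic $\alpha$-stable process, so that $\PP_x(Z_{\tilde\ell(Z)}\in\pDd)=0$ for every $x\in\Dd$. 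This reduces the lemma to a pointwise upper bound on $G_\Dd(x,y)$.

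Second, I would invoke the classical sharp upper bound of Chen--Song (see also the works of Kulczycki, Jakubowski and Bogdan--Grzywny--Ryznar): there exists $C>0$, depending only on $\Dd$ and $\alpha$, such that for all $x,y \in \Dd$,
$$
G_\Dd(x,y) \leq C|x-y|^{\alpha-d}\min\Big(1,\Big[\frac{\delta(x)\delta(y)}{|x-y|^2}\Big]^{\alpha/2}\Big).
$$
Plugging this estimate into the Ikeda--Watanabe formula and absorbing $c_{d,\alpha}$ into $C$ yields exactly the announced kernel $K(x,y,z)$.

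The argument is essentially a compilation of known results. The only (minor) point to verify is the absence of continuous exit, which follows from the polarity of $\pDd$ for a pure-jump isotropic stable process; all of the geometric content of the lemma is hidden inside the Chen--Song estimate, whose proof crucially uses that $\Dd$ is $C^{1,1}$. No novel difficulty arises beyond quoting these two results and pasting them together.
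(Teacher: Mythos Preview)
Your proposal is correct and follows essentially the same route as the paper: the paper also quotes Chen's sharp Green function estimate for $C^{1,1}$ domains and then derives the Ikeda--Watanabe-type formula directly via the compensation formula for the Poisson measure driving $Z$. Your explicit remark that $\pDd$ is polar (so the exit is a.s.\ by a jump) is a welcome clarification that the paper leaves implicit in writing the first equality of its computation.
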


Observe that $\cu(x,A,e)=y$ and $\co(x,A,e)=z$ means that the undershoot (resp. overshoot) of $h_x(A,e)$ equals
$h_x(A,y)$ (resp. $h_x(A,z)$). Observe also that since $h_x(A,\cdot)$ is an isometry, for $(Z_t)_{t\geq 0}$ 
an ISP$_{\alpha,a}$, 
the process $(h_x(A,Z_t))_{t\geq 0}$ is an ISP$_{\alpha,h_x(A,a)}$.
Admitting~\eqref{newformula} and Lemma~\ref{uos}, we thus find that when $\beta=*$, informally,
\begin{align*}
q_x(\dr y,\dr z)=& a_*\lim_{r\to 0} r^{-\alpha/2}\PP_{r\be_1}\Big(\cu(x,A,Z)\in \dr y, \co(x,A,Z) \in \dr z\Big)\\
\leq & C \indiq_{\{h_x(A,y) \in \Dd,h_x(A,z)\notin \Dd\}}\lim_{r\to 0} r^{-\alpha/2} K(h_x(A,r\be_1),h_x(A,y),h_x(A,z))\\
=& C  \indiq_{\{h_x(A,y) \in \Dd,h_x(A,z)\notin \Dd\}} |z-y|^{-d-\alpha}|y|^{-d} [\delta(h_x(A,y))]^{\alpha/2}.
\end{align*}
However, even if we assume~\eqref{newformula}, such an argument is not so easy to make rigorous, 
in particular because the undershoot/overshoot is
not a continuous function of the path.

\begin{proof}[Proof of Lemma~\ref{uos}] By Chen~\cite[Theorem 2.4]{chen1999} 
(with much less assumptions on the domain)
we know that there is a constant $C>0$ such that for all $x \in \Dd$, 
$$
G(x,\dr y):=\E_x\Big[\int_0^{\tell(Z)} \indiq_{\{Z_t \in \dr y\}}\dr t\Big]
\leq C |x-y|^{\alpha-d} \min\Big(\Big[\frac{\delta(x)\delta(y)}{|x-y|^2}\Big]^{\alpha/2},1\Big) \indiq_{\{y \in \Dd\}}
\dr y.
$$
We next apply a classical method, see e.g. Bertoin~\cite[Section III, page 76]{bertoin1996levy}.
For $x \in \Dd$  and $f:(\R^d)^2 \to \R_+$ bounded, we write, using the Poisson measure $N$ defining
the stable process $Z$, see~\eqref{eqs}, as well as the compensation formula,
\begin{align*}
\E_x[f(Z_{\tell (Z)-},Z_{\tell(Z)})]=&\E_x\Big[\int_0^\infty \int_{\R^d}
f(Z_\sm,Z_\sm+z) \indiq_{\{\tell(Z)\geq s\}}\indiq_{\{Z_\sm +z \notin \Dd\}} N(\dr s,\dr z)  \Big]\\
=&\E_x\Big[\int_0^\infty \int_{\R^d}
f(Z_s,Z_s+z) \indiq_{\{\tell(Z)\geq s\}}\indiq_{\{Z_s +z \notin \Dd\}} \frac{\dr z}{|z|^{d+\alpha}} \dr s \Big]\\
=&\E_x\Big[\int_0^{\tell(Z)} \int_{\R^d} f(Z_s,Z_s+z) 
\indiq_{\{Z_s +z \notin \Dd\}} \frac{\dr z}{|z|^{d+\alpha}} \dr s\Big]\\
=& \int_{\R^d} \int_{\R^d} f(y,y+z) \indiq_{\{y+z \notin \Dd\}} \frac{\dr z}{|z|^{d+\alpha}} G(x,\dr y).
\end{align*}
Thus 
\begin{align*}
\E_x[f(Z_{\tell (Z)-},Z_{\tell(Z)})]
\leq & C \int_{\R^d} \! \int_{\R^d} f(y,y+z)
|x-y|^{\alpha-d} \min\Big(\Big[\frac{\delta(x)\delta(y)}{|x-y|^2}\Big]^{\alpha/2}\!\!\!,1\Big) 
\indiq_{\{y+z \notin \Dd,y \in \Dd\}}
\frac{\dr y \dr z}{|z|^{d+\alpha}}\\
=& C  \int_{\R^d} \! \int_{\R^d} f(y,z)
|x-y|^{\alpha-d} \min\Big(\Big[\frac{\delta(x)\delta(y)}{|x-y|^2}\Big]^{\alpha/2}\!\!\!,1\Big) 
\indiq_{\{z \notin \Dd,y \in \Dd\}}
\frac{\dr y\dr z}{|z-y|^{d+\alpha}},
\end{align*}
whence the result.
\end{proof}

\begin{proof}[Proof of Proposition~\ref{underover}] 
We fix $x\in\pDd$, $A\in\cI_x$ and use the short notation ${h_x(y)=h_x(A,y)}$,
$\cl_x(e)=\cl_x(A,e)$, $\cu(x,e)=\cu(x,A,e)$ and $\co(x,e)=\co(x,A,e)$.
We set $\Dd_x=\{y \in \R^d : h_x(y) \in \Dd\}$.

\vip

{\it Case $\beta=*$.} Our goal is to prove that for any measurable $F:(\R^d)^2 \to \R_+$,
\begin{gather}
q_x(F):=\int_{(\R^d)^2}F(y,z)q_x(\dr y,\dr z) \leq C \int_{\Dd_x\times\Dd_x^c}F(y,z)G(x,y,z) \dr y \dr z,
\label{tbp1}\\
\hbox{where}\qquad G(x,y,z)=|y-z|^{-d-\alpha} |y|^{-d}[\delta(h_x(y))]^{\alpha/2}.\notag
\end{gather}
By density, it is sufficient to prove~\eqref{tbp1} when $F\in C_c(\Dd_x\times\cDd_x^c,\R_+)$, 
provided we can show that
\begin{equation}\label{tbp2}
q_x((\Dd_x^c\times\R^d) \cup (\R^d \times \cDd_x))=0.
\end{equation}

Fix any measurable $F:(\R^d)^2\to \R_+$.
By Remark~\ref{imp4}, $\cl_x(e)>0$ for $\nn_*$-a.e. $e$, so that
$$
q_x(F) = \lim_{t \to 0} q_{x,t}(F),\quad \hbox{where} \quad
q_{x,t}(F)= \int_\cE F(\cu(x,e),\co(x,e))\indiq_{\{\cl_x(e)>t\}}\nn_*(\dr e).
$$
We now apply the Markov property at time $t>0$, see Lemma~\ref{mark}, to find
$$
q_{x,t}(F)= \int_\cE \E_{e(t)}[F(\cu(x,Z),\co(x,Z))]\indiq_{\{\cl_x(e)>t\}}\nn_*(\dr e),
$$
Since $Z$ is, under $\PP_a$, an ISP$_{\alpha,a}$ and since $h_x$ is an isometry,
$\tilde Z=h_x(Z)$ is, under $\PP_a$, an ISP$_{\alpha,h_x(a)}$. Moreover,
$\tell (\tilde Z):=\inf\{t> 0 : \tilde Z_t \notin \Dd\}=\cl_x(Z)$,
$\cu(x,Z)=h_x^{-1}(\tilde Z_{\tell(\tilde Z)-})$ and $\co(x,Z)=h_x^{-1}(\tilde Z_{\tell(\tilde Z)})$,
whence 
$$
\E_a[F(\cu(x,Z),\co(x,Z))]\!=\!\E_a[F(h_x^{-1}(\tilde Z_{\tell(\tilde Z)-}),h_x^{-1}(\tilde Z_{\tell(\tilde Z)})))
]\!=\!\E_{h_x(a)}[F(h_x^{-1}(Z_{\tell(Z)-}),h_x^{-1}(Z_{\tell(Z)}))]
$$
and thus, by Lemma~\ref{uos},
\begin{equation}\label{ttuu}
\E_a[F(\cu(x,Z),\co(x,Z))]\leq C \int_{(\R^d)^2}F(h_x^{-1}(y),h_x^{-1}(z))\indiq_{\{y \in \Dd, z \notin \Dd\}}K(h_x(a),y,z)
\dr y \dr z.
\end{equation}
Consequently,
\begin{align*}
q_{x,t}(F)
\leq & C \int_\cE \int_{(\R^d)^2}F(h_x^{-1}(y),h_x^{-1}(z))\indiq_{\{y \in \Dd, z \notin \Dd\}}K(h_x(e(t)),y,z)
\dr y \dr z \indiq_{\{\cl_x(e)>t\}}\nn_*(\dr e).
\end{align*}
Performing the substitution $y\mapsto h_x(y), z\mapsto h_x(z)$, we find
\begin{align*}
q_{x,t}(F)\leq & C \int_\cE \int_{(\R^d)^2}F(y,z)\indiq_{\{h_x(y) \in \Dd, h_x(z) \notin \Dd\}}K(h_x(e(t)),h_x(y),h_x(z))
\dr y \dr z \indiq_{\{\cl_x(e)>t\}}\nn_*(\dr e)\\
\leq & C \int_\cE \int_{(\R^d)^2}F(y,z)\indiq_{\{y \in \Dd_x, z \notin \Dd_x\}}K(h_x(e(t)),h_x(y),h_x(z))
\dr y \dr z \indiq_{\{\ell(e)>t,e(t)\in \Dd_x\}}\nn_*(\dr e),
\end{align*}
since $\cl_x(e)>t$ implies that $\ell(e)>t$ and $h_x(e(t))\in \Dd$.
Introducing the measure  $k_t(\dr a)=n_*(e(t)\in \dr a,\ell(e)>t)$, which is carried by $\HH$,
we get
$$
q_{x,t}(F) \leq C \int_{\HH} \int_{(\R^d)^2}F(y,z)\indiq_{\{y \in \Dd_x, z \notin \Dd_x, a \in \Dd_x\}}
K(h_x(a),h_x(y),h_x(z))\dr y \dr z k_t(\dr a).
$$
For all $a\in \Dd_x$, it holds that $\delta(h_x(a))=d(a,\pDd_x)\leq a_1$, where $a_1:=a\cdot\be_1>0$:
since $\Dd_x \subset \HH$, $a-a_1\be_1 \in \partial \HH$ cannot belong to $\Dd_x$,
so that $d(a,\pDd_x)\leq |a-(a-a_1\be_1)|=a_1$.
Consequently, recalling the expression of $K$ and that $h_x$ is an isometry,
\begin{gather*}
K(h_x(a),h_x(y),h_x(z))\leq a_1^{\alpha/2} H(a,x,y,z),\\
\hbox{where} \quad H(a,x,y,z)=a_1^{-\alpha/2}|z-y|^{-d-\alpha}|a-y|^{\alpha-d}
\Big(\Big[\frac{a_1\delta(h_x(y))}
{|a-y|^2}\Big]^{\alpha/2}\land 1\Big).
\end{gather*} 
Moreover, $a\in \Dd_x$ implies that $|a|\leq$ diam$(\Dd)=:D$, see Lemma~\ref{paradurf}-(i).
All in all,
\begin{align*}
q_{x,t}(F)\leq & C \int_{\HH\cap \closure{B}_d(0,D)} f(a) a_1^{\alpha/2} k_t(\dr a),\quad \hbox{where}\quad f(a)=
\int_{\Dd_x\times\Dd_x^c} F(y,z)H(a,x,y,z) \dr y \dr z.
\end{align*}
Choosing $F=\indiq_{(\Dd_x^c\times\R^d) \cup (\R^d \times \cDd_x)}$, we have $f\equiv0$
(since $\pDd_x$ is Lebesgue-null), whence
$$
q_x((\Dd_x^c\times\R^d) \cup (\R^d \times \cDd_x))=\lim_{t\to 0} q_{x,t}(F)=0,
$$
and we have checked~\eqref{tbp2}. Consider next $F\in C_c(\Dd_x\times\cDd_x^c,\R_+)$ and assume
we can show that 
\begin{gather}
\hbox{$f$ is bounded on $\HH\cap \closure{B}_d(0,D)$} \quad \text{and} \quad
\lim_{a \to 0} f(a)= \int_{\Dd_x\times\Dd_x^c} F(y,z)G(x,y,z) \dr y \dr z. \label{tbp3}
\end{gather}
Then by Lemma~\ref{encore1}, which tells us that $a_1^{\alpha/2} k_t(\dr a) \to c_0\delta_0(\dr a)$
weakly as $t\to 0$,
$$
q_x(F)=\lim_{t\to 0} q_{x,t}(F) \leq C \lim_{t\to 0} \int_{\HH \cap \bar B_d(0,D)} f(a) a_1^{\alpha/2}k_t(\dr a)=
c_0C \int_{\Dd_x\times\Dd_x^c} F(y,z)G(x,y,z)\dr y \dr z,
$$
which shows~\eqref{tbp1}.

\vip
It remains to prove~\eqref{tbp3}. Since $F\in C_c(\Dd_x\times\cDd_x^c,\R_+)$, there exists
$\e>0$ such that $F(y,z)=0$ as soon as $|y-z|\leq \e$ or $|z|\geq 1/\e$, whence
$$
F(y,z)H(a,x,y,z)\leq ||F||_\infty \indiq_{\{|z|< 1/\e\}} 
a_1^{-\alpha/2} \e^{-d-\alpha} |a-y|^{\alpha-d} \Big(\frac{a_1^{\alpha/2}[\delta(h_x(y))]^{\alpha/2}}{|a-y|^\alpha}\land 1\Big).
$$
But $\delta(h_x(y))\leq |h_x(a)-h_x(y)|+\delta(h_x(a))\leq |a-y|+a_1$, whence, for some $C$
depending on $F$,
\begin{align}
F(y,z)H(a,x,y,z)\leq& C \indiq_{\{|z|< 1/\e\}} \Big(|a-y|^{\alpha/2-d} + \frac{a_1^{\alpha/2}}{|a-y|^d}\land 
\frac1{a_1^{\alpha/2}|a-y|^{d-\alpha}}\Big) \notag\\
\leq&  C \indiq_{\{|z|< 1/\e\}} |y-a|^{\alpha/2-d}, \label{pasrelou}
\end{align}
since $u\land v \leq \sqrt{uv}$.
Consequently, for all $a \in \HH\cap \closure{B}_d(0,D)$, recalling the definition of $f$ and
that $\Dd_x \subset B_d(0,D)$
by Lemma~\ref{paradurf}-(i), we find
\begin{align*}
f(a)\leq& C \int_{{B}_d(0,D)} |y-a|^{\alpha/2-d} \dr y \int_{B_d(0,1/\e)}\dr z
= C \int_{{B}_d(-a,D)} |u|^{\alpha/2-d} \dr u \leq C  \int_{{B}_d(0,2D)} |u|^{\alpha/2-d} \dr u,
\end{align*}
so that $f$ is bounded on $\HH \cap \closure{B}_d(0,D)$. Next, we clearly have
$\lim_{a\to 0} H(a,x,y,z)=G(x,y,z)$ for each fixed $y,z \in \Dd_x\times\cDd_x^c$.
Hence to establish the limit in~\eqref{tbp3}, we only have to check the uniform integrability
property $\lim_{M\to \infty} \sup_{a \in \HH\cap \closure{B}_d(0,D)} R_M(a)=0$, where
$$
R_M(a)=\int_{\Dd_x\times\Dd_x^c} F(y,z)H(a,x,y,z)\indiq_{\{F(y,z)H(a,x,y,z)\geq M\}} \dr y \dr z.
$$
By~\eqref{pasrelou} and since $\cDd_x \subset \closure{B}_d(0,D)$, for all $a \in \HH\cap \closure{B}_d(0,D)$, 
substituting $u=y-a$
$$
R_M(a)\leq C \int_{{B}_d(0,D)} |y-a|^{\alpha/2-d} \indiq_{\{|y-a|^{\alpha/2-d}\geq M/C\}}\dr y
\leq C \int_{{B}_d(0,2D)} |u|^{\alpha/2-d} \indiq_{\{|u|^{\alpha/2-d}\geq M/C\}}\dr u,
$$
which does not depend on $a$ and tends to $0$ as $M\to \infty$.
\vip

{\it Case $\beta\in (0,\alpha/2)$.}  We write $q_x=q_x^1+q_x^2$, where
\begin{gather*}
q_x^1(\dr y,\dr z) = \nn_\beta(h_x(e(0)) \in \Dd,\cu(x,e) \in \dr y, \co(x,e)\in \dr z),\\
q_x^2(\dr y,\dr z) = \nn_\beta(h_x(e(0)) \notin \Dd,\cu(x,e) \in \dr y, \co(x,e)\in \dr z).
\end{gather*}
We clearly have 
$\cl_x(e)=0$ when $h_x(e(0)) \notin \Dd$ and, recalling the definitions of $\cu$ and $\co$ and 
the convention that $e(0-)=0$,
$$
q_x^2(\dr y,\dr z)=\nn_\beta(h_x(e(0)) \notin \Dd, 0 \in \dr y, e(0)\in \dr z)= \delta_0(\dr y)
\nn_\beta(h_x(e(0)) \notin \Dd, e(0)\in \dr z).
$$
By definition of $\nn_\beta$, see~\eqref{nnb}, and since $\PP_a(Z(0)\in \dr z)=\delta_a(\dr z)$, this gives 
$$
q_x^2(\dr y,\dr z)=  \delta_0(\dr y) 
\int_{a \in \HH} |a|^{-d-\beta}\indiq_{\{h_x(z) \notin \Dd\}}  \delta_a(\dr z) \dr a
=  \delta_0(\dr y) |z|^{-d-\beta}\indiq_{\{z\in\HH,h_x(z) \notin \Dd\}} \dr z.
$$
By definition of $\nn_\beta$ and by~\eqref{ttuu},
\begin{align*}
q^1_x(\dr y,\dr z)=&\int_{a \in \HH} |a|^{-d-\beta} 
\PP_a(h_x(Z(0))\in \Dd, \cu(x,Z) \in \dr y, \co(x,Z)\in \dr z) \dr a\\
=& \int_{a \in \HH} |a|^{-d-\beta} \indiq_{\{h_x(a) \in \Dd\}}
\PP_a(\cu(x,Z) \in \dr y, \co(x,Z)\in \dr z) \dr a\\
\leq &  C \int_{a \in \HH} |a|^{-d-\beta} \indiq_{\{h_x(a) \in \Dd\}} \indiq_{\{h_x(y) \in \Dd, h_x(z) \notin \Dd\}}
K(h_x(a),h_x(y),h_x(z))\dr a \dr y \dr z\\
=& C  \indiq_{\{h_x(y) \in \Dd, h_x(z) \notin \Dd\}} |z-y|^{-d-\alpha} m(x,y) \dr y \dr z,
\end{align*}
where
$$
m(x,y)=\int_{a \in \HH} |a|^{-d-\beta} \indiq_{\{h_x(a) \in \Dd\}}|a-y|^{\alpha-d} 
\Big(\Big[\frac{\delta(h_x(a))\delta(h_x(y))}{|a-y|^2}\Big]^{\alpha/2}\land 1 \Big)\dr a.
$$
It only remains to show that for some constant $C>0$, for all $x\in\pDd$, all $y \in \R^d$ with
$h_x(y)\in\Dd$, 
\begin{equation}\label{cqv}
m(x,y)\leq C |y|^{-d} [\delta(h_x(y))]^{\alpha/2}.
\end{equation}
We write $m(x,y)=m_1(x,y)+m_2(x,y)$, where $m_1$ (resp. $m_2$) corresponds to the integral on $a \in \HH\setminus B_d(y,|y|/2)$ (resp. $a\in B_d(y,|y|/2)$).

\vip
First, $a \notin B_d(y,|y|/2)$ implies that
$|y-a|\geq |y|/2$, and $h_x(a) \in \Dd$ implies that $|a|\leq$ diam$(\Dd)=:D$, see Lemma~\ref{paradurf}-(i).
Moreover,  $\delta(h_x(a))\leq |h_x(a)-x|=|a|$, since $x \in \pDd$. Thus
\begin{align*}
m_1(x,y)\leq& \int_{B_d(0,D)\cap B_d(y,|y|/2)^c}  |a|^{-d-\beta} |a-y|^{-d} 
[\delta(h_x(a))\delta(h_x(y))]^{\alpha/2} \dr a\\
\leq& C [\delta(h_x(y))]^{\alpha/2}|y|^{-d} \int_{B_d(0,D)} |a|^{-d-\beta+\alpha/2} \dr a.
\end{align*}
Since $\beta<\alpha/2$, we conclude that $m_1(x,y) \leq C [\delta(h_x(y))]^{\alpha/2}|y|^{-d}$.

\vip

Next, $a \in B_d(y,|y|/2)$ implies that $|a|>|y|/2$. Using that $\delta(h_x(a))\leq \delta(h_x(y))+|y-a|$,
\begin{align*}
m_2(x,y)\leq& C |y|^{-d-\beta} \int_{B_d(y,|y|/2)} |a-y|^{\alpha-d}\Big(\Big[
\frac{\delta(h_x(y))(\delta(h_x(y))+|a-y|)}{|a-y|^2}\Big]^{\alpha/2}\land 1 \Big)  \dr a.
\end{align*}
But
\begin{align*}
\frac{\delta(h_x(y))(\delta(h_x(y))+|a-y|)}{|a-y|^2}\land 1
\leq&\frac{(\delta(h_x(y)))^2}{|a-y|^2}\land 1 +
\frac{\delta(h_x(y))}{|a-y|}\land 1
\leq  2 \frac{\delta(h_x(y))}{|a-y|},
\end{align*}
so that 
$$
m_2(x,y)\leq C |y|^{-d-\beta} [\delta(h_x(y))]^{\alpha/2} \int_{B_d(y,|y|/2)} |a-y|^{\alpha/2-d} \dr a
=C |y|^{\alpha/2-d-\beta} [\delta(h_x(y))]^{\alpha/2}.
$$
This last quantity is bounded by $C |y|^{-d} [\delta(h_x(y))]^{\alpha/2}$, because 
$\alpha/2>\beta$ and because $|y|\leq $ diam$(\Dd)$, see Lemma~\ref{paradurf}-(i).
\end{proof}

\subsection{A geometric inequality}

Recall that $\delta(y)=d(y,\pDd)$ and that $\bg_x(A,y,z)=\Lambda(h_x(A,y),h_x(A,z))$.

\begin{proposition}\label{tyvmmm}
Grant Assumption~\ref{as}.
There is a constant $C\in (0,\infty)$ such that for all ${x,x' \in \pDd}$, all $A \in \cI_x$, $A'\in \cI_{x'}$,
setting $\rho_{x,x',A,A'}=|x-x'|+||A-A'||$,
\vip
(i) for all $y,z \in \R^d$
such that $h_x(A,y)\in\Dd$, $h_{x'}(A',y)\in\Dd$, $h_x(A,z)\notin\Dd$, $h_{x'}(A',z)\notin\Dd$,
$$
\Big||\bg_x(A,y,z)- \bg_{x'}(A',y,z)| - |x-x'|\Big|
\leq C \rho_{x,x',A,A'}\Big(|z|\land 1 + \frac{|y|(|y-z|\land 1)}{\delta(h_x(A,y))\land
\delta(h_{x'}(A',y)) } \Big),
$$

(ii) for all $y,z \in \R^d$ 
such that $h_x(A,y)\in\Dd$, $h_{x'}(A',y)\in\Dd$, $h_x(A,z)\notin\Dd$, $h_{x'}(A',z)\in\Dd$,
$$
\Big||\bg_x(A,y,z)- h_{x'}(A',z)| - |x-x'|\Big|
\leq C \rho_{x,x',A,A'} \Big(|z|\land 1 + \frac{|y|(|y-z|\land 1)}{\delta(h_x(A,y))} \Big),
$$

(iii) for all $z \in \R^d$ such that $h_x(A,z)\notin\Dd$, $h_{x'}(A',z)\notin \Dd$,
$$
\Big||\bg_x(A,0,z)- \bg_{x'}(A',0,z)| - |x-x'|\Big|
\leq C \rho_{x,x',A,A'},
$$

(iv) for all $z \in \R^d$,
$$
\Big||\bg_x(A,0,z)- \bg_{x'}(A',0,z)| - |x-x'|\Big| \leq C.
$$
\end{proposition}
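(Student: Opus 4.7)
The plan is to treat the trivial bound (iv) first, then reduce (i)--(iii) to a local analysis near $\pDd$ using the $C^3$ charts from Assumption~\ref{as} and Remark~\ref{asp}. Strong convexity ($\mathrm{Hess}\,\psi_x \geq \eta I_{d-1}$) is the essential ingredient, quantifying how much the exit point of a segment from $\Dd$ can move under perturbations of its data. Bound (iv) is immediate: since $\bg_x(A,0,z)$ and $\bg_{x'}(A',0,z)$ both lie in $\cDd$, their distance is bounded by $\mathrm{diam}(\Dd)$, so the left-hand side is at most $\mathrm{diam}(\Dd) + |x-x'| \leq C$.

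For (i)--(iii) I introduce $p = h_x(A,y)$, $q = h_x(A,z)$, $p' = h_{x'}(A',y)$, $q' = h_{x'}(A',z)$. By Lemma~\ref{paradurf}(i), $|y|$ and $|z|$ are both bounded by $\mathrm{diam}(\Dd)$ whenever the corresponding image lies in $\cDd$, so $|p - p'| \leq C \rho_{x,x',A,A'}(1+|y|)$ and $|q - q'| \leq C \rho_{x,x',A,A'}(1+|z|)$. In case (i), I write $\bg_x = p + t_x(q - p)$ where $t_x \in (0,1]$ is determined by the implicit condition $\bg_x \in \pDd$; in a local $C^3$ chart $\psi$ of $\pDd$ near $\bg_x$ this becomes $F(t_x; p, q, \psi) = 0$ for a smooth function $F$ whose derivative $\partial_t F$ at $t_x$ equals the component of $q-p$ transverse to $\pDd$ at the crossing. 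Strong convexity plus the elementary fact that the segment must cover the transverse distance $\delta(p)$ before meeting $\pDd$ force $|\partial_t F(t_x)| \gtrsim \delta(p)/t_x$. The implicit function theorem, together with the perturbation bounds above, then gives $|t_x - t_{x'}| \leq C \rho_{x,x',A,A'}/(\delta(p) \wedge \delta(p'))$ up to lower-order factors; translating this back into a displacement of the crossing point and accounting for the direct translations $|p - p'|$ and $|q-q'|$ produces the claimed bound, with $|z|\wedge 1$ absorbing the global translation and the $|y-z| \wedge 1$ saturation arising whenever the segment is long enough that the crude estimate from (iv) takes over.

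Case (ii) is analogous: since $q' \in \cDd$ while $q \notin \cDd$, the point $q$ lies within $C \rho_{x,x',A,A'}$ of $\pDd$, and one writes $|\bg_x - q'| \leq |\bg_x - q| + |q - q'|$ with the first term controlled by the same transverse-distance mechanism via $\delta(p)$. Case (iii), with $y = 0$ and $p = x \in \pDd$, requires a distinct argument because $\delta(p) = 0$: the segment starts tangentially, so the quadratic lower bound $\mathrm{Hess}\,\psi_x \geq \eta I_{d-1}$ must be used to obtain a leading-order expansion of $t_x$ in terms of the tangential and normal parts of $z$; combining this expansion with the regularity of $x\mapsto \psi_x$ given by the auxiliary parametrization lemmas of Appendix~\ref{ageo} yields the bound $C \rho_{x,x',A,A'}$ without a $1/\delta$ factor, since both $\bg_x$ and $\bg_{x'}$ now lie on $\pDd$.

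The main obstacle is the sharp $1/\delta(p)$ dependence in case (i): when $p$ approaches $\pDd$ the segment becomes nearly tangent and a small motion of $q$ can displace the crossing point significantly. It is precisely strong convexity of $\Dd$ that prevents a total loss of control here, and Remark~\ref{strange} warns that for merely convex (non-strongly convex) domains the bound can fail. Extracting this inequality quantitatively --- with the correct tradeoff between $\delta(p)$, the segment length $|y-z|$, and the isometry discrepancy $\|A-A'\|$ --- is the delicate part of the proof.
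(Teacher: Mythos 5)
Your bound (iv) and the general reduction to comparing exit points of the segment $[y,z]$ from the two domains $\Dd_{x,A}$ and $\Dd_{x',A'}$ are fine, but the core of your plan has a genuine gap: the implicit-function/transversality estimate you sketch produces, at best, a displacement of the crossing point of order $\rho_{x,x',A,A'}\,|y-z|/\delta$, i.e.\ it is missing the factor $|y|$ in the numerator. That factor is not cosmetic. Take $y$ close to the tangency point (so $|y|\sim\delta(h_x(A,y))=\e^2$ small) and $z$ outside at distance $|z|\sim|y-z|\sim\e$: your bound is of order $\rho/\e$, whereas the right-hand side of (i) is of order $\rho\,\e$, so your estimate is not dominated by the stated one and cannot yield the proposition. (The factor $|y|$ is also exactly what makes the later integration against the undershoot law possible in Proposition~\ref{int}, via $|y|^{-d}\cdot|y|=|y|^{1-d}$.) The reason the true displacement is small in this regime is not generic transversality but the fact that the two images of $\pDd$ agree to \emph{second order} at the common tangency point: $|\psi_{x,A}(v)-\psi_{x',A'}(v)|\leq C\rho_{x,x',A,A'}|v|^2$ (Lemma~\ref{paradur}(iii)); treating the two boundaries as generic perturbations at distance $\rho$ is too lossy, and your sketch never uses this quadratic closeness.

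For comparison, the paper's proof (Lemma~\ref{letop}) first symmetrizes so that $v=a(x',y,z)$ lies outside $\Dd_{x,A}$, whence $d(v,\pDd_{x,A})\leq C\rho$ by Lemma~\ref{paradurf}, and then establishes $|a(x,y,v)-v|\leq M\rho\,|y|\,|y-v|/d(y,\pDd_{x,A})$ by a three-regime analysis: when $|y|$ is bounded below, a Thales-type lemma applied from $y$ suffices (the factor $|y|$ is free); when $|y|$ is small but the exit point is far from the tangency point, Thales is applied from an intermediate point of the segment pushed deep into an inscribed ball; when both are small, the local charts, the quadratic estimate above and the strong-convexity lower bound $\mathrm{Hess}\,\psi\geq\eta I_{d-1}$ show the crossing parameter itself is $O(\rho)$. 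Your separate expansion for (iii) is also unnecessary: once (i) holds in the stated form, (iii) follows by applying it with $y=\e\be_1$ (for which $\delta(h_x(A,\e\be_1))=\e$) and letting $\e\to0$. So the delicate point you correctly flag at the end -- the tradeoff between $\delta$, $|y-z|$ and $\rho$ -- is precisely where your argument, as written, does not go through.
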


The proof of this proposition is handled in Appendix~\ref{ageo}, to which we refer for some illustrations.
The shape of these upperbounds is of course motivated by the estimates we have on the joint law of the 
undershoot/overshoot, see Proposition~\ref{underover}. However, these bounds look quite sharp.
Let us also mention Remark~\ref{strange}, where we show that the strong convexity assumption of $\Dd$
cannot be removed, although it is likely that some more tricky upperbounds should hold without such an assumption.

\subsection{Bounding a few integrals}

We will need the following bounds.

\begin{proposition}\label{int}
Fix $\beta \in (0,1)$ and grant Assumption~\ref{as}.
There is $C\in (0,\infty)$ such that for all $x,x' \in \pDd$, all $A\in \cI_x$, $A' \in \cI_{x'}$,
setting $\rho_{x,x',A,A'}=|x-x'|+||A-A'||$, 
\begin{gather}
\int_{(\R^d)^2}\Big(|z|\land 1 + \frac{|y|(|y-z|\land 1)}{\delta(h_x(A,y))} \Big)
\frac{[\delta(h_x(A,y))]^{\alpha/2}}{|z-y|^{d+\alpha}|y|^{d}}\indiq_{\{h_x(A,y) \in \Dd, h_x(A,z)\notin \Dd\}}
\dr y \dr z \leq C, \label{jjab1} \\
\int_{(\R^d)^2}\!\!\!  \frac{[\delta(h_{x'}(A',z))]^{\alpha/2}[\delta(h_x(A,y))]^{\alpha/2}}{|z-y|^{d+\alpha}|y|^d}
\indiq_{\{h_x(A,y) \in \Dd,h_x(A,z)\notin \Dd,h_{x'}(A',z) \in \Dd\}}\dr y \dr z
\leq C\rho_{x,x',A,A'}, \label{jjab3}\\
\int_{\HH} \indiq_{\{h_x(A,z)\notin \Dd\}} \frac{\dr z}{|z|^{d+\beta}} \leq C, \label{jjab5}\\
\int_\HH \indiq_{\{h_x(A,z)\notin \Dd,h_{x'}(A',z) \in \Dd\}} \frac1{|z|^{d+\beta}}\dr z\leq C\rho_{x,x',A,A'}. \label{jjab2}
\end{gather}
\end{proposition}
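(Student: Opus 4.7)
The plan is to derive all four bounds from the local paraboloid description of $\pDd$ at each point $x\in\pDd$ given by Remark~\ref{asp}: in the coordinates $z\mapsto h_x(A,z)$, $\Dd_{x,A}:=\{z : h_x(A,z)\in\Dd\}$ agrees on $B_d(0,\e_0)$ with $\{z_1>\psi_x(z')\}$ for a $C^3$ strongly convex function satisfying $\psi_x(0)=0$, $\nabla\psi_x(0)=0$ and $c|z'|^2\leq \psi_x(z')\leq C|z'|^2$. Writing $\delta_x(y):=\delta(h_x(A,y))=d(y,\pDd_{x,A})$ for brevity, the geometric fact used repeatedly is that $y\in\Dd_{x,A}$ and $z\notin\Dd_{x,A}$ force $|y-z|\geq \delta_x(y)$. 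For \eqref{jjab5}, splitting $\{|z|\leq \e_0\}$ and $\{|z|>\e_0\}$, the far part is integrable since $\beta>0$ (and $\Dd$ is bounded), while the near part is controlled by
\[
\int_{|z'|\leq\e_0}\int_0^{C|z'|^2}\frac{\dr z_1\,\dr z'}{(z_1^2+|z'|^2)^{(d+\beta)/2}}\leq C\int_0^{\e_0} r^{1-\beta}\,\dr r<\infty,
\]
thanks to $\beta\in(0,1)$.

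For \eqref{jjab2}, the same decomposition works: the far part is bounded by $C\rho_{x,x',A,A'}$ using $\mathrm{Vol}(\Dd_{x',A'}\setminus\Dd_{x,A})\leq C\rho_{x,x',A,A'}$ from Lemma~\ref{paradurf}-(ii), while the near part is handled exactly as for \eqref{jjab5} after observing that in the $(x,A)$-coordinates the surfaces $\pDd_{x,A}$ and $\pDd_{x',A'}$ differ by a rigid motion of norm $\lesssim \rho_{x,x',A,A'}$, so the thin layer $\{z\in\HH\setminus\Dd_{x,A}:z\in\Dd_{x',A'}\}\cap B_d(0,\e_0)$ has normal thickness $\leq C\rho_{x,x',A,A'}$ at every tangential point, which yields an extra $\rho_{x,x',A,A'}$ factor. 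For \eqref{jjab1} and \eqref{jjab3}, I would perform the $z$-integral first in polar coordinates centred at $y$, using $|y-z|\geq \delta_x(y)$ and the fact that the angular measure of $\{\omega: y+r\omega\notin\Dd_{x,A}\}$ is bounded by $C_d$. After multiplying the resulting expression by $\delta_x(y)^{\alpha/2}/|y|^d$ and integrating over $y\in\Dd_{x,A}$ in paraboloid coordinates (where $|y|\sim |y'|$ when $y_1\lesssim |y'|^2$ and $|y|\sim y_1$ when $y_1\gtrsim |y'|^2$), the $y$-integral converges thanks to $\alpha\in(0,2)$; the borderline contribution is $\int_0^{|y'|^2}(y_1-\psi_x(y'))^{-\alpha/2}\,\dr y_1\sim|y'|^{2-\alpha}$, which is integrable in $y'\in\R^{d-1}$. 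For \eqref{jjab3}, the supplementary constraint $z\in\Dd_{x',A'}$ together with Lemma~\ref{paradurf}-(ii) gives $\delta(h_{x'}(A',z))\leq C\rho_{x,x',A,A'}$ and confines $z$ to a layer of width $\lesssim\rho_{x,x',A,A'}$ around $\pDd_{x',A'}$; parametrising $z$ by its signed distance to $\pDd_{x',A'}$ and a tangential variable, the $s$-integral of $s^{\alpha/2}$ on a segment of length $\leq C\rho_{x,x',A,A'}$ produces the required factor $\rho_{x,x',A,A'}$.

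The hard part is the simultaneous control of three competing singularities: $|y|^{-d}$ at $y=0\in\pDd$, the negative powers of $\delta_x(y)$ that appear after the $z$-integration as $y\to\pDd_{x,A}$, and $|y-z|^{-d-\alpha}$ as $z\to y$. The precise paraboloid shape of $\pDd_{x,A}$ at $0$, with its two distinct scalings of $|y|$ in the tangential and normal regimes, is what allows one to close the estimates using the restrictions $\alpha<2$ and $\beta\in\{*\}\cup(0,\alpha/2)\subset(0,1)$.
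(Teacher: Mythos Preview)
Your sketch for \eqref{jjab5} is correct and matches the paper's Step~6. Your approach to \eqref{jjab1} (integrate $z$ first using $|z-y|\geq\delta_x(y)$, then reduce to integrals of the form $\int_{\Dd_{x,A}}|y|^{1-d}\delta_x(y)^{-\theta}\,\dr y$ with $\theta\in(0,1)$) also works, and is in fact simpler than what the paper does for the first term: the paper integrates $y$ first via a Poisson-kernel bound (their Step~1, Lemma~\ref{egrelou}) to obtain $f_x(z)\leq C|z|^{-d}\delta_x(z)^{-\alpha/2}$.

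There are, however, two genuine gaps.

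\textbf{For \eqref{jjab2} near the origin.} Your claim that the layer $\{z\in\Dd_{x',A'}\setminus\Dd_{x,A}\}$ has normal thickness $\leq C\rho$ is correct but insufficient. In the paraboloid coordinates the near-origin contribution is
\[
\int_{|v|<\e_0}|v|^{-d-\beta}\bigl(\psi_{x,A}(v)-\psi_{x',A'}(v)\bigr)_+\,\dr v,
\]
and with only $\psi_{x,A}(v)-\psi_{x',A'}(v)\leq C\rho$ this diverges (the exponent is $-d-\beta<-(d-1)$). The point you miss is that \emph{both} surfaces pass through $0$ and are tangent to $\partial\HH$ there, so the layer is quadratically thin: $|\psi_{x,A}(v)-\psi_{x',A'}(v)|\leq C\rho|v|^2$. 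This is the content of Lemma~\ref{paradur}-(iii), which requires the $C^3$ regularity of $\pDd$ and a careful second-order comparison of the two local graphs. Only with this $|v|^2$ gain does the integral converge ($2-d-\beta>-(d-1)$ since $\beta<1$).

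\textbf{For \eqref{jjab3}.} Your $z$-first strategy does not close. After extracting $\int_0^{C\rho}s^{\alpha/2}\,\dr s\sim\rho^{1+\alpha/2}$ from the normal direction, the remaining tangential integral $\int_{\pDd_{x',A'}}|p-y|^{-d-\alpha}\,\dr\sigma(p)$ is of order $\delta_x(y)^{-1-\alpha}$, and then
\[
\int_{\Dd_{x,A}}\frac{\delta_x(y)^{\alpha/2}}{|y|^d}\,\delta_x(y)^{-1-\alpha}\,\dr y
\;=\;\int_{\Dd_{x,A}}\frac{\dr y}{|y|^d\,\delta_x(y)^{1+\alpha/2}}
\]
diverges along $\pDd$ (the exponent $1+\alpha/2>1$). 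The paper avoids this by integrating $y$ first: the bound $f_x(z)\leq C|z|^{-d}\delta_x(z)^{-\alpha/2}$ (Step~1) leaves an integrand $\delta_{x'}(z)^{\alpha/2}\delta_x(z)^{-\alpha/2}|z|^{-d}$ on the thin layer. The two $\alpha/2$-powers then cancel exactly: away from $0$ one uses $\int_0^{C\rho}t^{-\alpha/2}\,\dr t\cdot\rho^{\alpha/2}\sim\rho$, and near $0$ one uses the identity $\int_a^b(z_1-a)^{\alpha/2}(b-z_1)^{-\alpha/2}\,\dr z_1=c_\alpha(b-a)$ together with the quadratic thickness bound $b-a\leq C\rho|v|^2$ from Lemma~\ref{paradur}-(iii). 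Neither of these mechanisms is available in your ordering.
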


We will use the following inequality.

\begin{lemma}\label{egrelou}
There is a constant $c_{d,\alpha}>0$ such that
for $x,z \in \R^d$
$$
\hbox{if $|x|=1<|z|$,} \qquad \int_{{B}_d(0,1)} \frac{(1-|a|^2)^{\alpha/2}}
{|a-z|^{d+\alpha}|a-x|^{d}} \dr a
\leq \frac{c_{d,\alpha}}{|z-x|^{d}(|z|^2-1)^{\alpha/2}}.
$$
\end{lemma}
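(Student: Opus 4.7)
Write $R=|z-x|$ and $\delta=|z|-1>0$, and note that $R\geq \delta$. The integrand has a borderline-integrable singularity at $a=x$ and vanishes only through the boundary factor $(1-|a|^2)^{\alpha/2}$. The key observation is that $|x|=1$ forces $1-|a|^2=(1-|a|)(1+|a|)\leq 2|a-x|$ for every $a\in B_d(0,1)$, which tames the singularity at $x$ and reduces the whole estimate to a boundary-layer calculation in the ball.

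First I would dispose of the case $|z|\geq 2$: there $|a-z|\geq |z|/2$ for every $a\in B_d(0,1)$, the bound $(1-|a|^2)^{\alpha/2}\leq 2^{\alpha/2}|a-x|^{\alpha/2}$ makes $\int_{B_d(0,1)}|a-x|^{\alpha/2-d}\,\dr a$ finite, and the inequality of the lemma follows from $R\leq |z|+1\leq 2|z|$ together with $|z|^2-1\leq |z|^2$.

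Assume now $|z|<2$ and split $B_d(0,1)=A_1\cup A_2$ with $A_1=\{a\in B_d(0,1):|a-x|\leq R/2\}$. On $A_1$ one has $|a-z|\geq R/2$, and the bound $1-|a|^2\leq 2|a-x|$ gives
\[
\int_{A_1}\frac{(1-|a|^2)^{\alpha/2}}{|a-z|^{d+\alpha}|a-x|^d}\,\dr a \leq \frac{C}{R^{d+\alpha}}\int_{|a-x|\leq R/2}|a-x|^{\alpha/2-d}\,\dr a \leq \frac{C'}{R^{d+\alpha/2}},
\]
which, since $(|z|^2-1)^{\alpha/2}\asymp \delta^{\alpha/2}\leq R^{\alpha/2}$, is controlled by $C''R^{-d}(|z|^2-1)^{-\alpha/2}$. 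On $A_2$, $|a-x|\geq R/2$, so the remaining task reduces to the key claim
\[
(\star)\qquad \int_{B_d(0,1)}\frac{(1-|a|^2)^{\alpha/2}}{|a-z|^{d+\alpha}}\,\dr a \leq \frac{C}{(|z|^2-1)^{\alpha/2}}.
\]

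To establish $(\star)$ for $|z|<2$ I would use polar coordinates $a=\rho\sigma$ with $\sigma\in\Sp_{d-1}$, write $s=1-\rho$, and denote by $\theta\in[0,\pi]$ the angle between $\sigma$ and $z/|z|$. One computes
\[
|a-z|^2=(\delta+s)^2+2(1-s)|z|(1-\cos\theta)\geq c\bigl((\delta+s)^2+\min(\theta^2,1)\bigr),
\]
whence the angular integral $\int_0^\pi \theta^{d-2}\bigl((\delta+s)^2+\theta^2\bigr)^{-(d+\alpha)/2}\,\dr\theta\leq C(\delta+s)^{-1-\alpha}$ by rescaling $\theta=(\delta+s)v$, and finally $\int_0^{1/2}s^{\alpha/2}(\delta+s)^{-1-\alpha}\,\dr s\leq C\delta^{-\alpha/2}$ by rescaling $s=\delta v$ (the integrand $v^{\alpha/2}(1+v)^{-1-\alpha}$ is integrable at infinity). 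The contribution of $|a|\leq 1/2$ is bounded by a constant, itself $\leq C\delta^{-\alpha/2}$ since $\delta<1$, giving $(\star)$. The main obstacle is thus the polar-coordinate estimate $(\star)$; everything else is a clean partition argument exploiting the identity $1-|a|^2\leq 2|a-x|$ that holds because $|x|=1$.
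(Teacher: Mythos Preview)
Your proof is correct and takes a genuinely different route from the paper. The paper's argument is probabilistic: it invokes the explicit density of the exit position $Z_{\tau}$ of an isotropic $\alpha$-stable process from the unit ball (Blumenthal--Getoor--Ray) together with the explicit joint density of $(Z_{\tau-},Z_{\tau})$ (Kyprianou--Rivero--Satitkanitkul), obtains an exact integral identity for interior starting points $x_n$ with $|x_n|<1$, and then sends $x_n\to x\in\Sp_{d-1}$ via Fatou's lemma. In particular, the paper's method suggests the inequality may in fact be an equality.

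Your approach, by contrast, is entirely elementary real analysis. The two pivots are the observation $1-|a|^2\leq 2|a-x|$ (valid because $|x|=1$), which converts the borderline singularity $|a-x|^{-d}$ into the integrable $|a-x|^{\alpha/2-d}$, and the splitting at radius $R/2$ around $x$, which isolates the two competing singularities. The reduction to the one-point estimate $(\star)$ and its verification by polar coordinates are routine once these pivots are in place. What you gain is a self-contained argument with no external references; what you lose is the hint of sharpness and the direct link to the probabilistic structure that motivates the lemma in the paper.
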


\begin{proof} This inequality may actually be an equality and a more straightforward proof could likely be 
provided.
Let $Z$ be some ISP$_{\alpha,0}$ and set $\tau(Z)=\inf\{t>0, |Z_t|>1\}$.
By Blumenthal, Getoor and Ray~\cite[Theorem A]{blumenthal1961distribution}, 
there is $c_{1,\alpha,d}$ such that for all $x\in \R^d$ 
with $|x|<1$,
$$
\PP_x(Z_{\tau(Z)}\in \dr z)= \frac{c_{1,\alpha,d} (1-|x|^2)^{\alpha/2}}{|z-x|^d(|z|^2-1)^{\alpha/2}} \indiq_{\{|z|>1\}}\dr z.
$$
By Kyprianou, Rivero and Satitkanitkul~\cite[Corollary 1.4-(ii)]{KypRivSat}, for all $x\in \R^d$ with $|x|<1$,
$$
\PP_x(Z_{\tau(Z)-}\in \dr a, Z_{\tau(Z)}\in \dr z)= \frac{c_{2,\alpha,d} \,
\varphi(\zeta(x,a))}{|x-a|^{d-\alpha}|z-a|^{d+\alpha}} 
\indiq_{\{|a|<1< |z|\}}\dr a \dr z,
$$
where $\varphi(u)=\int_0^u (v+1)^{-d/2}v^{\alpha/2-1}\dr v$ and 
$\zeta(x,a)=\frac{(1-|x|^2)(1-|a|^2)}{|x-a|^{2}}$.
Hence if $|x|<1<|z|$,
\begin{equation}\label{deqq}
\int_{{B}_d(0,1)} \frac{\varphi(\zeta(x,a))}{ (1-|x|^2)^{\alpha/2}|x-a|^{d-\alpha}|z-a|^{d+\alpha}} \dr a
= \frac{c_{1,\alpha,d}}{c_{2,\alpha,d}|z-x|^d(|z|^2-1)^{\alpha/2}}.
\end{equation}
Fix now $x,z \in \R^d$ with $|x|=1<|z|$, and consider $x_n=\frac{n-1}n x$.
Since $\varphi(u)\sim_0 (2/\alpha) u^{\alpha/2}$,
$$
\text{for all $a\in B_d(0,1)$,}\quad \lim_n\frac{\varphi(\zeta(x_n,a))}
{ (1-|x_n|^2)^{\alpha/2}|x_n-a|^{d-\alpha}|z-a|^{d+\alpha}}= 
\frac{2 (1-|a|^2)^{\alpha/2}}{\alpha |x-a|^d|z-a|^{d+\alpha}}.
$$
The conclusion follows from~\eqref{deqq} applied to $x_n$ and from the Fatou lemma.
\end{proof}

We can now give the

\begin{proof}[Proof of Proposition~\ref{int}] We divide the proof in several steps.
\vip

{\it Step 1.} There is $C>0$ such that for all $x \in \pDd$, $A\in\cI_x$,
$z \in \R^d$ with $h_x(A,z) \notin \Dd$,
$$
f_x(z):= \int_{\R^d}  \frac{[\delta(h_x(A,y))]^{\alpha/2}}{|z-y|^{d+\alpha}|y|^{d}}\indiq_{\{h_x(A,y)\in \Dd\}} \dr y
\leq \frac{C}{|z|^d [\delta(h_x(A,z))]^{\alpha/2}}.
$$
Indeed, we substitute $u=h_x(A,y)$, $v=h_x(A,z)$ and write
$$
f_x(z)=\int_{\Dd}  \frac{[\delta(u)]^{\alpha/2}}{|v-u|^{d+\alpha}|u-x|^{d}} \dr u.
$$
We now use the diffeomorphism $\Phi:\R^d \to \R^d$ such that $\Phi(\Dd)=B_d(0,1)$ 
introduced in Lemma~\ref{parafac} and substitute
$a=\Phi(u)$. The Jacobian is bounded, and we get
$$
f_x(z) \leq C \int_{{B}_d(0,1)} \frac{[\delta(\Phi^{-1}(a))]^{\alpha/2}}
{|v-\Phi^{-1}(a)|^{d+\alpha}|\Phi^{-1}(a)-x|^{d}} \dr a.
$$
But, see Lemma~\ref{parafac}, we have, for $a \in B_d(0,1)$,
\begin{gather*}
\delta(\Phi^{-1}(a))\leq \kappa d(a,\partial B_d(0,1))=\kappa (1-|a|)
\leq \kappa (1-|a|^2),\\
|v-\Phi^{-1}(a)|\geq \kappa^{-1} |\Phi(v)-a| \qquad \hbox{and}\qquad
|\Phi^{-1}(a)-x|\geq \kappa^{-1} |a-\Phi(x)|. 
\end{gather*}
Hence
$$
f_x(z) \leq C \int_{{B}_d(0,1)} \frac{(1-|a|^2)^{\alpha/2}}
{|a-\Phi(v)|^{d+\alpha}|a-\Phi(x)|^{d}} \dr a
\leq \frac{C}{|\Phi(v)-\Phi(x)|^{d}(|\Phi(v)|^2-1)^{\alpha/2}}
$$
because $|\Phi(v)|>1=|\Phi(x)|$, see Lemma~\ref{egrelou}. 
To conclude the step, it suffices to notice that
\begin{gather*}
|\Phi(v)|^2-1\geq |\Phi(v)|-1=d(\Phi(v),\partial B_d(0,1)) \geq \kappa^{-1} d(v,\pDd)=
\kappa^{-1} \delta(h_x(A,z)),\\
|\Phi(v)-\Phi(x)|\geq \kappa^{-1} |v-x|=\kappa^{-1}|h_x(A,z)-x|= \kappa^{-1}|z|.
\end{gather*}

{\it Step 2.} We now bound the first term (with $|z|\land 1$) in~\eqref{jjab1}. Calling it $I_1(x)$,
we see that
$$
I_1(x)=\int_{\R^d} (|z|\land 1) f_x(z)\indiq_{\{h_x(A,z)\notin \Dd\}} \dr z
\leq C \int_{\R^d} \frac{|z|\land 1}{|z|^d [\delta(h_x(A,z))]^{\alpha/2}} \indiq_{\{h_x(A,z)\notin \Dd\}} \dr z.
$$
As in Step 1, we substitute $v=h_x(A,z)$ (whence $|z|=|v-x|$) and then $b=\Phi(v)$ to write
\begin{align*}
I_1(x) \leq C  \int_{\Dd^c} \frac{|v-x|\land 1}{|v-x|^d [\delta(v)]^{\alpha/2}} \dr v
\leq C \int_{{B}_d(0,1)^c}  \frac{|\Phi^{-1}(b)-x|\land 1}{|\Phi^{-1}(b)-x|^d [\delta(\Phi^{-1}(b))]^{\alpha/2}} \dr b.
\end{align*}
By Lemma~\ref{parafac}, we have $\kappa^{-1}|b-\Phi(x)|\leq |\Phi^{-1}(b)-x|\leq \kappa|b-\Phi(x)|$, as well as
$\delta(\Phi^{-1}(b))\geq \kappa^{-1} d(b,\partial B_d(0,1))=\kappa(|b|-1)$. Thus
$$
I_1(x)\leq C \int_{{B}_d(0,1)^c}  \frac{|b-\Phi(x)|\land 1}{|b-\Phi(x)|^d (|b|-1)^{\alpha/2}} \dr b
=C\int_{\Sp_{d-1}} \int_1^\infty r^{d-1}\frac{|r\sigma-\Phi(x)|\land 1}{|r\sigma-\Phi(x)|^d (r-1)^{\alpha/2}}  
\dr r \dr \sigma.
$$
By rotational invariance and since $|\Phi(x)|=1$, we have $I_1(x)\leq C J_1$, where
$$
J_1=\int_{\Sp_{d-1}} \int_1^\infty r^{d-1}\frac{|r\sigma-\be_1|\land 1}{|r\sigma-\be_1|^d (r-1)^{\alpha/2}}  
\dr r \dr \sigma,
$$
and it only remains to check that $J_1$ is finite. We write $J_1\leq J_{11}+J_{12}$, where
\begin{gather*}
J_{11}=\int_{\Sp_{d-1}} \int_2^\infty r^{d-1}\frac{|r\sigma-\be_1|\land 1}{|r\sigma-\be_1|^d (r-1)^{\alpha/2}}  
\dr r \dr \sigma \leq \int_{\Sp_{d-1}} \int_2^\infty \frac{r^{d-1}}{|r\sigma-\be_1|^d (r-1)^{\alpha/2}}  
\dr r \dr \sigma, \\
J_{12}=  \int_{\Sp_{d-1}} \int_1^2 r^{d-1}\frac{|r\sigma-\be_1|\land 1}{|r\sigma-\be_1|^d (r-1)^{\alpha/2}}  
\dr r \dr \sigma \leq  C\int_{\Sp_{d-1}} \int_1^2 \frac{1}{|r\sigma-\be_1|^{d-1} (r-1)^{\alpha/2}}  
\dr r \dr \sigma.
\end{gather*}
For $r\geq 2$, we have $|r\sigma -\be_1|\geq r-1 \geq \frac r 2$, whence
$$
J_{11}\leq C \int_{\Sp_{d-1}} \int_2^\infty \frac{1}{r^{1+\alpha/2}}
\dr r \dr \sigma <\infty.
$$
For $r\in [1,2]$, we have $|r\sigma -\be_1|\geq r-1$ and $|r\sigma -\be_1|\geq |\sigma-\be_1|$
(because $|r\sigma -\be_1|^2=1+r^2-2r\sigma_1\geq 2r(1-\sigma_1)\geq 2(1-\sigma_1)=|\sigma-\be_1|^2$), so that
$|r\sigma -\be_1|\geq \frac{r-1+|\sigma-\be_1|}2$. Hence
$$
J_{12}\leq C \int_1^2 \frac{\dr r}{(r-1)^{\alpha/2}} \int_{\Sp_{d-1}} \frac{\dr \sigma}{(r-1+|\sigma-\be_1|)^{d-1}}
\leq C \int_1^2 \frac{\dr r}{(r-1)^{\alpha/2}} \Big(1+\log \frac{1}{r-1}\Big)<\infty.
$$

{\it Step 3.} We now show that for all $\theta\in (0,1)$, there is $C_\theta>0$ such that
for all $x \in \pDd$,
$$
K_\theta(x)= \int_{\Dd} \frac{\dr u}{|u-x|^{d-1}[\delta(u)]^{\theta}} \leq C_\theta.
$$
We substitute $a=\Phi(u)$ and find, similarly as in Steps 1 and 2, 
$$
K_\theta(x)\leq  C \int_{{B}_d(0,1)} \frac{\dr a}{|\Phi^{-1}(a)-x|^{d-1}[\delta(\Phi^{-1}(a))]^{\theta}}
\leq C \int_{{B}_d(0,1)} \frac{\dr a}{|a-\Phi(x)|^{d-1}(1-|a|)^{\theta}}.
$$
By rotational invariance and since $|\Phi(x)|=1$, $K_\theta(x)\leq CL_\theta$, where
$$
L_\theta = C \int_{{B}_d(0,1)} \frac{\dr a}{|a-\be_1|^{d-1}(1-|a|)^{\theta}}
=C\int_0^1 \int_{\Sp_{d-1}} \frac{r^{d-1} \dr \sigma \dr r}{|r\sigma - \be_1|^{d-1}(1-r)^{\theta}}.
$$
But for $r\in (0,1)$, 
$|r\sigma-\be_1|\geq 1-r$ and $|r\sigma-\be_1|\geq r |\sigma-\be_1|$ (because $|r\sigma-\be_1|^2=1+r^2-2r\sigma_1
\geq 2r  (1-\sigma_1)=r |\sigma-\be_1|^2\geq r^2 |\sigma-\be_1|^2$), so that 
$|r\sigma-\be_1|\geq \frac{1-r + r|\sigma-\be_1|}2$ and thus
$$
\frac r{|r\sigma-\be_1|}\leq \frac{2r}{1-r + r|\sigma-\be_1|}
=\frac{2}{\frac{1-r}r + |\sigma-\be_1|}
\leq \frac{2}{1-r + |\sigma-\be_1|}.
$$
Consequently,
$$
L_\theta\leq C\int_0^1 \frac{\dr r}{(1-r)^\theta} \int_{\Sp_{d-1}} \frac{\dr \sigma}{(1-r+|\sigma - \be_1|)^{d-1}}
\leq C \int_0^1 \frac{\dr r}{(1-r)^\theta}\Big(1+\log\frac 1{1-r} \Big)<\infty.
$$

{\it Step 4.} We next bound the second term in~\eqref{jjab1}. Naming it $I_2(x)$, we have
$$
I_2(x)= \int_{(\R^d)^2} \frac{|y-z|\land 1}{|y|^{d-1}|y-z|^{d+\alpha} [\delta(h_x(A,y))]^{1-\alpha/2}}
\indiq_{\{h_x(A,y) \in \Dd, h_x(A,z)\notin \Dd\}} \dr y \dr z.
$$
We substitute $u=h_x(A,y)$, $v=h_x(A,z)$ to find
$$
I_2(x)=\int_{\Dd} \int_{\Dd^c} \frac{|u-v|\land 1}{|u-x|^{d-1}|u-v|^{d+\alpha} [\delta(u)]^{1-\alpha/2}}  \dr v \dr u.
$$
But for $u \in \Dd$, we have $\Dd^c \subset B_d(u,\delta(u))^c$ (because 
$\delta(u)=d(u,\Dd^c)$). Moreover,
$$
\int_{B_d(u,\delta(u))^c}\frac{|u-v|\land 1}{|u-v|^{d+\alpha}} \dr v
=\int_{|w|>\delta(u)} \frac{|w|\land 1}{|w|^{d+\alpha}}\dr w =: \rho_\alpha(u).
$$
We thus have
$$
I_2(x) \leq \int_{\Dd} \frac{\rho_\alpha(u)}{|u-x|^{d-1} [\delta(u)]^{1-\alpha/2}} \dr u.
$$
Recall that $\delta(u)$ is bounded (for $u\in \Dd$) by the diameter of $\Dd$.
One easily checks that $\rho_\alpha(u) \leq C$ if $\alpha<1$, that $\rho_\alpha(u) 
\leq C(1+\log (1+\frac 1 {\delta(u)}))\leq (\delta(u))^{-1/4}$ if $\alpha=1$, and that $\rho_\alpha(u) 
\leq C(\delta(u))^{1-\alpha}$ if $\alpha \in (1,2)$, so that in any case,
$$ 
\frac{\rho_\alpha(u)}{|u-x|^{d-1} [\delta(u)]^{1-\alpha/2}} \leq \frac{C}{|u-x|^{d-1} [\delta(u)]^{\theta}}
\quad \text{whence} \quad I_2(x)\leq CK_\theta(x),
$$
with $\theta=1-\alpha/2$ if $\alpha \in (0,1)$, $\theta=3/4$ if $\alpha=1$ and $\theta=\alpha/2$ 
if $\alpha \in (1,2)$.
We always have $\theta \in (0,1)$, whence $\sup_{x\in \pDd} I_2(x)<\infty$ by Step~3.
Recalling Step 2, we have checked~\eqref{jjab1}.

\vip

{\it Step 5.} Calling $I_3(x,x')$ the left hand side of~\eqref{jjab3} and recalling Step 1, we have
\begin{align*}
I_3(x,x')\leq& \int_{\R^d} [\delta(h_{x'}(A',z))]^{\alpha/2} f_x(z)\indiq_{\{h_x(A,z)\notin \Dd, h_{x'}(A',z) \in \Dd\}} \dr z\\
\leq&  C \int_{\R^d} \frac{[\delta(h_{x'}(A',z))]^{\alpha/2}}{|z|^d [\delta(h_x(A,z))]^{\alpha/2}} 
\indiq_{\{h_x(A,z)\notin \Dd, h_{x'}(A',z) \in \Dd\}} \dr z.
\end{align*}
We now use the notation of Lemma~\ref{paradur} and observe that $h_x(A,z)\notin \Dd$ and $h_{x'}(A',z) \in \Dd$
if and only if $z \in \Dd_{x',A'}\setminus \Dd_{x,A}$, while $\delta(h_x(A,z))=d(z,\pDd_{x,A})$. Hence
$$
I_3(x,x')\leq C \int_{\R^d} \frac{[d(z,\pDd_{x',A'})]^{\alpha/2}}{|z|^d [d(z,\pDd_{x,A})]^{\alpha/2}} 
\indiq_{\{z \in \Dd_{x',A'}\setminus \Dd_{x,A}\}} \dr z=I_{31}(x,x')+I_{32}(x,x'),
$$
$I_{31}(x,x')$ (resp. $I_{32}(x,x')$) being the integral on $B_d(0,\e_1)$ (resp. $B_d(0,\e_1)^c$).
Using the functions $\psi_{x,A},\psi_{x',A'}:B_{d-1}(0,\e_1)\to\R$ introduced in Lemma~\ref{paradur},
we may write
\begin{align*}
I_{31}(x,x')=&C \int_{B_d(0,\e_1)} \frac{[d(z,\pDd_{x',A'})]^{\alpha/2}}{|z|^d [d(z,\pDd_{x,A})]^{\alpha/2}} 
\indiq_{\{\psi_{x',A'}(z_2,\dots,z_d)< z_1 \leq  \psi_{x,A}(z_2,\dots,z_d)\}} \dr z\\
\leq &  C \int_{B_{d-1}(0,\e_1)} \indiq_{\{\psi_{x',A'}(v)<\psi_{x,A}(v)\}}\frac{\dr v}{|v|^d} 
\int_{\psi_{x',A'}(v)}^{ \psi_{x,A}(v)} \frac{[d(z,\pDd_{x',A'})]^{\alpha/2}}{[d(z,\pDd_{x,A})]^{\alpha/2}} \dr z_1,
\end{align*}
where we have set $z=(z_1,v)$ and we used that $|z| \geq |v|$. But, see Lemma~\ref{paradur}, we have
$d(z,\pDd_{x',A'}) \leq  z_1-\psi_{x',A'}(v)$ and 
$d(z,\pDd_{x,A})\geq \frac12 (\psi_{x,A}(v)-z_1)$, so that
$$
I_{31}(x,x')
\leq  C \int_{B_{d-1}(0,\e_1)} \indiq_{\{\psi_{x',A'}(v)<\psi_{x,A}(v)\}} \frac{\dr v}{|v|^d} 
\int_{\psi_{x',A'}(v)}^{ \psi_{x,A}(v)} 
\frac{[z_1-\psi_{x',A'}(v)]^{\alpha/2}}{[\psi_{x,A}(v)-z_1]^{\alpha/2}} \dr z_1.
$$
Setting $c_\alpha=\int_0^1 (\frac{u}{1-u})^{\alpha/2}\dr u$, we have 
$\int_a^b (\frac{z_1-a}{b-z_1})^{\alpha/2}\dr z_1=c_\alpha (b-a)$ for all $0\leq a \leq b$: use
the change of variables $u=\frac{z_1-a}{b-a}$. Consequently, see Lemma~\ref{paradur},
\begin{align*}
I_{31}(x,x') \leq & C \int_{B_{d-1}(0,\e_1)} |\psi_{x',A'}(v)-\psi_{x,A}(v)|\frac{\dr v}{|v|^d}
\leq C \rho_{x,x',A,A'} \int_{B_{d-1}(0,\e_1)} \frac{|v|^2\dr v}{|v|^d}\leq C\rho_{x,x',A,A'}.
\end{align*}

Next,
\begin{align*}
I_{32}(x,x') \leq & C\int_{\R^d} \frac{[d(z,\pDd_{x',A'})]^{\alpha/2}}{[d(z,\pDd_{x,A})]^{\alpha/2}}
\indiq_{\{z \in \Dd_{x',A'}\setminus \Dd_{x,A}\}} \dr z..
\end{align*}
For $z \in \Dd_{x',A'}\setminus \Dd_{x,A}$, we have
$d(z,\Dd_{x,A})\lor d(z,\pDd_{x',A'}) \leq C \rho_{x,x',A,A'}$, see Lemma~\ref{paradurf}. Hence
\begin{align*}
I_{32}(x,x') \leq& C\rho_{x,x',A,A'}^{\alpha/2} 
\int_{\R^d} \frac{\indiq_{\{d(z,\Dd_{x,A})\in (0,C\rho_{x,x',A,A'}]\}}}{[d(z,\pDd_{x,A})]^{\alpha/2}}\dr z\\
=&C\rho_{x,x',A,A'}^{\alpha/2} 
\int_{\R^d} \frac{\indiq_{\{d(h_x(A,z),\Dd)\in (0,C\rho_{x,x',A,A'}]\}}}{[d(h_x(A,z),\pDd)]^{\alpha/2}}\dr z.
\end{align*}
We substitute $u=h_x(A,z)$ and then $a=\Phi(u)$. The Jacobian of $\Phi$ being bounded, see Lemma~\ref{parafac}, 
\begin{align*}
I_{32}(x,x') \leq& C\rho_{x,x',A,A'}^{\alpha/2} 
\int_{\R^d} \frac{\indiq_{\{d(u,\Dd)\in (0,C\rho_{x,x',A,A'}]\}}}{[d(u,\pDd)]^{\alpha/2}}\dr z\\
\leq& C\rho_{x,x',A,A'}^{\alpha/2} \int_{\R^d} \frac{\indiq_{\{d(\Phi^{-1}(a),\Dd)\in (0,C\rho_{x,x',A,A'}]\}}}
{[d(\Phi^{-1}(a),\pDd)]^{\alpha/2}}\dr a.
\end{align*}
But, see Lemma~\ref{parafac}, $\kappa^{-1}d(a,{B}_d(0,1))\leq 
d(\Phi^{-1}(a),\Dd)\leq \kappa d(a,{B}_d(0,1))$, whence
\begin{align*}
I_{32}(x,x') \leq& C\rho_{x,x',A,A'}^{\alpha/2} 
\int_{\R^d} \frac{\indiq_{\{d(a,{B}_d(0,1))\in (0,C\rho_{x,x',A,A'}]\}}}{[d(a,{B}_d(0,1))]^{\alpha/2}}\dr a\\
=&C\rho_{x,x',A,A'}^{\alpha/2} \int_1^{1+C\rho_{x,x',A,A'}} \frac{r^{d-1}\dr r}{(r-1)^{\alpha/2}}  \int_{\Sp_{d-1}} \dr \sigma\\
\leq& C\rho_{x,x',A,A'}^{\alpha/2} \int_1^{1+C\rho_{x,x',A,A'}} \frac{\dr r}{(r-1)^{\alpha/2}},
\end{align*}
because $\rho_{x,x',A,A'}$ is uniformly bounded. Finally, we conclude that
$$
I_{32}(x,x') \leq C\rho_{x,x',A,A'}^{\alpha/2} \rho_{x,x',A,A'}^{1-\alpha/2}=C\rho_{x,x',A,A'}.
$$
We have proved that $I_3(x,x')\leq C \rho_{x,x',A,A'}$, {\it i.e.}~\eqref{jjab3}.
\vip

{\it Step 6.} Let $I_4(x)$ be the left hand side of~\eqref{jjab5} and recall that 
$\Dd_{x,A}=\{y \in \R^d : h_x(A,y)\in \Dd\}$. 
Using the function $\psi_{x,A}:B_{d-1}(0,\e_1)\to \R_+$ introduced in Lemma~\ref{paradur},
$$
I_4(x)=\int_\HH \indiq_{\{z \notin \Dd_{x,A}\}} \frac{\dr z}{|z|^{d+\beta}} \leq 
\int_{\HH \cap B_d(0,\e_1)^c} \frac{\dr z}{|z|^{d+\beta}} + \int_{\HH \cap B_d(0,\e_1)} 
\indiq_{\{z_1 < \psi_{x,A}(z_2,\dots,z_d)\}}\frac{\dr z}{|z|^{d+\beta}}.
$$
Hence, writing $z=(z_1,v)$ and using that $|z|\geq|v|$,
$$
I_4(x) \leq C + \int_{B_{d-1}(0,\e_1)} \frac{\dr v}{|v|^{d+\beta}} \int_0^{\psi_{x,A}(v)}\dr z_1
\leq C +  \int_{B_{d-1}(0,\e_1)} \frac{\psi_{x,A}(v)\dr v}{|v|^{d+\beta}}.
$$
By Lemma~\ref{paradur}, there is $C>0$, which does not depend on $x\in \pDd$, 
such that $|\psi_{x,A}(v)|\leq C |v|^2$:
$$
I_4(x) \leq C + C \int_{B_{d-1}(0,\e_1)} \frac{\dr v}{|v|^{d+\beta-2}}.
$$
The conclusion follows, since $d+\beta-2<d-1$.
\vip

{\it Step 7.} We call $I_5(x,x')$ the left hand side of~\eqref{jjab2}. Using the functions
$\psi_{x,A},\psi_{x',A'}:B_{d-1}(0,\e_1)\to \R_+$ introduced in Lemma~\ref{paradur},
$$
I_5(x,x')= \int_\HH \indiq_{\{z \in \Dd_{x',A'}\setminus \Dd_{x,A}\}}  \frac{\dr z}{|z|^{d+\beta}} \leq I_{51}(x,x')+
I_{52}(x,x'),
$$
where
\begin{align*}
I_{51}(x,x')=& \int_{\HH \cap B_d(0,\e_1)^c} \indiq_{\{z \in \Dd_{x',A'}\setminus \Dd_{x,A}\}} \frac{\dr z}{|z|^{d+\beta}} \leq
C {\rm Vol}_d(\Dd_{x',A'}\setminus \Dd_{x,A}),\\
I_{52}(x,x')=& \int_{\HH \cap B_d(0,\e_1)} \!\!\frac{\indiq_{\{ \psi_{x',A'}(z_2,\dots,z_d)\leq z_1 < \psi_{x,A}(z_2,\dots,z_d)\}}
\dr z}{|z|^{d+\beta}}
\leq C \int_{B_{d-1}(0,\e_1)}\!\! \frac{|\psi_{x,A}(v)-\psi_{x',A'}(v)|\dr v}{|v|^{d+\beta}}.
\end{align*}
We have written $z=(z_1,v)$, used that $|z|\geq |v|$ and integrated in $z_1\in [\psi_{x',A'}(v),\psi_{x,A}(v))$.
Using now Lemmas~\ref{paradurf} and~\ref{paradur}, we find
$$
I_5(x,x')\leq C \rho_{x,x',A,A'} + C\rho_{x,x',A,A'}\int_{B_{d-1}(0,\e_1)} 
\frac{\dr v}{|v|^{d+\beta-2}} \leq  C\rho_{x,x',A,A'}.
$$
\vskip-1.1cm
\end{proof}

\subsection{Conclusion}

We can now give the

\begin{proof}[Proof of Proposition~\ref{ttoopp}.]
We fix
$\alpha \in (0,2)$, $\beta \in \{*\}\cup(0,\alpha/2)$, as well as 
$x,x' \in \pDd$ and $A \in \cI_x$, $A' \in \cI_{x'}$.
We use the shortened notation
$h_x(y)=h_x(A,y)$, $g_x(e)=g_x(A,e)$, 
$\cu(x,e)=\cu(x,A, e)$, $\co(x,e)=\co(x,A,e)$, $\bg_x(y,z)=\bg_x(A,y,z)$ and $\rho_{x,x',A,A'}=|x-x'|+||A-A'||$. 

\vip

{\it Step 1.} By Remark~\ref{ggbar}, it holds that
\begin{align*}
\Delta(x,x',A,A')
=&\int_\cE \Big||\bg_x(\cu(x,e),\co(x,e))-\bg_{x'}(\cu(x',e),\co(x',e))| -|x-x'|\Big| \nn_\beta(\dr e).
\end{align*}
We recall that $\cl_x(e)=\inf\{t>0 : h_x(e(t))\notin \Dd\}$,
$\cu(x,e)=e(\cl_x(e)-)$ and $\co(x,e)=e(\cl_x(e))$. We distinguish 
four possibilities:

\vip

\noindent $\bullet$ $\cl_x(e)=\cl_{x'}(e)$ and $\delta(h_x(\cu(x,e))\leq \delta(h_{x'}(\cu(x',e))$, 
in which case $\cu(x,e)=\cu(x',e)$, $\co(x,e)=\co(x',e)$,
and $h_x(\cu(x,e)) \in \Dd$, $h_{x'}(\cu(x,e)) \in \Dd$, $h_x(\co(x,e))\notin \Dd$ and 
$h_{x'}(\co(x,e)) \notin \Dd$; 

\vip

\noindent $\bullet$  $\cl_x(e)=\cl_{x'}(e)$ and $\delta(h_{x'}(\cu(x',e))\leq \delta(h_x(\cu(x,e))$,
in which case $\cu(x,e)=\cu(x',e)$, $\co(x,e)=\co(x',e)$,
and $h_x(\cu(x',e)) \in \Dd$, $h_{x'}(\cu(x',e)) \in \Dd$, $h_x(\co(x',e))\notin \Dd$ and 
$h_{x'}(\co(x',e)) \notin \Dd$;

\vip

\noindent $\bullet$ $\cl_x(e)<\cl_{x'}(e)$, in which case
$h_x(\cu(x,e))\!\in \!\Dd$, $h_{x'}(\cu(x,e)) \!\in\! \Dd$, $h_x(\co(x,e))\!\notin \!\Dd$,
$h_{x'}(\co(x,e)) \! \in \! \Dd$;

\vip

\noindent $\bullet$ $\cl_{x'}(e)<\cl_x(e)$, in which case
$h_{x'}(\cu(x',e))\!\in\! \Dd$, $h_{x}(\cu(x',e)) \!\in \!\Dd$, $h_{x'}(\co(x',e))\!\notin \!\Dd$,
$h_{x}(\co(x',e)) \!\in \!\Dd$.

\vip

Thus $\Delta(x,x')\leq \Delta_1(x,x',A,A')+\Delta_1(x',x,A',A)+\Delta_2(x,x',A,A')+\Delta_2(x',x,A',A)$, where
\begin{align*}
\Delta_1(x,x'\!,A,A')\!=\!&\int_\cE\! \Big||\bg_x(\cu(x,e),\co(x,e))\!-\!\bg_{x'}(\cu(x,e),\co(x,e))| 
\!-\!|x-x'|\Big| \indiq_{\{\delta(h_x(\cu(x,e))\leq \delta( h_{x'}(\cu(x,e))\}} 
\\
&\hskip3cm
\indiq_{\{h_x(\cu(x,e)) \in \Dd, h_{x'}(\cu(x,e)) \in \Dd, h_x(\co(x,e))\notin \Dd,h_{x'}(\co(x,e)) \notin \Dd\}}
\nn_\beta(\dr e)\\
=&\int_{(\R^d)^2} \Big||\bg_x(y,z)\!-\!\bg_{x'}(y,z)| \!-\!|x-x'|\Big| \indiq_{\{\delta(h_x(y))\leq \delta(h_{x'}(y))\}}\\
&\hskip3cm 
\indiq_{\{h_x(y) \in \Dd, h_{x'}(y) \in \Dd, h_x(z)\notin \Dd,h_{x'}(z) \notin \Dd\}}
q_x(\dr y, \dr z),
\end{align*}
with $q_x$ defined in Proposition~\ref{underover}, and
\begin{align*}
\Delta_2(x,x',A,A')=&\int_\cE \Big||\bg_x(\cu(x,e),\co(x,e))-\bg_{x'}(\cu(x',e),\co(x',e))| -|x-x'|\Big| \\
&\hskip1.5cm \indiq_{\{\cl_{x'}(e)>\cl_x(e),h_x(\cu(x,e)) \in \Dd, h_{x'}(\cu(x,e)) \in \Dd, 
h_x(\co(x,e))\notin \Dd,h_{x'}(\co(x,e)) \in \Dd\}} \nn_\beta(\dr e).
\end{align*}
We write $\Delta_{2}(x,x',A,A')\leq \Delta_{21}(x,x',A,A')+\Delta_{22}(x,x',A,A')$, where
\begin{align*}
\Delta_{21}(x,x',A,A')=&\int_\cE \Big||\bg_x(\cu(x,e),\co(x,e))-h_{x'}(\co(x,e))| -|x-x'|\Big| \\
&\hskip3cm \indiq_{\{h_x(\cu(x,e)) \in \Dd, h_{x'}(\cu(x,e)) \in \Dd, h_x(\co(x,e))\notin \Dd,h_{x'}(\co(x,e)) \in \Dd\}}
\nn_\beta(\dr e)\\
=& \int_{(\R^d)^2} \Big||\bg_x(y,z)-h_{x'}(z)| -|x-x'|\Big| \\
&\hskip3cm \indiq_{\{h_x(y) \in \Dd, h_{x'}(y) \in \Dd, h_x(z)\notin \Dd,h_{x'}(z) \in \Dd\}}
q_x(\dr y, \dr z),
\end{align*}
and
\begin{align*}
\Delta_{22}(x,x',A,A')=&\int_\cE \Big|h_{x'}(\co(x,e))-\bg_{x'}(\cu(x',e),\co(x',e))\Big| \\
&\hskip1.5cm \indiq_{\{\cl_{x'}(e)>\cl_x(e),h_x(\cu(x,e)) \in \Dd, h_{x'}(\cu(x,e)) \in \Dd, h_x(\co(x,e))\notin \Dd,h_{x'}(\co(x,e)) \in \Dd\}}
\nn_\beta(\dr e).
\end{align*}
By the Markov property (see Lemma~\ref{mark}) 
with the stopping time $\cl_x(e)$ (which is indeed positive
when $\beta=*$, see Remark~\ref{imp}) and since
$\cl_x(e)<\cl_{x'}(e)$ implies $\cl_x(e)<\ell (e)$,
\begin{align*}
\Delta_{22}(x,x',A,A')=&\int_\cE \E_{\co(x,e)}\Big[\Big|h_{x'}(Z_0)-\bg_{x'}(Z_{\cl_{x'}(Z)-},Z_{\cl_{x'}(Z)})\Big|\Big] \\
&\hskip1.5cm \indiq_{\{\cl_{x'}(e)>\cl_x(e),h_x(\cu(x,e)) \in \Dd, h_{x'}(\cu(x,e)) \in \Dd, h_x(\co(x,e))\notin \Dd,h_{x'}(\co(x,e)) \in \Dd\}}
\nn_\beta(\dr e)\\
\leq &\int_{(\R^d)^2} \E_{z}\Big[\Big|h_{x'}(Z_0)-\bg_{x'}(Z_{\cl_{x'}(Z)-},Z_{\cl_{x'}(Z)})\Big|\Big] \\
&\hskip1.5cm \indiq_{\{h_x(y) \in \Dd, h_{x'}(y) \in \Dd, h_x(z)\notin \Dd,h_{x'}(z) \in \Dd\}} q_x(\dr y, \dr z).
\end{align*}

\vip

{\it Step 2.} By Proposition~\ref{underover}, we see that 
\begin{align*}
\Delta_1(x,x',A,A')\leq & C \int_{(\R^d)^2} \Big||\bg_x(y,z)-\bg_{x'}(y,z)| -|x-x'|\Big|
\indiq_{\{\delta(h_x(y))\leq \delta(h_{x'}(y))\}}\\
&\hskip3cm \indiq_{\{h_x(y) \in \Dd, h_{x'}(y) \in \Dd, h_x(z)\notin \Dd,h_{x'}(z) \notin \Dd\}}
\frac{[\delta(h_x(y))]^{\alpha/2}}{|z-y|^{d+\alpha}|y|^{d}}\dr y \dr z\\
&+ C\indiq_{\{\beta\neq *\}} \int_{\HH} \Big||\bg_x(0,z)-\bg_{x'}(0,z)| -|x-x'|\Big| 
\indiq_{\{h_x(z)\notin \Dd,h_{x'}(z) \notin \Dd\}} \frac1{|z|^{d+\beta}}\dr z.
\end{align*}
Using next Proposition~\ref{tyvmmm}-(i)-(iii), we find
\begin{align*}
\Delta_1(x,x',A,A')\leq & C\rho_{x,x',A,A'} \int_{(\R^d)^2} \Big(|z|\land 1 + \frac{|y|(|y-z|\land 1)}{\delta(h_x(y))} \Big)\\
&\hskip3cm \indiq_{\{h_x(y) \in \Dd, h_{x'}(y) \in \Dd, h_x(z)\notin \Dd,h_{x'}(z) \notin \Dd\}}
\frac{[\delta(h_x(y))]^{\alpha/2}}{|z-y|^{d+\alpha}|y|^{d}}\dr y \dr z\\
&+ C\indiq_{\{\beta\neq *\}} \rho_{x,x',A,A'} \int_{\HH}
\indiq_{\{h_x(z)\notin \Dd,h_{x'}(z) \notin \Dd\}} \frac1{|z|^{d+\beta}}\dr z.
\end{align*}
Thus $\Delta_1(x,x',A,A')\leq  C\rho_{x,x',A,A'}$
by~\eqref{jjab1} and~\eqref{jjab5}.

\vip

{\it Step 3.}  By Proposition~\ref{underover}, we get
\begin{align*}
\Delta_{21}(x,x',A,A')\leq & C \int_{(\R^d)^2}\! \Big||\bg_x(y,z)-h_{x'}(z)| -|x-x'|\Big| \\
&\hskip3cm \indiq_{\{h_x(y) \in \Dd, h_{x'}(y) \in \Dd, h_x(z)\notin \Dd,h_{x'}(z) \in \Dd\}}
\frac{[\delta(h_x(y))]^{\alpha/2}}{|z-y|^{d+\alpha}|y|^{d}}\dr y \dr z\\
&+C\indiq_{\{\beta\neq *\}} \int_\HH\Big||\bg_x(0,z)-h_{x'}(z)| -|x-x'|\Big|
\indiq_{\{h_x(z)\notin \Dd,h_{x'}(z) \in \Dd\}} \frac1{|z|^{d+\beta}}\dr z.
\end{align*}
We next use Proposition~\ref{tyvmmm}-(ii)-(iv) to obtain
\begin{align*}
\Delta_{21}(x,x',A,A')\leq & C\rho_{x,x',A,A'} \int_{(\R^d)^2}\! 
\Big(|z|\land 1 + \frac{|y|(|y-z|\land 1)}{\delta(h_x(y))} \Big) \\
&\hskip3cm \indiq_{\{h_x(y) \in \Dd, h_{x'}(y) \in \Dd, h_x(z)\notin \Dd,h_{x'}(z) \in \Dd\}}
\frac{[\delta(h_x(y))]^{\alpha/2}}{|z-y|^{d+\alpha}|y|^{d}}\dr y \dr z\\
&+C\indiq_{\{\beta\neq *\}} \int_\HH \indiq_{\{h_x(z)\notin \Dd,h_{x'}(z) \in \Dd\}} \frac1{|z|^{d+\beta}}\dr z.
\end{align*}
We conclude from~\eqref{jjab1} and~\eqref{jjab2} that
$\Delta_{21}(x,x',A,A')\leq  C\rho_{x,x',A,A'}$.

\vip

{\it Step 4.} To treat $\Delta_{22}$, let us first prove that there is a constant $C$ 
(not depending on $x' \in \pDd$) such that
for any $z \in \R^d$ such that $h_{x'}(z)\in \Dd$,
\begin{equation}\label{ttc}
f(z):=\E_{z}\Big[\Big|h_{x'}(Z_0)-\bg_{x'}(Z_{\cl_{x'}(Z)-},Z_{\cl_{x'}(Z)})\Big|\Big]
\leq C [\delta(h_{x'}(z))]^{\alpha/2}.
\end{equation}
Since $h_{x'}$ is an affine isometry, $h_{x'}(Z)$ has the same law as $Z$ (with the suitable
initial condition), so that we can write, recalling~\eqref{bg},
$$
f(z)=\E_{h_{x'}(z)}\Big[\Big|Z_0-\Lambda(Z_{\tell(Z)-},Z_{\tell(Z)})\Big|\Big]
\leq \E_{h_{x'}(z)}\Big[\sup_{t \in [0,\tell(Z)]}|Z_t-Z_0| \land \mathrm{diam}(\Dd)\Big],
$$
where $\tell (Z) = \inf\{t>0 : Z_t \notin \Dd\}$. Consider now $u\in \pDd$ such that 
$\delta(h_{x'}(z))=|h_{x'}(z)-u|$, as well as the half-space $\HH_u$ tangent to $\pDd$ at $u$
and containing $\Dd$ (recall that $\Dd$ is convex).
It holds that $\hat \ell_u(Z)=\inf\{t>0 : Z_t \notin {\HH}_u\} \geq \tell(Z)$, whence
$$
f(z)\leq \E_{h_{x'}(z)}\Big[\sup_{t \in [0,\hat\ell_u(Z)]}|Z_t-Z_0|\land \mathrm{diam}(\Dd)\Big]
= \E_{\delta(h_{x'}(z))\be_1} \Big[\sup_{t \in [0,\ell(Z)]}|Z_t-Z_0|\land \mathrm{diam}(\Dd)\Big]
$$
by isometry-invariance of the stable process (recall that $\ell(Z)=\inf\{t>0 : Z_t \notin {\HH}\}$)
and since $h_{x'}(z)-u$ is orthogonal to $\partial \HH_u$.
We then deduce~\eqref{ttc} from Lemma~\ref{pr5}. With the very same arguments,
we have the following (to be used in other proofs): for any $y \in \Dd$,
with $u\in \pDd$ such that $\delta(y)=|y-u|$ and $\HH_u$ as above, 
\begin{equation}\label{agarder}
\PP_{y}(\tell(Z)> 1)\leq \PP_y(\hat \ell_u(Z)>1)= \PP_{\delta(y)\be_1}(\ell(Z)>1) \sim c (\delta(y))^{\alpha/2}
\text{ as $\delta(y)\to 0$.}
\end{equation}
We thus have
\begin{align*}
\Delta_{22}(x,x',A,A')\leq & C\int_{(\R^d)^2}  [\delta(h_{x'}(z))]^{\alpha/2}
\indiq_{\{h_x(y) \in \Dd, h_{x'}(y) \in \Dd, h_x(z)\notin \Dd,h_{x'}(z) \in \Dd\}} q_x(\dr y, \dr z)\\
\leq & C\int_{(\R^d)^2} [\delta(h_{x'}(z))]^{\alpha/2} \frac{[\delta(h_x(y))]^{\alpha/2}}{|z-y|^{d+\alpha}|y|^d}
\indiq_{\{h_x(y) \in \Dd, h_{x'}(y) \in \Dd, h_x(z)\notin \Dd,h_{x'}(z) \in \Dd\}}\dr y \dr z\\
&+C \indiq_{\{\beta \neq *\}}\int_{\HH}  [\delta(h_{x'}(z))]^{\alpha/2} 
\indiq_{\{h_x(z)\notin \Dd,h_{x'}(z) \in \Dd\}} \frac{\dr z}{|z|^{d+\beta}}
\end{align*}
by Proposition~\ref{underover}. By~\eqref{jjab3} and~\eqref{jjab2}, we end with $\Delta_{22}(x,x',A,A')
\leq C\rho_{x,x',A,A'}$.

\vip

{\it Conclusion.} Gathering Steps 1 to 4, we find $\Delta(x,x',A,A')\leq C\rho_{x,x',A,A'}$ as desired.
\end{proof}

\section{The reflected process starting from the boundary}\label{sec:start_bounda}

The goal of this section is to prove Theorem~\ref{mr1}. We start with three lemmas.

\begin{lemma}\label{tti}
Fix $\beta \in \{*\}\cup(0,\alpha/2)$, $x\in \pDd$ and suppose Assumption~\ref{as}.
Consider a filtration $(\cG_u)_{u\geq 0}$, a $(\cG_u)_{u\geq 0}$-Poisson measure 
$\Pi_\beta=\sum_{u\in \JJ}\delta_{(u,e_u)}$ on $\R_+\times\cE$ with intensity measure $\dr u \nn_{\beta}(\dr e)$,
a càdlàg $(\cG_u)_{u\geq 0}$-adapted $\pDd$-valued process $(b_u)_{u\geq 0}$, a $(\cG_u)_{u\geq 0}$-predictable 
process $(a_u)_{u \geq 0}$ such that a.s., for all $u \geq 0$, $a_u \in \cI_{b_{u-}}$ and define $(\tau_u)_{u\geq 0}$
by~\eqref{sdet}. Then a.s., $u \mapsto \tau_u$ is strictly increasing on $\R_+$ and $\lim_{u\to\infty} \tau_u=\infty$.
\end{lemma}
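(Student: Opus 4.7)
The plan is to decouple the analysis from the processes $(b_u)_{u\geq 0}$ and $(a_u)_{u\geq 0}$ by using the pointwise lower bound $\cl_x(A,e)\geq \ell_r(e)$ from Remark~\ref{imp4}, valid for \emph{every} $x\in\pDd$, $A\in\cI_x$ and $e\in\cE$, where $\ell_r(e)=\inf\{t>0:e(t)\notin B_d(r\be_1,r)\}$. Once this is in hand, both claims reduce to measure-theoretic statements about how much mass $\nn_\beta$ puts on $\{\ell_r>0\}$, with no further reference to $b$ or $a$.

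\textbf{Mass of $\{\ell_r>0\}$.} By right-continuity, $\{e(0)\in B_d(r\be_1,r)\}\subset\{\ell_r>0\}$. For $\beta=*$, Lemma~\ref{imp}-(iii) gives $\nn_*(\ell_r=0)=0$, while Lemma~\ref{qdist} gives $\nn_*(\cE)=+\infty$; hence $\nn_*(\ell_r>0)=+\infty$. For $\beta\in(0,\alpha/2)$, the defining formula \eqref{nnb} and $\PP_y(Z_0\in B_d(r\be_1,r))=\indiq_{\{y\in B_d(r\be_1,r)\}}$ yield
\begin{equation*}
\nn_\beta\big(e(0)\in B_d(r\be_1,r)\big)=\int_{B_d(r\be_1,r)}|y|^{-d-\beta}\dr y,
\end{equation*}
and in polar coordinates $y=\rho\omega$ the condition $y\in B_d(r\be_1,r)$ becomes $\rho<2r\omega_1$ with $\omega_1>0$, so the integral equals $\int_{\{\omega_1>0\}}\int_0^{2r\omega_1}\rho^{-\beta-1}\dr\rho\dr\omega=+\infty$ since $\beta>0$. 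On the other hand, $\nn_\beta(\ell_r>\epsilon)\leq\nn_\beta(\ell>\epsilon)<+\infty$ for every $\epsilon>0$ by Lemmas~\ref{qdist} and~\ref{qdist2}, and $\nn_\beta(\ell_r>\epsilon)\uparrow+\infty$ as $\epsilon\downarrow 0$ by monotone convergence.

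\textbf{Strict monotonicity.} The restriction of $\Pi_\beta$ to $\R_+\times\{\ell_r>0\}$ is a Poisson measure whose time-projection has infinite intensity, hence almost surely its atoms $\{v\in\JJ:\ell_r(e_v)>0\}$ are dense in $\R_+$ (for every $a<b$ the count $\Pi_\beta((a,b]\times\{\ell_r>0\})$ is Poisson of infinite mean, hence infinite a.s.). At each such atom $v$, the bound $\cl_{b_\vm}(a_v,e_v)\geq\ell_r(e_v)>0$ forces $\Delta\tau_v>0$. Thus for any $0\leq u_1<u_2$ some such atom lies in $(u_1,u_2]$ and contributes a strictly positive jump, giving $\tau_{u_1}<\tau_{u_2}$.

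\textbf{Divergence.} I would pick $\epsilon>0$ small enough that $\lambda:=\nn_\beta(\ell_r>\epsilon)\in(0,\infty)$ and set $N_u:=\Pi_\beta([0,u]\times\{\ell_r>\epsilon\})$, which is an honest Poisson process of rate $\lambda>0$, so $N_u\to+\infty$ a.s. Since each of its atoms contributes at least $\epsilon$ to $\tau$ via $\cl_{b_\vm}(a_v,e_v)\geq\ell_r(e_v)>\epsilon$, we conclude $\tau_u\geq\epsilon N_u\to+\infty$ a.s. The only genuinely nontrivial step is the cusp computation showing $\nn_\beta(\{e(0)\in B_d(r\be_1,r)\})=+\infty$ when $\beta\in(0,\alpha/2)$; everything else is a routine consequence of the deterministic domination $\cl\geq\ell_r$ and standard restriction properties of Poisson measures.
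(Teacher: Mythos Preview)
Your proof is correct and follows essentially the same approach as the paper: reduce to the deterministic lower bound $\cl_x(A,e)\geq\ell_r(e)$ from Remark~\ref{imp4}, then show $\nn_\beta(\ell_r>0)=\infty$ in both cases (via Lemma~\ref{imp}-(iii) for $\beta=*$ and via the divergent integral $\int_{B_d(r\be_1,r)}|y|^{-d-\beta}\dr y$ for $\beta\in(0,\alpha/2)$). The paper is terser, simply writing ``the result follows'' once $\nn_\beta(\ell_r>0)=\infty$ is established, whereas you spell out explicitly the density-of-atoms argument for strict monotonicity and the $\tau_u\geq\epsilon N_u$ bound for divergence; both are standard consequences and your added detail is fine.
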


\begin{proof}
By Remark~\ref{imp4}, we have
$\cl_x(A,e) \geq \ell_r(e)$ for all $e \in \cE$, all $x \in \pDd$, all $A \in \cI_x$.
If $\beta=*$, we have $\ell_r(e)>0$ for $\nn_*$-a.e. $e \in \cE$ by Lemma~\ref{imp}, so that
$\nn_*(\ell_r>0)=\infty$ and the result follows.
If next $\beta \in (0,\alpha/2)$, we  have $\ell_r(e)>0$ for all $e \in \cE_r:=\{e \in \cE :
e(0) \in  B_d(r\be_1,e)\}$. Since $\nn_\beta (\cE_r)=\int_{\HH\cap B_d(r\be_1,e)} |x|^{-d-\beta} \dr x= \infty$,
recall~\eqref{nnb},
the result follows.
\end{proof}

The second one deals with Poisson measures.

\begin{lemma}\label{ttaann}
Fix $\beta \in \{*\}\cup(0,\alpha/2)$. Consider a filtration $(\cG_u)_{u\geq 0}$, 
a $(\cG_u)_{u\geq 0}$-Poisson measure 
$\Pi_\beta=\sum_{u\in \JJ}\delta_{(u,e_u)}$ on $\R_+\times\cE$ with intensity measure $\dr u \nn_{\beta}(\dr e)$.
For any $(\cG_u)_{u\geq 0}$-predictable process $\Theta_u$ such that a.s., for all $u\geq 0$,
$\Theta_u$ is a linear isometry of $\R^d$ sending $\be_1$ to $\be_1$, the random measure
$\Pi'_\beta=\sum_{u\in \JJ}\delta_{(u,\Theta_u e_u)}$ is again a $(\cG_u)_{u\geq 0}$-Poisson measure 
on $\R_+\times\cE$ with intensity measure $\dr u \nn_{\beta}(\dr e)$.
\end{lemma}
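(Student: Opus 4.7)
The plan is to first establish the deterministic invariance $\Theta\#\nn_\beta=\nn_\beta$ for every linear isometry $\Theta$ of $\R^d$ sending $\be_1$ to $\be_1$, and then to upgrade this to the random, predictable case via Watanabe's characterization of Poisson random measures by their predictable compensator.

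For the invariance when $\beta\in(0,\alpha/2)$, I would use directly the definition~\eqref{nnb}. Since $\Theta\be_1=\be_1$, the map $\Theta$ preserves $\HH=\{x : x\cdot\be_1>0\}$ and $\ell(\Theta Z)=\ell(Z)$; by isotropy of the stable process, $(\Theta Z_t)_{t\ge 0}$ under $\PP_x$ has the same law as $(Z_t)_{t\ge 0}$ under $\PP_{\Theta x}$. The change of variables $y=\Theta x$ has unit Jacobian and satisfies $|\Theta x|=|x|$, yielding $\Theta\#\nn_\beta=\nn_\beta$. For $\beta=*$ the argument goes back to the construction of $\nn_*$ in the paragraph around~\eqref{et}: because $\Theta\be_1=\be_1$, the first coordinate $Z^1_t=Z_t\cdot\be_1$ is identical whether computed from $Z$ or from $\Theta Z$, so the local time $L$, its right-continuous inverse $(\gamma_u)_{u\ge 0}$ and the index set $\JJ$ coincide in the two constructions. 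Isotropy gives $\Theta Z\stackrel{d}{=}Z$, so the Poisson measures $\sum_{u\in\JJ}\delta_{(u,e_u)}$ and $\sum_{u\in\JJ}\delta_{(u,\Theta e_u)}$ have the same law, forcing the intensity $\nn_*$ (uniquely determined thanks to the normalization $\E[\int_0^\infty e^{-t}\dr L_t]=1$) to satisfy $\Theta\#\nn_*=\nn_*$.

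With this deterministic invariance in hand, the random case becomes essentially formal. I would compute the $(\cG_u)$-predictable compensator of $\Pi'_\beta$ by writing, for an arbitrary nonnegative $(\cG_u)$-predictable function $H$ on $\R_+\times\cE$,
\[
\E\Big[\int H(u,e)\,\Pi'_\beta(\dr u,\dr e)\Big]
=\E\Big[\sum_{u\in\JJ}H(u,\Theta_u e_u)\Big]
=\E\Big[\int_0^\infty\!\dr u\int_\cE H(u,\Theta_u e)\,\nn_\beta(\dr e)\Big],
\]
where the second equality is the compensation formula for the $(\cG_u)$-Poisson measure $\Pi_\beta$ applied to the predictable integrand $(u,e,\omega)\mapsto H(\omega,u,\Theta_u(\omega)e)$. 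Using the first step $\omega$-by-$\omega$ and $u$-by-$u$, the inner integral reduces to $\int_\cE H(u,e)\,\nn_\beta(\dr e)$, so the $(\cG_u)$-predictable compensator of $\Pi'_\beta$ equals the \emph{deterministic} measure $\dr u\,\nn_\beta(\dr e)$. Since $\Pi'_\beta$ is visibly an integer-valued simple random measure (distinct atoms of $\Pi_\beta$ have distinct $u$-coordinates, which are untouched by the transformation), Watanabe's characterization then guarantees that $\Pi'_\beta$ is a $(\cG_u)$-Poisson measure on $\R_+\times\cE$ with intensity $\dr u\,\nn_\beta(\dr e)$.

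The only mildly delicate point is establishing the deterministic invariance of $\nn_*$, which hinges on the fact that the local time of $Y_t=Z_t^1-\inf_{s\le t}Z_s^1$ is a functional of $Z^1$ alone and that the normalization condition pins $L$ down uniquely; once this is observed, everything else is bookkeeping and a direct appeal to Watanabe's theorem.
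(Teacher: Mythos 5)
Your proof is correct and follows essentially the same route as the paper: establish that $\nn_\beta$ is invariant under linear isometries fixing $\be_1$ (which the paper attributes directly to rotational invariance of the stable process), then identify the predictable compensator of $\Pi'_\beta$ as the deterministic measure $\dr u\,\nn_\beta(\dr e)$ and conclude by the characterization of Poisson random measures via their compensator (your appeal to Watanabe's theorem is the same as the paper's citation of Jacod--Shiryaev, Theorem 4.8). Your extra detail on the invariance of $\nn_*$ through the construction around \eqref{et} is a fuller justification of a step the paper states without proof, but it is not a different argument.
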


\begin{proof}
By rotational invariance of the stable process, the excursion measure $\nn_\beta$ is invariant by
any linear isometry of $\R^d$ sending $\be_1$ to $\be_1$. Consequently, 
the compensator (see Jacod and Shiryaev~\cite[Theorem 1.8 page 66]{jacod2013limit})
of the integer-valued random
measure (see~\cite[Definition 1.13 page 68]{jacod2013limit}) $\Pi_\beta'$ is again  $\dr u \nn_{\beta}(\dr e)$. 
The conclusion follows from~\cite[Theorem 4.8 page 104]{jacod2013limit}.
\end{proof}

The third one states some continuity results.

\begin{lemma}\label{relou}
Fix $\beta \in \{*\}\cup(0,\alpha/2)$ and suppose Assumption~\ref{as}.

\vip

(i) For any $x\in \pDd$, any $A \in \cI_x$, any $t>0$, we have  $\nn_\beta(\{e \in \cE : \cl_x(A,e)=t\})=0$.

\vip

(ii) Let $x_n\in \pDd$, $A_n \in \cI_{x_n}$ such that $\lim_n x_n= x \in \pDd$ and 
$\lim_n A_n = A \in \cI_x$. For $\nn_\beta$-a.e. $e \in \cE$, 
there is $n_e \geq 1$ such that for all $n\geq n_e$, $\cl_{x_n}(A_n,e)=\cl_x(A,e)$.
\end{lemma}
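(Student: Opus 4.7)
For (i), I fix any $s\in(0,t)$ and apply Lemma~\ref{mark} with the constant stopping time $\rho\equiv s$. On the event $\{\cl_x(A,e)=t\}$ one has $\cl_x(A,e)>s$, so $e(s)\in\Dd_{x,A}=\{y\in\R^d:h_x(A,y)\in\Dd\}$; by the Markov property the shifted path $(e(s+u))_{u\ge 0}$ is an isotropic $\alpha$-stable process started at $e(s)$ and stopped upon leaving $\HH$, and the event reduces to this stable process first exiting the bounded $C^{1,1}$ set $\Dd_{x,A}$ exactly at the time $t-s>0$. Since the Dirichlet heat kernel of the isotropic stable process in $\Dd_{x,A}$ is continuous in time on the interior, the first-exit-time distribution is absolutely continuous with respect to Lebesgue measure, so this inner probability vanishes pointwise; integration against $\nn_\beta$ then yields $\nn_\beta(\cl_x(A,e)=t)=0$.

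For (ii), set $\tau=\cl_x(A,e)$ and $\tau_n=\cl_{x_n}(A_n,e)$. My plan is to extract a $\nn_\beta$-full measure set of excursions enjoying: (a) $\tau>0$, from Remark~\ref{imp4} and Lemma~\ref{imp}-(iii); (b) no creeping at the exit, $h_x(A,e(\tau-))\in\Dd$ strictly; (c) strict exit, $h_x(A,e(\tau))\notin\cDd$; and (d) no boundary left-limit before the exit, $e(u-)\in\Dd_{x,A}$ strictly for every $u\in(0,\tau)$. Properties (b)--(c) are the classical no-creep and no-boundary-hit statements for isotropic stable processes in smooth bounded domains, which I would derive by applying Lemma~\ref{mark} at a small positive time together with sharp Green-function estimates of the type underlying Proposition~\ref{underover}. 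For (d), a compensation-formula argument on the Poisson measure of jumps of the stable process shows that, for each $\e>0$, the expected number of jumps of size $\ge\e$ occurring before $\tau$ from a point with $e(u-)\in\partial\Dd_{x,A}$ equals a constant times $\int_0^\infty\nn_\beta(u<\tau,\,e(u)\in\partial\Dd_{x,A})\,\dr u$, which vanishes because the one-time marginals are absolutely continuous; taking $\e=1/k$ covers every jump time, and at non-jump times $e(u-)=e(u)\in\Dd_{x,A}$ automatically when $u<\tau$.

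Granted (a)--(d), for any $s_0\in(0,\tau)$ the closure of $\{e(u):u\in[s_0,\tau)\}\cup\{e(u-):u\in(s_0,\tau]\}$ is a compact subset of the open set $\Dd_{x,A}$: every accumulation point has the form $e(u^*)$ or $e(u^*-)$ with $u^*\in[s_0,\tau]$, and such values lie strictly inside $\Dd_{x,A}$ by (b) and (d). Choosing $s_0$ small enough that $e(u)\in B_d(r\be_1,r)$ for $u\in(0,s_0]$, Remark~\ref{imp4} gives $e(u)\in\Dd_{x_n,A_n}$ on that interval for every $n$. Combining this with the pointwise estimate $|h_{x_n}(A_n,y)-h_x(A,y)|\le(|x_n-x|+\|A_n-A\|)(1+|y|)$ and the boundedness of $e$, for $n$ large the perturbation is smaller than both $\inf_{u\in[s_0,\tau]}d(h_x(A,e(u)),\partial\Dd)>0$ and $d(h_x(A,e(\tau)),\cDd)>0$; this forces $e(u)\in\Dd_{x_n,A_n}$ on $(0,\tau)$ and $e(\tau)\notin\cDd_{x_n,A_n}$, hence $\tau_n=\tau$ for every $n\ge n_e$. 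The main obstacle will be the rigorous justification of (b) and (d): (b) is the no-creep property of isotropic stable processes on smooth convex domains, while (d) hinges on the compensation formula for the Poisson measure of jumps together with absolute continuity of the one-time marginals; once these are in place, the rest of (ii) is a routine perturbation.
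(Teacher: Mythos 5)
Your proof of (i) is essentially the paper's: both reduce, via the Markov property of Lemma~\ref{mark} (the paper only needs it when $\beta=*$; for $\beta\in(0,\alpha/2)$ it uses \eqref{nnb} directly), to the fact that the exit time of the stable process from $\Dd_{x,A}$ has no atoms, which the paper gets by citing a density result for first exit times and you get from continuity in $t$ of the killed semigroup; either justification is fine. For (ii), your route to the key regularity property is genuinely different from the paper's. The paper proves \eqref{eer} (before exiting, the path stays at positive distance from $\Dd_{x,A}^c$, and it exits strictly outside the closure) by a strong Markov argument at the hitting times of $\{y:\delta(y)\le 1/k\}$ combined with the survival estimate $\PP_y(\tilde\ell(Z)\ge\e)\le c_\e[\delta(y)]^{\alpha/2}$ from \eqref{agarder}. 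You instead obtain it from: (b)--(c) no creeping and strict overshoot (which, as you say, follow from the undershoot/overshoot density bound of Lemma~\ref{uos} after applying Lemma~\ref{mark} at a positive time), plus (d) absence of boundary left limits strictly before the exit, proved by the compensation formula and absolute continuity of one-time marginals, plus the compactness of the closure of the range of a c\`adl\`ag path. This chain is sound (note that for $u<\cl_x(A,e)$ one automatically has $e(u-)\in\cDd_{x,A}$, so excluding the boundary is exactly what is needed), and the final perturbation step coincides with the paper's. Your approach trades the paper's quantitative hitting-time estimate for a softer jump-measure argument; both need the same Green-function input for the no-creep/strict-overshoot part.

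There is, however, a genuine gap in coverage: as written, your proof of (ii) only treats $\beta=*$. Your property (a), ``$\tau>0$ for $\nn_\beta$-a.e.\ $e$'', is false when $\beta\in(0,\alpha/2)$: the set of excursions with $h_x(A,e(0))\notin\Dd$, on which $\cl_x(A,e)=0$, has positive (finite, by \eqref{jjab5}) $\nn_\beta$-measure, and Remark~\ref{imp4} together with Lemma~\ref{imp}-(iii) only give $\tau\ge\ell_r(e)>0$ under $\nn_*$. Likewise, your choice of $s_0$ with $e(u)\in B_d(r\be_1,r)$ on $(0,s_0]$ rests on the excursion starting at $0$, so it does not apply to $\nn_\beta$-excursions issued from an arbitrary point of $\HH$. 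Both defects are easily repaired and are exactly what the paper's case (c) handles: for $\nn_\beta$-a.e.\ $e$ with $\cl_x(A,e)=0$, the density of the law of $e(0)$ gives $h_x(A,e(0))\notin\cDd$ strictly, so $h_{x_n}(A_n,e(0))\notin\cDd$ and hence $\cl_{x_n}(A_n,e)=0$ for $n$ large; and when $\cl_x(A,e)>0$ one has $e(0)\in\Dd_{x,A}$, hence $e(0)\in\Dd_{x_n,A_n}$ eventually, and your compactness argument runs with $s_0=0$ without the ball device. You should also state explicitly that (b), (c), (d) are established under $\nn_*$ by first applying Lemma~\ref{mark} at a small positive time and then arguing under $\PP_{e(s_0)}$ (the compensation formula and the absolute continuity of the marginals are facts about the stable process, and the entrance marginals $\nn_*(e(u)\in\cdot,\ell>u)$ inherit absolute continuity the same way); you flag this as the main obstacle, and with these additions the argument is a valid alternative to the paper's proof.
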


\begin{proof}
For (i), we fix $t>0$ and observe that for all $x\in \pDd$, all 
$z\in \Dd_{x,A}=\{y \in \R^d : h_x(y)\in \Dd \}$,
$\PP_z(\ell_x(A,Z)=t)=0$, where $Z$ is an ISP$_{\alpha,z}$ under $\PP_z$:
$\cl_x(A,Z)$ has a density under $\PP_z$, as shown (with more generality and with some informative 
formula) by Bogdan, Jastrz\c{e}bski, Kassmann, Kijaczko and Pop\l{}awski~\cite[Theorem 1.3]{bjkkp}.
\vip

Since $t>0$, $\cl_x(A,e)=t$ implies
that $e(0)\in \Dd_{x,A}$ (because $e(0)\in \Dd_{x,A}^c$ implies that $\cl_x(A,e)=0$).
Recalling~\eqref{nnb}, we conclude that if $\beta \in (0,\alpha/2)$,
$$
\nn_\beta(\cl_x(A,\cdot)=t)=\int_{\Dd_{x,A}} |z|^{-d-\beta} \PP_z(\cl_x(A,Z)=t) \dr z =0.
$$

If now $\beta=*$, we use the Markov property of $\nn_*$ at time $t/2$,
see Lemma~\ref{mark}, to write
$$
\nn_*(\cl_x(A,\cdot)=t)=\int_{\cE} \indiq_{\{\ell(e)>t/2\}} \PP_{e(t/2)}(\cl_x(A,Z)=t/2) \nn_*(\dr e)=0.
$$

We next check (ii).
We fix $x_n \in \pDd$, $A_n\in \cI_{x_n}$ such that $x_n\to x \in \pDd$ and $A_n \to A \in \cI_x$.
We set $\Dd_{x_n,A_n}=\{y \in \HH : h_{x_n}(A_n,y)\in\Dd\}$ and $\Dd_{x,A}=\{y \in \HH : h_{x}(A,y)\in\Dd\}$.
We recall that $\cl_{x_n}(A_n,e)=\inf\{t>0 : e(t)\notin \Dd_{x_n,A_n}\}$ and 
$\cl_{x}(A,e)=\inf\{t>0 : e(t)\notin \Dd_{x,A}\}$.
\vip

(a) If $z \in \Dd_{x,A}$, then $\PP_z(\inf_{t\in[0,\cl_x(A,Z))}d(Z_t,\Dd_{x,A}^c)>0, 
Z_{\cl_x(A,Z)} \in \cDd_{x,A}^c)=1$. 

\vip

By invariance of the stable process, it suffices that, setting  
$\tell(Z)=\inf\{t> 0 : Z_t \notin \Dd\}$,
\begin{equation}\label{eer}
\text{for all $z\in \Dd$,}\qquad p(z):=\PP_z\Big(\inf_{t\in[0,\tell(Z))}\delta(Z_t)>0, 
Z_{\tell(Z)} \in \cDd^c \Big)=1,
\end{equation}
where $\delta(y)=d(y,\pDd)$.
Fix $z \in \Dd$. By Lemma~\ref{uos}, the laws of $Z_{\tell(Z)-}$ and $Z_{\tell(Z)}$ have densities under 
$\PP_z$, so that $\PP_z$-a.s., $Z_{\tell(Z)} \in \cDd^c$ and 
$Z_{\tell(Z)-} \in \Dd$, whence there is $\e>0$ such that
$\inf_{t\in[\tell(Z)-\e,\tell(Z))}\delta(Z_t)>0$.
Hence we only have to check that for all $\e>0$, 
$$
q_\e(z):=\PP_z\Big(\tell(Z)>\e,\inf_{t\in[0,\tell(Z)-\e]}\delta(Z_t)=0\Big)=0.
$$
For $k\geq 1$, set $\rho_k(Z)=\inf\{t\geq 0 :\delta(Z_t)\leq 1/k \}$: we need 
that $\lim_k q_{k,\e}(z)=0$, where
$$
q_{k,\e}(z)=\PP_z\Big(\tell(Z)>\e,\inf_{t\in[0,\tell(Z)-\e]}\delta(Z_t)\leq 1/k\Big)
\leq \PP_z(\rho_k(Z)\leq \tell(Z)-\e). 
$$
On the event $\{\rho_k(Z)\leq \tell(Z)-\e\}$, we have $Z_{\rho_k(Z)} \in \Dd$, so that, applying the 
strong Markov property at time $\rho_k(Z)$, we get
$$
q_{k,\e}(z)\leq \E_z\Big[\indiq_{\{Z_{\rho_k(Z)} \in \Dd\}}\PP_{Z_{\rho_k(Z)}}(\tell(Z)\geq \e) \Big].
$$
By~\eqref{agarder} and a scaling argument, there is $c_\e>0$ such that
$\PP_{y}(\tell(Z)\geq \e)\leq c_\e [\delta(y)]^{\alpha/2}$ for $y \in \Dd$.  Since
$\delta(Z_{\rho_k(Z)})\leq 1/k$ on  $\{Z_{\rho_k(Z)} \in \Dd\}$, we conclude that
$q_{k,\e}(z)\leq c_\e k^{-\alpha/2} \to 0$ as desired.

\vip

(b) We conclude when $\beta=*$.  Let us first mention that as $n\to \infty$,
\begin{align}\label{ssz}
\sup_{y\in \R^d}|d(y,\Dd_{x_n,A_n}^c)-d(y,\Dd_{x,A}^c)|
=\sup_{y\in \R^d}|d(h_x(A,y),\Dd^c)-d(h_{x_n}(A_n,y),\Dd^c)|
\to 0,
\end{align}
since $|d(h_x(A,y),\Dd^c)-d(h_{x_n}(A_n,y),\Dd^c)|\leq 
|h_x(A,y)-h_{x_n}(A_n,y)|\leq |x-x_n|+||A-A_n|||y|$
and since $|d(h_x(A,y),\Dd^c)-d(h_{x_n}(A_n,y),\Dd^c)|=0$ if $|y|\geq \text{diam}(\Dd)$.

\vip
By Remark~\ref{imp4},
$\cl_x(A,e)\geq \ell_r(e)$, $\cl_{x_n}(A_n,e)\geq \ell_r(e)$ and by Lemma~\ref{imp},  
$\ell_r>0$ $\nn_*$-a.e.
By (a) and the Markov property applied at time $\ell_r(e) >0$, see Lemma~\ref{mark},
we see that for $\nn_*$-a.e. $e \in \cE$, 
\vip
\noindent$\bullet$ either $e(\ell_r(e)) \notin \cDd_{x,A}$,
\vip
\noindent $\bullet$ or 
$e(\ell_r(e))\in \Dd_{x,A}$ and $\inf_{t \in [\ell_r(e),\cl_x(A,e))}d(e(t),\Dd_{x,A}^c)>0$ 
and $e(\cl_x(A,e))\in \cDd_{x,A}^c$. 
\vip
We used that for $\nn_*$-a.e. $e \in \cE$, $e(\ell_r(e))\notin \pDd_{x,A}$,
because the law of $e(\ell_r(e))$ under $\nn_*$ has a density (as in 
Proposition~\ref{underover}). 

\vip

If first $e(\ell_r(e)) \notin \cDd_{x,A}$, then $\cl_x(A,e)=\ell_r(e)$ 
and $\cl_{x_n}(A_n,e)=\ell_r(e)$ for all $n$ large enough, since 
$h_{x_n}(A_n,e(\ell_r(e)))\to h_{x}(A,e(\ell_r(e))) \in\R^d\setminus \cDd$, which is open, 
so that $e(\ell_r(e)) \notin \cDd_{x_n,A_n}$ for $n$ large enough.

\vip

If next $e(\ell_r(e))\in \Dd_{x,A}$ and $\inf_{t \in [\ell_r(e),\cl_x(A,e))}d(e(t),\Dd_{x,A}^c)>0$ 
and 
$e(\cl_x(A,e))\in \cDd_{x,A}^c$, then for all $n$ large enough, 
$e(\ell_r(e))\in \Dd_{x_n,A_n}$ (because $h_{x_n}(A_n,e(\ell_r(e)))\to h_{x}(A,e(\ell_r(e))) \in \Dd$ which
is open),
$\inf_{t \in [\ell_r(e),\cl_{x}(A,e))}d(e(t),\Dd_{x_n,A_n}^c)>0$
by~\eqref{ssz}
and $e(\cl_x(A,e))\in \cDd_{x_n,A_n}^c$ (because $h_{x_n}(A_n,e(\cl_x(A,e)))
\to h_{x}(A,e(\cl_x(A,e))) \in \cDd^c$ which is open).
Thus $\cl_{x_n}(A_n,e)=\cl_x(A,e)$ for all $n$ large enough.

\vip

(c) The case where $\beta \in (0,\alpha/2)$ is easier. By (a) and~\eqref{nnb}, 
we see that for $\nn_\beta$-a.e. $e \in \cE$,
\vip
\noindent $\bullet$ 
either $e(0) \in \R^d\setminus \cDd_{x,A}$, 
\vip
\noindent $\bullet$ or $e(0) \in \Dd_{x,A}$ and 
$\inf_{t \in [0,\cl_x(A,e))}d(e(t), \Dd_{x,A}^c)>0$ and 
$e(\cl_x(A,e))\in \cDd_{x,A}^c$. 
\vip
We conclude as in (b).
\end{proof}

We now prove some well-posedness result for the boundary process.

\begin{proposition}\label{newb} Fix 
$\beta \in \{*\}\cup(0,\alpha/2)$ and suppose Assumption~\ref{as}. We consider a filtration
$(\cG_u)_{u\geq 0}$ and a $(\cG_u)_{u\geq 0}$-Poisson measure $\Pi_\beta=\sum_{u \in \JJ}\delta_{(u,e_u)}$
on $\R_+\times\cE$ with intensity $\dr u \nn_\beta(\dr e)$.

\vip

(a) If $d = 2$, consider the Lipschitz family $(A_y)_{y\in \pDd}$ introduced in Lemma~\ref{locglob}-(i).
For any $x \in \pDd$,
there is strong existence and uniqueness
of a càdlàg $(\cG_u)_{u\geq 0}$-adapted $\pDd$-valued process $(b_u^{x})_{u\geq 0}$ solving
\begin{equation}\label{sdeb21}
b_u^{x}=x+\int_0^u \int_\cE [g_{b^{x}_\vm}(A_{b^{x}_\vm},e)-b^{x}_\vm] \Pi_\beta(\dr v,\dr e).
\end{equation}

(b) If $d\geq 3$, we denote by 
$\varsigma$ the uniform measure on $\pDd$, that is, the (normalized) Hausdorff measure
of dimension $d-1$ (in $\R^d$) restricted to $\pDd$. We
consider for each $z \in \pDd$ the family $(A^z_y)_{y\in \pDd}$ introduced in 
Lemma~\ref{locglob}-(ii), such that $y\mapsto A^z_y$ is locally Lipschitz on $\pDd\setminus\{z\}$.
For any $x \in \pDd$, for $\varsigma$-a.e. $z \in \pDd$, 
there is strong existence and uniqueness
of a càdlàg $(\cG_u)_{u\geq 0}$-adapted $\pDd$-valued process $(b_u^{x,z})_{u\geq 0}$ solving
\begin{equation}\label{sdeb2}
b_u^{x,z}=x+\int_0^u \int_\cE [g_{b^{x,z}_\vm}(A^z_{ b^{x,z}_\vm},e)- b^{x,z}_\vm ] \Pi_\beta(\dr v,\dr e),
\end{equation}
with moreover $\rho_{x,z}:=\lim_{k\to \infty} \rho_{x,z}^k=\infty$ a.s., where 
$\rho_{x,z}^k:=\inf\{u\geq 0 : |b_u^{x,z}-z|\leq 1/k\}$. 
\end{proposition}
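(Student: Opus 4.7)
Here is the plan. Both parts rest on the following identity for two pure-jump solutions $b, b'$ driven by the same Poisson measure with respective predictable isometry choices $a_v, a'_v$: summing the jumps,
\[
|b_u - b'_u| = \int_0^u\!\int_\cE \Big(|g_{b_\vm}(a_v,e)-g_{b'_\vm}(a'_v,e)|-|b_\vm-b'_\vm|\Big)\,\Pi_\beta(\dr v,\dr e).
\]
Taking expectations and applying Proposition~\ref{ttoopp}, one obtains
\[
\E[|b_u-b'_u|] \leq C\int_0^u \E\big[|b_\vm-b'_\vm|+\|a_v-a'_v\|\big]\,\dr v.
\]
For part (a), since $y\mapsto A_y$ is globally Lipschitz, $\|A_{b_\vm}-A_{b'_\vm}\|\leq C|b_\vm-b'_\vm|$, and Gronwall gives pathwise uniqueness. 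Existence follows by a Picard iteration using the same estimate, after checking convergence of the resulting càdlàg approximations in the appropriate space (this is routine given \eqref{se1} and the bound $|g_x(A,e)-x|\leq M(e)\land \mathrm{diam}(\Dd)$, which also forces the iterates to remain in $\cDd$).

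For part (b), fix $z\in\pDd$. Since $y\mapsto A^z_y$ is Lipschitz on each $\pDd\setminus B_d(z,1/k)$, one extends $A^z$ off a slightly smaller ball to a globally Lipschitz family $\widetilde A^{z,k}$. The argument of (a) yields strong existence and uniqueness of a solution $\widetilde b^{x,z,k}$ driven by this extended family; as long as it stays outside $B_d(z,1/k)$ the extension is irrelevant, so $\widetilde b^{x,z,k}$ agrees with the desired $b^{x,z}$ on $[0,\rho^k_{x,z}]$. Concatenating over $k\to\infty$ defines $b^{x,z}$ on $[0,\rho_{x,z})$.

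It remains to show $\rho_{x,z}=\infty$ a.s.\ for $\varsigma$-a.e.\ $z$, which is the main obstacle. First, since $|\Delta b^{x,z}_v|\leq M(e_v)\land\mathrm{diam}(\Dd)$ and $\int_\cE(M(e)\land 1)\nn_\beta(\dr e)<\infty$ by \eqref{se1}, the total variation of $b^{x,z}$ on $[0,u\land\rho_{x,z}]$ has finite expectation, so $b^{x,z}$ has finite variation on $[0,\rho_{x,z})$ a.s. Interpolating linearly across jumps gives a continuous rectifiable path whose range contains the closure $\overline{R}^{x,z}$ of the image of $b^{x,z}$; hence $\dim_H \overline{R}^{x,z}\leq 1<d-1$, so $\varsigma(\overline{R}^{x,z})=0$ a.s. By Fubini, for a fixed reference point $z_0$, $\P(y\in\overline{R}^{x,z_0})=0$ for $\varsigma$-a.e.\ $y\in\pDd$.

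The key remaining step is to transfer this to $b^{x,y}$. For any two valid families $A^{z_0},A^{y}$ there is a predictable rotation $R_{b_\vm}$ fixing $\be_1$ with $A^{z_0}_{b_\vm}=A^y_{b_\vm}R_{b_\vm}^{-1}$ on their common Lipschitz domain, and a direct computation gives $g_{b_\vm}(A^{z_0}_{b_\vm},e)=g_{b_\vm}(A^y_{b_\vm},R_{b_\vm}^{-1}\!\cdot\! e)$. Applying Lemma~\ref{ttaann} to $\Theta_v=R_{b_\vm}^{-1}$, the transformed Poisson measure has the same law, which implies $b^{x,z_0}$ and $b^{x,y}$ have the same law up to their respective exit times. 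Hence $\P(y\in\overline{R}^{x,y})=\P(y\in\overline{R}^{x,z_0})=0$ for $\varsigma$-a.e.\ $y$. Since $\rho_{x,y}<\infty$ would force $y\in\overline{R}^{x,y}$, we conclude $\rho_{x,y}=\infty$ a.s.\ for $\varsigma$-a.e.\ $y$, as required.
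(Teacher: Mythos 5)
Your Gronwall argument for (a), the localized well-posedness up to $\rho^k_{x,z}$ via Proposition~\ref{ttoopp} and Lemma~\ref{locglob}, the finite-variation plus Hausdorff-dimension argument showing $\varsigma$ of the closed range is a.s.\ zero, and the rotation trick via Lemma~\ref{ttaann} are all in line with the paper. The gap is in your transfer step, where you fix a single reference point $z_0$. The rotation identity shows that $b^{x,y}$ solves the equation with the family $(A^{z_0}_\cdot)$ driven by a Poisson measure with the same law as $\Pi_\beta$; but strong uniqueness for that equation is only available while the solution stays away from $z_0$ (the Lipschitz estimate \eqref{tbru} degenerates there), and $b^{x,y}$ itself only exists until it approaches $y$. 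So the identification in law of $b^{x,y}$ and $b^{x,z_0}$ holds only for the processes stopped at the first approach to \emph{either} $y$ or $z_0$ (the paper's $\rho^k_{x,y,z_0}$), not ``up to their respective exit times''. Consequently you cannot deduce $\PP(y\in\cR_{x,y})=\PP(y\in\cR_{x,z_0})$: on the event that $b^{x,y}$ comes close to the fixed point $z_0$ strictly before coming close to $y$, the comparison says nothing about its subsequent behaviour, and this event is not known to be negligible. Your Step-3/Fubini statement only says that, for a fixed index, $\varsigma$-a.e.\ point is a.s.\ avoided, with a null set of bad points depending on the index; it does not give $\PP(z_0\in\cR_{x,y})=0$ for the specific $z_0$ and a.e.\ $y$. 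Obtaining that would essentially require knowing that the law of $\cR_{x,y}$ does not depend on $y$, which is exactly what the transfer is meant to establish, so the argument as written is circular.

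The paper closes this with a symmetric two-point argument rather than a fixed reference point: by Fubini on $\varsigma\otimes\varsigma$, for a.e.\ pair $(z,z')$ one has both $z'\notin\cR_{x,z}$ a.s.\ and $z\notin\cR_{x,z'}$ a.s.; for such pairs the processes stopped upon approaching $\{z,z'\}$ a.s.\ coincide with the unstopped ones, hence $\cR_{x,z}$ and $\cR_{x,z'}$ have the same law, whence $\PP(z\in\cR_{x,z})=\PP(z\in\cR_{x,z'})=0$ for a.e.\ $z$, and then $\rho_{x,z}=\infty$ a.s.\ by your (correct) observation that $\rho_{x,z}<\infty$ forces $z\in\cR_{x,z}$. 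You should replace the fixed $z_0$ by this genuinely pairwise argument. A minor additional point: your ``globally Lipschitz extension $\widetilde A^{z,k}$'' of $A^z$ cannot take values in the classes $\cI_y$ for all $y\in\pDd$ when $d\geq3$ (this is precisely the hairy-ball obstruction motivating the local families), so existence and uniqueness on $[0,\rho^k_{x,z}]$ should instead be obtained by direct localization of the Gronwall estimate with the coefficient $A^z$ itself, as the paper does.
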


Point (b) might actually hold true for all $z \in \pDd\setminus \{x\}$, but this seems difficult to prove.

\begin{proof}
Point (a) classically follows from the Lipschitz estimate
$$
\int_\cE \Big||g_y(A_y,e)-g_{y'}(A_{y'},e)|-|y-y'| \Big| \nn_\beta(\dr e) \leq C |y-y'|,
$$
which is a consequence of Proposition~\ref{ttoopp} and Lemma~\ref{locglob}-(i). The resulting 
process is indeed $\pDd$-valued since $g_y(A_y,e) \in \pDd$ for all $y \in \pDd$, all $e\in \cE$.
We now turn to (b) and fix $x \in \pDd$.

\vip
{\it Step 1.}
We fix $z \in \pDd$. By Proposition~\ref{ttoopp} and Lemma~\ref{locglob}-(ii),
there exists, for any $k\geq 1$, a constant $C_k$ such that 
for any $y,y' \in \pDd \setminus B_d(z,1/k)$,
\begin{equation}\label{tbru}
\int_\cE \Big||g_y(A_y^z,e)-g_{y'}(A_{y'}^z,e)|-|y-y'| \Big| \nn_\beta(\dr e) \leq C_k |y-y'|.
\end{equation}
Moreover, $g_y(A_y^z,e) \in \pDd$ for all $y \in \pDd$, all $e\in \cE$.
We classically deduce the strong existence and uniqueness of a càdlàg $(\cG_u)_{u\geq 0}$-adapted 
$\pDd$-valued process
$b^{x,z}$ solving~\eqref{sdeb2} on  $[0,\rho_{x,z}^k]$.  Letting $k\to \infty$, we conclude the strong 
existence and uniqueness of a càdlàg $(\cG_u)_{u\geq 0}$-adapted 
$\pDd$-valued process
$b^{x,z}$ solving~\eqref{sdeb2} on  $I_{x,z}=\cup_{k\geq 1} [0,\rho_{x,z}^k]$. 

\vip

{\it Step 2.} For $z \in \pDd$, we introduce $\cR_{x,z}:=\overline{\{b^{x,z}_u, u \in I_{x,z}\}}$.
Then $\{\rho_{x,z}<\infty\}\subset \{z\in\cR_{x,z}\}$.
\vip
Indeed, recall that $\rho_{x,z}=\lim_k \rho_{x,z}^k$. On the event $\{\rho_{x,z}< \infty\}$, we have
\vip
\noindent $\bullet$ either $I_{x,z}=[0,\rho_{x,z})$ if the sequence $k\mapsto \rho_{x,z}^k$ 
is not eventually constant,
in which case $b^{x,z}_{\rho_{x,z}-}=z$ (because $|b^{x,z}_{\rho^k_{x,z}}-z|\leq 1/k$),
\vip
\noindent $\bullet$ or $I_{x,z}=[0,\rho_{x,z}]$ if the sequence $k\mapsto \rho_{x,z}^k$ is 
eventually constant, {\it i.e.} if $b^{x,z}_{\rho_{x,z}^k}=z$ for all $k$ large enough, 
in which case $b^{x,z}_{\rho_{x,z}}=z$.

\vip

{\it Step 3.} For all $z\in\pDd$, for $\varsigma$-a.e. $z'\in\pDd$, $\PP(z' \in \cR_{x,z})=0$.
\vip
Using~\eqref{se1} and that $|g_y(A,e)-y|\leq M(e)\land\text{diam }\Dd$ for all $y \in \pDd$, $A \in \cI_y$, 
$e\in\cE$, 
we see that
$$
\text{for all $T>0$,}\qquad 
\E\Big[\sum_{u\in \JJ\cap I_{x,z} \cap[0,T]} |\Delta b^{x,z}_u|\Big]\leq T\int_\cE (M(e)\land \text{diam }\Dd) <\infty.
$$ 
This implies, see L\'epingle~\cite[Theorem 1]{lepingle}, that $(b^{x,z}_u)_{u\in I_{x,z}}$ a.s. 
has finite variation 
on any finite time interval. By McKean~\cite[Page 570]{McKean}, this implies that
the Hausdorff dimension of $\cR_{x,z}$
is a.s. smaller than $1$ (McKean gives the result for $\{b^{x,z}_u, u \in I_{x,z}\}$, 
but taking the closure
only adds a countable number of points, namely $\{b^{x,z}_{u-}, u \in \JJ\cap I_{x,z}\}$). Since 
$\varsigma$ is the Hausdorff measure
of dimension $d-1\geq 2$ restricted to $\pDd$
we conclude that a.s., $\varsigma(\cR_{x,z})=0$, whence
$\int_{\pDd} \PP(z' \in \cR_{x,z}) \varsigma (\dr z')= \E[\varsigma(\cR_{x,z})]=0$.
\vip

{\it Step 4.} For $z,z' \in \pDd$, 
set $\rho^k_{x,z,z'}=\inf\{t \geq 0 : |b^{x,z}_t-z|\leq 1/k$ or $|b^{x,z}_t-z'|\leq 1/k\}$ and 
$I_{x,z,z'}=\cup_{k\geq 1}[0,\rho^k_{x,z,z'}]$.
The processes
$(b^{x,z}_u)_{u \in I_{x,z,z'}}$ and $(b^{x,z'}_u)_{u \in I_{x,z',z}}$
have the same law (when extended to the time-interval $\R_+$ by sending them to a 
cemetery point after their lifetime).
\vip
Indeed, for all $u \in [0,\rho^k_{x,z',z}]$, we have
\begin{gather*}
b_u^{x,z'}\!=\!x+\!\int_0^u \int_\cE [g_{b^{x,z'}_\vm}(A^{z'}_{b^{x,z'}_\vm},e)- b^{x,z'}_\vm ] \Pi_\beta(\dr v,\dr e)
\!=\!x+\!\int_0^u \int_\cE [g_{b^{x,z'}_\vm}(A^{z}_{b^{x,z'}_\vm},e)- b^{x,z'}_\vm ] \Pi_\beta'(\dr v,\dr e),\\
\text{where}\quad  \Pi_\beta' =\sum_{u \in \JJ}\delta_{(u, e_u')}, \quad \text{with}\quad 
e_u'=\Theta_ue_u\quad  \text{and} \quad \Theta_u= (A^{z}_{b^{x,z'}_{u-}})^{-1}A^{z'}_{b^{x,z'}_{u-}}.
\end{gather*}
But $(\Theta_u)_{u\geq 0}$ is $(\cG_u)_{u\geq 0}$-predictable and 
a.s., for any $u\geq 0$, $\Theta_u$ is a linear isometry sending 
$\be_1$ to $\be_1$. By Lemma~\ref{ttaann}, $\Pi_\beta'$ is a $(\cG_u)_{u\geq 0}$-Poisson measure
with intensity $\dr u \nn_\beta(\dr e)$.
Consequently, $b^{x,z'}$ solves~\eqref{sdeb2} during $[0,\rho^k_{x,z',z}]$, {\it i.e.} until it enters 
$\bar B_d(z,1/k)\cup\bar B_d(z',1/k)$, with the Poisson measure
$\Pi'_\beta$. Since $b^{x,z}$ also solves~\eqref{sdeb2} during $[0,\rho^k_{x,z,z'}]$, {\it i.e.} until it enters 
$\bar B_d(z,1/k)\cup\bar B_d(z',1/k)$, with the Poisson measure
$\Pi_\beta$ and since this S.D.E. is well-posed, see Step 1, we conclude that
$(b^{x,z}_u)_{u \in  [0,\rho^k_{x,z,z'}]}$ and $(b^{x,z'}_u)_{u \in [0,\rho^k_{x,z',z}]}$ 
share the same law. The conclusion follows.

\vip

{\it Step 5.} For $\varsigma\otimes \varsigma$-a.e. $(z,z') \in \pDd^2$,
$\cR_{x,z}$ and $\cR_{x,z'}$ have the same law.
\vip

By Step~4, we know that for all $z,z' \in \pDd$, $\closure{\{b^{x,z}_u : u \in I_{x,z,z'}\}}$ 
and $\closure{\{b^{x,z'}_u : u \in I_{x,z',z}\}}$ have the same law.
It thus suffices to verify that for  $\varsigma\otimes\varsigma$-a.e. $(z,z') \in \pDd^2$,
we both have $I_{x,z,z'}=I_{x,z}$ a.s. ({\it i.e.} $z' \notin \cR_{x,z}$ a.s.) and  $I_{x,z',z}= I_{x,z'}$ a.s. 
({\it i.e.} $z \notin \cR_{x,z'}$ a.s.). This follows from Step 3.

\vip

{\it Step 6.} By Step 2, it suffices to check that for $\varsigma$-a.e. $z\in  \pDd$,
$\PP(z\in\cR_{x,z})=0$.
But for $\varsigma\otimes\varsigma$-a.e. $(z,z') \in (\pDd)^2$, we have
$\PP(z\in\cR_{x,z})=\PP(z\in \cR_{x,z'})=0$ by Steps 5 and 3.
\end{proof}

We are now ready to give the

\begin{proof}[Proof of Theorem~\ref{mr1}] 
We only treat the case $d\geq 3$. The case $d=2$ is easier, as we can use Proposition~\ref{newb}-(a) 
instead of Proposition~\ref{newb}-(b).
We fix
$\beta \in \{*\}\cup(0,\alpha/2)$ and suppose Assumption~\ref{as}. We also consider a filtration
$(\cG_u)_{u\geq 0}$ and a $(\cG_u)_{u\geq 0}$-Poisson measure $\Pi_\beta=\sum_{u \in \JJ}\delta_{(u,e_u)}$
on $\R_+\times\cE$ with intensity measure $\dr u \nn_\beta(\dr e)$.

\vip

{\it Step 1.} We prove (a). The existence, for any $x \in \pDd$, of an 
$(\alpha,\beta)$-stable process reflected in $\cDd$ issued from $x$, see Definition~\ref{dfr1}, 
immediately follows from Proposition~\ref{newb}: fix one value of $z \in \pDd$ such that 
$(b_u:=b^{x,z}_u)_{u\geq 0}$ exists.
Then $(b_u)_{u\geq 0}$ solves~\eqref{sdeb} with the choice $a_u=A^z_{b^{x,z}_{u-}}$
and it suffices to define $(\tau_u)_{u\geq 0}$ by~\eqref{sdet}, to introduce
$(L_t=\inf\{u\geq 0 : \tau_u > t\})_{t\geq 0}$ and to define $(R_t)_{t\geq 0}$ by~\eqref{defR}.

\vip

The process $(R_t)_{t\geq 0}$ is $\cDd$-valued. Indeed, recall from Remark~\ref{rkr2} that we always have
$t\in [\tau_{L_t-},\tau_{L_t}]$.
If first $L_t \in \JJ$ and $\tau_{L_t}>t$, then $R_t=h_{b_{L_t-}}(a_{L_t},e_{L_t}(t-\tau_{L_t-}))$, which belongs to $\cDd$ 
because $t-\tau_{L_t-}\in [0,\Delta \tau_{L_t})=[0,\cl_{b_{L_t-}}(a_{L_t},e))$ and by definition of $\cl$;
if next $L_t \in \JJ$ and $\tau_{L_t}=t$, then $R_t=b_{L_t}\in \pDd$, see Remark~\ref{rkr}-(f);
if finally  $L_t\notin \JJ$, then $R_t=b_{L_t} \in \pDd$, see Remark~\ref{rkr}-(a).

\vip

It remains to show that $R=(R_t)_{t\geq 0}$ is a.s. càdlàg on $\R_+$.
We will use that for all $t\geq 0$,
\begin{equation}\label{ttu}
|R_t-b_{L_{t-}}|=\left.
\begin{cases}
|h_{b_{L_t -}}(a_{L_t},e_{L_t}(t-\tau_{L_t -}))-b_{L_t-}| & \text{if }  \tau_{L_t} > t, \\
|g_{b_{L_t-}}(a_{L_t},e_{L_t})-b_{L_t-}| & \text{if }   L_t \in \JJ,  \tau_{L_t} = t, \\
|b_{L_t} - b_{L_t-}| &\text{if } L_t \notin  \JJ.
\end{cases} \right\}
\leq \indiq_{\{L_t \in \JJ\}}M(e_{L_t}),
\end{equation}
where $M(e)=\sup_{t\in [0,\ell(e)]}|e(t)|$, 
because $b_{L_t}=b_{L_t-}$ if $L_{t}\notin \JJ$, see Remark~\ref{rkr}-(a). 
\vip
Let us now check that 
\begin{equation}\label{tuu}
\text{a.s., for all } u\geq 0, \lim_{v\to u,v\neq u}
\indiq_{\{v \in \JJ\}}M(e_v)=0.
\end{equation}
The process $\hat \tau_u=\int_0^u \int_\cE \ell(e) \Pi_\beta(\dr v,\dr e)$ being càdlàg,
\begin{equation}\label{pttu}
\text{a.s., for all } u\geq 0, \lim_{v\to u,v\neq u} \Delta \hat\tau_v=0,\quad \text{i.e. } \lim_{v\to u,v\neq u}
\indiq_{\{v \in \JJ\}}\ell(e_v)=0.
\end{equation}
For $T>0$ and $\delta>0$, set $G_{T,\delta}=\sup_{v \in \JJ\cap[0,T]} M(e_v)\indiq_{\{\ell(e_v)\leq \delta\}}$. 
We have
$$
\E[G_{T,\delta}\land 1]\leq \E\Big[\sum_{v \in \JJ\cap[0,T]} (M(e_v)\land 1)\indiq_{\{\ell(e_v)\leq \delta\}}\Big]
=T \int_\cE (M(e)\land 1)\indiq_{\{\ell(e)\leq \delta\}}\nn_\beta(\dr e) \to 0 \text{ as } \delta\to 0
$$
by~\eqref{se1} and since $\ell>0$ $\nn_\beta$-a.e.
Since $G_{T,\delta}$ is monotone in $\delta$, we conclude that $\lim_{\delta\to 0}  G_{T,\delta}=0$ a.s. for all $T>0$
which, together with~\eqref{pttu}, implies~\eqref{tuu}.

\vip

We claim that
\begin{gather*}
L_{\tau_u+h}>u \quad \text{for all $u\in \JJ$, all $h>0$},\\
L_{\tau_{u-}-h}<u \quad \text{for all $u\in \JJ$, all $h\in (0,\tau_{u-})$},\\
L_{t+h}\neq L_t \quad  \text{for all $t\geq 0$ such that $L_t\notin \JJ$ and all $h\in [-t,\infty)\setminus\{0\}$.}
\end{gather*}
The first claim follows from the fact that $L_{\tau_u+h}=\inf\{v>0 : \tau_v > \tau_{u}+h\}$ and that 
$(\tau_u)_{u\geq 0}$ is càd. Indeed, if $L_{\tau_u+h} \leq u$, then for any $v > u$, $\tau_v > \tau_u + h$, 
which would contradict the right-continuity of $\tau$.
The second one uses that $L_{\tau_{u-}-h}=\inf\{v>0 : \tau_v > \tau_{u-}-h\}$ and that $(\tau_u)_{u\geq 0}$ is làg.
The last claim uses that for $t\geq 0$ such that $L_t \notin \JJ$, if e.g. $h>0$,
$L_{t+h}=\inf\{v>0 : \tau_v > t+h\}>\inf\{v>0 : \tau_v > t\}$, because $\tau$ is continuous at $L_t$ and
$\tau_{L_t}=t$.
\vip
We now have all the tools to check that $R$ is càdlàg.
\vip
\noindent $\bullet$ Since $e_u$ is càdlàg for each $u \in \JJ$ and
since $R_t=h_{b_{u-}}(a_u,e_u(t-\tau_{u-}))$ for each $t \in [\tau_{u-},\tau_u)$, $R$ is clearly càdlàg on 
$\cup_{u\in \JJ} (\tau_{u-},\tau_u)$, càd at each $t\in \{\tau_{u-} : u \in \JJ, \Delta \tau_u>0\}$ and làg at each 
$t\in \{\tau_{u} : u \in \JJ, \Delta \tau_u>0\}$.
\vip
\noindent $\bullet$ Let us check that $R$ is càd at $t=\tau_u$ for each $u \in \JJ$. 
For $h>0$, by Remark~\ref{rkr}-(f) with $t=\tau_u$,
$$
|R_{\tau_u+h}-R_{\tau_u}|=|R_{\tau_u+h}-b_u|\leq |R_{\tau_u+h}-b_{L_{\tau_u+h}-}| + |b_{L_{\tau_u+h}-}-b_u|
=:\Delta_{u,h}^1+\Delta_{u,h}^2.
$$
As seen above, $L_{\tau_u+h}>u$ for all $h>0$.
Hence $\lim_{h\searrow 0} \Delta_{u,h}^2=0$ because $b$ is càd, $L$ is continuous and $L_{\tau_u}=u$.
Next, $\Delta_{u,h}^1\leq \indiq_{\{L_{\tau_u+h} \in \JJ\}}M(e_{L_{\tau_u+h}})$ by~\eqref{ttu}.
Thus $\lim_{h\searrow 0} \Delta_{u,h}^1=0$ by~\eqref{tuu}, because $L_{\tau_u+h}\neq u$ for all $h>0$
and because $L$ is continuous and $L_{\tau_u}=u$.
\vip
\noindent $\bullet$ We now verify that $R$ is làg at $t=\tau_{u-}$ for each $u \in \JJ$, and more 
precisely that $\lim_{h \searrow 0}R_{\tau_{u-}-h}=b_{u-}$.
We write, for $h\in (0,\tau_{u-})$,
$$
|R_{\tau_{u-}-h}-b_{u-}|\leq |R_{\tau_{u-}-h}-b_{(L_{\tau_{u-}-h})-}| + |b_{(L_{\tau_{u-}-h})-}-b_{u-}|
=:\Delta_{u,h}^3+\Delta_{u,h}^4.
$$
First, $\lim_{h\searrow 0} \Delta_{u,h}^4=0$ because $b$ is làg, $L$ is continuous, $L_{\tau_{u-}-h}< u$
and $L_{\tau_{u-}}=u$. Next, $\Delta_{u,h}^3\leq \indiq_{\{L_{\tau_{u-}-h} \in \JJ\}}M(e_{L_{\tau_{u-}-h}})$ by~\eqref{ttu}.
Since $L_{\tau_{u-}-h}<u$ for all $h\in [0,\tau_{u-})$ and since $L$ is continuous,
$\lim_{h\searrow 0} \Delta_{u,h}^3=0$ by~\eqref{tuu}.
\vip
\noindent $\bullet$ Let us show that $R$ is continuous at each $t$ such that $L_t\notin \JJ$.
By Remark~\ref{rkr}-(a), for $h \in (-t,\infty)$,
$$
|R_{t+h}-R_t|=|R_{t+h}-b_{L_t}|\leq |R_{t+h}-b_{L_{t+h}-}| + |b_{L_{t+h}-}-b_{L_t}|
=:\Delta_{t,h}^5+\Delta_{t,h}^6.
$$
Since $L$ is continuous and $b$ is continuous at $L_t \notin \JJ$, we have
$\lim_{h\to 0} \Delta_{t,h}^6=0$. Next, 
$\Delta_{t,h}^5\leq \indiq_{\{L_{t+h} \in \JJ\}} M(e_{L_{t+h}})$ by~\eqref{ttu}.
But $\lim_{h\to 0}\indiq_{\{L_{t+h} \in \JJ\}} M(e_{L_{t+h}})=0$ by~\eqref{tuu}, since $L_{t+h}\neq L_t$ for all $h \neq 0$
as recalled above.

\vip

{\it Step 2.} Here we show (b), {\it i.e.} that for any $x\in \pDd$, any 
$(\alpha,\beta)$-stable process $(R'_t)_{t\geq 0}$ reflected in $\cDd$ issued from $x$, associated to some
$(\cG_u')_{u\geq 0}$-Poisson measure $\Pi_\beta'$ and some processes $(b'_u)_{u\geq 0}$, $(a'_u)_{u\geq 0}$, 
$(\tau'_u)_{u\geq 0}$ and $(L_t')_{t\geq 0}$,
has the same law as the process of Step 1 (built with the same value of $z\in \pDd$
as in Step 1, with  
some $(\cG_u)_{u\geq 0}$-Poisson measure 
$\Pi_\beta$ and some processes $(b_u)_{u\geq 0}$, $(a_u:=A^z_{b_{u-}})_{u\geq 0}$, 
$(\tau_u)_{u\geq 0}$ and $(L_t)_{t\geq 0}$).

\vip

Write $\Pi_\beta'=\sum_{u \in \JJ'}\delta_{(u,e_u')}$ and, setting $e_u''= \Theta_u e_u'$
with $\Theta_u=(A^z_{b'_{u-}})^{-1}a_u'$
for all $u \in \JJ'$,
\begin{align*}
b_u'=& x + \int_0^u\int_{\mathcal{E}}\Big(g_{b'_\vm}(a'_v,e) - b_\vm'\Big)\Pi_\beta' (\dr v, \dr e)\\
=& x + \int_0^u\int_{\mathcal{E}}\Big(g_{b'_{\vm}}(A^z_{b'_\vm},e) - b'_\vm\Big)\Pi_\beta''(\dr v, \dr e),\;\; 
\hbox{where}\;\; \Pi_\beta''=\sum_{u \in \JJ'}\delta_{(u,e_u'')}.
\end{align*}
This follows from the fact that $g_{b'_\vm}(a'_v,e)=g_{b'_{\vm}}(A^z_{b'_\vm}, \Theta_u e)$.
But $(\Theta_u)_{u\geq 0}$ is $(\cG_u)_{u\geq 0}$- predictable and a.s., 
for any $u\geq 0$, $\Theta_u$ is a linear isometry sending 
$\be_1$ to $\be_1$. By Lemma~\ref{ttaann}, $\Pi_\beta''$ is a $(\cG_u)_{u\geq 0}$-Poisson measure
with intensity $\dr u \nn_\beta(\dr e)$.
By strong uniqueness, see Proposition~\ref{newb}-(b),
$((b_u')_{u\geq 0},\Pi''_\beta)$ has the same law as $((b_u)_{u\geq 0},\Pi_\beta)$. 
Next, we have
$$
\tau_u'=\int_0^u\int_{\mathcal{E}}\cl_{b'_\vm}(a'_v,e)\Pi_\beta' (\dr v, \dr e)
=\int_0^u\int_{\mathcal{E}}\cl_{b'_\vm}(A^z_{b'_\vm},e)\Pi_\beta''(\dr v, \dr e)
$$
and $L'_t=\inf\{u\geq 0 : \tau'_u>t\}$,
so that $((b_u')_{u\geq 0},(\tau'_u)_{u\geq 0},(L'_t)_{t\geq0},\Pi''_\beta)$ has the same law as 
$((b_u)_{u\geq 0},(\tau_u)_{u\geq 0},(L_t)_{t\geq 0},\Pi_\beta)$. Finally,
\begin{equation*}
R_t' = 
\begin{cases}
h_{b'_{L_t' -}}(a'_{L'_t},e'_{L'_t}(t-\tau'_{L'_t -}))= h_{b'_{L_t' -}}(A^z_{ b'_{L'_t-}},e''_{L'_t}(t-\tau'_{L'_t -}))
& \text{if }  \tau'_{L'_t} > t, \\
g_{b'_{L'_t-}}(a'_{L'_t},e'_{L'_t})=g_{b'_{L'_t-}}(A^z_{b'_{L'_t-}},e''_{L'_t}) & 
\text{if }   L'_t \in \JJ' \text{ and } \tau'_{L'_t} = t, \\
b'_{L_t'} &\text{if } L'_t \notin  \JJ',
\end{cases}
\end{equation*}
so that $((b_u')_{u\geq 0},(\tau'_u)_{u\geq 0},(L'_t)_{t\geq0},(R'_t)_{t\geq 0},\Pi''_\beta)$ and 
$((b_u)_{u\geq 0},(\tau_u)_{u\geq 0},(L_t)_{t\geq 0},(R_t)_{t\geq 0},\Pi_\beta)$ have the same law. In particular,
$(R'_t)_{t\geq 0}$ has the same law as $(R_t)_{t\geq 0}$ as desired. 

\vip

{\it Step 3.} Here we prove point (c). We 
call $\QQ_x$ the law of the $(\alpha,\beta)$-stable process $(R_t)_{t\geq 0}$ reflected in $\cDd$ issued from $x$,
consider some sequences $t_n\geq 0$ and $x_n \in \pDd$ such that $\lim_n t_n=t\geq 0$ and  $\lim_n x_n=x \in \pDd$,
some bounded continuous function $\varphi : \cDd \to \R$, and verify that
$\lim_n \QQ_{x_n}[\varphi(X^*_{t_n})]=\QQ_x[\varphi(X^*_t)]$, where we recall that 
$(X_t^*)_{t\geq0}$ is the canonical process.
\vip

{\it Step 3.1.} We fix a Poisson measure $\Pi_\beta$ and
$z \in \pDd$ such that Proposition~\ref{newb}-(b) applies both to $x$ and to every $x_n$ for all $n\geq 1$.
We consider the solutions $(b_u:=b^{x,z}_u)_{u\geq 0}$ and $(b^n_u:=b^{x_n,z}_u)_{u\geq 0}$ to~\eqref{sdeb2},
and we then build $((\tau_u)_{u,\geq 0}, (L_t)_{t\geq 0}, (R_t)_{t\geq 0})$ 
(resp. $((\tau_u^n)_{u,\geq 0}, (L_t^n)_{t\geq 0}, (R_t^n)_{t\geq 0})$) as in Definition~\ref{dfr1}, using
$(a_u:=A^z_{b_{u-}})_{u \geq 0}$ (resp.$(a_u^n:=A^z_{b^n_{u-}})_{u \geq 0}$) and the Poisson measure $\Pi_\beta$.
We will check that $\lim_n\E[|R^n_{t_n}-R_t|]=0$ and this will end the proof, since
$(R^n_s)_{s\geq 0} \sim \QQ_{x_n}$ and $(R_s)_{s\geq 0} \sim \QQ_x$.

\vip
{\it Step 3.2.} We verify that for any $T>0$, $\sup_{[0,T)}(|b^n_u-b_u|+|a^n_u-a_u|)$ 
tends to $0$ in probability.

\vip
For $k\geq 1$, we introduce $\rho^k = \inf\{u \geq 0 : |b_u-z|\leq 1/k\}$ and 
$\rho^k_n = \inf\{u \geq 0 : |b_u^n-z |\leq 1/k\}$. Using~\eqref{tbru} and the Gronwall Lemma, one easily gets
\begin{equation}\label{gk}
\E\Big[\sup_{[0,T\land \rho^k \land \rho^k_n)}|b^n_u-b_u|\Big]\leq |x_n-x| e^{C_kT}.
\end{equation}
Since $y\mapsto A^z_y$ is Lipschitz continuous on $\pDd\setminus B_d(z,1/k)$, this gives,
for some other constant $C'_k$,
\begin{equation}\label{gk2}
\E\Big[\sup_{[0,T\land \rho^k \land \rho^k_n)}|a^n_u-a_u|\Big]
=\E\Big[\sup_{[0,T\land \rho^k \land \rho^k_n)}|A^z_{b^n_{u-}}-A^z_{b_{u-}}|\Big]
\leq C'_k|x_n-x| e^{C_kT}.
\end{equation}
For $\e>0$ and $k\geq1$, we write 
$$
p_n(\e):=\PP\Big(\sup_{[0,T)}(|b^n_u-b_u|+|a^n_u-a_n|)>\e\Big)\leq \PP(\rho^{k} \leq T)+
p_{n,k}^1 +p_{n,k}^2(\e),
$$ 
where
\begin{gather*}
p_{n,k}^1=  \PP(\rho^{k} >T,\rho^{2k}_n\leq T) \quad \text{and} \quad
p_{n,k}^2(\e) = \PP\Big(\sup_{[0,T\land \rho^{k} \land \rho^{2k}_n)}(|b^n_u-b_u|+|a^n_u-a_u|)>\e\Big).
\end{gather*}
First, $\lim_n p_{n,k}^2(\e)=0$ for each $k\geq 1$ by~\eqref{gk}-\eqref{gk2}. Next, we observe that
$\rho^{k} >T,\rho^{2k}_n\leq T$ implies that $\sup_{[0,T \land \rho^k\land \rho^{2k}_n)}|b^n_u-b_u|>1/(2k)$, whence
$p_{n,k}^1\leq p_{n,k}^2(1/2k)$ and $\lim_n p_{n,k}^1=0$ for each $k\geq 1$ as well.
We thus find $\limsup_n p_n(\e) \leq  \PP(\rho^{k} \leq T)$. Since 
$\lim_k \rho^k = \infty$ a.s. by Proposition~\ref{newb}-(b), the conclusion follows.
\vip

{\it Step 3.3.} We next check that for any $T>0$, $\sup_{[0,T]}|\tau^n_u-\tau_u|$ tends to $0$ in probability.
Recalling~\eqref{sdet} and using the subadditivity of the function $r \mapsto r\land 1$ on $[0,\infty)$, we write
$$
\E\Big[\sup_{[0,T]}|\tau^n_u-\tau_u|\land 1\Big] \leq 
\int_0^T \int_\cE \E[|\cl_{b^n_\vm}(a^n_v,e)-\cl_{b_\vm}(a_v,e)|\land 1] \nn_\beta(\dr e) \dr v.
$$
Recall that $\cl_y(A,e)\leq \ell(e)$ for all $y\in \pDd$, all $A\in \cI_y$, and that 
$\int_\cE (\ell(e)\land 1 ) \nn_\beta(\dr e)<\infty$, see~\eqref{se1}. By dominated convergence,
it thus suffices to verify that for all $v \in [0,T]$, for $\nn_\beta$-a.e. $e \in \cE$,
$\lim_n \cl_{b^n_\vm}(a^n_v,e)=\cl_{b_\vm}(a_v,e)$ in probability. This follows from Lemma~\ref{relou}-(ii)
and Step 3.2.

\vip

{\it Step 3.4.} By Lemma~\ref{tti}, the generalized inverse $L_t=\inf\{u\geq 0 : \tau_u>t\}$ is a.s. finite 
for all $t\geq 0$ and continuous on $\R_+$. We thus classically deduce from Step 3.3 that for all $T>0$, $\sup_{[0,T]} |L^n_t-L_t| \to 0$ in probability.

\vip

{\it Step 3.5.} Here we prove that for all $t\geq 0$, $\PP(L_t\in \JJ, \tau_{L_t-}=t)=
\PP(L_t\in \JJ, \tau_{L_t}=t)=0$ and for all $n\geq 0$, all $t\geq0$,
$\PP(L_t^n\in \JJ, \tau^n_{L_t^n-}=t)=\PP(L_t^n\in \JJ, \tau^n_{L^n_t}=t)=0$.
\vip
We only study $L_t,\tau_{L_t-},\tau_{L_t}$, the case of  $L_t^n,\tau^n_{L^n_t-},\tau^n_{L^n_t}$ 
being treated in the very same way. Since $L_0=0\notin \JJ$ a.s., it suffices to treat the case where $t>0$. By Remark~\ref{rkr2}, we always have $\tau_{L_t-}\leq t \leq \tau_{L_t}$ and 
$L_t=u \in \JJ$ if and only if $u\in\JJ$ and $t\in [\tau_{u-},\tau_u]$, so that
\begin{align*}
\PP(L_t\in \JJ, \tau_{L_t-}=t)=&\E\Big[\sum_{u \in \JJ} \indiq_{\{\tau_{u-}=t\leq \tau_{u}\}}\Big]
=\E\Big[\sum_{u \in \JJ} \indiq_{\{\tau_{u-}=t\leq\tau_{u-}+\cl_{b_{u-}}(a_u,e_u)\}}\Big].
\end{align*}
By the Poisson compensation formula,
\begin{align*}
\PP(L_t\in \JJ, \tau_{L_t-}=t)=&\E\Big[\int_0^\infty \int_\cE \indiq_{\{\tau_{u}= t\leq \tau_{u}+\cl_{b_{u}}(a_u,e)\}} 
\nn_\beta(\dr e) \dr u\Big]
\leq \E\Big[\int_0^\infty \int_\cE \indiq_{\{\tau_{u}= t\}} 
\nn_\beta(\dr e) \dr u\Big].
\end{align*}
This last quantity equals $0$, since there is a.s. at most one $u \in [0,\infty)$ such that $\tau_u=t$ 
by Lemma~\ref{tti}. Similarly (and using that $\PP(L_t\in \JJ, \tau_{L_t-}=t)=0$),
\begin{align*}
\PP(L_t\in \JJ, \tau_{L_t}=t)=\PP(L_t\in \JJ, \tau_{L_t-}<t=\tau_{L_t})=
\E\Big[\int_0^\infty \int_\cE \indiq_{\{\tau_{u}<t=\tau_{u}+\cl_{b_{u}}(a_u,e)\}} 
\nn_\beta(\dr e) \dr u\Big],
\end{align*}
which equals $0$ by by Lemma~\ref{relou}-(i).

\vip

{\it Step 3.6.} Here we prove that $I_n=\E[|R^n_{t_n}-R_t|\indiq_{\{L_t \notin \JJ\}}]\to 0$.
Everywhere in this step we may use the dominated convergence theorem, since $R$ and $R^n$ take values in $\cDd$
which is bounded.
\vip

We have $R_t=b_{L_t}$ on $\{L_t\notin \JJ\}$, so that $I_n\leq I_{n,A}^1+I_{n,A}^2+I_{n,A}^3$, where,
setting $D=\text{diam}(\Dd)$,
\begin{gather*}
I_{n,A}^1=2 D (\PP(L_t> A)+\PP(L^n_{t_n}> A)), \quad 
I_{n,A}^2=\E[|b^n_{L^n_{t_n}-} - b_{L_t}|\indiq_{\{L_t \notin \JJ, L_t \leq A, L^n_{t_n}\leq A\}}],\\
I_{n,A}^3=\E[|R^n_{t_n}-b^n_{L^n_{t_n}-}| \indiq_{\{L_t \notin \JJ, L_t \leq A, L^n_{t_n}\leq A\}}].
\end{gather*}
By Step 3.4 and since $L$ is continuous, $L^n_{t_n}\to L_t$. Hence 
$\limsup_n I^1_{n,A} \leq 4 D\PP(L_t> A)$,
so that $\lim_{A\to \infty} \limsup_n I^1_{n,A} =0$. Next, 
$$
I^2_{n,A} \leq \E\Big[\sup_{[0,A]}|b^n_u-b_u|\Big]+\E\Big[|b_{L^n_{t_n}-}-b_{L_t}|\indiq_{\{L_t \notin \JJ\}}\Big]. 
$$
The first term
tends to $0$ as $n\to \infty$ by Step 3.2, as well as the second one, since  $L^n_{t_n}\to L_t$
and since $L_t \notin \JJ$ implies that $b$ is continuous at $L_t$.
Thus $\lim_n I^2_{n,A} =0$ for all $A>0$.
We now recall that if $L^n_{t_n} \notin \JJ$, then $R^n_{t_n}=b^n_{L^n_{t_n}}=b^n_{L^n_{t_n}-} $ (see Remark~\ref{rkr}-(a)) 
and write
$$
I_{n,A}^3= \E[|R^n_{t_n}-b^n_{L^n_{t_n}-}|\indiq_{\{L^n_{t_n} \in \JJ,L_t \notin \JJ, L_t \leq A, L^n_{t_n}\leq A\}}]\leq
2DI_{n,A,\delta}^{31} +I_{n,A,\delta}^{32}+ I_{n,A}^{33}, 
$$
where
\begin{gather*}
I_{n,A,\delta}^{31}=\PP(L_t \leq A, L^n_{t_n}\leq A, L_t \notin \JJ, \tau^n_{L^n_{t_n}}- \tau^n_{L^n_{t_n}-}>\delta),\\
I_{n,A,\delta}^{32}= \mathbb{E}\Big[|R^n_{t_n}-b^n_{L^n_{t_n-}}| \indiq_{\{L_t \leq A, L^n_{t_n}\leq A, L_t \notin \JJ,
0 < \tau^n_{L^n_t}-\tau^n_{L^n_{t_n}-} < \delta\}}\Big],\\
I_{n,A}^{33}=\E\Big[|R^n_{t_n}-b^n_{L^n_{t_n}-}| \indiq_{\{L_t \leq A, L^n_{t_n}\leq A, L_t \notin \JJ,L^n_{t_n} \in \JJ,
\tau^n_{L^n_t}=\tau^n_{L^n_{t_n}-}\}}\Big].
\end{gather*}
Observe that $I_{n,A}^{33}=0$ when $\beta=*$, since we have $\cl_y(B,e)>0$ for all $y \in \pDd$, all
$B \in \cI_y$ and $\nn_*$-a.e. $e\in \cE$ (see Remark~\ref{imp4}) and thus $\tau^n_u>\tau^n_{u-}$ for all 
$u \in \JJ$, but this is not the case when $\beta \neq *$ since $\cl_y(B,e)=0$ as soon as $h_y(B,e(0))\notin \Dd$.

\vip
We have $\lim_n I_{n,A,\delta}^{31}=0$ for each $A>0$, each $\delta>0$, because $\tau^n$ converges uniformly to
$\tau$ on $[0,A]$ by Step 3.3, because $L^n_{t_n}\to L_t$, and because $\tau$ is 
continuous at $L_t$ when $L_t \notin \JJ$. Next, 
we recall that when $L^n_{t_n}=u \in \JJ$, we have
$|R^n_{t_n}-b^n_{L^n_{t_n}-}|\leq M(e_u)\land D$, see~\eqref{ttu}. Since $L^n_{t_n}=u \in \JJ$ if and only if 
$t_n \in [\tau^n_{u-},\tau^n_u]=[\tau^n_{u-},\tau^n_{u-}+\cl_{b^n_{u-}}(a^n_u,e)]$,
\begin{align*}
I_{n,A,\delta}^{32}\leq& \mathbb{E}\Big[|R^n_{t_n}-b^n_{L^n_{t_n-}}| \indiq_{\{ L^n_{t_n}\leq A,
0 < \tau^n_{L^n_t}-\tau^n_{L^n_{t_n}-} < \delta\}}\Big]\\
\leq &\E \Big[\sum_{u \in \JJ, u \leq A} [M(e_u)\land D]
\indiq_{\{t_n \in [\tau^n_{u-},\tau^n_{u-}+\cl_{b^n_{u-}}(a^n_u,e)], \ell_{b^n_{u-}}(a_u^n,e_u)\in (0,\delta)\}} \Big]\\
=&\E\Big[\int_0^A \int_{\cE} [M(e)\land D] \indiq_{\{t_n\in[\tau^n_u,\tau^n_u+\cl_{b^n_{u}}(a^n_u,e)]\}}
\indiq_{\{\cl_{b^n_{u}}(a_u^n,e_u)\in (0,\delta)\}} \nn_\beta(\dr e) \dr u\Big]\\
\leq &\E\Big[\int_0^A \int_{\cE} [M(e)\land D]\indiq_{\{\cl_{b^n_{u}}(a_u^n,e_u)\in (0,\delta)\}} \nn_\beta(\dr e) \dr u\Big].
\end{align*}
By Step~3.2, 
Lemma~\ref{relou}-(ii) and dominated convergence (recall~\eqref{se1}), we conclude that
$$
\limsup_n I_{n,A,\delta}^{32} \leq \E\Big[\int_0^A \int_{\cE} [M(e)\land D]
\indiq_{\{\cl_{b_{u}}(a_u,e_u)\in (0,\delta)\}} \nn_\beta(\dr e) \dr u\Big],
$$
so that $\lim_{\delta \to 0} \limsup_n I_{n,A,\delta}^{32}=0$ for all $A>0$ by dominated convergence again.
Finally, observe that $L^n_{t_n} \in \JJ$ and $\tau^n_{L^n_{t_n}}=\tau^n_{L^n_{t_n-}}$ implies that $t_n=\tau^n_{L^n_{t_n}}
=\tau^n_{L^n_{t_n-}}$ and thus that $R^n_{t_n}=b^n_{L^n_{t_n}}$, see Remark~\ref{rkr}-(f). Thus
$$
I_{n,A}^{33}\leq \E\Big[|b^n_{L^n_{t_n}}-b^n_{L^n_{t_n-}}| \indiq_{\{L_t \leq A, L^n_{t_n}\leq A,L_t \notin \JJ\}}\Big]
\leq  2\E\Big[\sup_{[0,A]}|b^n_u-b_u|\Big]+\E\Big[|b_{L^n_{t_n}}-b_{L^n_{t_n-}}|\indiq_{\{L_t \notin \JJ\}}\Big].
$$
Using Step 3.2, that $L^n_{t_n}\to L_t$ by Step~3.4
and that $L_t\notin \JJ$ implies that $b$ is continuous at $L_t$,
we conclude that $\lim_n I^{33}_{n,A}=0$ for each $A>0$.
The step is complete.

\vip

{\it Step 3.7.} We finally check that $J_n=\E[|R^n_{t_n}-R_t|\indiq_{\{L_t \in \JJ\}}]$ tends to $0$.
Again, we may use everywhere the dominated convergence theorem, since $R$ and $R^n$ take values in $\cDd$.

\vip
By Step 3.5, we see that $\{L_t \in \JJ\}=\{\tau_{L_t-}<t<\tau_{L_t}\}$ up to some negligible event.
We write $J_n \leq D J_n^1+J_n^2$, where
\begin{gather*}
J_n^1=\PP(L^n_{t_n}\neq L_t,\tau_{L_t-}<t<\tau_{L_t})\quad \text{and} \quad
J_n^2=\E\Big[|R^n_{t_n}-R_t|\indiq_{\{L^n_{t_n}=L_t, \tau_{L_t-}<t<\tau_{L_t}\}}\Big].
\end{gather*}
By Remark~\ref{rkr2}, we know that for all $u \in \JJ$, $\{L^n_{t_n}=u\}=\{t_n\in[\tau^n_{u-},\tau^n_u]\}$. Thus
$\{L^n_{t_n}=L_t\}=\{t_n\in[\tau^n_{L_t-},\tau^n_{L_t}]\}$, and
$$
J_n^1 = \PP\Big(t_n\notin[\tau^n_{L_t-},\tau^n_{L_t}], t \in (\tau_{L_t-},\tau_{L_t})\Big)\to 0,
$$
because $t_n\to t$, $\tau^n_{L_t-}\to \tau_{L_t-}$ and $\tau^n_{L_t}\to \tau_{L_t}$ by Step 3.3.
We next write, recalling~\eqref{defR},
\begin{align*}
J_n^2=&\E\Big[\sum_{u \in \JJ}\Big|h_{b^n_{u-}}(a^n_u,e_u(t_n -\tau^n_{u-}))-h_{b_{u-}}(a_u,e_u(t-\tau_{u-}))\Big|
\indiq_{\{t_n\in[\tau^n_{u-},\tau^n_u]\}}\indiq_{\{t \in (\tau_{u-},\tau_u)\}} \Big]\\
=&\E\Big[\int_0^\infty \int_\cE \Big|h_{b^n_{u}}(a^n_u,e(t_n-\tau^n_{u}))-h_{b_{u}}(a_u,e(t-\tau_{u}))\Big|
\indiq_{\{t_n\in[\tau^n_{u},\tau^n_{u}+\cl_{b^n_u}(a^n_u,e)]\}}\\
&\hskip8cm\indiq_{\{t \in (\tau_{u},\tau_{u}+\cl_{b_u}(a_u,e)]\}} \nn_\beta(\dr e)\dr u\Big]
\end{align*}
by the compensation formula and since $\tau^n_{u}=\tau^n_{u-}+\cl_{b^n_u}(a^n_u,e_u)$ and $\tau_{u}=\tau_{u-}
+\cl_{b_u}(a_u,e_u)$ for all $u \in \JJ$. By dominated convergence and the following arguments, we conclude that
$\lim_n J_n^2=0$:
\vip
\noindent $\bullet$ $\E[\int_0^\infty \int_\cE \indiq_{\{t \in (\tau_{u},\tau_{u}+\cl_{b_u}(a_u,e))\}} 
\nn_\beta(\dr e)\dr u]= \PP(t\in (\tau_{L_t-},\tau_{L_t}))<\infty$ by the compensation formula,
\vip
\noindent $\bullet$  for $\nn_\beta$-a.e. $e \in \cE$, for all $u\geq 0$ such that 
$t \in (\tau_{u},\tau_{u}+\cl_{b_u}(a_u,e))$ and $t_n \in (\tau_{u}^n,\tau_{u}^n+\cl_{b_u^n}(a_u^n,e))$, 
\[
|h_{b^n_{u}}(a^n_u,e(t-\tau^n_{u}))- h_{b_{u}}(a_u,e(t-\tau_{u}))|\leq |b^n_u-b_u|+||a^n_u-a_u|||e(t_n-\tau^n_u)|
+ |e(t_n -\tau^n_u)-e(t-\tau_u)|,
\]
which vanishes as $n\to \infty$, because
$b^n_u\to b_u $ and $a^n_u\to a_u$ by Step 3.2, $\tau^n_u\to \tau_u$ by Step 3.3,
and $e(t-\tau^n_u)\to e(t-\tau_u)$ since 
$t-\tau_u>0$ is not a jump time of $e$ (for  $\nn_\beta$-a.e. $e \in \cE$).

\vip

{\it Step 4.} We finally check (d). 
Since 
$$
\cB(\DD(\R_+,\cDd))= \sigma(\{\{w \in\DD(\R_+,\cDd) : w(t) \in A \} 
: t \geq 0, A \in \cB(\cDd)\}),
$$ 
it suffices, by a monotone class argument,
to verify that for all $0\leq t_1<\dots<t_k$,
for all $A \in \cB(\cDd^k)$, the map $x \mapsto \QQ_x[(X^*_{t_1},\dots,X^*_{t_k}) \in A]$
is measurable from $\pDd$ to $\R$. Since now $\{A \in \cB(\cDd^k) : x \mapsto 
\QQ_x[(X^*_{t_1},\dots,X^*_{t_k}) \in A]$ 
is measurable$\}$ is a $\sigma$-field, we may assume that $A$ is a closed 
subset of $\cDd^k$. In such a case, we may find a sequence $\varphi_\ell \in C_b(\cDd^k)$ decreasing to $\indiq_A$,
and it finally suffices to show that for any $\varphi \in C_b(\cDd^k)$,
$x\mapsto \QQ_x[\varphi(X^*_{t_1}, \dots, X^*_{t_k})]$ is measurable from $\pDd$ to $\R$.
But this map is actually continuous: this follows from Step 3, where we have seen that if $x_n \in \pDd$ satisfies 
$\lim_n x_n=x \in \pDd$, then it is possible to build $R^{x_n} \sim \QQ_{x_n}$ and 
$R^x \sim \QQ_x$ on the same probability space in such a way that for each $t\geq 0$,
$R^{x_n}_t\to R^x_t$ in probability.
\end{proof}

\section{The reflected stable process starting from anywhere}\label{any}

Our goal is now to prove Theorem~\ref{mr2}.
We start with the Markov property,
of which the proof is fastidious. We do not follow Blumenthal's 
approach~\cite[Chapter V, Section 2]{blu1992}, which rely more on resolvents, and approximations 
of the process. Our proof is closer in spirit to the one given by Salisbury~\cite{MR859838}, although 
it is more involved as we deal with excursions of Markov processes in domains. It basically relies on the 
compensation formula and the Markov property of the excursion measure. The rotational 
invariance of the isotropic stable process is also crucial.

\begin{proposition}\label{markoff}
Fix $\beta \in \{*\}\cup (0,\alpha/2)$ and suppose Assumption~\ref{as}.
 The family $(\QQ_x)_{x\in\closure{\Dd}}$ defines a Markov process on the 
canonical filtered probability space of c\`adl\`ag $\closure{\Dd}$-valued processes.
\end{proposition}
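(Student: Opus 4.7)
The plan is to verify that for every $x\in\cDd$, every $t\geq 0$, and every bounded measurable $\Phi,\Psi$ on $\DD(\R_+,\cDd)$,
$$
\E^{\QQ_x}\!\bigl[\Phi\bigl((X^*_u)_{u\leq t}\bigr)\,\Psi\bigl((X^*_{t+s})_{s\geq 0}\bigr)\bigr]
=\E^{\QQ_x}\!\bigl[\Phi\bigl((X^*_u)_{u\leq t}\bigr)\,\E^{\QQ_{X^*_t}}[\Psi(X^*)]\bigr].
$$
First I would reduce to $x\in\pDd$. For $x\in\Dd$, Definition~\ref{dfr2} builds $(R_t)_{t\geq 0}$ as an ISP$_{\alpha,x}$ up to $\tell(Z)$, concatenated with a $\QQ_{R_{\tell(Z)}}$-distributed tail. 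The strong Markov property of $Z$ handles the Markov property on $\{t<\tell(Z)\}$, while the conditional independence of the two pieces, combined with the Markov property at boundary starting points, handles $\{t\geq\tell(Z)\}$. Hence the core of the work is $x\in\pDd$.

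For $x\in\pDd$, I would work with the construction of Definition~\ref{dfr1} on $(\Omega,(\cG_u)_{u\geq 0},\Pi_\beta)$ and first note that $L_t=\inf\{u:\tau_u>t\}$ is a $(\cG_u)$-stopping time, since $\{L_t\leq u\}=\{\tau_u\geq t\}\in\cG_u$ by right-continuity of $(\tau_u)$. I would then split according to Remark~\ref{rkr}. In \emph{Case A} (parts (a) and (f) of Remark~\ref{rkr}), when $L_t\notin\JJ$ or $L_t\in\JJ$ with $\tau_{L_t}=t$, we have $X^*_t=b_{L_t}\in\pDd$. The strong Markov property of the Poisson measure $\Pi_\beta$ at the stopping time $L_t$ (resp. $L_t+$) ensures that
$$
\tilde\Pi_\beta:=\sum_{u\in\JJ,\,u>L_t}\delta_{(u-L_t,\,e_u)}
$$
is, conditionally on $\cG_{L_t}$, a Poisson measure on $\R_+\times\cE$ with intensity $\dr u\,\nn_\beta(\dr e)$. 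Feeding $\tilde\Pi_\beta$ into (\ref{sdeb}) with initial condition $b_{L_t}$ and isometries $\tilde a_u=a_{L_t+u}$, one recovers a solution with $\tilde b_u=b_{L_t+u}$, $\tilde\tau_u=\tau_{L_t+u}-\tau_{L_t}$, $\tilde L_s=L_{t+s}-L_t$, so that $(X^*_{t+s})_{s\geq 0}$ is an $(\alpha,\beta)$-reflected process issued from $X^*_t\in\pDd$; by Theorem~\ref{mr1}-(b) its law is $\QQ_{X^*_t}$.

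In \emph{Case B} (parts (b)--(e) of Remark~\ref{rkr}), when $L_t\in\JJ$ and $\tau_{L_t-}\leq t<\tau_{L_t}$, set $s=t-\tau_{L_t-}$, so that $X^*_t=h_{b_{L_t-}}(a_{L_t},e_{L_t}(s))$. The Markov property of $\nn_\beta$ at the deterministic time $s$ (Lemma~\ref{mark}) gives that, conditionally on $\{\ell(e_{L_t})>s\}$ and on the past, the tail $(e_{L_t}((s+r)\wedge\ell(e_{L_t})))_{r\geq 0}$ is distributed as $(Z_{r\wedge\ell(Z)})_{r\geq 0}$ under $\PP_{e_{L_t}(s)}$, independently of the post-$L_t$ Poisson measure. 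Pushing forward by $h_{b_{L_t-}}(a_{L_t},\cdot)$ and using isometry invariance of the stable law, $(R_{t+r})_{r\in[0,\tau_{L_t}-t)}$ is the law of an ISP$_{\alpha,X^*_t}$ killed upon leaving $\HH_{b_{L_t-}}$. Since $\Dd\subset\HH_{b_{L_t-}}$, this killed process exits $\Dd$ strictly before leaving $\HH_{b_{L_t-}}$, so up to $\tell(Z)$ it agrees in law with an unrestricted ISP$_{\alpha,X^*_t}$ stopped at $\tell(Z)$; the overshoot then gives $R_{\tau_{L_t}}=g_{b_{L_t-}}(a_{L_t},e_{L_t})=\Lambda(Z_{\tell(Z)-},Z_{\tell(Z)})\in\pDd$, exactly as in Definition~\ref{dfr2}. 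Beyond $\tau_{L_t}$ we are back to Case~A, started from $R_{\tau_{L_t}}\in\pDd$ with $\Pi_\beta$ shifted past $L_t$ (independent by the strong Markov property). Altogether, the conditional law of $(X^*_{t+r})_{r\geq 0}$ matches the prescription of Definition~\ref{dfr2} at the interior point $X^*_t$, i.e.\ $\QQ_{X^*_t}$.

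The hard part will be Case~B: the rigorous identification of ``the future of the ongoing excursion, mapped into $\cDd$ and glued with subsequent excursions'' with a $\QQ_{X^*_t}$-distributed process. This rests on (i) the Markov property of the excursion measure Lemma~\ref{mark}; (ii) the comparison between a stable process killed at exit from $\HH_{b_{L_t-}}$ and one stopped at exit from $\Dd$, which is possible because $\Dd\subset\HH_{b_{L_t-}}$ and because an ISP started in $\Dd$ a.s.\ leaves $\Dd$ strictly before leaving $\HH_{b_{L_t-}}$ (a consequence of the arguments behind \eqref{agarder} and of the density of the undershoot/overshoot law from Lemma~\ref{uos}); and (iii) the law-uniqueness of Theorem~\ref{mr1}-(b), which ensures that neither the predictable choice $(a_u)_{u\geq 0}$ nor the family $(A^z_y)$ employed in the construction influences the final law $\QQ_\cdot$, and in particular licenses the gluing of two sub-trajectories constructed with possibly different isometric conventions.
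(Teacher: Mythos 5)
Your overall skeleton does coincide with the paper's: reduce to $x\in\pDd$, handle the part after $d_t$ by shifting the Poisson measure at the stopping time $L_t$ and invoking uniqueness in law (Theorem~\ref{mr1}-(b)), and handle the excursion straddling $t$ via the Markov property of $\nn_\beta$, isometry invariance of the stable process and Definition~\ref{dfr2}. The genuine gap is in your Case~B, which you flag as the hard part but do not actually prove. You assert that ``the Markov property of $\nn_\beta$ at the deterministic time $s$'' gives the conditional law of the tail of $e_{L_t}$ after $s=t-\tau_{L_t-}$. But $s$ is random, and, more importantly, the straddling excursion $e_{L_t}$ is not a sample from $\nn_\beta(\cdot\mid\ell>s)$ drawn independently of the past: the pair $(\tau_{L_t-},e_{L_t})$ is selected by the condition $\tau_{L_t-}\le t<\tau_{L_t-}+\cl_{b_{L_t-}}(a_{L_t},e_{L_t})$, which length-biases the excursion and couples the elapsed time with the excursion itself. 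Moreover $\cG_{L_t}$ contains the whole of $e_{L_t}$ (including its future after $t$) while $\cG_{L_t-}$ contains none of it, so ``conditioning on the past at time $t$'' is precisely the delicate point; Lemma~\ref{mark} is an identity under the measure $\nn_\beta$ and by itself says nothing about the excursion straddling a fixed time under $\PP$.

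The missing device is the compensation (master) formula for $\Pi_\beta$, which is how the paper makes your Case~B rigorous: the functional is split at $g_t$, $t$ and $d_t$, written as a sum over $u\in\JJ$ of a predictable factor times a functional of $e_u$, the compensation formula replaces the sum by $\int_0^\infty\dr u\int_\cE\nn_\beta(\dr e)$, and only then, with $t-\tau_u$ a fixed number for each $(\omega,u)$, is Lemma~\ref{mark} applied to the $e$-integral; translation and rotation invariance identify the resulting kernel with $\E_{h_{b_u}(a_u,e(t-\tau_u))}[\cdot]$ evaluated on the process stopped at exit from $\Dd$, and the compensation formula is then applied backwards. Predictability of $(a_u)_{u\ge0}$ and of the pre-$\tau_{u-}$ factor is what licenses these two passages. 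Without this (or an equivalent Palm-type computation), your claim about the conditional law of the ongoing excursion given the observed path up to $t$ is exactly the statement to be proven, so the proposal has a real hole at its central step. Two minor points: the comparison ``killed at exit from $\HH_{b_{L_t-}}$ versus stopped at exit from $\Dd$'' needs only $\Dd\subset\HH_{b_{L_t-}}$, not a strict ordering of exit times; and your Case~A tacitly uses $\PP(L_t\in\JJ,\,\tau_{L_t}=t)=0$, which is proved in Step~3.5 of the proof of Theorem~\ref{mr1} and should be cited rather than assumed.
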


\begin{proof}
We denote by $\Omega^* = \DD(\mathbb{R}_+, \closure{\Dd})$ the canonical space endowed with the Skorokhod $\JS$-topology, by $X^*=(X_t^*)_{t\geq0}$ the canonical process and by 
$(\mathcal{F}_t^*)_{t\geq0}$ the canonical filtration. We have to show that for any $x\in\closure{\Dd}$,
any $t\geq 0$, any bounded measurable functions $\psi_1, \psi_2 : \Omega^* \to \mathbb{R}$,
\begin{equation}\label{eq:markov_prop}
\QQ_x\Big[\psi_1((X_{s\wedge t}^*)_{s\geq0})\psi_2((X_{s + t}^*)_{s\geq0})\Big] 
= \QQ_x\Big[\psi_1((X_{s\wedge t}^*)_{s\geq0})\QQ_{X_t^*}\big[\psi_2((X_{s }^*)_{s\geq0})\big]\Big].
\end{equation}

\textit{Step 1.} We first rephrase Definition~\ref{dfr2}. We denote by $\DD_0$ the subset of 
$\DD(\mathbb{R}_+, \mathbb{R}^d)$ of paths $z$ such that $z(0) \in \cDd$ and for $z\in\DD_0$, 
we set $\tell(z) = \inf\{r >0, \: z(r) \notin {\Dd}\}$. Then we define
$\mathcal{C}: \DD_0 \to \partial\Dd$ and $\Psi : \DD_0 \to \Omega^*$ by
\begin{equation}\label{eq:psi_c}
 \mathcal{C}(z) = \Lambda(z(\tell(z)-), z(\tell(z))) \quad \text{and} \quad 
\Psi(z) = \big(z(t) \indiq_{\{t < \tell(z)\}} + \mathcal{C}(z)\indiq_{\{t \geq \tell(z)\}}\big)_{t\geq0}.
\end{equation}
Next, we introduce the stopping time $d_0^* = \inf\{s >0, \: X_s^* \in \partial\Dd\}$. 
Recall that under $\mathbb{P}_x$, the process $Z=(Z_s)_{s\geq0}$ is an ISP$_{\alpha,x}$.
From Definition~\ref{dfr2}, for any $x\in \Dd$, any bounded measurable 
$\varphi : \Omega^* \times \Omega^* \to \mathbb{R}$,
\begin{gather}\label{eq:def_5}
 \QQ_x\Big[\varphi((X_{s\wedge d_0^*}^*)_{s\geq0},(X_{s + d_0^*}^*)_{s\geq0})\Big] 
= \mathbb{E}_x\Big[\tilde{\varphi}(\Psi(Z), \mathcal{C}(Z))\Big],\\
\text{where} \quad \tilde{\varphi}(w, x) = \QQ_x\big[\varphi(w, X^*)\big] \quad 
\text{for all $w\in \Omega^*$, all $x \in \pDd$.}\notag
\end{gather}

\textit{Step 2.} Here we  show that~\eqref{eq:markov_prop} holds when $x\in \partial\Dd$. 
Consider, on some filtered probability space $(\Omega, \mathcal{G}, (\mathcal{G}_u)_{u\geq0}, \mathbb{P})$, 
a $(\mathcal{G}_u)_{u\geq0}$-Poisson measure $\Pi_\beta = \sum_{u\in \mathrm{J}}\delta_{(u, e_u)}$ 
on $\mathbb{R}_+ \times \mathcal{E}$ with intensity $\dr u\nn_\beta(\dr e)$, together with the processes 
$(a_u^x, b_u^x, \tau_u^x)_{u\geq0}$ and $(L^x_t,R^x_t)_{t\geq 0}$ from Definition~\ref{dfr1}. By definition, 
the law of $(R_t^x)_{t\geq0}$ is $\QQ_x$. For $t> 0$, we also set $g_t^x = \tau_{L_t^x -}^x$ and
$d_t^x = \tau_{L_t^x}^x$.
To lighten the notations, we will most of the time drop the superscript $x$, except for $(R_t^x)_{t\geq0}$.

\vip
\textit{Step 2.1.} Let us verify that 
with the convention that $(v,v)=\emptyset$ for all $v\geq 0$, it holds that
\begin{gather}
\text{for} \quad \cZ:=\closure{\{t\geq 0 : R^x_t \in \pDd\}},
\quad \text{we have} \quad \cZ^c=\cup_{u \in \JJ} (\tau_{u-},\tau_u) \notag\\
\quad \text{and} \quad \cZ= \{\tau_u, \: u\geq 0\}\cup \{\tau_{u-}, \: u> 0\}.\label{zdur}
\end{gather}
Pick $t$ in $\mathcal{O} := \cup_{u \in \JJ} (\tau_{u-},\tau_u)$.
There exists $u \in \JJ$ such that $\Delta\tau_u > 0$ and $t \in (\tau_{u-}, \tau_u)$. By 
Remark~\ref{rkr2}, $L_t = u$ and $t\in(\tau_{L_t-}, \tau_{L_t})$, so that
$R_t^x\in \Dd$ by Remark~\ref{rkr}-(b). Consequently, 
$\mathcal{O} \subset \{t\geq 0, R_t^x \in \Dd\}= \{t\geq 0, R_t^x \in \pDd\}^c$, whence
$\mathcal{O}\subset \cZ^c$ since $\mathcal{O}$ is open.
Now, we infer from Bertoin~\cite[page 13]{bertoin_subordinators} 
that $\mathcal{O}^c = \{\tau_u, \: u\geq 0\}\cup \{\tau_{u-}, \: u> 0\}$,
because $(\tau_u)_{u\geq0}$ is a strictly increasing càdlàg path such that $\lim_{u\to\infty}\tau_u = \infty$,
see Lemma~\ref{tti}. For all $u\geq 0$, $R_{\tau_u}=b_u \in \pDd$: by Remark~\ref{rkr} with $t=\tau_u$
(whence $L_t=u$ by Remark~\ref{rkr2}), we either have $u \notin \JJ$ and thus $R_t=b_u$ 
(see  Remark~\ref{rkr}-(a))
or  $u \in \JJ$ and $t=\tau_u$ and thus $R_t=b_u$  (see  Remark~\ref{rkr}-(f)). Consequently, 
$\{\tau_u, \: u\geq 0\}\subset \cZ$. 
Hence for all $u>0$, all $h \in (0,u)$, $\tau_{u-h}\in \cZ$, so that
$\tau_{u-}=\lim_{h\searrow 0}\tau_{u-h} \in \cZ$, since $\cZ$ is closed. All in all
$\mathcal{O}^c= \{\tau_u, \: u\geq 0\}\cup \{\tau_{u-}, \: u> 0\}\subset \mathcal{Z}$. Since $\mathcal{Z} \subset \mathcal{O}^c$ (because we have seen that $\mathcal{O}\subset \cZ^c$),
we conclude that $\cZ=\mathcal{O}^c= \{\tau_u, \: u\geq 0\}\cup \{\tau_{u-}, \: u> 0\}$
and that $\mathcal{Z}^c=\mathcal{O}= \cup_{u \in \JJ} (\tau_{u-},\tau_u)$. 
We have proved~\eqref{zdur}.
\vip
We next check that for all $t>0$,
\[
g_t= \sup\{s < t, \: R_s^x \in \partial\Dd\}\quad \text{and}\quad 
d_t = \inf\{s > t, \: R_s^x \in \partial\Dd\}.
\] 
Indeed, recall that $g_t=\tau_{L_t-}$ and observe that by~\eqref{zdur}
\begin{align*}
\sup\{s < t, \: R_s^x \in \partial\Dd\}=&\sup \closure{\{s < t, \: R_s^x \in \partial\Dd\}}\\
=& \sup ( \cZ \cap [0,t) ) \\
=& \sup ((\{\tau_u : u \geq 0\}\cup\{\tau_{u-} : u>0\})\cap [0,t)  ),
\end{align*}
which equals $\tau_{L_t-}$ since $t \in [\tau_{L_t-},\tau_{L_t}]$ by Remark~\ref{rkr2}.
The other identity is checked similarly.

\vip

Setting now $d_0 = \inf\{s > 0 : R_s \in \pDd\}$, we observe that $d_0=0$ a.s. 
(recall that $x \in \pDd$). This implies that $\QQ_x(d_0^*=0)=1$
when $x \in \pDd$, a fact we will use later. Indeed, for any $t>0$, $d_0\leq d_t=\tau_{L_t}$, which tends to 
$\tau_0=0$ as $t\to 0$ since $\tau$ is right continuous and since $\lim_{t\to 0}L_t=0$ because 
$\tau$ is strictly increasing (so that $L$ is continuous).

\vip

By Remark~\ref{rkr}-(a)-(b)-(c)-(d)-(f) and since $\tau_{L_t}=t$ when $L_t \notin \JJ$ (because 
$t\in [\tau_{L_t-},\tau_{L_t}]$ and $\Delta \tau_{L_t}=0$), we see that for all $t\geq 0$,
\begin{equation}\label{ptd}
R_t^x = h_{b_{L_t}-}(a_{L_t}, e_{L_t}(t-\tau_{L_t -}))\indiq_{\{\tau_{L_t} > t\}} + b_{L_t}\indiq_{\{\tau_{L_t} = t\}}.
\end{equation}
Recalling that $d_t = \tau_{L_t}$, we deduce that $(R_{s\wedge d_t}^x)_{s\geq0}$ is $\mathcal{G}_{L_t}$-measurable.

\vip

\textit{Step 2.2.} We now show that for any $t\geq0$, the law of $(R_{d_t + s}^x)_{s\geq0}$ conditionally on 
$(R_{s\wedge d_t}^x)_{s\geq0}$ is $\QQ_{R_{d_t}^x}$. To this end, we fix $t\geq0$ and we introduce
$$
 \Pi_\beta^t = \sum_{u \in \mathrm{J}_t}\delta_{(u , e^t_u)} \quad \text{where} \quad \mathrm{J}_t 
= \{u \geq0, \: u + L_t \in \mathrm{J}\} \quad \text{and} \quad e^t_u=e_{u+L_t}. 
$$
Since $L_t$ is a $(\mathcal{G}_u)_{u\geq0}$-stopping time, it comes that $\Pi_\beta^t$ is a 
$(\mathcal{G}_{u+L_t})_{u\geq0}$-Poisson measure distributed as $\Pi_\beta$ and is independent of 
$\mathcal{G}_{L_t}$. 
One easily checks that $(b_u^t)_{u\geq0} = (b_{L_t + u})_{u\geq0}$ and 
$(\tau_u^t)_{u\geq0} = (\tau_{L_t + u} - \tau_{L_t})_{u\geq0}$ solve~\eqref{sdeb} and~\eqref{sdet} with the Poisson
measure $\Pi^t_\beta$, with $(a^t_u=a_{L_t+u})_{u\geq 0}$ and with $b^t_0=b_{L_t}=R_{d_t}^x$ (recall that
$d_t=\tau_{L_t}$ and use Remark~\ref{rkr}-(f)).
Next,
$(L_s^t)_{s\geq0} = (L_{d_t+s} - L_{d_t})_{s\geq0}$ is the right-continuous inverse of $(\tau_u^t)_{u\geq0}$:
this is easily checked, recalling that $d_t=\tau_{L_t}$ and using that
$L_{d_t}=L_t$ (because $L$ is constant on $[\tau_{L_t -}, \tau_{L_t}]$, see Remark~\ref{rkr2}).
By~\eqref{ptd},
\begin{equation*}
R_{d_t+s}^x = 
\begin{cases}
h_{b_{(L_{d_t + s})-}}(a_{L_{d_t + s}}, e_{L_{d_t+s}}(d_t + s - \tau_{(L_{d_t +s})-}))& \text{if } \tau_{L_{d_t + s}} > d_t + s,  \\
b_{L_{d_t+s}} &\text{otherwise}. 
\end{cases}
\end{equation*}
Since $L_{d_t} = L_t$, we find $b_{(L_{d_t +s})-} = b_{(L_s^t)-}^t$, $a_{L_{d_t +s}} = a_{L_s^t}^t$
and $\tau_{(L_{d_t+s})-}-d_t=\tau^t_{(L^t_s)-}$, so that
\begin{equation*}
R_{d_t+s}^x = 
\begin{cases}
h_{b_{(L_{s}^t)-}^t}(a_{L_s^t}^t, e_{L_s^t}^t( s - \tau_{L_s^t}^t))& \text{if } \tau_{L_s^t}^t > s,  \\
b_{L_s^t}^t &\text{otherwise}. 
\end{cases}
\end{equation*}
Thus, conditionally on $\mathcal{G}_{L_t}$,  
$(R_{d_t + s}^x)_{s\geq0}$ is an $(\alpha,\beta)$-stable process reflected in $\cDd$, 
driven by the Poisson measure $\Pi^t_\beta$ and starting from $R^x_{d_t}$.
Hence by Theorem~\ref{mr1}-(b), 
conditionally on $\mathcal{G}_{L_t}$, $(R_{d_t + s}^x)_{s\geq0}$ is $\QQ_{R_{d_t}^x}$-distributed.
This completes the step since 
$\sigma((R_{s\wedge d_t}^x)_{s\geq0}) \subset \mathcal{G}_{L_t}$ by Step~2.1.

\vip

\textit{Step 2.3.} We show here that for any $t>0$, any $x\in\partial\Dd$, any bounded measurable functions
$\psi_1,\psi_2 : \Omega^* \to \R$, we have~\eqref{eq:markov_prop}. By a monotone class argument,
it suffices that for any bounded measurable functions 
$\varphi_1, \varphi_2, \varphi_3, \varphi_4 : \Omega^* \to \mathbb{R}$, we have
\begin{align*}
 \bm{\mathrm{I}} :=& \QQ_x\Big[\varphi_1((X_{s\wedge g^*_t}^*)_{s\geq0})\varphi_2((X_{(s +g^*_t) \wedge t}^*)_{s\geq0})
\varphi_3((X_{(t+s) \wedge d_t^*}^*)_{s\geq0})\varphi_4((X_{d_t^* +s}^*)_{s\geq0})\Big] \\ 
=& \QQ_x\Big[\varphi_1((X_{s\wedge g^*_t}^*)_{s\geq0})
\varphi_2((X_{(s +g^*_t) \wedge t}^*)_{s\geq0})\QQ_{X_t^*}\Big[\varphi_3((X_{s \wedge d_0^*}^*)_{s\geq0})
\varphi_4((X_{d_0^* +s}^*)_{s\geq0})\Big]\Big]
\end{align*}
where $g^*_t = \sup\{s < t, \: X_s^* \in \partial\Dd\}$ and $d_t^* = \inf\{s > t, \: X_s^* \in \partial\Dd\}$. 
By definition, we have
$$
 \bm{\mathrm{I}}= \E\Big[\varphi_1((R_{s\wedge g_t}^x)_{s\geq0})
\varphi_2((R_{(s +g_t) \wedge t}^x)_{s\geq0})\varphi_3((R_{(t+s) \wedge d_t}^x)_{s\geq0})\varphi_4((R_{d_t +s}^x)_{s\geq0})\Big].
$$
Conditioning on $(R^x_{s\wedge d_t})_{s\geq0}$ and applying Step 2.2, we get
$$
 \bm{\mathrm{I}} = \E\Big[\varphi_1((R_{s\wedge g_t}^x)_{s\geq0})\varphi_2((R_{(s +g_t) \wedge t}^x)_{s\geq0})
\varphi_3((R_{(t+s) \wedge d_t}^x)_{s\geq0})\QQ_{R_{d_t}^x}\big[\varphi_4((X_s^*)_{s\geq0})\big]\Big].
$$
We split this expectation according to whether $L_t \notin\mathrm{J}$ or $\tau_{L_t} > t$. 
This covers all the cases, since $\mathbb{P}(L_t \in \mathrm{J}, \tau_{L_t} = t) = 0$
as seen in Step 3.5 of the proof of Theorem~\ref{mr1}. 

\vip

First, when $L_t\notin\mathrm{J}$, we have $d_t = \tau_{L_t} = t$ so that
$(R_{(t+s) \wedge d_t}^x)_{s\geq0}=(R_t)_{s\geq 0}$:
\begin{align*}
 \bm{\mathrm{I}}_1 &:= \E\Big[\varphi_1((R_{s\wedge g_t}^x)_{s\geq0})\varphi_2((R_{(s +g_t) \wedge t}^x)_{s\geq0})
\varphi_3((R_{(t+s) \wedge d_t}^x)_{s\geq0})
\QQ_{R_{d_t}^x}\big[\varphi_4((X_s^*)_{s\geq0})\big]\indiq_{\{L_t \notin \mathrm{J}\}}\Big] \\
& = \E\Big[\varphi_1((R_{s\wedge g_t}^x)_{s\geq0})\varphi_2((R_{(s +g_t) \wedge t}^x)_{s\geq0})
\varphi_3((R_{t}^x)_{s\geq0})\QQ_{R_{t}^x}\big[\varphi_4((X_s^*)_{s\geq0})\big]\indiq_{\{L_t \notin \mathrm{J}\}}\Big].
\end{align*}
Since $L_t \notin \mathrm{J}$ implies that $R_t^x = b_{L_t} \in \partial\Dd$ and since
$\QQ_z(d_0^*=0)=1$ for all $z \in \partial\Dd$ (see Step~2.1)
\begin{equation}\label{eq:first_term_markov}
\bm{\mathrm{I}}_1 = \E\Big[\varphi_1((R_{s\wedge g_t}^x)_{s\geq0})\varphi_2((R_{(s +g_t) \wedge t}^x)_{s\geq0})
\QQ_{R_t^x}\Big[\varphi_3((X_{s \wedge d_0^*}^*)_{s\geq0})\varphi_4((X_{d_0^* +s}^*)_{s\geq0})\Big]
\indiq_{\{L_t \notin \mathrm{J}\}}\Big].
\end{equation}

\vip

To treat the case $\tau_{L_t} > t$, we introduce the function 
$\varphi_5 : \Omega^*\times\closure{\Dd} \to \mathbb{R}$ 
defined by $\varphi_5(w,z) = \varphi_3(w)\QQ_z[\varphi_4(X^*)]$, so that
\begin{align*}
\bm{\mathrm{I}}_2 :=& \E\Big[\varphi_1((R_{s\wedge g_t}^x)_{s\geq0})\varphi_2((R_{(s +g_t) \wedge t}^x)_{s\geq0})
\varphi_3((R_{(t+s) \wedge d_t}^x)_{s\geq0})
\QQ_{R_{d_t}^x}\big[\varphi_4((X_s^*)_{s\geq0})\big]\indiq_{\{\tau_{L_t}>t\}}\Big]\\
=& \E\Big[\varphi_1((R_{s\wedge g_t}^x)_{s\geq0})
\varphi_2((R_{(s +g_t) \wedge t}^x)_{s\geq0})\varphi_5((R_{(t+s) \wedge d_t}^x)_{s\geq0}, R_{d_t}^x)\indiq_{\{\tau_{L_t} > t\}}\Big].
\end{align*}
We now aim to use the compensation formula. For a path $w\in\DD(\mathbb{R}_+, \mathbb{R}^d)$ and for $r\geq 0$,
we set $k_rw = (w(s\wedge r))_{s\geq0}$ and $\theta_rw = (w(r+s))_{s\geq0}$. We set
$\mathcal{A} = \{(b,a) : b \in \partial\Dd, a \in \mathcal{I}_b\}$ and introduce
the function $\mathrm{H} : \mathcal{A}\times \mathcal{E} \to \Omega^*$ defined by
$$
 \mathrm{H}(b, a, e) = \Big(h_{b}(a, e(r))\indiq_{\{r < \bar{\ell}_{b}(a, e)\}} 
+ g_b(a, e)\indiq_{\{r \geq \bar{\ell}_{b}(a, e)\}}\Big)_{r \geq0}.
$$
From~\eqref{ptd} and since $g_t=\tau_{L_t-}$ and
$d_t=\tau_{L_t}$, on the event $\{\tau_{L_t} > t\}$, we have
\begin{gather*}
 (R_{(s +g_t) \wedge t}^x)_{s\geq0} = \mathrm{H}(b_{L_t -}, a_{L_t}, k_{t - \tau_{L_t -}}e_{L_t}), 
\quad (R_{(t+s) \wedge d_t}^x)_{s\geq0} = \mathrm{H}(b_{L_t -}, a_{L_t}, \theta_{t - \tau_{L_t -}}e_{L_t}),\\
\text{and} \quad  R_{d_t}^x = g_{b_{L_t-}}(a_{L_t}, \theta_{t- \tau_{L_t-}}e_{L_t}). 
\end{gather*}
We now set 
$\tilde{\varphi}_2(b,a,e) = \varphi_2 (\mathrm{H}(b,a,e))$ and $\tilde{\varphi}_5(b,a,e) =  
\varphi_5(\mathrm{H}(b, a, e), g_b(a, e))$ and we write
\begin{align*}
\bm{\mathrm{I}}_2 =& \E\Big[\varphi_1((R_{s\wedge \tau_{L_t-}}^x)_{s\geq0}) 
\tilde\varphi_2(b_{L_t -}, a_{L_t}, k_{t - \tau_{L_t -}}e_{L_t})
\tilde\varphi_5(b_{L_t -}, a_{L_t}, \theta_{t - \tau_{L_t -}}e_{L_t}) \indiq_{\{\tau_{L_t}>t\}}\Big]\\
=&\E\bigg[\sum_{u\in\mathrm{J}}\varphi_1((R_{s\wedge \tau_{u-}}^x)_{s\geq0})
\tilde{\varphi}_2(b_{u -}, a_{u}, k_{t - \tau_{u -}}e_u)
\tilde{\varphi}_5(b_{u -}, a_{u}, \theta_{t - \tau_{u -}}e_{u})
\indiq_{\{\tau_{u-}\leq t < \tau_{u-} + \bar{\ell}_{b_{u-}}(a_u, e_u)\}}\bigg].
\end{align*}
We used that $\tau_{L_t}>t$ if and only if $L_t \in \JJ$ and $\tau_{L_t-}\leq t < \tau_{L_t}$,
see Remark~\ref{rkr2}, and that in such a case, 
$\tau_{L_t}=\tau_{L_t-}+ \cl_{b_{L_t-}}(a_{L_t},e_{L_t})$ by~\eqref{sdet}.
Finally, we apply the compensation formula, which is licit because $(a_u)_{u\geq0}$ is 
$(\mathcal{G}_u)_{u\geq0}$-predictable and so is $\varphi_1((R_{s\wedge \tau_{u-}}^x)_{s\geq0})$.
We then get
\begin{align*}
 \bm{\mathrm{I}}_2 = \E\bigg[\int_0^\infty \varphi_1((R_{s\wedge \tau_{u}}^x)_{s\geq0})\indiq_{\{\tau_u \leq t\}}
\int_{\mathcal{E}}\tilde{\varphi}_2(&b_{u}, a_{u}, k_{t - \tau_{u }}e)  \\
 &\times\tilde{\varphi}_5(b_{u}, a_{u}, \theta_{t - \tau_{u}}e)\indiq_{\{t < \tau_{u} + \bar{\ell}_{b_{u}}(a_u, e)\}}
\nn_\beta(\dr e) \dr u\bigg].
\end{align*}
Since $u\mapsto \tau_u$ is a.s. strictly increasing by Lemma~\ref{tti}, there is at most one $u$ for which
$\tau_u=t$ and we can replace $\indiq_{\{\tau_u \leq t\}}$ by $\indiq_{\{\tau_u < t\}}$ in the above integral.
Now, for every fixed $u\geq0$,
we apply the Markov property of $\nn_\beta$
at time $t-\tau_u$, see Lemma~\ref{mark}. This is possible because
$\tau_u < t < \tau_{u} + \bar{\ell}_{b_{u}}(a_u, e)$ implies that $\ell(e)\geq \bar{\ell}_{b_{u}}(a_u, e) 
> t- \tau_u>0$.
Finally, we stress that $\indiq_{\{t < \tau_{u} + \bar{\ell}_{b_{u}}(a_u, e)\}}$ is a function of 
($\tau_u,b_u,a_u$ and) $k_{t-\tau_u}e$. It comes that
\begin{align*}
 \bm{\mathrm{I}}_2 = \E\bigg[\int_0^\infty \!\!\!\varphi_1((R_{s\wedge \tau_{u}}^x)_{s\geq0})&
\int_{\mathcal{E}}\!\!\tilde{\varphi}_2(b_{u}, a_{u}, k_{t - \tau_{u }}e) 
\varphi_6(b_u,a_u,e(t-\tau_u))\indiq_{\{\tau_u< t < \tau_{u} + \bar{\ell}_{b_{u}}(a_u, e)\}}\nn_\beta(\dr e) \dr u\bigg],
\end{align*}
where 
$$
\varphi_6(b,a,z)=\mathbb{E}_{z}\Big[\tilde{\varphi}_5(b,a,(Z_{r\wedge\ell(Z)})_{r\geq0})\Big]
$$
and where we recall that under $\mathbb{P}_z$, $(Z_t)_{t\geq0}$ is an ISP$_{\alpha,z}$.
Recalling the definition~\eqref{eq:psi_c} of the functions $\Psi$ and $\mathcal{C}$, it should be clear that
$H(b,a,e)=\Psi(b+ae)$ and $g_b(a,e)=\mathcal{C}(b+ae)$, whence $ \tilde{\varphi}_5(b,a,e) 
= \varphi_5(\Psi(b + a e), \mathcal{C}(b+ a e))$. By translation and rotational invariance of the isotropic 
stable process, we thus have
$$
\varphi_6(b,a,z)=\mathbb{E}_{h_b(a,z)}[\varphi_5(\Psi(Z),\mathcal{C}(Z))],
$$
whence
\begin{align*}
 \bm{\mathrm{I}}_2 = \E\bigg[\int_0^\infty \!\!\!\varphi_1((R_{s\wedge \tau_{u}}^x)_{s\geq0})
\int_{\mathcal{E}}\!\!\tilde{\varphi}_2(b_{u}, a_{u}, k_{t - \tau_{u }}e) 
\mathbb{E}_{h_{b_u}(a_u,e(t-\tau_u))}&\Big[\varphi_5(\Psi(Z),\mathcal{C}(Z))\Big]\\
&\indiq_{\{\tau_u<  t < \tau_{u} + \bar{\ell}_{b_{u}}(a_u, e)\}}\nn_\beta(\dr e) \dr u\bigg].
\end{align*}
We can then use the compensation formula in the reverse way to conclude that
\begin{align*}
\bm{\mathrm{I}}_2 =&\E\Big[\varphi_1((R_{s\wedge g_t}^x)_{s\geq0})\varphi_2((R_{(s +g_t) \wedge t}^x)_{s\geq0})
\mathbb{E}_{h_{b_{{L_t}-}}(a_{L_t},e_{L_t}(t-\tau_{L_t-}))}[\varphi_5(\Psi(Z), \mathcal{C}(Z))]\indiq_{\{\tau_{L_t} > t\}}\Big]\\
=& \E\Big[\varphi_1((R_{s\wedge g_t}^x)_{s\geq0})\varphi_2((R_{(s +g_t) \wedge t}^x)_{s\geq0})
\mathbb{E}_{R_t^x}[\varphi_5(\Psi(Z), \mathcal{C}(Z))]\indiq_{\{\tau_{L_t} > t\}}\Big],
\end{align*}
since $R^x_t=h_{b_{L_t-}}(a_{L_t},e_{L_t}(t-\tau_{L_t-}))$ when $\tau_{L_t} > t$, see~\eqref{ptd}.
Recalling now~\eqref{eq:def_5} and the definition of $\varphi_5(w,x) 
= \varphi_3(w)\QQ_x[\varphi_4(X^*)]$, we see that for any $z \in \Dd$,
$$
 \mathbb{E}_{z}[\varphi_5(\Psi(Z), \mathcal{C}(Z))] = 
\mathbb{E}_{z}[\varphi_3(\Psi(Z)) \QQ_{\mathcal{C}(Z)}[\varphi_4(X^*)]] =
\QQ_z
\Big[\varphi_3((X_{s \wedge d_0^*}^*)_{s\geq0})\varphi_4((X_{d_0^*+s}^*)_{s\geq0})\Big].
$$
Since we work on $\{\tau_{L_t} > t\}$, we have $R_t^x \in \Dd$ so that
$$
 \bm{\mathrm{I}}_2 = \E\Big[\varphi_1((R_{s\wedge g_t}^x)_{s\geq0})\varphi_2((R_{(s +g_t) \wedge t}^x)_{s\geq0})
\QQ_{R_t^x}\Big[\varphi_3((X_{s \wedge d_0^*}^*)_{s\geq0})\varphi_4((X_{d_0^* +s}^*)_{s\geq0})\Big]
\indiq_{\{\tau_{L_t} > t\}}\Big]
$$
Recalling now the expression~\eqref{eq:first_term_markov} of $\bm{\mathrm{I}}_1$ and that 
$\bm{\mathrm{I}} =\bm{\mathrm{I}}_1 + \bm{\mathrm{I}}_2$, we finally get
\begin{align*}
 \bm{\mathrm{I}} & = \E\Big[\varphi_1((R_{s\wedge g_t}^x)_{s\geq0})\varphi_2((R_{(s +g_t) \wedge t}^x)_{s\geq0})
\QQ_{R_t^x}\Big[\varphi_3((X_{s \wedge d_0^*}^*)_{s\geq0})\varphi_4((X_{d_0^* +s}^*)_{s\geq0})\Big]\Big] \\
 & = \QQ_x\Big[\varphi_1((X_{s\wedge g_t^*}^*)_{s\geq0})\varphi_2((X_{(s +g_t^*) \wedge t}^*)_{s\geq0})
\QQ_{X_t^*}\Big[\varphi_3((X_{s \wedge d_0^*}^*)_{s\geq0})\varphi_4((X_{d_0^* +s}^*)_{s\geq0})\Big]\Big],
\end{align*}
which was our goal.

\vip

\textit{Step 3.} We finally show that~\eqref{eq:markov_prop} holds when $x \in \Dd$. 
We first show that for any  measurable bounded functions 
$\varphi_1, \varphi_2, \varphi_3 :\Omega^* \to \mathbb{R}$,
\begin{align}\label{eq:mark_pas_bord1}
 \bm{\mathrm{J}} := &\QQ_x\Big[\varphi_1((X_{s\wedge t}^*)_{s\geq0})\varphi_2((X_{(s+t)\wedge d_0^*}^*)_{s\geq0})
\varphi_3((X_{d_0^*+ s}^*)_{s\geq0}) \indiq_{\{d_0^*>t\}}\Big] \nonumber \\ 
  = & \QQ_x\Big[\varphi_1((X_{s\wedge t}^*)_{s\geq0})\QQ_{X_t^*}\big[\varphi_2((X_{s\wedge d_0^*}^*)_{s\geq0})
\varphi_3((X_{d_0^* + s}^*)_{s\geq0})\big]\indiq_{\{d_0^*>t\}}\Big].
\end{align}
First, we see from~\eqref{eq:def_5} that
\begin{align*}
\bm{\mathrm{J}} 
=&\mathbb{E}_x\Big[\varphi_1((Z_{s\wedge t})_{s\geq0})\varphi_2(\Psi((Z_{(s+t)\wedge \tell(Z)})_{s\geq0}))
\QQ_{\mathcal{C}(Z)}\big[\varphi_3(X^*)\big]\indiq_{\{\tell(Z) > t\}}\Big].
\end{align*}
Conditionally on $(Z_{s\wedge t})_{s\geq0}$ and $\{\tell(Z)>t\}$, 
$(Z_{t+s})_{s\geq0}$ is an ISP$_{\alpha,Z_t}$, whence
$$
\bm{\mathrm{J}} = \mathbb{E}_x\Big[\varphi_1((Z_{s\wedge t})_{s\geq0})
\mathbb{E}_{Z_t}\Big[\varphi_2(\Psi((Z_{s\wedge \tell(Z)})_{s\geq0}))
\QQ_{\mathcal{C}(Z)}\big[\varphi_3(X^*)\big]\Big]\indiq_{\{\tell(Z) > t\}}\Big],
$$
and~\eqref{eq:mark_pas_bord1} follows from~\eqref{eq:def_5}. Using a monotone class argument,
we deduce from~\eqref{eq:mark_pas_bord1} that for any bounded measurable $\psi_1,\psi_2 : \Omega^* \to \R$, 
\begin{align}\label{tg1}
\QQ_x\Big[\psi_1((X_{s\wedge t}^*)_{s\geq0})\psi_2((X_{t+s}^*)_{s\geq0})\indiq_{\{d_0^*> t\}}\Big]
=& \QQ_x\Big[\psi_1((X_{s\wedge t}^*)_{s\geq0})\QQ_{X_{t}^*}[\psi_2(X^*)]\indiq_{\{d_0^*> t\}}\Big].
\end{align}
We next prove that for any  measurable bounded functions 
$\varphi_1, \varphi_2, \varphi_3 :\Omega^* \to \mathbb{R}$,
\begin{align}\label{eq:mark_pas_bord2}
\bm{\mathrm{K}} := &\QQ_x\Big[\varphi_1((X_{s\wedge d_0^*}^*)_{s\geq0})\varphi_2((X_{(s+d_0^*)\wedge t}^*)_{s\geq0})
\varphi_3((X_{t+ s}^*)_{s\geq0}) \indiq_{\{d_0^*\leq t\}}\Big] \nonumber \\ 
  = & \QQ_x\Big[\varphi_1((X_{s\wedge d_0^*}^*)_{s\geq0})\varphi_2((X_{(s+ d_0^*)\wedge t}^*)_{s\geq0})
\QQ_{X_t^*}[\varphi_3(X^*)]\indiq_{\{d_0^*\leq t\}}\Big].
\end{align}
We first write $\bm{\mathrm{K}}=\QQ_x[\varphi_1((X_{s\wedge d_0^*}^*)_{s\geq 0})\varphi_4(t-d_0^*,(X_{s+d_0^*}^*)_{s\geq0}) 
\indiq_{\{d_0^*\leq t\}}]$, where
$$
\text{for all $r\geq0$, all $w\in \Omega^*$},
\quad \varphi_4(r,w)=\varphi_2(w((s\land r)_{r\geq 0})\varphi_3(w((s+r)_{s\geq 0}).
$$
Since $d_0^*$ is a function of $(X_{s\wedge d_0^*}^*)_{s\geq 0}$, we find, using~\eqref{eq:def_5}, that
\begin{align*}
\bm{\mathrm{K}}=\E_x\Big[\varphi_1(\Psi((Z_{s\wedge \tell(Z)})_{s\geq0})))\varphi_5(t-\tell(Z),
\mathcal{C}(Z))\indiq_{\{\tell(Z) \leq t \}}   \Big],
\end{align*}
where for $r\geq 0$ and $z \in \pDd$,
$$
\varphi_5(r,z)=\QQ_z\Big[\varphi_2((X_{s\land r}^*)_{s\geq0})
\varphi_3((X_{r+ s}^*)_{s\geq0})\Big]= \QQ_z\Big[\varphi_2((X_{s\land r}^*)_{s\geq0}) \QQ_{X_r^*}[
\varphi_3(X^*)]\Big]
$$
by Step 2.
Finally, we use~\eqref{eq:def_5} again to conclude the proof of~\eqref{eq:mark_pas_bord2}.
By a monotone class argument,
we deduce from~\eqref{eq:mark_pas_bord2} that for any bounded measurable $\psi_1,\psi_2 : \Omega^* \to \R$, 
\begin{align*}
\QQ_x\Big[\psi_1((X_{s\wedge t}^*)_{s\geq0})\psi_2((X_{t+s}^*)_{s\geq0})\indiq_{\{d_0^*\leq t\}}\Big]
=& \QQ_x\Big[\psi_1((X_{s\wedge t}^*)_{s\geq0})\QQ_{X_{t}^*}[\psi_2(X^*)]\indiq_{\{d_0^*\leq t\}}\Big].
\end{align*}
Together with~\eqref{tg1}, this shows that~\eqref{eq:markov_prop} holds true.
The proof is complete.
\end{proof}

We can now give the

\begin{proof}[Proof of Theorem~\ref{mr2}]
By Proposition~\ref{markoff}, the family $(\QQ_x)_{x\in \cDd}$ defines a Markov process
on the canonical filtered probability space of càdlàg $\cDd$-valued processes. 

\vip

For any $x\in \cDd$ it holds that $\QQ_x[\int_0^\infty \indiq_{\{X^*_t\in\pDd\}}\dr t]=0$. 
Indeed, recalling Definition~\ref{dfr2}, it suffices to treat the case where $x \in \pDd$. 
With the notation of the proof of Proposition~\ref{markoff}, it suffices that
$\int_0^\infty \indiq_{\{R_t^x\in\pDd\}}\dr t=0$ a.s. Since $\lim_{u\to \infty} \tau_u=\infty$ a.s. 
by Lemma~\ref{tti}, it is enough that $\int_0^{\tau_u} \indiq_{\{R_t^x\in\pDd\}}\dr t=0$ a.s. for all $u>0$.
Recalling~\eqref{zdur}, we thus need that $\int_0^{\tau_u} \indiq_{\{t \in \mathcal{Z}\}}\dr t=0$ for all $u>0$,
{\it i.e.} that $\int_0^{\tau_u} \indiq_{\{t \in \mathcal{Z}^c\}}\dr t=\tau_u$. This follows from the facts that
$\mathcal{Z}^c\cap [0,\tau_u)=\cup_{v \in \JJ,v\leq u}(\tau_{v-},\tau_v)$, see~\eqref{zdur}, and 
that $\tau_u=\sum_{v\in \JJ,v\leq u} \Delta \tau_v$.

\vip

We finally check  
that this Markov process is Feller, {\it i.e.} that for any $t>0$, any $\varphi \in C_b(\cDd)$,
we have $\QQ_{x_n}[\varphi(X^*_t)]\to \QQ_x[\varphi(X^*_t)]$ if $x_n \to x$,
where $X^*$ is the canonical process on $\Omega^*=\DD(\R_+,\cDd)$.
\vip

Fix $t> 0$, $\varphi \in C_b(\cDd)$ and $x_n \in \cDd$ such that $x_n\to x \in \cDd$.
Let us introduce $\psi(r,z)=\QQ_z[\varphi(X^*_{r})]$ for $r\geq 0$ and $z\in \pDd$.
We know from Theorem~\ref{mr1}-(c) that the map $(r,z)\mapsto \psi(r,z)$ is continuous on $\R_+\times \pDd$.
By Definition~\ref{dfr2}, recalling that $Z$ is, under $\PP_0$, an ISP$_{\alpha,0}$,
\begin{gather*}
\QQ_x[\varphi(X_t^*)]= \E_0[\varphi(x+Z_t)\indiq_{\{\sigma >t\}}]+\E_0[\psi(t-\sigma,\Lambda(x+Z_{\sigma-},x+Z_\sigma))
\indiq_{\{\sigma \leq t\}}], \\
\QQ_{x_n}[\varphi(X_t^*)]= \E_0[\varphi(x_n+Z_t)\indiq_{\{\sigma_n >t\}}]+
\E_0[\psi(t-\sigma_n,\Lambda(x_n+Z_{\sigma_n-},x_n+Z_{\sigma_n}))\indiq_{\{\sigma_n \leq t\}}],
\end{gather*}
where $\sigma=\tell(x+Z)$ ({\it i.e.} $\sigma=\inf\{t>0 : x+Z_t \notin \Dd\}$) 
and $\sigma_n=\tell(x_n+Z)$. Note here that  $\mathbb{P}_0(\sigma = 0) = 1$ when $x \in \pDd$ (see Lemma~\ref{ai}),
in which case the above identity is trivial.
\vip

{\it Case 1: $x \in \Dd$.} By~\eqref{eer},
we a.s. have 
$$
\inf_{t\in [0,\sigma)}d(x+Z_t,\Dd^c)>0 \quad\text{and}\quad x+Z_\sigma \in \cDd^c.
$$
Thus a.s., for all $n$ large enough, $\inf_{t\in [0,\sigma)}d(x_n+Z_t, \Dd^c)>0$ and 
$x_n+Z_\sigma \in \cDd^c$, implying that $\sigma_n=\sigma$ and thus that
$\Lambda(x_n+Z_{\sigma_n-},x_n+Z_{\sigma_n})\to \Lambda(x+Z_{\sigma-},x+Z_\sigma)$ by Lemma~\ref{Lambdacon}.
It follows by dominated
convergence that  $\QQ_{x_n}[\varphi(X^*_t)]\to \QQ_x[\varphi(X^*_t)]$.

\vip

{\it Case 2: $x \in \pDd$.} Then $\mathbb{P}_0(\sigma=0) = 1$, so that $\QQ_x[\varphi(X^*_t)]=\psi(t,x)$.
By~\eqref{agarder} and a scaling argument, for any $\e>0$, there is $c_\e>0$ such that
$\PP(\sigma_n>\e)\leq c_\e d(x_n,\pDd)^{\alpha/2}$, so that $\sigma_n\to 0$ in probability.
This implies that in probability, $\indiq_{\{\sigma_n>t\}}\to 0$,  $\indiq_{\{\sigma_n\leq t\}}\to 1$,
and $\Lambda(x_n+Z_{\sigma_n-},x_n+Z_{\sigma_n})\to x$ (because 
$\lim_n Z_{\sigma_n-}=\lim_n Z_{\sigma_n}=0$ in probability by right-continuity of $Z$ at time $0$). 
It follows by dominated convergence that  $\QQ_{x_n}[\varphi(X^*_t)]\to \psi(t,x)$.
\end{proof}

Let us check Proposition~\ref{exit} about the behavior of the process at the boundary.

\begin{proof}[Proof of Proposition~\ref{exit}]
Recall Definitions~\ref{dfr1} and~\ref{dfr2}.
By the strong Markov property, we may assume that $R_0=x\in \pDd$. 
Also recall that $\cZ=\closure{\{t \geq0, \: R_t \in \pDd\}}$ and that $\cZ^c = \cup_{u\in\JJ}(\tau_{u-}, \tau_u)$,
see~\eqref{zdur}. 
It remains to check that for any $u\in\JJ$, such that $\Delta\tau_u > 0$, we have 
$R_{\tau_{u-}} = R_{(\tau_{u-})-}$ if $\beta = *$ and $R_{\tau_{u-}} \neq R_{(\tau_{u-})-}$ if $\beta\in(0,\alpha/2)$.

\vip
Let thus $u\in \JJ$ such that $\Delta \tau_u>0$. By Remark~\ref{rkr2}, we have $u = L_{\tau_{u}} = L_{\tau_{u-}}$. 
From the definition~\eqref{defR} of $R$, with $t=\tau_{u-}$, which satisfies $\tau_{L_t}=\tau_u>\tau_{u-}=t$,
we get
\[
R_{\tau_{u-}} = h_{b_{u-}}(a_u,e_u(t-\tau_{u-}))=b_{u -} + a_{u}e_{u}(0)=R_{(\tau_{u-})-} + a_{u}e_{u}(0).
\]
We finally used that $R_{(\tau_{u-})-} = b_{u-}$, as seen in the proof of Theorem~\ref{mr1} (see Step 1).
This completes the proof since $e_u(0)=0$ when $\beta=*$, while $e_u(0) \in \HH$ when $\beta\in(0,\alpha/2)$.
\end{proof}

Finally, we deal with the scaling property of our processes.

\begin{proof}[Proof of Proposition~\ref{scal}] We divide the proof in two parts.

\vip
\textit{Step 1.} Let us first assume that $R_0 = x \in \pDd$. Consider the Poisson measure 
$\Pi_\beta$ and the processes $(a_u, b_u, \tau_u)_{u\geq0}$ from Definition~\ref{dfr1}. We set $\gamma = 1/2$ 
when $\beta = *$ and $\gamma = \beta / \alpha$ when $\beta \in (0,\alpha /2)$. For $\lambda >0$, we introduce 
the map $\Phi_\lambda : \cE \to \cE$ defined by $\Phi_\lambda(e)(t)=\lambda^{1/\alpha}e(t/\lambda)$.  
We set $\Dd^\lambda = \lambda^{1/\alpha} \Dd$ and observe that for $y \in  \pDd$, the normal vector 
at $\lambda^{1/\alpha}y \in \pDd^\lambda$ is $\bn_y$, so that, with obvious notations, 
$\cI_y=\cI^\lambda_{\lambda^{1/\alpha}y}$.
For any $y \in \pDd^\lambda$, any $A\in\cI_y^\lambda$ and any $e\in\cE$, 
we will denote by $\bar{\ell}_y^\lambda(A, e)$ and $g_y^\lambda(A, e)$ the corresponding quantities associated 
with the domain $\Dd^\lambda$. One can check from~\eqref{g1} and the definition of $\bar{\ell}$ that 
for any $y\in\pDd$, any $A\in\cI_y$ and any $e\in\cE$,
\begin{equation}\label{eq:scaling_ell_g}
\lambda\bar{\ell}_y(A, e) = \bar{\ell}_{\lambda^{1/\alpha}y}^\lambda(A, \Phi_\lambda(e)) 
\quad \text{and} \quad\lambda^{1/\alpha}g_y(A, e) = g_{\lambda^{1/\alpha}y}^\lambda(A, \Phi_\lambda(e)).
\end{equation}
We now introduce the Poisson measure $\Pi_\beta^\lambda$ defined by
$$
\Pi_\beta^\lambda = \sum_{s\in \mathrm{J}_\lambda}\delta_{(s, e_s^\lambda)} \quad \text{where} \quad \mathrm{J}_\lambda 
= \{s\geq0, \: s / \lambda^\gamma \in \mathrm{J}\} \quad \text{and} \quad e_s^\lambda 
= \Phi_\lambda(e_{s/\lambda^\gamma}).
$$
Using the scaling property of $\nn_\beta$, see Lemma~\ref{scaling}, one can verify that $\Pi_\beta^\lambda$ 
is a Poisson measure distributed 
as $\Pi_\beta$.
Recall now that $(b_u)_{u\geq0}$ is a solution to~\eqref{sdeb}, so that, setting 
$(b_u^\lambda)_{u\geq0} = (\lambda^{1/\alpha}b_{u / \lambda^\gamma})_{u\geq0}$ and 
$(a_u^\lambda)_{u\geq0} = (a_{u/\lambda^\gamma})_{u\ge0}$, we have for any $u\geq0$,
\begin{align*}
b_u^\lambda & = \lambda^{1/\alpha}x + \lambda^{1/\alpha}\int_0^{u / \lambda^{\gamma}}
\int_{\cE}\Big(g_{b_\vm}(a_{v}, e) - b_\vm\Big)\Pi_\beta (\dr v, \dr e) \\
& = \lambda^{1/\alpha}x +\int_0^{u}\int_{\cE}\Big(g_{b_\vm^\lambda}^\lambda(a_v^\lambda, e) - b_\vm^\lambda\Big)
\Pi_\beta^\lambda (\dr v, \dr e)
\end{align*}
where in the second equality we used~\eqref{eq:scaling_ell_g} and the definition of $\Pi_\beta^\lambda$. 
Thus $(b_u^\lambda)_{u\geq0}$ is a solution to~\eqref{sdeb} valued in $\pDd^\lambda$ and 
started at $\lambda^{1/\alpha}x$. Similarly,
$(\tau_u^\lambda)_{u\geq0} = (\lambda\tau_{u / \lambda^\gamma})_{u\geq0}$ satisfies
$$
\tau_u^\lambda = \int_0^u\int_{\cE}\bar{\ell}_{b_\vm^\lambda}^\lambda(a_v^{\lambda}, e)\Pi_\beta^\lambda(\dr v, \dr e).
$$
Then, if $(L_t)_{t\geq0}$ denotes the right-continuous inverse of $(\tau_u)_{u\geq0}$, one can see 
that $(L_t^\lambda)_{t\geq0} := (\lambda^{\gamma}L_{t/\lambda})_{t\geq0}$ is the right-continuous inverse 
of $(\tau_u^\lambda)_{u\geq0}$. Finally, by~\eqref{ptd} (which follows from
Remark~\ref{rkr}), we have
\begin{align*}
\lambda^{1/\alpha}R_{t/\lambda} = &
\lambda^{1/\alpha}h_{b_{L_{t/\lambda} -}}(a_{L_{t/\lambda}},e_{L_{t/\lambda}}(t/\lambda-\tau_{L_{t/\lambda} -})) 
\indiq_{\{\tau_{L_{t/\lambda}} > t/\lambda\}}+\lambda^{1/\alpha}b_{L_{t/\lambda}} \indiq_{\{\tau_{L_{t/\lambda}} = t/\lambda\}}\\
=&h_{b_{L_t^\lambda -}^\lambda}(a_{L_t^\lambda}^\lambda,e_{L_t^\lambda}^\lambda(t-\tau_{L_t^\lambda -}^\lambda)) 
\indiq_{\{  \tau_{L_t^\lambda}^\lambda > t\}}+b_{L_t^\lambda}^\lambda\indiq_{\{  \tau_{L_t^\lambda}^\lambda =t\}}.
\end{align*}
As a consequence, the process $(\lambda^{1/\alpha} R_{t/\lambda})_{t\geq 0}$ an $(\alpha,\beta)$-stable 
process reflected in $\lambda^{1/\alpha}\cDd=\{\lambda^{1/\alpha}y : y \in \cDd\}$ issued 
from $\lambda^{1/\alpha}x\in\pDd^\lambda$.

\vip
\textit{Step 2.} Consider now an $(\alpha,\beta)$-stable process $(R_t)_{t\geq 0}$ reflected in $\cDd$
issued from $x \in \Dd$, built as in Definition~\ref{dfr2}: for some ISP$_{\alpha,x}$ $(Z_t)_{t\geq 0}$, 
for $\tell(Z)=\inf\{t>0 : Z_t \notin \Dd\}$, set $R_{t}=Z_t$ for $t\in [0,\tell(Z))$, set
$Y=\Lambda(Z_{\tell(Z)-},Z_{\tell(Z)})$, pick some $\QQ_{Y}$-distributed process $(S_t)_{t\geq 0}$,
and set $R_t=S_{t-\tell(Z)}$ for $t\geq \tell(Z)$.
\vip
We introduce the function $\Lambda_\lambda$ as in~\eqref{Lambda} associated with $\Dd^\lambda$ and,
for $z \in \pDd^\lambda$, we call $\QQ^\lambda_z$ the law of the $(\alpha,\beta)$-stable process 
reflected in $\cDd^\lambda$.
\vip

We introduce 
$(Z^\lambda_t=\lambda^{1/\alpha}Z_{t/\lambda})_{t\geq 0}$, which is an ISP$_{\alpha,\lambda^{1/\alpha}x}$,
and observe that we have $\tell_\lambda(Z^\lambda)=\inf\{t>0 : Z_t^\lambda \notin \Dd^\lambda\}=\lambda \tell(Z)$,
that $\lambda^{1/\alpha}Y=\Lambda_\lambda(Z^\lambda_{\tell_\lambda(Z^\lambda)-},Z^\lambda_{\tell_\lambda(Z^\lambda) })$, that 
$(S^\lambda_t=\lambda^{1/\alpha}S_{t/\lambda})_{t\geq 0}$ is $\QQ^\lambda_{\lambda^{1/\alpha}Y}$-distributed by Step 1 
and that $\lambda^{1/\alpha}S_{(t-\sigma)/\lambda}=S^\lambda_{t-\sigma_\lambda}$. 
All this shows that $(R^\lambda_t=\lambda^{1/\alpha}R_{t/\lambda})_{t\geq 0}$
is indeed an $(\alpha,\beta)$-stable process reflected in $\cDd^\lambda$ and
issued from $\lambda^{1/\alpha}x \in \Dd$.
\end{proof}

\section{Infinitesimal generator and P.D.E.s}\label{sec:pde}

The goal of this section is to prove Theorem~\ref{ig} and Proposition~\ref{pde}.
We recall that $\QQ_x$ was introduced in Definitions~\ref{dfr1} and~\ref{dfr2} 
and that $(X^*_t)_{t\geq 0}$ is the canonical process
on $\Omega^*=\DD(\R_+,\cDd)$. Recall also that the sets $D_\alpha$ and $H_\beta$ were introduced in 
Definitions~\ref{opbd} and~\ref{test}.
The main difficulty of the section is to establish the following result.

\begin{proposition}\label{lg2}
Fix $\beta \in \{*\}\cup (0,\alpha/2)$ and suppose Assumption~\ref{as}.
Let $\varphi \in D_\alpha \cap H_\beta$.
For all $x\in \cDd$, all $t\geq 0$, we have
\begin{equation}\label{anres}
\QQ_x[\varphi(X^*_{t})]=\varphi(x)+\QQ_x\Big[\int_0^{t} \cL\varphi(X^*_s)\dr s\Big].
\end{equation}
\end{proposition}

By Theorem~\ref{mr2}, $\QQ_x[\int_0^\infty \indiq_{\{X_s^*\in \pDd\}}\dr s]=0$,
so that everything makes sense in the above identity, 
even if $\cL \varphi$ is not defined on $\pDd$. Let us admit Proposition~\ref{lg2} 
for a moment and handle the
proofs of the results announced in Subsection~\ref{ssig}.

\begin{proof}[Proof of Theorem~\ref{ig}]
For any $\varphi \in D_\alpha \cap H_\beta$, Proposition~\ref{lg2}  
tells us that for all $x \in \Dd$, all $t\geq 0$,
$$
\psi(t,x):=\frac{\QQ_x[\varphi(X_t^*)-\varphi(x)]}t=\QQ_x\Big[\frac1t\int_0^t \cL\varphi(X_s^*)\dr s\Big].
$$
Since $\cL \varphi \in C(\Dd)\cap L^\infty(\Dd)$ by definition of $D_\alpha$ and since
$X^*$ is right continuous, we immediately conclude that $\psi(t,x)$ converges bounded pointwise 
on $\Dd$ to $\cL\varphi(x)$
as $t\to 0$.
\end{proof}

\begin{proof}[Proof of Proposition~\ref{pde}] Recall that $f(t,x,\dr y)=\QQ_x(X_t^* \in \dr y)$ for all
$x\in \cDd$, all $t\geq 0$. By Theorem~\ref{mr2}, we know that for all $x \in \cDd$,
$\int_0^\infty f(t,x,\pDd) \dr t =0$ and that for all $t\geq 0$, the map $x\mapsto f(t,x,\dr y)$
is weakly continuous.
For $\varphi \in D_\alpha \cap H_\beta$, Proposition~\ref{lg2}  
tells us that for all $x \in \cDd$, all $t\geq 0$,
$\QQ_x[\varphi(X_t^*)]=\varphi(x)+\int_0^t  \QQ_x[\cL\varphi(X_s^*)]\dr s$,
which precisely gives us~\eqref{wpde}.
\end{proof}

To prove Proposition~\ref{lg2}, we first study what happens until the process reaches the boundary.

\begin{lemma}\label{lg1}
Fix $\beta \in \{*\}\cup(0,\alpha/2)$ and grant Assumption~\ref{as}.
Let $\varphi \in D_\alpha$. For all $x \in \cDd$, all $t\geq 0$, 
setting $d_0^* =\inf\{t>0 : X_t^* \in \pDd\}$,
\[
\QQ_x\big[\varphi(X^*_{t\land d_0^*})\big]= \varphi(x) 
+ \QQ_x\bigg[\int_0^{t\land d_0^*} \cL\varphi(X^*_s)\dr s\bigg].
\]
\end{lemma}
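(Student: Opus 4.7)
The boundary case $x\in\pDd$ is immediate since $d_0^*=0$ $\QQ_x$-a.s., making both sides equal to $\varphi(x)$. I therefore focus on $x\in\Dd$. By Definition~\ref{dfr2}, under $\QQ_x$ there is an ISP$_{\alpha,x}$ process $(Z_t)_{t\geq 0}$, with Poisson jump measure $N(\dr s,\dr z)$ of intensity $\dr s\,|z|^{-d-\alpha}\dr z$, such that $X^*_t=Z_t$ for $t<d_0^*=\tell(Z)$ and $X^*_{d_0^*}=\Lambda(Z_{d_0^*-},Z_{d_0^*})\in\pDd$.

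The plan is to combine the classical Dynkin formula for $Z$ with the Poisson compensation formula applied to the single exit jump at $d_0^*$. Fixing a smooth bounded extension $\tilde\varphi\in C^2_b(\R^d)$ of $\varphi|_{\cDd}$ (whose existence is the main technical point, see below), the standard Dynkin identity for the isotropic stable process provides
\[
\E_x[\tilde\varphi(Z_{t\land d_0^*})] = \varphi(x) + \E_x\Big[\int_0^{t\land d_0^*}\cL^{\mathrm{ISP}}\tilde\varphi(Z_s)\dr s\Big],
\]
where $\cL^{\mathrm{ISP}}\psi(y)=\int[\psi(y+z)-\psi(y)-z\cdot\nabla\psi(y)\indiq_{\{|z|\leq 1\}}]|z|^{-d-\alpha}\dr z$. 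Observing that $X^*_{t\land d_0^*}$ differs from $Z_{t\land d_0^*}$ only on $\{t\geq d_0^*\}$, where the former equals $\Lambda(Z_{d_0^*-},Z_{d_0^*})$ rather than $Z_{d_0^*}$, and that $d_0^*$ is $\PP_x$-a.s.\ the very first jump time at which $Z_{s-}+\Delta Z_s\notin\cDd$, the Poisson compensation formula for $N$ will rewrite the expectation of this discrepancy as
\[
\E_x\Big[\int_0^{t\land d_0^*}\!\!\int_{\{Z_s+z\notin\cDd\}}\!\!\big[\varphi(\Lambda(Z_s,Z_s+z))-\tilde\varphi(Z_s+z)\big]\frac{\dr z\,\dr s}{|z|^{d+\alpha}}\Big].
\]
Summing this correction with the Dynkin identity and invoking the algebraic equality
\[
\cL^{\mathrm{ISP}}\tilde\varphi(y)+\int_{\{y+z\notin\cDd\}}\!\!\big[\varphi(\Lambda(y,y+z))-\tilde\varphi(y+z)\big]\frac{\dr z}{|z|^{d+\alpha}} = \cL\varphi(y),\qquad y\in\Dd,
\]
valid since $\Lambda(y,y+z)=y+z$ on $\{y+z\in\cDd\}$ and $\tilde\varphi,\nabla\tilde\varphi$ coincide with $\varphi,\nabla\varphi$ on $\Dd$, produces the claimed formula after identifying $X^*_s$ with $Z_s$ for $s<d_0^*$ (the set $\{d_0^*\}$ having zero Lebesgue measure).

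The main obstacle is the construction of the $C^2_b(\R^d)$ extension $\tilde\varphi$: elements of $D_\alpha$ are only $C^2$ on $\Dd$ with a merely continuous extension to $\cDd$, so derivatives of $\varphi$ may blow up near $\pDd$ and a global smooth extension need not exist. I plan to bypass this by localization: set $\rho_n=\inf\{s\geq 0:d(Z_s,\pDd)\leq 1/n\}\land d_0^*$, so that on $[0,\rho_n)$ the process $Z$ stays in $\Dd_n:=\{y\in\Dd:d(y,\pDd)>1/n\}$, on a neighborhood of which $\varphi$ is smooth and thus admits a $C^2_c(\R^d)$ extension. Running the above Dynkin-plus-compensation argument with $\rho_n$ in place of $d_0^*$ and then letting $n\to\infty$ should conclude the proof, using $\rho_n\uparrow d_0^*$ a.s.\ (a consequence of \eqref{eer}), the continuity of $\varphi$ on $\cDd$, the uniform bound $\sup_\Dd|\cL\varphi|<\infty$ built into the definition of $D_\alpha$, and dominated convergence. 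When $\alpha\in[1,2)$, the boundary condition $\nabla\varphi\cdot\bn=0$ on $\pDd$ from Remark~\ref{dnonvide} will be crucial to keep the compensated small-jump contribution near $\pDd$ integrable in the limit.
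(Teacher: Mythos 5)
Your plan is sound and, at its core, it is the same strategy as the paper's proof: localize at the first time $Z$ comes within distance $1/n$ (the paper uses $\tell_\e(Z)=\inf\{t\geq 0: d(Z_t,\Dd^c)\leq\e\}$), obtain at that level an exact identity whose drift is already $\cL\varphi$ (the cut-at-the-boundary jump being produced by a Poisson compensation argument), and then pass to the limit using \eqref{eer} (which in fact gives $\rho_n=d_0^*$ for $n$ large a.s., not merely $\rho_n\uparrow d_0^*$), the continuity and boundedness of $\varphi$ on $\cDd$ and the bound $\sup_\Dd|\cL\varphi|<\infty$. The packaging differs: the paper observes that $(R_{t\land\tell(Z)})_{t\geq0}$ itself solves the S.D.E. driven by $N$ with jump coefficient $\bar\Lambda(R_{s-},z)-R_{s-}$, where $\bar\Lambda(r,z)=\Lambda(r,r+z)$, and applies It\^o's formula directly to $\varphi\in C^2(\Dd)\cap C(\cDd)$ (the gradient bound on $\{d(\cdot,\Dd^c)\geq\e/2\}$ making the compensated integral a true martingale), so no extension of $\varphi$ is ever needed; you instead use a smooth compactly supported extension plus a compensation-formula correction. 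Both routes work and cost about the same.

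Two points need care in your execution. First, your algebraic identity $\cL^{\mathrm{ISP}}\tilde\varphi(y)+\int_{\{y+z\notin\cDd\}}[\varphi(\Lambda(y,y+z))-\tilde\varphi(y+z)]\,|z|^{-d-\alpha}\dr z=\cL\varphi(y)$ uses $\tilde\varphi=\varphi$ on all of $\cDd$, which is precisely what the localized extension cannot achieve: $\tilde\varphi_n$ agrees with $\varphi$ only on, say, $\{y\in\cDd: d(y,\pDd)\geq 1/(2n)\}$. The fix is to let the correction run over all jumps from $\Dd_n$ landing outside that region (including the inner band $\{0<d(\cdot,\pDd)<1/(2n)\}\cap\Dd$, where $\Lambda(y,y+z)=y+z$); such jumps have size at least $1/(2n)$, so the corresponding compensator is finite, and after recombining with Dynkin the drift is exactly $\cL\varphi(Z_s)$ for $s<\rho_n$, yielding the level-$n$ identity $\E_x[\varphi(X^*_{t\land\rho_n})]=\varphi(x)+\E_x[\int_0^{t\land\rho_n}\cL\varphi(Z_s)\dr s]$, which is the paper's $\e$-level identity. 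Second, the Neumann condition of Remark~\ref{dnonvide} is not a hypothesis of Lemma~\ref{lg1} and must not be used: $D_\alpha$ is defined solely by $\sup_\Dd|\cL\varphi|<\infty$ and contains (see Remark~\ref{pdi}-(a) for the ball, including $\alpha\in[1,2)$) functions behaving like $[d(\cdot,\Dd^c)]^{\alpha/2}$ at the boundary, whose gradient blows up and which violate $\nabla\varphi\cdot\bn_x=0$. No such condition is needed: once the level-$n$ identity is in hand, the limit only uses the boundedness of $\varphi$ and $\cL\varphi$ together with $\PP_x(\rho_n=d_0^*)\to1$, and there is no residual small-jump contribution near $\pDd$ to control; if your argument genuinely required that boundary condition, it would prove the statement only on a strictly smaller class than $D_\alpha$, which would not suffice for the later use of the lemma.
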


\begin{proof} 
The case $x\in\pDd$ is obvious, since then $\QQ_x(d_0^*=0)=1$, see Step~2.1 of the proof of 
Proposition~\ref{markoff}.
Fix $x \in \Dd$ and recall Definition~\ref{dfr2}:
$$
\QQ_x\big[\varphi(X^*_{t\land d_0^*})\big] = \mathbb{E}_x\big[\varphi(R_{t\land \tell(Z)})\big]
\quad \text{and} \quad  
\QQ_x\Big[\int_0^{t\land d_0^*} \!\! \cL\varphi(X^*_s)\dr s\Big]=\E_x\Big[\int_0^{t\land \tell(Z)}\!\!
\cL\varphi(R_s)\dr s\Big],
$$ 
where $\mathbb{P}_x$ is the law of an ISP$_{\alpha,x}$ $(Z_t)_{t\geq0}$, where 
$\tell(Z) = \inf \{t> 0 : Z_t \notin\Dd\}$ and where $R_t = Z_t$ for $t\in [0,\tell(Z))$ 
and $R_{\tell(Z)} = \Lambda(Z_{\tell(Z)-}, Z_{\tell(Z)})$. Recall also that $Z$ is defined by~\eqref{eqs},
through a Poisson measure $N$ on $\R_+\times\R^d\setminus\{0\}$ with intensity measure 
$\dr s |z|^{-\alpha-d} \dr z$. We claim that for all $t\in [0,\tell(Z)]$,
$$
R_{t} = x + \int_0^{t}\int_{\{|z|\leq 1\}}\big[\bar{\Lambda}(R_\sm, z) - R_\sm\big]\tilde{N}(\dr s, \dr z) 
+ \int_0^{t}\int_{\{|z|> 1\}}\big[\bar{\Lambda}(R_\sm, z) - R_\sm\big]N(\dr s, \dr z),
$$
where $\bar\Lambda(r,z)=\Lambda(r,r+z)$. Indeed, let $U_t$ be the RHS of this expression.
We have $\bar{\Lambda}(R_\sm, z)=\bar{\Lambda}(Z_\sm, z)=Z_\sm+z=R_{\sm}+z$ for $N$-a.e. 
$(s,z) \in [0,\tell(Z))\times \R^d$ (recall that $\bar\Lambda(x,z)=x+z$ if $x+z \in \Dd$).
Thus for $t\in [0,\tell(Z))$,
$$
U_t=x + \int_0^t \int_{\{|z|\leq 1\}} z\tilde{N}(\dr s, \dr z) 
+ \int_0^{t}\int_{\{|z|> 1\}} z N(\dr s, \dr z)=Z_t=R_t.
$$
Next,
$$
U_{\tell(Z)}=U_{\tell(Z)-}+\bar{\Lambda}(R_{\tell(Z)-}, \Delta Z_{\tell(Z)}) - R_{\tell(Z)-}=
\bar{\Lambda}(Z_{\tell(Z)-}, \Delta Z_{\tell(Z)})=\Lambda(Z_{\tell(Z)-},Z_{\tell(Z)})=R_{\tell(Z)}.
$$
We now introduce $\tell_\e(Z)=\inf\{t\geq 0 : d(Z_t,\Dd^c)\leq \e\}\leq \tell(Z)$. We have
\begin{align*}
R_{t\land\tell_\e(Z)} =& x + \int_0^{t\land\tell_\e(Z)}\int_{\{|z|\leq 1\}}\big[\bar{\Lambda}(R_\sm, z) - R_\sm\big]
\tilde{N}(\dr s, \dr z) \\
&+ \int_0^{t\land\tell_\e(Z)}\int_{\{|z|> 1\}}\big[\bar{\Lambda}(R_\sm, z) - R_\sm\big]N(\dr s, \dr z).
\end{align*}
We can now apply the Itô formula with $\varphi\in C^2(\Dd)\cap C(\cDd)$ and we get
\begin{align*}
\varphi(R_{t\land\tell_\e(Z)}) = & \varphi(x) + \int_0^{t\land\tell_\e(Z)}\int_{\{|z|\leq 1\}}
\big[\varphi(\bar{\Lambda}(R_\sm, z)) - \varphi(R_\sm)\big]
\tilde{N}(\dr s, \dr z) \\
& \hskip-25pt+ \int_0^{t\land\tell_\e(Z)} \int_{\{|z|\leq 1\}}\big[\varphi(\bar{\Lambda}(R_\sm, z)) - \varphi(R_\sm) 
- \nabla \varphi (R_\sm) \cdot (\bar{\Lambda}(R_\sm, z) - R_\sm)\big]\frac{\dr z}{|z|^{d + \alpha}} \dr s \\
& \hskip-25pt+ \int_0^{t\land\tell_\e(Z)}\int_{\{|z|> 1\}}\big[\varphi(\bar{\Lambda}(R_\sm, z)) - \varphi(R_\sm)\big]
N(\dr s, \dr z) \\
=  & \varphi(x) + M_{t}^{\e} + \int_0^{t\land\tell_\e(Z)}\!\!\!\int_{\mathbb{R}^d} \!\!
\big[\varphi(\bar{\Lambda}(R_\sm, z))-\varphi(R_\sm) - \nabla \varphi(R_\sm) \cdot z \indiq_{\{|z|\leq 1\}}\big]
\frac{\dr z}{|z|^{d+\alpha}}\dr s
\end{align*}
where
\[
 M_t^{\e} = \int_0^{t\land\tell_\e(Z)}\int_{\mathbb{R}^d}\big[\varphi(\bar \Lambda(R_\sm, z)) - \varphi(R_\sm)\big]
\tilde{N}(\dr s, \dr z).
\]
Since $\varphi \in C(\cDd)\cap C^2(\Dd)$ and since $d(r,\Dd^c)\geq\e$ and $|z|<\e/2$ imply that
$\bar \Lambda(r,z)=r+z$,
$$
\sup_ {r : d(r,\Dd^c)\geq \e}|\varphi(\bar\Lambda(r,z))-\varphi(r)|\leq 
2||\varphi||_\infty \indiq_{\{|z|\geq \e/2\}} + \Big(\sup_{r : d(r,\Dd^c)\geq \e/2} |\nabla \varphi(r)| \Big)
|z| \indiq_{\{|z|<\e/2\}},
$$
which belongs to $L^2(\R^d,|z|^{-\alpha-d}\dr z)$. Thus $(M_t^\e)_{t\geq0}$ is a true martingale, and we 
conclude, recalling Definition~\ref{opbd}, that 
\begin{align*}
\E_x\big[\varphi(R_{t\land \tell_\e(Z)})\big]=&\varphi(x)+\E_x\Big[\int_0^{t\land \tell_\e(Z)} \cL\varphi(R_s)  
\dr s \Big].
\end{align*}
It then suffices to let $\e\to 0$, using that $\lim_{\e\to 0}\PP_x(\tell_\e(Z)=\tell(Z))=1$ by~\eqref{eer} 
(since $x \in \Dd$), that $\varphi \in C(\cDd)$ and that $\cL\varphi$ is bounded.
\end{proof}

We next show that our reflected process can be approximated by a similar process with a {\it finite} excursion
measure. This is a little fastidious, but it seems unavoidable 
to prove Proposition~\ref{lg2}  without heavy restrictions on the
test functions. Such restrictions would make the conclusion of 
Remark~\ref{pdi} incorrect, which would be an important issue.

\begin{definition}\label{dfd}
Grant Assumption~\ref{as}.
Let $m >0$ and let $j$ be a finite measure on $\HH$  left invariant by any isometry of $\HH$ sending 
$\be_1$ to $\be_1$. Consider the finite measure $\nn_j$ on $\cE$ defined by
\begin{equation}\label{nnj}
\nn_j(B) = \int_{x \in \HH} \PP_x\big((Z_{t\land \ell(Z)})_{t\geq 0} \in B \big)j(\dr x)\quad
\text{for all Borel subset $B$ of $\cE$.}
\end{equation}
For $x \in \pDd$, we say that $(R_t)_{t\geq 0}$ is an $(\alpha,m, j)$-stable process reflected in $\cDd$ issued 
from $x$ if there exists a filtration $(\cG_u)_{u\geq 0}$, a $(\cG_u)_{u\geq 0}$-Poisson measure 
$\Pi_j=\sum_{u\in \JJ}\delta_{(u,e_u)}$ on $\R_+\times\cE$ with intensity measure $\dr u \nn_{j}(\dr e)$,
a càdlàg $(\cG_u)_{u\geq 0}$-adapted $\pDd$-valued process $(b_u)_{u\geq 0}$ and a $(\cG_u)_{u\geq 0}$-predictable 
process $(a_u)_{u \geq 0}$ such that a.s., for all $u \geq 0$, $a_u \in \cI_{b_{u-}}$ and 
\begin{equation}\label{sdebd}
b_u = x + \int_0^u\int_{\mathcal{E}}\Big(g_{b_\vm}(a_{v}, e) - b_\vm\Big)\Pi_j (\dr v, \dr e)
\end{equation}
and such that, introducing the càdlàg increasing $\R_+$-valued $(\cG_u)_{u\geq 0}$-adapted process 
\begin{equation}\label{sdetd}
\tau_u = mu + \int_0^u\int_{\mathcal{E}}\cl_{b_\vm}(a_v, e)\Pi_j(\dr v, \dr e)
\end{equation}
and its generalized inverse $L_t=\inf\{u\geq 0 : \tau_u>t\}$,
$R_t$ is given by~\eqref{defR} for all $t\geq 0$.
We then call $\QQ^{m,j}_x$ the law of $(R_t)_{t\geq 0}$.
\end{definition}

Recall that for any $z \in \pDd$, the measurable family $(A^z_y)_{y \in \pDd}$ (or $(A_y)_{y \in \pDd}$ if $d=2$) 
such that $A_y^z \in \cI_y$ was introduced in Lemma~\ref{locglob}.
For any $z \in \pDd$ fixed,
the strong existence and uniqueness of an $(\alpha,m,j)$-stable process reflected in $\cDd$ 
such that $a_u=A^z_{b_{u-}}$ 
(or $a_u=A_{b_{u-}}$ if $d=2$) is obvious, since the Poisson measure $\Pi_j$ is finite.
The uniqueness in law (with any choice for $(a_u)_{u\geq 0}$) of the $(\alpha,m,j)$-stable process 
reflected in $\cDd$ can be checked exactly as in Step 2 of the proof of Theorem~\ref{mr1},
making use of a result similar to Lemma~\ref{ttaann} for $\Pi_j$,
which is licit since $j$ is invariant by any isometry of $\HH$ sending 
$\be_1$ to $\be_1$. 

\vip

We are now ready to specify the approximating processes. Recall that $r>0$ and $\ell_r$ were 
defined in Remark~\ref{imp4}. 
For $n \geq 1$, we introduce (recall the definition~\eqref{nnb} of $\nn_\beta$)
\begin{gather*}
j_\beta^n(\dr x) = \nn_\beta(e(0) \in \dr x, |e(0)| > 1/n) = \indiq_{\{|x| > 1/n\}}\frac{\dr x}{|x|^{1+\beta}}
\quad \text{if $ \beta \in (0,\alpha/2)$}, \\
j_*^n(\dr x) = \nn_*(e(1/n) \in \dr x, \: \ell_r(e) > 1/n).
\end{gather*}
These two measures on $\HH$ are finite and invariant by any isometry sending $\be_1$ to $\be_1$. 

\begin{lemma}\label{dtc}
Let $\beta \in \{*\}\cup(0,\alpha /2)$ and suppose Assumption~\ref{as}.  For $n\geq 1$, set $m_n = 1/n$. 
For any $x\in\pDd$, in the sense of finite-dimensional distributions,
\[
 \lim_n \QQ_x^{m_n, j_\beta^n} = \QQ_x.
\]
\end{lemma}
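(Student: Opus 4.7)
The plan is to realize both $\QQ_x$ and the approximating laws $\QQ_x^{m_n,j_\beta^n}$ on a single probability space through a natural coupling of the driving Poisson measures, and then to pass to the limit componentwise in the construction of Definition~\ref{dfr1}. For $\beta\in(0,\alpha/2)$, \eqref{nnb} gives that $\nn_{j_\beta^n}$ is exactly the restriction of $\nn_\beta$ to $\{e:|e(0)|>1/n\}$, so if $\Pi_\beta=\sum_{u\in\JJ}\delta_{(u,e_u)}$ drives the $\QQ_x$-process, the thinning $\Pi^n:=\sum_{u\in\JJ,|e_u(0)|>1/n}\delta_{(u,e_u)}$ has intensity $\dr u\,\nn_{j_\beta^n}(\dr e)$. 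For $\beta=*$, starting from $\Pi_*$, I set $\tilde e_u(s):=e_u((s+1/n)\wedge\ell(e_u))$ and $\Pi^n:=\sum_{u\in\JJ,\ell_r(e_u)>1/n}\delta_{(u,\tilde e_u)}$; applying Lemma~\ref{mark} at $\rho=1/n$, using that $\{\ell_r>1/n\}$ is $\cG_{1/n}$-measurable and that $\{\ell_r>1/n\}\subset\{\ell>1/n\}$, one checks that $\Pi^n$ has intensity $\dr u\,\nn_{j_*^n}(\dr e)$. I then fix $z\in\pDd$ outside a single $\varsigma$-null set for which Proposition~\ref{newb}-(b) applies to the SDEs driven by $\nn_\beta$ and by every $\nn_{j_\beta^n}$, and solve \eqref{sdeb}--\eqref{sdet} and \eqref{sdebd}--\eqref{sdetd} simultaneously with $a_u=A^z_{b_\vm}$, $a_u^n=A^z_{b^n_\vm}$, obtaining $b,b^n,\tau,\tau^n,L,L^n$ and $R,R^n$ on the same space.

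The crucial geometric point for $\beta=*$ is that $\ell_r(e_v)>1/n$ forces, via Remark~\ref{imp4}, $\cl_y(A,e_v)>1/n$ for every $y\in\pDd$ and $A\in\cI_y$, whence on kept excursions $g_y(A,\tilde e_v)=g_y(A,e_v)$ and $\cl_y(A,\tilde e_v)=\cl_y(A,e_v)-1/n$. Combined with Proposition~\ref{ttoopp} (which applies verbatim to any sub-measure of $\nn_\beta$ with the same constant) and the local Lipschitz property of $y\mapsto A_y^z$ on $\pDd\setminus\{z\}$, a standard Poisson-SDE Gronwall estimate gives, for $\rho^k=\inf\{u:|b_u-z|\leq 1/k\}$ and $\rho^{n,k}$ its analogue,
\[
\E\Big[\sup_{[0,T\wedge\rho^k\wedge\rho^{n,k}]}|b_u^n-b_u|\Big]\leq C_k\,T\,\varepsilon_n\,e^{C_kT},\qquad \varepsilon_n:=\int_\cE(M(e)\wedge D)\indiq_{A_n^c}(e)\,\nn_\beta(\dr e),
\]
where $A_n$ is the set of kept excursions and $D=\mathrm{diam}(\Dd)$; in both cases $\varepsilon_n\to 0$ by \eqref{se1} and dominated convergence (using Lemma~\ref{imp}-(iii) when $\beta=*$). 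Letting $k\to\infty$ via Proposition~\ref{newb}-(b) yields $\sup_{[0,T]}|b_u^n-b_u|\to 0$ and $\sup_{[0,T]}\|a_u^n-a_u\|\to 0$ in probability. For the inverse local times I split
\[
\tau_u-\tau_u^n=\!\!\sum_{v\in\JJ\cap A_n^c,v\leq u}\!\!\cl_{b_\vm}(a_v,e_v)+\!\!\sum_{v\in\JJ\cap A_n,v\leq u}\!\!\big[\cl_{b_\vm}(a_v,e_v)-\cl_{b^n_\vm}(a^n_v,e_v)\big]+\Delta_n(u),
\]
with $\Delta_n(u)=(\#\{v\in A_n,v\leq u\}-u)/n$ when $\beta=*$ and $\Delta_n(u)=-u/n$ when $\beta\in(0,\alpha/2)$; Lemma~\ref{relou}-(ii) handles the middle sum, dominated convergence the first, and for $\beta=*$ the estimate $\E[\#\{v\in A_n,v\leq u\}]/n=u\nn_*(\ell_r>1/n)/n\leq uc_*/\sqrt n\to 0$ from Lemma~\ref{qdist} controls $\Delta_n$. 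Hence $\sup_{[0,T]}|\tau_u^n-\tau_u|\to 0$ in probability.

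Since $\tau$ is a.s.\ strictly increasing with continuous inverse $L$ (Lemma~\ref{tti} and Remark~\ref{rkr2}), uniform convergence of $\tau^n$ on compacts yields $L_t^n\to L_t$ in probability for every $t\geq 0$. Fixing $t\geq 0$ and reproducing Steps~3.5--3.7 of the proof of Theorem~\ref{mr1}---namely splitting on the dichotomy $L_t\in\JJ$ vs.\ $L_t\notin\JJ$ (with $\PP(L_t\in\JJ,\tau_{L_t-}=t)=\PP(L_t\in\JJ,\tau_{L_t}=t)=0$), exploiting \eqref{defR}, and on each branch invoking the convergences $b^n\to b,\,a^n\to a,\,\tau^n\to\tau,\,L^n\to L$ together with the compensation formula and dominated convergence---one obtains $R_t^n\to R_t$ in probability. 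Performing this for each entry of an arbitrary finite tuple of times gives the claimed finite-dimensional convergence $\QQ_x^{m_n,j_\beta^n}\to\QQ_x$. The main obstacle is the bookkeeping in the $\beta=*$ case: verifying the shift identity $\cl_y(A,\tilde e_v)=\cl_y(A,e_v)-1/n$ on kept excursions, reconciling the deterministic drift $m_nu=u/n$ with the divergent-intensity random shift losses $\#\{v\in A_n,v\leq u\}/n$ via the sharp tail $\nn_*(\ell>1/n)=c_*\sqrt n$, and the uniform control in probability of discarded excursions' contributions to both $b$ and $\tau$.
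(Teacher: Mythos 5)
Your proposal is correct and follows essentially the same route as the paper's proof: the same thinning coupling for $\beta\in(0,\alpha/2)$ and shift-plus-thinning coupling (with the Markov property of $\nn_*$ at time $1/n$ and the identity $\cl_y(A,\tilde e)=\cl_y(A,e)-1/n$, $g_y(A,\tilde e)=g_y(A,e)$ on kept excursions) for $\beta=*$, the same Gronwall estimate via Proposition~\ref{ttoopp} with $a_u=A^z_{b_{u-}}$, and the same reduction to Steps~3.2--3.7 of the proof of Theorem~\ref{mr1} to get $R^n_t\to R_t$ in probability. The only variation is cosmetic: you control the $\beta=*$ time change by isolating the count term $\#\{v\in A_n,v\le u\}/n$ with the tail bound $\nn_*(\ell_r>1/n)\le c_*\sqrt n$, whereas the paper absorbs the $1/n$ shift directly into the dominated-convergence term $|(\cl_{b^n}-1/n)_+-\cl_b|\wedge 1$; both are equivalent.
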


\begin{proof} As in the proof of Theorem~\ref{mr1}, we only treat the case $d\geq 3$, the case $d=2$ being easier,
as we can use Lemma~\ref{locglob}-(a) instead of Lemma~\ref{locglob}-(b).
We fix  $\beta \in\{*\}\cup (0,\alpha/2)$ and 
$x \in \pDd$ and consider $z \in \pDd$ such that Proposition~\ref{newb}-(b) applies. 
\vip
Consider, on some probability space with a filtration $(\mathcal{G}_u)_{u\geq0}$,
a $(\mathcal{G}_u)_{u\geq0}$-Poisson measure $\Pi_\beta = \sum_{u\in \mathrm{J}}\delta_{(u, e_u)}$ 
on $\mathbb{R}_+ \times \mathcal{E}$ with intensity $\dr u\nn_\beta(\dr e)$, as well as the boundary process
$(b_u)_{u\geq 0}$ built in Proposition~\ref{newb}-(b). We set $(a_u)_{u\geq 0}=(A^z_{b_{u-}})_{u\geq 0}$, 
define
$(\tau_u)_{u\geq 0}$ by~\eqref{sdet}, introduce $(L_t=\inf\{u\geq 0 : \tau_u>t\})_{t\geq 0}$ and define 
$(R_t)_{t\geq 0}$ by~\eqref{defR}. Then, as in Step 1 of the proof of Theorem~\ref{mr1}, 
$(R_t)_{t\geq 0}$ is $\QQ_x$-distributed.

\vip
If $\beta \in (0, \alpha/2)$, 
we set $\Pi_\beta^n = \sum_{u\in \mathrm{J}}\delta_{(u, e_u)} \indiq_{\{|e_u(0)| > 1/n\}}$ and we see that $\Pi_\beta^n$ is a 
Poisson  measure with intensity $\dr u \nn_{j_\beta^n}(\dr e)$. 

\vip

If $\beta = *$, then for any $u\in\JJ$ such that 
$\ell_r(e_u) > 1/n$, we define $e_u^n = (e_u(1/n + s))_{s\geq 0} =: \theta_{1/n}(e_u)$ and we set 
$\Pi_\beta^n = \sum_{u\in \mathrm{J}}\delta_{(u, e_u^n)} \indiq_{\{\ell_r(e_u) > 1/n\}}$. Let us observe at once that
\begin{equation}\label{tbk}
\text{for all $b \in \pDd$, all $A\in \cI_b$,}
\quad\cl_b(A,e^n_u)=\cl_b(A,e_u)-\frac1n \quad \text{and} \quad g_b(A,e^n_u)=g_b(A,e_u),
\end{equation}
because $\cl_b(a,e)\geq \ell_r(e)$, see Remark~\ref{imp4}.
For any measurable 
$F:\mathbb{R}_+ \times \cE \to \mathbb{R}_+$,
\begin{align*}
\mathbb{E}\bigg[\int_0^\infty\int_{\cE} F(u, e)\Pi_\beta^n(\dr u, \dr e)\bigg]
=& \int_0^\infty\int_{\cE} F(u, \theta_{1/n}(e))\indiq_{\{\ell_r(e) > 1/n\}}\nn_*(\dr e) \dr u \\
=& \int_0^\infty\int_{\cE} \E_{e(1/n)}[F(u,(Z_{t\land \tell(Z)})_{t\geq 0})] \indiq_{\{\ell_r(e) > 1/n\}}\nn_*(\dr e) \dr u,
\end{align*}
where we used the Markov property at time $1/n$ of the measure $\nn_*$, see Lemma~\ref{mark}.
This is possible since $\ell(e) \geq \ell_r(e)$. Recalling~\eqref{nnj} and the definition of $j^n_*$,
this gives
\begin{align*}
\mathbb{E}\bigg[\int_0^\infty\int_{\cE} F(u, e)\Pi_\beta^n(\dr u, \dr e)\bigg]
=& \int_0^\infty\int_{\cE}F(u, e)\nn_{j_*^n}(\dr e) \dr u,
\end{align*}
which  shows that $\Pi_\beta^n$ has intensity $\dr u \nn_{j_*^n}(\dr e)$.
\vip
We then introduce, for each $n\geq 1$, the $(\alpha,m_n,j^n_\beta)$-process $(R^n_t)_{t\geq 0}$
built as in Definition~\ref{dfd} with the Poisson measure $\Pi_\beta^n$ (and with the choice
$a^n_u=A^z_{b^n_{u-}}$,
with the value of $z$ introduced above to build $(R_t)_{t\geq 0}$), 
and denote by $(b^n_u)_{u\geq 0}$, $(a^n_u)_{u\geq 0}$, $(\tau^n_u)_{u\geq 0}$,
$(L^n_t)_{t\geq 0}$ the processes involved in its construction.
\vip
We now show, following  line by line Steps 3.2-3.7 of the proof of Theorem~\ref{mr1}
(with $t_n=t$ and $x_n=x$), that 
for each $t\geq 0$, $R^n_t$ converges in probability to $R_t$. This will complete
the proof.
\vip
{\it Step 3.2.} For any $T>0$, $\sup_{[0,T)}(|b^n_u-b_u|+|a^n_u-a_u|)\to 0$ in probability.
\vip
We introduce $\rho^k=\inf\{u\geq 0 : |b_u-z|\leq 1/k\}$ and $\rho^k_n=\inf\{u\geq 0 : |b_u^n-z|\leq 1/k\}$
and show that $\lim_n\E[\sup_{[0,T\land \rho^k\land \rho^k_n)}|b^n_u-b_u|]=0$ 
for each $k\geq1$, the rest of the proof is similar.

\vip

If $\beta \in (0,\alpha/2)$, we have
\begin{gather*}
b_u^n=x+\int_0^u \int_\cE [g_{b_\vm^n}(A^z_{b_\vm^n},e)-b_\vm^n]\indiq_{\{|e(0)|>1/n\}} \Pi_\beta(\dr v,\dr e),\\
b_u=x+\int_0^u \int_\cE [g_{b_\vm}(A^z_{b_\vm},e)-b_\vm]\Pi_\beta(\dr v,\dr e).
\end{gather*}
Using~\eqref{tbru} and that $|g_b(A,e)-b|\leq M(e)\land D$ (where $D=$diam$(\Dd)$) 
for all $b\in\pDd$, all $A\in \cI_b$, all $e\in \cE$, 
we get, for all $u\in [0,T]$,
$$
\E\Big[\sup_{[0,u\land \rho^k\land \rho^k_n)}|b^n_u-b_u|\Big]
\leq C_k \int_0^u \E\Big[|b^n_{v\land \rho^k\land \rho^k_n}-b_{v\land \rho^k\land \rho^k_n}|\Big] \dr v
+ T \int_\cE (M(e)\land D) \indiq_{\{|e(0)|\leq 1/n\}}\nn_\beta(\dr e).
$$
Thanks to the Gronwall lemma, we conclude that
$$
\E\Big[\sup_{[0,T\land \rho^k\land \rho^k_n)}|b^n_u-b_u|\Big] \leq T e^{C_kT}\int_\cE (M(e)\land D) \indiq_{\{|e(0)|\leq 1/n\}}
\nn_\beta(\dr e),
$$
which tends to $0$ as $n\to \infty$ by~\eqref{se1} and since $|e(0)|>0$ for $\nn_\beta$-a.e. $e\in\cE$.
\vip
If $\beta=*$, we have, thanks to~\eqref{tbk},
\begin{equation*}
b_u^n=x+\int_0^u \int_\cE [g_{b_\vm^n}(A^z_{b_\vm^n},e)-b_\vm^n]\indiq_{\{\ell_r(e) > 1/n\}} \Pi_\beta(\dr v,\dr e).
\end{equation*}
We get as previously that
$$
\E\Big[\sup_{[0,T\land \rho^k\land \rho^k_n)}|b^n_u-b_u|\Big] \leq T e^{c_KT}\int_\cE (M(e)\land D) \indiq_{\{\ell_r(e)\leq 1/n\}}
\nn_\beta(\dr e),
$$
which tends to $0$ as $n\to \infty$ by~\eqref{se1} and since $\ell_r>0$ $\nn_*$-a.e. by Lemma~\ref{imp}.
\vip

{\it Step 3.3.} For any $T>0$, $\sup_{[0,T]}|\tau^n_u-\tau_u|\to 0$ in probability.
\vip
If $\beta \in (0,\alpha/2)$, we have
\begin{gather*}
\tau_u^n=\frac un+\int_0^u \int_\cE \cl_{b_\vm^n}(A^z_{b_\vm^n},e)\indiq_{\{|e(0)|>1/n\}} \Pi_\beta(\dr v,\dr e)
\quad \text{and}
\quad \tau_u=\int_0^u \int_\cE \cl_{b_\vm}(A^z_{b_\vm},e) \Pi_\beta(\dr v,\dr e).
\end{gather*}
Using the subadditivity of $r\mapsto r\land 1$ on $[0,\infty)$ and that $\cl_b(A,e)\leq \ell(e)$, we write
\begin{align*}
\E\Big[\sup_{[0,T]}|\tau^n_u-\tau_u|\land 1\Big]\leq& \frac Tn+\int_0^T \int_{\cE}\E\Big[|\cl_{b_\vm^n}(A^z_{b_\vm^n},e)
-\cl_{b_\vm}(A^z_{b_\vm},e)|\land 1 \Big] \nn_\beta(\dr e)\dr v\\
&+ T \int_\cE (\ell(e)\land 1) \indiq_{\{|e(0)|\leq1/n\}} \nn_\beta(\dr e).
\end{align*}
We conclude by dominated convergence, exactly as in Step 3.3 of the proof of Theorem~\ref{mr1}.
\vip
If now $\beta = *$, we have, thanks to~\eqref{tbk},
\[
\tau_u^n = \frac u n + \int_0^u\int_{\cE}[\ell_{b_{v-}^n}(A^z_{b_\vm^n},e) - 1/n]\indiq_{\{\ell_r(e)\leq 1/n\}}
\Pi_\beta(\dr v,\dr e),
\]
whence as previously
\begin{align*}
\E\Big[\sup_{[0,T]}|\tau^n_u-\tau_u|\land 1\Big]\leq& \frac Tn+\int_0^T \int_{\cE}\E\Big[\Big|
\Big(\cl_{b_\vm^n}(A^z_{b_\vm^n},e)-\frac1n\Big)_+-\cl_{b_\vm}(A^z_{b_\vm},e)\Big|\land 1\Big]\nn_*(\dr e) \dr v\\
&+ T \int_\cE (\ell(e)\land 1) \indiq_{\{\ell_r(e)\leq1/n\}} \nn_*(\dr e).
\end{align*}
We conclude by dominated convergence, using~\eqref{se1} and that $\ell_r>0$ $\nn_*$-a.e. for the last term.

\vip

Steps 3.4 and 3.5 are exactly the same as in the proof of Theorem~\ref{mr1}, while Steps 3.6 and 3.7
are slightly easier (since $t_n=t$ and $x_n=x$).
\end{proof}

Let us now write down some kind of Itô formula for the approximate process.

\begin{lemma}\label{itod}
Grant Assumption~\ref{as}.
Let $m >0$ and let $j$ be a finite measure on $\HH$, invariant by any isometry of $\HH$ sending 
$\be_1$ to $\be_1$. Fix $x\in \pDd$. Let $(R_t)_{t\geq 0}$ be some $(\alpha,m,j)$-stable process
reflected in $\cDd$ issued from $x$, built as in Definition~\ref{dfd} with some Poisson measure $\Pi_j$.
Consider the associated processes $(b_u)_{u \geq 0}$, $(a_u)_{u\geq 0}$,  $(\tau_u)_{u\geq 0}$, $(L_t)_{t\geq 0}$.
\vip
(a) For all $t\geq 0$, setting $\theta= \int_{\cE}(1-\exp(-\ell_r(e))\nn_j(\dr e)$, we have
$\E[L_t]\leq \theta^{-1} e^t.$
\vip
(b) For all $\varphi \in D_\alpha$ (recall Definition~\ref{opbd}), all $t\geq 0$, we have
$$
\E[\varphi(R_t)]=\varphi(x)+\E\Big[\int_0^t \indiq_{\{R_s \notin \pDd\}}\cL\varphi(R_s)\dr s\Big] + \E\Big[\int_0^{L_t} 
\cK_j\varphi(b_{u},a_{u}) \dr u\Big],
$$
where for $b\in \pDd$ and  $A \in \cI_b$,
\begin{equation}\label{ckj}
\cK_j\varphi(b,A)=\int_{\HH} [\varphi(\Lambda(b,h_{b}(A,z)))-\varphi(b)]j(\dr z) .
\end{equation}

(c) For all  $\varphi \in D_\alpha$, all $t\geq 0$, we have
$$
\Big|\E[\varphi(R_t)]-\varphi(x)-\E\Big[\int_0^t \indiq_{\{R_s \notin \pDd\}}\cL\varphi(R_s)\dr s\Big]\Big|
\leq  \theta^{-1}e^t\sup_{b\in \pDd, A \in \cI_b} |\cK_j\varphi(b,A)|.
$$
\end{lemma}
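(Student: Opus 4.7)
The plan is to establish (a), (b), (c) in sequence, with (a) an exponential-moment estimate, (b) the core Itô-type decomposition, and (c) an immediate combination of the two.

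For (a), I would start from Remark~\ref{imp4}, which gives $\cl_b(A,e)\geq\ell_r(e)$ uniformly in $b\in\pDd$ and $A\in\cI_b$, so that $\tau_u\geq \int_0^u\!\int_\cE \ell_r(e)\Pi_j(\dr v,\dr e)$. The right-hand side is a compound Poisson functional whose Laplace transform is explicit: $\E[\exp(-\int_0^u\!\int_\cE \ell_r(e)\Pi_j(\dr v,\dr e))]=\exp(-u\int_\cE(1-e^{-\ell_r(e)})\nn_j(\dr e))=e^{-\theta u}$. Thus $\E[e^{-\tau_u}]\leq e^{-\theta u}$, and Markov's inequality gives $\PP(L_t>u)=\PP(\tau_u\leq t)\leq e^{t}e^{-\theta u}$, which integrated in $u\geq 0$ yields $\E[L_t]\leq\theta^{-1}e^t$.

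For (b), the plan is to decompose $\varphi(R_t)-\varphi(x)$ path-by-path. Since $R$ is constant on each bridge between consecutive $\JJ$-times (where $R\equiv b_v\in\pDd$), all variation of $\varphi(R)$ comes from the excursion intervals, and enumerating $v\in\JJ$ with $\tau_v\leq t$ plus a partial-excursion term when $L_t\in\JJ$ and $\tau_{L_t}>t$ will give the telescopic identity
\begin{align*}
\varphi(R_t)-\varphi(x) &= \sum_{v\in\JJ,\,\tau_v\leq t}\bigl\{[\varphi(R_{\tau_\vm})-\varphi(b_\vm)]+[\varphi(b_v)-\varphi(R_{\tau_\vm})]\bigr\}\\
&\quad +\indiq_{\{L_t\in\JJ,\tau_{L_t}>t\}}\bigl\{[\varphi(R_{\tau_{L_t-}})-\varphi(b_{L_t-})]+[\varphi(R_t)-\varphi(R_{\tau_{L_t-}})]\bigr\}.
\end{align*}
Grouping entry-jump terms and adjoining the partial-excursion entry jump produces a Poisson integral over $v\in\JJ\cap[0,L_t]$. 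Since $L_t$ is a $(\cG_u)$-stopping time (because $\{L_t\leq u\}=\{\tau_u>t\}\in\cG_u$) and the integrand $\varphi(\Lambda(b_\vm,h_{b_\vm}(a_v,e(0))))-\varphi(b_\vm)$ is $(\cG_u)$-predictable, the Poisson compensation formula will yield $\E[\int_0^{L_t}\cK_j\varphi(b_v,a_v)\dr v]$, using that the pushforward of $\nn_j$ under $e\mapsto e(0)$ is $j$.

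The excursion-evolution contributions I will handle via Lemma~\ref{lg1}. The key point is that conditionally on $\cG_{\tau_\vm}$, the piece $(R_{\tau_\vm+s})_{s\in[0,\Delta\tau_v]}$ is, by translation and rotation invariance of the isotropic stable process, distributed as $(X_s^*)_{s\in[0,d_0^*]}$ under $\QQ_{R_{\tau_\vm}}$: if $h_{b_\vm}(a_v,e_v(0))\in\Dd$ then $R_{\tau_\vm}\in\Dd$ and Lemma~\ref{lg1} applied with $t=\infty$ gives $\E[\varphi(b_v)-\varphi(R_{\tau_\vm})\mid\cG_{\tau_\vm}]=\E[\int_{\tau_\vm}^{\tau_v}\cL\varphi(R_s)\dr s\mid\cG_{\tau_\vm}]$, while otherwise $R_{\tau_\vm}\in\pDd$, $\Delta\tau_v=0$, and both sides vanish trivially. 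The analogous application with finite terminal time will dispatch the partial-excursion evolution piece. Summing reconstructs $\E[\int_0^t\indiq_{\{R_s\in\Dd\}}\cL\varphi(R_s)\dr s]=\E[\int_0^t\indiq_{\{R_s\notin\pDd\}}\cL\varphi(R_s)\dr s]$, finishing (b). Part (c) follows immediately: $|\E[\int_0^{L_t}\cK_j\varphi(b_u,a_u)\dr u]|\leq \sup_{b\in\pDd,A\in\cI_b}|\cK_j\varphi(b,A)|\cdot\E[L_t]\leq \theta^{-1}e^t\sup_{b,A}|\cK_j\varphi(b,A)|$ by (a).

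The hardest part will be the bookkeeping around the partial excursion at $L_t$ when $L_t\in\JJ$ and $\tau_{L_t}>t$: its entry-jump contribution is exactly what upgrades the compensation from $\{v\in\JJ:\tau_v\leq t\}$ to $[0,L_t]$, and its evolution contribution is exactly what upgrades $\bigcup_{v\in\JJ,\tau_v\leq t}(\tau_\vm,\tau_v)$ to the full interior set $\{s\leq t:R_s\in\Dd\}$. Matching these two enhancements consistently, together with verifying predictability of the compensating integrand at the stopping time $L_t$, is where the bulk of the technical care will lie.
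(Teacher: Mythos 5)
Parts (a) and (c) of your plan are fine and essentially the paper's argument (the paper gets $\E[e^{-\tau_u}]\leq e^{-\theta u}$ by a differential inequality rather than the compound-Poisson Laplace transform, an immaterial difference), and your treatment of the entry jumps — compensation over $\{v\in\JJ: v\leq L_t\}$, predictability of $\indiq_{\{\tau_{v-}\leq t\}}$, and the pushforward $\nn_j(e(0)\in\cdot)=j$ — is exactly how $\cK_j$ arises in the paper. The gap is in the evolution terms of the excursions completed before time $t$. You apply Lemma~\ref{lg1} ``with $t=\infty$'' conditionally on the pre-excursion information to get $\E[\varphi(b_v)-\varphi(R_{\tau_{v-}})\mid\cdot]=\E[\int_{\tau_{v-}}^{\tau_v}\cL\varphi(R_s)\dr s\mid\cdot]$, and then sum this over $\{v:\tau_v\leq t\}$. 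But the event $\{\tau_v\leq t\}=\{\cl_{b_{v-}}(a_v,e_v)\leq t-\tau_{v-}\}$ depends on the duration of the excursion $e_v$ itself and is therefore not measurable with respect to your conditioning $\sigma$-field, so the unconstrained Dynkin identity cannot be inserted under that indicator. What the sum over completed excursions actually requires is the duration-constrained identity $\E_z[\indiq_{\{\tilde\ell(Z)\leq w\}}(\varphi(\Lambda(Z_{\tilde\ell(Z)-},Z_{\tilde\ell(Z)}))-\varphi(z))]=\E_z[\indiq_{\{\tilde\ell(Z)\leq w\}}\int_0^{\tilde\ell(Z)}\cL\varphi(Z_s)\dr s]$ with $w=t-\tau_{v-}$, which is false in general (take $\varphi$ $\cL$-harmonic inside $\Dd$ with non-constant boundary values: the right-hand side vanishes while the exit distribution conditioned on early exit is not the full harmonic measure, so the left-hand side does not). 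The error terms of the completed excursions are cancelled only when combined with the corresponding deviation of the partial excursion; they do not vanish termwise, so the two ``enhancements'' you flag cannot be matched the way you propose.

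The repair is precisely not to split complete and partial excursions for the evolution part: treat every $v\leq L_t$ at the common, $\cG_{v-}$-measurable horizon $w=t-\tau_{v-}$. This is what the paper does: after a single application of the compensation formula, the compensated integrand $\Gamma\varphi(w,b,A)=\int_\cE[\indiq_{\{\cl_b(A,e)\leq w\}}\varphi(g_b(A,e))+\indiq_{\{\cl_b(A,e)>w\}}\varphi(h_b(A,e(w)))-\varphi(b)]\nn_j(\dr e)$ is split as $\cK_j\varphi(b,A)$ plus a remainder $\Theta\varphi(w,b,A)$, and the finite-horizon version of Lemma~\ref{lg1}, transported through \eqref{nnj} and the isometry invariance of the stable process, identifies $\Theta\varphi(w,b,A)$ with $\int_\cE\big(\int_0^{w\wedge\cl_b(A,e)}\cL\varphi(h_b(A,e(s)))\dr s\big)\nn_j(\dr e)$; a second compensation formula then shows this equals $\E[\int_0^t\indiq_{\{R_s\notin\pDd\}}\cL\varphi(R_s)\dr s]$. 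Your telescoping can be salvaged by recombining your two evolution pieces into this single finite-horizon expression, but as written the step for completed excursions fails; note also that the correct route never needs the $t=\infty$ extension of Lemma~\ref{lg1}, which would itself require an extra justification such as $\E_x[\tilde\ell(Z)]<\infty$ together with a limiting argument.
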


All the expressions in (b) and (c) make sense, thanks to (a), since $j$ is finite, and since 
$\varphi \in D_\alpha$ implies that $\varphi$ and $\cL\varphi$ are bounded on $\Dd$.

\begin{proof} Point (c) immediately follows from (a) and (b).
\vip

We start with (a). For $u>0$, $\PP(L_t > u)\leq \PP(\tau_u \leq t)=\PP(e^{-\tau_u}\geq e^{-t})
\leq e^t \E[e^{-\tau_u}]$. But recalling~\eqref{sdetd} and that $\cl_b(a,e)\geq \ell_r(e)$, see Remark~\ref{imp4}, 
we have
$$
\frac {\dr}{\dr u} \E[e^{-\tau_u}]\leq-\int_{\cE}\E[e^{-\tau_u}(1-\exp(-\cl_{b_u}(a_u,e)))]\nn_j(\dr e)
\leq -\int_{\cE}\E[e^{-\tau_u}](1-\exp(-\ell_r(e)))\nn_j(\dr e).
$$
Hence $\E[e^{-\tau_u}]\leq \exp(-\theta u)$, so that $\PP(L_t>u)\leq e^te^{-\theta u}$ and thus
$\E[L_t] \leq e^t \int_0^\infty e^{-\theta u}\dr u = \theta^{-1}e^t$.
\vip
For (b), we write, recalling~\eqref{sdebd} and that $\Pi_j=\sum_{u\in \JJ}\delta_{(u,e_u)}$ has a finite 
intensity measure,
\begin{align}\label{tbrw}
\varphi(R_t)=&\varphi(x)+ \sum_{u \in \JJ} \indiq_{\{u<L_t\}} [\varphi(b_{u})-\varphi(b_{u-})]
+\varphi(R_t)-\varphi(b_{L_t-})\notag\\
=&\varphi(x)+\sum_{u \in \JJ} \indiq_{\{u<L_t\}} [\varphi(g_{b_{u-}}(a_u,e_u))-\varphi(b_{u-})]
+\varphi(R_t)-\varphi(b_{L_t-}).
\end{align}
But $\cl_{b_{u-}}(a_u,e_u)\leq t-\tau_{u-}$ for all $u\in \JJ$ such that $u<L_t$ (because $u<L_t$ 
implies that $\tau_u\leq t$, {\it i.e.} $\tau_{u-}+\cl_{b_{u-}}(a_u,e_u)\leq t$).
Moreover, by~\eqref{defR} (see also Remark~\ref{rkr}),
$$
R_t=\indiq_{\{L_t\notin \JJ\}}b_{L_t-}+\indiq_{\{L_t \in \JJ, \tau_{L_t>t}\}}h_{b_{L_t-}}(a_{L_t},e_{L_t}(t-\tau_{L_t-}))
+ \indiq_{\{L_t\in \JJ,\tau_{L_t}=t\}}g_{b_{L_t-}}(a_{L_t},e_{L_t}),
$$
so that, since $\tau_{L_t}=\tau_{L_t-}+\cl_{b_{L_t-}}(a_{L_t},e_{L_t})$ when $L_t \in \JJ$,
\begin{align*}
\varphi(R_t)-\varphi(b_{L_{t}-})=&\indiq_{\{L_t\in \JJ\}} \Big(
 \indiq_{\{\cl_{b_{L_t-}}(a_{L_t},e_{L_t})=t-\tau_{L_t-}\}}\varphi(g_{b_{L_t-}}(a_{L_t},e_{L_t}))\\
&\hskip1.5cm+ \indiq_{\{\cl_{b_{L_t-}}(a_{L_t},e_{L_t})<t-\tau_{L_t-}\}}\varphi(h_{b_{L_t-}}(a_{L_t},e_{L_t}(t-\tau_{L_t-})) )
-\varphi(b_{L_t-})\Big).
\end{align*}
All in all, \eqref{tbrw} rewrites as
\begin{align*}
\varphi(R_t)=\varphi(x)+ \sum_{u \in \JJ} \indiq_{\{u\leq L_t\}}
\Big(&\indiq_{\{ \cl_{b_{u-}}(a_u,e_u)\leq t-\tau_{u-}\}}\varphi(g_{b_{u-}}(a_u,e_u)) \\
&\hskip10pt +\indiq_{\{ \cl_{b_{u-}}(a_u,e_u)> t-\tau_{u-}\}}\varphi(h_{b_{u-}}(a_{u},e_{u}(t-\tau_{u-})))
-\varphi(b_{u-})\Big).
\end{align*}
The compensation formula gives us, since $L_t$ is a $(\cG_u)_{u\geq 0}$-stopping time,
$$
\E[\varphi(R_t)]=\varphi(x)+\E\Big[\int_0^{L_t} \Gamma\varphi(t-\tau_u,b_u,a_u) \dr u \Big],
$$
where for $w\geq 0$, $b \in \pDd$ and $A\in \cI_b$,
$$
\Gamma\varphi(w,b,A)=\int_{\cE} [\indiq_{\{\cl_b(A,e)\leq w\}} \varphi(g_b(A,e))+
\indiq_{\{\cl_b(A,e)> w\}} \varphi(h_b(A,e(w)))-\varphi(b)]\nn_j(\dr e).
$$
Recall the definition~\eqref{nnj} of $\nn_j$, from which we get $\nn_j(e(0)\in \dr z)=j(\dr z)$. 
Recall also from~\eqref{ckj} the definition of $\cK_j$.
We now write $\Gamma\varphi(w,b,A)=\Theta\varphi(w,b,A)+\cK_j\varphi(b,A)$, where
$$
\Theta\varphi(w,b,A)\!=\!
\int_{\cE} \![\indiq_{\{\cl_b(A,e)\leq w\}} \varphi(g_b(A,e))+
\indiq_{\{\cl_b(A,e)> w\}} \varphi(h_b(A,e(w)))-\varphi(\Lambda(b,h_b(A,e(0))))]\nn_j(\dr e).
$$
At this point, we have shown that
$$
\E[\varphi(R_t)]=\varphi(x)+\E\Big[\int_0^{L_t}\Theta\varphi(t-\tau_u,b_u,a_u)\dr u\Big] 
+\E\Big[\int_0^{L_t}\cK_j\varphi(b_u,a_u)\dr u \Big],
$$
and it remains to check that $\E[I_t]=\E[J_t]$, where
\begin{equation*}
I_t:=\int_0^{L_t}\Theta\varphi(t-\tau_u,b_u,a_u)\dr u \quad \text{and}  \quad
J_t:=\int_0^t\cL\varphi(R_s)\indiq_{\{R_s\notin\pDd\}}\dr s.
\end{equation*}
Since $(R_t)_{t\geq 0}$ is defined by~\eqref{defR} (see also Remark~\ref{rkr}), 
it holds that $R_s \in \pDd$ if and only if $s\in \cup_{u\in \JJ}(\tau_{u-},\tau_u)$. Since moreover
$\tau_{u-}\leq t$ for all $u\in \JJ \cap[0,L_t]$, we have
\begin{align*}
J_t= \sum_{u \in \JJ} \indiq_{\{u\leq L_t\}} \int_{\tau_{u-}}^{\tau_u\land t} \cL\varphi(R_s) \dr s.
\end{align*}
Again by~\eqref{defR}, we have $R_s=h_{b_{u-}}(a_u,e_u(s-\tau_{u-}))$ for all $u \in \JJ$, all
$s \in (\tau_{u-},\tau_u)$, whence
\begin{align*}
J_t
=&\sum_{u \in \JJ} \indiq_{\{u\leq L_t\}} \int_{\tau_{u-}}^{\tau_u\land t}\cL\varphi(h_{b_{u-}}(a_u,e_u(s-\tau_{u-})) ) \dr s\\
=&\sum_{u \in \JJ} \indiq_{\{u\leq L_t\}} \int_0^{(t-\tau_{u-})\land (\tau_u-\tau_{u-})}
\cL\varphi(h_{b_{u-}}(a_u,e_u(s)) ) \dr s.
\end{align*}
Since finally $\tau_u-\tau_{u-}=\cl_{b_{u-}}(a_u,e_u)$ by~\eqref{sdetd}, the compensation formula
tells us that
\begin{equation}\label{f1}
\E[J_t]=\E\Big[\int_0^{L_t}\int_\cE \Big(\int_0^{(t-\tau_u) \land \cl_{b_{u}}(a_u,e)} 
\cL\varphi(h_{b_{u}}(a_u,e(s)) ) \dr s
\Big) \nn_j(\dr e) \dr u   \Big].
\end{equation}
Next, we show that for all $w\geq 0$, all $b\in \pDd$, all $A \in \cI_b$,
\begin{equation}\label{ultimgoal}
\Theta\varphi(w,b,A)=\int_\cE \Big(\int_0^{w\land \cl_b(A,e)} \cL\varphi(h_b(A,e(s)))\dr s
\Big) \nn_j(\dr e).
\end{equation}
Inserted in the expression of $I_t$ and comparing to~\eqref{f1}, this will show that 
$\E[I_t]=\E[J_t]$ and the proof will be complete.
\vip
Recall the expression~\eqref{nnj} of $\nn_j$: we have, with $Z$ an ISP$_{\alpha,z}$ under $\PP_z$,
\begin{gather*}
\Theta\varphi(w,b,A)=\int_\HH \Delta_z\varphi(w,b,A) j(\dr z), \\
\int_\cE \Big(\int_0^{w\land \cl_b(A,e)} \cL\varphi(h_b(A,e(s)))\dr s
\Big) \nn_j(\dr e)= \int_\HH \Delta'_z\varphi(w,b,A) j(\dr z),
\end{gather*}
where 
\begin{gather*}
\Delta_z\varphi(w,b,A)\!\!=\E_z\!\Big[\!\indiq_{\{\cl_b(A,Z)\leq w\}} \varphi(g_b(A,Z_{\cdot\land\cl_b(A,Z)}))\!+\!
\indiq_{\{\cl_b(A,Z)> w\}} \varphi(h_b(A,Z_w))\!-\!\varphi(\Lambda(b,h_b(A,z)))\!\Big]\\
\Delta_z'\varphi(w,b,A)\!\!= \E_z\Big[ \int_0^{w\land \cl_b(A,Z)} \cL\varphi(h_b(A,Z_s)) \dr s\Big],
\end{gather*}
and it suffices to show that $\Delta_z\varphi(w,b,A)=\Delta_z'\varphi(w,b,A)$ for all $z\in \HH$,
$w\geq0$, $b \in \pDd$, $A \in \cI_b$.

\vip
If $h_b(A,z)\notin  \Dd$, this is obvious, since we $\PP_z$-a.s. have $\cl_b(A,Z)=0$ and 
$g_b(A,Z_{\cdot\land\cl_b(A,Z)})=\Lambda(b,h_b(A,z))$.

\vip

If $h_b(A,z)\in  \Dd$, we have, recalling the definition of $g_b(A,\cdot)$,
using that $h_b(A,Z)$ is an ISP$_{\alpha,h_b(A,z)}$ under $\PP_z$ and setting
$\tell(Z)=\inf\{t>0 : Z_t\notin \Dd\}$,
$$
\Delta_z\varphi(w,b,A)=
\E_{h_b(A,z)}\Big[\indiq_{\{\tell(Z)\leq w\}} \varphi(\Lambda(Z_{\tell(Z)-},Z_{\tell(Z)}))
+ \indiq_{\{\tell(Z)> w\}} \varphi(Z_w) - \varphi( h_b(A,z)) \Big].
$$
Recalling Definition~\ref{dfr2}, we thus find, with $d_0^*=\inf\{s>0 : X^*_s \notin \Dd\}$,
\begin{align*}
\Delta_z\varphi(w,b,A)=&\QQ_{h_b(A,z)}\Big[\varphi(X^*_{w\land d_0^*})-\varphi(h_b(A,z))\Big]
=\QQ_{h_b(A,z)}\Big[\int_0^{w\land d_0^*} \cL\varphi(X^*_s)\dr s \Big]
\end{align*}
by Lemma~\ref{lg1}.
Using one more time Definition~\ref{dfr2} and then the isotropy  of the stable process,
\begin{gather*}
\Delta_z\varphi(w,b,A)\!=\!\E_{h_b(A,z)}\Big[\int_0^{w\land\tell(Z)} \cL\varphi(Z_s)\dr s \Big]
\!=\!\E_{z}\Big[\int_0^{w\land\cl_b(A,Z)} \cL\varphi(h_b(A,Z_s))\dr s \Big]
\!=\!\Delta_z'\varphi(w,b,A)
\end{gather*} 
as desired.
\end{proof}

Finally, we can handle the

\begin{proof}[Proof of Proposition~\ref{lg2}] We consider $\varphi \in D_\alpha\cap H_\beta$ and 
split the proof into 3 steps.
\vip
{\it Step 1.} We first prove~\eqref{anres} when $x \in \pDd$ and $\beta \in (0,\alpha/2)$.
For each $n\geq 1$, we consider $m_n=1/n$ and $j^n_\beta$ as in Lemma~\ref{dtc}. By Lemma~\ref{itod}, we know
that, with  $\theta_n=\int_{\cE} (1-e^{-\ell_r(e)})\nn_{j^n_\beta}(\dr e)$,
\begin{equation}\label{anresd}
\Big|\QQ_x^{m_n,j^n_\beta}[\varphi(X^*_t)]-\varphi(x)-
\int_0^t \QQ_x^{m_n,j^n_\beta}[\indiq_{\{X_s^* \notin \pDd\}}\cL\varphi(X_s^*)]\dr s\Big|
\leq  \theta_n^{-1}e^t\sup_{b\in \pDd, A \in \cI_b} |\cK_{j^n_\beta}\varphi(b,A)|.
\end{equation}
Fix $n_0>1/r$, so that $B_d(r\be_1,r)\cap \{|z|>1/n_0\}\neq \emptyset$. Then for any $n\geq n_0$,
we have $\theta_n\geq \theta_{n_0}>0$. Indeed,
recalling the definition~\eqref{nnj} of $\nn_j$ and that of $j^\beta_n$, we find
$$
\theta_{n_0}=\int_\HH \indiq_{\{|z|>1/n_0\}} \E_z[1-\exp(-\ell_r(Z))] \frac{\dr z}{|z|^{d+\beta}},
$$
which is strictly positive since $\PP_z(\ell_r(Z)>0)=1$ for all $z \in B_d(r\be_1,r)$.
Next, comparing Definition~\ref{test}-(a) and~\eqref{ckj}, we see that 
$\cK_{j^n_\beta}\varphi(b,A)=\cH_{\beta,1/n}\varphi(b)$.
Since $\varphi \in H_\beta$, we conclude that $\lim_n \sup_{b \in \pDd,A\in \cI_b}|\cK_{j^n_\beta}\varphi(b,A)|=0$.
\vip
By Lemma~\ref{dtc} and since $\varphi\in C(\cDd)$, we see that $\lim_n \QQ_x^{m_n,j^n_\beta}[\varphi(X^*_t)]
=\QQ_x[\varphi(X^*_t)]$. Since $\cL \varphi \in C(\Dd)\cap L^\infty(\Dd)$, we also have 
$\lim_n \QQ_x^{m_n,j^n_\beta}[\cL\varphi(X^*_s)\indiq_{\{X^*_s \notin \pDd \}}]
=\QQ_x[\cL\varphi(X^*_s)]$ for all $s\geq 0$ such that $\QQ_x(X^*_s\in\pDd)=0$. Since $\QQ_x(X^*_s\in\pDd)=0$
for a.e. $s\geq 0$, see Theorem~\ref{mr2}, and since $\cL\varphi$ is bounded, 
we conclude that $\lim_n \int_0^t \QQ_x^{m_n,j^n_\beta}[\indiq_{\{X_s^* \notin \pDd\}}
\cL\varphi(X_s^*)]\dr s= \int_0^t \QQ_x[\cL\varphi(X_s^*)]\dr s$.
We thus may let $n\to \infty$ in~\eqref{anresd} and find~\eqref{anres}.

\vip

{\it Step 2.} We next prove~\eqref{anres} when $x \in \pDd$ and $\beta=*$.
As in Step 1, we start from~\eqref{anresd} and, using the very same arguments, we only have to verify that
the RHS of~\eqref{anresd} tends to $0$ as $n\to \infty$, {\it i.e.} that 
$\liminf_n \theta_n>0$ and that $\lim_n \Delta_n=0$, 
where $\Delta_n=\sup_{b \in \pDd,A\in \cI_b}|\cK_{j^n_*}\varphi(b,A)|=0$.

\vip

Let us first recall that $j^n_*(\dr x)=\nn_*(e(1/n)\in \dr x, \ell_r(e)>1/n)$, from which we get that for any 
$n \geq 1$, $\theta_n=\int_\cE (1-e^{-(\ell_r(e)-1/n)})\indiq_{\{\ell_r(e)>1/n\}} \nn_*(\dr e)$.
Using Fatou's lemma, we get
$\liminf_n \theta_n \geq \int_\cE (1-e^{-\ell_r(e)})\nn_*(\dr e)>0$, see Lemma~\ref{imp}.

\vip

Since $j^n_*$ is carried by $B_d(r\be_1,r)$ by definition
of $\ell_r$, since $h_b(A,z)=b+Az \in B_d(b+r\bn_b,r)$ for all $z\in B_d(r\be_1,r)$
and since $B_d(b+r\bn_b,r)\subset \Dd$ by definition of $r$ (see Remark~\ref{imp4}),
we have, recalling~\eqref{ckj},
$$
\cK_{j^n_*}\varphi(b,A)= \int_{B_d(r\be_1,r)}[\varphi(b+Az)-\varphi(b)]j^n_*(\dr z).
$$
Recall that 
$\Sp_*=\{\rho \in \R^d : |\rho|=1, \rho\cdot \be_1=0\}$ and that $\varsigma$ is the uniform measure on $\Sp_*$.
Since $j^n_*$ is invariant by any 
isometry of $\HH$ sending $\be_1$ to $\be_1$, it holds that
$$
j^n_*(B)=\int_{B_2(r\be_1,r)}\int_{\Sp_*} \indiq_{\{h_1 \be_1 + h_2 \rho \in B \}} \zeta(\dr \rho) g^n_* (\dr h)
$$ 
for any $B \in \cB(\HH)$, where $g^n_*$ is the image measure of $j^n_*$ by
$$
B_d(r\be_1,r)\ni z\mapsto h=(z\cdot \be_1,\sqrt{|z|^2-(z\cdot\be_1)^2})\in B_2(r\be_1,r).
$$
As a conclusion, 
\begin{align*}
\cK_{j^n_*}\varphi(b,A)=& \int_{B_2(r\be_1,r)}\int_{\Sp_*}[\varphi(b+A(h_1\be_1+h_2\rho))-\varphi(b)] 
\zeta(\dr\rho) g^n_* (\dr h)\\
=&  \int_{B_2(r\be_1,r)} |h|^{\alpha/2} \cH_{*}\varphi(b,h) g^n_* (\dr h),
\end{align*}
with $\cH_*\varphi$ introduced in Definition~\ref{test}-(b). We set
$\Psi(u)=\sup_{b \in \pDd, h \in B_2(r\be_1,r)\cap B_2(0,u)}|\cH_{*}\varphi(b,h)|$ for $u \in (0,2r)$ 
(observe that $h \in B_2(r\be_1,r)$ implies that $|h|<2r$). Since $\varphi \in H_*$, recall 
Definition~\ref{test}-(b), $\Psi$ is bounded and satisfies $\lim_{u\to 0} \Psi(u) =0$. We now write
$$
\Delta_n \leq  \int_{B_2(r\be_1,r)} |h|^{\alpha/2} \Psi (|h|) g^n_*(\dr h)=\int_{B_d(r\be_1,r)} 
|z|^{\alpha/2} \Psi (|z|) j^n_*(\dr z).
$$
Recalling the definition of $j^n_*$ and that $\ell\geq\ell_r$, we find
\begin{align*}
\Delta_n\leq&\int_{\cE} |e(1/n)|^{\alpha/2} \Psi (|e(1/n)|) \indiq_{\{e(1/n)\in B_d(r\be_1,r)\}} 
\indiq_{\{\ell_r(e)>1/n\}}\nn_*(\dr e).
\end{align*}
Using the notation of Lemma~\ref{encore1}, this gives
\[
\Delta_n\leq \int_\HH  |a|^{\alpha/2}\Psi (|a|)\indiq_{\{a \in B_d(r\be_1,r)\}} k_{1/n}(\dr a).
\]
But Lemma~\ref{encore1} tells us that $|a|^{\alpha/2} k_t(\dr a) \to c_1\delta_0(\dr a)$ weakly as $t\to 0$.
Since $\lim_{u\to 0} \Psi (u)=0$ and since $\Psi$ is bounded, we conclude
that $\lim_n \Delta_n=0$ as desired.

\vip

{\it Step 3.} We finally prove~\eqref{anres} when $x \in \Dd$. With $d_0^*=\inf\{t>0 : X^*_t\in\pDd\}$,
\begin{align*}
\QQ_x[\varphi(X_t^*)]=&\QQ_x[\varphi(X_{t\land d_0^*}^*)]+ \QQ_x[\indiq_{\{t>d_0^*\}}(\varphi(X_t^*)-
\varphi(X_{t\land d_0^*}^*))]\\
=&\varphi(x)+\QQ_x\Big[\int_0^{t\land d_0^*}\cL\varphi(X^*_s)\dr s\Big] + 
\QQ_x\Big[\indiq_{\{t>d_0^*\}} \psi(X^*_{d_0^*},t-d_0^*) \Big],
\end{align*}
where $\psi(z,r)=\QQ_z[\varphi(X^*_r)-\varphi(z)]$. We used Lemma~\ref{lg1} for the first term and the 
strong Markov property for the second one. By Steps 1 and 2, we know that 
$\psi(z,r)=\QQ_z[\int_0^r \cL\varphi(X^*_s)\dr s]$
for all $z \in \pDd$. Since $X^*_{d_0^*}\in \pDd$, we conclude, using the strong Markov property again, that
$$
\QQ_x[\varphi(X_t^*)]=\varphi(x)+\QQ_x\Big[\int_0^{t\land d_0^*}\cL\varphi(X^*_s)\dr s\Big] + 
\QQ_x\Big[\indiq_{\{t>d_0^*\}} \int_{d_0^*}^t\cL\varphi (X^*_s)\dr s \Big].
$$
The conclusion follows.
\end{proof}
 
\section{The scattering process and its Markov version}\label{scatM}

Here we prove Remark~\ref{scwd}, which shows that the scattering process
introduced in Definition~\ref{dfsp} does not explode. We then introduce a Markov variation of the (position
of) the scattering process and explain why it is sufficient to the study scaling limit of this Markov process.

\begin{proof}[Proof of Remark~\ref{scwd}]
Recall the definition of the sequence $(T_n^\e)_{n\geq1}$ from Definition~\ref{dfsp}.
First, the sequence $T_n^\e$ is a.s. strictly increasing because $\lambda(x,v,s)>0$ for all $(x,v) \in \bE$,
all $s>0$. We next show that there exist $a>0$ and $\rho>0$ (depending on $\e$) such that  
for $U\sim \rF(v)\dr v$, $W\sim \rG_+(v)\dr v$ and $E^\e\sim \text{Exp}(\e^{-1})$ independent,
\begin{gather}
\forall x\in \Dd, \quad  p_x(a)=\PP(\lambda(x,U,E^\e)\geq a)\geq \rho,\label{supc1} \\
\forall x\in \pDd,  \forall A\in \cI_x, \quad  q_x(a)=\PP(\lambda(x,AW,E^\e)\geq a)\geq \rho, \label{supc2}
\end{gather}

The domain $\Dd$ being open and $C^2$, there exists $\delta>0$ such that for all $x\in \Dd$, there
exists a ball $C_x\subset \Dd$ with radius $\delta$ such that $x \in \partial C_x$. Calling $y_x$ the center of
$C_x$, we have $C_x=B_d(y_x,\delta)$ and $\delta=|x-y_x|$. One may check that for $v\in \R^d$ and $s>0$,
$x+sv \in C_x$ if and only if $\frac{v}{|v|}\cdot \frac{y_x-x}{|y_x-x|}>\frac{|v|s}{2\delta}$, and in such a case,
$\lambda(x,v,s)=s$. Thus for all $x \in \Dd$,
$$
p_x(a)\geq \PP\Big(E^\e\geq a, \frac{U}{|U|}\cdot \frac{y_x-x}{|y_x-x|}>\frac{|U| E^\e}{2\delta}\Big)
=\PP\Big(E^\e\geq a, \frac{U}{|U|}\cdot \be_1>\frac{|U| E^\e}{2\delta}\Big)
$$
by rotational invariance of $\rF$. Since $\frac{U}{|U|}$ is independent of $|U|$,
$$
p_x(a)\geq \PP(E^\e\in (a,2a))\PP\Big(\frac{U}{|U|}\cdot \be_1>\frac 12\Big)\PP\Big(|U|<\frac\delta{2a}\Big).
$$
This last quantity does not depend on $x$ and is positive if 
$a>0$ is small enough so that ${\PP(|U|<\frac\delta{2a})>0}$, since $\frac{U}{|U|}$ is uniformly distributed
on $\Sp_{d-1}$.

\vip

The domain $\Dd$ being open and $C^2$, there is $\delta>0$ such that for all $x\in \pDd$, 
$B_d(x+\delta\bn_x,\delta)\subset \Dd$. One concludes as previously that $\inf_{x\in\pDd} q_x(a)>0$
if $a>0$ is small enough.

\vip

Recalling Definition~\ref{dfsp}, one easily deduces from~\eqref{supc1}-\eqref{supc2} 
that if setting $\cG_n^\e=\sigma(T_1^\e,\dots,T_n^\e)$, we have $\PP(T_{n+1}^\e-T_n^\e \geq a | \cG_n^\e)\geq \rho$
for all $n\geq 0$, so that $\mathbb{E}[1-e^{-(T_{n+1}^\e - T_n^\e)} | \cG_n^\e] \geq \rho(1-e^{-a})$. Hence, we get
\[
\E[e^{-T_{n+1}^\e}]= \E\Big[e^{-T_n^\e}\E\Big[e^{-(T_{n+1}^\e-T_n^\e)}\Big|\cG_n^\e\Big]\Big]
\leq (1-\rho(1-e^{-a}))\E[e^{-T_{n}^\e}].
\]
Thus $\E[e^{-T_{n}^\e}]\leq \theta^n$, where $\theta=1-\rho(1-e^{-a})<1$ 
and $\PP(T_n^\e \leq T)=\PP(e^{-T_n^\e}\geq e^{-T})
\leq e^T \theta^n$.
Hence $\lim_n T_n^\e=\infty$ a.s. and $\PP(\bM^\e_T \geq n)=\PP(T_n^\e\leq T)\leq e^T\theta^n$,
so that $\E[\bM^\e_T]<\infty$.
\end{proof}

We introduce a few notation.

\begin{notation}\label{fgh}
Grant Assumption~\ref{assump_equi}.
Fix $\e \in (0,1]$ and consider three independent random variables $U\sim \rF(v)\dr v$, $W\sim \rG_+(v)\dr v$ and 
$E^\e\sim \rm{Exp}(\e^{-1})$. We call $\rF_\e$ the density of $\e^{(1-\alpha)/\alpha}E^\e U$, carried by $\R^d$,
$\rG_\e$ the density of $\e^{(1-\alpha)/\alpha}E^\e W$, carried by $\HH$, and 
finally $\rH_\e$ the density of $(\e^{(1-\alpha)/\alpha}E^\e U,\e^{(1-\alpha)/\alpha}E^\e W)$, 
carried by $\R^d\times \HH$.
\end{notation}

The scattering process $(\bX^\e_t,\bV^\e_t)_{t\geq 0}$ is Markov, but the position process $(\bX^\e_t)_{t\geq 0}$ alone
is not. We now introduce some Markov process $(R^\e_t)_{t\geq 0}$ that will be close to $(\bX^\e_t)_{t\geq 0}$.
More precisely, as we will see in the proof of Theorem~\ref{mr}, 
$(\bX^\e_t)_{t\geq 0}$ is the linear interpolation of $(R^\e_{\lambda^\e_t})_{t\geq 0}$, where $\lambda^\e$ is a time-change
close to identity.

\begin{definition}\label{dfre}
Grant Assumptions~\ref{as} and~\ref{assump_equi}.
Fix $\e \in (0,1]$ and
consider a Poisson measure $\rM_\e$ on $\R_+\times\R^d\times \HH$, with intensity 
$\e^{-1}\dr s \rH_\e(u,w)\dr u \dr w$. 
We also consider a measurable family $(A_y)_{y \in \pDd}$ such that $A_y  \in \cI_y$
for each $y \in \pDd$ and recall that $\Lambda$ was defined in~\eqref{Lambda}. 
For $x\in \cDd$, we say that $(R^\e_t)$ is an $\e$-Markov scattering process issued from $x$ if a.s.,
for all $t\geq 0$, 
\begin{equation}\label{eq:def_r_epsilon}
R_t^\e =  x + \int_0^t\int_{\mathbb{R}^d\times\HH} [\Lambda(R_\sm^\e, R_\sm^\e+u\indiq_{\{R_\sm^\e \notin \partial\Dd\}}+ 
A_{R_\sm^\e}w\indiq_{\{R_\sm^\e \in \partial\Dd\}}) - R_\sm^\e]
\rM_\e(\dr s, \dr u,\dr w).
 \end{equation}
\end{definition}

This process is well-defined.

\begin{remark}\label{repswd}
Grant Assumptions~\ref{as} and~\ref{assump_equi} and 
fix $\e\in (0,1]$. The S.D.E.~\eqref{eq:def_r_epsilon} has, for each $x\in \cDd$, a pathwise unique
solution $(R^\e_t)_{t\geq 0}$, which is $\cDd$-valued. As solution to a time-homogeneous
well-posed S.D.E., the resulting process is strongly Markov. The law of $(R^\e_t)_{t\geq 0}$
does not depend on the choice of the family $(A_y)_{y \in \pDd}$.
\end{remark}

\begin{proof}
Note that for any $t > 0$, 
$\rM_\e([0,t]\times \mathbb{R}^d\times\HH)=\e^{-1}t< \infty$. 
Hence~\eqref{eq:def_r_epsilon} may be solved by induction on the jump times of $\rM_\e$,
implying its pathwise well-posedness. The last assertion follows from the fact that for $y \in \pDd$ and 
for $A,B \in \cI_y$, $\rG_\e \#A=\rG_\e \#B$.
\end{proof}

The following convergence result will be the object of Sections~\ref{sconvmark} and~\ref{stech}.

\begin{proposition}\label{thm:conv_markov}
Grant Assumption~\ref{as} and Assumption~\ref{assump_equi} with some $\alpha \in (0,2)$ and with 
$\kappa_\rF=1/\Gamma(\alpha+1)$. Grant either Assumption~\ref{assump:moments}-(a)
(in which case, set $\beta=*$) or Assumption~\ref{assump:moments}-(b)
(in which case $\beta \in (0,\alpha/2)$).
Consider the family
$(\QQ_x)_{x \in \cDd}$ as in Theorem~\ref{mr2}, with these values of $\alpha$ and $\beta$.
Consider, for each $\e \in (0,1]$, the solution $(R_t^\e)$ to~\eqref{eq:def_r_epsilon} starting from
some $x_\e \in \cDd$. If $x_\e \to x \in \cDd$, then
\[
 (R_t^\e)_{t\geq0} \quad \text{converges in law to} \quad \QQ_x \quad \text{as }\e\to0
\]
in $\DD(\R_+,\cDd)$, endowed with the $\JS$-topology.
\end{proposition}

We will also show the following result in Section~\ref{stech}.

\begin{lemma}\label{tmok}
Grant the same assumptions as in Proposition~\ref{thm:conv_markov}.
For $(x,v)\in \bE$ and $\e\in (0,1]$, consider the sequence $(T_n^\e)_{n\geq 1}$ introduced in 
Definition~\ref{dfsp}, set $T_0^\e=0$ and $N^\e_t=\sum_{n\geq 1} \indiq_{\{T_n^\e\leq t\}}$. For any $T>0$,
\begin{gather*}
\sup_{n=0,\dots,N^\e_T} (T^\e_{n+1}-T_n^\e) \to 0 \quad \text{in probability as $\e\to 0$,}\\
\sup_{t \in [0,T]} |\e N^\e_t-t| \to 0 \quad \text{in probability as $\e\to 0$.}
\end{gather*}
\end{lemma}

Admitting Lemma~\ref{tmok} and Proposition~\ref{thm:conv_markov}, we give the

\begin{proof}[Proof of Theorem~\ref{mr}]
We divide the proof in several steps.
\vip

{\it Step 1.} With the notation of Definition~\ref{dfsp} and with $T^\e_0=0$, we introduce the process
$\bar \bX^\e_t=\sum_{n\geq 0}\bX^\e_{T_n^\e}\indiq_{\{t \in [T_n^\e,T^\e_{n+1})\}}$, of which $\bX^\e$ is the linear 
interpolation. We check here that
it is enough to show that $(\bar \bX^\e_t)_{t\geq 0}$ converges in law to $\QQ_x$ for the $\JS$-topology.

\vip

We know from Lemma~\ref{tmok} that $\sup_{n=0,\dots,N^\e_T} |T^\e_{n+1}-T_n^\e| \to 0$
in probability as $\e\to 0$ for any $T>0$. By Lemma~\ref{j1m1}, we conclude that
if $(\bar \bX^\e_t)_{t\geq 0}$ converges in law to $\QQ_x$ in $\DD(\R_+,\cDd)$ for the
$\JS$-topology, then $(\bX^\e_t)_{t\geq 0}$ converges in law to $\QQ_x$
for the $\MS$-topology.

\vip

{\it Step 2.} We next introduce $(Y^\e_t=\bar \bX^\e_{T_1^\e+t})_{t\geq 0}$ and verify that it suffices to show that
$(Y^\e_t)_{t\geq 0}$ converges in law to $\QQ_x$ for the $\JS$-topology.

\vip

We introduce the continuous increasing time-change 
$$
\lambda_\e(t)=\frac{T^\e_2}{T^\e_2-T^\e_1}t \indiq_{\{t\leq T^\e_2-T^\e_1\}} + (T_1^\e+t)\indiq_{\{t> T^\e_2-T^\e_1\}}.
$$
By definition of the $\JS$-topology, see Appendix~\ref{sko}, it suffices to show that
$\lim_{\e\to 0} ||\lambda_\e - I||_\infty=0$ and $\lim_{\e\to 0}||Y^\e -\bar \bX^\e\circ \lambda_\e ||_\infty=0$ 
in probability. But
$||\lambda_\e - I||_\infty=T^\e_1 \leq E^\e_1\sim \text{Exp}(\e^{-1})$, which tends to 0 in probability,
while 
$$
||Y^\e -\bar \bX^\e\circ \lambda_\e ||_\infty= \sup_{t\in [0,T^\e_2-T^\e_1]} |\bar \bX^\e_{T_1^\e+t}-\bar \bX^\e_{\lambda_\e(t)}|
\leq \sup_{t\in [0,T^\e_2-T^\e_1]} |\bar \bX^\e_{T_1^\e+t}-x|+\sup_{t\in [0,T^\e_2-T^\e_1]} |\bar \bX^\e_{\lambda_\e(t)}-x|,
$$
so that
$$
||Y^\e -\bar \bX^\e\circ \lambda_\e ||_\infty\leq 2\sup_{[0,T_2^\e]}|\bar \bX^\e_{t}-x|\leq 
2(| \bX^\e_{T^\e_1}-x|\lor| \bX^\e_{T^\e_2}-x|).
$$
Looking at Definition~\ref{dfsp}, we deduce that
\begin{align*}
||Y^\e -\bar \bX^\e\circ \lambda_\e ||_\infty\leq & 2\e^{(1-\alpha)/\alpha}|v|T_1^\e+2
\e^{(1-\alpha)/\alpha}|\bV^\e_{T_1^\e}|(T_2^\e-T^\e_1)\\
\leq& 2|v|\e^{(1-\alpha)/\alpha}E^\e_1+ 2(|U_1|+|W_1|)\e^{(1-\alpha)/\alpha}E^\e_2,
\end{align*}
which goes to $0$ in probability, since $\e^{(1-\alpha)/\alpha} E^\e_1$ and $\e^{(1-\alpha)/\alpha} E^\e_2$ 
are equal in law to $\e^{1/\alpha}E_1^1$.

\vip

{\it Step 3.} We consider an i.i.d. family $(F_n^\e)_{n\geq 1}$ of $\mathrm{Exp}(\e^{-1})$-distributed random variables 
independent of everything else, set $S^\e_n=F_1^\e+\dots+F_n^\e$ and introduce the Poisson measure
$$
\rM_\e=\sum_{n\geq 1} \delta_{(S^\e_n,U_n^\e,W_n^\e)}, \quad \text{where} \quad 
U_n^\e=\e^{(1-\alpha)/\alpha}E_{n+1}^\e U_{n} \quad \text{and} \quad 
W_n^\e=\e^{(1-\alpha)/\alpha}E_{n+1}^\e W_{n}.
$$
Its intensity is $\e^{-1} \dr s \rH_\e(u,w)\dr u \dr w$, and it is independent of $\bX^\e_{T_1^\e}$
(which is a function of $x,v,E^\e_1$). We then consider the solution $Z^\e_t$ 
to~\eqref{eq:def_r_epsilon} with this Poisson measure and starting at 
$\bX^\e_{T_{1}^\e}$. We claim that $Y^\e_t=Z^\e_{\rho^\e_t}$, where $\rho^\e_t=S^\e_{M^\e_t}$, with 
$M^\e_t=\sum_{n\geq 1} \indiq_{\{T_{n+1}^\e-T_1^\e\leq t\}}$.
\vip

Looking at Definition~\ref{dfsp}, one can check that for all $n\geq 1$,
$$
\bX^\e_{T_{n+1}^\e}= \Lambda(\bX^\e_{T_{n}^\e}, \bX^\e_{T_{n}^\e}+ U_n^\e)\indiq_{\{\bX^\e_{T_{n}^\e}\in \Dd\}}
+\Lambda(\bX^\e_{T_{n}^\e}, \bX^\e_{T_{n}^\e}+ A_{\bX^\e_{T_n^\e}}W_n^\e)\indiq_{\{\bX^\e_{T_{n}^\e}\in \pDd\}}.
$$
Indeed, if first $\bX^\e_{T_{n}^\e}\in \Dd$, then $\bV^\e_{T_{n}^\e}=U_n$ and thus 
$$
\bX^\e_{T_{n+1}^\e}=\bX^\e_{T_{n}^\e}+\e^{(1-\alpha)/\alpha}U_n\lambda(\bX^\e_{T_{n}^\e},\e^{(1-\alpha)/\alpha}U_n,E^{\e}_{n+1})
=\Lambda(\bX^\e_{T_{n}^\e}, \bX^\e_{T_{n}^\e}+ \e^{(1-\alpha)/\alpha}E_{n+1}^\e U_n)
$$
by~\eqref{Lamlam}. If next $\bX^\e_{T_{n}^\e}\in \pDd$, then $\bV^\e_{T_{n}^\e}=A_{\bX^\e_{T_{n}^\e}}W_n$ and thus 
$$
\bX^\e_{T_{n+1}^\e}=\bX^\e_{T_{n}^\e}+\e^{(1-\alpha)/\alpha}A_{\bX^\e_{T_{n}^\e}}W_n
\lambda(\bX^\e_{T_{n}^\e},\e^{(1-\alpha)/\alpha}A_{\bX^\e_{T_{n}^\e}}W_n,E^{\e}_{n+1}),
$$
which equals $\Lambda(\bX^\e_{T_{n}^\e}, \bX^\e_{T_{n}^\e}+
\e^{(1-\alpha)/\alpha}E_{n+1}^\e A_{\bX^\e_{T_{n}^\e}}W_n)$ by~\eqref{Lamlam}.
\vip

Consequently, for all $n\geq 1$, since $Y^\e_{T_{n}^\e-T_{1}^\e}=\bX^\e_{T_{n}^\e}$,
$$
Y^\e_{T_{n+1}^\e-T_{1}^\e}=\Lambda( Y^\e_{T_{n}^\e-T_1^\e}, Y^\e_{T_{n}^\e-T_1^\e}+ U_n^\e)\indiq_{\{Y^\e_{T_{n}^\e-T_1^\e}\in \Dd\}}
+\Lambda(Y^\e_{T_{n}^\e-T_1^\e}, Y^\e_{T_{n}^\e-T_1^\e}+ A_{Y^\e_{T_n^\e-T_1^\e}}W_n^\e)\indiq_{\{Y^\e_{T_{n}^\e-T_1^\e}\in \pDd\}}.
$$

Next, we immediately deduce from~\eqref{eq:def_r_epsilon} and the definition of $\rM_\e$ 
that for all $n\geq 1$,
$$
Z^\e_{S^\e_{n}}=\Lambda(Z^\e_{S^\e_{n-1}}, Z^\e_{S^\e_{n-1}}+ U_{n}^\e)\indiq_{\{Z^\e_{S^\e_{n-1}}\in \Dd\}}
+\Lambda(Z^\e_{S^\e_{n-1}}, Z^\e_{S^\e_{n-1}}+ A_{Z^\e_{S^\e_{n-1}}}W_{n}^\e)\indiq_{\{Z^\e_{S^\e_{n-1}}\in \pDd\}}.
$$
Since $Y^\e_0=\bX^\e_{T_1^\e}=Z^\e_0$, we conclude that for all $n\geq 0$, we have $Y^\e_{T_{n+1}^\e-T_{1}^\e}=Z^\e_{S^\e_{n}}$.

\vip
Thus for all $n\geq 0$, all $t\in [T_{n+1}^\e-T_{1}^\e,T_{n+2}^\e-T_{1}^\e)$, it holds that
$$
Y^\e_t=Y^\e_{T_{n+1}^\e-T_{1}^\e}=Z^\e_{S^\e_{n}}=Z^\e_{\rho^\e_t},
$$ 
because by definition of $M^\e_t$, we have $M^\e_t=n$ for all $t\in [T_{n+1}^\e-T_{1}^\e,T_{n+2}^\e-T_{1}^\e)$.

\vip 

{\it Step 4.} Here we prove that for all $T>0$, $\sup_{[0,T]} |\e M^\e_t-t|\to 0$ in probability.

\vip

Recall that $M^\e_t=\sum_{n\geq 1} \indiq_{\{T_{n+1}^\e-T_1^\e\leq t\}}$ and that $N^\e_t=\sum_{n\geq 1} \indiq_{\{T_{n}^\e\leq t\}}$.
For all $t\geq 0$, we have $M^\e_t=N^\e_{T_1^\e+t}-1$, so that
$$
\sup_{[0,T]}|\e M^\e_t-t| \leq \e + \sup_{[0,T]}|\e N^\e_{t+T^\e_1}-t|
\leq \e +  \sup_{[0,T+T_1^\e]}|\e N^\e_{t}-t| + T_1^\e.
$$
Since $T_1^\e\to 0$ in probability (see Step~2) and
$\sup_{[0,T+1]}|\e N^\e_{t}-t|\to 0$ in probability by Lemma~\ref{tmok}, the conclusion follows.
\vip

{\it Step 5.} By Step~2, it suffices that $(Y^\e_t)_{t\geq 0} \to \QQ_x$ in law for the $\JS$-topology.
Since $Y^\e_t=Z^\e_{\rho^\e_t}$ by Step~3, it is enough, by Lemma~\ref{compj1},
to show that (i) $(Z^\e_t)_{t\geq 0}\to \QQ_x$ in law  for the $\JS$-topology and that (ii)
for all $T>0$, $\sup_{[0,T]}|\rho^\e_t-t|\to 0$ in probability.

\vip

First, (i) follows from Proposition~\ref{thm:conv_markov}, because $(Z^\e)_{t\geq 0}$ is a solution 
to~\eqref{eq:def_r_epsilon}  starting from $\bX^\e_{T_{1}^\e}$ and since $\bX^\e_{T_{1}^\e}
=\bar \bX^\e_{T_{1}^\e}\to x$ in probability (as seen in Step~2).

\vip
Next, we write $\sup_{[0,T]}|\rho^\e_t-t|=A^1_{T,\e}+A^2_{T,\e}$, where
$$
A^1_{T,\e}= \sup_{[0,T]} |S^\e_{M^\e_t}-\e M^\e_t| \quad \text{and} \quad A^2_{T,\e} = \sup_{[0,T]} |\e M^\e_t-t|.
$$
We know from Step~4 that $A^2_{T,\e}\to 0$ in probability. For $\eta>0$
$$
\PP(A^1_{T,\e}>\eta)\leq \PP\Big(M^\e_T>\frac {2T} \e\Big) 
+ \PP\Big(\sup_{k=0,\dots,\lfloor 2T/\e\rfloor}|S^\e_k-\e k|>\eta\Big)
\leq \PP\Big(M^\e_T>\frac {2T} \e\Big)+ \frac{4 \mathrm{Var}(S^\e_{\lfloor 2T/\e\rfloor })}{\eta^2}
$$
by Doob's $L^2$ inequality, since $(S^\e_k-\e k)_{k\geq 0}$ is a martingale. Since 
Var$(S^\e_{\lfloor 2T/\e \rfloor})=\e^2 \lfloor 2T/\e \rfloor\to 0$
and since $\PP(M^\e_T>2T/ \e) \leq \PP(|\e M^\e_T-T|>T) \to 0$ by  Step~4, we have shown (ii).
\end{proof}

\section{Convergence of the Markov scattering process}\label{sconvmark}

The goal of this section is to prove Proposition~\ref{thm:conv_markov}, assuming a few results,
see Proposition~\ref{lemma:estimates}, that will be proved in Section~\ref{stech}.
\vip

\subsection{Excursions outside the boundary}

We consider the solution $(R_t^\e)_{t\geq0}$ to~\eqref{eq:def_r_epsilon}, starting from some $x\in \pDd$. 
We show that this process
shares the same structure as $(R_t)_{t\geq0}$, {\it i.e.} it can be built by concatenating, translating,
rotating and stopping 
excursions outside the half-space, in a way resembling Definition~\ref{dfr1}.

\begin{notation}\label{neps}
Grant Assumptions~\ref{as} and~\ref{assump_equi}, fix $\e\in (0,1]$ and $c_\e>0$ to be chosen later. 
Recall that $\rF_\e$ and $\rG_\e$ were introduced
in Notation~\ref{fgh}. Pick $O_\e\sim \rG_\e(v)\dr v$, independent of a Poisson measure
$\rK_\e$ on $\R_+\times \R^d$ with intensity $\e^{-1}\dr s \rF_\e(u)\dr u$. Set 
$$
Y^\e_t=O_\e +\int_0^t \int_{\R^d} u \rK_\e(\dr s,\dr u)\quad \text{and} 
\quad \ell(Y^\e)=\inf\{t> 0 : Y^\e_t \notin \HH\}.
$$
We denote by 
$\nn^\e=\frac{c_\e}\e \rm{Law}((Y^\e_{t\land \ell(Y^\e)})_{t\geq 0})$,
which is a measure on $\cE$ with mass $\frac{c_\e}\e$.
\end{notation}

Recall that $h$, $g$, $\cl$ and the excursion measures $\nn_*$ and $\nn_\beta$
were introduced in Subsection~\ref{nono}.

\begin{lemma}\label{repsexc}
Grant Assumptions~\ref{as} and~\ref{assump_equi}, fix $\e\in (0,1]$ and $c_\e>0$. Consider $x\in \pDd$, as well as
a measurable family $(A_y)_{y \in \pDd}$ such that $A_y \in \cI_y$ for each $y \in \pDd$,
and a Poisson measure $\Pi^\e=\sum_{u \in \JJ_\e}\delta_{(u,e^\e_u)}$ 
on $\R_+\times \cE$ with intensity $\dr u \nn^\e(\dr e)$. The following equations
have pathwise-unique solutions 
\begin{gather}
b_u^\e = x + \int_0^u\int_{\mathcal{E}}[g_{b_\vm^\e}(A_{b_\sm^\e}, e) - b_\vm^\e]\Pi_\e(\dr v, \dr e),\label{sdebe} \\
\tau_u^\e =c_\e u + \int_0^u\int_{\mathcal{E}}\cl_{b_\vm^\e}(A_{b_\sm^\e},e)\Pi_\e(\dr v, \dr e),\label{sdete}
\end{gather}
and $(b^\e_u)_{u\geq 0}$ is valued in $\pDd$, while $(\tau_u^\e)_{u\geq 0}$ is increasing, valued in $\R_+$ and
$\lim_{u\to \infty} \tau^\e_u=\infty$. 
Let $(L_t^\e=\inf\{u\geq 0 : \tau^\e_u>t\})_{t\geq0}$  and set 
\begin{equation}\label{def:reflected_process_eps}
R_t^\e = 
\begin{cases}
h_{b^\e_{L^\e_t-}}(A_{b^\e_{L^\e_t-}},e^\e_{L_t^\e}(t-\tau^\e_{L_t^\e-})) & \text{if }   \tau^\e_{L_t^\e}>t,\\
g_{b_{L_t^\e -}^\e}(A_{b^\e_{L^\e_t-}},e^\e_{L_t^\e})&  \text{if } L^\e_t \in \JJ_\e \text{ and } \tau^\e_{L_t^\e}=t,\\
b_{L_t^\e}^\e &  \text{if } L^\e_t \notin \JJ_\e.
\end{cases}
\end{equation}
Then $(R^\e_t)_{t\geq 0}$ has the same law as the process built in Remark~\ref{repswd} 
(issued from $x\in \pDd$).
\end{lemma}
Existence and pathwise uniqueness for~\eqref{sdebe}-\eqref{sdete} are straightforward, since 
$\Pi_\e$ is finite of $[0,T]\times \cE$ for all $T>0$.
We clearly have $\lim_{u\to \infty} \tau^\e_u=\infty$ 
(because $\tau^\e_u\geq c_\e u$), so that $(L^\e_t)_{t\geq 0}$ is also well-defined. The process 
$(R^\e_t)_{t\geq 0}$ introduced in~\eqref{def:reflected_process_eps} is thus uniquely defined.

\begin{proof}
We fix $\e>0$ and consider the process $(R^\e_t)_{t\geq 0}$ starting at $x\in\pDd$
built in Remark~\ref{repswd} with some Poisson
measure $\rM_\e=\sum_{s\in \II_\e}\delta_{(s,v_s,w_s)}$ 
on $\R_+\times \R^d\times \HH$ with intensity $\e^{-1} \dr s \rH_\e(v,w)\dr v \dr w$:
\begin{equation}\label{rere}
R^\e_t=x+\int_0^t\int_{\R^d\times \HH} [\Lambda(R_\sm^\e, R_\sm^\e+v\indiq_{\{R_\sm^\e \in \Dd\}}+ 
A_{R_\sm^\e}w\indiq_{\{R_\sm^\e \in \partial\Dd\}}) - R_\sm^\e]\rM_\e(\dr s,\dr v,\dr w).
\end{equation}
Our goal is to 
build a Poisson measure $\Pi_\e$ as in the statement such that $(R^\e_t)_{t\geq 0}$ 
satisfies~\eqref{def:reflected_process_eps}.  Let $(\cF^\e_t)_{t\geq 0}$ be the canonical filtration
of $\rM_\e$.
\vip
{\it Step 1.} We first build the (extended) excursions $(\bar e_n)_{n\geq 1}$ of $(R^\e_t)_{t\geq 0}$ outside $\pDd$: 
we introduce the sequences of 
stopping times $(\sigma_n)_{n \geq 0}$ and $(\rho_n)_{n\geq 1}$ defined by $\sigma_0=0$
and, for any $n\geq 1$,
$$
\rho_{n} = \inf\{t\geq\sigma_{n-1}, \: \rM_\e([\sigma_{n-1}, t] \times \mathbb{R}^d \times \HH) >0\}
\quad \text{and} \quad 
\sigma_{n} = \inf\{t \geq\rho_{n}, \: R^\e_t \in \partial\Dd\}.
$$
We may have $\sigma_n=\rho_n$ if $R^\e$ jumps from $\pDd$ to $\pDd$ at time $\rho_n$
({\it i.e.} if $R^\e_{\sigma_{n-1}}+A_{R^\e_{\sigma_{n-1}}}w_{\rho_n} \notin \Dd$).
We define $(\bar e_n(t))_{t\in [0,\sigma_n-\rho_n]}$ for each $n\geq 1$ by setting, for all 
$t\in [0,\sigma_n-\rho_n]$, 
$$
\bar e_n(t)=R^\e_{\sigma_{n-1}}+A_{R^\e_{\sigma_{n-1}}}w_{\rho_n} + 
\int_{(\rho_n,\rho_n+t]}\int_{\R^d\times \HH} v \rM_\e(\dr s,\dr v,\dr w).
$$

We now check that for all $n\geq 1$,
\vip

\noindent (i) if $\sigma_n=\rho_n$, then $R^\e_{\rho_n}=\Lambda(R^\e_{\sigma_{n-1}},\bar e_n(0))$ and
$\bar e_n(0) \notin \Dd$,
\vip
\noindent (ii) if $\sigma_n>\rho_n$, then $R^\e_{t+\rho_n}=\bar e_n(t)$ for $t\in [0,\sigma_n-\rho_n)$,
while $\bar e_n(\sigma_n-\rho_n)\notin \Dd$
and $R^\e_{\sigma_n}=\Lambda(\bar e_n((\sigma_n-\rho_n)-),\bar e_n(\sigma_n-\rho_n))$.
\vip
For (i), we have $R^\e_{\rho_n}= \Lambda(R^\e_{\rho_n-},R^\e_{\rho_n-}+A_{R^\e_{\rho_n-}}w_{\rho_n})$ by~\eqref{rere} and since
$R^\e_{\rho_n-}=R^\e_{\sigma_{n-1}} \in \pDd$. Thus 
$R^\e_{\rho_n}=\Lambda(R^\e_{\sigma_{n-1}},R^\e_{\sigma_{n-1}}+A_{R^\e_{\sigma_{n-1}}}w_{\rho_n})=\Lambda(R^\e_{\sigma_{n-1}},\bar e_n(0))$.
We have $\bar e_n(0) \notin \Dd$, because else we would have $R^\e_{\sigma_n}=R^\e_{\rho_n}=\bar e_n(0)\in \Dd$.

\vip
For (ii), we have $R^\e_{\rho_n}= R^\e_{\rho_n-}+A_{R^\e_{\rho_n-}}w_{\rho_n}$ by~\eqref{rere} and since 
$R^\e_{\rho_n-}=R^\e_{\sigma_{n-1}} \in \pDd$ and $R^\e_{\rho_n}\in \Dd$ (because $\sigma_n>\rho_n$). Thus
$R^\e_{\rho_n}= R^\e_{\sigma_{n-1}}+A_{R^\e_{\sigma_{n-1}}}w_{\rho_n}= \bar e_n(0)$. Next, by definition of $\sigma_n$, we have 
$R^\e_\sm,R^\e_s \in \Dd$ for all 
$s \in \II_\e\cap(\rho_n,\sigma_n)$, so that 
$$
\Lambda(R_\sm^\e, R_\sm^\e+v_s\indiq_{\{R_\sm^\e \in \Dd\}}+ A_{R_\sm^\e}w_s\indiq_{\{R_\sm^\e \in \partial\Dd\}})-R_\sm^\e
=\Lambda(R_\sm^\e, R_\sm^\e+v_s)-R^\e_\sm=v_s, 
$$
and thus for $t\in (0,\sigma_n-\rho_n)$, recalling~\eqref{rere},
$$
R^\e_{\rho_n+t}=R^\e_{\rho_n}+\int_{(\rho_n,\rho_n+t]}\int_{\R^d\times \HH} v \rM_\e(\dr s,\dr v,\dr w)=\bar e_n(t).
$$
Finally, $R^\e_{\sigma_n}=\Lambda(R^\e_{\sigma_n-},R^\e_{\sigma_n-}+v_{\sigma_n})$ because $R^\e_{\sigma_n-} \in \Dd$.
But $R^\e_{\sigma_n-}=\bar e_n((\sigma_n-\rho_n)-)$ from the above discussion, while 
$R^\e_{\sigma_n-}+v_{\sigma_n}=\bar e_n((\sigma_n-\rho_n)-)+v_{\sigma_n}=\bar e_n(\sigma_n-\rho_n)$ 
by definition of $\bar e_n$. Thus $R^\e_{\sigma_n}=\Lambda(\bar e_n((\sigma_n-\rho_n)-),\bar e_n(\sigma_n-\rho_n))$.
Notice that $\bar e_n(\sigma_n-\rho_n)\notin \Dd$ because else, we would have 
$R_{\sigma_n}=\bar e_n(\sigma_n-\rho_n)\in \Dd$.

\vip

{\it Step 2.} We now rotate/translate/complete the excursions $\bar e_n$ to get some i.i.d. excursions
outside the half-space. We recall that the marginals of $\rH_\e$ are $\rF_\e$ and $\rG_\e$, see Notation~\ref{fgh}.

\vip

We consider an i.i.d. sequence of Poisson measures
$(\mathrm{K}_n)_{n\geq0}$ on $\mathbb{R}_+ \times \mathbb{R}^d$ with intensity $\e^{-1}\dr s\mathrm{F}_\e(\dr z)$, 
independent of everything else. We define $\cG_n=\sigma(\mathrm{K}_n)$ and, for all $n\geq 1$, all $t\geq 0$,
$$
Y^n_t=w_{\rho_n} + \int_{(\rho_n,(\rho_n+t)\land \sigma_n]}\int_{\R^d\times \HH} A_{R^\e_{\sigma_{n-1}}}^{-1} v \rM_\e(\dr s,\dr v,\dr w)
+\indiq_{\{t>\sigma_n-\rho_n\}} \int_0^{t-\sigma_n} v \mathrm{K}_n(\dr s,\dr v).
$$
Since $w_{\rho_n}\sim \rG_\e$ by definition of $\rho_n$ and 
since $\rF_\e$ is rotationally invariant, the process $(Y^n_t)_{t\geq 0}$ has
the same law $\mu_\e$ as $(Y^\e_t)_{t\geq 0}$ introduced in Notation~\ref{neps}.
Moreover, $(Y^n_t)_{t\geq 0}$ 
is $\cF_{\sigma_n}\lor\cG_n$-measurable and independent of $\cF_{\rho_n-}$.

\vip

We introduce $\ell(Y^n)=\inf\{t> 0 : Y^n_t \notin \HH\}$. It holds that $\ell(Y^n)\geq \sigma_n-\rho_n$, 
because for all $t\in [0,\sigma_n-\rho_n)$, $Y^n_t=A_{R^\e_{\sigma_{n-1}}}^{-1}(\bar e_n(t)-R^\e_{\sigma_{n-1}})$
with $\bar e_n(t)=R^\e_{\rho_n+t} \in \Dd$ by Step~1-(ii) and because for all $y\in \pDd$, all $z\in \Dd$,
$A_y^{-1}(z-y) \in \HH$ by convexity of $\Dd$.

\vip

We introduce $e_n(t)= Y^n_{t\land \ell(Y^n)}$. Again, $(e_n(t))_{t\geq 0}$
is $\cF_{\sigma_n}\lor\cG_n$-measurable and independent of $\cF_{\rho_n-}$ (and thus of $\cF_{\sigma_{n-1}}$)  
and has the same law $\nu_\e$
as  $(Y^\e_{t\land \ell(Y^\e)})_{t\geq 0}$ introduced in Notation~\ref{neps}.
\vip
For each $n\geq 0$, by definition of $\sigma_{n-1}$ and of $\rho_n$ and since $\rM_\e$ is Poisson, 
$\rho_n-\sigma_{n-1}$ is Exp$(\e^{-1})$-distributed, $\cF_{\rho_n-}$-measurable (and thus $\cF_{\sigma_n}$-measurable) and
independent of $\cF_{\sigma_{n-1}}$.
\vip
For each $n\geq 2$,  $((e_n(t))_{t\geq 0},\rho_n-\sigma_{n-1})$ is independent of $\cF_{\sigma_{n-1}}$ and
thus of the vector $((e_1(t))_{t\geq 0},\rho_1-\sigma_0, \ldots, (e_{n-1}(t))_{t\geq 0},\rho_{n-1}-\sigma_{n-2})$. 
Furthermore,
$(e_n(t))_{t\geq 0}$ is independent of $\cF_{\rho_n-}$ and thus of $\rho_n-\sigma_{n-1}$.

\vip
All in all, the whole family $((e_n(t))_{t\geq0}, \rho_n - \sigma_{n-1})_{n\geq 1}$ 
is independent and for each $n\geq1$, $(e_n(t))_{t\geq 0}\sim \nu_\e$
and $\rho_n-\sigma_{n-1}\sim \text{Exp}(\e^{-1})$.

\vip

{\it Step 3.} We set $T_0=0$ and, for $n\geq1$,
$T_n = c_\e^{-1}\sum_{k=1}^n(\rho_k - \sigma_{k-1})$. By Step~2 and since e.g. 
$T_1=c_{\e}^{-1}(\rho_1 - \sigma_{0})\sim \text{Exp}(\frac{c_\e}\e)$, the measure
\[
\Pi_\e = \sum_{n\geq1}\delta_{(T_n, e_n)}
\]
is Poisson on $\mathbb{R}_+ \times \mathcal{E}$ with intensity $\dr u  \nn_\e(\dr e)$ 
(because $\nn_\e=\frac{c_\e}\e\nu_\e$, see Notation~\ref{neps}), with
set of jump times $\JJ_\e=\{T_n : n\geq 1\}$.
Note that we may write $\Pi_\e=\sum_{s \in \JJ_\e}\delta_{(s,e^\e_s)}$ as in the statement by setting
$e^\e_{T_n}=e_n$ for each $n\geq 1$.

\vip
For all $n\geq1$,  $\bar e_n(t)=R^\e_{\sigma_{n-1}}+ A_{R^\e_{\sigma_{n-1}}}e_n(t)$ for all $t\in [0,\sigma_n-\rho_n]$:
since $\ell(Y^n)\geq \sigma_n-\rho_n$,
\begin{align*}
R^\e_{\sigma_{n-1}}+ A_{R^\e_{\sigma_{n-1}}}e_n(t)=&R^\e_{\sigma_{n-1}}+ A_{R^\e_{\sigma_{n-1}}}Y^n_t\\
=&R^\e_{\sigma_{n-1}}+ A_{R^\e_{\sigma_{n-1}}}w_{\rho_n} 
+\int_{(\rho_n,\rho_n+t]}\int_{\R^d\times \HH} v \rM_\e(\dr s,\dr v,\dr w)=\bar e_n(t).
\end{align*}

{\it Step 4.} We now check that for all $n\geq 1$, we have 
\vip
\noindent (i) $g_{R^\e_{\sigma_{n-1}}}(A_{R^\e_{\sigma_{n-1}}},e_{n})=R^\e_{\sigma_n}$, 
\vip
\noindent (ii) $\cl_{R^\e_{\sigma_{n-1}}}(A_{R^\e_{\sigma_{n-1}}},e_{n})=\sigma_{n}-\rho_n$,
\vip
\noindent (iii) $R^\e_t=h_{R^\e_{\sigma_{n-1}}}(A_{R^\e_{\sigma_{n-1}}},e_n(t-\rho_n))$ for all $t\in [\rho_n,\sigma_n)$ 
if $\sigma_n>\rho_n$.

\vip

We start with (iii), writing, for $t \in [\rho_n,\sigma_n)$,
\begin{align*}
h_{R^\e_{\sigma_{n-1}}}(A_{R^\e_{\sigma_{n-1}}},e_n(t-\rho_n))=R^\e_{\sigma_{n-1}}+A_{R^\e_{\sigma_{n-1}}}e_n(t-\rho_n)
=\bar e_n(t-\rho_n)=R^\e_t.
\end{align*}
We used the definition of $h$, the link between $e_n$ and $\bar e_n$ (see Step~3) and Step~1-(ii).

\vip

We next check (ii) when $\sigma_n>\rho_n$. In this case, $R^\e_t \in \Dd$ for all $t\in [\rho_n,\sigma_n)$ 
by definition of $\rho_n$ and $\sigma_n$, so that
(iii) implies that $\cl_{R^\e_{\sigma_{n-1}}}(A_{R^\e_{\sigma_{n-1}}},e_{n})\geq \sigma_{n}-\rho_n$, and it remains to verify
that $h_{R^\e_{\sigma_{n-1}}}(A_{R^\e_{\sigma_{n-1}}},e_{n}(\sigma_n-\rho_n)) \notin \Dd$. By definition of $h$ and 
by the link between $e_n$ and $\bar e_n$, we have 
$h_{R^\e_{\sigma_{n-1}}}(A_{R^\e_{\sigma_{n-1}}},e_{n}(\sigma_n-\rho_n))=\bar e_n(\sigma_n-\rho_n) \notin \Dd$ by see Step 1-(ii).

\vip

We now prove (i) when $\sigma_n>\rho_n$: it follows from the definition of $g$ and point (ii) that 
\begin{align*}
g_{R^\e_{\sigma_{n-1}}}(A_{R^\e_{\sigma_{n-1}}},e_{n})=&
\Lambda(h_{R^\e_{\sigma_{n-1}}}(A_{R^\e_{\sigma_{n-1}}},e_{n}((\sigma_n-\rho_n)-),
h_{R^\e_{\sigma_{n-1}}}(A_{R^\e_{\sigma_{n-1}}},e_{n}(\sigma_n-\rho_n)))\\
=& \Lambda(\bar e_n((\sigma_n-\rho_n)-),\bar e_n(\sigma_n-\rho_n)),
\end{align*}
which equals $R^\e_{\sigma_n}$ by Step~1-(ii).

\vip

We finally check (i) and (ii) when $\sigma_n=\rho_n$. We have $\cl_{R^\e_{\sigma_{n-1}}}(A_{R^\e_{\sigma_{n-1}}},e_{n})=0$, because
$h_{R^\e_{\sigma_{n-1}}}(A_{R^\e_{\sigma_{n-1}}},e_{n}(0))=\bar e_n(0) \notin \Dd$, see Step~1-(i).
This also implies that $g_{R^\e_{\sigma_{n-1}}}(A_{R^\e_{\sigma_{n-1}}},e_{n})=\Lambda(R^\e_{\sigma_{n-1}},\bar e_n(0))$,
which equals $R^\e_{\rho_n}$ by Step~1-(i).

\vip

{\it Step 5.} We consider $(b^\e_u)_{u\geq 0}$, $(\tau^\e_u)_{u\geq 0}$, $(L^\e_t)_{t\geq 0}$ as in the statement, defined
with $\Pi_\e$ introduced in Step~3, and we show that $(R^\e_t)_{t\geq 0}$ satisfies~\eqref{def:reflected_process_eps}.

\vip

{\it Step 5.1.} We first check by induction that for all $n\geq 0$, we have (convention: $\sum_1^{0}=0$)
\begin{equation}\label{tdrec}
\text{for all} \; u \in [T_n,T_{n+1}),\quad b^\e_u=R^\e_{\sigma_n} \quad \text{and} \quad
\tau^\e_u=c_\e u+\sum_{k=1}^{n} (\sigma_{k}-\rho_k).
\end{equation}
Recalling~\eqref{sdebe}-\eqref{sdete},
we have $b^\e_u=x=R^\e_0=R^\e_{\sigma_0}$ and $\tau^\e_u=c_\e u$ for all $u \in [0,T_1)$,
which shows~\eqref{tdrec} when $n=0$.
If now~\eqref{tdrec} holds true for some $n\geq 0$, then by~\eqref{sdebe}, for all 
$u \in [T_{n+1},T_{n+2})$,
$$
b^\e_u=b^\e_{T_{n+1}}=g_{b^\e_{T_n}}(A_{b^\e_{T_n}},e_{n+1})=g_{R^\e_{\sigma_n}}(A_{R^\e_{\sigma_n}},e_{n+1})=R^\e_{\sigma_{n+1}}
$$ 
by the induction hypothesis and Step~4. Moreover, by \eqref{sdete}, for all $u \in [T_{n+1},T_{n+2})$,
$$
\tau^\e_u=\tau^\e_{T_{n+1}}+c_\e(u-T_{n+1})=\tau^\e_{T_{n+1}-}+\cl_{b^\e_{T_{n}}}(A_{b^\e_{T_{n}}},e_{n+1})
+c_\e(u-T_{n+1}).
$$
By the induction hypothesis and since $\cl_{b^\e_{T_{n}}}(A_{b^\e_{T_{n}}},e_{n+1})=
\cl_{R^\e_{\sigma_n}}(A_{R^\e_{\sigma_n}},e_{n+1})=\sigma_{n+1}-\rho_{n+1}$ by Step~4, this gives
$$
\tau^\e_u=c_\e T_{n+1}+\sum_{k=1}^{n} (\sigma_{k}-\rho_k) + (\sigma_{n+1}-\rho_{n+1})+c_\e(u-T_{n+1})
=c_\e u + \sum_{k=1}^{n+1} (\sigma_{k}-\rho_k).
$$

{\it Step 5.2.} We now check, using~\eqref{tdrec} and recalling that 
$T_n = c_\e^{-1}\sum_{k=1}^n(\rho_k - \sigma_{k-1})$ and that
$(L_t^\e)_{t\geq0}$ is the right-continuous inverse of $(\tau_u^\e)_{u\geq0}$, that
for all $n\geq 1$,
\begin{equation}\label{ftl}
L_t^\e = T_{n-1}+c_\e^{-1}(t-\sigma_{n-1}) \quad \text{for all} \quad t\in [\sigma_{n-1},\rho_n) \quad \text{and} 
\quad L_t^\e = T_n \quad \text{for all} \quad t \in [\rho_n, \sigma_n).
\end{equation}

\noindent We have $\tau^\e_u=c_\e u$ for $u\in [0,T_1)$, whence $L^\e_t=c_\e^{-1}t=T_0+c_\e^{-1}(t-\sigma_0)$ 
for $t\in [0,c_\e T_1)=[\sigma_0,\rho_1)$.

\vip
\noindent We have  $\tau^\e_{T_1}=c_\e T_1+\sigma_1-\rho_1=\sigma_1$, whence $L^\e_t=T_1$ for $t\in [\rho_1,\sigma_1)$.
\vip
\noindent We have  $\tau^\e_u=c_\e u+\sigma_1-\rho_1$ for $u\in [T_1,T_2)$, whence
$L^\e_t=c_\e^{-1}(t-\sigma_1+\rho_1)=T_1+c_\e^{-1}(t-\sigma_1)$ for 
$t\in [c_\e T_1+\sigma_1-\rho_1,c_\e T_2+\sigma_1-\rho_1)=[\sigma_1,\rho_2)$.
\vip
\noindent We have  $\tau^\e_{T_2}=c_\e T_2+\sigma_1-\rho_1+\sigma_2-\rho_2=\sigma_2$, whence $L^\e_t=T_2$ 
for $t\in [\rho_2,\sigma_2)$.
\vip
\noindent Etc.

\vip
{\it Step 5.3.} We fix $t \geq 0$ and check~\eqref{def:reflected_process_eps}. If $t=0$,
then $L^\e_0=0 \notin\JJ_\e$ and $R^\e_0=x=b^\e_0$, which agrees with the third line 
of~\eqref{def:reflected_process_eps}. If $t>0$, there is $n\geq 1$
such that either $t\in(\sigma_{n-1},\rho_n)$ or $t\in[\rho_{n}, \sigma_{n})$ or $t=\sigma_n$.

\vip
\noindent $\bullet$ If $t\in(\sigma_{n-1},\rho_n)$, then $R^\e_t=R^\e_{\sigma_{n-1}}$ by definition of
$\rho_n$. Moreover, $L_t^\e =T_{n-1}+c_\e^{-1}(t-\sigma_{n-1}) \in (T_{n-1},T_n)$ by~\eqref{ftl}, 
so that $L^\e_t \notin \JJ_\e$ and
$b^\e_{L^\e_t}= R^\e_{\sigma_{n-1}}$ by~\eqref{tdrec}. Thus $R^\e_t=b^\e_{L^\e_t}$, which 
agrees with the third line 
of~\eqref{def:reflected_process_eps}.

\vip

\noindent $\bullet$ If $t \in[\rho_{n}, \sigma_{n})$ (which implies that $\sigma_n>\rho_n$),
then $L^\e_t=T_n \in \JJ_\e$ by~\eqref{ftl}, so that~\eqref{tdrec} tells us that $b^\e_{L^\e_t-}=R^\e_{\sigma_{n-1}}$,
that $\tau_{L^\e_t-}=c_\e T_n+\sum_{k=1}^{n-1} (\sigma_{k}-\rho_k)=\rho_n$ and finally that
$\tau_{L^\e_t}=c_\e T_n+\sum_{k=1}^{n} (\sigma_{k}-\rho_k)=\sigma_n$. 
Thus $\tau^\e_{L^\e_t}>t$, and to agree with the first line of~\eqref{def:reflected_process_eps},
we have to verify that $R^\e_t=h_{R^\e_{\sigma_{n-1}}}(A_{R^\e_{\sigma_{n-1}}},e_n(t-\rho_n))$. This has
been seen in Step 4.
\vip

\noindent $\bullet$ If $t=\sigma_{n}$, then as previously, $L^\e_t=T_n \in \JJ_\e$, $b^\e_{L^\e_t-}=R^\e_{\sigma_{n-1}}$,
$\tau_{L^\e_t-}=\rho_n$ and $\tau_{L^\e_t}=\sigma_n$. Thus $\tau^\e_{L^\e_t}=t$, and to agree with the second line 
of~\eqref{def:reflected_process_eps},
we have to verify that $R^\e_{\sigma_n}=g_{R^\e_{\sigma_{n-1}}}(A_{R^\e_{\sigma_{n-1}}},e_n)$. This has
been seen in Step 4.
\end{proof}

\subsection{Estimates on the excursion measures}

Here we give some crucial estimates on the measure $\nn^\e$ that will proved in Section~\ref{stech}.
Recall that $\ell:\cE\to \R_+$ and $M : \cE\to \R_+$ were defined in Subsection~\ref{nono}.
For $r>0$ and $e\in \cE$, we recall that $\ell_r(e)=\inf\{t>0 : e(t) \notin B_d(r\be_1,r)\}$. 
The constant $\chi_\rG$ is introduced in Definition~\ref{def:chig} when $\beta=*$ and we set
$\chi_\rG=1/(2\kappa_\rG \Gamma(\beta+1))$ when $\beta\in (0,\alpha/2)$, 
with $\kappa_\rG$ defined in Assumption~\ref{assump:moments}-(b).

\begin{proposition}\label{lemma:estimates}
Grant Assumption~\ref{assump_equi} with some $\alpha\in (0,2)$ and with $\kappa_{\rF}=1/\Gamma(\alpha+1)$ 
and either Assumption~\ref{assump:moments}-(a), in which case set $\beta=*$, $c_\e= \chi_{\rG} \, \e^{1/2}$ 
and $\theta_0=1/2$, or Assumption~\ref{assump:moments}-(b) 
with some $\beta\in (0,\alpha/2)$, in which case set $c_\e= \chi_{\rG} \,\e^{1-\beta / \alpha}$ and 
$\theta_0=\beta/\alpha$.
Consider $\nn^\e$ as in Notation~\ref{neps} and $\nn_\beta$ as in Subsection~\ref{nono}.
For $\delta\in (0,1]$, define 
\begin{gather}
\nn_*^\delta (\dr e)=\indiq_{\{\ell(e)>\delta\}}\nn_*(\dr e) \quad \text{and} \quad
\nn^{\e,\delta} (\dr e)=\indiq_{\{\ell(e)>\delta\}}\nn^\e(\dr e) \quad \text{under Assumption~\ref{assump:moments}-(a)},
\label{nd1}\\
\nn_\beta^\delta (\dr e)=\indiq_{\{|e(0)|>\delta\}}\nn_*(\dr e) \quad \text{and} \quad
\nn^{\e,\delta} (\dr e)=\indiq_{\{|e(0)|>\delta\}}\nn^\e(\dr e) \quad \text{under Assumption~\ref{assump:moments}-(b)}.
\label{nd2}
\end{gather}
For any $\delta \in (0,1]$, we have
\begin{equation}\label{aan}
\nn_\beta^\delta(\cE)+\sup_{\e\in (0,1]} \nn^{\e,\delta} (\cE)<\infty.
\end{equation}
For any $\theta \in (0,\theta_0)$, we have
\begin{gather}
\sup_{\e\in(0,1]}\int_{\mathcal{E}}[M(e) \wedge 1 + \ell(e) \wedge [\ell(e)]^\theta]\nn^\e(\dr e) < \infty, \label{j1s}\\
\lim_{\delta\to0} \limsup_{\e\to 0} \int_{\mathcal{E}}\big[M(e) \wedge 1 + \ell(e) \wedge 1\big]
(\nn^\e-\nn^{\e,\delta})(\dr e) = 0. \label{j2s}
\end{gather}
For all $\delta>0$, all $\phi:\cE\to \R$ bounded and continuous for the $\JS$-topology,
\begin{equation}\label{j3s}
\lim_{\e\to 0} \int_\cE \phi(e)  \nn^{\e,\delta} (\dr e) = \int_\cE \phi(e)
\nn_\beta^\delta (\dr e).
\end{equation}
If $\beta=*$, for all $\delta>0$, all $r>0$,
\begin{equation}\label{j4s}
\lim_{\eta\to 0}  \limsup_{\e\to 0} \nn^{\e,\delta}(\ell_r<\eta)=0.
\end{equation}
In any case (when $\beta=*$ or $\beta \in (0,\alpha/2)$), for all $r>0$,
\begin{equation}\label{j5s}
\lim_{\eta\to 0}\liminf_{\e\to 0}\nn^{\e}(\ell_r>\eta)=\infty.
\end{equation} 
When  $\beta \in (0,\alpha/2)$, for all $a>0$,
\begin{equation}\label{j6s}
\lim_{\delta\to 0} \limsup_{\e\to 0}\nn^{\e}(\{e \in \cE : |e(0)|\leq \delta, \ell(e)>a\})=0.
\end{equation}
\end{proposition}

\subsection{Some continuity properties}

We first verify that the exit time does not charge points (except possibly $0$ when $\beta\neq *$) 
under the excursion measure.

\begin{lemma}\label{clac}
Grant the same assumptions and notations as in Proposition~\ref{lemma:estimates} and suppose
Assumption~\ref{as}.
For all $\e\in(0,1]$, for all $t>0$, for all $x \in \pDd$, for all $A\in \cI_x$, we have
$\nn^\e(\{e \in \cE : \cl_x(A,e)=t\})=0$.
\end{lemma}

\begin{proof}
Recall that $\nn^\e=\frac{c_\e}{\e}\text{Law}((Y^\e_{t\land \ell(Y^\e)})_{t\geq 0})$, 
with $(Y^\e_t)_{t\geq 0}$ introduced in 
Notation~\ref{neps}. Let $S_\e$ be the set of jump times of the Poisson measure $\rK_\e$.
Since $(Y^\e_t)_{t\geq 0}$ is piecewise constant with set of jump times $S_\e$, we have
$\cl_x(A,(Y^\e_{t\land \ell(Y^\e)})_{t\geq 0}) \in \{0\}\cup S_\e$. But 
$\PP(t \in S_\e)=0$ for all $t\geq 0$.
\end{proof}

We next check some continuity properties.

\begin{lemma}\label{lemma:cont_g_ell}
Grant the same assumptions and notations as in Proposition~\ref{lemma:estimates} and suppose
Assumption~\ref{as}. Consider a measurable family $(A_y)_{y\in \pDd}$ such that
$A_y \in \cI_y$ for each $y\in \pDd$.
\vip
(i) Fix $\delta>0$, $z\in \pDd$ and a sequence $(z_n)_{n\geq 1}$ such that
$z_n \in \pDd$ and $\lim_n z_n =z$. For any continuous bounded function
$\varphi:\R^d\times \mathbb{R}_+\to \R$, any sequence $(\e_n)_{n\geq 1}$ decreasing to $0$,
$$
\lim_{n}\int_\cE \varphi(g_{z_n}(A_{z_n}, e), \cl_{z_n}(A_{z_n}, e)) \nn^{\e_n,\delta}(\dr e)
=\int_\cE \varphi(g_{z}(A_z, e), \cl_{z}(A_z, e)) \nn_\beta^{\delta}(\dr e).
$$

(ii)  Fix $\delta>0$, $z\in \pDd$ and a sequence $(z_n)_{n\geq 1}$ such that
$z_n \in \pDd$ and $\lim_n z_n =z$. Consider $s\geq 0$, $t\geq 0$ and $(s_n)_{n\geq 1}$, $(t_n)_{n\geq 1}$ 
such that $\lim_n s_n= s$ and $\lim_n t_n=t$. If $s \neq t$, then for 
any sequence $(\e_n)_{n\geq 1}$ decreasing to $0$, any continuous bounded function $\varphi:\R^d\to\R$,
\begin{align*}
\lim_{n}\int_\cE \varphi(h_{z_n}(A_{z_n},e(t_n-s_n))) &\indiq_{\{s_n<t_n<s_n+\cl_{z_n}(A_{z_n},e)\}} \nn^{\e_n,\delta}(\dr e)
\\
=&\int_\cE \varphi(h_{z}(A_{z},e(t-s))) \indiq_{\{s<t<s+\cl_{z}(A_{z},e)\}} \nn_\beta^\delta(\dr e).
\end{align*}

(iii) Consider a process $(Z^\e_t)_{t\geq 0}$ converging in law to an isotropic 
$\alpha$-stable process $(Z_t)_{t\geq 0}$ issued from $z\in \cDd$ for the $\JS$-topology. 
For each $t> 0$, as $\e\to 0$,
$$
\big(Z^\e_t\indiq_{\{t<\tell(Z^\e)\}},\Lambda(Z^\e_{\tell(Z^\e)-},Z^\e_{\tell(Z^\e)}),\tell(Z^\e)\big) 
\;\; \text{goes in law to} \;\;
\big(Z_t\indiq_{\{t<\tell(Z)\}},\Lambda(Z_{\tell(Z)-},Z_{\tell(Z)}),\tell(Z)\big),
$$
where we recall that $\tell(Z)=\inf\{t > 0 : Z_t \notin \Dd\}$.
\end{lemma}

\begin{proof}
We start with (i).
By invariance of $\nn^{\e,\delta}$ and $\nn_\beta^\delta$ by any isometry sending $\be_1$ to $\be_1$,
the integrals do not depend on the choice of $(A_{y})_{y\in \pDd}$. We thus may assume that $y\mapsto A_y$
continuous at $z$, see Lemma~\ref{locglob}, so that $A_{z_n}\to A_z$. We set $m_{n,\delta}=\nn^{\e_n,\delta}(\cE)$ and 
$m_\delta=\nn_\beta^{\delta}(\cE)$. By~\eqref{j3s} with $\phi=1$, we have $\lim_n m_{n,\delta}=m_\delta$.
Still by~\eqref{j3s}, the sequence of probability measures $(m_{n,\delta}^{-1}\nn^{\e_n,\delta})_{n\geq 1}$ on $\cE$ 
converges weakly to $m_\delta^{-1}\nn_\beta^{\delta}$. By Skorokhod's representation theorem, we can find
$\bw_n\sim m_{n,\delta}^{-1}\nn^{\e_n,\delta}$ a.s. converging, for the $\JS$-topology, to some 
$\bw\sim m_\delta^{-1}\nn_\beta^{\delta}$, and we are reduced to show that $\lim_n I_n=I$, where
$$
I_n=\E\Big[\varphi(g_{z_n}(A_{z_n},\bw_n),\cl_{z_n}(A_{z_n},\bw_n))\Big] \quad \text{and} \quad
I=\E\Big[\varphi(g_{z}(A_{z},\bw),\cl_{z}(A_{z},\bw))\Big].
$$
Consider a (random) sequence of time changes $\lambda_n$ such that $||\lambda_n-I||_\infty\to 0$
and such that $\bov_n:=\bw_n\circ \lambda_n$ converges to $\bw$ uniformly on compact intervals.
We have $g_{z_n}(A_{z_n},\bw_n)= g_{z_n}(A_{z_n},\bov_n)$ and $\cl_{z_n}(A_{z_n},\bw_n)=\lambda_n(\cl_{z_n}(A_{z_n},\bov_n))$.
Assume for a moment that $\lim_n q_n=0$, where
$$
q_n:=\PP(\cl_{z_n}(A_{z_n},\bov_n)\neq\cl_z(A_z,\bw)).
$$
Then $\cl_{z_n}(A_{z_n},\bw_n)$ of course goes to $\cl_z(A_z,\bw)$ in probability,
and 
\begin{align*}
&g_{z_n}(A_{z_n},\bw_n)=g_{z_n}(A_{z_n},\bov_n)=
\Lambda(z_n+A_{z_n}\bov_n(\cl_{z_n}(A_{z_n},\bov_n)-),z_n+A_{z_n}\bov_n(\cl_{z_n}(A_{z_n},\bov_n)))\\
&\hskip5cm\to \Lambda(z+A_{z}\bw(\cl_{z}(A_{z},\bw)-),z+A_{z}\bw(\cl_{z}(A_{z},\bw)))=g_z(A_z,\bw)
\end{align*}
in probability by continuity of $\Lambda$, see Lemma~\ref{Lambdacon}. We conclude that $\lim_n I_n=I$ 
as desired by dominated convergence.

\vip

We now check that $\lim_n q_n=0$ when $\beta=*$. Consider $r>0$ as in Remark~\ref{imp4} and recall that
$\cl_{y}(A,e)\geq \ell_r(e)$ for all $y\in \pDd$, all $A\in \cI_y$, all $e\in \cE$. 
For any $\eta>0$, we write 
$q_n\leq q_{n,1}(\eta)+q_2(\eta)+q_{n,3}(\eta)$, where
\begin{gather*}
q_{n,1}(\eta) = \PP(\ell_r(\bov_n)\leq\eta), \qquad q_2(\eta)= \PP(\ell_r(\bw)\leq\eta) \\
q_{n,3}(\eta)=\PP(\ell_{z_n}(A_{z_n},\bov_n)> \eta,\cl_z(A_z,\bw)> \eta, \cl_{z_n}(A_{z_n},\bov_n)
\neq\cl_z(A_z,\bw) ).
\end{gather*}
By Remark~\ref{imp4}, we have $\lim_{\eta\to 0} q_{2}(\eta)=0$. By~\eqref{j4s}, we have
$\lim_{\eta\to 0} \limsup_n\PP(\ell_r(\bw_n)\leq\eta)=0$, which implies that 
$\lim_{\eta\to 0} \limsup_n q_{n,1}(\eta)=0$. It thus 
suffices to prove that
for each $\eta>0$, $\lim_n q_{n,3}(\eta)=0$. By~\eqref{eer} and the Markov property, see Lemma~\ref{mark}, we know
that a.s., on the event $\{\cl_z(A_z,\bw)> \eta\}$,
$$
\inf\{d(h_z(A_z,\bw(t)),\Dd^c), t\in [\eta,\cl_{z}(A_{z},\bw))\}>0
\quad \text{and} \quad \bw(\cl_{z}(A_{z},\bw)) \in \cDd^c.
$$
Since $\bov_n$ a.s. converges locally uniformly to $\bw$, we deduce that 
$h_{z_n}(A_{z_n},\bov_n)$ converges locally uniformly to $h_z(A_z,\bw)$, so that
a.s. on $\{\cl_z(A_z,\bw)> \eta\}$, for all $n$ large enough,
$$
\inf\{d(h_{z_n}(A_z,\bov_n(t)),\Dd^c), t\in [\eta,\cl_{z}(A_{z},\bw))\}>0
\quad \text{and} \quad \bov_n(\cl_{z}(A_{z},\bw)) \in \cDd^c.
$$
Thus a.s. on $\{\cl_z(A_z,\bw)> \eta\}$, we have $\cl_{z_n}(A_{z_n},\bov_n) \in [0,\eta)\cup\{\cl_{z}(A_{z},\bw)\}$ 
for all $n$ large enough.
This implies that $\lim_n q_{n,3}(\eta)=0$.

\vip
We assume that $\beta\in (0,\alpha/2)$ and check that $\lim_n q_n=0$, and more precisely that a.s.,
$\cl_{z_n}(A_{z_n},\bov_n)=\cl_z(A_z,\bw)$ for all $n$ large enough.
We recall from~\eqref{nnb} that
$\bw(0)$ has a density (namely, $\bw(0)\sim m_\delta^{-1}\indiq_{\{x \in \HH,|x|>\delta\}}|x|^{-d-\beta}\dr x$), so that
$h_z(A_z,\bw(0)) \notin \pDd$ a.s.

\vip
On $\{h_z(A_z,\bw(0))\in \cDd^c\}$, we a.s. have 
$h_{z_n}(A_{z_n},\bov_n(0)) \in \cDd^c$ for all $n$ large enough, so that
$\cl_{z_n}(A_{z_n},\bov_n)=\cl_{z}(A_{z},\bw)=0$ for all $n$ large enough.
\vip
On $\{h_z(A_z,\bw(0))\in \Dd\}$, we a.s. have, by~\eqref{eer}, recalling~\eqref{nnb} and that
$\bw \sim m_\delta^{-1}\nn_\beta^\delta$,
$$
\inf\{d(h_z(A_z,\bw(t)),\Dd^c), t\in [0,\cl_{z}(A_{z},\bw))\}>0
\quad \text{and} \quad \bw(\cl_{z}(A_{z},\bw)) \in \cDd^c.
$$
We conclude as in the case $\beta=*$ (but directly with $\eta=0$) that on $\{h_z(A_z,\bw(0))\in \Dd\}$, a.s., 
for all $n$ large enough, $\cl_{z_n}(A_{z_n},\bov_n)=\cl_{z}(A_{z},\bw)$.

\vip

For (ii), we use the same notation as previously: we have to check that
$\lim_n J_n=J$, where
\begin{gather*}
J_n=\E\Big[\varphi(h_{z_n}(A_{z_n},\bw_n(t_n-s_n)))\indiq_{\{s_n<t_n<s_n+\cl_{z_n}(A_{z_n},\bw_n)\}}\Big],\\
J=\E\Big[\varphi(h_{z}(A_{z},\bw(t-s)))\indiq_{\{s<t<s+\cl_{z}(A_{z},\bw)\}}\Big].
\end{gather*}
We already have seen that $h_{z_n}(A_{z_n},\bw_n)$ a.s. converges to $h_{z}(A_{z},\bw)$ 
for the $\JS$-topology and that $\cl_{z_n}(A_{z_n},\bw_n)$
converges to $\cl_{z}(A_{z},\bw)$ in probability. Since $\bw$ a.s. has no jump at time $t-s$ and since $t_n-s_n$
goes to $t-s$, we deduce that $h_{z_n}(A_{z_n},\bw_n(t_n-s_n))$ goes to $h_{z}(A_{z},\bw(t-s))$ in probability.
Since $s\neq t$, we have $\indiq_{\{s_n<t_n\}}\to \indiq_{\{s<t\}}$. Since finally $\cl_{z}(A_{z},\bw)\neq t-s$ a.s.
by Lemma~\ref{clac}, we have $\indiq_{\{t_n<s_n+\cl_{z_n}(A_{z_n},\bw_n)\}}\to\indiq_{\{t<s+\cl_{z}(A_{z},\bw)\}}$ in probability.
The conclusion follows from dominated convergence.
\vip
We check (iii) when $z \in \pDd$. By Lemma~\ref{ai},
$(Z_{t}\indiq_{\{t<\tell(Z)\}},\Lambda(Z_{\tell(Z)-},Z_{\tell(Z)}),\tell(Z))=(0,z,0)$.
By the Skorokhod representation theorem, we may assume that
$(Z^\e_t)_{t\geq 0}$ a.s. goes to $(Z_t)_{t\geq 0}$ for the $\JS$-topology. 
Let us now show that a.s., $\tell(Z^\e)\to 0$. Since $\Dd\subset \{u\in\mathbb{R}^d, (u-z)\cdot\bn_z > 0\}$
by convexity, it suffices to show that $\lim_{\e\to 0} \inf_{s\in[0,t]}(Z^\e_s-z) \cdot \bn_z<0$ a.s. for every $t>0$.
This follows from the facts that  $(Z^\e_t)_{t\geq 0}$ a.s. goes to $(Z_t)_{t\geq 0}$ for the $\JS$-topology
and that for any $t > 0$, $\inf_{s\in[0,t]}(Z_s-z) \cdot \bn_z <0$, see Lemma~\ref{ai}.
This implies that $\indiq_{\{t<\tell(Z^\e)\}}\to 0$ (because $t>0$)
and that both $Z^\e_{\tell(Z^\e)-}$ and $Z^\e_{\tell(Z^\e)}$ a.s. go to $z$, so that 
$\Lambda(Z_{\tell(Z)-},Z_{\tell(Z)})\to \Lambda(z,z)=z$ by Lemma~\ref{Lambdacon}.
\vip

We finally check (iii) when $z \in \Dd$. By the Skorokhod representation theorem, we may assume that
$(Z^\e_t)_{t\geq 0}$ a.s. goes to $(Z_t)_{t\geq 0}$ for the $\JS$-topology.   
Since $Z_0=z \in \Dd$, we know from~\eqref{eer} that
$$
\inf\{d(Z_t, \Dd^c), t\in [0,\tell(Z))\}>0
\quad \text{and} \quad Z_{\tell(Z)} \in \cDd^c,
$$
from which we deduce as in the proof of (i) that a.s.,
$$
\lim_{\e\to 0}\tell(Z^\e)=\tell(Z) \quad \text{and} \quad \lim_{\e\to 0}\Lambda(Z^\e_{\tell(Z^\e)-},Z^\e_{\tell(Z^\e)})
=\Lambda(Z_{\tell(Z)-},Z_{\tell(Z)}).
$$
Since $\PP(\tell(Z)=t)=0$ by Bogdan, Jastrz\c{e}bski, Kassmann, Kijaczko and 
Pop\l{}awski~\cite[Theorem~1.3]{bjkkp}, 
we conclude that $\indiq_{\{t<\tell(Z^\e)\}}$ a.s. goes to $\indiq_{\{t<\tell(Z)\}}$. Finally,  $Z^\e_t$ a.s. goes to $Z_t$ because $Z$ a.s. has no jump at time $t$.
\end{proof}

\subsection{Convergence of the boundary process}

In this subsection, we prove the convergence in law of the process $(b_t^\e, \tau_t^\e)_{t\geq0}$ towards 
the limiting boundary process $(b_t, \tau_t)_{t\geq0}$, through
tightness/martingale problems arguments. 

\begin{proposition}\label{convbord}
Grant the same assumptions and notations as in Proposition~\ref{lemma:estimates} and suppose
Assumption~\ref{as}. Consider a measurable family $(A_y)_{y\in \pDd}$ such that
$A_y \in \cI_y$ for each $y\in \pDd$.
Consider for each $\e \in (0,1]$ some $x_\e\in\partial\Dd$ and the solution $(b^\e_u,\tau^\e_u)_{u\geq 0}$
to~\eqref{sdebe}-\eqref{sdete} with $b^\e_0=x_\e$.
If $\lim_{\e\to 0} x_\e=x \in \pDd$, 
then $(b^\e_u,\tau^\e_u)_{u\geq 0}$ converges in law to $(b_u, \tau_u)_{u\geq0}$ in 
$\mathcal{D}(\mathbb{R}_+, \mathbb{R}^{d} \times \mathbb{R}_+)$ for the $\JS$-topology,
where $(b_u, \tau_u)_{u\geq0}$ is as in Definition~\ref{dfr1} with $b_0=x$.
\end{proposition}

\begin{proof} We divide the proof into five steps.

\vip
 \textit{Step 1.} We first show that the family of processes $((b^\e_u,\tau^\e_u)_{u\geq 0}, \e \in (0,1])$ is tight 
in $\mathcal{D}(\mathbb{R}_+, \mathbb{R}^{d} \times \mathbb{R}_+)$ for the $\JS$-topology
and that any limit point  $(b_u,\tau_u)_{u\geq 0}$ satisfies $\Delta b_u=\Delta\tau_u=0$ a.s. for every 
deterministic $u\geq 0$. To this end,
we apply the Aldous criterion, see for instance 
Jacod and Shiryaev~\cite[Section~VI, Thm.~4.5 and Remark~4.7]{jacod2013limit}.
It suffices that for all $T>0$,
\vip
\noindent (i) $\lim_{A\to \infty} \sup_{\e\in [0,1]}\PP[\sup_{[0,T]} (|b^\e_u|+\tau^\e_u)>A]=0$,
\vip
\noindent (ii) $\lim_{\delta\to 0} \sup_{\e\in (0,1]}\sup_{(S,S')\in \cA^\e_{T,\delta}}\E[(|b^\e_{S'}-b^\e_S|+
\tau^\e_{S'}-\tau^\e_S)\land 1]=0$, where $\cA^\e_{T,\delta}$ is the set of pairs of stopping times $(S,S')$
in the filtration of $(b^\e_u,\tau^\e_u)_{u\geq 0}$
such that $0\leq S \leq S'\leq S+\delta\leq T$.

\vip

We start with (i). Since for any $u\geq 0$, $b_u^\e \in \pDd$ and since $\Dd$ is bounded, 
we immediately deduce that $\sup_{\e \in (0,1]}\E[\sup_{[0,T]} |b^\e_u| ]<\infty$. 
Next, recalling~\eqref{sdete} and that $\cl_y(A,e)\leq \ell(e)$, we find that
$\sup_{[0,T]} \tau^\e_u \leq c_\e T + I_{\e,T} + J_{\e,T}\leq \chi_\rG T+I_{\e,T} + J_{\e,T}$, where 
$$
I_{\e,T}=\int_0^T \int_\cE \ell(e) \indiq_{\{\ell(e) \leq 1\}} \Pi^\e(\dr v, \dr e) \quad
\text{and} \quad J_{\e,T}=\int_0^T \int_\cE \ell(e) \indiq_{\{\ell(e) > 1\}} \Pi^\e(\dr v, \dr e).
$$
Recall that $\theta_0\in (0,1/2]$ was defined in Proposition~\ref{lemma:estimates} and fix
$\theta\in (0,\theta_0)$.
Using that $(a+b)^\theta \leq a^\theta+b^\theta$ for all $a,b\geq 0$, we find
$$
J_{\e,T}^\theta \leq \int_0^T \int_\cE [\ell(e)]^\theta \indiq_{\{\ell(e) > 1\}} \Pi^\e(\dr v, \dr e),\;\;
\text{whence} \;\; I_{\e,T}+J_{\e,T}^\theta\leq \int_0^T \int_\cE (\ell(e) \land[\ell(e)]^\theta )\Pi^\e(\dr v, \dr e).
$$
Thus $\sup_{\e \in(0,1]}\E[ I_{\e,T}+J_{\e,T}^\theta]<\infty$ by~\eqref{j1s}. All this proves (i).

\vip
We carry on with (ii). Using~\eqref{sdebe}, \eqref{sdete}, that $(a+b)\land 1 \leq a\land 1+b\land 1$ 
for all $a,b\geq 0$ and that $|g_y(A,e)-y|\leq M(e)$ and $\cl_y(A,e)\leq \ell(e)$,
we get that for any $(S,S')\in \cA^\e_{T,\delta}$,
\[
(|b_{S'}^\e -b^\e_S |+ \tau_{S'}^\e - \tau^\e_S)\land 1 \leq 
\int_S^{S'}\int_{\mathcal{E}} [M(e)\land 1+\ell(e)\land 1]
\Pi_\e(\dr v, \dr e).
\]
We conclude from~\eqref{j1s} (and since $\ell(e)\land 1 \leq \ell(e)\land[\ell(e)]^\theta)$ 
that 
$$
\sup_{\e\in(0,1]}\E[(|b_{S'}^\e -b^\e_S |+ \tau_{S'}^\e - \tau^\e_S)\land 1] \leq C \delta, 
$$
which implies (ii).

\vip

\textit{Step 2.} We consider a sequence $(b^{\e_k}_u,\tau^{\e_k}_u)_{u\geq 0}$ converging in law to some
$(b_u,\tau_u)_{u\geq 0}$ for the $\JS$-topology, and show that $(b_u,\tau_u)_{u\geq 0}$ 
solves~\eqref{sdeb}-\eqref{sdet} (with e.g. $a_u=A_{b_\um}$). Such a process being unique in law, 
see~Theorem~\ref{mr1} and Step~2 of its proof, this will complete the proof.

\vip

By the theory of martingale problems, it suffices that for any 
$\varphi \in C^1_b( \mathbb{R}^d \times \mathbb{R}_+ )$,
\[
 M_u^\varphi := \varphi(b_u, \tau_u) - \varphi(x,0) - \int_0^u\int_{\mathcal{E}}\Big[\varphi(g_{b_v}(A_{b_{v}}, e), 
\tau_v + \cl_{b_v}(A_{b_{v}}, e)) - \varphi(b_v, \tau_v)\Big]\nn_\beta(\dr e)\dr v
\]
is a martingale in the canonical filtration of $(b_u,\tau_u)_{u\geq 0}$. In other words, we need that
for any $n\geq 1$, any $0 \leq u_1 < \cdots < u_n \leq v < w$, any 
$\varphi, \psi_1, \cdots, \psi_n \in C^1_b( \mathbb{R}^d \times \mathbb{R}_+ )$,
\begin{equation}\label{ggoal}
\E[H((b_u,\tau_u)_{u\geq 0})]=0, 
\end{equation}
where for $(y,r)\in \DD(\R_+,\R^d\times\R_+)$,
\begin{align*}
H(y,r)=&\Big(\prod_{i=1}^n\psi_i(y_{u_i}, r_{u_i})\Big) \bigg(\varphi(y_w,r_w) - \varphi(y_v,r_v)  \\
&\hskip2cm- \int_v^w\int_{\mathcal{E}}\Big[\varphi(g_{y_s}(A_{y_s}, e),r_s+\cl_{y_s}(A_{y_s},e)) - \varphi(y_s,r_s)
\Big]\nn_\beta(\dr e)\dr s\bigg).
\end{align*}
We fix such a functional $H$, the rest of the proof is devoted to establish~\eqref{ggoal}.

\vip

{\it Step 3.} Since $(b^{\e_k}_u,\tau^{\e_k}_u)_{u\geq 0}$ solves~\eqref{sdebe}-\eqref{sdete}, we have 
$\E[K_{\e_k}((b_u^{\e_k},\tau_u^{\e_k})_{u\geq 0})]=0$, where
\begin{align*}
K_\e(y,r)=&\Big(\prod_{i=1}^n\psi_i(y_{u_i}, r_{u_i})\Big) \bigg(\varphi(y_w,r_w) - \varphi(y_v,r_v)  
-\int_v^w  c_\e \partial_r\varphi(y_s,r_s) \dr s\\
&\hskip2cm- \int_v^w\int_{\mathcal{E}}\Big[\varphi(g_{y_s}(A_{b_{y_s}}, e),r_s+\cl_{y_s}(A_{b_{y_s}},e)) - \varphi(y_s,r_s)
\Big]\nn^\e(\dr e)\dr s\bigg).
\end{align*}
We thus may write, for any $k\geq 1$ and any $\delta \in (0,1]$,
\begin{equation}\label{dec}
\E[H((b_u,\tau_u)_{u\geq 0})]=\E[H((b_u,\tau_u)_{u\geq 0})]-\E[K_{\e_k}((b_u^{\e_k},\tau_u^{\e_k})_{u\geq 0})]
= I_1^{\delta}+I_2^{k,\delta}+I_3^{k,\delta}+I_4^{k},
\end{equation}
where, defining $H^\delta$ (resp. $H^\delta_\e$,  $H_\e$) as $H$ replacing $\nn_\beta$ by
$\nn_\beta^\delta$ (resp. $\nn^{\e,\delta}$, $\nn^\e$), see~\eqref{nd1}-\eqref{nd2},
\begin{align*}
I_1^{\delta}=&\E[H((b_u,\tau_u)_{u\geq 0})]-\E[H^\delta((b_u,\tau_u)_{u\geq 0})], \\
I_2^{k,\delta}=&\E[H^\delta((b_u,\tau_u)_{u\geq 0})]-\E[H^\delta_{\e_k}((b_u^{\e_k},\tau_u^{\e_k})_{u\geq 0})],\\
I_3^{k,\delta}=& \E[H^\delta_{\e_k}((b_u^{\e_k},\tau_u^{\e_k})_{u\geq 0})]-\E[H_{\e_k}((b_u^{\e_k},\tau_u^{\e_k})_{u\geq 0})],\\
I_4^{k}=& \E[H_{\e_k}((b_u^{\e_k},\tau_u^{\e_k})_{u\geq 0})]-\E[K_{\e_k}((b_u^{\e_k},\tau_u^{\e_k})_{u\geq 0})].
\end{align*}
We have $\sup_{(y,r)\in \DD(\R_+,\R^d\times\R_+) }|H_\e(y,r)-K_\e(y,r)|\leq C c_\e$ 
for some constant $C>0$, so that 
\begin{equation}\label{rr1}
\lim_{k\to \infty} I_4^{k}=0.
\end{equation}

{\it Step 4.} Here we prove that 
\begin{equation}\label{rr2}
\lim_{\delta\to 0} I_1^\delta= 0 \quad \text{and}\quad \lim_{\delta\to 0} \limsup_{k} |I_3^{k,\delta}|= 0.
\end{equation}
Since $\varphi$ is bounded together with it first derivatives, we see that for all
$(y,r)\in \DD(\R_+,\R^d\times\R_+)$,
$$
|H^\delta_{\e}(y,r)-H_{\e}(y,r)|\leq C \int_v^w \int_\cE\Big(|g_{y_s}(A_{b_{y_s}}, e)-y_s|\land 1 + 
\cl_{y_s}(A_{b_{y_s}},e))\land 1 \Big)   (\nn^{\e}-\nn^{\e,\delta})(\dr e) \dr s.
$$
Recalling that $|g_b(A,e)-b|\leq M(e)$ and $\cl_b(A,e)\leq \ell(e)$, we end with 
$$
|H^\delta_{\e}(y,r)-H_{\e}(y,r)|\leq C (w-v) \int_\cE \Big(M(e)\land 1+ \ell(e)\land 1
\Big)   (\nn^{\e}-\nn^{\e,\delta})(\dr e).
$$
It then directly follows from~\eqref{j2s} that $\lim_{\delta\to 0} \limsup_{k} |I_3^{k,\delta}|= 0$. One
shows similarly that $\lim_{\delta\to 0} I_1^\delta= 0$, using~\eqref{se1}, which implies that
$$
\lim_{\delta\to0}\int_{\mathcal{E}}\big[M(e) \wedge 1 + \ell(e) \wedge 1\big]
(\nn_\beta-\nn_\beta^\delta)(\dr e) = 0.
$$

{\it Step 5.} Recall~\eqref{dec} and that our goal is to prove~\eqref{ggoal}. By~\eqref{rr1}-\eqref{rr2}, 
it suffices to check that for each $\delta \in (0,1]$, $\lim_{k\to \infty} I^{k,\delta}_2=0$.
Fix $\delta\in (0,1]$ and assume for a moment that
\vip
\noindent (a) $\sup_{(y,r) \in  \DD(\R_+,\R^d\times\R_+)}(|H^\delta(y,r)|+|H^{\delta}_{\e_k}(y,r)|)<\infty$, 
\vip
\noindent (b) if $(y_k,r_k)\to (y,r)$ in $\DD(\R_+,\R^d\times\R_+)$ for the the $\JS$-topology and if $(y,r)$ has
no jump at times $u_1,\dots,u_k, v,w $, then 
$\lim_k H^{\delta}_{\e_k}(y_k,r_k)=H^\delta(y,r)$.
\vip
Then, we use the Skorokhod representation theorem to find 
$(\bar b^{\e_k}_u,\bar \tau^{\e_k}_u)_{u\geq 0})$ (with the same law as $(b^{\e_k}_u,\tau^{\e_k}_u)_{u\geq 0})$)
a.s. converging to $(\bar b_u,\bar \tau_u)_{u\geq 0})$ (with the same law as $(b_u,\tau_u)_{u\geq 0})$) and write
$$
|I^{k,\delta}_2|=|\E[H^\delta((\bar b_u,\bar \tau_u)_{u\geq 0})]-
\E[H^{\delta}_{\e_k}((\bar b^{\e_k}_u,\bar \tau^{\e_k}_u)_{u\geq 0}))]|\leq \E[|H^\delta((\bar b_u,\bar \tau_u)_{u\geq 0})
-H^{\delta}_{\e_k}((\bar b^{\e_k}_u,\bar \tau^{\e_k}_u)_{u\geq 0}))|].
$$
This last quantity tends to $0$ as $k\to \infty$ by dominated convergence, thanks to points (a) and (b) and since
$(\bar b_u,\bar \tau_u)_{u\geq 0}$ a.s. has no jump at times $u_1,\dots,u_k, v,w $, see Step~1.

\vip

Point (a) follows from~\eqref{aan} and the fact that the functions involved in the 
definition of $H^\delta$ and $H^\delta_\e$ are bounded.
For (b), the only difficulty is to check that for a.e. $s \in [v,w]$,
\begin{align*}
\lim_k \int_{\mathcal{E}} \varphi(g_{y^k_s}(A_{y^k_s}, e),r_s^k+\cl_{y^k_s}(A_{b_{y^k_s}},e))\nn^{\e_k,\delta}(\dr e)
= \int_{\mathcal{E}} \varphi(g_{y_s}(A_{y_s}, e),r_s+\cl_{y_s}(A_{y_s},e))\nn_\beta^\delta(\dr e).
\end{align*}
This follows from Lemma~\ref{lemma:cont_g_ell}-(i) and the fact that
$\lim_k (y^k_s,r^k_s)=(y_s,r_s)$ for all $s\in [v,w]$ which is not a jump time of $(y,r)$,
which is the case for a.e. $s\in [v,w]$.
\end{proof}

\subsection{Convergence of the Markov scattering process}

We denote by $\QQ^\e_x$ the law of  $(R_t^\e)_{t\geq0}$ (the solution to~\eqref{eq:def_r_epsilon}) 
issued from $x \in \cDd$ and recall that 
$\QQ_x$ was introduced in Definitions~\ref{dfr1} and~\ref{dfr2}. 
We also introduce the corresponding transition semigroups acting on continuous functions 
$\varphi:\closure{\Dd} \to \mathbb{R}$ and defined for any $x\in\cDd$, any $t\geq 0$, as
\[
\cP^\e_t \varphi(x) = \QQ_x^\e[\varphi(X_t^*)],\quad\text{and}\quad  
\cP_t \varphi(x) = \QQ_x[\varphi(X_t^*)],
\]
where $(X_t^*)_{t\geq 0}$ is the canonical process on $\DD(\R_+,\cDd)$.
We first show some tightness result.

\begin{lemma}\label{ret}
Adopt the same assumptions and notations as in Proposition~\ref{thm:conv_markov}.
The family $(\QQ_x^\e,x\in \cDd,\e\in (0,1])$ 
is tight, the set $\DD(\R_+,\cDd)$ being endowed with the $\JS$-topology.
Moreover, 
\begin{equation}\label{ttac}
\lim_{\eta\to 0} \limsup_{\e\to 0} \sup_{x \in \cDd} \QQ_x^\e\Big[\sup_{t\in [0,\eta]} |X^*_t-x|\Big] = 0.
\end{equation}
\end{lemma}

\begin{proof} We divide the proof in several steps.
\vip
{\it Step 1.} Let us first show that~\eqref{ttac} implies the tightness. Since $\cDd$ is compact,
it suffices, by the Aldous criterion, see
Jacod and Shiryaev~\cite[Section~VI, Thm.~4.5]{jacod2013limit}, that for all $T>0$,
\begin{equation}\label{ttax}
\lim_{\eta\to 0} \;\limsup_{\e\to 0} \;\sup_{x \in \cDd} \;\sup_{(S,S')\in \cA_{T,\eta}}\QQ^\e_x[|X^*_{S'}-X^*_S|]=0.
\end{equation}
Here $\cA_{T,\eta}$ is the set of all couples $(S,S')$ of stopping times (in the canonical filtration of $X^*$)
such that $0\leq S \leq S' \leq S+\eta \leq T$. But for $(S,S')\in \cA_{T,\eta}$, for any $x\in \cDd$,
using the strong Markov property at time $S$, we see that
$$
\QQ^\e_x[|X^*_{S'}-X^*_S|]\leq \sup_{y \in \cDd} \QQ^\e_y\Big[\sup_{t\in [0,\eta]}|X^*_t-y|\Big].
$$
Thus~\eqref{ttac} implies~\eqref{ttax} as desired.
\vip

{\it Step 2.} For $x\in\cDd$ and $\eta>0$, we set, recalling that $\tell(X^*)=\inf\{t> 0 : X^*_t\in \pDd\}$,
$$
\rho_\e(x,\eta)=\QQ_x^\e\Big[\sup_{t\in [0,\eta]} |X^*_t-x|\Big]\quad \hbox{and} 
\quad\rho'_\e(x,\eta)=\QQ_x^\e\Big[\sup_{t\in [0,\eta\land \tell(X^*)]} |X^*_t-x|\Big].
$$ 
By the strong Markov property at time $\tell(X^*)$, for all $x \in \cDd$,
$\rho_\e(x,\eta) \leq \rho_\e'(x,\eta)+\sup_{y \in \pDd} \rho_\e(y,\eta)$.

\vip

{\it Step 3.} Here we show that $\lim_{\eta\to 0} \limsup_{\e\to 0}\sup_{x\in \cDd}\rho_\e'(x,\eta) =0$.

\vip

We recall that the solution $(R^\e_t)_{t\geq 0}$ to~\eqref{eq:def_r_epsilon} 
starting at $x\in \cDd$ is $\QQ^\e_x$-distributed, so that 
$\rho_\e'(x,\eta)=\E[\sup_{t\in [0,\eta\land \tell(R^\e)]} |R^\e_t-x|]$. We introduce 
introduce the free process issued from $0$
\begin{equation}\label{freep}
Z^\e_t=\int_0^t \int_{\R^d\times \HH} u \rM_\e(\dr s,\dr u, \dr v),
\end{equation}
with the same Poisson measure $\rM_\e$ as in~\eqref{eq:def_r_epsilon}. One can verify that
$R^\e_t=x+Z^\e_t$ for all $t\in [0,\tell(R^\e))$ and that 
$R^\e_{\tell(R^\e)}=\Lambda(x+Z^\e_{\tell(R^\e)-},x+Z^\e_{\tell(R^\e)-})\in [x+Z^\e_{\tell(R^\e)-},x+Z^\e_{\tell(R^\e)}]$.
As a conclusion,
$$
\sup_{t\in [0,\eta\land \tell(R^\e)]} |R^\e_t-x| \leq \sup_{t\in [0,\eta]} |Z^\e_t|.
$$
Since moreover $\sup_{t\in [0,\eta\land \tell(R^\e)]} |R^\e_t-x|\leq \text{Diam}(\Dd)$,
we only have to check that  
$$
\lim_{\eta\to 0} \sup_{\e\in (0,1]} \rho_\e''(\eta)=0, \quad \text{where} \quad
\rho_\e''(\eta)=\E\Big[ \sup_{t\in [0,\eta]} |Z^\e_t|\land 1\Big].
$$
Using that $\int_{|u|\leq 1} u \rF_\e(u)\dr u=0$ 
by rotational invariance, we may write $Z^\e_t=Z^{\e,1}_t+Z^{\e,2}_t$, where
$$
Z^{\e,1}_t=\int_0^t \int_{\R^d\times \HH} u\indiq_{\{|u|\leq 1\}} \tilde \rM_\e (\dr s,\dr u,\dr v)
\quad\text{ and } \quad
Z^{\e,2}_t=\int_0^t \int_{\R^d\times \HH} u\indiq_{\{|u|> 1\}} \rM_\e (\dr s,\dr u,\dr v). 
$$

Let us first show that $\E[\sup_{[0,\eta]}|Z^{\e,1}_t|^2] \leq C \eta$.
By Doob's inequality, recalling Definition~\ref{dfre} and Notation~\ref{fgh},
$$
\E\Big[\sup_{[0,\eta]}|Z^{\e,1}_t|^2\Big] \leq \frac{4\eta}\e \int_{ \R^d} |u|^2\indiq_{\{|u|\leq 1\}}\rF_\e(u)\dr u
= \frac{4\eta}\e\E\Big[|\e^{(1-\alpha)/\alpha}E^\e U|^2\indiq_{\{|\e^{(1-\alpha)/\alpha}E^\e U|\leq 1\}} \Big],
$$
with $E^\e\sim\text{Exp}(\e^{-1})$ and $U\sim \rF(u)\dr u$. Using Assumption~\ref{assump_equi} and
that $\e^{-1} E^\e \sim\text{Exp}(1)$, we get
$$
\E\Big[\sup_{[0,\eta]}|Z^{\e,1}_t|^2\Big] \leq \frac{4C_{\rF}\eta}{\e}\int_0^\infty e^{-z}\dr z
\int_{\{|u|\leq \e^{-1/\alpha}z^{-1}\}} \frac{\e^{2/\alpha}z^2|u|^2 \dr u}{|u|^{d+\alpha}}.
$$
A simple computation shows that this last quantity equals $C\eta$ for some constant $C>0$.

\vip
Let us next check that $\E[\sup_{[0,\eta]}|Z^{\e,2}_t|\land 1] \leq C\eta$, which will end the step.
We write
$$
\E\Big[\sup_{[0,\eta]}|Z^{\e,2}_t|\land 1\Big] \!\leq\! \PP\Big(\sup_{[0,\eta]}|Z^{\e,2}_t|>0\Big)
\!\leq\! \PP\Big(\rM_\e([0,\eta]\times\{|u|>1\} \times \HH)>0 \Big)\!=\!1-\exp(-\eta a_\e) 
\!\leq\! a_\e \eta,
$$
where 
$$
a_\e= \frac1\e\int_{|u|>1} \rF_\e(u)\dr u = \frac1\e \PP(|\e^{(1-\alpha)/\alpha}E^\e U|>1)\leq 
\frac{C_\rF}\e \int_0^\infty e^{-z}\dr z \int_{\{|u|> \e^{-1/\alpha}z^{-1}\}} \frac{\dr u}{|u|^{d+\alpha}}.
$$
This last quantity is finite and does not depend on $\e$, so that 
$\E[\sup_{[0,\eta]}|Z^{\e,2}_t|\land 1] \leq C\eta$.

\vip

{\it Step 4.} It only remains to verify that  $\lim_{\eta\to 0} \limsup_{\e\to 0}\sup_{x\in \pDd}\rho_\e(x,\eta) =0$.
Here we use that $(R^\e_t)_{t\geq 0}$ defined in Lemma~\ref{repsexc}, issued from $x\in \pDd$,
is $\QQ^\e_x$-distributed.
For any $a\in (0,1]$,
\begin{equation}\label{ccc1}
\rho_\e(x,\eta)=\E\Big[\sup_{t \in [0,\eta)}|R^\e_t-x|\Big] 
\leq D\PP(L^\e_\eta>a) + \E\Big[\indiq_{\{L^\e_\eta\leq a\}}\sup_{t\in [0,\eta]}|R^\e_t-x|\Big],
\end{equation}
where $D=\text{Diam}(\Dd)$.
We have $R^\e_t-x=R^\e_t-b^\e_{L_t^\e-}+b^\e_{L^\e_t-}-x$ and, by~\eqref{def:reflected_process_eps},
$$
|R^\e_t-b^\e_{L^\e_t-}|\leq M(e^\e_{L^\e_t})\indiq_{\{L^\e_t \in \JJ_\e\}},
$$
where we recall that $M(e)=\sup_{t\in [0,\ell(e)]}|e(t)|$. Moreover,
$|R^\e_t-b^\e_{L^\e_t-}|\leq D$, whence 
$$ 
\E\Big[\indiq_{\{L^\e_\eta\leq a\}}\sup_{t\in [0,\eta]}|R^\e_t-x|\Big] \leq \E\Big[\sup_{u \in \JJ_\e\cap [0,a]}(M(e^\e_u)\land D)
+ \sup_{u\in [0,a]} |b^\e_u-x|\Big].
$$
Recalling~\eqref{sdebe} and that $|g_y(A,e)-y|\leq M(e)\land D$ for all $y\in \pDd$, $A\in \cI_y$ and
$e\in \cE$, we find
$$ 
\E\Big[\indiq_{\{L^\e_\eta\leq a\}}\sup_{t\in [0,\eta]}|R^\e_t-x|\Big] \leq 2\E\Big[\int_0^a \int_\cE (M(e)\land D) 
\Pi_\e(\dr v,\dr e)\Big] = 2a  \int_\cE (M(e)\land D) \nn^\e(\dr e).
$$
By~\eqref{j1s}, there is a constant $C>0$ such that for all $a\in (0,1]$,
\begin{equation}\label{ccc2}
\sup_{\e\in (0,1]}\sup_{x\in\pDd}\E\Big[\indiq_{\{L^\e_\eta\leq a\}}\sup_{t\in [0,\eta]}|R^\e_t-x|\Big] \leq C a.
\end{equation}

Define $r>0$ as in Remark~\ref{imp4} and recall that $\cl_y(A,\cdot)\geq \ell_r(\cdot)$. 
Recalling~\eqref{sdete}, we conclude that 
$\tau^\e_u\geq \int_0^u\int_\cE \ell_r(e) \Pi^\e(\dr v,\dr e)=: \tilde\tau_u^\e$ (which does not depend on 
$x\in \pDd$). Hence
$$
\PP(L^\e_\eta>a)=\PP(\tau^\e_a<\eta)\leq \PP(\tilde \tau^\e_a<\eta)
\leq \PP\Big(\Pi_\e([0,a]\times \{\ell_r>\eta\})=0\Big)
=\exp(-a \nn^\e(\ell_r>\eta)).
$$
Using~\eqref{j5s}, we conclude that for any $a>0$,
\begin{equation}\label{ccc3}
\lim_{\eta\to 0} \limsup_{\e\to 0}\sup_{x\in \pDd}\PP(L^\e_\eta>a)\leq \lim_{\eta\to 0} \limsup_{\e\to 0}
\exp(-a \nn^\e(\ell_r>\eta))=0.
\end{equation}
Gathering~\eqref{ccc1}, \eqref{ccc2} and~\eqref{ccc3} completes the step.
\end{proof}

We next show the convergence of the semigroups when starting from the boundary.

\begin{lemma}\label{prop:conv_semi}
Adopt the same assumptions and notations as in Proposition~\ref{thm:conv_markov}.
Let $x \in \pDd$, $t\geq 0$ and, for any $\e\in(0,1]$, let $x_\e \in \pDd$ and $t_\e \geq 0$ 
be such that $x_\e\to x$ and $t_\e\to t$ as $\e\to0$. 
For any continuous function $\varphi:\cDd \to \mathbb{R}$, 
it holds that $\mathcal{P}^\e_{t_\e} \varphi(x_\e) \to \mathcal{P}_t \varphi(x)$ as $\e\to0$.
\end{lemma}

\begin{proof} We classically may assume that $t>0$ and that $\varphi$ is Lipschitz continuous.

\vip

{\it Step 1.} The set $S=\{t\geq 0 : \QQ_x(X^*_t \in \pDd)>0\}$ being Lebesgue-null (because it holds that
$\QQ_x[\int_0^\infty \indiq_{\{X^*_t\in \pDd\}}\dr t] = 0$,
see~Theorem~\ref{mr2}), it suffices to treat the case where $t\notin S$. 

\vip

Indeed, when $t\in S$, let
$\eta_n>0$ be decreasing to $0$ and such that $t+\eta_n \notin S$. We find
$\lim_{\e\to 0}\cP^\e_{t_\e+\eta_n}\varphi(x_\e)=\cP_{t+\eta_n}\varphi(x)$ for each $n$.
But $\lim_n \cP_{t+\eta_n}\varphi(x)=\cP_{t}\varphi(x)$ by right-continuity of $X^*$, and
$\lim_n\lim_{\e\to 0}\cP^\e_{t_\e+\eta_n}\varphi(x_\e)=\lim_{\e\to 0}\cP^\e_{t_\e}\varphi(x_\e)$,
because, using the Markov property at time $t_\e$ and that $\varphi$ is Lipschitz continuous,
$$
\limsup_{\e\to0} |\cP^\e_{t_\e+\eta_n}\varphi(x_\e)-\cP^\e_{t_\e}\varphi(x_\e)|
\leq \limsup_{\e\to 0} \sup_{y \in \cDd} |\cP^\e_{\eta_n}\varphi(y)-\varphi(y)|
\leq  C \limsup_{\e\to 0} \sup_{y \in \cDd} \QQ^\e_y[|X^*_{\eta_n}-y|],
$$
which tends to $0$ as $n\to \infty$ by~\eqref{ttac}.
Thus $\lim_{\e\to 0}\cP^\e_{t_\e}\varphi(x_\e)=\cP_{t}\varphi(x)$ as desired.

\vip

{\it Step 2.} We consider $(R^\e_t)_{t\geq 0}\sim \QQ^\e_{x_\e}$,
together with $\Pi^\e=\sum_{u\in \JJ_\e}\delta_{(u,e^\e_u)}$, $(b^\e_u)_{u\geq 0}$, 
$(\tau^\e_u)_{u\geq 0}$ and $(L^\e_t)_{t\geq 0}$, as in Lemma~\ref{repsexc}. We also consider
$(R_t)_{t\geq 0}\sim\QQ_x$, together with $\Pi=\sum_{u\in \JJ}\delta_{(u,e_u)}$, $(b_u)_{u\geq 0}$, 
$(\tau_u)_{u\geq 0}$ and $(L_t)_{t\geq 0}$, as in Definition~\ref{dfr1}. 
We have to prove that $\lim_{\e\to 0}\E[\varphi(R^\e_{t_\e})]=\E[\varphi(R_{t})]$ if $t\notin S$.
We thus assume that $t\notin S$ and observe that 
$L_t \in \JJ$ a.s., because $R_t=b_{L_t} \in \pDd$ when $L_t \notin \JJ$,
see~\eqref{defR}.

\vip

{\it Step 2.1.} By Proposition~\ref{convbord}, $(b^\e_u,\tau^\e_u)_{u\geq 0}$ 
converges in law to $(b_u,\tau_u)_{u\geq 0}$ for the $\JS$-topology. 
By Lemma~\ref{convL}-(i), this implies that $((b^\e_u,\tau^\e_u)_{u\geq 0},(L^\e_t)_{t\geq 0})$ converges in law
to $((b_u,\tau_u)_{u\geq 0},(L_t)_{t\geq 0})$ in $\DD(\R_+,\cDd\times \R_+)\times\DD(\R_+,\R_+)$,
with $\DD(\R_+,\cDd\times \R_+)$ endowed with $\JS$ and $\DD(\R_+,\R_+)$ endowed with the 
topology of the uniform convergence on compact time intervals.

\vip

{\it Step 2.2.} Exactly as in Step~3.5 of the proof of Proposition~\ref{newb}, making use of Lemma~\ref{clac},
one can check that we have both
$\PP(L^\e_{t_\e} \in \JJ_\e,\tau^\e_{L^\e_{t_\e}-}=t_\e)=\PP(L^\e_{t_\e} \in \JJ_\e,\tau^\e_{L^\e_{t_\e}}=t_\e)=0$
and $\PP(L_{t} \in \JJ,\tau_{L_{t}-}=t)=\PP(L_{t} \in \JJ,\tau_{L_{t}}=t)=0$.

\vip

{\it Step 2.3.} We now show that
$$
\lim_{\delta\to 0}\limsup_{\e\to 0} \PP(L^\e_{t_\e}\in \JJ_\e,\ell(e^\e_{L^\e_{t_\e}})\leq \delta)=
\lim_{\delta\to 0}\PP(L_{t}\in \JJ,\ell(e_{L_{t}})\leq \delta)=0
$$

By Step~2.2 and since $t\notin S$, we a.s. have $L_t \in \JJ$ and $\tau_{L_t-}<t<\tau_{L_t}$.
By Lemma~\ref{convL}-(ii) and Step~2.1, we conclude that
$$
\limsup_{\e\to 0} \PP(\Delta \tau^\e_{L_{t_\e}^\e}\leq \delta) \leq  \PP(\Delta \tau_{L_{t}}\leq \delta)= 
\PP(0<\Delta \tau_{L_{t}}\leq \delta).
$$
Since $\cap_{\delta>0} \{0<\Delta \tau_{L_{t}}\leq \delta\}=\emptyset$ up to a negligible set, we end with
$$
\lim_{\delta\to 0}\limsup_{\e\to 0} \PP(\Delta \tau^\e_{L_{t_\e}^\e}\leq \delta)=0.
$$
The conclusion follows, since $\{L^\e_{t_\e}\in J_\e,\ell(e^\e_{L^\e_{t_\e}})\leq \delta\}\subset 
\{\Delta \tau^\e_{L_{t_\e}^\e}\leq \delta\}$: indeed, $\Delta \tau^\e_{L_{t_\e}^\e}=
\cl_{b^\e_{L_{t_\e}^\e-}}(A_{b^\e_{L_{t_\e}^\e-}},e^\e_{L^\e_{t_\e}})\leq \ell(e^\e_{L^\e_{t_\e}})$.
Similarly, $\PP(L_{t}\in \JJ,\ell(e_{L_{t}})\leq \delta)\leq \PP(0<\Delta \tau_{L_t}\leq \delta)$ tends to $0$
as $\delta\to 0$.

\vip

{\it Step 2.4.} We have $\lim_{A\to \infty} \limsup_{\e\to 0} \PP(L^\e_{t_\e}\geq A)=\lim_{A\to \infty} \PP(L_{t}\geq A)=0$.
\vip
Indeed, we deduce from Step~2.1 that $L^\e_{t_\e}$ goes in law to $L_t$, whence
$\limsup_{\e\to 0} \PP(L^\e_{t_\e}\geq A)\leq \PP(L_{t}\geq A)$, which tends to $0$ as $A\to\infty$
since $L_t$ is a.s. finite.

\vip

{\it Step 2.5.} We now verify that if $\beta=*$, then for any $\delta>0$, 
$$
\lim_{\e\to 0} \E\Big[\varphi(R^\e_{t_\e})\indiq_{\{L^\e_{t_\e}\in \JJ_\e,\ell(e^\e_{L^\e_{t_\e}})>\delta\}}\Big]= 
\E\Big[\varphi(R_{t})\indiq_{\{L_{t}\in \JJ,\ell(e_{L_{t}})>\delta\}}\Big].
$$

Using that $L^\e_{t_\e} \in \JJ_\e$ if and only if there is $u\in \JJ_\e$ such that $\tau^\e_{u-}< t_\e < \tau^\e_u$
(we replaced broad inequalities by strict ones using Step~2.2) and that in such a case
$R^\e_{t_\e}=h_{b^\e_u}(A_{b^\e_u},e^\e_u)$, see~\eqref{def:reflected_process_eps} and
$\tau^\e_u=\tau^\e_{u-}+\cl_{b^\e_{L_{t_\e}^\e-}}(A_{b^\e_{L_{t_\e}^\e-}},e^\e_{L^\e_{t_\e}})$, we write,
for any $A>0$,
\begin{align*} 
\E\Big[\varphi(R^\e_{t_\e})\indiq_{\{L^\e_{t_\e}\in \JJ_\e\cap[0,A],\ell(e^\e_{L^\e_{t_\e}})>\delta\}}\Big]
=& \E\Big[\sum_{u \in \JJ_\e\cap[0,A]} \varphi(h_{b^\e_u}(A_{b^\e_u},e^\e_u))
\indiq_{\{\tau^\e_{u-}<t_\e  < \tau^\e_{u}\}}  
\indiq_{\{\ell(e^\e_u)>\delta\}} \Big]\\
=& \E\Big[\int_0^A \int_\cE \varphi(h_{b^\e_u}(A_{b^\e_u},e))\indiq_{\{\tau^\e_{u}< t_\e< \tau^\e_{u}+ \cl_{b^\e_u}(A_{b^\e_u},e)\}} 
\nn^{\e,\delta}(\dr e) \dr u \Big]
\end{align*}
by the compensation formula. Recall that $\nn^{\e,\delta}$ was introduced in Proposition~\ref{lemma:estimates}. 
Similarly,
\begin{align*}
\E\Big[\varphi(R_t)\indiq_{\{L_{t}\in \JJ\cap[0,A],\ell(e_{L_{t}})>\delta\}}\Big]
=& \E\Big[\int_0^A \int_\cE \varphi(h_{b_u}(A_{b_u},e))\indiq_{\{\tau_{u}< t< \tau_{u}+ \cl_{b_u}(A_{b_u},e)\}} 
\nn^{\delta}_*(\dr e) \dr u \Big].
\end{align*}
By the Skorokhod representation theorem, we may assume that $(\tau^\e_u,b^\e_u)_{u\geq 0}$ a.s. converges to 
$(\tau_u,b_u)_{u\geq 0}$ for the $\JS$-topology. This implies that for a.e. $u\geq 0$,
$\lim_{\e\to 0} \tau^\e_u=\tau_u$ and $\lim_{\e\to 0} b^\e_u=b_u$. Moreover, we have $\tau_u\neq t$ for a.e. $u\geq 0$
(since $(\tau_u)_{u\geq 0}$ is strictly increasing, there is at most one $u$ such that $\tau_u=t$).
By Lemma~\ref{lemma:cont_g_ell}-(ii), we conclude that a.s., for a.e. $u\geq 0$,
$$
\lim_{\e\to 0}\int_\cE \!\!\varphi(h_{b^\e_u}(A_{b^\e_u},e))\indiq_{\{\tau^\e_{u}< t_\e< \tau^\e_{u}+ \cl_{b^\e_u}(A_{b^\e_u},e)\}} 
\nn^{\e,\delta}(\dr e)=\int_\cE \!\!\varphi(h_{b_u}(A_{b_u},e)) \indiq_{\{\tau_{u}< t< \tau_{u}+ \cl_{b_u}(A_{b_u},e)\}}
\nn_*^\delta(\dr e).
$$
But $K:=\sup_{\e\in (0,1]} \nn^{\e,\delta}(\cE)<\infty$ by~\eqref{aan}. Hence
by dominated convergence, for all $A>0$,
$$
\lim_{\e\to 0} \E\Big[\varphi(R^\e_{t_\e})\indiq_{\{L^\e_{t_\e}\in \JJ_\e\cap[0,A],\ell(e^\e_{L^\e_{t_\e}})>\delta\}}\Big]
=\E\Big[\varphi(R_t)\indiq_{\{L_{t}\in \JJ\cap[0,A],\ell(e_{L_{t}})>\delta\}}\Big].
$$
To complete the step, 
it only remains to observe that $\lim_{A\to \infty}\limsup_{\e\to 0} I_{\e}^{\delta,A}=0$,
where
\begin{align*}
I_{\e}^{\delta,A}:=& \PP(L^\e_{t_\e}\in \JJ_\e\cap(A,\infty],\ell(e^\e_{L^\e_{t_\e}})>\delta)
+\PP(L_{t}\in \JJ\cap(A,\infty),\ell(e_{L_{t}})>\delta).
\end{align*}
Since $I_{\e}^{\delta,A}\leq  \PP(L^\e_{t_\e}>A)+\PP(L_{t}>A)$, this follows from Step~2.4.

\vip
{\it Step 2.6.} We next prove that if $\beta=*$, then $\lim_{\e\to 0} \PP(L^\e_{t_\e} \in \JJ_\e)=1$.
\vip
Since $t\notin S$, we have $\PP(L_t \in \JJ)=1$ (see the very beginning of Step~2), whence
$$
\Delta_\e:=| \PP(L^\e_{t_\e} \in \JJ_\e)-1|=| \PP(L^\e_{t_\e} \in \JJ_\e)-\PP(L_t \in \JJ)|\leq \Delta_\e^{\delta,1}
+ \Delta_\e^{\delta,2},
$$
where
\begin{gather*}
 \Delta_\e^{\delta,1}:=|\PP(L^\e_{t_\e} \in \JJ_\e, \ell(e_{L^\e_{t_\e}}) \leq \delta)-\PP(L_{t} \in \JJ, \ell(e_{L_{t}}) 
\leq \delta)|, \\
\Delta_\e^{\delta,2}:=|\PP(L^\e_{t_\e} \in \JJ_\e, \ell(e_{L^\e_{t_\e}}) > \delta)-\PP(L_{t} \in \JJ, \ell(e_{L_{t}}) 
> \delta)|.
\end{gather*}
The conclusion follows, since we 
know from Step~2.5 (with $\varphi=1$) that for each $\delta>0$, $\lim_{\e\to 0} \Delta_\e^{\delta,2}=0$ 
and from Step~2.3
that $\lim_{\delta\to 0}\limsup_{\e\to 0} \Delta_\e^{\delta,1}=0$.

\vip
{\it Step 2.7.} We conclude the proof when $\beta=*$.
We write 
$$
\Gamma_\e:=|\E[\varphi(R^\e_{t_\e})]-\E[\varphi(R_{t})]|\leq \Gamma_\e^{1,\delta}+ 2  
||\varphi||_\infty\Gamma_\e^{2,\delta},
$$ 
where (recall that $L_t\in\JJ$ a.s. because $t\notin S$)
\begin{gather*}
\Gamma_\e^{1,\delta}=\Big|\E[\varphi(R^\e_{t_\e})\indiq_{\{L^\e_{t_\e}\in \JJ_\e,\ell(e^\e_{L^\e_{t_\e}})>\delta\}}]
-\E[\varphi(R_{t})\indiq_{\{L_t\in\JJ, \ell(e)>\delta\}}]\Big|,\\
\Gamma_\e^{2,\delta}=\PP(L^\e_{t_\e}\notin \JJ_\e)+\PP(L^\e_{t_\e}\in \JJ_\e,\ell(e^\e_{L^\e_{t_\e}})\leq\delta)
+\PP(L_{t}\in \JJ,\ell(e_{L_{t}})\leq\delta).
\end{gather*}
We have  $\lim_{\e\to 0}\Gamma_\e^{1,\delta}=0$ for each $\delta>0$ by Step~2.5 and 
$\lim_{\delta\to 0} \limsup_{\e\to 0} \Gamma_\e^{2,\delta}=0$ by Steps~2.6 and~2.3.
\vip

\vip
{\it Step 2.8.} We now prove that when $\beta\in (0,\alpha/2)$, 
$$
\lim_{\delta\to 0} \limsup_{\e\to 0} \PP(L^\e_{t_\e}\in \JJ_\e, |e^\e_{L^\e_{t_\e}}(0)|\leq \delta)=0.
$$

We write $\PP(L^\e_{t_\e}\in \JJ_\e, |e^\e_{L^\e_{t_\e}}(0)|\leq \delta)\leq \Theta_\e^{1,\delta,a,A}+\Theta_\e^{2,a}
+\Theta_\e^{3,A}$,
where
\begin{gather*}
\Theta_\e^{1,\delta,a,A}= \PP(L^\e_{t_\e}\in \JJ_\e\cap[0,A], \ell(e^\e_{L^\e_{t_\e}})>a, |e^\e_{L^\e_{t_\e}}(0)|\leq \delta),\\
\Theta_\e^{2,a}=\PP(L^\e_{t_\e}\in \JJ_\e, \ell(e^\e_{L^\e_{t_\e}})\leq a)
\quad \text{and} \quad \Theta_\e^{3,A}=\PP(L^\e_{t_\e}>A).
\end{gather*}
But $\lim_{a\to 0} \limsup_{\e\to 0} \Theta_\e^{2,a}=0$  by Step~2.3 and
$\lim_{A\to \infty} \limsup_{\e\to 0} \Theta_\e^{3,A}=0$ by Step~2.4. Hence we only have to check that
for each $a>0$, each $A>0$, $\lim_{\delta\to 0} \limsup_{\e\to 0} \Theta_\e^{1,\delta,a,A}=0$.
But
\begin{align*}
\Theta_\e^{1,\delta,a,A}\leq& \PP(\Pi_\e([0,A]\times\{e \in \cE : |e(0)|\leq \delta, \ell(e)>a\})>0)\\
=& 1 - \exp(-A \nn^{\e}(\{e \in \cE : |e(0)|\leq \delta, \ell(e)>a\})).
\end{align*}
The conclusion follows from~\eqref{j6s}.

\vip

{\it Step 2.9.} We conclude when  $\beta\in (0,\alpha/2)$. Proceeding exactly as in Step 2.5
(recall that when  $\beta\in (0,\alpha/2)$, $\nn^{\e,\delta}(\dr e)=\indiq_{\{|e(0)|>\delta\}}\nn^\e(\dr e)$),
one can check that for any $\delta>0$,
$$
\lim_{\e\to 0} \E\Big[\varphi(R^\e_{t_\e})\indiq_{\{L^\e_{t_\e}\in \JJ_\e,|e^\e_{L^\e_{t_\e}}(0)|>\delta\}}\Big]= 
\E\Big[\varphi(R_{t})\indiq_{\{L_{t}\in \JJ,|e_{L_{t}}(0)|>\delta\}}\Big].
$$
Replacing systematically $\ell(e^\e_{L^\e_{t_\e}})>\delta$ by $|e^\e_{L^\e_{t_\e}}(0)|>\delta$,
$\ell(e^\e_{L^\e_{t_\e}})\leq \delta$ by $|e^\e_{L^\e_{t_\e}}(0)|\leq\delta$, $\ell(e_{L_{t}})>\delta$ by 
$|e_{L_{t}}(0)|>\delta$ and $\ell(e_{L_{t}})\leq \delta$ by $|e_{L_{t}}(0)|\leq\delta$, we 
prove as in Step~2.6 that $\lim_{\e\to 0} \PP(L^\e_{t_\e}\in \JJ_\e)=1$ and
conclude as in Step~2.7.
\end{proof}

We now extend the previous result to any starting point in $\cDd$.

\begin{lemma}\label{prop:conv_semi_g}
Adopt the same assumptions and notations as in Proposition~\ref{thm:conv_markov}.
Let $x \in \cDd$, $t\geq 0$ and, for any $\e\in(0,1]$, let $x_\e \in \cDd$
be such that $x_\e\to x$.
Then for any continuous function $\varphi:\cDd \to \mathbb{R}$, 
it holds that $\mathcal{P}^\e_{t} \varphi(x_\e) \to \mathcal{P}_t \varphi(x)$ as $\e\to0$.
\end{lemma}

\begin{proof} We recall that $\tell(r)=\inf\{t>0 : r(t)\notin \Dd\}$ for all $r\in \DD(\R_+,\R^d)$.
The case $t=0$ being obvious, we suppose that $t>0$. We classically may assume that
$\varphi$ is Lipschitz continuous.
\vip

{\it Step 1.} The set $S_1=\{t\geq  0 : \QQ_x(\Delta X^*_t \neq 0)>0\}$ is Lebesgue-null. Indeed, 
write $S_1=\cup_{n\geq 1} S_1^n$, where $S_1^n=\{t\geq 0 : \QQ_x(\Delta X^*_t \neq 0)\geq 1/n\}$,
recall that for any $y \in \DD(\R_+,\cDd)$, the set $j(y)=\{t\geq 0 : \Delta y(t)\neq 0\}$ is at most countable,
and notice that
$$
\text{Leb}(S_1^n)=\int_0^\infty \indiq_{\{\QQ_x(\Delta X^*_t \neq 0)\geq1/n\}}\dr t\leq
n\int_0^\infty \QQ_x(\Delta X^*_t \neq 0) \dr t=n\QQ_x(\text{Leb}(j(X^*)))=0.
$$
Moreover, the set $S_2=\{t\geq 0 : \QQ_x(\tell(X^*)=t)>0\}$ is at most countable,
because any probability measure
on $\R_+$ (here $\QQ_x(\tell(X^*)\in \dr t)$) has at most a countable number of atoms.
Exactly as in Step~1 of the proof of Lemma~\ref{prop:conv_semi}, we may assume that 
$t \notin S:=S_1\cup S_2$.

\vip

{\it Step 2.} We consider the free process 
$(Z^\e_t=\int_0^t\int_{\R^d\times \HH} u \rM_\e(\dr s,\dr u,\dr v))_{t\geq 0}$ defined in~\eqref{freep}. 
We recall that for $(R^\e_t)_{t\geq 0}$ the solution of~\eqref{eq:def_r_epsilon} starting from $x_\e \in \Dd$, 
we have $\tell(R^\e)=\tell(x_\e+Z^\e)$ and $R^\e_t=x_\e+Z^\e_t$ for all $t\in [0,\tell(R^\e))$ and
$R^\e_{\tell(R^\e)}=\Lambda(x_\e+Z^\e_{\tell(Z^\e)-},x_\e+Z^\e_{\tell(Z^\e)})$.
\vip

{\it Step 3.}
We show that the pure jump Lévy process $(Z^\e_t)_{t\geq 0}$ converges, for the local uniform topology 
(whence for the $\JS$-topology), 
to the ISP$_{\alpha,0}$. This stronger convergence will be useful in Section~\ref{stech}.
The Lévy measure of $(Z^\e_t)_{t\geq 0}$ is 
$\e^{-1}\rF_\e(z)\dr z$. Recalling Notation~\ref{fgh}, we find
$$
\e^{-1}\rF_\e(z)=\e^{-1-d/\alpha}\int_0^\infty e^{-r} \rF(\e^{-1/\alpha}r^{-1} z) r^{-d} \dr r.
$$
Using Assumption~\ref{assump_equi}, one easily checks that, since $\kappa_\rF=1/\Gamma(\alpha+1)$
and setting $M=C_\rF \Gamma(\alpha+1)$,
$$
\e^{-1}\rF_\e(z) \leq M |z|^{-d-\alpha} \quad \text{and} \quad \e^{-1}\rF_\e(z)\sim |z|^{-d-\alpha}
\quad \text{as $\e \to 0$.}
$$
We now introduce $\varphi_\e(z)= |z|^{d+\alpha} \e^{-1} \rF_\e(z)$, which is bounded by $M$ and
tends to $1$ as $\e\to 0$. We consider a Poisson measure $\rK$ on $\R_+\times[0,M]\times \R^d$
with intensity $\dr s \dr u |z|^{-d-\alpha}\dr z$, as well as
\begin{align*}
U^\e_t=& \int_0^t\int_0^M\int_{\{|z|<1\}} z \indiq_{\{u\leq \varphi_\e(z)\}}\tilde \rK(\dr s,\dr u,\dr z)+
 \int_0^t\int_0^M\int_{\{|z|\geq 1\}} z \indiq_{\{u\leq \varphi_\e(z)\}}\rK(\dr s,\dr u,\dr z),\\
Z_t=& \int_0^t\int_0^M\int_{\{|z|<1\}} z \indiq_{\{u\leq 1\}}\tilde \rK(\dr s,\dr u,\dr z)+
 \int_0^t\int_0^M\int_{\{|z|\geq 1\}} z \indiq_{\{u\leq 1\}}\rK(\dr s,\dr u,\dr z).
\end{align*}
It holds that $(U^\e_t)_{t\geq 0}$ has the same law as $(Z^\e_t)_{t\geq 0}$ and that $(Z_t)_{t\geq 0}$
is an ISP$_{\alpha,0}$. We now prove that $\Delta_{\e,T}=\sup_{[0,T]}|U^\e_t-Z_t|\to 0$ in probability and this will
complete the step. We write $\Delta_{\e,T}\leq \Delta_{\e,T}^1+\Delta_{\e,T}^2$, where
\begin{align*}
\Delta_{\e,T}^1 =& \sup_{[0,T]} \Big|\int_0^t\int_0^M\int_{\{|z|<1\}} z (\indiq_{\{u\leq \varphi_\e(z)\}}-\indiq_{\{u\leq1\}})
\tilde \rK(\dr s,\dr u,\dr z)\Big|,\\
\Delta_{\e,T}^2 =& \int_0^t\int_0^M\int_{\{|z|\geq 1\}} |z| |\indiq_{\{u\leq \varphi_\e(z)\}}-\indiq_{\{u\leq1\}}|
\rK(\dr s,\dr u,\dr z).
\end{align*}
First, By Doob's inequality, we have
\begin{align*}
\E[(\Delta_{\e,T}^1)^2]\leq& 4 T \int_{\{|z|<1\}}\int_0^M |z|^2 |\indiq_{\{u\leq \varphi_\e(z)\}}-\indiq_{\{u\leq1\}} |
\frac{\dr u \dr z}{|z|^{d+\alpha}}
= 4T \int_{\{|z|<1\}} \frac{ |\varphi_\e(z)-1|\dr z}{|z|^{d+\alpha-2}},
\end{align*}
which tends to $0$ by dominated convergence. Next, we have $\PP(\Delta_{\e,T}^2 \neq 0)\leq \PP(\rK(A_{\e,T})>0)$, 
where 
$A_{\e,T}=\{(t,u,z)\in [0,T]\times [0,M]\times \{|z|\geq 1\} : u \in [\varphi_\e(z)\land 1, \varphi_\e(z)\lor 1]\}$.
As a consequence, $\PP(\Delta_{\e,T}^2 \neq 0)\leq 1-e^{-\lambda_{\e,T}}$, with
$$
\lambda_{\e,T}=T \int_{\{|z|\geq 1\}}\int_0^M \indiq_{\{u\in[\varphi_\e(z)\land 1, \varphi_\e(z)\lor 1]\}}
\frac{\dr u \dr z}{|z|^{d+\alpha}}=T \int_{\{|z|\geq1\}} \frac{ |\varphi_\e(z)-1|\dr z}{|z|^{d+\alpha}},
$$
which tends to $0$ by dominated convergence. Thus $\PP(\Delta_{\e,T}^2 \neq 0)\to 0$.

\vip
{\it Step 4.} We show that the law of $(X^*_t\indiq_{\{t<\tell(X^*)\}},X^*_{\tell(X^*)},\tell(X^*))$
under $\QQ^\e_{x_\e}$ converges to the law of $(X^*_t\indiq_{\{t<\tell(X^*)\}},X^*_{\tell(X^*)},\tell(X^*))$ 
under $\QQ_{x}$. 

\vip
Recall Definition~\ref{dfr2}: let $(Z_t)_{t\geq 0}$ be an $\alpha$-stable process issued from $0$, 
set $R_t=x+Z_t$ for all 
$t\in[0,\tell(x+Z))$ and $R_{\tell(Z)}=\Lambda(x+Z_{\tell(x+Z)-},x+Z_{\tell(x+Z)})$. Then $\tell(R)=\tell(x+Z)$ and
$(R_{t\land\tell(R)})_{t\geq 0}$ has the same law as $(X^*_{t\land\tell(X^*)})_{t\geq 0}$ under $\QQ_x$.
Recalling Step~2, it suffices to show that $((x_\e+Z^\e_t)\indiq_{\{t<\tell(x_\e+Z^\e)\}},
\Lambda(x_\e+Z^\e_{\tell(x_\e+Z^\e)-},x_\e+Z^\e_{\tell(x_\e+Z^\e)}),
\tell(x_\e+Z^\e))$ goes in law
to $((x+Z_t)\indiq_{\{t<\tell(x+Z)\}},\Lambda(x+Z_{\tell(x+Z)-},x+Z_{\tell(x+Z)}),\tell(x+Z))$. This follows from Step~3  and
Lemma~\ref{lemma:cont_g_ell}-(iii).

\vip

{\it Step 5.} Using the Markov property at time $\tell(X^*)$, we have
\begin{align*}
\mathcal{P}^\e_{t} \varphi(x_\e)= &\QQ^\e_{x_\e}[\indiq_{\{t<\tell(X^*)\}}\varphi(X^*_t)] +
\QQ^\e_{x_\e}[\indiq_{\{t\geq \tell(X^*)\}}\cP^\e_{t-\tell(X^*)}\varphi(X^*_{\tell(X^*)})]\\
=&\QQ^\e_{x_\e}[\Psi_\e(X^*_{t}\indiq_{\{t< \tell(X^*)\}},X^*_{\tell(X^*)},\tell(X^*))],
\end{align*}
where for $(y,z,s)\in (\cDd\cup\{0\})\times\pDd\times\R_+$, 
$\Psi_\e(y,z,s)=\indiq_{\{t<s\}}\varphi(y) + \indiq_{\{t\geq s\}}\cP^\e_{t-s}\varphi(z)$.
Similarly, with $\Psi(y,z,s)=\indiq_{\{t<s\}}\varphi(y) + \indiq_{\{t\geq s\}}\cP_{t-s}\varphi(z)$,
$$
\cP_t\varphi(x)=\QQ_x[\Psi(X^*_t\indiq_{\{t< \tell(X^*)\}},X^*_{\tell(X^*)},\tell(X^*))].
$$
By Step~4 and 
Skorokhod's representation theorem, we may consider $(Y_\e, H_\e, \rho_\e)$ (resp. $(Y,H,\rho)$)
distributed as $(X^*_t\indiq_{\{t< \tell(X^*)\}},X^*_{\tell(X^*)},\tell(X^*))$ under $\QQ^\e_{x_\e}$ (resp. under $\QQ_x$)
such that almost surely, $\lim_{\e\to 0}(Y_\e,H_\e,\rho_\e)=(Y,H, \rho)$.
Since $\PP(\rho=t)=0$ (because $t \notin S_2$), we a.s. have $\indiq_{\{t<\rho_\e\}}\to \indiq_{\{t<\rho\}}$
and $\indiq_{\{t\geq \rho_\e\}}\to \indiq_{\{t\geq\rho\}}$ as $\e\to 0$. 
Moreover, we deduce from Lemma~\ref{prop:conv_semi}
that $\indiq_{\{t\geq \rho_\e\}}\cP^\e_{t-\rho_\e}\varphi(H_\e)\to \indiq_{\{t\geq \rho\}}\cP_{t-\rho}\varphi(H)$ a.s.
as $\e\to 0$. All in all, $\Psi_\e(Y_\e,H_\e,\rho_\e)\to \Psi(Y,H,\rho)$ a.s. and, since
$\Psi_\e$ is bounded by $||\varphi||_\infty$, we conclude by dominated convergence that
$$
\mathcal{P}^\e_{t} \varphi(x_\e)=\E[\Psi_\e(Y_\e,H_\e,\rho_\e)]\to \E[ \Psi(Y,H,\rho)]=\cP_t\varphi(x).
$$
\vskip-0.8cm
\end{proof}

\vip
We are now ready to give the

\begin{proof}[Proof of Proposition~\ref{thm:conv_markov}]
We consider $x_\e \in \cDd$ such that $x_\e\to x \in \cDd$,
consider a process $(R^\e_{t})_{t\geq 0}$ with law $\QQ^\e_{x_\e}$ and 
a process $(R_{t})_{t\geq 0}$ with law $\QQ_x$. Thanks to the tightness proved in Lemma~\ref{ret},
it suffices to show that the finite-dimensional distributions of $(R_t^\e)_{t\geq0}$ converge
to those of $(R_t)_{t\geq0}$: for each $n\geq 1$,  any $0 \leq t_1 < \ldots < t_n$ and any
continuous bounded functions $\varphi_1, \cdots, \varphi_n:\closure{\Dd}\to \mathbb{R}$,
\begin{equation}\label{eq:conv_reccur}
\E\bigg[\prod_{k=1}^n\varphi_k(R_{t_k}^\e)\bigg] \longrightarrow \E\bigg[\prod_{k=1}^n\varphi_k(R_{t_k})\bigg] 
\quad \text{as }\e\to0.
\end{equation} 
We work by induction on $n$.
When $n=1$, \eqref{eq:conv_reccur} 
follows from Lemma~\ref{prop:conv_semi_g}. Assume next that~\eqref{eq:conv_reccur} holds true
for some $n\geq 1$ and consider $0 \leq t_1 < \ldots < t_{n+1}$ and some continuous bounded functions
$\varphi_1, \cdots, \varphi_{n+1}:\closure{\Dd}\to \mathbb{R}$. By the Markov property 
applied at time $t_n$, we get
\[
\E\bigg[\prod_{k=1}^{n+1}\varphi_k(R_{t_k}^\e)\bigg] = 
\E\bigg[\mathcal{P}_{u}^\e \varphi_{n+1}(R_{t_n}^\e)\prod_{k=1}^n\varphi_k(R_{t_k}^\e)\bigg],
\]
where $u = t_{n+1} - t_n > 0$. A similar equality holds for $(R_t)_{t\geq0}$ and we write
$$
\bigg|\E\bigg[\prod_{k=1}^{n+1}\varphi_k(R_{t_k}^\e)\bigg] -  \E\bigg[\prod_{k=1}^{n+1} \varphi_k(R_{t_k})\bigg]
\bigg|  \leq A_\e+B_\e,
$$
where
\begin{align*}
A_\e=&\bigg|\E\bigg[\Big(\mathcal{P}_{u}^\e \varphi_{n+1}(R_{t_n}^\e) -\mathcal{P}_u\varphi_{n+1}(R_{t_n}^\e)\Big)
\prod_{k=1}^n \varphi_k(R_{t_k}^\e)\bigg]\bigg|, \\
B_\e=&\bigg|\E\bigg[\mathcal{P}_{u} \varphi_{n+1}(R_{t_n}^\e)\prod_{k=1}^n\varphi_k(R_{t_k}^\e)\bigg] - 
\mathbb{E}_x\bigg[\mathcal{P}_{u} \varphi_{n+1}(R_{t_n})\prod_{k=1}^n\varphi_k(R_{t_k})\bigg]\bigg|.
\end{align*}
Since the limiting process $(R_t)_{t\geq0}$ is Feller, see Theorem~\ref{mr2}, 
the function $\mathcal{P}_u \varphi_{n+1}$ 
is continuous so that we can use the induction hypothesis and apply~\eqref{eq:conv_reccur} 
with the functions 
$\varphi_1, \cdots \varphi_{n-1}$ and $\varphi_n\mathcal{P}_u \varphi_{n+1}$, which implies that $B_\e \to 0$ 
as $\e\to0$. Next, setting $C = \prod_{k=1}^n ||\varphi_k||_{\infty}$, we write
\[
 A_\e \leq C \E\Big[\big|\mathcal{P}_{u}^\e \varphi_{n+1}(R_{t_n}^\e) - 
\mathcal{P}_{u} \varphi_{n+1}(R_{t_n}^\e)\big|\Big].
\]
By Lemma~\ref{prop:conv_semi_g}, it holds that 
$R_{t_n}^\e$ converges in law to $R_{t_n}$ as $\e \to 0$ and therefore, by Skorokhod's representation 
theorem, there exist a family $\bar R_{t_n}^\e$ (with same law as $R_{t_n}^\e$) a.s. converging to some
$\bar R_{t_n}$ (with same law as $R_{t_n}$). Thus
\begin{align*}
A_\e\leq & C  \E\Big[\big|\mathcal{P}_{u}^\e \varphi_{n+1}(\bar R_{t_n}^\e) - 
\mathcal{P}_{u} \varphi_{n+1}(\bar R_{t_n}^\e)\big|\Big]\\
\leq & C  \E\Big[\big|\mathcal{P}_{u}^\e \varphi_{n+1}(\bar R_{t_n}^\e) - 
\mathcal{P}_{u} \varphi_{n+1}(\bar R_{t_n})\big|\Big]+  \E\Big[\big|\mathcal{P}_{u} \varphi_{n+1}(\bar R_{t_n}^\e) - 
\mathcal{P}_{u} \varphi_{n+1}(\bar R_{t_n})\big|\Big].
\end{align*}
The first term tends to $0$ as $\e\to 0$ by Lemma~\ref{prop:conv_semi_g} and dominated convergence,
as well as the second one by continuity of $\mathcal{P}_{u} \varphi_{n+1}$ and dominated convergence.
\end{proof}

\section{Convergence of the excursion measures and related estimates}\label{stech}

In this section, we first introduce some notations related to the discrete excursion measures.
Then we establish some estimates on random walks in the half-space. We next prove 
Proposition~\ref{lemma:estimates} under Assumption~\ref{assump:moments}-(b) and then under 
Assumption~\ref{assump:moments}-(a) separately, the case of (a) being more delicate.
Finally, we prove Lemma~\ref{tmok}.

\subsection{Notation}\label{notex}

For $w\in \DD(\R_+,\R^d)$, we set as usual
$$
\ell(w)=\inf\{t>0 : w(t) \notin \HH\} \quad \text{and} \quad M(w)=\sup_{t\in [0,\ell(w)]}|w(t)|.
$$
We also recall some notations introduced before, see Notation~\ref{neps} and Proposition~\ref{lemma:estimates}.
With $\zeta=1/2$ if $\beta=*$ and $\zeta=\beta/\alpha$ if $\beta\in (0,\alpha/2)$, we have, for all $\e\in (0,1]$,
\begin{equation}\label{newt1}
\nn^\e = \chi_\rG \e^{-\zeta} \cL(Y^\e_{t\land \ell(Y^\e)}),
\end{equation}
where $Y_t^\varepsilon = O_\varepsilon + \int_0^t\int_{\mathbb{R}^d} u \mathrm{K}_\varepsilon(\dr s, \dr u)$,
with $O_\varepsilon \sim \mathrm{G}_\varepsilon(v) \dr v$ independent of the Poisson measure $\mathrm{K}_\varepsilon$ on 
$\mathbb{R}_+ \times \mathbb{R}^d$ with intensity $\varepsilon^{-1}\dr s \mathrm{F}_\varepsilon(u) \dr u$.
Recall that $\rF_\e$ and $\rG_\e$ were introduced in Notation~\ref{fgh}.
Let us observe at once that since $(Y^\e_t)_{t\geq 0}$ has the same law as $(\e^{1/\alpha}Y^1_{t/\e})_{t\geq 0}$,
it holds that
\begin{equation}\label{newt2}
\nn^\e = \e^{-\zeta} (\Phi_\e\# \nn^1), \quad \text{where} \quad \Phi_\e(w)=(\e^{1/\alpha}w(t/\e))_{t\geq 0}. 
\end{equation}
For $w\in  \DD(\R_+,\R^d)$, we have $\ell(\Phi_\e(w))=\e\ell(w)$ and $M(\Phi_\e(w))=\e^{1/\alpha}M(w)$.
For $x\in \HH$, we introduce $Y^{\e,x}_t=x+ \int_0^t\int_{\mathbb{R}^d} u \mathrm{K}_\varepsilon(\dr s, \dr u)$.
Then $(Y^{\e,x}_t)_{t\geq 0}$ has the same law as $(\e^{1/\alpha}Y^{1,\e^{-1/\alpha}x}_{t/\e})_{t\geq 0}$,
whence in particular, for all $x \in \HH$, all $\e\in (0,1]$,
\begin{equation}\label{newt3}
\ell(Y^{\e,x})\stackrel{(d)}=\e \ell(Y^{1,\e^{-1/\alpha}x}) \quad \text{and}\quad M(Y^{\e,x})\stackrel{(d)}=
\e^{1/\alpha} M(Y^{1,\e^{-1/\alpha}x}).
\end{equation}
By~\eqref{newt1}, for any measurable $\phi:\cE\to \R_+$, since $\rG_\e(y)=\e^{-d/\alpha}\rG_1(\e^{-1/\alpha}y)$,
\begin{align}
\int_\cE \phi(e)  \nn^{\e} (\dr e) =& \chi_{\mathrm{G}}\e^{-\zeta-d/\alpha}\int_{\HH}
\mathbb{E}\Big[\phi\big(\big(Y_{t\wedge \ell(Y^{\e, y})}^{\e, y}\big)_{t\geq 0}\big)\Big]
\mathrm{G}_1(\e^{-1/\alpha}y) \dr y \label{newt4}\\
=& \chi_\rG\e^{-\zeta}\int_{\HH}
\mathbb{E}\Big[\phi\big(\big(Y_{t\wedge \ell(Y^{\e, \e^{1/\alpha }x})}^{\e, \e^{1/\alpha }x}\big)_{t\geq0}\big)\Big]
\mathrm{G}_1(x) \dr x. \label{newt5}
\end{align}
Finally, we will use that for all $x\in \HH$, for $(Z_t)_{t\geq 0}$
an ISP$_{\alpha,x}$ under $\PP_x$,
\begin{equation}\label{eer2}
\PP_x\Big(\inf_{t\in[0,\ell(Z))} d(Z_t,\HH^c)>0, Z_{\ell(Z)} \in \bar \HH^c \Big)=1.
\end{equation}
This is nothing but~\eqref{eer} when $\Dd=\HH$. By~\eqref{eer2} and the Markov property of $\nn_*$, 
see Lemma~\ref{mark}, we deduce that for all $\eta>0$,
\begin{equation}\label{eer3}
\text{for $\nn_*$-a.e. $e\in \cE$ such that $\ell(e)>\eta$,}\quad 
\inf_{t\in[\eta,\ell(e))} d(e(t),\HH^c)>0 \quad \text{and} \quad e(\ell(e)) \in \bar \HH^c.
\end{equation}

\subsection{Estimates on random walks}

Recall that $\rF_1$ denotes the law of $EU$ where $E \sim \mathrm{Exp}(1)$ and $U \sim \mathrm{F}(v)\dr v$,
see Notation~\ref{fgh}.

\begin{lemma}\label{prop:estimate_rw}
Grant Assumption~\ref{assump_equi} with $\kappa_F=1/\Gamma(\alpha+1)$. 
Let $(\mathbf{S}_n)_{n\geq0}$ be a random walk with incremental law 
$\mathrm{F}_1$ starting at $0$.
There is $\mathcal{V} : \mathbb{R}_+ \to \R_+$ such that for all $x\in \HH$,
\begin{align}\label{eq:sim_return}
\mathbb{P}(\check{\ell}(x+\mathbf{S}) > n) \sim \mathcal{V}(x_1)n^{-1/2} \quad \text{as }n\to\infty.
\end{align}
where $\check{\ell}(x+\mathbf{S}) = \min\{n\geq 1 : (x +\mathbf{S}_n) \notin \HH\}$.
Moreover, there is a constant $C$ such that for any $x \in\HH$, any $n\geq 0$, any $y>0$,
setting $\check{M}(x+\mathbf{S})=\max_{i=1,\dots,\check{\ell}(x+\mathbf{S})} |x+\mathbf{S}_i|$,
\begin{gather}
\mathbb{P}(\check{\ell}(x+\mathbf{S}) > n) \leq C\mathcal{V}(x_1)(n+1)^{-1/2} 
\quad \text{and} \quad \mathcal{V}(x_1) \leq C(1 + x_1^{\alpha/2}),\label{eq:maj_return}\\
\mathbb{P}(\check{M}(x+\mathbf{S}) > y) \leq C(1+|x|^{\alpha/2})y^{-\alpha/2}.\label{eq:maj_unif2}
\end{gather}
\end{lemma}

These estimates are more or less well-known, except maybe~\eqref{eq:maj_unif2}.

\begin{remark} We will use~\eqref{eq:maj_unif2} only under Assumption~\ref{assump:moments}-(b).
Its proof relies on the fact that
\begin{equation}\label{tododo}
\text{there is $C>0$ such that for all $y>0$,}\quad
\mathbb{P}(\check{M}(\mathbf{S}) > y)
\leq  C y^{-\alpha/2}.
\end{equation}
This has been proved for a one-dimensional random walk by Doney~\cite[Corollary 3]{doney1985conditional},
and his arguments can be extended to the multidimensional case. We will check~\eqref{tododo}
while proving Proposition~\ref{lemma:estimates} under Assumption~\ref{assump:moments}-(a), namely just after
the proof of Lemma~\ref{tic2}.
\end{remark}

\begin{proof}[Proof of Lemma~\ref{prop:estimate_rw}]
We set $U_n = \mathbf{S}_n \cdot \be_1$ which is a one-dimensional symmetric random walk. 
We define its {\it decreasing ladder heights and epochs} $(\gamma_n, H_n)_{n\in\mathbb{N}}$ by setting 
$H_0 = 0$, $\gamma_0=0$ and for any $n \geq 1$, $\gamma_n = \min\{k > \gamma_{n-1} : U_k < H_{n-1}\}$ and 
$H_n = U_{\gamma_n}$. By the strong Markov property,
$(\gamma_n, -H_n)_{n\in\mathbb{N}}$ is a bivariate random walk, and each 
component is increasing. We now introduce the renewal function $\mathcal{W}:(0,\infty)\to [1,\infty)$ 
of $(H_n)_{n\geq0}$ defined by $ \mathcal{W}(y) = \mathbb{E}[T_y ]$,
where $T_y = \min\{n\geq 1 : y + H_n < 0\}$. Then~\eqref{eq:sim_return}, with $\mathcal{V}(y)
=a\mathcal{W}(y)$ for some
constant $a>0$, is a rather classical fact, 
see for instance Denisov-Wachtel~\cite[Theorem 1]{denisov_wachtel_exact}. The fact that there is no slowly 
varying function in the asymptotics follows from the symmetry of the random walk, see for 
instance~\cite{BB23}.
The first bound in~\eqref{eq:maj_return} follows again from~\cite[Theorem 1]{denisov_wachtel_exact}. 
The second bound  in~\eqref{eq:maj_return} follows from Doney~\cite[Lemma 7]{MR2892956}, which tells us that 
$\mathcal{V}(y) \sim cy^{\alpha /2}$ as $y\to \infty$, and from the fact that 
$\mathcal{V}$ locally bounded on $\R_+$ (because it is nondecreasing).

\vip
We finally show~\eqref{eq:maj_unif2}. Note that we may (and will) 
assume that $y>2|x|$. It holds that 
$\check{\ell}(x+\mathbf{S})=\inf\{k\geq 1 : x+U_k<0\} = \gamma_{T_{x_1}}$. We set
$Z_k= \max_{i \in\{\gamma_{k-1},\ldots, \gamma_{k}\}}|\mathbf{S}_i-\mathbf{S}_{\gamma_{k-1}}|$ and write
\begin{gather*}
\check{M}(x+\mathbf{S}) = \max_{k \in \{1, \ldots, T_{x_1}\}} \max_{i \in\{\gamma_{k-1},\ldots, \gamma_{k}\}}|x + \mathbf{S}_i|\leq
|x|+ \max_{k \in \{1, \ldots, T_{x_1}\}} |\mathbf{S}_{\gamma_{k-1}}| +  \max_{k \in \{1, \ldots, T_{x_1}\}}Z_k.
\end{gather*}
Since  $\max_{k \in \{1, \ldots, T_{x_1}\}} |\mathbf{S}_{\gamma_{k-1}}|\leq 
\sum_{k=1}^{T_{x_1}}|\mathbf{S}_{\gamma_{k}}-\mathbf{S}_{\gamma_{k-1}}| \leq \sum_{k=1}^{T_{x_1}}  Z_k$, we find
$$
\check{M}(x+\mathbf{S}) \leq |x| + 2\sum_{k=1}^{T_{x_1}}  Z_k.
$$
Thus, since  $y>2|x|$,
$$
\PP(\check{M}(x+\mathbf{S})>y)\leq \PP\Big(\sum_{k=1}^{T_{x_1}}  Z_k \geq \frac{y-|x|}2\Big)
\leq  \PP\Big(\sum_{k=1}^{T_{x_1}}  Z_k \geq \frac{y}4\Big)\leq
\PP\Big(\sum_{k=1}^{T_{x_1}}  \Big(Z_k \land \frac y 4 \Big) \geq \frac{y}4\Big).
$$
The trick used in the last inequality is borrowed from Denisov-Wachtel~\cite{denisov_wachtel_exact}.
Thus 
$$
\PP(\check{M}(x+\mathbf{S})>y)\leq \E\Big[\sum_{k=1}^{T_{x_1}}  \Big(\frac{4Z_k}y \land 1 \Big)\Big]
= \mathbb{E}[T_{x_1}] \E\Big[\frac{4Z_1}y\land 1\Big]
$$
by Wald's identity: the sequence $(Z_k)_{k\geq 1}$ is i.i.d., with $Z_k$ being $\cF_{\gamma_k}$-measurable and
$T_{x_1}$ is an $(\cF_{\gamma_k})_{k\geq 0}$-stopping time, because it is a hitting time of $(H_k=U_{\gamma_k})_{k\geq 0}$.
But we have seen that $\mathbb{E}[T_{x_1}]=\mathcal{W}(x_1)\leq C(1+x_1^{\alpha/2})\leq C(1+|x|^{\alpha/2})$.
Moreover, recalling that $Z_1=\max_{i \in\{0,\dots,\gamma_1\}}|\mathbf{S}_i|$ and that 
$\gamma_1=\check{\ell}(\mathbf{S})$,
we see that $Z_1\stackrel{(d)}=\check{M}(\mathbf{S})$. Thus  $\PP(Z_1>z)\leq C z^{-\alpha/2}$ by~\eqref{tododo} and
$$
\E\Big[\frac{4Z_1}y\land 1\Big]=\int_0^1 \PP\Big(Z_1 \geq \frac{yz}4\Big) 
\dr z \leq C \int_0^1 \frac{\dr z}{(yz)^{\alpha/2}}\leq \frac C {y^{\alpha/2}}.
$$
We have shown that $\PP(\check{M}(x+\mathbf{S})\geq y)\leq C (1+|x|^{\alpha/2})y^{-\alpha/2}$, which was our goal.
\end{proof}

We now deduce similar estimates for continuous-time random walks.

\begin{lemma}\label{newt}
Grant Assumption~\ref{assump_equi} and recall that $(Y^{\e,x}_t)_{t\geq 0}$ was introduced in 
Subsection~\ref{notex}. There is a constant $C$ such that for all $\e\in (0,1]$, all $x\in\HH$, all $t > 0$,
all $y>0$,
\begin{gather}
\mathbb{P}(\ell(Y^{\e,x}) > t) \leq C(\e^{1/2}+x_1^{\alpha/2})t^{-1/2}, \label{newe1}\\
\mathbb{P}(M(Y^{\e,x})> y) \leq C (\e^{1/2} +|x|^{\alpha/2}) y^{-\alpha/2}. \label{newe2}
\end{gather}
For all $x\in \HH$,
\begin{equation}\label{newe3}
\mathbb{P}(\ell(Y^{1,x}) > t) \sim \mathcal{V}(x_1) t^{-1/2} \quad \text{as $t\to \infty$.}
\end{equation}
\end{lemma}

\begin{proof} By~\eqref{newt3}, it suffices to show~\eqref{newe1}-\eqref{newe2} when $\e=1$.
But we can write $Y^{1,x}_t=x+\mathbf{S}_{N_t}$, where $(\mathbf{S}_n)_{n\geq0}$ is as in Lemma~\ref{prop:estimate_rw}
and is independent of a Poisson process $(N_t)_{t\geq0}$ of parameter~$1$. We also observe that
$\{\ell(Y^{1,x})>t\}=\{\check{\ell}(x+\mathbf{S}) > N_t\}$ and that $M(Y^{1,x})=\check{M}(x+\mathbf{S})$.
Thus~\eqref{newe2} (with $\e=1$) immediately follows from~\eqref{eq:maj_unif2}, while~\eqref{eq:maj_return}
tells us that 
\begin{align*}
\mathbb{P}(\ell(Y^{1,x}) > t)\leq C (1 +x_1^{\alpha /2})\mathbb{E}\big[(1+N_t)^{-1/2}\big] 
\leq C(1 +x_1^{\alpha /2})t^{-1/2},
\end{align*}
because $\mathbb{E}[(1+N_t)^{-1/2}]\leq \E[(1+N_t)^{-1}]^{1/2}
=[e^{-t}\sum_{k\geq 0} \frac{t^{k}}{(k+1)!}]^{1/2}=[\frac{1-e^{-t}}t]^{1/2}\leq \frac 1{t^{1/2}}$.
This proves~\eqref{newe1} (when $\e=1$). Finally, for any $a \in (0,1)$,
\[
\mathbb{P}(\check{\ell}(x+\mathbf{S}) > t(1+a), \: |N_t / t -1| \leq a) 
\leq \mathbb{P}(\ell(Y^{1,x})> t, \: |N_t / t -1| \leq a) 
\leq \mathbb{P}(\check{\ell}(x+\mathbf{S}) > t(1-a)).
\]
By Bienaymé-Tchebychev's inequality, it holds that $\mathbb{P}(|N_t/t -1| > a) \leq a^{-2}t^{-1}$, from which 
$t^{1/2}\mathbb{P}(|N_t/t -1| > a) \to 0$ as $t\to\infty$. 
We thus get from~\eqref{eq:sim_return} that
\[
\mathcal{V}(x_1)(1+a)^{-1/2} \leq \liminf_{t\to\infty}t^{1/2}\mathbb{P}(\ell(Y^{1,x})> t) \leq 
\limsup_{t\to\infty}t^{1/2}\mathbb{P}(\ell(Y^{1,x}) > t) \leq \mathcal{V}(x_1)(1-a)^{-1/2}.
\]
Letting $a\to0$ completes the proof of~\eqref{newe3}.
\end{proof}

\subsection{Convergence of the excursion measure under Assumption~\ref{assump:moments}-(b)}

We start with an easy lemma. Recall that $\ell_r(w)=\inf\{t>0 : w(t)\notin B_d(r\be_1,r)\}$.

\begin{lemma}\label{lemma:conv_stop_sortie}
Grant Assumption~\ref{assump_equi} with $\kappa_\rF=1/\Gamma(\alpha+1)$
and recall that for $x\in  \HH$, the process $(Y^{\e,x}_t)_{t\geq 0}$ was introduced in Subsection~\ref{notex}.
Let $(Z_t^x)_{t\geq 0}$ be an ISP$_{\alpha,x}$.
\vip
(a) We have the following convergence in law in $\mathbb{D}(\mathbb{R}_+, \mathbb{R}^d)$ 
endowed with $\bm{\JJ}_1$-topology:
\[
\big(Y_{t\wedge \ell(Y^{\e,x})}^{\e, x}\big)_{t\geq0} \longrightarrow \big(Z_{t\wedge \ell(Z^x)}^x\big)_{t\geq0} 
\quad \text{as }\e\to0
\]

(b) For any $r >0$, $\ell_r(Y^{\e,x})$ converges in law to $\ell_r(Z^x)$ as $\e\to0$.
\end{lemma}

\begin{proof}
As in Step~3 of the proof of Lemma~\ref{prop:conv_semi_g}, it holds that
that $(Y^{\e,x}_t)_{t\geq 0}$ converges in law to $(Z_t^x)_{t\geq 0}$ for the local uniform topology.
By Skorokhod's representation theorem, we may assume that the convergence holds a.s.
From~\eqref{eer2}, it holds that a.s., $\inf_{t\in[0,\ell(Z^x))}d(Z_t^x, \HH^c) > 0$ 
and $Z^x_{\ell(Z^x)} \notin \closure{\HH}$. Thus for $\e$ small enough, we have 
$\inf_{t\in[0,\ell(Z^x))}d(Y_t^{\e,x}, \HH^c) > 0$ and $Y^{\e,x}_{\ell(Z^x)} \notin \closure{\HH}$, implying that 
$\ell(Y^{\e,x})=\ell(Z^x)$. Point (a) follows. The proof of (b) is similar.
\end{proof}

We can now give the

\begin{proof}[Proof of Proposition~\ref{lemma:estimates} under Assumption~\ref{assump:moments}-(b)]
We recall that 
$\chi_{\mathrm{G}} = 1 /(2\kappa_{\mathrm{G}}\Gamma(\beta + 1))$ with
$\kappa_{\mathrm{G}}$ introduced in Assumption~\ref{assump:moments}-(b).
\vip
{\it Step 1.} Here we prove that
\begin{equation}\label{eq:equi_G}
\mathrm{G}_1(x) \sim 2\Gamma(\beta + 1)\kappa_{\mathrm{G}}|x|^{-d- \beta} \quad \text{as }|x|\to \infty
\end{equation}
and that there is $C>0$ such that for all $x \in \HH$,
\begin{equation}\label{eq:bound_G1}
\rG_1(x)\leq  C (|x|^{1 - d}\wedge |x|^{-\beta - d}).
\end{equation}
We recall from Notation~\ref{fgh} that $\rG_1$ is the law of $EW$ where $E \sim \mathrm{Exp}(1)$ and 
$W \sim \mathrm{G}_+(v) \dr v=2\rG(v)\indiq_{\{v \in \HH\}}\dr v$. An easy computation shows that for any $x\in \HH$,
\[
 \mathrm{G}_1(x) = 2\int_0^\infty t^{-d}e^{-t}\mathrm{G}(x/t) \dr t.
\]
Recalling Assumption~\ref{assump:moments}-(b), $\mathrm{G}(x/t) \sim \kappa_{\mathrm{G}}t^{\beta + d}|x|^{-\beta-d}$
as $|x|\to \infty$, for any $t > 0$, and there is a constant $C$ such 
$\mathrm{G}(x/t) \leq C t^{\beta+d}|x|^{-\beta - d}$ for any $x\in \R^d$, any $t>0$.
Then~\eqref{eq:equi_G} follows by dominated convergence, and we also have
$\rG_1(x)\leq C |x|^{-d- \beta}$. Assumption~\ref{assump:moments}-(b) also requires that
$G(v) \leq C (1+ |v|)^{-\beta - d}\leq C (1\land|v|^{-\beta-d})$, so that 
$\rG(x/t)\leq C(1 \land |x|^{-\beta-d}t^{\beta+d})$ and thus 
$$
\rG_1(x)\leq
C |x|^{-\beta - d} \int_0^{|x|}t^{\beta}e^{-t} \dr t + C \int_{|x|}^\infty t^{-d}e^{-t} \dr t
\leq  C |x|^{-\beta - d} \int_0^{|x|}t^{\beta} \dr t + C \int_{|x|}^\infty t^{-d} \dr t = C |x|^{1-d}.
$$

\textit{Step 2.} We fix $\delta>0$ and $\phi:\cE\to\R$ bounded and continuous 
for the $\JS$-topology and we prove~\eqref{j3s}. By~\eqref{newt4} and since 
$\nn^{\e,\delta}(\dr e)=\indiq_{\{|e(0)|>\delta\}}\nn^\e(\dr e)$, we have 
\begin{equation}\label{ntruc}
\int_\cE \phi(e)  \nn^{\e,\delta} (\dr e) = \chi_{\mathrm{G}}\e^{-\beta/\alpha-d/\alpha}\int_{\{|y|>\delta\}}
\mathbb{E}\Big[\phi\big(\big(Y_{t\wedge \ell(Y^{\e, y})}^{\e, y}\big)_{t\geq 0}\big)\Big]
\mathrm{G}_1(\e^{-1/\alpha}y) \dr y.
\end{equation}
By Lemma~\ref{lemma:conv_stop_sortie}, for each $y \in \HH$,
\[
\mathbb{E}\Big[\phi\big(\big(Y_{t\wedge \ell(Z^{\e, y})}^{\e, y}\big)_{t\geq0}\big)\Big] 
\longrightarrow\mathbb{E}_y\Big[\phi\big(\big(Z_{t \wedge \ell(Z)}\big)_{t\geq0}\big)\Big] \quad \text{as }\e\to0.
\]
Moreover, it follows from Step~1 that for any $y\in\HH$, $\chi_\rG\e^{-(\beta + d) / \alpha}\mathrm{G}_1(\e^{-1/\alpha}y) 
\to |y|^{-d-\beta}$ as $\e\to0$. We also get from~\eqref{eq:bound_G1} that 
$\e^{-(\beta + d) / \alpha}\mathrm{G}_1(\e^{-1/\alpha}y) \leq C |y|^{-d-\beta}$, which is integrable on $\{|y| > \delta\}$. 
We thus can use the dominated convergence theorem to get
\[
\lim_{\e\to 0}\int_\cE \phi(e)  \nn^{\e,\delta} (\dr e)=\int_{\{|y| > \delta\}} 
\mathbb{E}_y\Big[\phi\big(\big(Z_{t \wedge \ell(Z)}\big)_{t\geq0}\big)\Big]|y|^{-d-\beta}\dr y = \int_\cE \phi(e)  
\nn_\beta^{\delta} (\dr e),
\]
recall that $\nn_\beta$ was defined in~\eqref{nnb} and that $\nn_\beta^\delta(\dr e)=
\indiq_{\{|e(0)|>\delta\}}\nn_\beta(\dr e)$, see~\eqref{nd2}. 
Note that we have not yet used the fact that $\beta < \alpha / 2$.

\vip
{\it Step 3.} We next show~\eqref{aan}: we have $\nn_\beta^\delta(\cE)=\int_{\{|y|>\delta\}}|y|^{-d-\beta}\dr y<\infty$,
while using~\eqref{ntruc}, we find 
$\nn^{\e,\delta}(\cE)= \chi_\rG \e^{-(\beta+d)/\alpha}\int_{\{|y|>\delta\}} \rG_1(\e^{-1/\alpha}y)\dr y
\leq C\int_{\{|y|>\delta\}}|y|^{-d-\beta}\dr y\leq C$.

\vip
\textit{Step 4.} We now prove~\eqref{j1s}. Fix $\theta\in(0,\beta/\alpha)$. 
We have to show that $I_\e$ is bounded, where
\begin{align*}
I_\e:=&\int_{\cE} [M(e)\land 1 + \ell(e)\land[\ell(e)]^\theta] \nn^\e(\dr e)\\
=&\chi_\rG\e^{-(\beta+d)/\alpha}\int_\HH \E\Big[M(Y^{\e,y}\big))\land 1
+\ell(Y^{\e, y})\land[\ell(Y^{\e, y})]^\theta\Big]  \rG_1(\e^{-1/\alpha}y)\dr y.
\end{align*}
by~\eqref{newt4}. We deduce from~\eqref{newe2} that
$$
\E\Big[M(Y^{\e, y})\land 1\Big]
=\int_0^1\PP(M(Y^{\e, y})>z)\dr z\leq C (\e^{1/2}+|y|^{\alpha/2})\land 1.
$$
Consider a r.v. $L>0$ satisfying $\PP(L>t)\leq a t^{-1/2}$ for some $a>0$. If $a>1$, 
since $\theta\in (0,1/2)$,
$$
\E[L\land L^\theta]=\int_0^\infty \PP(L\land L^\theta>t)\dr t \leq a^{2\theta}
+ \int_{a^{2\theta}}^\infty \PP(L>t^{1/\theta})\dr t\leq a^{2\theta}+a\int_{a^{2\theta}}^\infty \frac{\dr t}{t^{1/(2\theta)}} 
=  C a^{2\theta}.
$$
If $a\in (0,1]$, we have
$$
\E[L\land L^\theta] \leq \int_0^1 \PP(L>t)\dr t + 
\int_1^\infty \PP(L>t^{1/\theta})\dr t \leq \int_0^1 \frac{a\dr t}{t^{1/2}}+\int_0^1 \frac{a\dr t}{t^{1/(2\theta)}}
= Ca.
$$
Thus $\E[L \land L^\theta]\leq C(a \land a^{2\theta})$ in any case. From this and~\eqref{newe1}, we conclude that
$$
\E\Big[\ell(Z^{\e, y})\land[\ell(Z^{\e, y})]^\theta\Big] \leq C[ (\e^{1/2}+|y|^{\alpha/2}) \land
(\e^{1/2}+|y|^{\alpha/2})^{2\theta}].
$$
All in all (and since $a\land 1 \leq a \land a^{2\theta}$ for all $a>0$), we have proved that
\begin{align}
I_\e\leq& C \e^{-(\beta+d)/\alpha}\int_\HH[ (\e^{1/2}+|y|^{\alpha/2}) \land
(\e^{1/2}+|y|^{\alpha/2})^{2\theta}] \rG_1(\e^{-1/\alpha}y)\dr y \notag\\
\leq & C\e^{-(\beta+d)/\alpha}\int_\HH[ (\e^{1/2}+|y|^{\alpha/2}) \land
(\e^{1/2}+|y|^{\alpha/2})^{2\theta}] \frac{\dr y}{|\e^{-1/\alpha}y|^{d-1}\lor |\e^{-1/\alpha}y|^{\beta + d}} \label{ssd}
\end{align}
by~\eqref{eq:bound_G1}. We now write $I_\e\leq I_{\e,1}+I_{\e,2}+I_{\e,3}$, where $I_{\e,1}$ (resp. $I_{\e,2}$, $I_{\e,3}$) 
stands for the integral on $\{|y|\leq \e^{1/\alpha}\}$ (resp.  $\{\e^{1/\alpha}<|y|\leq 1\}$, $\{|y|>1\}$). First,
$$
I_{\e,1}\leq \frac{C}{\e^{(\beta+d)/\alpha}}\int_{\{|y|\leq \e^{1/\alpha}\}}(\e^{1/2}+|y|^{\alpha/2})\frac{\dr y}
{\e^{-1/\alpha}|y|^{d-1}}
\leq \frac{C}{\e^{(\beta+d)/\alpha}}\int_{\{|y|\leq \e^{1/\alpha}\}}\e^{1/2}\frac{\dr y}
{|\e^{-1/\alpha}y|^{d-1}}.
$$
Since $\int_{\{|y|\leq \e^{1/\alpha}\}}\frac{\dr y}{|y|^{d-1}}=C \e^{1/\alpha}$, 
we find $I_{\e,1}\leq C \e^{1/2-\beta/\alpha} \leq C$.
Next, 
$$
I_{\e,2}\leq \frac{C}{\e^{(\beta+d)/\alpha}}\int_{\{\e^{1/\alpha}<|y|\leq 1\}}(\e^{1/2}+|y|^{\alpha/2})
\frac{\dr y}{|\e^{-1/\alpha}y|^{\beta + d}}
\leq \frac{C}{\e^{(\beta+d)/\alpha}}\int_{\{\e^{1/\alpha}<|y|\leq 1\}} |y|^{\alpha/2}\frac{\dr y}{|\e^{-1/\alpha}y|^{\beta + d}}.
$$
Since $\int_{\{|y|\leq 1\}}|y|^{\alpha/2-\beta-d}\dr y = C$ (because $\alpha/2-\beta>0$),
we conclude that $I_{\e,2}\leq C$. Finally,
$$
I_{\e,3}\leq \frac{C}{\e^{(\beta+d)/\alpha}}\int_{\{|y|\geq 1\}}(\e^{1/2}+|y|^{\alpha/2})^{2\theta}
\frac{\dr y}{|\e^{-1/\alpha}y|^{\beta + d}}
\leq \frac{C}{\e^{(\beta+d)/\alpha}}\int_{\{|y|\geq 1\}} |y|^{\alpha\theta}\frac{\dr y}{|\e^{-1/\alpha}y|^{\beta + d}}.
$$
Since $\int_{\{|y|\geq 1\}}|y|^{\alpha\theta-\beta-d}\dr y = C$ (because $\alpha\theta-\beta<0$, 
since $\theta<\beta/\alpha$), we have $I_{\e,3}\leq C$.

\vip
\textit{Step 5.} We next prove~\eqref{j2s}. Exactly as in Step~4 (with $\theta=0$), see~\eqref{ssd}, we find
\begin{align*}
J_{\e,\delta}:=&\int_{\cE} [M(e)\land 1 + \ell(e)\land 1] (\nn^\e-\nn^{\e,\delta})(\dr e)\\
\leq & \frac{C}{\e^{(\beta+d)/\alpha}}\int_{\{|y|\leq \delta\}} (\e^{1/2}+|y|^{\alpha/2}) 
\frac{\dr y}{|\e^{-1/\alpha}y|^{d-1}\wedge |\e^{-1/\alpha}y|^{\beta + d}}.
\end{align*}
For $\e$ small enough, we write $J_{\e,\delta}\leq J_{\e,1}+J_{\e,\delta,2}$, where $J_{\e,1}$ (resp. $J_{\e,\delta,2}$)
stands for the integral on $\{|y|\leq \e^{1/\alpha}\}$ (resp.  $\{\e^{1/\alpha}<|y|\leq \delta\}$).
We have $J_{\e,1}=I_{\e,1}\leq  C \e^{1/2-\beta/\alpha}$, so that $\lim_{\e\to 0} J_{\e,1}=0$. We also have
$$
J_{\e,\delta,2}\leq \frac{C}{\e^{(\beta+d)/\alpha}}\int_{\{\e^{1/\alpha}<|y|\leq \delta\}}\!\!(\e^{1/2}+|y|^{\alpha/2})
\frac{\dr y}{|\e^{-1/\alpha}y|^{\beta + d}}
\leq \frac{C}{\e^{(\beta+d)/\alpha}}\int_{\{\e^{1/\alpha}<|y|\leq \delta\}}\!\! 
|y|^{\alpha/2}\frac{\dr y}{|\e^{-1/\alpha}y|^{\beta + d}}.
$$
but $\int_{\{|y|\leq \delta\}}|y|^{\alpha/2-\beta-d}\dr y = C \delta^{\alpha/2-\beta}$,
whence $J_{\e,\delta,2}\leq C\delta^{\alpha/2-\beta}$ and $\lim_{\delta\to 0} \limsup_{\e\to 0} J_{\e,\delta,2}=0$.

\vip
\textit{Step 6.} We prove~\eqref{j6s}. For $\delta  >0$ and $a>0$, we write, since $z>a$ implies 
$z\land 1\geq a\land 1$,
\begin{align*}
 \nn^{\e}(\{e\in \cE : |e(0)|\leq \delta, \ell(e)>a\})\leq &
(\nn^{\e} - \nn^{\e,\delta})(\{e\in \cE : \ell(e)\land 1 \geq  a\land 1\}) \\
\leq& \frac{1}{a\land 1}
\int_\cE (\ell(e)\land 1)(\nn^\e - \nn^{\e, \delta})(\dr e).
\end{align*}
The result then follows from~\eqref{j2s}.

\vip

\textit{Step 7.} It only remains to check~\eqref{j5s}. By~\eqref{newt4}, we have
$$
\nn^\e(\ell_r>\eta)=\chi_\rG\e^{-(\beta+d)/\alpha}\int_\HH \PP(\ell_r(Y^{\e,y})>\eta) \rG_1(\e^{-1/\alpha}y)\dr y.
$$
Thanks to Lemma~\ref{lemma:conv_stop_sortie}, for any $y\in B_d(r\be_1, r)$, it holds that 
$\ell_r(Y^{\e,y})$ converges in law to $\ell_r(Z^y)$ where $(Z_t^y)_{t\geq0}$ is an $\mathrm{ISP}_{\alpha, y}$. 
Moreover, $\lim_{\e\to 0}\chi_\rG\e^{-(\beta + d) / \alpha}\mathrm{G}_1(\e^{-1/\alpha}y) =|y|^{-d-\beta}$ by~\eqref{eq:equi_G}.
Combining Fatou's lemma and Portmanteau's theorem, we get
\[
\liminf_{\e\to0}\nn^\e(\ell_r(e) > \eta) \geq \int_{B_d(r\be_1, r)}\mathbb{P}(\ell_r(Z^{y}) > \eta)|y|^{-d-\beta} \dr y.
\]
Since $\PP(\ell_r(Z^{y}) >0)=1$ for all $y\in B_d(r\be_1, r)$, we get by monotone convergence
\[
 \liminf_{\eta\to0}\liminf_{\e\to0}\nn^\e(\ell_r(e) > \eta) \geq \int_{B_d(r\be_1, r)}|y|^{-d-\beta} \dr y = \infty.
\]
\vskip-0.95cm
\end{proof}

\subsection{Convergence of the excursion measure under Assumption~\ref{assump:moments}-(a)}

We are now able to specify the value of the constant $\chi_{\mathrm{G}}$ in Proposition~\ref{lemma:estimates} 
when $\beta=*$.

\begin{definition}\label{def:chig}
Grant Assumption~\ref{assump_equi} and Assumption~\ref{assump:moments}-(a). Let $\mathcal{V}$ be the 
function from Lemma~\ref{prop:estimate_rw}. We define, recalling that $\rG_1=\cL(UW)$
with $U\sim \mathrm{Exp}(1)$ and $W\sim 2\rG(v)\indiq_{\{v \in \HH\}}\dr v$,
\[
\chi_{\mathrm{G}} = \frac{\nn_*(\ell > 1)}{\int_{\HH}\mathcal{V}(x_1) \mathrm{G}_1(\dr x)}.
\]
\end{definition}

Let us check that $I:=\int_{\HH}\mathcal{V}(x_1) \mathrm{G}_1(\dr x)< \infty$: recalling Notation~\ref{fgh},
we have $I=\E[\mathcal{V}(EW_1)]$, where $E \sim \mathrm{Exp}(1)$ and $W \sim \mathrm{G}_+(v) \dr v$ are
independent. Thus $I\leq C\E[1+|EW_1|^{\alpha/2}]$ by~\eqref{eq:maj_return}. This quantity is 
finite since $\E[|W|^{\alpha/2}]<\infty$ under Assumption~\ref{assump:moments}-(a).

\vip

The main goal of this subsection is to prove that $\nn^\e\to \nn_*$ under Assumption~\ref{assump:moments}-(a).
Informally, starting from~\eqref{newt5} and using 
Lemma~\ref{lemma:conv_stop_sortie}-(a), we should have
$$
\int_\cE\phi(e)\nn^\e(\dr e)\simeq \chi_\rG\e^{-1/2}\int_{\HH}
\mathbb{E}\Big[\phi\big(\big(Z_{t\wedge \ell(Z^{\e^{1/\alpha }x})}^{\e^{1/\alpha }x}\big)_{t\geq0}\big)\Big]
\mathrm{G}_1(x) \dr x.
$$
Admitting~\eqref{newformula}, we would have
$\mathbb{E}[\phi((Z_{t\wedge \ell(Z^{\e^{1/\alpha }x})}^{\e^{1/\alpha }x})_{t\geq0})]\simeq a_* \e^{1/2}x_1^{\alpha/2}\nn_*(\phi)$, whence
$$
\int_\cE\phi(e)\nn^\e(\dr e)\simeq \kappa \nn_*(\phi),
\quad \text{where}\quad \kappa=\chi_\rG a_* \int_\HH x_1^{\alpha/2}\mathrm{G}_1(x) \dr x,
$$
and it might be possible to show that $\kappa=1$. However, we are far from being able to establish the first
approximate equality. We are thus led to reproduce in a multidimensional setting the arguments of 
Doney~\cite{doney1985conditional}, initiated by Bolthausen~\cite{bolthausen1976functional}, 
regarding the convergence in law of conditioned random walks.
The first step consists in showing that $\nn^\e(\ell>\delta)\to \nn_*(\ell>\delta)$.

\begin{lemma}\label{tic0}
Grant Assumptions~\ref{assump_equi} and~\ref{assump:moments}-(a). For any $\delta>0$,
$$
\lim_{\e \to 0}\nn^\e(\ell>\delta) = \nn_*(\ell>\delta)= \delta^{-1/2} \nn_*(\ell>1)
\quad \text{as $\e\to 0$}.
$$
Moreover, there is $C>0$ such that $\nn^\e(\ell>\delta) \leq C \delta^{-1/2}$ 
for all $\e\in (0,1]$, all $\delta>0$.
Finally, 
$$
\nn^1(\ell>t)\sim t^{-1/2}\nn_*(\ell>1) \quad \text{as $t\to \infty$}.
$$
\end{lemma}

\begin{proof} By~\eqref{newt5}, we have
$$
\nn^\e(\ell>\delta)= \chi_\rG\e^{-1/2}\int_{\HH}
\mathbb{P}(\ell(Y^{\e, \e^{1/\alpha}x})>\delta)\mathrm{G}_1(x) \dr x.
$$
Using next~\eqref{newe1}, we find
$$
\nn^\e(\ell>\delta) \leq C \delta^{-1/2} \int_\HH (1+x_1^{\alpha/2})\mathrm{G}_1(x) \dr x \leq C \delta^{-1/2},
$$
since $\rG$ (and thus $\rG_1$) has a moment of order $\alpha/2$ by Assumption~\ref{assump:moments}-(a). Moreover,
\eqref{newt3} and~\eqref{newe3} tell us that 
$\PP(\ell(Y^{\e, \e^{1/\alpha}x})>\delta)=\PP(\ell(Y^{1,x})>\delta/\e) \sim \mathcal{V}(x_1) (\e/\delta)^{1/2}$
as $\e\to 0$. By dominated convergence, we end with
$$
\nn^\e(\ell>\delta) \sim \chi_\rG \delta^{-1/2} \int_\HH \mathcal{V}(x_1)\dr x= \delta^{-1/2} \nn_*(\ell>1)
$$
by definition of $\chi_\rG$. It holds that $\delta^{-1/2} \nn_*(\ell>1)=\nn_*(\ell>\delta)$ by Lemma~\ref{qdist}.
Finally, \eqref{newt2} (with $\e=1/t$) implies that $\nn^1(\ell>t)=t^{-1/2}\nn^\e(\ell>1)\sim t^{-1/2} 
\nn_*(\ell>1)$ as $t\to \infty$.
\end{proof}

The next step is to show that $\nn^\e(\cdot | \ell>\delta)\to \nn_*(\cdot|\ell>\delta)$.
To mimic the arguments of Doney~\cite{doney1985conditional}, we need the following result,
that will allow us to express $\nn^\e(\cdot | \ell>\delta)$ as a piece of the path of some process
$(U^\e_t)_{t\geq 0}$. Doney's work is concerned with a one-dimensional random walk, while we have to work 
a little more, because $d\geq 2$ and under $\nn^\e$,
the walk does not start from $0$, but from a random variable $O$ distributed according to $\rG_1$. 
The assumption that $\rG$ has a moment
of order $\alpha/2$ is crucial in the following results.

\begin{lemma}\label{tic1} Grant Assumption~\ref{assump_equi} and Assumption~\ref{assump:moments}-(a).
There exists a family of processes $((U^\e_t)_{t\geq 0}$, $\e \in (0,1])$, with the following properties.
\vip
(i) $(U^\e_t)_{t\geq 0}$ converges in law to the ISP$_{\alpha,0}$ for the local uniform topology as $\e\to0$.
\vip
(ii) Fix $\e \in (0,1]$ and set $\tau^\e_0=0$ and, for $n\geq 0$,
$$
\rho_{n}^\e=\inf\{t>\tau_{n}^\e : \Delta U^\e_t\neq 0\} \quad \text{and} \quad
\tau_{n+1}^\e=\inf\{t>\rho_n^\e : U^\e_t-U^\e_{\tau_n^\e} \notin \HH\}.
$$
The family $((V^{\e,n}_t)_{t\geq 0}=(U^\e_{(\rho^\e_n+t)\land \tau^\e_{n+1}}-U^\e_{\rho_n^\e-}), n\geq 0)$ is i.i.d. 
with common law $\chi_\rG^{-1} \e^{1/2}\nn^\varepsilon$.
\vip
(iii) For all $T>0$, it holds that (convention : $\rho^\e_{-1}=0$)
\begin{equation}\label{ult1}
\sup\{\rho_n^\e-\tau_n^\e : n\geq 0, \rho_{n-1}^\e \leq T\}\to 0 \quad \text{in probability as $\e\to 0$.}
\end{equation}

(iv) Set $I^\e_t=\inf_{s\in [0,t]}U^\e_t\cdot\be_1$. For all $\e\in (0,1]$, all $n\geq 0$,
\begin{equation}\label{ult2}
I^\e_{\tau_{n+1}^\e-}=I^\e_{\rho^\e_n-}=I^\e_{\tau^\e_n}=U^\e_{\tau_n^\e}\cdot \be_1=U^\e_{\rho_n^\e-}\cdot\be_1.
\end{equation}
\end{lemma}

\begin{proof}
We recall that $\rH_1$ is a probability density on $\R^d\times\HH$
with marginals $\rF_1$ and $\rG_1$, see Notation~\ref{fgh}. We consider
a Poisson measure $\rM=\sum_{s\in \JJ}\delta_{(s,u_s,v_s)}$ on 
$[0,\infty)\times\R^d\times\HH$ with intensity $\dr s \rH_1(u,v)\dr u \dr v$ and introduce
$(\cF_t=\sigma(\{\rM(A),A\in \cB([0,t]\times\R^d\times\HH)\})_{t\geq 0}$.
Then for 
$$
S_t=\int_0^t \int_{\R^d\times\HH} u \rM(\dr s,\dr u,\dr v),
$$ 
$(\e^{1/\alpha}S_{t/\e})_{t\geq 0}$ 
has the same law as $(Y^{\e,0}_t)_{t\geq 0}$ and thus converges in law
to an ISP$_{\alpha,0}$ for the local uniform topology as in Step~3 of 
the proof of Lemma~\ref{prop:conv_semi_g}. We next consider
$$
U_t=\int_0^t \int_{\R^d\times\HH}\!\!\! (u\indiq_{\{X_\sm>0\}}+v\indiq_{\{X_\sm=0\}}) \rM(\dr s,\dr u,\dr v),
\;\; \text{where}\;\; X_t=U_t\cdot\be_1-\inf_{s\in[0,t]} U_s\cdot\be_1.
$$
This path-dependent S.D.E. obviously has a unique solution $(U_t)_{t\geq 0}$, 
since $\rM$ has a finite number of jumps on each finite time interval.
\vip
{\it Step 1.} We introduce $\tau_0=0$ and, for $n\geq 0$,  we set 
$$
\rho_{n}=\inf\{t>\tau_{n} : \Delta U_t\neq 0\}
\quad \text{and} \quad  \tau_{n+1}=\inf\{t>\rho_n :  U_t-U_{\tau_n} \notin \HH\}.
$$
Here we check that for all $n\geq 0$, $\rho_{n}=\inf\{t>\tau_{n} : X_t>0\}$ and
$\tau_{n+1}=\inf\{t>\rho_n : X_t=0\}$. We set $U_{1,t}=U_t\cdot\be_1$ and
$I_t=\inf_{s\in[0,t]} U_{1,s}$, whence $X_t=U_{1,t}-I_t$. We only give the first steps.
\vip
\noindent $\bullet$ For $t\in [0,\rho_0)$, we have $U_t=0$ and $I_t=0$, whence $X_t=0$, 
so that $U_{\rho_0}=v_{\rho_0}\in \HH$, i.e. $U_{1,\rho_0}>0$. Thus $I_{\rho_0}=0$ and $X_{\rho_0}>0$.
Consequently, $\rho_0=\inf\{t>0 : X_t>0\}$.
\vip
\noindent $\bullet$ For $t\in [\rho_0,\tau_1)$, we have $U_t=U_t-U_{\tau_0}\in\HH$,
i.e. $U_{1,t}>0$. Thus $I_t=0$ and $X_t>0$ for all $t\in [\rho_0,\tau_1)$.
Moreover, $U_{\tau_1}\notin\HH$, i.e.  $U_{1,\tau_1}\leq 0$, so that $I_{\tau_1}=U_{1,\tau_1}$ and thus $X_{\tau_1}=0$.
Consequently, $\tau_{1}=\inf\{t>\rho_0 : X_t=0\}$.
\vip
\noindent $\bullet$ Since $X_{\tau_1}=0$ and since $\rho_1$ is the first jump instant of $U$ after $\tau_1$,
we have $X_t=0$ for all $t\in [\tau_1,\rho_1)$ and $U_{\rho_1}=U_{\tau_1}+v_{\rho_1}$. 
Since $v_{\rho_1} \in \HH$, we have $U_{1,\rho_1}>U_{1,\tau_1}$, whence 
$I_{\rho_1}=I_{\tau_1}=U_{1,\tau_1}$, so that $X_{\rho_1}=v_{\rho_1}\cdot \be_1>0$. Thus $\rho_1=\inf\{t>\tau_1 : X_t>0\}$.
\vip
\noindent $\bullet$ For $t\in [\rho_1,\tau_2)$, $U_t-U_{\tau_1} \in \HH$, i.e. $U_{1,t}>U_{1,\tau_1}$,
so that $I_t=I_{\rho_1}=U_{1,\tau_1}$, whence $X_t>0$. 
Moreover, $U_{\tau_2}-U_{\tau_1}\notin\HH$, i.e.  $U_{1,\tau_2}\leq U_{1,\tau_1}$, 
so that $I_{\tau_2}=U_{1,\tau_2}$ and thus $X_{\tau_2}=0$.
As a consequence, $\tau_{2}=\inf\{t>\rho_1 : X_t=0\}$.
\vip

{\it Step 2.} We set $V^n_t=U_{(\rho_n+t)\land \tau_{n+1}}-U_{\tau_n}$ and prove that
$((V^n_t)_{t\geq 0}, n\geq 0)$ is i.i.d. with common law $\chi_\rG^{-1}\nn_1$.
It suffices that for each $n\geq 0$, $(V^n_t)_{t\geq 0}$ is independent of $\cF_{\tau_n}$ and 
$\chi_\rG^{-1}\nn_1$-distributed
We let $\rK(\dr s,\dr u)=\sum_{s\in \JJ}\delta_{(s,u_s)}$
which is Poisson with intensity $\dr s \rF_1(u)\dr u$.
\vip
Since $X_{\rho_n-}=X_{\tau_n}=0$ and since $X_{s-}>0$
for all $s\in (\rho_n,\tau_{n+1}]$ by Step 1, 
$$
V^n_t=U_{\rho_n}-U_{\tau_n}+\int_{\rho_n+}^{(\rho_n+t)\land\tau_{n+1}}\int_{\R^d\times\HH} u \rM(\dr s,\dr u,\dr w)
=v_{\rho_n} + \int_{\rho_n+}^{(\rho_n+t)\land\tau_{n+1}}\int_{\R^d\times\HH} u \rK(\dr s,\dr u).
$$
Introducing $\rK^n=\sum_{s\in \JJ,s>\rho_n}\delta_{(s-\rho_n,u_s)}$, 
which is a Poisson measure with intensity $\dr s \rF_1(u)\dr u$ independent of $\cF_{\rho_n}$, we get
$$
V^n_t= v_{\rho_n} + \int_0^{t\land (\tau_{n+1}-\rho_n)}\int_{\R^d} u \rK^n(\dr s,\dr u).
$$
Thus $(V^n_t)_{t\geq 0}$ is independent of $\cF_{\rho_n-}$ and thus of $\cF_{\tau_n}$. Setting 
$Y^n_t=v_{\rho_n}+\int_0^t \int_{\R^d} u\rK^n(\dr s,\dr u)$, we have $\tau_{n+1}-\rho_n=\ell(Y^n)$, because 
$Y^n_t=U_{(\rho_n+t)\land \tau_{n+1}}-U_{\tau_n} \in \HH$ for all $t\in [0,\tau_{n+1}-\rho_n)$ and
$Y^n_{\tau_{n+1}-\rho_n}=U_{\tau_{n+1}}-U_{\tau_n} \notin \HH$ by definition of $\tau_{n+1}$.
Thus $(V^n_t)_{t\ge 0}=(Y^n_{t\land \ell(Y^n)})_{t\geq 0}$. 
Since $\rho_n = \inf\{t > \tau_n : \Delta U_t \neq 0\}$, 
$v_{\rho_n}$ is $\rG_1(v)\dr v$-distributed and independent of $\rK^n$ (because it is
$\cF_{\rho_n}$-measurable) and we conclude that $(V^n_t)_{t\geq 0}$ is 
$\chi_\rG^{-1}\nn_1$-distributed, recall Subsection~\ref{notex}.

\vip
{\it Step 3.} We now introduce $U^\e_t=\e^{1/\alpha}U_{t/\e}$ and verify (ii). For $(\tau_n^\e,\rho_n^\e)_{n\geq 0}$
as in the statement, we have $\tau_n^\e=\e\tau_n$ and $\rho_n^\e=\e\rho_n$. Indeed, $\tau_0^\e=0=\e\tau_0$
and if  $\tau_n^\e=\e\tau_n$ for some $n\geq 0$, then
$$
\rho^\e_{n}=\!\inf\{t> \tau_{n}^\e : \Delta U^\e_t\neq 0\}=\!\inf\{t> \e \tau_{n} : \Delta U_{t/\e}\neq 0\}
=\!\e\inf\{s>\tau_{n} : \Delta U_{s}\neq 0\}=\e\rho_{n},
$$
see Step 1. Similarly,
$$
\tau^\e_{n+1}=\!\inf\{t> \rho_n^\e : U^\e_t-U^\e_{\tau_n^\e}\notin \HH\}=\!\inf\{t>\e\rho_n : U_{t/\e}-U_{\tau_n}\notin \HH\}
=\!\e\inf\{s>\rho_n : U_{s}-U_{\tau_n}\notin \HH\},
$$
which equals $\e\tau_{n+1}$. 
Since $U^\e_{\rho_n^\e-}=U^\e_{\tau_n^\e}$, we get
$(U^\e_{(\rho^\e_n+t)\land \tau^\e_{n+1}}-U^\e_{\rho_n^\e-})_{t\geq 0}
=(\e^{1/\alpha} V^n_{t/\e})_{t\geq 0}$ for each $n\geq 0$, so that (ii) follows from Step 2 and~\eqref{newt2}.

\vip
{\it Step 4.} We next show (i). Since $(\e^{1/\alpha}S_{t/\e})_{t\geq 0}$ goes in law
to the ISP$_{\alpha,0}$, it suffices that 
$\Delta^\e_T=\sup_{t\in [0,T]}|U^\e_t-\e^{1/\alpha}S_{t/\e}|\to 0$ in probability for any $T>0$. But 
$$
\Delta^\e_T=\e^{1/\alpha}\sup_{[0,T/\e]}|S_t-U_t| 
\leq \e^{1/\alpha}\int_0^{T/\e}\int_{\R^d\times \HH} (|u|+|v|)\indiq_{\{X_\sm=0\}} \rM(\dr s,\dr u,\dr w).
$$
We set $M_t=\sum_{n\geq 0} \indiq_{\{\rho_n\leq t\}}$ and write, recalling Step~1,
$$
\Delta^\e_T \leq \e^{1/\alpha} \sum_{k=0}^{M_{T/\e}} (|u_{\rho_k}|+|v_{\rho_k}|)=
(\e^{1/2} M_{T/\e})^{2/\alpha} \Gamma_{M_{T/\e}},\quad \text{where}\quad 
\Gamma_n=\frac{1}{n^{2/\alpha}} \sum_{k=0}^n(|u_{\rho_k}|+|v_{\rho_k}|).
$$
To complete the proof, it suffices to check that $\lim_{\e\to 0} \Gamma_{M_{T/\e}}=0$ in probability and that
$\lim_{a\to \infty} \limsup_{\e\to 0} \PP(\e^{1/2} M_{T/\e}>a)=0$.
\vip
The sequence $(u_{\rho_k},v_{\rho_k})_{k\geq 0}$ is i.i.d. with common law $\rH_1$ (because for each $n\geq 0$, 
$\rho_n$ is the first jump time of $\rM$ after  $\tau_n$, which is an $(\mathcal{F}_t)_{t\geq0}$ stopping time). 
By Assumptions~\ref{assump_equi} 
and~\ref{assump:moments}-(a),
$$
\E[|u_{\rho_1}|^{\alpha/2}+|v_{\rho_1}|^{\alpha/2}]=\int_{\R^d} |z|^{\alpha/2}\rF_1(z)\dr z
+\int_\HH |z|^{\alpha/2}\rG_1(z)\dr z<\infty.
$$ 
We conclude e.g. from~\cite[Proposition 33]{bethencourt2022fractional} that
$\Gamma_n\to 0$ in probability as $n\to\infty$. Since $\rho_n<\infty$ for all $n\geq0$, 
we have $\lim_{\e\to 0} M_{T/\e}=\infty$ a.s., whence $\lim_{\e\to 0} \Gamma_{M_{T/\e}}=0$ in probability. Next, 
$$
\{\e^{1/2}M_{T/\e}>a\}=\{\rho_{\lfloor \e^{-1/2}a \rfloor}<T/\e\}\subset \cap_{k=0,\cdots,\lfloor \e^{-1/2}a \rfloor-1} 
\{\tau_{k+1}-\rho_k<T/\e\},
$$
because for all $n> k\geq 0$, $\rho_n\geq \rho_{k+1}-\rho_k \geq \tau_{k+1}-\rho_k$.
The sequence $(\tau_{n+1}-\rho_n)_{n\geq 0}$ is i.i.d. and $\PP(\tau_1-\rho_0>t)=\chi_\rG^{-1}\nn^1(\ell>t)$ by Step~2.
Thus  $\PP(\tau_1-\rho_0>t)\sim c t^{-1/2}$ by Lemma~\ref{tic0}, where $c=\chi_\rG^{-1}\nn_*(\ell>1)$. Hence
$$
\PP(\e^{1/2}M_{T/\e}>a)\leq (1-\PP(\tau_1-\rho_0>T/\e))^{\lfloor \e^{-1/2}a \rfloor}\sim (1-cT^{-1/2}\e^{1/2})^{\e^{-1/2}a} \to 
\exp(-cT^{-1/2}a)
$$
as $\e\to 0$. Thus $\lim_{a\to \infty}\limsup_{\e\to 0} \PP(\e^{1/2}M_{T/\e}>a)=0$ as desired.

\vip
{\it Step 5.} We now prove (iii). Set $\Delta_{\e,T}=\sup\{\rho_n^\e-\tau_n^\e : n\geq 0, \rho_{n-1}^\e \leq T\}$.
With the notation of Step 4, we have $\Delta_{\e,T}=\e \sup\{\rho_n-\tau_n : n\geq 0, n \leq M_{T/\e}+1\}$.
Thus for any $\eta>0$, any $a>0$,
$$
\PP(\Delta_{\e,T}>\eta)\leq \PP(\e^{1/2}M_{T/\e}>a)+\PP\Big(\sup_{n=0,\dots,\lfloor a\e^{-1/2}\rfloor+1}(\rho_n-\tau_n)>\eta 
\e^{-1}\Big).
$$
For each $n\geq 0$, $\rho_n-\tau_n$
is $\mathrm{Exp}(1)$-distributed. Thus for any $a>0$,
$$
\limsup_{\e\to 0}\PP(\Delta_{\e,T}>\eta)\leq \limsup_{\e\to 0}\Big[\PP(\e^{1/2}M_{T/\e}>a)+
(a\e^{-1/2}+1) e^{-\eta/\e}\Big]=  \limsup_{\e\to 0}\PP(\e^{1/2}M_{T/\e}>a).
$$
Since $\lim_{a\to \infty} \limsup_{\e\to 0}\PP(\e^{1/2}M_{T/\e}>a)=0$ as already seen, the result follows.

\vip

{\it Step 6.} For (iv), it suffices to study the case without $\e$.
By definition of $\rho_n$, we clearly have $U_{\rho_n-}=U_{\tau_{n}}$ and $I_{\rho_n-}=I_{\tau_n}$.
Since $\tau_{n}=\inf\{t>\rho_{n-1} : X_t=0\}$ with $X_t=U_{1,t}-I_t$ by Step~1, we have 
$U_{1,\tau_n}=I_{\tau_n}$. Finally, since $X_t\geq 0$ during $(\tau_n,\tau_{n+1})$, we have $I_{\tau_{n+1}-}=I_{\tau_n}$.
\end{proof}

We can now show that $\nn^\e(\cdot |\ell>\delta)\to\nn_*(\cdot |\ell>\delta)$, 
as well as some other useful convergences.

\begin{lemma}\label{tic11}
Grant Assumptions~\ref{assump_equi} and~\ref{assump:moments}-(a).
For all $\delta>0$, the set $\DD(\R_+,\R^d)$ being endowed with the $\JS$-topology,
$$
\nn^\e(\cdot|\ell>\delta) \to \nn_*(\cdot|\ell>\delta) \quad \text{as $\e\to 0$.}
$$
Moreover, for all $\delta>0$, all $y>0$, 
$$\lim_{\e\to 0}\nn^\e(M>y|\ell>\delta)=\nn_*(M>y|\ell>\delta)\quad \text{and}\quad
\lim_{\e\to 0}\nn^\e(\ell>\delta|M>y)=\nn_*(\ell>\delta|M>y).
$$
\end{lemma}

\begin{proof} We fix $\delta>0$, $y>0$ and consider the objects introduced in Lemma~\ref{tic1}.
By Skorokhod's representation theorem, we may assume 
that $(U^\e_t)_{t\geq 0}$ a.s. converges to 
some ISP$_{\alpha,0}$ $(Z_t)_{t\geq 0}$ for the local uniform topology.
\vip

{\it Step 1.} For $\e\in (0,1]$ fixed, we introduce 
$$
\sigma_\e=\inf\{n\geq 0 : \ell(V^{\e,n})>\delta\} \quad \text{and}\quad 
\sigma_\e'=\inf\{n\geq 0 : M(V^{\e,n})>y\}.
$$
By Lemma~\ref{tic1}-(ii), $\brv_\e=(V^{\e,\sigma_\e}_t)_{t\geq 0}\sim \nn^\e(\cdot | \ell>\delta)$ and
$\brv_\e'=(V^{\e,\sigma_\e'}_t)_{t\geq 0}\sim \nn^\e(\cdot | M>y)$. In other words, if setting 
$g_\e=\rho^\e_{\sigma_\e}$, $d_\e=\tau^\e_{\sigma_\e+1}$, $g_\e'=\rho^\e_{\sigma_\e'}$
and $d_\e'=\tau^\e_{\sigma_\e'+1}$,
$$
\brv_\e=(U^\e_{(g_\e+t)\land d_\e}-U^\e_{g_\e-})_{t\geq 0}\sim \nn^\e(\cdot | \ell>\delta) 
\;\;\; \text{and} \;\;\;
\brv_\e'=(U^\e_{(g_\e'+t)\land d_\e'}-U^\e_{g_\e'-})_{t\geq 0}\sim \nn^\e(\cdot | M>y).
$$

{\it Step 2.} Recall that $\nn_*$ was defined through an ISP$_{\alpha,0}$ $(Z_t)_{t\geq 0}$ and the corresponding
Poisson measure of excursions $\Pi_*=\sum_{u \in \JJ} \delta_{(u,e_u)}$: this Poisson measure has for intensity
$\dr u \nn_*(\dr e)$, see the paragraph around~\eqref{et}.
We set
$$
\sigma=\inf\{u \in \JJ : \ell(e_u)>\delta\} \quad \text{and}\quad 
\sigma'=\inf\{u \in \JJ : M(e_u)>y\}.
$$
Due to the properties of Poisson measures, 
$\brv=e_{\sigma}\sim \nn_*(\cdot|\ell>\delta)$ and $\brv'=e_{\sigma'}\sim \nn_*(\cdot|M>y)$. 
By~\eqref{et}, this rewrites, setting $g=\gamma_{\sigma-}$, $d=\gamma_{\sigma}$, 
$g'=\gamma_{\sigma'-}$ and $d'=\gamma_{\sigma'}$,
$$
\brv=(Z_{(g+t)\land d}-Z_g)_{t\geq 0} \sim \nn_*(\cdot | \ell>\delta) \quad \text{and}\quad 
\brv'=(Z_{(g'+t)\land d'}-Z_{g'})_{t\geq 0} \sim \nn_*(\cdot | M>y).
$$

{\it Step 3.} We now recall a few properties of the stable process $(Z_t)_{t\geq 0}$. We set 
$Z_{1,t}=Z_t\cdot \be_1$ and $I_t=\inf_{s\in [0,t]} Z_{1,s}$. Recall that $(\xi_t)_{t\geq 0}$ is the local
time of $(Z_{1,t}-I_t)_{t\geq 0}$ and that $(\gamma_u)_{u\geq 0}$ is its right-continuous generalized inverse.
\vip
(a) Almost surely, for all $u \in \JJ$, $(Z_t)_{t\geq 0}$ is continuous at $\gamma_{u-}$. 
\vip
{\it Indeed, we know from Doney~\cite[Lemma~2 point~6]{doney1985conditional} that a.s.,
for all $u\in \JJ$, $(Z_{1,t})_{t\geq 0}$ is continuous at $\gamma_{u-}$, and the result follows since a.s.,
$\{s\geq 0 : \Delta Z_t\neq 0\}=\{s\geq 0 : \Delta Z_{1,t}\neq 0\}$.}

\vip
(b) Almost surely, for all $u \in \JJ$, $\ell(e_u)\neq \delta$ and $M(e_u)\neq y$.
\vip 
{\it This follows from the fact that $\nn_*(\ell=\delta)=\nn_*(M=y)=0$ by Lemma~\ref{qdist}.}
\vip

(c) Almost surely, for all $u\in \JJ$, $e(\ell(e_u)-) \in \HH$ and for  all $s\in (0,\ell(e_u))$, it holds that
$e_u(s) \in \HH$ and $e_u(s-) \in \HH$.
\vip
{\it For $s\in(0, \ell(e_u))$, we have $e_u(s) \in \HH$ by definition of $\ell$ and
$e_u(s-) \in \HH$ by~\eqref{eer3}, which also implies that $e(\ell(e_u)-) \in \HH$.}
\vip
(d)  Almost surely, for all $t>s\geq 0$, if $I_{t-}=I_s$, then $u:=\xi_t=\xi_s \in \JJ$ and 
$\gamma_{u-}\leq s<t\leq \gamma_{u}$. 
\vip
{\it Since $(\xi_r)_{r\geq 0}$ increases only when $(I_r)_{r\geq 0}$ decreases, we have $\xi_t=\xi_s$. By definition
of $(\gamma_v)_{v\geq 0}$, this implies that for $u=\xi_t$, we have $\gamma_{u-}\leq s <t\leq \gamma_u$, whence
$u \in \JJ$.}
\vip
(e) Almost surely, for all $t>s\geq 0$, if $I_s=Z_{1,s-}\land Z_{1,s}$ and $\xi_{t}=\xi_s$,
then $\xi_s \in \JJ$ and $s=\gamma_{\xi_s-}$. 
\vip
{\it As in (d), we have $u:=\xi_s \in \JJ$ and $\gamma_{u-}\leq s<t\leq \gamma_u$. If $s>\gamma_{u-}$, 
then by (c) and~\eqref{et},
$Z_{1,s}\land Z_{1,s-}>Z_{1,\gamma_{u-}}$, implying that $I_s<Z_{1,s}\land Z_{1,s-}$.}
\vip
(f) Almost surely, for all $u \in \JJ$, all $s\in [0,\gamma_{u-})$, $I_s>I_{\gamma_{u-}}$. 
\vip
{\it By Bertoin~\cite[Lemma~1 p 218]{bertoin1996levy}, $(H_v:=-I_{\gamma_v})_{v\geq 0}$ is a stable subordinator 
and is thus
strictly increasing. Now for $u \in \JJ$ and $s<\gamma_{u-}$, we have $\xi_s<u$, implying that $H_{\xi_s}<H_{u-}$,
i.e. $I_{\gamma_{\xi_s}}>I_{\gamma_{u-}}$. Since $s\leq \gamma_{\xi_s}$, the conclusion follows.}
\vip
(g) Almost surely, for all $t\geq 0$, if $I_{t-}=Z_{1,t-}$, then $\Delta Z_t=0$.
\vip
{\it As in (a), it suffices to show that $\Delta Z_{1,t}=0$. If first $\Delta Z_{1,t}>0$, then
$I_t= I_{t-}=Z_{1,t-}\land Z_{1,t}$ and $Z_{1,t}>Z_{1,t-}$. By right continuity, there is $s>t$ such that 
$I_s=I_t$, whence $\xi_t \in \JJ$ and $t=\gamma_{\xi_t-}$ by (e). Thus $\Delta Z_t=0$ by (a). 
If next $\Delta Z_{1,t}<0$, then we also have $\xi_t \in \JJ$ (because else, $\Delta Z_t=0$
exactly as in the proof of~Theorem~\ref{mr1}, see the last paragraph before Step~2) and $t>\gamma_{\xi_t-}$
(because else, $\Delta Z_t=0$ by (a)). Thus $t\in(\gamma_{\xi_t-},\gamma_{\xi_t}]$, so that
$Z_{1,t-}> Z_{1,\gamma_{\xi_t-}}$ by~\eqref{eer3}, which contradicts the fact that 
$I_{t-}=Z_{1,t-}$.
}

\vip

{\it Step 4.} Here we show that $\lim_{\e \to 0} g_\e=g$ a.s. We set $U^\e_{1,t}=U^\e_t\cdot \be_1$
and $I^\e_t=\inf_{s\in [0,t]} U^\e_{1,s}$.
\vip
{\it Step 4.1.} We have $\Gamma=\limsup_{\e\to 0} g_\e \leq g$. Indeed, consider a (random) sequence $\e_k \to 0$
such that $\lim_k g_{\e_k}=\Gamma$. Recall that $d>g+\delta$, introduce $m=(g+d)/2$
and $n_k=\sup\{{n\geq 0} : \rho^{\e_k}_n\leq m\}$, as well as $\bg_k=\rho^{\e_k}_{n_k}$ and
$\bd_k=\tau^{\e_k}_{n_k+1}$. Up to extraction, we may assume that $\bg_k\to \bg \in [0,m]$ and
that $\bd_k \to \bd \in [\bg,\infty]$.

\vip
(i) We have $\bg \leq g$. Else, $\bg_k>g$ for all $k$ large enough, whence 
$U^{\e_k}_{1,g}\geq I^{\e_k}_{\bg_k}=U^{\e_k}_{1,\bg_k-}$ by~\eqref{ult2}. But
$\lim_k U^{\e_k}_{1,g}=Z_{1,g}$ and, up to extraction, $\lim_kU^{\e_k}_{1,\bg_k-}=Z_{1,\bg}$ or $Z_{1,\bg-}$ by 
Lemma~\ref{extra}. Thus $Z_{1,g} \geq Z_{1,\bg}$ or $Z_{1,g}\geq Z_{1,\bg-}$. By Step~3-(c), we have
$Z_{1,t}>Z_{1,g}$ and $Z_{1,t-}>Z_{1,g}$ for all $t\in (g,d)$. Thus $\bg \notin (g,d)$. Since $\bg\leq m<d$,
we have $\bg \leq g$.
\vip
(ii) We have $\bg \geq g$. By definition of $n_k$, 
$\rho_{n_k+1}^{\e_k} >m$, which implies that $\tau_{n_k+1}^{\e_k} >(g+m)/2=:a$
for all $k$ large enough, see~\eqref{ult1}. We have $\bg_k<a$ for $k$ large enough by (i).
Hence by~\eqref{ult2}, $I^{\e_k}_a=I^{\e_k}_{\bg_k-}$.
We have $I^{\e_k}_{a}\to I_a$ and, by Lemma~\ref{extra}, up to extraction, $I^{\e_k}_{\bg_k-}\to I_{\bg}$ or $I_{\bg-}$.
Thus  $I_a = I_{\bg}$ or $I_a = I_{\bg-}$.
Since $(I_t)_{t\geq 0}$ is nonincreasing, we conclude that $I_{a}=I_{\bg}$ in any case.
But $a\in (g,d)$, whence $I_{a}=I_g$ and we end with $I_{g}=I_{\bg}$.
This is not possible if $\bg<g$, see Step~3-(f).
\vip
(iii) We have $\bd \geq d$. Else, $\bd \in [g,d)$. We have already seen in (ii) that $\bd_k=\tau_{n_k+1}^{\e_k} >a$
for $k$ large enough, so that $\bd \in [a,d)\subset (g,d)$. By definition of $\bd_k$, we have 
$U^{\e_k}_{1,\bd_k}<U^{\e_k}_{1,\bg_k-}$. Using Lemma~\ref{extra} and that $g$ is a continuity point of 
$(Z_t)_{t\geq 0}$ by Step~3-(a), we deduce that either $Z_{1,\bd}\leq Z_{1,g}$ or $Z_{1,\bd-}\leq Z_{1,g}$.
This is not possible if $\bd\in (g,d)$ by Step~3-(c).
\vip
(iv) As a consequence, $\ell(V^{\e_k,n_k})=\bd_k-\bg_k\to \bd-\bg\geq d-g>\delta$ by definition of $g$ and $d$.
Thus for all $k$ large enough, $\sigma_{\e_k}\leq n_k$, whence $g_{\e_k}=\rho^{\e_k}_{\sigma_{\e_k}}
\leq \rho^{\e_k}_{n_k}=\bg_k \to g$. Hence $\Gamma=\lim_k g_{\e_k}\leq g$.
\vip

{\it Step 4.2.} We have $\theta=\liminf_{\e \to 0} g_\e \geq g$. Observe that $\theta$ is finite by Step~4.1
and consider a (random) sequence $\e_k \to 0$ such that $\lim_k g_{\e_k}=\theta$.  
By~\eqref{ult2} and since 
$$
d_{\e_k}=\tau^{\e_k}_{\sigma_{\e_k}+1}>\rho^{\e_k}_{\sigma_{\e_k}}+\delta=g_{\e_k}+\delta, 
$$
we have $I^{\e_k}_{g_{\e_k}+\delta}=I^{\e_k}_{g_{\e_k}-}$.
By Lemma~\ref{extra}, up to extraction, we have $I^{\e_k}_{g_{\e_k}+\delta}\to I_{\theta+\delta}$ or $I_{(\theta+\delta)-}$
and $I^{\e_k}_{g_{\e_k}-}\to I_{\theta}$ or $I_{\theta-}$. By monotony, we conclude that
$I_{(\theta+\delta)-}=I_\theta$ in any case. Step~3-(d) then implies that for $u=\xi_\theta$, we have
$u\in \JJ$, and $\gamma_{u-}\leq \theta$ and $\ell(e_u)=\gamma_{u}-\gamma_{u-}\geq \delta$, 
whence $\ell(e_u)> \delta$ by Step~3-(b). Thus $\sigma\leq u$, which implies 
that $g=\gamma_{\sigma-}\leq \gamma_{u-}\leq \theta$.

\vip

{\it Step 5.} We next show that $\lim_{\e \to 0} g_\e'=g'$ a.s.
\vip

{\it Step 5.1.} We have $\Gamma'=\limsup_{\e\to 0} g_\e' \leq g'$. Indeed, consider a (random) sequence $\e_k \to 0$
such that $\lim_k g_{\e_k}'=\Gamma'$. We recall that $d'>g'$ and we set
$m=(g'+d')/2$. We introduce $n_k=\sup\{n\geq 0 : \rho_n^{\e_k} \leq m\}$ and set $\bg'_k=\rho^{\e_k}_{n_k}$
and $\bd'_k=\tau^{\e_k}_{n_k+1}$. Exactly as in Step 4.1, we may assume that $\lim_k \bg'_k=\bg' \in [0,m]$
and that $\lim_k \bd'_k=\bd' \in [\bg',\infty]$ and we can show that $\bg'=g'$ and that $\bd'\geq d'$.
By definition of $g',d'$, there is $t\in (g',d']$ such that $|Z_{t}-Z_{g'}|>y$. By Step~3-(a) and
since $\bg'_k\to g'$, we know that $U^{\e_k}_{\bg'_k-}\to Z_g$.

\vip
(i) If first $t \in (g',\bd')$, then $t \in (\bg_k',\bd'_k)$ for all $k$ large enough. Since
$U^{\e_k}_t\to Z_t$, we conclude that for all $k$ large enough, $|U^{\e_k}_t-U^{\e_k}_{\bg'_k-}|>y$, implying that
$M(V^{\e_k,n_k})>y$ and thus that $\sigma'_k\leq n_k$, whence $g_k'\leq \bg_k'\to g'$.
Hence $\Gamma'=\lim_k g_{\e_k}'\leq g'$.
\vip
(ii) If next $t=\bd'$, then $t=d'$ (because $t\leq d'\leq \bd'$). We now prove that $\bd'_k=t$
for $k$ large enough. This will imply that $U^{\e_k}_{\bd_k'}=U^{\e_k}_{t}\to Z_{t}$, so that
$M(V^{\e_k,n_k})\geq |U^{\e_k}_{\bd_k'}-U^{\e_k}_{\bg'_k-}|>y$ for $k$ large enough and thus that $\Gamma'\leq g'$ as in (i).
The following three points show that $\bd'_k=t$ for $k$ large enough.
\vip
\noindent $\bullet$ We have $\bd'_k=\rho^{\e_k}_{n_k+1}\geq (g'+m)/2=:a$ for $k$ large enough
exactly as in Step~4.1-(ii).
\vip
\noindent $\bullet$ We have $\inf_{s\in [a,t-g')}d(Z_{g'+s}-Z_{g'},\HH)>0$ by~\eqref{eer3} (the length of the
excursion starting at $g'$ is $d'-g'=t-g'$), so that for $k$ large enough, for all $s\in [a,t)$,
$U^{\e_k}_s-U^{\e_k}_{\bg_k'-} \in \HH$ (recall that $(U^\e_s)_{s\geq 0}$ converges locally uniformly to $(Z_s)_{s\geq 0}$). 
\vip
\noindent $\bullet$ We have $Z_t-Z_{g'} \notin \bar \HH$ by~\eqref{eer3}, so that for 
$k$ large enough, $U^{\e_k}_t-U^{\e_k}_{\bg_k'-} \notin \HH$.

\vip
{\it Step 5.2.} We have $\theta'=\liminf_{\e\to 0} g_\e' \geq g'$. Observe that $\theta'<\infty$ by Step~5.1 and 
consider a (random) sequence $\e_k \to 0$
such that $\lim_k g_{\e_k}'=\theta'$. By definition of $g'_\e$, there is $s_k>0$ such that 
$I^{\e_k}_{(g'_{\e_k}+s_k)-}=I^{\e_k}_{g'_{\e_k}-}=U^{\e_k}_{1,g'_{\e_k}-}$, see~\eqref{ult2}, and 
$|U^{\e_k}_{g'_{\e_k}+s_k}-U^{\e_k}_{g'_{\e_k}-}|>y$. Up to extraction, 
we may assume
that $\lim_ks_k=s_0 \in [0,\infty]$.
\vip
(i) It holds that $s_0<\infty$, else, we would have $I_{\infty}=I_{\theta'}$, which does a.s. not occur.
\vip
(ii) Since  $|U^{\e_k}_{g'_{\e_k}+s_k}-U^{\e_k}_{g'_{\e_k}-}|>y$ for all $k$, we infer from Lemma~\ref{extra} that either
$|Z_{\theta'+s_0}-Z_{\theta'}|\geq y$ or $|Z_{(\theta'+s_0)-}-Z_{\theta'}|\geq y$ 
or $|Z_{\theta'+s_0}-Z_{\theta'-}|\geq y$ or $|Z_{(\theta'+s_0)-}-Z_{\theta'-}|\geq y$.
\vip
(iii) Exactly as in Step~4.2, we have $\xi_{\theta'+s_0}=\xi_{\theta'}$. 
\vip
(iv) We have $s_0>0$, because
else, (ii) would necessarily give us $Z_{\theta'}=\lim_k U^{\e_k}_{g'_{\e_k}+s_k}$, $Z_{\theta'-}=\lim_k U^{\e_k}_{g'_{\e_k}-}$
and $|Z_{\theta'}-Z_{\theta'-}|\geq y$. Moreover, since $I^{\e_k}_{g'_{\e_k}-}=U^{\e_k}_{1,g'_{\e_k}-}$, we find
$I_{\theta'-}= Z_{1,\theta'-}$. By Step~3-(g), this implies that $\Delta Z_{\theta'}=0$,
which contradicts the fact that  $|Z_{\theta'}-Z_{\theta'-}|\geq y$.
\vip
(v) Since $I^{\e_k}_{g'_{\e_k}-}=U^{\e_k}_{1,g'_{\e_k}-}$, Lemma~\ref{extra} implies that either $I_{\theta'}=Z_{1,\theta'}$
or $I_{\theta'}=Z_{1,\theta'-}$ or $I_{\theta'-}=Z_{1,\theta'}$ or $I_{\theta'-}=Z_{1,\theta'-}$. In any case, we find
$I_{\theta'}= Z_{1,\theta'-}\land Z_{1,\theta'}$. This, together with (iii), (iv) and 
Step~3-(e), tells us that $u=\xi_{\theta'} \in \JJ$ and that $\theta'=\gamma_{u-}$.
By Step~3-(a), $\theta'$ is a continuity point of $(Z_t)_{t\geq 0}$, so that (ii) rewrites as
$|Z_{\gamma_{u-}+s_0}-Z_{\gamma_{u-}}|\geq y$ or $|Z_{(\gamma_{u-}+s_0)-}-Z_{\gamma_{u-}}|\geq y$. Since 
$s_0 \in (\gamma_{u-},\gamma_u]$ by (iii), this implies that $M(e_u)\geq y$, whence $M(e_u)>y$ by Step~3-(b). Thus
$\sigma'\leq u$, so that $g'=\gamma_{\sigma'-}\leq \gamma_{u-}= \theta'$.

\vip
{\it Step 6.} Since $(U^\e_t)_{t\geq 0}\to (Z_t)_{t\geq 0}$ for the $\JS$-topology and since
$g=\gamma_{\sigma-}$ and $g'=\gamma_{\sigma'-}$ are continuity points of $(Z_t)_{t\geq 0}$ by Step~3-(a), we 
deduce from Steps~4 and~5 that almost surely,  
$\brw_\e=(U^\e_{g_\e+t}-U^\e_{g_\e-})_{t\geq 0}$ goes to $\brw=(Z_{g+t}-Z_{g})_{t\geq 0}$ and
$\brw_\e'=(U^\e_{g'_\e+t}-U^\e_{g'_\e-})_{t\geq 0}$ goes to  $\brw'=(Z_{g'+t}-Z_{g'})_{t\geq 0}$
for the $\JS$-topology.
\vip
{\it Step 7.} Here we check that a.s.,
$\brv_\e=(U^\e_{(g_\e+t)\land d_\e}-U^\e_{g_\e-})_{t\geq 0}$ goes to  $\brv=(Z_{(g+t)\land d}-Z_{g})_{t\geq 0}$
for the $\JS$-topology and $M(\brv_\e)$ goes to $M(\brv)$.
By Steps~1 and~2 (and since $\nn_*(M=y)=0$), this will prove that 
$\nn^\e(\cdot|\ell>\delta)\to \nn_*(\cdot|\ell>\delta)$ and that 
$\nn^\e(M>y|\ell>\delta)\to \nn_*(M>y|\ell>\delta)$.
\vip
Recalling the notation of Step~6, we have $d_\e-g_\e=\ell(\brw_\e)=\ell(\brv_\e)$ and
$d-g=\ell(\brw)=\ell(\brv)$. We consider some (random) continuous increasing $\lambda_\e:\R_+\to\R_+$ 
such that $\lambda_\e(0)=0$, 
$\sup_{t\geq 0}|\lambda_\e(t)-t|\to 0$ and $\sup_{[0,T]} |\brx_\e(t)-\brw(t)|\to 0$ for all $T>0$, where
$\brx_\e(t)=\brw_\e(\lambda_\e(t))$. Observe that $\ell(\brw_\e)=\lambda_\e(\ell(\brx_\e))$ 
and $M(\brv_\e)=M(\brw_\e)=M(\brx_\e)$.
\vip
Almost surely, for all $\e$ small enough, $\ell(\brx_\e)=\ell(\brw)$:
this follows from the facts that 
\vip

\noindent $\bullet$ we have $\brw_\e(t)\in \HH$ for all $t\in [0,\delta]$ because 
$\ell(\brw_\e)>\delta$ by definition of $d_\e$, so that
for $\e$ small enough, $\brx_\e(t) \in \HH$ for all $t \in[0,\delta/2]$;
\vip
\noindent $\bullet$ by~\eqref{eer3}, 
$\inf_{[\delta/2,\ell(\brw))} d(\brw(t),\HH^c)>0$, so that for $\e$ small enough, $\brx_\e(t) \in \HH$ 
for $t\in [\delta/2,\ell(\brw))$;
\vip
\noindent $\bullet$ by~\eqref{eer3}, $\brw(\ell(\brw)) \notin \bar\HH$, 
so that for $\e$ small enough, $\brx_\e(\ell(\brw)) \notin \bar\HH$.
\vip
Consequently, since $M(\brv_\e)=M(\brx_\e)$, for all $\e$ small enough such that
$\ell(\brx_\e)=\ell(\brw)$,
$$
|M(\brv_\e)-M(\brw)|= \Big|\sup_{t\in [0,\ell(\brw)]} |\brx_\e(t)| - \sup_{t\in [0,\ell(\brw)]} |\brw(t)|\Big|\leq
\sup_{t\in [0,\ell(\brw)]} |\brx_\e(t)-\brw(t)|\to 0.
$$

Moreover, $\brv_\e=(\brw_\e(t\land \ell(\brw_\e))_{t\geq 0}$  goes to  $\brv=(\brw(t\land \ell(\brw)))_{t\geq 0}$
for the $\JS$-topology, because $\sup_{t\geq 0}|\lambda_\e(t)-t|\to 0$ and, still for all $\e$ small enough
so that $\ell(\brx_\e)=\ell(\brw)$,
$$
\sup_{t\in [0,\infty)}|\brv_\e(\lambda_\e(t))-\brv(t)|
=\sup_{t\in [0,\infty)}|\brx_\e(t\land\ell(\brx_\e))-\brw(t\land \ell(\brw))|=
\sup_{t\in [0,\ell(\brw)]} |\brx_\e(t)-\brw(t)|\to 0.
$$

{\it Step 8.} Finally, we verify that for $\brv_\e'=(U^\e_{(g_\e'+t)\land d_\e}-U^\e_{g_\e'-})_{t\geq 0}$ 
and $\brv'=(Z_{(g'+t)\land d}-Z_{g'})_{t\geq 0}$, we have $\ell(\brv_\e')\to \ell(\brv)$ a.s.
By Steps~1 and~2 (and since $\nn_*(\ell=\delta)=0$), this will prove that 
$\nn^\e(\ell>\delta|M>y)\to \nn_*(\ell>\delta|M>y)$.

\vip

Recalling the notation of Step~6, we have $\ell(\brv_\e')=\ell(\brw_\e')$ and $\ell(\brv')=\ell(\brw')$.
Let $\lambda_\e:\R_+\to\R_+$ be continuous, increasing, with
$\lambda_\e(0)=0$, 
$\sup_{t\geq 0}|\lambda_\e(t)-t|\to 0$ and $\sup_{[0,T]} |\brx_\e'(t)-\brw'(t)|\to 0$ for all $T>0$, where
$\brx_\e'(t)=\brw_\e'(\lambda_\e(t))$. Observe that $\ell(\brw_\e')=\lambda_\e(\ell(\brx_\e'))$.
To complete the proof, it suffices to check that a.s., for all $\e$ small enough, $\ell(\brx_\e')=\ell(\brw')$.
This can be checked as Step~7,
provided $\liminf_{\e\to 0} \ell(\brx'_\e)>0$ a.s. But on $\{\liminf_{\e\to 0} \ell(\brx'_\e)=0\}$, there is 
$\e_n\to 0$
such that $\ell(\brx'_{\e_n})\to 0$, whence $M(\brx'_{\e_n})\to 0$ (since $\brx'_{\e_n}$ converges locally 
uniformly to $\brw$ which is càd and since $\brw(0)=0$).
This is not possible, because $M(\brx'_{\e})=M(\brw_{\e}')>y$ a.s. by definition.
\end{proof}

We continue to study the convergence of $\nn^\e$ to $\nn_*$ and prove some uniform
estimates on $\nn^\e$.

\begin{lemma}\label{tic2}
Grant Assumptions~\ref{assump_equi} and~\ref{assump:moments}-(a).
\vip
(i) There is a constant $C>0$ such that $\nn^\e(M>y)\leq C y^{-\alpha/2}$ for all $\e\in (0,1]$, all $y>0$.
\vip

(ii) For all $\delta>0$, all $y>0$, as $\e\to 0$,
$$
\nn^\e(M>y,\ell\leq \delta)\to \nn_*(M>y,\ell\leq \delta).
$$
\end{lemma}

\begin{proof} 
For (i), it suffices that $C:=\sup_{\e\in (0,1]}\nn^\e(M>1)<\infty$. Indeed, by~\eqref{newt2}
and since $M(\Phi_\e(e))=\e^{1/\alpha}M(e)$,
this will imply that $\nn^1(M>\e^{-1/\alpha})=\e^{1/2}\nn^\e(M>1)\leq C\e^{1/2}$ for all $\e \in (0,1]$, 
and thus that 
$\nn^1(M>z)\leq Cz^{-\alpha/2}$ for all $z>1$. This also holds true when $z \in (0,1)$, since $\nn^1$
is a finite measure. By~\eqref{newt2} again, we will find that for all $\e\in (0,1]$, all $y>0$,
$$
\nn^\e(M>y)=\e^{-1/2}\nn^1(M>y\e^{-1/\alpha}) \leq C y^{-\alpha/2}.
$$
To prove that $\sup_{\e\in (0,1]}\nn^\e(M>1)<\infty$, we use Lemmas~\ref{tic0} and~\ref{tic11}, writing
$$
\nn^\e(M>1) = \nn^\e(\ell>1) \frac{\nn^\e(M>1|\ell>1)}{\nn^\e(\ell>1|M>1)}
\longrightarrow \nn_*(\ell>1) \frac{\nn_*(M>1|\ell>1)}
{\nn_*(\ell>1|M>1)} \quad \text{as $\e\to 0$.}
$$
Hence, $\nn^\e(M>1)$ goes to some finite constant as $\e\to 0$.
Since finally $\nn^\e(M>1)\leq \nn^\e(\cE)=\chi_\rG \e^{-1/2}$ for all $\e\in (0,1]$, we conclude that
$\sup_{\e \in (0,1]} \nn^\e(M>1)<\infty$.

\vip
We next prove (ii). Writing
\begin{align*}
\nn^\e(M>y,\ell\leq \delta)=&\nn^\e(M>y)-\nn^\e(M>y,\ell> \delta)\\
=& \nn^\e(\ell>\delta)\frac{\nn^\e(M>y|\ell>\delta)}{\nn^\e(\ell>\delta)|M>y)}
- \nn^\e(\ell>\delta)\nn^\e(M>y|\ell>\delta),
\end{align*} 
it suffices to use Lemmas~\ref{tic0} and~\ref{tic11}.
\end{proof}

We now check~\eqref{tododo}, which is a consequence of the previous lemma.

\begin{proof}[Proof of~\eqref{tododo}]
By Lemma~\ref{tic2}-(i) with $\e=1$, we know that for $(U_t)_{t\geq 0}$ a continuous-time random walk
with incremental law $\rF_1$, jumping rate $1$ and initial condition $U_0\sim\rG_1$ with a moment
of order $\alpha/2$,
there is a constant $C>0$ such that for all $y>0$, $\PP(M(U)>y)\leq C y^{-\alpha/2}$.
Consequently, for $(\mathbf{S}_n)_{n\geq 0}$ a random walk with incremental law $\rF_1$ issued from $0$,
independent of some $U_0\sim\rG_1$, $\PP(\check{M}(U_0+\mathbf{S})>y)\leq C y^{-\alpha/2}$.
But we clearly have $\check{\ell}(\mathbf{S})\leq \check{\ell}(U_0+\mathbf{S})$ (since $U_0 \in \HH$), 
so that $\check{M}(\mathbf{S})\leq \check{M}(U_0+\mathbf{S})+|U_0|$.  Thus
$$
\PP(\check{M}(\mathbf{S}))>y)\leq \PP(\check{M}(U_0+\mathbf{S})>y/2)+\PP(|U_0|>y/2)\leq C y^{-\alpha/2}.
$$
\vskip-0.8cm
\end{proof}

We can finally give the
\vip

\begin{proof}[Proof of Proposition~\ref{lemma:estimates} under Assumption~\ref{assump:moments}-(a)]
First, \eqref{aan} directly follows from the fact that $\nn_*(\ell>\delta)
+\sup_{\e\in (0,1]} \nn^\e(\ell>\delta)<\infty$
by Lemmas~\ref{qdist} and~\ref{tic0}.
\vip
The estimate~\eqref{j1s} (with $\theta \in (0,1/2)$) follows from the facts that 
$\nn^\e(\ell>\delta)\leq C \delta^{-1/2}$ and 
$\nn^\e(M>y)<Cy^{-\alpha/2}$, see Lemmas~\ref{tic0} and \ref{tic2}-(i).

\vip
Concerning~\eqref{j2s}, we first write, for $\delta \in (0,1)$,
\[
\int_{\mathcal{E}}(\ell(e)\wedge 1) (\nn^\e - \nn^{\e,\delta})(\dr e) = \int_{\mathcal{E}}(\ell(e)\wedge 1) 
\bm{1}_{\{\ell(e) \leq \delta\}}\nn^\e(\dr e) \leq \int_0^\delta \nn^\e(\ell > u) \dr u.
\]
By  Lemma~\ref{tic0}, this is smaller than $C\int_0^\delta u^{-1/2}\dr u \leq C \delta^{1/2}$, from which we conclude
that $\lim_{\delta\to 0}\sup_{\e\in (0,1]} \int_{\mathcal{E}}(\ell(e)\wedge 1) (\nn^\e - \nn^{\e,\delta})(\dr e)=0$.
Next,
\[
 \int_{\mathcal{E}}(M(e)\wedge 1) (\nn^\e - \nn^{\e,\delta})(\dr e) = \int_{\mathcal{E}}(M(e)\wedge 1) 
\bm{1}_{\{\ell(e) \leq \delta\}}\nn^\e(\dr e)
= \int_0^1\nn^\e(M > y, \ell \leq \delta) \dr y.
\]
We have $\nn^\e(M(e) > y, \ell(e) \leq \delta) \to \nn_*(M(e) > y, \ell(e) \leq \delta)$ for each $y>0$
by Lemma~\ref{tic2}-(ii), and $\sup_{\e\in(0,1)}\nn^\e(M(e) > y, \ell(e) \leq \delta) \leq C_* y^{-\alpha/2}$
by Lemma~\ref{tic2}-(i). By dominated convergence,
\[
 \lim_{\e\to0}\int_{\mathcal{E}}(M(e)\wedge 1) (\nn^\e - \nn^{\e,\delta})(\dr e) 
= \int_0^1 \nn_*(M(e) > y, \ell(e) \leq \delta) \dr y=\int_{\mathcal{E}}(M(e)\wedge 1) 
\bm{1}_{\{\ell(e) \leq \delta\}}\nn_*(\dr e).
\]
This last quantity tends to $0$ as $\delta\to 0$ by~\eqref{se1} and since $\nn_*(\ell=0)=0$.

\vip

We now check~\eqref{j3s}. We fix $\delta>0$ and  $\phi:\cE\to \R$ bounded and continuous for the $\JS$-topology.
By {Lemma~\ref{tic11},} $\nn^\e(\phi|\ell>\delta)\to \nn_*(\phi|\ell>\delta)$. Since 
$\nn^{\e,\delta}(\dr e)=\indiq_{\{\ell(e)>\delta\}}\nn^\e(\dr e)$ and 
$\nn_*^{\delta}(\dr e)=\indiq_{\{\ell(e)>\delta\}}\nn_*(\dr e)$, this rewrites as
$$
\frac{\nn^{\e,\delta}(\phi)}{\nn^\e(\ell>\delta)} \to \frac{\nn_*^\delta(\phi)}{\nn_*(\ell>\delta)}.
$$
Recalling that $\nn^\e(\ell>\delta)\to\nn_*(\ell>\delta)$
by Lemma~\ref{tic0}, we find $\nn^{\e,\delta}(\phi)\to \nn_*^\delta(\phi)$ as desired.

\vip
We next verify~\eqref{j4s}, which resembles Lemma~\ref{imp}. We fix $\delta>0$ and $\eta\in (0,\delta/2)$
and write
$$
\nn^{\e,\delta}(\ell_r<\eta)=\chi_\rG\e^{-1/2}\PP(\ell_r(Y^\e)<\eta,\ell(Y^\e)>\delta),
$$
where we recall that $Y^\e_t=O_\e+\int_0^t \int_{\R^d}u \rK_\e(\dr s,\dr u)$, see Subsection~\ref{notex}.
Applying the strong Markov property at time $\ell_r(Y^\e)$, we get
\begin{align*}
\nn^{\e,\delta}(\ell_r<\eta)=&\chi_\rG\e^{-1/2}\E\Big[\indiq_{\{\ell_r(Y^\e)<\eta \land \ell(Y^\e)\}}
g_\e(Y^\e_{\ell_r(Y^\e)},\delta-\ell_r(Y^\e))\Big]\\
\leq& \chi_\rG\e^{-1/2}\E\Big[\indiq_{\{\ell_r(Y^\e)<\eta \land \ell(Y^\e)\}}
g_\e(Y^\e_{\ell_r(Y^\e)},\delta/2)\Big],
\end{align*}
where $g_\e(x,t)=\PP(\ell(Y^{\e,x})>t)$, with $(Y^{\e,x}_t)_{t\geq 0}$ defined in Subsection~\ref{notex}. 
By~\eqref{newe1}, we find 
$$
g_\e(x,\delta/2)\leq \Big(C \delta^{-1/2}(\e^{1/2}+x_1^{\alpha/2})\Big)\land 1 
\leq C_\delta (\e^{1/2} + x_1^{\alpha/2}\land 1),
$$
for some constant $C_\delta>0$ depending on $\delta$.
Consequently, $\nn^{\e,\delta}(\ell_r<\eta)\leq C_\delta (\chi_{\rG} I_{\eta,\e} +J_{\eta,\e})$, where
$$
I_{\eta,\e}= \PP(\ell_r(Y^\e)<\eta \land \ell(Y^\e)) \leq \PP(\ell_r(Y^\e)<\ell(Y^\e))=:I_\e
$$
and
\begin{align*}
J_{\eta,\e}= &\chi_\rG \e^{-1/2}\E\Big[\indiq_{\{\ell_r(Y^\e)<\eta \land \ell(Y^\e)\}}
\Big((Y^\e_{\ell_r(Y^\e)}\cdot \be_1)^{\alpha/2}\land 1\Big)\Big]\\
=& \int_{\cE} \indiq_{\{\ell_r(e)<\eta\land\ell(e)\}} 
[(e(\ell_r(e)) \cdot\be_1)^{\alpha/2}\land 1] \nn^\e(\dr e).
\end{align*}
It remains to show that $\lim_{\e\to 0}I_{\e}=0$ 
and $\lim_{\eta\to 0} \limsup_{\e\to 0} J_{\eta,\e}=0$.

\vip
Recall that $(Y^\e_t)_{t\geq 0}\stackrel{(d)}=(\e^{1/\alpha} Y^1_{t/\e})$, see  Subsection~\ref{notex}, so that
$$
(\ell_r(Y^\e),\ell(Y^\e))\stackrel{(d)}=(\e\ell_{\e^{-1/\alpha}r}(Y^1),\e\ell(Y^1)).
$$
Thus $I_{\e}= \PP(\ell_{\e^{-1/\alpha}r}(Y^1)<\ell(Y^1))$, which tends to $0$ as $\e\to 0$ 
by the monotone convergence theorem,
since $\ell_{\e^{-1/\alpha}r}(Y^1)$ a.s. increases as $\e$ decreases and since a.s., $\ell_{\e^{-1/\alpha}r}(Y^1)=
\ell(Y^1)$ for all $\e$ small enough. Indeed, 
since $(Y^1_t)_{t\geq 0}$ is a (continuous-time) random walk of which the incremental law $\rF_1$ has a density,
we a.s. have $\min_{t\in  [0,\ell(Y^1))}d(Y_t^1,\HH^c)>0$, so that $Y^1_t \in B_d(\e^{-1/\alpha}r\be_1,\e^{-1/\alpha}r)$ 
for all $t\in [0,\ell(Y^1))$ if $\e$ is small enough.
\vip
Next, recalling that $B_d(r\be_1,r)=\{x \in \R^d  : |x|^2<2rx_1\}$, when $\ell_r(e)<\ell(e)$, we have
$0\leq e(\ell_r(e))\cdot \be_1 \leq |e(\ell_r(e))|^2/(2r)$, so that, for some constant $C_r>0$
depending on $r$,
$$
J_{\eta,\e}\leq  C_r \int_{\cE} \Big(\sup_{t\in [0,\eta\land \ell(e)]} |e(t)|^\alpha \land 1 \Big)\nn^\e(\dr e)
=C_r\int_0^1 \nn^{\e}\Big(\sup_{t\in [0,\eta\land \ell(e)]} |e(t)| > y^{1/\alpha}\Big) \dr y.
$$ 
Assume for a moment that for each $\eta>0$, for a.e. $y\in(0,1)$,
\begin{equation}\label{eutaf}
\lim_{\e\to 0} \nn^{\e}\Big(\sup_{t\in [0,\eta\land \ell(e)]} |e(t)| > y^{1/\alpha}\Big)=\nn_*
\Big(\sup_{t\in [0,\eta\land \ell(e)]} |e(t)| > y^{1/\alpha}\Big).
\end{equation}
Then by dominated convergence, since $\nn^{\e}(\sup_{t\in [0,\eta\land \ell(e)]} |e(t)| > y^{1/\alpha})
\leq \nn^\e(M>y^{1/\alpha})\leq C y^{-1/2}$ by Lemma~\ref{tic2}-(i), we find that
$$
\limsup_{\e\to 0} J_{\eta,\e}\leq C_r\int_0^1 \nn_*\Big(\sup_{t\in [0,\eta\land \ell(e)]} |e(t)| > y^{1/\alpha}\Big) \dr y
= C_r \int_{\cE} \Big(\sup_{t\in [0,\eta\land \ell(e)]} |e(t)|^\alpha \land 1 \Big)\nn^\e(\dr e)
$$
This last quantity tends to $0$ as $\eta\to 0$, as shown at the end of the proof of Lemma~\ref{imp}.

\vip

To complete the proof of~\eqref{j4s}, it remains to show~\eqref{eutaf}. We write
$$
\nn^\e\Big(\sup_{s\in[0,\eta\wedge \ell(e)]}|e(s)| > y^{1/\alpha}\Big) = 
\nn^{\e,\eta}\Big(\sup_{s\in[0,\eta]}|e(s)| > y^{1/\alpha}\Big) + \nn^\e(M(e) > y^{1/\alpha}, \ell(e) \leq \eta).
$$
Thanks to Lemma~\ref{tic2}-(ii), the second term converges to $\nn_*(M(e) > y^{1/\alpha}, \ell(e) \leq \eta)$ 
as $\e\to0$. Regarding the first term, it follows from \eqref{j3s} that 
$\nn^{\e, \eta}$ converges weakly to $\nn_*^\eta$. Since $e \mapsto \sup_{[0,\eta]}|e(s)|$ is continuous 
at each $e$ such that $\Delta e(\eta)=0$, which holds true for $\nn_*$-a.e. $e\in \cE$,
we get that the law of $\sup_{s\in[0,\eta]}|e(s)|$ under $\nn^{\e,\eta}$ converges to the law of 
$\sup_{s\in[0,\eta]}|e(s)|$ under $\nn_*^{\eta}$. Hence, for a.e. $y\in(0,1)$, 
$\nn^{\e,\eta}(\sup_{s\in[0,\eta]}|e(s)| > y^{1/\alpha}) \to \nn_*^{\eta}(\sup_{s\in[0,\eta]}|e(s)| > y^{1/\alpha})$ as $\e\to0$.

\vip

Finally, we prove \eqref{j5s}. Fix $r >0$ and write, for $\delta >\eta>0$ (recall that $\ell>\ell_r$),
\[
\nn^\e(\ell_r > \eta) \geq \nn^\e(\ell_r > \eta, \ell > \delta) 
= \nn^\e(\ell > \delta) - \nn^{\e,\delta}(\ell_r \leq \eta).
\]
Using Lemma~\ref{tic0} and~\eqref{j4s}, we find
\[
\liminf_{\eta \to 0}\liminf_{\e\to0}\nn^\e(\ell_r > \eta) \geq \delta^{-1/2}\nn_*(\ell > 1).
\]
Letting $\delta\to0$, we conclude that $\liminf_{\eta \to 0}\liminf_{\e\to0}\nn^\e(\ell_r > \eta)=\infty$ as desired.
\end{proof}

\subsection{A technical lemma for the scattering process}

It remains to show Lemma \ref{tmok}.  We first show an intermediate result.

\begin{lemma}\label{lemma:tail_kinetic_ball}
Grant Assumption~\ref{assump_equi} and Assumption~\ref{assump:moments} and fix $r>0$.
Let $U$ be a $\mathrm{G}_+$-distributed random variable and let $\rP_\e$ be a 
Poisson measure on $\mathbb{R}_+ \times \mathbb{R}^d$ with intensity $\e^{-1}\dr s\mathrm{F}(v) \dr v$. 
We introduce $V_t^\e = U + \int_0^t\int_{\mathbb{R}^d}(v - V_{s-}^\e)\rP_\e(\dr s,\dr v)$ and 
$X_t^\e = \e^{1/\alpha -1}\int_0^t V_s \dr s$, as well as the stopping time
$\ell_r(X^\e)= \inf\{t > 0 : X_t^\e \notin B_d(r\be_1, r)\}$. 
There exists $\zeta \in (0,1/2]$ and $a_0 > 0$ such that
\[
\liminf_{\e\to0}\e^{-\zeta} \mathbb{P}(\ell_r(X^\e)> a_0) >0.
\]
\end{lemma}

\begin{proof}
We write $\rP_\e = \sum_{n\geq 1}\delta_{(T_n^\e, W_n)}$, where 
$(W_n)_{n\geq1}$ is i.i.d. and $\mathrm{F}$-distributed, $(E_n^\e)_{n\geq1}$ is i.i.d. 
and $\mathrm{Exp}(\e^{-1})$-distributed and $T_n^\e = \sum_{k=1}^n E_k^\e$. We have
\[
X_{T_1^\e}^\e = \e^{1/\alpha - 1} E_1^\e U \quad \text{and for }n\geq 2,\quad X_{T_n^\e}^\e 
= \e^{1/\alpha-1} E_1^\e U + \e^{1/\alpha-1}\sum_{k=2}^n E_{k}^\e W_{k-1}.
\]
We introduce the Poisson process $M^\e_t=\sum_{n\geq 1}\indiq_{\{T^\e_n\leq t\}}$ with parameter $\e^{-1}$.
Since $(X^\e_t)_{t\geq 0}$ is linear on $[T_n^\e, T_{n+1}^\e)$, $n\geq0$ (with $T_0^\e = 0$), 
and since $B_d(r\be_1,r)$ is convex, we have, for all $a>0$,
\begin{equation}\label{hh1}
\{\ell_r(X^\e)>a\} \supset \{X^\e_{T_n^\e}\in B_d(r\be_1,r) \text{ for all } n=1,\dots,M^\e_a+1\}.
\end{equation}
We next consider another (independent) sequence $(F_n^\e)_{n\geq1}$ of $\mathrm{Exp}(\e^{-1})$-distributed i.i.d.
random variables and we let $S_n^\e = \sum_{k=1}^n F_k^\e$. Then $\mathrm{K}_\e = 
\sum_{n\geq 1}\delta_{(S_n^\e, \e^{1/\alpha - 1}E_{n+1}^\e W_n)}$ is a Poisson measure on $\mathbb{R}_+ \times \mathbb{R}^d$ 
with intensity $\e^{-1} \dr s \mathrm{F}_\e(z) \dr z$, with $\rF_\e$ defined in Notation~\ref{fgh}. Thus for
\[
 Y_t^\e =\e^{1/\alpha}E_1^\e U + \int_0^t\int_{\mathbb{R}^d}z \mathrm{K}_\e(\dr s, \dr z) \quad \text{and}
\quad \ell(Y^\e)=\inf\{t>0 : Y^\e_t\notin \HH\},
\]
the process $(Y^\e_{t\land \ell(Y^\e)})_{t\geq 0}$ is $\chi_\rG^{-1}\e^{\zeta}\nn^\e$-distributed, with 
$\zeta=1/2$ under Assumption~\ref{assump:moments}-(a) and $\zeta=\beta/\alpha\in (0,1/2)$ 
under Assumption~\ref{assump:moments}-(b), see~\eqref{newt1}.
We thus deduce from~\eqref{j5s} that $\lim_{a\to 0} \liminf_{\e\to 0} \e^{-\zeta}\PP(\ell_r(Y^\e)>a)=\infty$, so that
for some $a_0>0$,
$$
p:=\liminf_{\e\to 0} \e^{-\zeta}\PP(\ell_r(Y^\e)>3a_0)>0.
$$
We now introduce the Poisson process $O^\e_t=\sum_{n\geq 1}\indiq_{\{S^\e_n\leq t\}}$ with parameter $\e^{-1}$ and 
observe that $Y^\e_{S_n^\e}=X^\e_{T_{n+1}^\e}$ for all $n\geq 0$, whence
\begin{align*}
\{\ell_r(Y^\e)>3a_0\}=&\{Y^\e_{S_{n}^\e} \in B_d(r\be_1,r) \text{ for all } n=0,\dots,O^\e_{3a_0}\}\\
=&\{X^\e_{T_{n}^\e} \in B_d(r\be_1,r) \text{ for all } n=1,\dots,O^\e_{3a_0}+1\}.
\end{align*}
Recalling~\eqref{hh1}, we see that 
$$
\PP(\ell_r(X^\e)>a_0)\geq\PP(\ell_r(Y^\e)>3a_0)-\PP(O^\e_{3a_0}< M^\e_{a_0}).
$$
Hence $\liminf_{\e\to 0} \e^{-\zeta}\PP(\ell_r(X^\e)>a_0) \geq p-\limsup_{\e\to 0}\e^{-\zeta}\PP(O^\e_{3a_0}< M^\e_{a_0})
=p$, because
\begin{align*}
\PP(O^\e_{3a_0}< M^\e_{a_0})=\PP(e^{M^\e_{a_0}-O^\e_{3a_0}}>1)\leq\E[e^{M^\e_{a_0}-O^\e_{3a_0}}]=
\exp\Big(\frac{a_0}\e [e-1+ 3(e^{-1}-1)] \Big)
\end{align*}
and because $e-1+3(e^{-1}-1) <0$. We used that $(M^\e_t)_{t\geq 0}$ and  $(O^\e_t)_{t\geq 0}$
are independent Poisson processes with parameter $\e^{-1}$.
\end{proof}

\begin{proof}[Proof of Lemma~\ref{tmok}]
We let to the reader the care to recall Definition~\ref{dfsp}. We only recall here that $\lambda(x,v,s)$ 
is defined by~\eqref{lambda} and that $T_1^\e = \lambda(\bX_0^\e,\e^{(1-\alpha)/\alpha}\bV_0^\e,E^\e_1)$ and that 
for any $n\geq 1$, $T_{n+1}^\e = T_n^\e + \lambda(\bX^\e_{T_n^\e},\e^{(1-\alpha)/\alpha}\bV^\e_{T_n^\e},E^\e_{n+1})$. 
Finally, $N_t^\e = \sum_{n\geq 1}\indiq_{\{T_n^\e \leq t\}}$.

\vip
\textit{Step 1.} It suffices that for all $t>0$, $\max_{n=1,\dots,\lfloor t/\e\rfloor}|T_n^\e-n\e|\to 0$
in probability as $\e\to0$.

\vip

Indeed, this implies that for all $t>0$, $\sup_{[0,t]} |\e N^\e_s-s|\to 0$ in probability, because
$$
\text{for all $\eta \in (0,t)$, all $\e \in (0,\eta/2)$,}\quad
\Big\{\sup_{[0,t]} |\e N^\e_s-s|>\eta\Big\}\subset \Big\{\max_{n=1,\dots,\lfloor 2t/\e\rfloor}|T_n^\e-n\e|>\eta/2\Big\}.
$$
Indeed, if there is $s\in [0,t]$ such that $\e N^\e_s-s>\eta$, then with $n=\lfloor \frac{s+\eta}\e
\rfloor \in \{0,\dots,\lfloor \frac{2t}\e\rfloor\}$, we have $T_n^\e\leq s \leq (n+1)\e-\eta < n\e -\eta/2$,
while if there is $s\in [0,t]$ such that $\e N^\e_s-s<-\eta$ (which implies that $s>\eta$), 
then with $n=\lfloor \frac{s-\eta}\e\rfloor\in \{0,\dots,\lfloor \frac{t}\e\rfloor\}$, 
we have $T_n^\e\geq s \geq \e n+\eta> n\e+\eta/2$.

\vip

This also implies that $\max_{n=1,\dots,N^\e_t}|T_n^\e-n\e|\to 0$ in probability, because for all $\eta>0$,
$$
\PP\Big(\max_{n=1,\dots,N^\e_t}|T_n^\e-n\e|>\eta\Big)\leq \PP\Big(N^\e_t > \frac {2t}\e\Big)+
\PP\Big(\max_{n=1,\dots,\lfloor 2t/\e\rfloor}|T_n^\e-n\e|>\eta\Big).
$$
By the above discussion, the first term tends to $0$, while the second one tends to $0$ by assumption.

\vip

{\it Step 2.} Since $\lambda(x,v,s)=s$ when $x+vs\in \Dd$, it holds that $T^\e_{n+1}-T^\e_n=E^\e_{n+1}$ 
when $\bX^\e_{T^\e_{n+1}}\in \Dd$. Since moreover we always have $\lambda(x,v,s)\leq s$,
$$
S^\e_n-R^\e_n\leq T^\e_n \leq S^\e_n, \quad \text{where} \quad S^\e_n=\sum_{k=1}^n E^\e_k\quad\text{and}
\quad R^\e_n=\sum_{k=1}^n E^\e_k\indiq_{\{\bX^\e_{T^\e_{k}} \in \pDd\}},
$$
whence
$$
\max_{n=1,\dots,\lfloor t/\e\rfloor}|T_n^\e-n\e| \leq \max_{n=1,\dots,\lfloor t/\e\rfloor}|S_n^\e-n\e| + R^\e_{\lfloor t/\e\rfloor}.
$$
Since $S^\e_n-n\e$ is a martingale, we deduce from Doob's inequality that
$$
\E\Big[\max_{n=1,\dots,\lfloor t/\e\rfloor}|S^\e_n-n\e|^2\Big]\leq 4 \text{Var } S^\e_{\lfloor t/\e\rfloor}=4\lfloor t/\e\rfloor
\text{Var } E^\e_1 = 4\lfloor t/\e\rfloor \e^2 \to 0.
$$

{\it Step 3.} 
It only remains to show that $R^\e_{\lfloor t/\e\rfloor}\to 0$ in probability as $\e\to 0$. We fix $t>0$ and $\eta>0$
and we consider
$\zeta \in (0,1/2]$ as Lemma~\ref{lemma:tail_kinetic_ball}. We write
$\PP(R^\e_{\lfloor t/\e\rfloor}>\eta)\leq I_{\e}+J_{\e}$, where
$$
I_{\e}=\PP\Big(\max_{n=1,\dots,\lfloor t/\e\rfloor} E^\e_n \geq \e^{(\zeta+1)/2}\Big), \quad 
J_{\e}=\PP(m^\e_t > \eta \e^{-(\zeta+1)/2})
\quad \hbox{and}\quad
m^\e_t=\sum_{k=1}^{\lfloor t/\e\rfloor}\indiq_{\{\bX^\e_{T^\e_{k}} \in \pDd\}}.
$$

First, $I_\e=1-\PP(E^\e_1< \e^{(\zeta+1)/2})^{\lfloor t/\e\rfloor} = 1-(1-e^{-\e^{-(1-\zeta)/2}})^{\lfloor t/\e\rfloor}$ 
tends to $0$ as $\e\to 0$. 
\vip
Next, we let $\tau^\e_1=\inf\{T^\e_k : k\geq 1, \bX^\e_{T^\e_k}\in \pDd\}$
and $\tau^\e_{n+1}=\inf\{T^\e_k : k\geq 1, T^\e_k> \tau^\e_n, \bX^\e_{T^\e_k}\in \pDd\}$ for $n\geq 1$.
Thanks to Remark~\ref{imp4}, there exists $r > 0$ such that $B_d(x + r\bn_x, r) \subset \Dd$ for all $x\in\pDd$
so that, if we let 
$\gamma_{n+1}^\e = \inf\{t > 0 : \bX_{\tau_n^\e + t}^\e \notin B_d(\bX_{\tau_n^\e}^\e + r\bn_{\bX_{\tau_n^\e}^\e}, r)\}$, 
then $\gamma_{n+1}^\e \leq \tau_{n+1}^\e - \tau_n^\e$ for all $n\geq 1$. Thus 
$\tau^\e_n \geq \sum_{k=2}^n \gamma^\e_k$ for all $n\geq 2$. 
\vip
By the strong Markov property of 
$(\bX^\e_t, \bV_t^\e)_{t\geq0}$ and by the rotational invariance of $\mathrm{F}$ 
and $\mathrm{G}$, the sequence $(\gamma_n^\e)_{n\geq 2}$ is i.i.d. and $\gamma_2^\e$ has the same law
as $\ell_r(X^\e)$ in Lemma~\ref{lemma:tail_kinetic_ball}. 
\vip
All in all, setting $n_{\e}=\lfloor \eta \e^{-(\zeta+1)/2}\rfloor$,
$$
J_{\e}=\PP(\tau^\e_{n_{\e}}<t) \leq  
\mathbb{P}\Big(\sum_{k=2}^{n_{\e}} \gamma_k^\e < t\Big)
\leq e^t \E\Big[\exp\Big(-\sum_{k=2}^{n_{\e}} \gamma_k^\e\Big)\Big]
=e^t \big( \E[e^{-\ell_r(X^\e)}] \big)^{n_{\e}-1}.
$$
By Lemma~\ref{lemma:tail_kinetic_ball}, there are $q>0$ and $a_0>0$ 
so that $\PP(\ell_r(X^\e)>a_0) \geq q\e^{\zeta}$ for all $\e\in (0,1]$ small enough, implying that
$$
\E[e^{-\ell_r(X^\e)}]\leq \E[e^{-a_0\indiq_{\{\ell_r(X^\e)>a_0\}}}]=1 - (1-e^{-a_0})\PP(\ell_r(X^\e)>a_0)
\leq 1 - c \e^{\zeta},
$$
where $c=(1-e^{-a_0})q>0$. We thus find
$J_{\e}\leq e^t (1-c\e^{\zeta})^{n_{\e}-1}$,
which tends to $0$ as $\e\to 0$.
\end{proof}

\appendix

\section{Geometric lemmas and inequalities}\label{ageo}

In this section, we check that the cutoff function $\Lambda$ is continuous,
we build some regular families of isometries,
we establish some parameterization lemmas of constant use and we prove the geometric 
inequalities stated in Proposition~\ref{tyvmmm}.

\subsection{Continuity of the cutoff function}

Here we check the following result.

\begin{lemma}\label{Lambdacon}
Assume that $\Dd$ is open, bounded and strictly convex. The function $\Lambda : \cDd\times \R^d\to \cDd$ 
defined in~\eqref{Lambda} is continuous.
\end{lemma}
 
\begin{proof} Let $V:\R^d \to \R_+$ be defined by $V(x)=d(x,\cDd)$. The function $V$ is continuous and convex.
For $(y,z)\in \cDd\times \R^d$, we define $v_{y,z}:[0,1]\to \R_+$ by $v_{y,z}(t)=V(y+t(z-y))$. Observing that
$v_{y,z}$ is continuous and convex and that $v_{y,z}(0)=0$ (because $y \in \cDd$), we may define
$t_{y,z}=\max\{t\in [0,1] : v_{y,z}(t)=0\}$. It then holds that $\Lambda(y,z)=y+t_{y,z}(z-y)$.
\vip
We consider a sequence $(y_k,z_k) \in \cDd\times \R^d$ converging to some $(y,z)\in \cDd\times\R^d$,
and we have to show that $\lim_k\Lambda(y_k,z_k)=\Lambda(y,z)$. Since the sequence $(\Lambda(y_k,z_k))_{k\geq 1}$
is valued in $\cDd$ which is compact, it suffices to show that $\Lambda(y,z)$ is its 
only accumulation point. We thus may assume that $\lim_{k\to \infty} \Lambda(y_k,z_k)$ exists.
Extracting a subsequence, we may moreover assume that $t_{y_k,z_k}$ converges to some $t\in [0,1]$.
Finally, using again subsequences, we may assume either (a) for all $k\geq 1$, $z_k \in \cDd$ or
(b) for all $k\geq 1$, $z_k \notin \cDd$.

\vip

{\it Case (a).} In this case, we have $z=\lim_k z_k \in \cDd$, whence $\Lambda(y_k,z_k)=z_k\to z=\Lambda(y,z)$.

\vip

{\it Case (b).} We always have $t\leq t_{y,z}$. Indeed, this follows from the fact that $v_{y,z}(t)=0$, 
because $v_{y,z}(t)=V(y+t(z-y))=\lim_k V(y_k+t_{y_k,z_k}(z_k-y_k))=0$.

\vip

If first $t= t_{y,z}$, which means that $\lim_k t_{y_k,z_k}=t_{y,z}$, then
$\Lambda(y_k,z_k)=y_k+t_{y_k,z_k}(z_k-y_k)$ tends to $y+t_{y,z}(z-y)=\Lambda(y,z)$ as $k\to \infty$.

\vip

If next $t< t_{y,z}$, then $v_{y,z}(s)=0$ (implying $y+s(z-y)\in \cDd$)
for all $s\in (t,t_{y,z})$. Moreover, $y+s(z-y)=\lim_k (y_k+s(z_k-y_k)) \in \Dd^c$ for
all $s \in (t,t_{y,z})$, because $\Dd^c$ is closed and because 
$y_k+s(z_k-y_k) \in \Dd^c$ for all $k$ large enough so that $s>t_{y_k,z_k}$.
Thus $\pDd$ contains the line segment $\{y+s(z-y) : s \in (t,t_{y,z})\}$, 
which is not possible by strict convexity of $\Dd$, except if $y=z$. In such a case,
$\Lambda(y_k,z_k)=y_k+t_{y_k,z_k}(z_k-y_k)$ tends to $y+t(z-y)=y=\Lambda(y,z)$.
\end{proof}

\subsection{Some regular families of isometries}

We recall that for $y\in \pDd$, $\cI_y$ stands for the set of linear isometries
sending $\be_1$ to $\bn_y$.

\begin{lemma}\label{locglob}
Suppose Assumption~\ref{as}. 
\vip
(i) If $d=2$, there exists a family $(A_y)_{y \in \pDd}$ such that
$A_y \in \cI_y$ for all $y \in \pDd$ and such that $y \mapsto A_y$ is globally Lipschitz continuous
on $\pDd$.
\vip
(ii) If $d\geq 3$, for any $z \in \pDd$, there exists a family $(A_y^z)_{y \in \pDd}$ such that
$A_y^z \in \cI_y$ for all $y \in \pDd$ and such that $y \mapsto A_y^z$ is locally Lipschitz continuous
on $\pDd\setminus\{z\}$.
\end{lemma}

\begin{proof}
When $d=2$, we define $A_y\in \cI_y$ 
by $A_y \be_1=\bn_y$ and $A_y \be_2=-(\bn_y\cdot \be_2) \be_1+ (\bn_y\cdot \be_1)\be_2$. 
The map $y\mapsto A_y$ has the same regularity as $y\mapsto \bn_y$,
{\it i.e.} $C^2$ under Assumption~\ref{as}.

\vip
When $d\geq 3$, we fix $z \in \pDd$ and consider a $C^3$-diffeomorphism $f:\Sp_{d-1} \to \pDd$ 
(e.g. the restriction 
to $\Sp_{d-1}$ of $\Phi^{-1}$ defined in Lemma~\ref{parafac}). We set $p=f^{-1}(z)$
and consider a $C^\infty$-diffeomorphism $g:\R^{d-1}\to \Sp_{d-1}\setminus\{p\}$.
Thus $h=f \circ g:\R^{d-1}\to \pDd\setminus \{z\} $ is a $C^3$-diffeomorphism.
We consider the canonical basis $(\be_2',\dots,\be'_{d})$ of $\R^{d-1}$, and define, for
$y \in \pDd\setminus  \{z\}$ and $k=2,\dots,d$,
$\bbf_k(y)=d_{h^{-1}(y)} h (\be_k')$. For each $y \in \pDd\setminus  \{z\}$,
the family $(\bbf_2(y),\dots,\bbf_{d}(y))$ is a basis of the tangent space $T_y$ at $y$ to $\pDd$
and for each $k=2,\dots,d$, the map $y\to \bbf_k(y)$ is of class $C^{2}$ on $\pDd \setminus\{z\}$.
 We now use the Gram-Schmidt procedure
to get, for each $y \in \pDd\setminus \{z\}$, an orthonormal basis $(\bbg_2(y),\dots,\bbg_{d}(y))$ of $T_y$.
For each $k=2,\dots,d$, the map $y\to \bbg_k(y)$ is still of class $C^{2}$  on $\pDd \setminus\{z\}$ (because 
$(\bbf_2(y),\dots,\bbf_{d}(y))$ are locally uniformly linearly independent for $y \in \pDd\setminus
 \{z\}$). 
We define
$A_y^z$ by $A_y^z\be_1=\bn_y$ and $A_y^z \be_k= \bbg_k(y)$ for $k=2,\dots,d$. We have $A_y^z\in \cI_y$ for each
$y \in \pDd\setminus\{z\}$, and $y\mapsto A_y^z$ is of class $C^{2}$ on $\pDd\setminus \{z\}$.
Finally, we choose $A_z^z \in \cI_z$ arbitrarily.
\end{proof}

\subsection{Some parameterization lemmas}

The following lemma may be standard, but we found no precise reference.

\begin{lemma}\label{parafac}
Grant Assumption~\ref{as}.
There is a $C^3$-diffeomorphism $\Phi : \R^d \to \R^d$, with both  $D \Phi$ and $D \Phi^{-1}$ bounded on $\R^d$,
such that $\Phi(\Dd)={B}_d(0,1)$. 
Consequently, there is $\kappa>0$
such that for all $x,y \in \R^d$, $\kappa^{-1} |x-y| \leq |\Phi(x)-\Phi(y)|\leq \kappa |x-y|$.
\end{lemma}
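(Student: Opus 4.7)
The strategy is a radial rescaling relative to an interior point of $\Dd$, smoothed near that point so $\Phi$ is globally $C^3$. By translation we may assume $0 \in \Dd$. Define the radial function $R : \Sp_{d-1} \to (0, \infty)$ by requiring $R(\omega)\omega \in \pDd$; it is well defined by boundedness and strict convexity of $\Dd$, and is $C^3$ by the implicit function theorem applied to a local $C^3$ defining function of $\pDd$ (strict convexity ensures $\omega$ is transverse to $\pDd$ at $R(\omega)\omega$). By compactness, $0 < R_{\min} \le R \le R_{\max} < \infty$. The naive Minkowski gauge $\Phi_0(x) := x/R(x/|x|)$ is $C^3$ on $\R^d\setminus\{0\}$, positively $1$-homogeneous, sends $\Dd, \pDd, \Dd^c$ onto $B_d(0,1), \Sp_{d-1}, B_d(0,1)^c$, and has a $0$-homogeneous, continuous, hence bounded differential on $\R^d\setminus\{0\}$; but $\Phi_0$ is typically not $C^1$ at the origin.

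To smooth $\Phi_0$ at $0$, I would fix $r_0 \in (0, R_{\min})$, a nondecreasing $C^3$ cutoff $\chi : [0,\infty) \to [0,1]$ with $\chi \equiv 0$ on $[0, r_0/2]$ and $\chi \equiv 1$ on $[r_0, \infty)$, and a constant $\lambda_0 \in (0, 1/R_{\max})$ (say $\lambda_0 := 1/(2R_{\max})$). Set
\[
\Phi(x) := \bigl[(1-\chi(|x|))\lambda_0 + \chi(|x|)/R(x/|x|)\bigr]\, x \quad (x \neq 0), \qquad \Phi(0) := 0.
\]
On $B_d(0, r_0/2)$ one has $\Phi(x) = \lambda_0 x$, linear and $C^\infty$; on $\{|x| \ge r_0\}$, $\Phi = \Phi_0$ is $C^3$; the pieces match smoothly on the overlap, so $\Phi \in C^3(\R^d)$. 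Writing $\Phi(r\omega) = \phi_\omega(r)\,\omega$ on each ray, a direct computation gives
\[
\phi_\omega'(r) = \lambda_0 + \bigl(\chi(r) + r\chi'(r)\bigr)\bigl(1/R(\omega) - \lambda_0\bigr) \ge \lambda_0 > 0,
\]
because $\chi \ge 0$, $\chi' \ge 0$, and $\lambda_0 < 1/R_{\max} \le 1/R(\omega)$. Thus $\phi_\omega$ is a strictly increasing $C^3$ bijection of $[0,\infty)$, so $\Phi$ is a global bijection of $\R^d$ with $\Phi(\Dd) = B_d(0,1)$; its inverse, built ray by ray, is also $C^3$ by the inverse function theorem.

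For the derivative estimates, $D\Phi = \lambda_0 I$ on $B_d(0, r_0/2)$, is $0$-homogeneous on $\{|x| \ge r_0\}$ (hence bounded there by continuity of its restriction to $\Sp_{d-1}$), and continuous on the compact transition annulus $\{r_0/2 \le |x| \le r_0\}$, so $\|D\Phi\|$ is bounded on $\R^d$; the same argument gives boundedness of $D\Phi^{-1}$. The bi-Lipschitz bound $\kappa^{-1}|x-y| \le |\Phi(x)-\Phi(y)| \le \kappa|x-y|$ then follows from the mean-value inequality along the segment $[x,y] \subset \R^d$, applied to $\Phi$ for the upper bound and to $\Phi^{-1}$ for the lower bound.

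The main (and rather mild) obstacle is ensuring strict radial monotonicity of $\phi_\omega$ uniformly in $\omega$ through the transition annulus. A naive choice such as $\lambda_0 = 1/R_{\min}$ would make $1/R(\omega) - \lambda_0$ negative, turning monotonicity into a delicate balance between $\|\chi'\|_\infty$ and the spread $R_{\max}/R_{\min}$; choosing $\lambda_0$ \emph{smaller} than every slope $1/R(\omega)$ makes $\phi_\omega'$ a sum of nonnegative terms bounded below by $\lambda_0$, eliminating the difficulty entirely.
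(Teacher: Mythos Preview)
Your proof is correct and follows essentially the same approach as the paper: a radial rescaling via the gauge function of $\Dd$, smoothed near the origin to ensure global $C^3$ regularity. The paper builds the inverse map $H:B_d(0,1)\to\Dd$ via an integral construction $q(t,m)=\int_0^t(1+\varphi(s)(m-1))\,\dr s$ and then sets $\Phi=H^{-1}$, whereas you build $\Phi$ directly by interpolating the radial scale factor between a constant $\lambda_0$ and $1/R(\omega)$; your choice $\lambda_0<1/R_{\max}$ makes the radial monotonicity $\phi_\omega'\ge\lambda_0$ immediate, which is a nice simplification over the paper's more involved verification of the properties of $q$.
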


\begin{proof} 
We assume without loss of generality that $0\in \Dd$ and split the proof into three steps.
\vip
{\it Step 1.} Let $r:\Sp_{d-1}\to (0,\infty)$ be uniquely defined, for $\sigma \in \Sp_{d-1}$, by 
the fact that $r(\sigma)\sigma \in \pDd$. We then have $\Dd=\{u\sigma : \sigma \in \Sp_{d-1}, u\in [0,r(\sigma))\}$.
Let us show that $r$ is of class $C^3$. 
\vip
We fix $\sigma_0 \in \Sp_{d-1}$
and set $x_0=r(\sigma_0)\sigma_0 \in \pDd$. Since $\pDd$ is of class $C^3$, there exists a neighborhood $U$ of 
$x_0$ in $\R^d$ and a $C^3$ function $F:U \to \R$ such that $\nabla F$ does not vanish and
$\pDd \cap U = \{u \in U : F(u) = 0\}$.
Moreover, $\nabla F(x_0)$ is colinear to $\bn_{x_0}$. 
Let $\psi(t,\sigma)=F(t\sigma)$, which is defined and $C^3$ on a neighborhood $V$ of $(r(\sigma_0),\sigma_0)$ in 
$(0,\infty)\times \Sp_{d-1}$. We have $\psi(r(\sigma_0),\sigma_0)=F(x_0)=0$ and 
$\partial_t \psi(r(\sigma_0),\sigma_0)=\sigma_0\cdot \nabla F(x_0)\neq 0$ (since $0\in\pDd$,
since $\nabla F(x_0)$ is colinear to $\bn_{x_0}$ and since for all $a\in \Dd$, all $y \in \pDd$, 
$(a-y)\cdot \bn_y>0$ by convexity of $\Dd$).
By the implicit function theorem, there is a $C^3$ function $g$, defined on a neighborhood $W$ of $\sigma_0$
in $\Sp_{d-1}$, such that $\psi(g(\sigma),\sigma)=0$ (whence $g(\sigma)\sigma \in \pDd$) for all $\sigma \in W$. 
We have $g(\sigma)=r(\sigma)$
for all $\sigma \in W$, so that $r$ is $C^3$ on $W$.
\vip

{\it Step 2.} We introduce  
$$
r_0=\min_{\Sp_{d-1}}r, \quad r_1=\max_{\Sp_{d-1}}r, \quad \eta=\frac{r_0\land 1}2, \quad 
m_0=\frac{r_0-\eta}{1-\eta} \quad \text{and}\quad m_1=\frac{r_1-\eta}{1-\eta}. 
$$
Consider some smooth
$\varphi:\R_+\to[0,1]$ such that $\varphi(t)=0$ for $t\in [0,\eta/2]\cup[2,\infty)$ and such that
$\int_0^1 \varphi(t)\dr t=1-\eta$. We set 
\begin{gather*}
q(t,m)=\int_0^t (1+\varphi(s)(m-1))\dr s \geq 0 \quad
\text{ for $t\in \R_+$, $m\in [m_0,m_1]$,}\\
\bmm(\rho)=\frac{\rho-\eta}{1-\eta}\in [m_0,m_1]\quad 
\text{ for $\rho\in[r_0,r_1]$.}
\end{gather*}
We have the following properties.
\vip
\noindent (i) $q(t,m)=t$ for all $t\in [0,\eta/2]$, all $m\in [m_0,m_1]$, 
\vip
\noindent (ii) $q(1,m)=(1-\eta)m+\eta$ for all $m \in [m_0,m_1]$,
\vip
\noindent (iii) $q(1,\bmm(\rho))=\rho$ for all $\rho \in [r_0,r_1]$, 
\vip
\noindent (iv) $\partial_t q(t,m)=1+\varphi(t)(m-1)\in [\eta,1+m_1]$ for all $t\in \R_+$, all 
$m\in [m_0,m_1]$,
\vip
\noindent (v) $\partial_m q(t,m)=\int_0^t \varphi(s) \dr s$, 
so that $\partial_mq(t,m)$ is bounded on $\R_+\times [m_0,m_1]$,
\vip
\noindent (vi) for all $t\geq 0$, all $m \in [m_0,m_1]$, $\eta t\leq q(t,m) \leq (1+m_1) t$.
\vip
Points (i), (ii), (iii) and (v) are straightforward. To show that $1+\varphi(t)(m-1)\geq \eta$ 
in (iv), we use that $m_0\geq \eta$.  Point (vi) uses that by (i) and (iv), for every $t\geq 0$,
every $m\in [m_0,m_1]$,
$$
q(t,m)=q(0,m)+ \int_0^t \partial_t q(s,m)\dr s \in [\eta t, (1+m_1)t].
$$

{\it Step 3.} We now show that $H(x)=q(|x|,\bmm(r(\frac{x}{|x|})))\frac{x}{|x|}$ 
is a $C^3$-diffeomorphism from $\R^d$ into $\R^d$, with $DH$ and $D H^{-1}$ bounded
on $\R^d$, and that $H(B_d(0,1))=\Dd$. Setting $\Phi=H^{-1}$ will complete the proof.

\vip
By Step 1, $\Dd=\{u \sigma, \sigma \in \Sp_{d-1}, u \in [0,r(\sigma))\}$.
For each $\sigma \in \Sp_{d-1}$, $t \mapsto q(t,\bmm(r(\sigma)))$ continuously increases
from $0$ (at $t=0$) to $q(1,\bmm(r(\sigma)))=r(\sigma)$ (at $t=1$) to $\infty$ (at $t=\infty$). 
Thus  $H(B_d(0,1))=\Dd$ and $H$ is a bijection from $\R^d$ into $\R^d$.

\vip
Since $H(x)=x$ for all $x\in B_d(0,\eta/2)$, we only have to show that $H$ is $C^3$
on $\R^d\setminus B_d(0,\eta/4)$, with $DH$ bounded from above and from below 
on $\R^d\setminus B_d(0,\eta/4)$. 

\vip
First,
$H$ is of class $C^3$ on  $\R^d\setminus B_d(0,\eta/4)$, because $q$ is smooth on $\R_+\times [m_0,m_1]$,
$x\mapsto \frac x{|x|}$ is smooth on $\R^d\setminus B_d(0,\eta/4)$, $r$ is $C^3$ on $\Sp_{d-1}$ and $\bmm$
is smooth on $[m_0,m_1]$.

\vip
For $x=t\sigma \in \R^d$ with $\sigma \in \Sp_{d-1}$ and $t\geq \eta/4$, and for $y \in \R^d$, we write
$y=a \sigma+b \tau$, with $\tau\in \Sp_{d-1}$ orthogonal to $\sigma$ and $a^2+b^2=|y|^2$. 
We find that 
$$
DH(t\sigma)(y)=[a \rho_1(t,\sigma)+b\rho_2(t,\sigma,\tau)]\sigma+b\rho_3(t,\sigma)\tau,
$$
where $\rho_1(t,\sigma)=\partial_t q(t,\bmm(r(\sigma)))$,
$$
\rho_2(t,\sigma,\tau)=\frac{\partial_mq(t,\bmm(r(\sigma))
\bmm'(r(\sigma))Dr(\sigma)(\tau)}t \quad \text{and} \quad \rho_3(t,\sigma)=\frac{q(t,\bmm(r(\sigma)))}{t}.
$$
Since $\rho_1$, $\rho_2$, $\rho_3$ are bounded (for $t> \eta/4$) by Steps 1 and 2, we conclude that
$|DH(t\sigma)(y)|\leq C|y|$, showing that $DH$ is bounded on $\R^d\setminus B_d(0,\eta/4)$.

\vip

Next, we recall that $\rho_1\geq \eta$, that $\rho_3\geq \eta$, and call
$A=\sup_{t\geq \eta/4,\sigma \in \Sp_{d-1}, \tau \in \Sp_{d-1}}|\rho_2(t,\sigma,\tau)|$.
If first $|b|\leq \frac{|a|\eta}{2A}$, then
$$
|DH(t\sigma)(y)|\geq |a \rho_1(t,\sigma)+b\rho_2(t,\sigma,\tau)| \geq |a|\eta - |b|A \geq \frac{|a|\eta}2=  
\frac{|a|\eta}2\lor (A|b|).
$$
If next $b\geq \frac{|a|\eta}{2A}$, then 
$$
|DH(t\sigma)(y)|\geq |b\rho_3(t,\sigma)| \geq \eta|b|=(\eta|b|) \lor \frac{\eta^2|a|}{2A}).
$$ 
Hence there is $c>0$ such that in any case, $|DH(t\sigma)(y)|\geq c(|a|\lor|b|)\geq \frac c 2 |y|$.
This shows that $DH$ is uniformly bounded from below on $\R^d\setminus B_d(0,\eta/4)$.
\end{proof}

The next lemma relies more deeply on Assumption~\ref{as}, see Remark~\ref{asp}.

\begin{lemma}\label{paradur}
Grant Assumption~\ref{as}.
For $x\in \pDd$ and $A\in\cI_x$, let $\Dd_{x,A}={\{y \in \R^d : h_x(A,y)\in\Dd\}}$.
There are $\e_1>0$, $\eta>0$, $C>0$ such that the following properties hold true.
\vip
(i) For all $x\in \pDd$, all $A\in \cI_x$,
there is a $C^3$ function $\psi_{x,A}: B_{d-1}(0,\e_1)\to \R_+$ such that 
$\psi_{x,A}(0)=0$, $\nabla \psi_{x,A}(0)=0$ and
\begin{equation}\label{casp2}
\Dd_{x,A}\cap B_d(0,\e_1)=\{u\in B_d(0,\e_1) : u_1>\psi_{x,A}(u_2,\dots,u_d)\},
\end{equation}
such that ${\rm Hess}\; \psi_{x,A}(v)\geq \eta I_{d-1}$ and 
$|D^2 \psi_{x,A}(v)|+|D^3\psi_{x,A}(v)|\leq C$ for all $v \in B_{d-1}(0,\e_1)$.

\vip

(ii) For all $x\in \pDd$, all $A\in\cI_x$, all  $u \in B_{d-1}(0,\e_1)$,
$$
\frac12 |u_1-\psi_{x,A}(u_2,\dots,u_d)|\leq d(u,\pDd_{x,A})\leq |u_1-\psi_{x,A}(u_2,\dots,u_d)|.
$$

(iii) For all $x,x' \in \pDd$, all $A\in \cI_x$, all $A' \in \cI_{x'}$, setting 
$\rho_{x,x',A,A'}=|x-x'|+||A-A'||$,
\begin{equation}\label{tbd2}
|\psi_{x,A}(v)-\psi_{x',A'}(v)|\leq C \rho_{x,x',A,A'} |v|^2
\qquad \hbox{for all $v\in B_{d-1}(0,\e_1)$.}
\end{equation}
\end{lemma}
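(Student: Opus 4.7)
The plan is to construct $\psi_{x,A}$ simultaneously for all $(x,A)$ by a uniform implicit function theorem applied to a $C^3$ defining function of $\pDd$, then derive (ii) and (iii) from Taylor's formula. Since $\Dd$ is a bounded $C^3$ domain, the signed distance to $\pDd$ (positive inside $\Dd$) is a $C^3$ function $\Phi$ on a tubular neighborhood of $\pDd$ with $\{\Phi=0\} = \pDd$ and $\nabla\Phi(x) = \bn_x$ for $x \in \pDd$. I would set $F(x, A, v, t) = \Phi(x + A(t, v))$ and note that $F(x,A,0,0) = 0$ and $\partial_t F(x,A,0,0) = \bn_x \cdot \bn_x = 1$. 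The parameter space $\mathcal{J} := \{(x, A) : x \in \pDd, A \in \cI_x\}$ is compact (a fiber bundle over the compact $\pDd$ with fibers isomorphic to $O(d-1)$), so the uniform implicit function theorem supplies $\e_1 > 0$ and a map $((x,A), v) \mapsto \psi_{x,A}(v)$, jointly $C^3$ on $\mathcal{J} \times B_{d-1}(0, \e_1)$, with $F(x, A, v, \psi_{x,A}(v)) = 0$ and $\psi_{x,A}(0) = 0$. The convexity inclusion $\Dd \subset \HH_x$ forces $\psi_{x,A} \geq 0$; differentiating the implicit equation and using that $A(\{0\} \times \R^{d-1})$ is the tangent plane to $\pDd$ at $x$ (orthogonal to $\bn_x = A\be_1$) gives $\nabla\psi_{x,A}(0) = 0$. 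The Hessian $\text{Hess}\,\psi_{x,A}(0)$ is the second fundamental form of $\pDd$ at $x$ read in the frame $A$, so strong convexity uniformizes the lower bound to $\text{Hess}\,\psi_{x,A}(0) \geq \eta I_{d-1}$, and joint $C^3$ regularity together with compactness of $\mathcal{J}$ yields the upper bounds on $|D^2\psi_{x,A}|$ and $|D^3\psi_{x,A}|$, which proves (i).

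For (ii), I further shrink $\e_1$ so that the Lipschitz constant $L$ of $\psi_{x,A}$ on $B_{d-1}(0, \e_1)$ is at most $1$ (using $\nabla\psi_{x,A}(0) = 0$ and the uniform bound on $|D^2\psi_{x,A}|$), and so that $\e_1$ is smaller than the uniform reach of $\pDd$, which guarantees that every point of $\pDd_{x,A}$ lying within distance $\e_1$ of $u \in B_d(0,\e_1)$ is of the graph form $(\psi_{x,A}(w), w)$ with $w \in B_{d-1}(0, \e_1)$. Writing $u = (u_1, v)$ and assuming WLOG $a := u_1 - \psi_{x,A}(v) \geq 0$, the vertical projection $(\psi_{x,A}(v), v)$ gives the upper bound $d(u, \pDd_{x,A}) \leq a$. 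For the lower bound, a competitor $(\psi_{x,A}(w), w)$ satisfies $|u - (\psi_{x,A}(w), w)|^2 = (a + b)^2 + c^2$ with $b := \psi_{x,A}(v) - \psi_{x,A}(w)$, $c := |v - w|$, and $|b| \leq Lc \leq c$; a case split on whether $c \leq a/2$ (in which case $(a+b)^2 \geq a^2/4$) or $c > a/2$ (in which case $c^2 > a^2/4$) yields $|u - (\psi_{x,A}(w), w)| \geq a/2$.

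For (iii), joint $C^3$ regularity implies $(x,A) \mapsto \text{Hess}\,\psi_{x,A}(w)$ is Lipschitz with constant uniform in $w \in B_{d-1}(0, \e_1)$, i.e., $||\text{Hess}\,\psi_{x,A}(w) - \text{Hess}\,\psi_{x',A'}(w)|| \leq C \rho_{x,x',A,A'}$. Taylor's formula with integral remainder applied at the origin, using $\psi_{x,A}(0) = 0$ and $\nabla \psi_{x,A}(0) = 0$, gives
\[
\psi_{x,A}(v) = \int_0^1 (1-s)\, v^T \, \text{Hess}\,\psi_{x,A}(sv)\, v \, ds,
\]
and subtracting the corresponding identity for $(x', A')$ yields
\[
|\psi_{x,A}(v) - \psi_{x',A'}(v)| \leq |v|^2 \sup_{|w| \leq |v|} ||\text{Hess}\,\psi_{x,A}(w) - \text{Hess}\,\psi_{x',A'}(w)|| \leq C |v|^2 \rho_{x, x', A, A'}.
\]
The main obstacle is Step 1: ensuring the implicit function theorem is applied uniformly on the compact bundle $\mathcal{J}$ with joint $C^3$ regularity, which rests on the $C^3$ regularity of the signed distance function to a $C^3$ hypersurface and on a quantitative implicit function theorem with parameters. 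Once uniform $C^3$ regularity is in hand, (ii) is elementary graph geometry and (iii) reduces to the Taylor estimate above.
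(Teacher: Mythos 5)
Your argument is sound, but it takes a genuinely different route from the paper, most visibly for part (iii). The paper does not construct the charts at all: point (i) is read off from the quantitative form of strong convexity recorded in Remark~\ref{asp}, which already furnishes $\psi_{x,A}$ with uniform bounds for \emph{one} $A\in\cI_x$, the general $A=B$ being handled by the substitution $\psi_{x,B}(u)=\psi_{x,A}(A^{-1}Bu)$; since that assumption says nothing about how $\psi_{x,A}$ varies with $(x,A)$, the paper must prove (iii) by a hands-on geometric comparison: it writes $y=A^{-1}(x'-x)=(\psi_{x,A}(a),a)\in\pDd_{x,A}$ and $\Theta=A'A^{-1}$, characterizes $\gamma=\psi_{x',A'}(v)$ through the implicit relation \eqref{star} expressed in the chart of $(x,A)$, and bounds three error terms, the decisive $\rho_{x,x',A,A'}|v|^2$ contribution coming from a Taylor expansion with integral remainder combined with $|a|\le|x-x'|$ and $|b-a-v|\le||A-A'||\,|v|$. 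You instead rebuild the charts from scratch via the signed distance function and a parametric implicit function theorem over the compact bundle $\{(x,A):x\in\pDd,\,A\in\cI_x\}$, which yields joint $C^3$ regularity of $(x,A,v)\mapsto\psi_{x,A}(v)$; then (iii) collapses to the Lipschitz dependence of ${\rm Hess}\,\psi_{x,A}$ on $(x,A)$ plus the same Taylor identity. This buys a much shorter and more conceptual (iii), at the price of two external (standard) inputs: $C^3$ regularity of the signed distance to a $C^3$ hypersurface, and a quantitative IFT with parameters, together with a short segment-plus-compactness argument converting bounded mixed third derivatives into the uniform Lipschitz bound (pairs with $\rho_{x,x',A,A'}$ bounded below being trivial). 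Two details you should still tighten: the bound ${\rm Hess}\,\psi_{x,A}(v)\ge\eta I_{d-1}$ is required on all of $B_{d-1}(0,\e_1)$, not just at $v=0$, which follows by uniform continuity of the Hessian over the compact bundle after shrinking $\e_1$; and in (ii) the appeal to the reach does not by itself ensure that every competitor boundary point near $u$ lies in the graph chart over $B_{d-1}(0,\e_1)$ — either take the graph representation on a ball of radius $2\e_1$ (or $\e_0$), or dispose of competitors outside $B_d(0,2\e_1)$ by the crude bound $|u_1-\psi_{x,A}(u_2,\dots,u_d)|\le 2\e_1$, which is exactly the case split the paper performs.
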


\begin{proof}
We first prove (i) with $\e_1=\e_0$ defined in Remark~\ref{asp}.
Observe that~\eqref{casp} and~\eqref{casp2} are equivalent.
Hence by Remark~\ref{asp}, a function $\psi_{x,A}$ satisfying all the requirements of (i) exists for
a some $A \in \cI_x$.
For another $B\in\cI_x$ and $y\in \R^d$, $y \in \Dd_{x,B}$ if and only if $A^{-1}B y \in \Dd_{x,A}$, 
so that
$\psi_{x,B}$ defined on $B_{d-1}(0,\e_0)$ by
$\psi_{x,B}(u):=\psi_{x,A}(A^{-1}Bu)$ (here we identify $u \in \R^{d-1}$ to $\sum_{i=2}^d u_i\be_{i+1} \in \be_1^\perp$ 
and observe that $A^{-1}B$ is an isometry from $\be_1^\perp$ into itself) is suitable.
\vip
Let us check (ii), for $\e_1\in (0,\e_0]$ small enough.
For $u=(u_1,\dots,u_d) \in \R^d$, we use the shortened notation $u_0=(u_2,\dots,u_d)$.
For $u \in B_{d}(0,\e_0)$, it holds that $v:=(\psi_{x,A}(u_0),u_0) \in \pDd_{x,A}$, so that
$d(u,\pDd_{x,A})\leq |u-v|=|u_1-\psi_{x,A}(u_0)|$. We next consider $\vartheta>0$ such that 
$\sup_{x \in \pDd,  A \in \cI_x } \sup_{u \in B_{d-1}(0,\vartheta)}
|\nabla \psi_{x,A} (u)|\leq 1$ and set $\e_1=\frac{\vartheta\land \e_0}2$. We fix $x \in \pDd$, $A \in \cI_x$ 
and $u \in B_d(0,\e_1)$
and show that for all $v \in \pDd_{x,A}$, $|u-v| \geq \frac12 |u_1-\psi_{x,A}(u_0)|$.

\vip
\noindent $\bullet$ If $v \notin B_d(0,2\e_1)$, then  $|u-v|\geq |v|-|u|\geq \e_1 \geq 
\frac12(|u_1|+|\psi_{x,A}(u_0)|)\geq \frac12|u_1-\psi_{x,A}(u_0)|$. 
We used that $|u_1|\leq \e_1$  and $|\psi_{x,A}(u_0)|\leq \e_1$
(since $\psi_{x,A}(0)=0$ and $u_0 \in B_{d-1}(0,\e_1)$, on which $|\nabla \psi_{x,A}|$ is bounded by $1$).

\vip
\noindent $\bullet$ If $v \in B_d(0,2\e_1)$, 
we write, using that $v_1=\psi_{x,A}(v_0)$ (because $v\in B_d(0,\e_0)\cap \pDd_{x,A}$),
\begin{align*}
|u\!-\!v| \!\geq \!\frac12(|u_1-\psi_{x,A}(v_0)|+|u_0-v_0|)\geq  \frac12(|u_1-\psi_{x,A}(u_0)|
-|\psi_{x,A}(v_0)-\psi_{x,A}(u_0)| +|u_0-v_0|).
\end{align*}
But $|\psi_{x,A}(v_0)-\psi_{x,A}(u_0)|\leq |u_0-v_0|$ because $u_0,v_0 \in B_{d-1}(0, \vartheta)$.
Hence $|u-v| \geq \frac12|u_1-\psi_{x,A}(u_0)|$.

\vip

Let us finally verify (iii) for $\e_1\in (0,\e_0]$ small enough.
By (i), there is $C>0$ such that for all $x \in \pDd$, all $A\in \cI_x$, all $v\in B_{d-1}(0,\e_0)$,
\begin{equation}\label{hf}
|D^2\psi_{x,A}(v)|+|D^3\psi_{x,A}(v)|\leq C, \quad |D \psi_{x,A}(v)|\leq C |v| \quad \hbox{and}\quad 
|\psi_{x,A}(v)|\leq C |v|^2.
\end{equation}
It is thus enough to show that there
are $\kappa \in (0,\e_0/2)$, $\e_1 \in (0,\e_0/2)$ and $C>0$ such that $|\psi_{x,A}(v)-\psi_{x',A'}(v)|\leq 
C \rho_{x,x',A,A'}|v|^2$
when $|x-x'|\leq \kappa$ and $v\in B_{d-1}(0,\e_1)$: 
when $|x-x'|>\kappa$ (and $v\in B_{d-1}(0,\e_1)$), we simply write
$$
|\psi_{x,A}(v)-\psi_{x',A'}(v)|\leq 2C|v|^2\leq \frac{2C}{\kappa} \rho_{x,x',A,A'} |v|^2.
$$ 
Thus, let us fix $\kappa \in (0,\e_0/2)$ 
and $\e_1 \in (0,\e_0/2)$ to be chosen later.

\vip
Fix $x,x'\in \pDd$ such that $|x-x'|\leq \kappa$, as well as $A \in \cI_x$ and $A'\in \cI_{x'}$
and observe that $y=A^{-1}(x'-x)\in \pDd_{x,A}\cap B_{d}(0,\e_0)$ (because $h_x(A,y)=x'\in \pDd$ and
$|y|=|x'-x|\leq \kappa< \e_0$).
By~\eqref{casp2}, there  is $a\in B_{d-1}(0,\e_0)$ such that $y=(\psi_{x,A}(a),a_1,\dots,a_{d-1})$.
Let us observe that
$$
|x-x'|=|y|\geq |a|.
$$
The inward unit normal vector $\sigma$ to $\pDd_{x,A}$ at $y$ is
$$
\sigma=\frac{\be_1-\sum_{i=1}^{d-1} \partial_i\psi_{x,A}(a) \be_{i+1}}{r} \quad \hbox{where}\quad  r
=\sqrt{1+|\nabla \psi_{x,A}(a)|^2}
\quad \hbox{and we have}\quad \sigma= \Theta \be_1,
$$
with $\Theta:=A'A^{-1}$.
Moreover, the tangent space $T$ to $\pDd_{x,A}$ at $y$ is given by
$$
T=\Big\{u \in \R^d : u_1 = \psi_{x,A}(a)+\sum_{i=1}^{d-1} \partial_i\psi_{x,A}(a) (u_{i+1}-a_i)\Big\}.
$$
We now fix $v\in B_{d-1}(0,\e_1)$
and use the notation $\bv=\sum_{i=1}^{d-1}v_i \be_{i+1}\in B_d(0,\e_1)$. 
\begin{figure}[ht]\label{fig1}
\centerline{\fbox{\begin{minipage}{0.9\textwidth}
\caption{\small{Here $O=h_x^{-1}(A,x)$ and $y=h_x^{-1}(A,x')$.}}
\centerline{\resizebox{8cm}{!}{\includegraphics{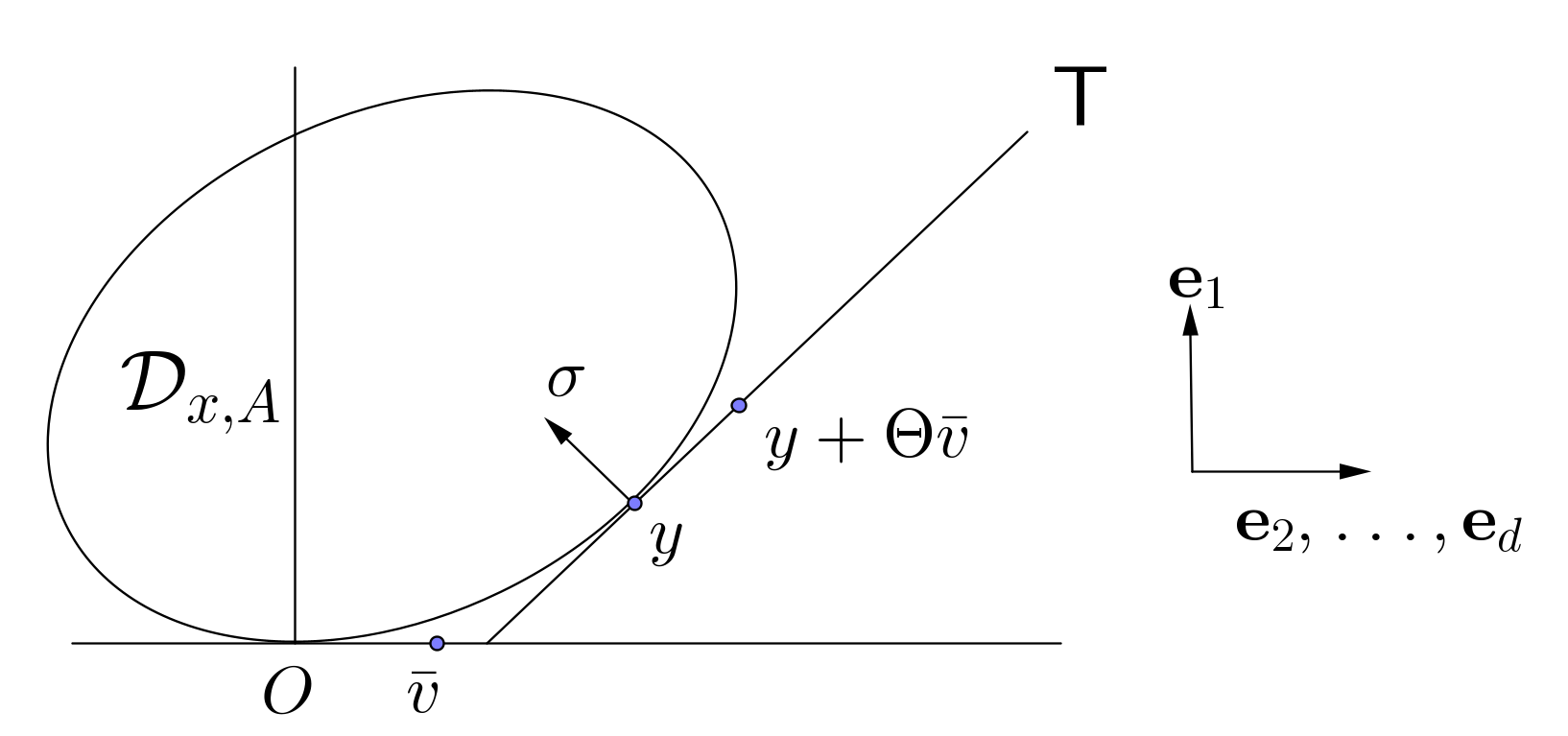}}}
\end{minipage}}}
\end{figure}
Note that $y+\Theta \bv \in T$, because 
$\Theta \bv \cdot \sigma = \Theta \bv \cdot \Theta \be_1 = \bv \cdot \be_1=0$.
Then, the value of $\gamma:=\psi_{x',A'}(v)\geq 0$ is determined by
the following fact: it is the smallest positive $\gamma$ such that $z:=y+\Theta \bv + \gamma \sigma \in \pDd_{x,A}$.
Observe that $\gamma= \psi_{x',A'}(v)\leq C |v|^2$ by~\eqref{hf}.
Note that we have
\[
 |z|\leq |y|+|\bv|+\gamma\leq |x-x'|+|v|+C|v|^2\leq \kappa+\e_1+C\e_1^2,
\]
and therefore, $z \in B_d(0,\e_0)$ if $\kappa$ and $\e_1$ are small enough. Thus, the condition 
$z\in \pDd_{x,A}$ can be rewritten as $z\cdot \be_1 = \psi_{x,A}(z\cdot \be_2,\dots,z\cdot \be_d)$.
In other words, since $y\cdot \be_1=\psi_{x,A}(a)$ and $z\cdot \be_i=a_{i-1}$ for $i=2,\dots,d$ and since
$\sigma\cdot \be_1=r^{-1}$ and $\sigma\cdot \be_i=-r^{-1}\partial_{i-1}\psi_{x,A}(a)$ for $i=2,\dots,d$,
\begin{equation}\label{tbr}
\psi_{x,A}(a)+\Theta \bv \cdot \be_1 + \frac{\gamma}{r} = \psi_{x,A}(b-\gamma c),
\end{equation}
where $b_i=a_i+\Theta \bv \cdot \be_{i+1}$ and $c_i=r^{-1}  \partial_{i}\psi_{x,A}(a)$ 
for $i=1,\dots,d-1$. Recalling the equation of $T$ and that $y+\Theta \bv \in T$, we get
$$
\Theta \bv \cdot \be_1= \sum_{i=1}^{d-1} \partial_i\psi_{x,A}(a) \Theta \bv \cdot \be_{i+1}
=(b-a)\cdot \nabla\psi_{x,A}(a).
$$
This formula, inserted in~\eqref{tbr}, gives us
\begin{equation}\label{star}
\frac{\gamma}{r} = \psi_{x,A}(b-\gamma c)-\psi_{x,A}(a) - (b-a)\cdot \nabla\psi_{x,A}(a).
\end{equation}
Since $\gamma=\psi_{x',A'}(v)$, we conclude that
$$
|\psi_{x',A'}(v)-\psi_{x,A}(v)|=|r[\psi_{x,A}(b-\gamma c)-\psi_{x,A}(a) - 
(b-a)\cdot \nabla\psi_{x,A}(a)]- \psi_{x,A}(v)|\leq I+J+K,
$$
where
\begin{align*}
I=&r|\psi_{x,A}(b-\gamma c)- \psi_{x,A}(b)|,\\
J=&(r-1)|\psi_{x,A}(b)-\psi_{x,A}(a) - (b-a)\cdot \nabla\psi_{x,A}(a)|,\\
K=& |\psi_{x,A}(b)-\psi_{x,A}(a) - (b-a)\cdot \nabla\psi_{x,A}(a)-\psi_{x,A}(v)  |.
\end{align*}

Observe that $1\leq r =\sqrt{1+|\nabla \psi_{x,A}(a) |^2}\leq 1+|\nabla \psi_{x,A}(a) |^2 \leq 1+C |a|^2$ 
by~\eqref{hf}, whence $1\leq  r \leq 1+ C|x-x'|^2\leq 1+C\kappa^2$, so that 
$r \in [1,2]$ if $\kappa$ is small enough. 
Since $\gamma\leq C|v|^2$ and $|c|\leq |\nabla\psi_{x,A}(a)|\leq C |a| \leq C|x-x'|$,
$$
I \leq 2||D\psi_{x,A}||_\infty \gamma |c| \leq C |x-x'| |v|^2.
$$
Next, since $r-1\leq  C|x-x'|^2 \leq C|x-x'|$ and $|b-a|=|\bar v|= |v|$,
$$
J \leq C |x-x'| ||D^2\psi_{x,A}||_\infty |b-a|^2 \leq C  |x-x'| |v|^2.
$$
To treat $K$, we use
the Taylor formula to write, recalling that $\psi_{x,A}(0)=0$ and $\nabla\psi_{x,A}(0)=0$,
\begin{gather*}
\psi_{x,A}(b)-\psi_{x,A}(a) - (b-a)\cdot \nabla\psi_{x,A}(a)=(b-a)\cdot M(b-a)\quad  
\hbox{and} \quad\psi_{x,A}(v)= v \cdot M' v,\\
\hbox{where} \quad M=\int_0^1 \mathrm{Hess}\,\psi_{x,A}(a+t(b-a)) (1-t)\dr t
\quad  \hbox{and} \quad
M'=\int_0^1 \mathrm{Hess}\,\psi_{x,A}(t v) (1-t)\dr t.
\end{gather*}
Consequently,
$$
K\leq ||M|| |b-a-v|(|b-a|+|v|) + ||M-M'|| |v|^2  \leq C |b-a-v| |v| + ||M-M'|| |v|^2,
$$
since $||M|| \leq ||D^2\psi_{x,A}||_\infty$ and $|b-a|= |v|$. Moreover, 
$$
|b-a-v|= |\Theta \bv - \bv|\leq ||\Theta-I|||\bv|= ||A-A'|| |v|,
$$
because $||\Theta-I||=||A'A^{-1}-I || = ||A'-A||$. Finally,
$$
||M-M'|| \leq ||D^3\psi_{x,A}||_\infty \sup_{t\in [0,1]} |a+t(b-a-v)| \leq C (|a|+|b-a-v|),
$$
whence $||M-M'||\leq C(|x-x'|+||A-A'||)$. All in all,
$K \leq C(|x-x'|+||A-A'||)|v|^2$.
\end{proof}

\subsection{Proof of the geometric inequalities}

Our goal is to prove Proposition~\ref{tyvmmm}. We recall that for $x\in \pDd$, $A\in \cI_x$ and $y,z \in \R^d$,
$h_x(A,y)=x+Ay$, $\delta(y)=d(y,\pDd)$ and
$\bg_x(A,y,z)=\Lambda(h_x(A,y),h_x(A,z))$. When $h_x(A,y)\in \Dd$ and $h_x(A,z)\notin \Dd$,
$\bg_x(A,y,z)=[h_x(A,y),h_x(A,z)]\cap \pDd$.
We start with a consequence
of the Thales theorem.

\begin{lemma}\label{thales}
Consider a $C^1$ open convex domain $\Sigma\subset \R^d$. For all $y \in \Sigma$, all 
$z \in \R^d\setminus \Sigma$, setting $a=[y,z]\cap \partial \Sigma$, we have
$$
|a-z|\leq \frac{|y-z| d(z,\partial \Sigma)}{d(y, \partial \Sigma)}.
$$
\end{lemma}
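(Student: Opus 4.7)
The plan is to reduce the estimate to the classical intercept theorem (Thales) applied to a supporting hyperplane of $\bar\Sigma$ at $a$. First, I would take $H$ to be the tangent hyperplane to $\partial\Sigma$ at $a$; since $\Sigma$ is convex with $C^1$ boundary, $H$ is a supporting hyperplane, so $\bar\Sigma$ (hence $\partial\Sigma$) lies in one of the two closed half-spaces bounded by $H$, which I denote $H^+$, and I write $\pi$ for the signed distance to $H$ that is positive on $H^+$.

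The two key geometric facts are $d(y,\partial\Sigma)\leq d(y,H)$ and $d(z,\partial\Sigma)\geq d(z,H)$. For the first, let $Py$ be the orthogonal projection of $y$ onto $H$. Either $Py\in\partial\Sigma$, in which case $d(y,\partial\Sigma)\leq|y-Py|=d(y,H)$, or $Py\notin\bar\Sigma$, in which case the segment $[y,Py]$ starts in $\Sigma$ and ends outside $\bar\Sigma$, so it must meet $\partial\Sigma$ at a point at distance at most $|y-Py|=d(y,H)$ from $y$. For the second, affineness of $\pi$ along the segment $[y,z]$ combined with $\pi(y)>0$ (because $y\in\Sigma$, which is open in $H^+$) and $\pi(a)=0$ forces $\pi(z)<0$ as soon as $a$ lies strictly between $y$ and $z$; since $\partial\Sigma\subset H^+$, one then has $|z-b|\geq|\pi(z)-\pi(b)|\geq|\pi(z)|=d(z,H)$ for every $b\in\partial\Sigma$.

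Then the intercept theorem, applied in the plane spanned by $[y,z]$ and the normal to $H$, gives (since $a\in H\cap[y,z]$ and the two right triangles with hypotenuses $[y,a]$ and $[z,a]$ and normal legs of lengths $d(y,H)$, $d(z,H)$ are similar)
\[
\frac{d(y,H)}{|y-a|}=\frac{d(z,H)}{|z-a|}.
\]
Chaining the three displayed inequalities yields
\[
\frac{d(y,\partial\Sigma)}{|y-a|}\leq\frac{d(y,H)}{|y-a|}=\frac{d(z,H)}{|z-a|}\leq\frac{d(z,\partial\Sigma)}{|z-a|},
\]
and multiplying by $|z-a|$ and using $|y-a|\leq|y-z|$ produces $|a-z|\,d(y,\partial\Sigma)\leq|y-z|\,d(z,\partial\Sigma)$, as desired.

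The one step that really carries the content is the bound $d(y,\partial\Sigma)\leq d(y,H)$, where convexity enters through the supporting hyperplane property of $H$; the bound in the opposite direction on the $z$ side is essentially trivial once one knows $z\in H^-$. A handful of degenerate configurations (e.g.\ $a$ coinciding with an endpoint of $[y,z]$, or the line $[y,z]$ tangent to $\partial\Sigma$ at $a$) can be either excluded by the hypotheses $y\in\Sigma$ open and $z\notin\Sigma$, or dispatched by a direct check.
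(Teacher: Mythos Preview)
Your proof is correct and follows essentially the same approach as the paper: take the tangent hyperplane $H=P_a$ at $a$, use similar triangles (Thales) for the projections of $y$ and $z$ onto $H$, and bound $d(y,H)\geq d(y,\partial\Sigma)$ and $d(z,H)\leq d(z,\partial\Sigma)$ via the supporting-hyperplane property. The paper states these two inequalities without justification, so your explicit arguments for them (and your handling of the degenerate case $a=z$) are welcome additions, but the underlying strategy is identical.
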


\begin{proof}
Introduce the tangent space $P_a$ to $\partial \Sigma $ at $a$, denote by $b$ (resp. $c$) 
the orthogonal projection
of $y$ (resp. $z$) on $P_a$. By the Thales theorem, we have
$$
\frac{|a-z|}{|y-a|}=\frac{|c-z|}{|y-b|},\quad \text{{\it i.e.}}\quad |a-z|=\frac{|y-a||c-z|}{|y-b|}.
$$
\begin{figure}[ht]\label{fig2}
\centerline{\fbox{\begin{minipage}{0.9\textwidth}
\caption{\small{Illustration of the proof of Lemma~\ref{thales}.}}
\centerline{\resizebox{6cm}{!}{\includegraphics{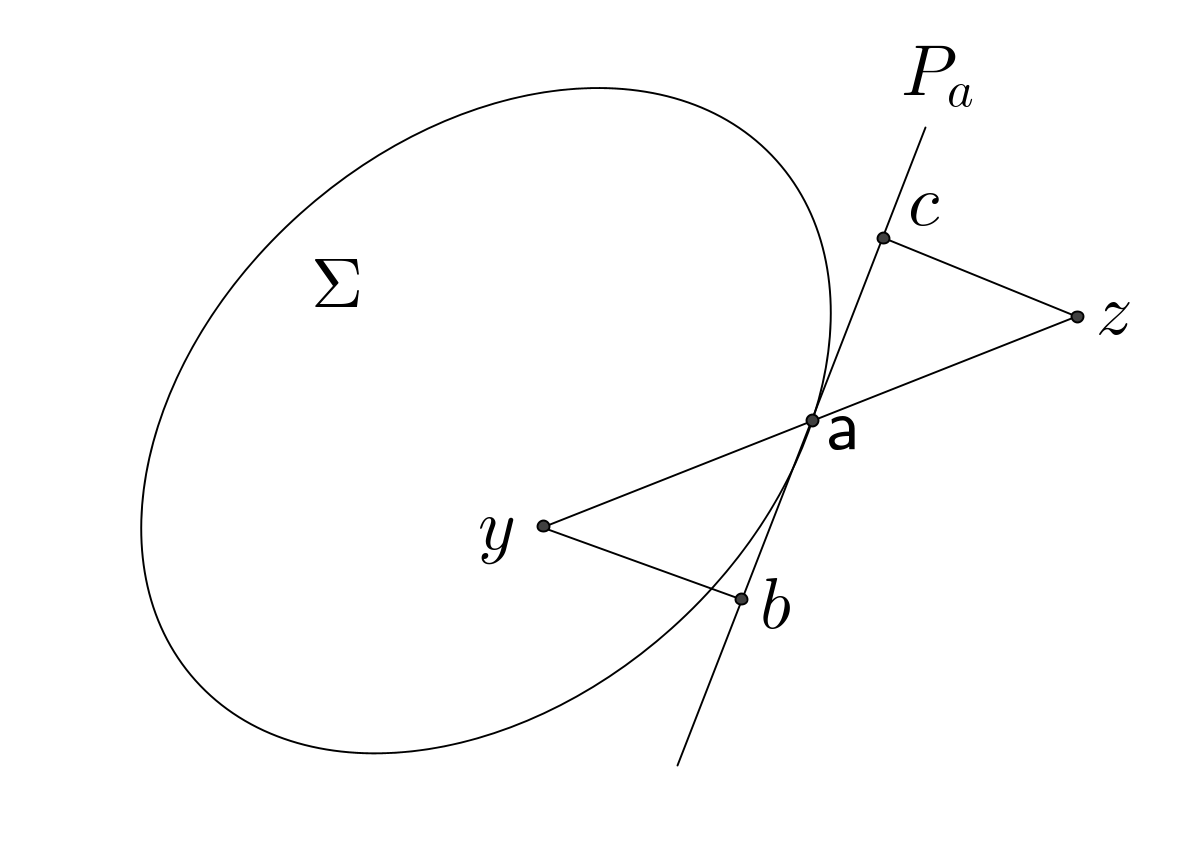}}}
\end{minipage}}}
\end{figure}
But $|y-a|\leq |y-z|$, $|c-z|\leq d(z,\partial \Sigma)$ and $|y-b|\geq d(y,\partial \Sigma)$.
\end{proof}

The next lemma is the main difficulty of the section.

\begin{lemma}\label{letop}
Grant Assumption~\ref{as}. There is a constant $C\in (0,\infty)$ such that
for all $x,x' \in \pDd$, all $A \in \cI_x$, $A' \in \cI_{x'}$, setting $\rho_{x,x',A,A'}=|x-x'|+||A-A'||$,
\vip
(a) for all $y,z \in \R^d$ such that both $h_x(A,y)$ and $h_{x'}(A',y)$ belong to $\Dd$ while $h_x(A,z)\notin \Dd$
and  $h_{x'}(A',z)\notin \Dd$, if $|\bg_x(A,y,z)-h_x(A,y)|\leq |\bg_{x'}(A',y,z)-h_{x'}(A',y)|$, then 
$$
\Big||\bg_x(A,y,z)-\bg_{x'}(A',y,z)|-|x-x'|\Big| \leq C \rho_{x,x',A,A'}\Big(|y|\land 1+
|z|\land 1 + \frac{|y|(|y-z|\land 1)}{\delta(h_x(A,y))}\Big),
$$

(b) for all $y,z \in \R^d$ such that $h_x(A,y)$, $h_{x'}(A',y)$ and $h_{x'}(A',z)$ belong to $\Dd$ 
but $h_x(A,z)\notin \Dd$,
$$
\Big||\bg_x(A,y,z)-h_{x'}(A',z)|-|x-x'|\Big| \leq C \rho_{x,x',A,A'}\Big(|y|\land 1+
|z|\land 1 + \frac{|y|(|y-z|\land 1)}{\delta(h_x(A,y))}\Big).
$$
\end{lemma}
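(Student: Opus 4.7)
My plan proceeds in three steps for part (a), with part (b) following the same scheme with $\hat a'$ replaced by $z$. First I would reduce to bounding the displacement of the segment-exit points: writing $\bg_x(A,y,z)=x+A\hat a$ and $\bg_{x'}(A',y,z)=x'+A'\hat a'$ with $\hat a\in[y,z]\cap\pDd_{x,A}$ and $\hat a'\in[y,z]\cap\pDd_{x',A'}$, the triangle inequality yields
\[
\bigl||\bg_x(A,y,z)-\bg_{x'}(A',y,z)|-|x-x'|\bigr|\leq |\hat a-\hat a'|+\|A-A'\|\cdot|\hat a'|.
\]
Since $\hat a'$ lies on $[y,z]$ and $x'+A'\hat a'\in\cDd$ forces $|\hat a'|\leq\mathrm{diam}(\Dd)$, one easily obtains $|\hat a'|\leq C(|y|\wedge 1+|z|\wedge 1)$, so that $\|A-A'\|\cdot|\hat a'|$ is absorbed into the first two terms of the target bound. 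The harder task is to bound $|\hat a-\hat a'|$.

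Second, parametrizing $\hat a=y+t_1(z-y)$ and $\hat a'=y+t_2(z-y)$, the hypothesis of part (a) translates to $t_1\leq t_2$, which by convexity places $\hat a$ in $\cDd_{x',A'}$ and makes $\hat a'$ strictly outside $\Dd_{x,A}$ when $t_1<t_2$. Applying Lemma~\ref{thales} to $\Sigma=\Dd_{x,A}$ with inside point $y$ and outside point $\hat a'$ then gives
\[
|\hat a-\hat a'|\leq \frac{|y-\hat a'|\cdot d(\hat a',\pDd_{x,A})}{\delta(h_x(A,y))}.
\]

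The third and most delicate step is a quadratic estimate on $d(\hat a',\pDd_{x,A})$: the naive linear bound $d(\hat a',\pDd_{x,A})\leq C\rho_{x,x',A,A'}(1+|\hat a'|)$ is too weak by a factor of $|\hat a'|$ and fails to produce the factor $|y|$ in the numerator of the target. In the local chart from Lemma~\ref{paradur}, valid when $|\hat a'_0|\leq\e_1$, one has $\hat a'_1=\psi_{x',A'}(\hat a'_0)$, and combining parts (ii) and (iii) of that lemma yields
\[
d(\hat a',\pDd_{x,A})\leq|\psi_{x',A'}(\hat a'_0)-\psi_{x,A}(\hat a'_0)|\leq C\rho_{x,x',A,A'}|\hat a'|^2.
\]
Expanding $|\hat a'|^2\leq 2|y|^2+2|y-\hat a'|^2$ together with $|y|^2\leq\mathrm{diam}(\Dd)\cdot|y|$ and $|y-\hat a'|\leq 2\mathrm{diam}(\Dd)(|y-z|\wedge 1)$, the $|y|^2$ contribution produces exactly the target main term $C\rho_{x,x',A,A'}|y|(|y-z|\wedge 1)/\delta(h_x(A,y))$; the $|y-\hat a'|^3$ remainder is split via $|y-\hat a'|\leq|y|$ (absorbed in the main term) versus $|y-\hat a'|>|y|$, where $|\hat a'|$, and hence $|y|$ or $|z|$, is forced bounded below by a constant, so that the $(|y|\wedge 1+|z|\wedge 1)$ summand dominates. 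Off-chart ($|\hat a'_0|>\e_1$), I would use the crude bound $d(\hat a',\pDd_{x,A})\leq C\rho_{x,x',A,A'}$ from Lemma~\ref{paradurf}, which again forces $|y|$ or $|z|$ bounded below and is absorbed into the first two target terms. Part (b) follows the same path with $z\in\Dd_{x',A'}\setminus\Dd_{x,A}$ in place of $\hat a'$, using the chart to obtain $\psi_{x',A'}(z_0)<z_1<\psi_{x,A}(z_0)$ and hence $d(z,\pDd_{x,A})\leq C\rho_{x,x',A,A'}|z|^2$, and the same algebraic manipulation closes the argument. The crux throughout is this quadratic improvement, which is precisely where the $C^3$ strong convexity of $\Dd$ (via Lemma~\ref{paradur}-(iii)) is essential, distinguishing this estimate from a looser Lipschitz bound.
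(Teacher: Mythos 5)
Your Step 1 reduction (triangle inequality, $\Delta_1=\|A-A'\|\,|\hat a'|$ controlled by $|\hat a'|\le(|y|+|z|)\wedge\mathrm{diam}(\Dd)$) and your use of Lemma~\ref{thales} from the interior point $y$ coincide with the paper's Steps 1--2, and your quadratic bound $d(\hat a',\pDd_{x,A})\le|\psi_{x',A'}(\hat a'_0)-\psi_{x,A}(\hat a'_0)|\le C\rho_{x,x',A,A'}|\hat a'|^2$ is exactly the ingredient the paper uses in its Step 4.1, i.e.\ in the regime where the exit point is small of order $\sqrt{|y|}$. The gap is in how you dispose of the remainder when $|y|$ is small but $\hat a'$ is not comparably small. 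First, the claim that $|y-\hat a'|>|y|$ forces $|\hat a'|$ (hence $|y|\vee|z|$) to be bounded below by a constant is false: both $y$ and $\hat a'$ may be small with $|y|$ much smaller. More importantly, even when $|\hat a'|$ is of constant order, the leftover terms $\rho_{x,x',A,A'}|y-\hat a'|^3/\delta(h_x(A,y))$ (on chart) and $\rho_{x,x',A,A'}|y-\hat a'|/\delta(h_x(A,y))$ (off chart) are neither $O(\rho_{x,x',A,A'})$ nor dominated by $\rho_{x,x',A,A'}|y|(|y-z|\wedge1)/\delta(h_x(A,y))$, because $\delta(h_x(A,y))=d(y,\pDd_{x,A})$ can be arbitrarily small independently of $|y-\hat a'|$, and the target's main term carries the extra factor $|y|$. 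Concretely, both $\Dd_{x,A}$ and $\Dd_{x',A'}$ have $0$ on their boundary with inward normal $\be_1$; take $y$ with $|y|\sim\epsilon$ but $d(y,\pDd_{x,A})\sim\epsilon^2$, and $\hat a'\in\pDd_{x',A'}\setminus\cDd_{x,A}$ with $|\hat a'|\sim\e_1$ (possible since the two boundaries differ by $O(\rho_{x,x',A,A'}|v|^2)$). Then your chain produces a bound of order $\rho_{x,x',A,A'}/\epsilon^2$, while the right-hand side of the lemma is only of order $\rho_{x,x',A,A'}/\epsilon$: the Thales estimate taken from $y$ is intrinsically too lossy in this regime, so no rearrangement of your inequalities can close it.

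This is precisely the regime in which the paper needs two further ideas absent from your sketch. When $|y|\le\e$ but the exit point $v$ satisfies $|v|\ge h(\e)$ (the paper's Step 3), Lemma~\ref{thales} is applied not from $y$ but from an auxiliary point $p\in[y,v]$ at distance of order $h(\e)^2$ from $y$, shown to lie a fixed distance (of order $h(\e)^4$) inside a ball inscribed in $\Dd_{x,A}$; one then converts using $d(y,\pDd_{x,A})\le|y|$. When both $|y|$ and $|v|$ are small but $|v_0|^2>2|y|$ (the paper's Step 4.2), Thales is abandoned altogether and one proves directly that the exit parameter $t$ of $[y,v]$ through $\pDd_{x,A}$ satisfies $t\le C\rho_{x,x',A,A'}$, via the uniform convexity inequality $s\psi_{x',A'}(y_0)+(1-s)\psi_{x',A'}(v_0)-\psi_{x',A'}(sy_0+(1-s)v_0)\ge\frac{\eta}{2}s(1-s)|v_0-y_0|$. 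This also corrects your closing remark: Lemma~\ref{paradur}-(iii) uses only the $C^3$ bounds on $\psi_{x,A}$, not the Hessian lower bound; strong convexity enters through Step 4.2, and Remark~\ref{strange}-(b) shows the estimate genuinely fails for a merely convex domain, so an argument that never invokes $\mathrm{Hess}\,\psi_{x,A}\ge\eta I_{d-1}$ cannot be complete.
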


\begin{proof}
Let  $D$ be the diameter of $\Dd$ and let $K=D\lor 1$.
We fix $x,x'\in \pDd$, $A \in \cI_x$ and $A'\in \cI_{x'}$.
We introduce $\Dd_{x,A}=\{u \in \R^d : h_x(A,u)\in \Dd\}$ and  $\Dd_{x',A'}=\{u \in \R^d : h_{x'}(A',u)\in \Dd\}$
and we recall that for all $u \in \R^d$, $\delta(h_x(A,u))=d(u,\pDd_{x,A})$.

\vip

{\it Step 1.} Here we show that it is sufficient to prove that for all $y \in \Dd_{x,A}\cap \Dd_{x',A'}$, 
all $v\in \cDd_{x',A'}\setminus \Dd_{x,A}$, for some constant $M$ depending only on $\Dd$, setting
$a(x,y,v)=[y,v]\cap \pDd_{x,A}$,
\begin{equation}\label{goal}
|a(x,y,v)-v| \leq M \rho_{x,x',A,A'} \frac{|y||y-v|}{d(y,\pDd_{x,A})}.
\end{equation}
\begin{figure}[ht]\label{fig3}
\centerline{\fbox{\begin{minipage}{0.9\textwidth}
\caption{\small{Illustration of~\eqref{goal}.}}
\centerline{\resizebox{9cm}{!}{\includegraphics{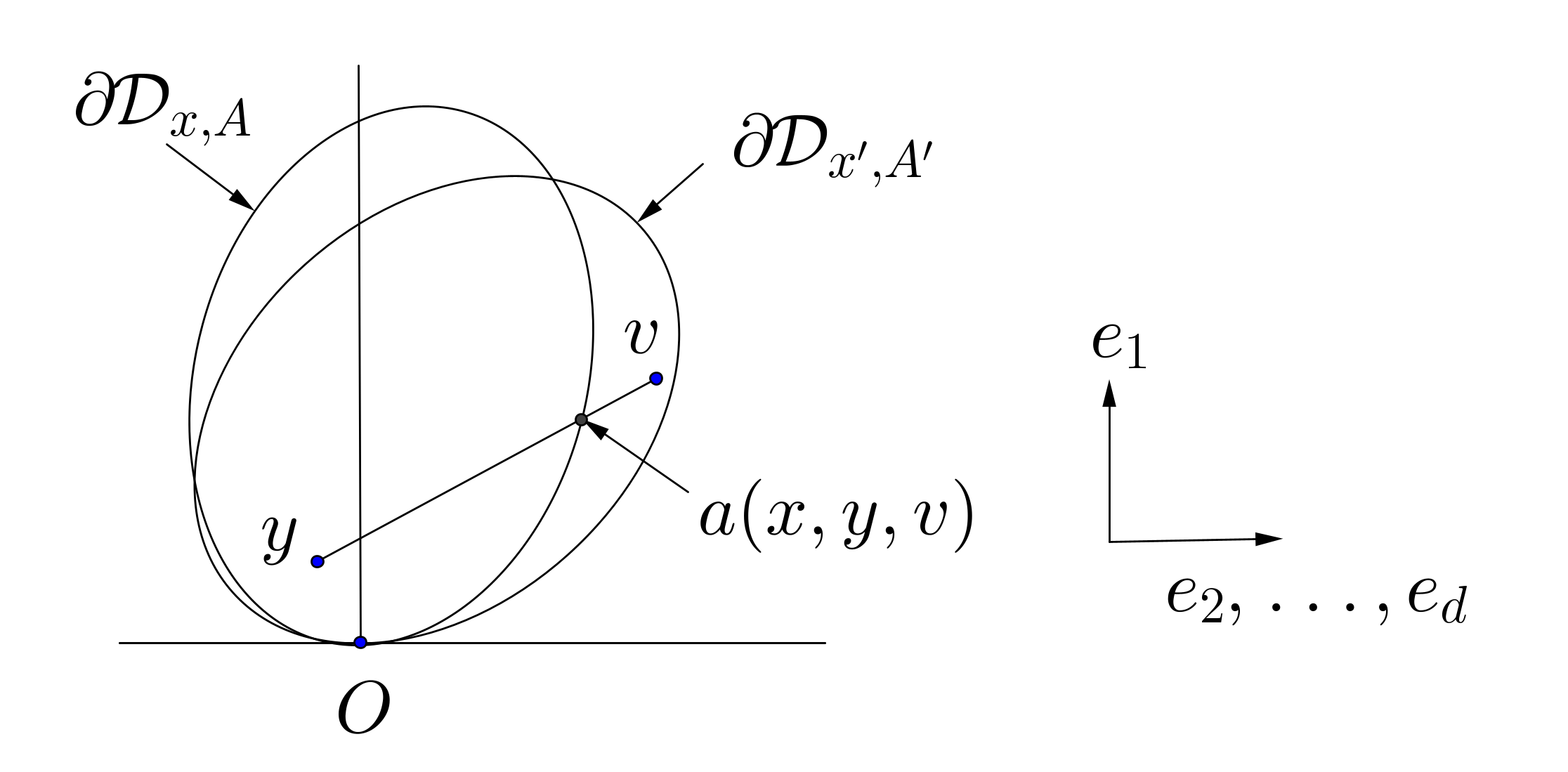}}}
\end{minipage}}}
\end{figure}

Let us prove that this implies (a). For $y,z$ as in the statement, we have $y \in \Dd_{x,A}\cap\Dd_{x',A'}$ 
and $z \in \Dd_{x,A}^c\cap  \Dd_{x',A'}^c$. We have
$\bg_x(A,y,z)=h_x(A,a(x,y,z))=x+Aa(x,y,z)$ and $\bg_{x'}(A',y,z)=x'+A'a(x',y,z)$, so that
\begin{align*}
\Big||\bg_x(A,y,z)-\bg_{x'}(A',y,z)|-|x-x'|\Big| \leq& |Aa(x,y,z)-A'a(x',y,z)|
\leq \Delta_1+\Delta_2,
\end{align*} 
where $\Delta_1=||A'-A|| |a(x,y,z)|$ and $\Delta_2=|A'a(x,y,z)-A'a(x',y,z)|$.
Since $|a(x,y,z)|\leq (|y|+|z|)\land D$ 
(because $a(x,y,z)\in [y,z]$, because  $0\in \Dd_{x,A}$ and $a(x,y,z)\in \pDd_{x,A}$), we find
$$
\Delta_1 \leq ||A-A'|| [(|y|+|z|)\land D]\leq  K \rho_{x,x',A,A'} (|y|\land 1 + |z|\land 1).
$$
Next, notice that the condition $|\bg_x(A,y,z)-h_x(A,y)|\leq |\bg_{x'}(A',y)-h_{x'}(A',y)|$ rewrites
\begin{equation}\label{hyp}
|a(x,y,z)-y| \leq |a(x',y,z)-y|,
\end{equation}
since $\bg_x(A,y,z)=h_x(A,a(x,y,z))$ and since $h_x(A,\cdot)$ is an isometry. 
Introduce now $v=a(x',y,z) \in \pDd_{x',A'}$, which satisfies $a(x',y,v)=v=a(x',y,z)$. 
Since moreover $v \notin \Dd_{x,A}$ by~\eqref{hyp}, \eqref{goal} tells us that
$$
\Delta_2=|a(x,y,z)-a(x',y,z)|=|a(x,y,z)-v|\leq M \rho_{x,x',A,A'} \frac{|y||y-v|}{\delta(h_x(A,y))}.
$$
The conclusion follows, since $|y-v|\leq|z-y|\land D$, because
$v\in [y,z]$ and $v,y \in \cDd_{x',A'}$.
\vip

Let us next show that~\eqref{goal} implies (b). For $y,z$ as in the statement, we have 
$y \in \Dd_{x,A}\cap\Dd_{x',A'}$ 
and $z \in \Dd_{x',A'}\setminus \Dd_{x,A}$. We have
$\bg_x(A,y,z)=x+Aa(x,y,z)$ and $h_{x'}(A',z)=x'+A'z$, so that
\begin{align*}
\Big||\bg_x(A,y,z)-h_{x'}(A',z)|-|x-x'|\Big| \leq& |Aa(x,y,z)-A'z| \leq \Delta_1+\Delta_3,
\end{align*} 
where  $\Delta_1=||A'-A|| |a(x,y,z)|$ and $\Delta_3=|A'a(x,y,z)-A'z|$. Of course,
$\Delta_1$ is controlled as previously, while~\eqref{goal} gives
(we have $|y-z|=|y-z|\land D$ because $y,z\in \cDd_{x',A'}$)
$$
\Delta_3=|a(x,y,z)-z|\leq M \rho_{x,x',A,A'} \frac{|y||y-z|}{\delta(h_x(A,y))}
\leq M \rho_{x,x',A,A'} \frac{|y|(|y-z|\land D)}{\delta(h_x(A,y))}.
$$

{\it Step 2.} There is a constant $C$ such that for any $\e\in (0,1)$, any $y \in \Dd_{x,A} \cap \Dd_{x',A'}$ 
such that $|y|\geq \e$ and any 
$v\in \cDd_{x',A'}\setminus \Dd_{x,A}$, \eqref{goal} holds true with the constant $M=\frac C \e$.

\vip

Indeed, by Lemma~\ref{thales} with the convex set $\Sigma=\Dd_{x,A}$, with $y \in \Dd_{x,A}$ and with 
$v \in D_{x, A}^c$,
$$
|a(x,y,v)-v|\leq \frac{|y-v| d(v,\pDd_{x,A})}{d(y,\pDd_{x,A})} 
\leq \frac 1 \e \frac{|y||y-v| d(v,\pDd_{x,A})}{d(y,\pDd_{x,A})}
\leq \frac C \e \rho_{x,x',A,A'} \frac{|y||y-v|}{d(y,\pDd_{x,A})},
$$
where we finally used Lemma~\ref{paradurf}-(ii): since $v\in \cDd_{x',A'}\setminus \Dd_{x,A}$,
we have $d(v,\pDd_{x,A})\leq C \rho_{x,x',A,A'}$.

\vip

{\it Step 3.} We next show that there exist $\e_2>0$ and 
$h:(0,\e_2]\to (0,\infty)$ with $\lim_{\e\to 0} h(\e)=0$
such that for all $\e\in(0,\e_2]$,
there is a constant $C_\e$ such that for all $y \in \Dd_{x,A} \cap \Dd_{x',A'}$ 
such that $|y|\leq \e$ and all $v\in \cDd_{x',A'}\setminus \Dd_{x,A}$ such that $|v|\geq h(\e)$, 
\eqref{goal} holds true with the constant $M=C_\e$.

\vip

By Assumption~\ref{as}, there exist $a>\gamma>0$ such that 
$$
U_a:=B_d(\be_1/a,1/a) \subset \Dd_{x,A}\cap\Dd_{x',A'}\subset \Dd_{x,A}\cup\Dd_{x',A'}
\subset B_d(\be_1/\gamma,1/\gamma)=:U_\gamma,
$$
and we e.g. assume that $a=1$. We fix $y \in \Dd_{x,A} \cap \Dd_{x',A'}$ 
such that $|y|\leq \e$ and $v\in \cDd_{x',A'}\setminus \Dd_{x,A}$ such that $|v|\geq \e'$, 
where $\e'=h(\e) =\frac{8\e^{1/2}}{\gamma^{1/2}}$.
For all $\e>0$ small enough, we have
\begin{gather*}
\frac{\gamma (\e')^2}2 - \e \geq \frac{\gamma (\e')^2}4, \quad
\frac{\gamma^4 (\e')^4}{256}-2\e^2\geq \frac{\gamma^4 (\e')^4}{300} \quad \text{and}\quad
\e'-\e\geq  \frac{\gamma^2 (\e')^2}{16}.
\end{gather*}
Actually the two first inequalities hold true for all $\e\in (0,1]$, and
the second one uses that $8^4/256-2>8^4/300$ and that $\gamma>1$.
We choose $\e_2>0$ such that $\frac{\gamma^2 (\e')^2}{16}\leq 1$ and the above inequalities hold true
for all $\e \in (0,\e_2]$.

\vip

{\it Step 3.1.} We have $v_1\geq \gamma (\e')^2/2$, because $v \in \cDd_{x',A'}\subset \closure{U}_\gamma$, whence 
$|v-\be_1/\gamma|^2\leq 1/\gamma^2$, {\it i.e.} $2v_1/\gamma \geq |v|^2\geq (\e')^2$. 
Moreover, $|v-y|\leq$diam$(\Dd_{x',A'}) \leq 2/\gamma$, so that
$$
\frac{v_1-y_1}{|v-y|} \geq \frac{\gamma (\e')^2/2-\e}{2/\gamma}\geq \frac{\gamma^2(\e')^2}{8}.
$$

{\it Step 3.2.} Let $p=y + \frac{\gamma^2 (\e')^2}{16}\frac{v-y}{|v-y|}$. We show here that
$p \in [y,v]\cap U_1$ and
$d(p, \partial U_1) \geq \frac{\gamma^4(\e')^4}{600}$.
\vip
We write $\frac{v-y}{|v-y|}=\sin\theta \be_1+ \cos \theta \be_0$, with $\be_0 \in \Sp_{d-1}$ orthogonal to
$\be_1$ and $\sin \theta \geq \frac{\gamma^2(\e')^2}{8}$ by Step~3.1.
We also write $y=y_1 \be_1+y_0 \be_0'$,
for some $\be_0' \in \Sp_{d-1}$ orthogonal to $\be_1$, so that
\begin{align*}
|p-\be_1|^2=&\Big(y_1+  \frac{\gamma^2 (\e')^2}{16} \sin\theta-1\Big)^2
+\Big|y_0 \be_0'+ \frac{\gamma^2 (\e')^2}{16} \cos \theta \be_0\Big|^2\\
\leq & y_1^2 +2y_1 \Big(\frac{\gamma^2 (\e')^2}{16} \sin\theta-1\Big) +
\Big(\frac{\gamma^2 (\e')^2}{16} \sin\theta-1\Big)^2
+ 2 y_0^2+2\Big(\frac{\gamma^2 (\e')^2}{16} \cos \theta
\Big)^2.
\end{align*}
The second term is nonpositive and we can bound $y_1^2+2y_0^2\leq 2|y|^2\leq 2\e^2$, whence
\begin{align*}
|p-\be_1|^2\leq  2\e^2+  1 -2 \frac{\gamma^2 (\e')^2}{16}\sin \theta + 
\frac{\gamma^4 (\e')^4}{256}+ \frac{\gamma^4(\e')^4}{128}.
\end{align*}
Recalling that $\sin \theta \geq \frac{\gamma^2(\e')^2}{8}$, we end with 
$$
|p-\be_1|^2\leq 1+2\e^2
-\frac{\gamma^4 (\e')^4}{256}\leq 1 - \frac{\gamma^4 (\e')^4}{300}.
$$
This implies that $p \in U_1$ and 
$d(p,\partial U_1)=1-|p-\be_1|\geq \frac12(1-|p-\be_1|^2)\geq \frac{\gamma^4 (\e')^4}{600}$. 

\vip

Finally, we have $p \in [y,v]$ because $\frac{\gamma^2 (\e')^2}{16|v-y|}\in [0,1]$, recall that
we are in the situation where $|v|\geq \e'$ and $|y|\leq \e$, whence
$|v-y|\geq \e'-\e\geq  \frac{\gamma^2 (\e')^2}{16}.$

\vip

{\it Step 3.3.} By Step 3.2 and since $U_1\subset \Dd_{x,A}$,
we have $a(x,y,v)=a(x,p,v)$, so that by Lemma~\ref{thales},
$$
|a(x,y,v)-v|\leq \frac{|p-v|d(v,\pDd_{x,A})}{d(p,\pDd_{x,A})}\leq \frac{|y-v|d(v,\pDd_{x,A})}{d(p,\partial U_1)}.
$$
By Lemma~\ref{paradurf}-(ii), we have $d(v,\pDd_{x,A})\leq C \rho_{x,x',A,A'}$, 
because $v\in \cDd_{x',A'}\setminus \Dd_{x,A}$.
By Step~3.2,
$$
d(p,\partial U_1) \geq \frac{\gamma^4(\e')^4}{600}
\geq  \frac{\gamma^4(\e')^4}{600} \frac{d(y,\pDd_{x,A})}{|y|}, 
$$
because $d(y,\pDd_{x,A})\leq |y|$ since $0 \in \pDd_{x,A}$.
All this shows that, as desired,
$$
|a(x,y,v)-v|\leq \frac{600C}{\gamma^4(\e')^4} \rho_{x,x',A,A'}\frac{|y||y-v|}{d(y,\pDd_{x,A})}.
$$

{\it Step 4.} We now fix $\e_4 \in (0,1)$ such that $\e_4\leq \e_3$ and $h(\e_4)\leq  \e_3$, with 
\begin{equation}\label{e4}
\e_3=\e_1 \land \e_2 \land 1 \land \frac{\eta}
{24 C \max_{x,x' \in \pDd,  A \in \cI_x, A'\in \cI_{x'}} \rho_{x,x',A,A'}}
\end{equation}
where $\e_1$,  $\eta$ and $C$ were defined in Lemma~\ref{paradur} ($C$ is the constant 
appearing in~\eqref{tbd2}) and $\e_2$ was defined in Step 3.
By Steps 2 and 3, we know that~\eqref{goal} holds true for any $y \in \Dd_{x,A} \cap \Dd_{x',A'}$ and 
$v\in \cDd_{x',A'}\setminus \Dd_{x,A}$ such that $|y|\geq \e_4$ or ($|y| \leq \e_4$ and $|v|\geq h(\e_4)$),
and it only remains to study the case where $|y|<\e_4$ and $|v|<h(\e_4)$, 
which implies that  $|y|<\e_3$ and $|v|<\e_3$. Since $\e_3\leq \e_1$, 
we can use Lemma~\ref{paradur}. We introduce  $y_0=(y_2,\dots,y_d)$ and $v_0=(v_2,\dots,v_d)$.

\vip

{\it Step 4.1.} If first $|v_0|^2\leq 2|y|$, then  we use Lemma~\ref{thales} as in Step 2 to write
$$
|a(x,y,v)-v|\leq \frac{|y-v| d(v,\pDd_{x,A})}{d(y,\pDd_{x,A})}.
$$
Since  $v\in \cDd_{x',A'}\setminus \Dd_{x,A}$,
we have $v_1 \in [\psi_{x',A'}(v_0),\psi_{x,A}(v_0)]$ by~\eqref{casp2}, so that by 
Lemma~\ref{paradur},
$$
d(v,\pDd_{x,A})\leq |v_1-\psi_{x,A}(v_0)|\leq |\psi_{x',A'}(v_0)-\psi_{x,A}(v_0)|
\leq C \rho_{x,x',A,A'} |v_0|^2\leq 2C \rho_{x,x',A,A'}|y|,
$$
whence 
$$
|a(x,y,v)-v|\leq 2C  \rho_{x,x',A,A'}\frac{|y||y-v|}{d(y,\pDd_{x,A})}.
$$

{\it Step 4.2.} If next $|v_0|^2 > 2|y|$, we will show that 
\begin{equation}\label{todo}
a(x,y,v)=t y + (1-t) v\quad \text{for some } t \in \Big(0,\frac{12C\rho_{x,x',A,A'}\e_3}{\eta}\Big].
\end{equation}
Using moreover that $|y|=|y-0|\geq d(y,\pDd_{x,A})$ because $0\in \pDd_{x,A}$, this will imply~\eqref{goal}, since
$$
|a(x,y,v)-v|=t|y-v|\leq \frac{12C\rho_{x,x',A,A'}\e_3}{\eta}|y-v|
\leq \frac{12C\e_3}{\eta} \rho_{x,x',A,A'} \frac{|y-v||y|}
{d(y,\pDd_{x,A})}.
$$

To prove~\eqref{todo}, it is enough to verify that, setting $s=\frac{12C\rho_{x,x',A,A'}\e_3}{\eta}\in (0,1/2)$
(by~\eqref{e4}),
$$
s y_1+(1-s)v_1- \psi_{x,A}(sy_0+(1-s)v_0)\geq 0.
$$
Indeed, $a(x,y,v)=t y + (1-t) v$, with $t \in (0,1)$ determined by the fact that $a(x,y,v)\in \pDd_{x,A}$,
{\it i.e.} $t y_1 + (1-t) v_1 - \psi_{x,A}(ty_0+(1-t)v_0)=0$.

\vip
But, as in Step 4.1, $v_1 \in [\psi_{x',A'}(v_0),\psi_{x,A}(v_0)]$. Moreover,
$y_1 \geq \psi_{x',A'}(y_0)$ because $y \in \Dd_{x',A'}$.
It thus suffices to check that
\begin{equation}\label{retodo}
r:=s \psi_{x',A'}(y_0)+(1-s)\psi_{x',A'}(v_0)- \psi_{x,A}(sy_0+(1-s)v_0)\geq 0.
\end{equation}

By~\eqref{tbd2}, we have 
\begin{align*}
r\geq& s \psi_{x',A'}(y_0)+(1-s)\psi_{x',A'}(v_0)- \psi_{x',A'}(sy_0+(1-s)v_0)
-C \rho_{x,x',A,A'} |sy_0+(1-s)v_0|^2.
\end{align*}
Since $|y_0|<\e_3\leq 1$ and $|v_0|<\e_3$ and since $2|y_0|<|v_0|^2$,
we have 
$$
|sy_0+(1-s)v_0|^2\leq \e_3 (|y_0|+|v_0|)\leq  \e_3 (|v_0|^2/2+|v_0|)\leq \frac 32 \e_3 |v_0|.
$$
Since Hess $\psi_{x',A'}\geq \eta I_{d-1}$, we classically have
$$
s \psi_{x',A'}(y_0)+(1-s)\psi_{x',A'}(v_0)- \psi_{x',A'}(sy_0+(1-s)v_0) \geq \frac \eta 2 s(1-s)|v_0-y_0|.
$$ 
Moreover, $|v_0-y_0|\geq |v_0|-|y_0| \geq |v_0| - \frac{|v_0|^2}2\geq \frac{|v_0|}{2}$.
All in all,
$$
r \geq \frac\eta 4  s(1-s) |v_0| - \frac 32  C \rho_{x,x',A,A'} \e_3 |v_0| \geq
\frac\eta 8  s |v_0| - \frac 32  C \rho_{x,x',A,A'} \e_3 |v_0|,
$$
because $s \in [0,1/2]$ as already seen. This last quantity equals $0$ by definition of $s$.
\end{proof}

We can finally handle the 

\begin{proof}[Proof of Proposition~\ref{tyvmmm}]
We fix $x,x' \in \pDd$ and $A \in \cI_x$, $A' \in \cI_{x'}$.

\vip
For (i), assume that $h_x(A,y)\in\Dd$, $h_{x'}(A',y) \in \Dd$, $h_x(A,z)\notin \Dd$ and 
$h_{x'}(A',z) \notin \Dd$. Since we either have 
$|\bg_x(A,y,z)-h_x(A,y)|\leq |\bg_{x'}(A',y,z)-h_{x'}(A',y)|$ or the converse inequality, 
we deduce from Lemma~\ref{letop}-(a) that
\begin{align*}
\Big||\bg_x(A,y,z)\!-\!\bg_{x'}(A',y,z)|\!-\!|x-x'|\Big| \leq& C \rho_{x,x',A,A'}\Big(|y|\land 1+
|z|\land 1 + \frac{|y|(|y-z|\land 1)}{\delta(h_x(A,y))\land \delta(h_{x'}(A',y))}\Big)\\
\leq & 2C \rho_{x,x',A,A'}\Big(|z|\land 1 + \frac{|y|(|y-z|\land 1)}{\delta(h_x(A,y))\land \delta(h_{x'}(A',y))}\Big).
\end{align*}
For the second inequality, note that 
$|y|\geq \delta(h_x(A,y))\land \delta(h_{x'}(A',y))$ (since $|y|={|h_x(A,y)-x|}\geq \delta(h_x(A,y))$
and $|y|=|h_{x'}(A',y)-x'|\geq \delta(h_{x'}(A',y))$ because $x,x'\in \pDd$) and write
$$
|y|\land 1 \leq |z|\land 1 + |y-z|\land 1\leq |z|\land 1+ 
\frac{|y|(|y-z|\land 1)}{\delta(h_x(A,y))\land \delta(h_{x'}(A',y))}.
$$ 

For (ii), assume that $h_x(A,y)\in\Dd$, $h_{x'}(A',y) \in \Dd$, $h_x(A,z)\notin \Dd$ and 
$h_{x'}(A',z) \in \Dd$. By Lemma~\ref{letop}-(b),
\begin{align*}
\Big||\bg_x(A,y,z)\!-\!h_{x'}(A',z)|\!-\!|x-x'|\Big| \leq& C \rho_{x,x',A,A'}\Big(|y|\land 1+
|z|\land 1 + \frac{|y|(|y-z|\land 1)}{\delta(h_x(A,y))}\Big)\\
\leq & 2C \rho_{x,x',A,A'}\Big(|z|\land 1 + \frac{|y|(|y-z|\land 1)}{\delta(h_x(A,y))}\Big).
\end{align*}
We used that $|y|\geq \delta(h_x(A,y))$, whence $|y|\land 1 \leq |z|\land 1 + |y-z|\land 1\leq |z|\land 1+ 
\frac{|y|(|y-z|\land 1)}{\delta(h_x(A,y))}$.

\vip

For (iii), we assume that $h_x(A,z)\notin \Dd$ and $h_{x'}(A',z) \notin \Dd$. Consider $\e>0$
small enough so that $h_x(A,\e \be_1) \in \Dd$ and $h_{x'}(A',\e\be_1) \in \Dd$. Then we can apply (i)
to find
$$
\Big||\bg_x(A,\e\be_1,z)-\bg_{x'}(A',\e \be_1,z)|-|x-x'|\Big| \leq C \rho_{x,x',A,A'}
\Big(|z|\land 1 + \frac{|\e\be_1|(|\e\be_1-z|\land 1)}
{\delta(h_x(A,\e \be_1)) \land \delta(h_{x'}(A',\e \be_1))}\Big).
$$ 
But if $\e>0$ is small enough, we have $\delta(h_x(A,\e \be_1))=d(x+\e \bn_x,\pDd)=\e$, recall that
$A \be_1= \bn_x$, and $\delta(h_{x'}(A',\e \be_1))=\e$. We end with
$$
\Big||\bg_x(A,\e\be_1,z)-\bg_{x'}(A',\e \be_1,z)|-|x-x'|\Big| \leq C \rho_{x,x',A,A'}
\Big(|z|\land 1 + |\e\be_1-z|\land 1) \leq  2C\rho_{x,x',A,A'}.
$$ 
Letting $\e\to 0$, we find 
$||\bg_x(A,0,z)-\bg_{x'}(A',0,z)|-|x-x'|| \leq 2C \rho_{x,x',A,A'}$ as desired.

\vip
Point (iv) is obvious, since $\bg_x(A,0,z)$, $\bg_{x'}(A',0,z)$, $x$, $x'$ belong to 
$\cDd$, which is bounded.
\end{proof}

We conclude this appendix with the following strange observation: we could treat the
flat case $\Dd=\HH$ (with some small difficulties since $\HH$ is not bounded, but with many
huge simplifications), but we could not treat, with our method, 
general convex domains that are not strongly convex.

\begin{remark}\label{strange}
(a) The following version of Lemma~\ref{letop}-(a) holds true in the flat case where $\Dd=\HH$:
for all $x,x' \in \partial \HH$, all $A \in \cI_x$, $A' \in \cI_{x'}$, 
all $y,z \in \R^d$ such that $y \in \HH$ and $z \notin \closure{\HH}$, 
(so that $h_x(A,y) \in \HH$, $h_{x'}(A',y)\in \HH$, $h_x(A,z) \notin \HH$ and  $h_{x'}(A',z) \notin \HH$),
$$
\Big||\bg_x(A,y,z)-\bg_{x'}(A',y,z)|-|x-x'|\Big| \leq ||A-A'||(|y|+|z|).
$$

(b) Consider a (non-strongly) convex open bounded domain $\Dd$ of $\R^2$ such that 
$$
\Dd \cap B_2(0,1)=\{u=(u_1,u_2) \in B_2(0,1) : u_1>u_2^4\}.
$$
There does not exist any constant $M \in (0,\infty)$ such that for all $x,x' \in \pDd$, all $A \in \cI_x$,
all $A' \in \cI_{x'}$, all $y,z \in \R^2$ such that  
$h_x(A,y)\in\Dd$, $h_{x'}(A',z') \in \Dd$, $h_x(A,z)\notin \Dd$ and $h_{x'}(A',z) \notin \Dd$,
$$
|\bg_x(A,y,z)-\bg_{x'}(A',y,z)|-|x-x'| \!\leq \! M(|x-x'|+||A-A'||)\Big(|y|+|z|
+ \frac{|y||y-z|}{\delta(h_x(A,y))\land\delta(h_{x'}(A',y)) }\Big).
$$
\end{remark}

\begin{proof}
For point (a), it suffices to note that $\bg_x(A,y,z)=x+A \bg_0(I,y,z)$, where $I$ is the identity matrix, 
and the result follows from
the fact that $|\bg_0(I,y,z)|\leq |y|+|z|$, because $\bg_0(I,y,z)=[y,z]\cap\partial \HH \in [y,z]$.
\vip

For point (b), we choose $x=(0,0)$, $x'=(a^4,a)$, $y=(\alpha,0)$ and $z=(\alpha,-\alpha^{1/4})$,
with $a>0$ small and $\alpha=\frac{a^4(6a^2-4a^3+a^4)}{\sqrt{1+16a^2}}>0$. We have $x,x',z \in \pDd$.
We naturally choose $A=I \in \cI_x$ (because $\bn_x=\be_1$) and $A'\in \cI_{x'}$ defined by
$A'(u_1,u_2)=u_1\sigma+u_2\tau$, where 
$$
\tau=\frac{(4a^3,1)}{\sqrt{1+16 a^6}} \quad \text{and} \quad \sigma=\frac{(1,-4a^3)}{\sqrt{1+16 a^6}}
$$
are the tangent and normal unit vectors to $\pDd$ at $x'$ (we have $\sigma=\bn_{x'}$). 
For all $u=(u_1,u_2) \in \R^2$, we have
$h_x(u)=u$ and $h_{x'}(u)=x'+u_1\sigma+u_2\tau$.
\begin{figure}[ht]\label{fig4}
\centerline{\fbox{\begin{minipage}{0.9\textwidth}
\caption{\small{Illustration of Remark~\ref{strange}-(b).}}
\centerline{\resizebox{13cm}{!}{\includegraphics{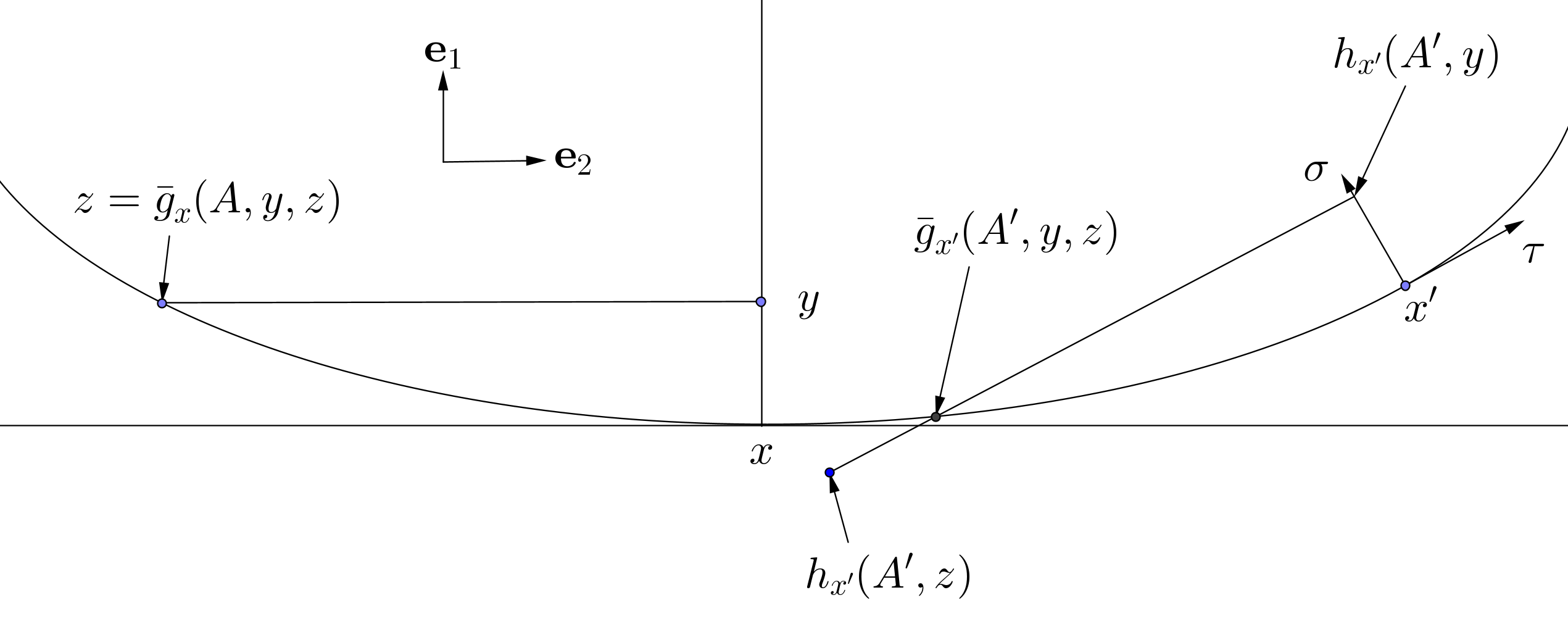}}}
\end{minipage}}}
\end{figure}
Moreover, since $a>0$ is small,  $y=h_x(A,y) \in \Dd$ and $h_{x'}(A,y) =x'+\alpha \sigma  \in \Dd$.
Observe also that $h_{x'}(A,z)=x'+\alpha \sigma - \alpha^{1/4}\tau.$ 
\vip

We have
$\bg_x(A,y,z)=z$, and it holds that $\bg_{x'}(A',y,z)=((a-a^2)^4,a-a^2)$:
it suffices that there exists $\theta \in (0,1)$ such that 
$((a-a^2)^4,a-a^2)=h_{x'}(A',y)+\theta(h_{x'}(A',z)-h_{x'}(A',y))$,
{\it i.e.}  $((a-a^2)^4,a-a^2)=x'+\alpha\sigma-\theta\alpha^{1/4}\tau$, which rewrites
$$
(a-a^2)^4=a^4+\frac\alpha{\sqrt{1+16 a^6}}-\frac{4\theta \alpha^{1/4}a^3}{\sqrt{1+16 a^6}}
\quad \text{and} \quad a-a^2=a-\frac{4 \alpha a^3}{\sqrt{1+16 a^6}}-\frac{\theta \alpha^{1/4}}{\sqrt{1+16 a^6}}.
$$
A  computation shows that
(the second equality uses the definition of $\alpha$)
$$
\theta=\frac{a^2\sqrt{1+16a^6} - 4 \alpha a^3}{\alpha^{1/4}}=
\frac{\alpha + \sqrt{1+16 a^6}(a^4-(a-a^2)^4)}{4\alpha^{1/4}a^3}
$$
is suitable, and indeed $\theta \in (0,1)$ if $a>0$ is small because $\alpha\sim 6a^6$ as $a\to0$.

\vip

One then observes that, as $a\to 0$,
$$
|\bg_x(A,y,z)-\bg_{x'}(A',y,z)|\geq |a-a^2+\alpha^{1/4}|=a+6^{1/4}a^{3/2}+o(a^{3/2}),
$$
while $|x-x'|=\sqrt{a^8+a^2}=a + o(a^6)$, so that
$$
|\bg_x(A,y,z)-\bg_{x'}(A',y,z)|-|x-x'| \geq 6^{1/4}a^{3/2}+o(a^{3/2}).
$$

Next, for all $a>0$ small, we have 
$$
||A-A'||\leq |\sigma - \be_1|+|\tau-\be_2|\leq  2\Big(1-\frac1{\sqrt{1+16a^2}}
+\frac{4a^3}{\sqrt{1+16a^2}}\Big) 
\leq 2 (\sqrt{1+16a^6}-1+4a^3),
$$
so that $||A-A'||\leq 8a^3+o(a^3)$. Moreover,
$|y|=\alpha\leq 6a^6$, $|z|\leq 2\alpha^{1/4} \leq 4 a^{3/2}$ and $|y-z|\leq |y|+|z|\leq 10 a^{3/2}$. 
Also, we have $\delta(h_x(A,y))=\delta(h_{x'}( A', y))=|y|$.
Thus, for $a>0$ small,
\begin{align*}
(|x-x'|+||A-A'||)\Big(|y|+|z|+ \frac{|y||y-z|}{\delta(h_x(A,y))\land\delta(h_{x'}(A',y))}\Big) \leq &
(a+o(a))(14a^{3/2}+o(a^{3/2}))\\
=&14a^{5/2}+o(a^{5/2}).
\end{align*}

Hence if there would exist $M$ as in the statement, we would have, for all $a>0$ small,
$6^{1/4}a^{3/2}+o(a^{3/2})\leq M(14a^{5/2}+o(a^{5/2}))$,
which is not possible.
\end{proof}

\section{About test functions}\label{tf}

The goal of this appendix is to prove Remarks~\ref{dnonvide} and~\ref{pdi}, as well as the following
result.

\begin{remark}\label{lcont}
Recall that $\cL\varphi(x)$ was  introduced in Definition~\ref{opbd} for $\varphi \in C(\cDd)\cap C^2(\Dd)$ and 
$x \in \Dd$. It is well-defined and continuous on $\Dd$. Moreover, we have 
\begin{equation}\label{newg}
\cL\varphi(x)= \int_{\R^d} [\varphi(\Lambda(x,x+z))-\varphi(x) - z \cdot \nabla \varphi(x) \indiq_{\{|z|<a\}}]
\frac{\dr z}{|z|^{d+\alpha}},
\qquad x\in\Dd,
\end{equation}
for any $a \in (0,\infty)$ and we can choose $a=0$ when $\alpha \in (0,1)$ and $a=\infty$ when $\alpha\in (1,2)$.
\end{remark}

\begin{proof} 
We recall that $\Lambda$ is continuous by Lemma~\ref{Lambdacon} and write
$\mathcal{L}\varphi = \mathcal{L}_1\varphi + \mathcal{L}_2\varphi + \mathcal{L}_3\varphi$, where
\begin{gather*}
\cL_1\varphi(x)=\int_{\{|z|\geq 1\}}  [\varphi(\Lambda(x,x+z))-\varphi(x)]\frac{\dr z}{|z|^{d+\alpha}},\\
\cL_2\varphi(x)=\int_{\{|z|<1\}} [\varphi(\Lambda(x,x+z))-\varphi(x)-(\Lambda(x,x+z)-x)\cdot \nabla\varphi(x)]
\frac{\dr z}{|z|^{d+\alpha}},\\
\cL_3\varphi(x)= \int_{\{|z|<1\}} (\Lambda(x,x+z)-x-z)\cdot \nabla \varphi(x)
\frac{\dr z}{|z|^{d+\alpha}}.
\end{gather*}
Since $\varphi\in C(\cDd)$, $\mathcal{L}_1\varphi$ is
well-defined and continuous on $\cDd$.
For $\varepsilon > 0$, we introduce the set
$\Dd_\varepsilon = \{x \in \Dd, \: d(x, \pDd) \geq \varepsilon\}$.
Since $\varphi\in C^2(\Dd)$, there is a constant $C_\e>0$ such that 
$|\varphi(y) -\varphi(x) - (y-x)\cdot\nabla\varphi(x)| \leq C_\varepsilon |y-x|^2$  for any $x,y\in \Dd_\varepsilon$.
Therefore, for any $x \in \Dd_\varepsilon$ and any $|z| \leq 1$, we have
\[
 \big|\varphi(\Lambda(x,x+z))-\varphi(x)-(\Lambda(x,x+z)-x)\cdot \nabla\varphi(x)\big| 
\leq C_{\varepsilon/2}|z|^2 \indiq_{\{|z| \leq \varepsilon/2\}} + \tilde{C}_\varepsilon \indiq_{\{\varepsilon/2 \leq |z| \leq  1\}},
\]
where $\tilde{C}_\varepsilon = 2\sup_{x\in \cDd} |\varphi(x)| + \sup_{x\in\Dd_\varepsilon}|\nabla\varphi(x)|$. 
We used that $|\Lambda(x,x+z) - x| \leq |z| \leq 1$. This bound is integrable with respect to 
$|z|^{-d-\alpha} \dr z$ so that we conclude by dominated convergence 
that $\mathcal{L}_2 \varphi$ is well-defined and continuous on $\Dd_\varepsilon$, for any $\varepsilon > 0$. 
Finally
observe that  $\Lambda(x, x+z)=x+z$ if $|z| \leq d(x, \pDd)$, so that for all $x \in \Dd_\e$, all $|z|\leq 1$,
$$
|(\Lambda(x,x+z)-x-z)\cdot \nabla \varphi(x)|\leq \indiq_{\{|z|>\e\}} \sup_{x\in \cDd} |\varphi(x)|.
$$
This bound being integrable with respect to $|z|^{-d-\alpha} \dr z$, we conclude
that $\cL_3\varphi$ is well-defined and continuous on $\Dd_\e$ for all $\e>0$.

\vip
Since $\int_{\mathbb{R}^d}z\indiq_{\{a < |z| < 1\}}|z|^{-d-\alpha} =\int_{\mathbb{R}^d}z\indiq_{\{1 < |z| < b\}}|z|^{-d-\alpha} = 0$ 
for any $a<1<b$, \eqref{newg} holds for any $a \in(0,\infty)$. 
When $\alpha \in(1, 2)$, $\int_{\{|z| > 1\}}|z|^{1-d-\alpha}\dr z < \infty$ 
and $\int_{\{|z| > 1\}}z|z|^{d-\alpha}\dr z = 0$ so that~\eqref{newg} holds with 
$a = \infty$. When $\alpha \in(0,1)$, we have 
$\int_{\{|z| < 1\}}|z|^{1-d-\alpha}\dr z < \infty$ and $\int_{\{|z| < 1\}}z|z|^{d-\alpha}\dr z = 0$ 
so that~\eqref{newg} holds with $a = 0$.
\end{proof}

\begin{proof}[Proof of Remark~\ref{dnonvide}]
We start with (a): we fix $\alpha \in (0,1)$, $\e \in (0,1-\alpha]$ 
and assume that $\varphi \in C^2(\Dd)\cap C^{\alpha+\e}(\cDd)$,
so that for all $x\in\Dd$, all $z \in \R^d$, using that $\Lambda(x,x+z)\in [x,x+z]$,
$$
|\varphi(\Lambda(x,x+z))-\varphi(x)|\leq C (|\Lambda(x,x+z)-x|^{\alpha+\e}\land 1)\leq C (|z|^{\alpha+\e}\land 1).
$$
Using~\eqref{newg} with $a=0$, we conclude that $\cL\varphi$ is bounded on $\Dd$ as desired.
\vip
We carry on with (b): we fix $\alpha \in [1,2)$, $\e \in (0,2-\alpha]$ 
and assume that $\varphi \in C^2(\Dd)\cap C^{\alpha+\e}(\cDd)$, with $\nabla \varphi(x)\cdot \bn_x=0$ for 
all $x\in \pDd$. We have to prove that $\cL\varphi$ is bounded on $\Dd$.
We use~\eqref{newg} with $a=\e_0/2$, $\e_0\in (0,1)$ being defined in Remark~\ref{asp}:
we write
$\cL\varphi(x)=\sum_{k=1}^4\cL_k\varphi(x)$, where,
introducing $\tx \in \pDd$ such that $d(x,\pDd)=|x-\tx|$,
\begin{gather*}
\cL_1\varphi(x)=\int_{\{|z|\geq \e_0/2\}}  [\varphi(\Lambda(x,x+z))-\varphi(x)]\frac{\dr z}{|z|^{d+\alpha}},\\
\cL_2\varphi(x)=\int_{\{|z|<\e_0/2\}} [\varphi(\Lambda(x,x+z))-\varphi(x)-(\Lambda(x,x+z)-x)\cdot \nabla\varphi(x)]
\frac{\dr z}{|z|^{d+\alpha}},\\
\cL_3\varphi(x)=\int_{\{|z|<\e_0/2\}} (\Lambda(x,x+z)-x-z)\cdot (\nabla \varphi(x)-\nabla \varphi(\tx))
\frac{\dr z}{|z|^{d+\alpha}},\\
\cL_4\varphi(x)=\int_{\{|z|< \e_0/2\}} (\Lambda(x,x+z)-x-z)\cdot \nabla \varphi(\tx)
\frac{\dr z}{|z|^{d+\alpha}},
\end{gather*}

Since $\varphi$ is bounded, $\cL_1\varphi$ is obviously bounded. 

\vip

Since $\varphi \in C^{\alpha+\e}(\cDd)$,
there is $C>0$ such that for all $x\in\Dd$, all $z \in \R^d$,
$$
|\varphi(\Lambda(x,x+z))-\varphi(x)-(\Lambda(x,x+z)-x)\cdot \nabla\varphi(x)|
\leq C |\Lambda(x,x+z)-x|^{\alpha+\e}\leq C |z|^{\alpha+\e},
$$
because $\Lambda(x,x+z)\in [x,x+z]$. Hence $\cL_2\varphi$ is bounded. 

\vip

Using that 
$|\nabla \varphi(x)-\nabla \varphi(\tx)|\leq C |x-\tx|^{\alpha+\e-1}$, that 
$|\Lambda(x,x+z)-x-z|\leq z$ and that $\Lambda(x,x+z)=x+z$
if $|z|<d(x,\pDd)=|x-\tx|$, we see that when $\alpha \in (1,2)$,
$$
|\cL_3\varphi(x)| \leq C |x-\tx|^{\alpha+\e-1} \int_{\{|z|\geq |x-\tx|\}}
\frac{|z|\dr z}{|z|^{d+\alpha}} = C  |x-\tx|^{\alpha+\e-1} |x-\tx|^{1-\alpha}=C|x-\tx|^\e,
$$
which is bounded since $\Dd$ is bounded. When $\alpha=1$, we write
$$
|\cL_3\varphi(x)| \leq C |x-\tx|^{\e} \int_{\{|x-\tx|\leq |z|\leq \e_0/2\}}
\frac{|z|\dr z}{|z|^{d+1}} \leq C  |x-\tx|^{\e} |\log |x-\tx||\leq C.
$$

We now study $\cL_4$. If first $|x-\tx|\geq \e_0/2$, 
then $x+z\in \Dd$ for all $z\in B_d(0,\e_0/2)$, whence $\cL_4\varphi(x)=0$.
We next treat the case where $|x-\tx|<\e_0/2$. Using a suitable isometry, we may assume that
$\tx=0$ and $\bn_\tx=\be_1$ (whence $0 \in \pDd$, 
$\Dd\subset \HH$ and $\nabla \varphi(\tx)\cdot\be_1=0$). Necessarily, $x-\tx$ is colinear to 
$\bn_\tx$ so that $x=x_1\be_1$, with $x_1\in (0,\e_0/2)$ (because $0<|x-\tx|<\e_0/2$).
Using that $\Lambda(x,x+z)=x+z$ if $x+z\in \Dd$, write
$\cL_4\varphi(x)=\cL_{41}\varphi(x)+\cL_{42}\varphi(x)+\cL_{43}\varphi(x)$,
where
\begin{gather*}
\cL_{41}\varphi(x)=
\int_{\{|z|< \e_0/2\}} \indiq_{\{x+z \in \HH\setminus \Dd\}}(\Lambda(x,x+z)-x-z)\cdot \nabla \varphi(\tx)
\frac{\dr z}{|z|^{d+\alpha}},\\
\cL_{42}\varphi(x)=\int_{\{|z|< \e_0/2\}} \indiq_{\{x+z \notin \HH\}}(\Lambda(x,x+z)-\Gamma(x,z))\cdot \nabla \varphi(\tx)
\frac{\dr z}{|z|^{d+\alpha}},\\
\cL_{43}\varphi(x)=\int_{\{|z|< \e_0/2\}} \indiq_{\{x+z \notin \HH\}}(\Gamma(x,z)-x-z)\cdot \nabla \varphi(\tx)
\frac{\dr z}{|z|^{d+\alpha}},
\end{gather*}
and where, for $z =(z_1,\dots,z_d) \in \R^d$ such that $x+z \notin \HH$, we have set 
$$
\Gamma(x,z)=-\frac{x_1}{z_1}\sum_{i=2}^d z_i \be_i.
$$
By Remark~\ref{asp} (with $x=0$ and $A=I$),
$\Dd\cap B_d(0,\e_0)=\{u \in B_d(0,\e_0) : u_1>\psi(u_2,\dots,u_d)\}$, 
for some $C^3$ convex function $\psi:B_{d-1}(0,\e_0)\to\R_+$
such that $\psi(0)=0$, $\nabla\psi(0)=0$, and $||D^2\psi||_\infty\leq C$ (with $C$ not depending on $\tx$).
This implies that for $|z|<\e_0/2$ (recall that $x=x_1\be_1$ with $x_1\in (0,\e_0/2)$), $x+z\in \HH\setminus \Dd$
if and only if $x_1+z_1 \in (0,\psi(z_2,\dots,z_d)]$. Thus, bounding roughly $|\Lambda(x,x+z)-x-z|\leq |z|$,
writing $z=(z_1,v)$ and using that $|z|\geq|v|$,
$$
|\cL_{41}\varphi(x)|\leq ||\nabla\varphi||_\infty \int_{B_{d-1}(0,\e_0/2)}\!\!\dr v\int_{-x_1}^{\psi(v)-x_1}\!\!
|z|^{1-d-\alpha} \dr z_1 \leq ||\nabla\varphi||_\infty  \int_{B_{d-1}(0,\e_0/2)} \!\!   |v|^{1-d-\alpha}\psi(v)\dr v.
$$
Since $\psi(v)\leq C|v|^2$ and since $3-d-\alpha>-(d-1)$, we conclude that $\cL_{41}\varphi$ is bounded.
\vip

Since $\nabla\varphi(\tx)\cdot \be_1=0$, 
we may write $\nabla\varphi(\tx)=\sum_{i=2}^d \rho_i \be_i$. 
Moreover, we also have $\nabla\varphi(\tx)\cdot x=0$  so that
$(\Gamma(x,z)-x-z)\cdot \nabla \varphi(\tx)=-(\frac{x_1}{z_1}+1)\sum_{i=2}^d\rho_i z_i$,
and we find that
$$
\cL_{43}\varphi(x)=-\sum_{i=2}^d\rho_i \int_{\{z \in B_d(0,\e_0/2) \: : \: x+z \notin \HH\}}\Big(\frac{x_1}{z_1}+1\Big) 
z_i \frac{\dr z}{|z|^{d+\alpha}} =  0
$$ 
by a symmetry argument,
using the substitution $(z_1,z_2,\dots,z_d)\mapsto (z_1,-z_2,\dots,-z_d)$, which leaves invariant the
set $\{z \in B_d(0,\e_0/2) : x+z \notin \HH\}$ (recall that $x=x_1\be_1$).

\vip

Next, assume for a moment that for some constant $K>0$,
\begin{equation}\label{ttch} 
\text{for all $z\in \R^d$ such that $x+z \notin \HH$,}\quad
|\Gamma(x,z)-\Lambda(x,x+z)| \leq K \frac{x_1^2}{|z_1|^3} \sum_{i=1}^d |z_i|^3.
\end{equation}
Recalling that $x+z \notin \HH$ if and only if $x_1+z_1\leq 0$ and that $\e_0\in (0,1)$,
\begin{align*}
|\cL_{42}\varphi(x)|\leq& K||\nabla \varphi||_\infty x_1^2 \int_{-1}^{-x_1} \dr z_1 \int_{B_{d-1}(0,1)} 
\Big(1+\frac{|z_2|^3+\cdot+|z_d|^3}{|z_1|^3} \Big) \frac{\dr z_2\cdots \dr z_d}{|z|^{d+\alpha}} \\
\leq &K' x_1^2 \int_{-1}^{-x_1} \dr z_1 \int_{B_{d-1}(0,1)} 
\Big(1+\frac{|z_2|^3}{|z_1|^3} \Big) \frac{\dr z_2\cdots \dr z_d}{|z|^{d+\alpha}}\\
\leq & K' x_1^2 \int_{x_1}^{1} \dr z_1 \int_0^1 \Big(1+\frac{z_2^3}{z_1^3} \Big) 
\frac{\dr z_2}{(z_1+z_2)^{2+\alpha}}\\
\leq &  K' x_1^2 \int_{x_1}^{1} \dr z_1 \Big[\int_0^{z_1} \Big(1+\frac{z_2^3}{z_1^3} \Big) 
\frac{\dr z_2}{z_1^{2+\alpha}} +\int_{z_1}^1 \Big(1+\frac{z_2^3}{z_1^3} \Big) 
\frac{\dr z_2}{z_2^{2+\alpha}}\Big],
\end{align*} 
for some constant $K'$, allowed to vary from line to line.
Using that $\alpha \in [1,2)$, we conclude that
\begin{align*}
|\cL_{42}\varphi(x)|\leq K' x_1^2 \int_{x_1}^{1} (z_1^{-1-\alpha} + z_1^{-3})\dr z_1 \leq 
 K'  x_1^2 \int_{x_1}^{1} z_1^{-3}\dr z_1 \leq K'.
\end{align*}

It remains to check~\eqref{ttch}. We set $\gamma=\Gamma(x,z)$ and observe that 
$\gamma=x+\theta_1 z$, where
$$
\theta_1= \frac {x_1}{|z_1|}.
$$ 
Indeed, recall that $z_1<-x_1<0$ (since $x+z \notin \HH$), that $x=x_1\be_1$, and note that $x+\theta_1 z = x_1\be_1-\frac{x_1}{z_1}z=-\frac{x_1}{z_1}\sum_{i=2}^d z_i\be_i$.
We then set $\lambda=\Lambda(x,x+z)=x+\theta z$, with $\theta \in (0,1)$ determined by the fact that 
$\lambda \in \pDd$. It holds that $\theta\leq \theta_1$: it suffices to prove that $\gamma \notin \Dd$,
which is obvious since $\gamma \notin \HH$ (because $\gamma_1=0$).
We also have $\theta\geq \theta_0$, where,
for $C$ the constant such that $\psi(v)\leq C |v|^2$,
$$
\theta_0=\Big(1- \frac{C x_1 |z|^2}{|z_1|^2}\Big) \theta_1.
$$ 
Indeed, it suffices to treat the case where $\theta_0>0$ and to check that
$\lambda^0:=x+\theta_0 z \in \Dd$, {\it i.e.} that
$\lambda^0_1\geq \psi(\lambda^0_2,\dots,\lambda^0_d)$. To this end, it suffices to note that
$$
\lambda^0_1=x_1+\frac {x_1}{|z_1|}\Big(1- \frac{Cx_1 |z|^2}{|z_1|^2}\Big) z_1
= \frac{C x_1^2|z|^2}{|z_1|^2},
$$
while, recalling that $x=x_1\be_1$ and that $\theta_0 \leq \theta_1$ (since we are in the case $\theta_0> 0$),
$$
\psi(\lambda^0_2,\dots,\lambda^0_d)\leq C\sum_{i=2}^d (\lambda_i^0)^2
=C \theta_0^2\sum_{i=2}^d z_i^2
\leq C\theta_0^2|z|^2 \leq C \theta_1^2|z|^2 =
C\frac {x_1^2|z|^2}{z_1^2}.
$$
We have shown that $\Gamma(x,z)=x+\theta_1 z$ and $\Lambda(x,x+z)=x+\theta z$ with $\theta\in [\theta_0,\theta_1]$,
so that
$$
|\Gamma(x,z)-\Lambda(x,x+z)|\leq  (\theta_1-\theta_0)||z|=\frac{Cx_1^2|z|^3}{|z_1|^3},
$$
from which~\eqref{ttch} follows.
\end{proof}

\begin{proof}[Proof of Remark~\ref{pdi}]
Point (a) relies on a rather noticeable observation, see Step 8, 
while points (b) and (c) are not difficult once (a) is checked (because $\Dd=B_d(0,1)$).
\vip

{\it Step 1.} Let $\beta \in (0,\alpha/2)$ and $\varphi \in C^{\alpha/2}(\cDd)$ be radially symmetric
and satisfy $\varphi|_{\pDd}=0$. Then 
$\varphi \in H_\beta$ if and only if $\cH_\beta \varphi(\be_1)=0$, where
$$
\cH_\beta\varphi(\be_1):=\int_\Dd \varphi(y) \frac{\dr y}{|y-\be_1|^{d+\beta}}.
$$
Indeed, using that $\varphi|_{\pDd}=0$ and the symmetry of $\varphi$ and of the domain $\Dd=B_d(0,1)$
we see that
$$
\text{for all $x \in \pDd$, all $\e>0$,} \quad
\cH_{\beta,\e}\varphi(x)=\cH_{\beta,\e}\varphi(\be_1)=\int_\Dd \varphi(y) \indiq_{\{|y-\be_1|\geq \e\}}
\frac{\dr y}{|y-\be_1|^{d+\beta}},
$$
which converges to $\cH_\beta\varphi(\be_1)$, since $|\varphi(y)|=|\varphi(y)-\varphi(\be_1)|\leq
C|y-\be_1|^{\alpha/2}$ and since $\beta \in (0,\alpha/2)$.

\vip

{\it Step 2.} Let $\varphi_1\in C(\cDd)$ such that
$\varphi_1(x)=[d(x,\Dd^c)]^{\alpha/2}$ when $d(x,\Dd^c)\leq 1/2$
and let $\varphi_2 \in C_c(\Dd)$. Then $\varphi_0=\varphi_1+\varphi_2 \notin H_*$.
Indeed, for any $x \in \pDd$ and for $h=(h_1,0)\in  B_2(\be_1,1)$ with $h_1>0$ small enough, we have,
since $\bn_{\be_1}=-\be_1$,
$$
\cH_{*}\varphi_0(x,h)=\cH_{*}\varphi_0(\be_1,h)=h_1^{-\alpha/2}[d((1-h_1)\be_1,D^c)]^{\alpha/2}=1.
$$
Hence $\liminf_{\e\to 0} \sup_{h \in B_2(0,\e)\cap  B_2(\be_1, 1)} \cH_{*}\varphi_0(x,h)\geq 1\neq 0$.
\vip

{\it Step 3.} For any $\varphi \in C^1(\cDd)$, we have $\varphi \in H_*$, because
$|\cH_{*}\varphi(x, h)| \leq |h|^{1-\alpha/2}||\nabla \varphi||_\infty$ for all $x \in \pDd$, all $h \in B_2(\be_1,1)$.

\vip

{\it Step 4.} Point (b) follows from  Step~1, since $\varphi_1-a\varphi_2 \in C^{\alpha/2}(\cDd)$ for any $a>0$,
and since choosing $a=\frac{\cH_\beta\varphi_1(\be_1)}{\cH_\beta\varphi_2(\be_1)}$, we find
$\cH_\beta (\varphi_1-a\varphi_2)(\be_1)=0$.

\vip

{\it Step 5.}  Assuming that (a) holds true, we show (c) when $\beta \in (0,\alpha/2)$ and $\beta'=*$.
Consider some nonnegative radially symmetric 
$\varphi_1 \in C^2(\Dd)$ such that
$\varphi_1(x)=[d(x,\Dd^c)]^{\alpha/2}$ as soon as $d(x,\Dd^c)\leq 1/2$.
Then $\varphi_1 \in D_\alpha$ by (a) and  $\varphi_1 \in C^{\alpha/2}(\cDd)$.
Let now $\varphi_2 \in C^2_c(\Dd)$ be radially symmetric, 
nonnegative and positive on $B_d(0,1/2)$. By Remark~\ref{dnonvide}, $\varphi_2 \in D_\alpha$. 
Thus $\varphi_0=\varphi_1 - a \varphi_2 \in D_\alpha $ for all $a>0$.
By (b), there is $a>0$ such that $\varphi_0\in H_\beta$. 
But $\varphi_0 \notin H_*$ by Step~2 and since $\varphi_0 = \varphi_1$ near the boundary.

\vip

{\it Step 6.} We now show (c) when $\beta = *$ and $\beta' \in (0,\alpha/2)$. We consider some nonnegative 
radially symmetric $\varphi \in C^2(\cDd)$ such that
$\varphi(x)=[d(x,\Dd^c)]^2$ as soon as $d(x,\Dd^c)\leq1/2$.
Then $\nabla \varphi(x)=0$ for all $x \in \pDd$, so that $\varphi \in D_\alpha$ by Remark~\ref{dnonvide}. 
Moreover, $\varphi \in C^1(\cDd)$, whence $\varphi \in H_*$ by Step 3.
But $\cH_{\beta'}\varphi(\be_1)>0$, so that $\varphi \notin H_{\beta'}$ by Step 1.

\vip

{\it Step 7.} We next prove (c) when $0<\beta<\beta'<\alpha/2$ (a similar argument works
when ${0<\beta'<\beta<\alpha/2}$). Consider some radially symmetric probability density $\varphi_1\in C^2_c(\Dd)$ 
such that $\cH_\beta\varphi_1(\be_1)>1$. Such a function $\varphi_1$
is easily built, since $\int_\Dd |y-\be_1|^{-d-\beta}\dr y =\infty$. By the Jensen inequality,
$\cH_{\beta'}\varphi_1(\be_1)\geq [\cH_{\beta}\varphi_1(\be_1)]^{(d+\beta')/(d+\beta)}>\cH_{\beta}\varphi_1(\be_1)$.
Let $\eta=\cH_{\beta'}\varphi_1(\be_1)-\cH_{\beta}\varphi_1(\be_1)$ and let $\varphi_2 \in C^2_c(\Dd)$ 
be radially symmetric and so that 
$|\cH_{\beta}\varphi_2(\be_1)-1|+|\cH_{\beta'}\varphi_2(\be_1)-1|\leq 
\frac 1 2 \land \frac \eta {4\cH_\beta\varphi_1(\be_1)}$. 
Such a function
is easily built, since $\int_{\Dd} |y-\be_1|^{-d-\beta}\delta_0(\dr y)
=\int_{\Dd} |y-\be_1|^{-d-\beta'}\delta_0(\dr y)=1$. Consider now 
$a=\frac{\cH_{\beta}\varphi_1(\be_1)}{\cH_\beta \varphi_2(\be_1)}$, 
so that $\cH_\beta \varphi_0(\be_1)=0$, where
$\varphi_0=\varphi_1-a\varphi_2 \in H_\beta$ by Step~1. Also, $\varphi_0 \in C^2_c(\Dd)$,
so that $\varphi_0 \in D_\alpha$ by Remark~\ref{dnonvide}. But $\cH_\beta \varphi_2(\be_1)\geq 1/2$, so that
$a\leq 2\cH_{\beta}\varphi_1(\be_1)$, and $|\cH_{\beta}\varphi_2(\be_1)-
\cH_{\beta'}\varphi_2(\be_1)|\leq \frac{\eta}{4\cH_\beta\varphi_1(\be_1)}$, whence
$$
\cH_{\beta'}\varphi_0(\be_1)=\eta + \cH_\beta \varphi_1(\be_1)-a\cH_{\beta'}\varphi_2(\be_1)
=\eta+a(\cH_{\beta}\varphi_2(\be_1)-
\cH_{\beta'}\varphi_2(\be_1))
\geq \eta-\frac {a\eta} {4 \cH_\beta\varphi_1(\be_1)}
\geq \frac \eta 2.
$$
Thus $\varphi_0 \notin H_{\beta'}$ by Step~1.

\vip

{\it Step 8.} For $\psi \in C^2((0,\infty))\cap C([0,\infty))$ and $a \in (0,\infty)$, we set 
$$
\cK\psi(a)= \int_\R [\psi((a+b)_+)-\psi(a)- \psi'(a)b\indiq_{\{|b|<1\}}] \frac{\dr b}{|b|^{1+\alpha}}.
$$
Setting $\psi_*(a)=a^{\alpha/2}$, we have $\cK\psi_*(a)=0$ for all $a \in (0,\infty)$.
\vip
When $\alpha \in (0,1)$, we have 
$\cK\psi_*(a)= \int_\R [(a+b)_+^{\alpha/2}-a^{\alpha/2}] \frac{\dr b}{|b|^{1+\alpha}}=a^{-\alpha/2} A$, where
$$
A= \int_\R [(1+b)_+^{\alpha/2}-1] \frac{\dr b}{|b|^{1+\alpha}}=A_1+A_2-\frac1\alpha,
$$
$A_1$, $A_2$, $-\frac1\alpha$ corresponding to the integrals on $(0,\infty)$, $(-1,0)$, $(-\infty,-1)$. 
The values of $A_1$ and $A_2$ are given in~\cite[Proof of Lemma~9, page~143]{f2013} (with 
$\beta=\frac\alpha2$). We find $A=0$.
\vip
When $\alpha \in (1,2)$, we have $\cK\psi_*(a)= \int_\R [(a+b)_+^{\alpha/2}-a^{\alpha/2}
-\frac\alpha2 a^{\alpha/2-1}b] \frac{\dr b}{|b|^{1+\alpha}}=a^{-\alpha/2} B$, where
$$
B= \int_\R [(1+b)_+^{\alpha/2}-1-\frac\alpha2b] \frac{\dr b}{|b|^{1+\alpha}}=B_1+B_3+\frac{\alpha}{2(\alpha-1)}-\frac{1}
\alpha,
$$
$B_1$, $B_3$, $\frac{\alpha}{2(\alpha-1)}-\frac{1}\alpha$ corresponding to the integrals on 
$(0,\infty)$, $(-1,0)$, $(-\infty,-1)$. The values of $B_1$ and $B_3$ are given 
in~\cite[Proof of Lemma~9, pages~144-145]{f2013} (with $\beta=\frac\alpha2$): we find $B=0$.
\vip
When $\alpha=1$, $\cK\psi_*(a)= \int_\R [(a+b)_+^{1/2}-a^{1/2}
- \frac 1{2a^{1/2}}b\indiq_{\{ |b|<1 \}}] \frac{\dr b}{|b|^{2}}=a^{-1/2} C$, with
$$
C= \int_\R [(1+b)_+^{1/2}-1-\frac1 2 b \indiq_{\{|b|<1/a\}}] \frac{\dr b}{|b|^{2}}= 
\int_\R [(1+b)_+^{1/2}-1-\frac1 2 b \indiq_{\{|b|<1\}}] \frac{\dr b}{|b|^{2}}=
C_1+C_2+C_3+C_4,
$$
corresponding to the integral on 
$(1,\infty)$, $(0,1)$, $(-1,0)$ and $(-\infty,-1)$.
One can show that
\begin{gather*}
C_1=\sqrt 2 +\log(1+\sqrt 2)-1,\;\; C_2=\frac 32 -\sqrt 2 -\log(1+\sqrt 2)+ \log 2,\;\;
C_3=\frac 12 - \log 2, \;\; C_4=-1,
\end{gather*}
so that $C=0$.
\vip

{\it Step 9.} It remains to prove (a). We consider $\varphi\in C^2(\Dd)$  such that
$\varphi(x)=[d(x,\Dd^c)]^{\alpha/2}$ as soon as $d(x,\Dd^c)\leq 1/2$, {\it i.e.}  
$\varphi(x)=(1-|x|)^{\alpha/2}$ as soon as $|x|\geq 1/2$. We  
only study the case $\alpha \in (1,2)$, the cases $\alpha\in (0,1)$ and $\alpha=1$ 
being treated similarly, with some variations.
We have to show that $\cL\varphi$ is bounded on $\Dd$.
Since $\cL\varphi \in C(\Dd)$ by Remark~\ref{lcont}, 
it suffices that $\cL\varphi$ is bounded on
$K=\{x \in \Dd : d(x,D^c)\leq 1/4\}$ and, by symmetry, on $K_1=\{x_1\be_1 : x_1  \in [3/4,1)\}$.
For $x \in K_1$, we have $\varphi(x)=(1-x_1)^{\alpha/2}$,
$\nabla \varphi(x)=-\frac{\alpha}{2}(1-x_1)^{\alpha/2-1}\be_1$
and $d(x+z,\Dd^c)\leq 1/2$ if $|z|< 1/4$, so that, using~\eqref{newg} with $a=1/4$,
$$
\cL\varphi(x)=\int_{\R^d} [\varphi(\Lambda(x,x+z))-\varphi(x)-z\cdot \nabla \varphi(x)\indiq_{\{|z|<1/4\}}] 
\frac{\dr z}{|z|^{d+\alpha}}= \cL_1\varphi(x)+ \cL_2\varphi(x)+ \cL_3\varphi(x),
$$
with
\begin{gather*}
\cL_1\varphi(x)=\int_{\{|z|\geq 1/4\}} [\varphi(\Lambda(x,x+z)))-\varphi(x)] \frac{\dr z}{|z|^{d+\alpha}},\\
\cL_2\varphi(x)=\int_{\{|z|< 1/4\}} [(1-x_1-z_1)_+^{\alpha/2} - (1-x_1)^{\alpha/2}+\frac\alpha2 (1-x_1)^{\alpha/2-1} z_1]  
\frac{\dr z}{|z|^{d+\alpha}},\\
\cL_3\varphi(x)=\int_{\{|z|< 1/4\}} [d(\Lambda(x,x+z),\Dd^c)^{\alpha/2}- (1-x_1-z_1)_+^{\alpha/2}]  
\frac{\dr z}{|z|^{d+\alpha}}.
\end{gather*}
First, $\cL_1\varphi$ is bounded because $\varphi$ is bounded. Next,
$\cL_2\varphi(x)=\cL_{21}\varphi(x)-\cL_{22}\varphi(x)$, with
\begin{gather*}
\cL_{21}\varphi(x)=\int_{\R^d} [(1-x_1-z_1)_+^{\alpha/2} - (1-x_1)^{\alpha/2}+\frac\alpha2 (1-x_1)^{\alpha/2-1} z_1
\indiq_{\{|z|<1/4\}}]
\frac{\dr z}{|z|^{d+\alpha}},\\
\cL_{22}\varphi(x)=\int_{\{|z|\geq 1/4\}} [(1-x_1-z_1)_+^{\alpha/2} - (1-x_1)^{\alpha/2}]
\frac{\dr z}{|z|^{d+\alpha}}.
\end{gather*}
Integrating in $z_2,\dots,z_d$, setting $y=1-x_1>0$ and using the substitution $u=-z_1$, we find that 
for some constant $c>0$,
\begin{align*}
\cL_{21}\varphi(x)=&c\int_{\R} [(1-x_1-z_1)_+^{\alpha/2} - (1-x_1)^{\alpha/2}+\frac\alpha2 (1-x_1)^{\alpha/2-1} z_1
\indiq_{\{|z_1|<1/4\}}]
\frac{\dr z_1}{|z_1|^{1+\alpha}}\\
=&c\int_{\R} [(y+u)_+^{\alpha/2} - y^{\alpha/2}-\frac\alpha 2 y^{\alpha/2-1} u \indiq_{\{|u|<1/4\}}]\frac{\dr u}{|u|^{1+\alpha}}
=0
\end{align*}
by Step 7. Moreover, $\cL_{22}\varphi(x)$ is bounded, because
$|(1-x_1-z_1)_+^{\alpha/2} - (1-x_1)^{\alpha/2}|\leq |z_1|^{\alpha/2}$, which is integrable against $|z|^{-\alpha-d}$.
\vip

Set $\HH_1=\{y \in \R^d : y_1 <1\}$, which contains $\Dd=B_d(0,1)$.
Observe that if $x+z\notin \HH_1$, then $d(\Lambda(x,x+z),\Dd^c)=0$ and $(1-x_1-z_1)_+=d(x+z,\HH_1^c)=0$.
We thus may write $\cL_3\varphi(x)=-\cL_{31}\varphi(x)-\cL_{32}\varphi(x)$, where
\begin{gather*}
\cL_{31}\varphi(x)=\int_{\{|z|<1/4\}} \indiq_{\{x+z \in \HH_1\setminus \Dd\}} 
[(1-x_1-z_1)^{\alpha/2}-d(\Lambda(x,x+z), \Dd^c )^{\alpha/2}] \frac{\dr z}{|z|^{d+\alpha}},\\
\cL_{32}\varphi(x)=\int_{\{|z|<1/4\}} \indiq_{\{x+z \in \Dd\}}
[(1-x_1-z_1)^{\alpha/2}-d(\Lambda(x,x+z), \Dd^c )^{\alpha/2}]  \frac{\dr z}{|z|^{d+\alpha}}.
\end{gather*}
If $x+z\in \HH_1\setminus \Dd$, $d(\Lambda(x,x+z),\Dd^c)=0$, so that
$$
\cL_{31}\varphi(x)=\int_{\{|z|<1/4\}} \indiq_{\{x+z \in \HH_1\setminus B_d(0,1)\}} 
(1-x_1-z_1)^{\alpha/2} \frac{\dr z}{|z|^{d+\alpha}}.
$$
Recalling that $x = x_1\be_1$ and using the notation $z=(z_1,v)$, so that $x+z=(x_1+z_1,v)$, 
we have $1-|v|^2\leq x_1+z_1< 1$ as soon as $|z|<1/4$ and $x+z \in \HH_1\setminus B_d(0,1)$: indeed, 
$|x+z|^2\geq 1$ tells us that $(x_1+z_1)^2+|v|^2\geq 1$, while $x_1+z_1<1$ since $x+z \in \HH_1$
and $x_1+z_1\geq 0$ since $x_1\geq3/4$ and $|z|<1/4$, so that $x_1+z_1 \geq (x_1+z_1)^2\geq 1-|v|^2$.
Since finally $|z|\geq|v|$,
$$
0\leq \cL_{31}\varphi(x)\leq \int_{B_{d-1}(0,1/4)} \frac{\dr v}{|v|^{d+\alpha}}
\int_{1-x_1-|v|^2}^{1-x_1} \!\!\!(1-x_1-z_1)^{\alpha/2} \dr z_1
=\frac1{\alpha/2+1} \int_{B_{d-1}(0,1/4)} \frac{|v|^{2+\alpha}\dr v}
{|v|^{d+\alpha}},
$$
and $\cL_{31}\varphi$ is bounded, because $2+\alpha-d-\alpha>1-d$. 

\vip

To treat $\cL_{32}$, we observe that $x+z \in B_d(0,1)$, whence
$$
\Delta(x,z):=(1-x_1-z_1)^{\alpha/2}-d(\Lambda(x,x+z),D^c)^{\alpha/2}=(1-x_1-z_1)^{\alpha/2}-(1-|x+z|)^{\alpha/2}\geq 0, 
$$ 
Moreover, $x+z \in B_d(0,1)$ and $|z|<1/4$ imply that $x_1+z_1\in (1/2,1)$ (because $x_1 \in [3/4,1)$)
and that, still with the notation $z=(z_1,v)$, so that $x+z=(x_1+z_1,v)$,
\begin{align*}
\Delta(x,z)\leq& (1-x_1-z_1)^{\alpha/2-1}[|x+z|-x_1-z_1] \leq (1-x_1-z_1)^{\alpha/2-1} |v|^2.
\end{align*}
We used that $a^{\alpha/2}-b^{\alpha/2}\leq a^{\alpha/2-1}(a-b)$ for $a \geq b\geq 0$
and that, since $\sqrt{1+u}\leq 1+ \frac u2$ for all $u\geq 0$, it holds that
$$
0\leq |x+z|-x_1-z_1=(x_1+z_1)\Big(\sqrt{1+\frac{|v|^2}{(x_1+z_1)^2}}-1 \Big)\leq 
\frac{|v|^2}{2(x_1+z_1)}\leq |v|^2.
$$ 
All this shows that
\begin{align*}
0\leq \cL_{32}\varphi(x)\leq&  \int_{\{|z|<1/4\}} \indiq_{\{1/2<x_1+z_1<1\}} 
(1-x_1-z_1)^{\alpha/2-1} |v|^2 \frac{\dr z}{|z|^{d+\alpha}}.
\end{align*}
We now set $\theta=d+\frac12+\frac\alpha4$ and use that $|z|^{-d-\alpha}\leq [\max(|v|,|z_1|)]^{-d-\alpha}\leq
|v|^{-\theta} |z_1|^{\theta-d-\alpha}$ (we have $\theta \in [0,d+\alpha]$ since $\alpha>1\geq \frac 23$) to get,
for some constant $C$ depending only on $\alpha$ and $d$,
\begin{align*}
|\cL_{32}\varphi(x)|\leq&  \int_{\{|v|<1/4\}} |v|^{2-\theta} \dr v \int_{1/2-x_1}^{1-x_1} (1-x_1-z_1)^{\alpha/2-1} 
|z_1|^{\theta-d-\alpha}\dr z_1\\
\leq& C \int_{-1}^{1-x_1} (1-x_1-z_1)^{\alpha/2-1} 
|z_1|^{\theta-d-\alpha}\dr z_1,
\end{align*}
since $2-\theta>1-d$ (because $\alpha<2$) and $x_1<1$. Substituting $z_1=-(1-x_1)u$, we find
\begin{align*}
|\cL_{32}\varphi(x)|\leq& C (1-x_1)^{\theta-d-\alpha/2} \int_{-1}^{1/(1-x_1)} (1+u)^{\alpha/2-1} 
|u|^{\theta-d-\alpha}\dr u\\
\leq & C  (1-x_1)^{\theta-d-\alpha/2} \Big[\int_{-1}^1  (1+u)^{\alpha/2-1} |u|^{\theta-d-\alpha}\dr u +
\int_1^{1/(1-x_1)} u^{\theta-1-d-\alpha/2}\dr u\Big]\\
\leq & C (1-x_1)^{\theta-d-\alpha/2} [1+ (1-x_1)^{-(\theta-d-\alpha/2)}].
\end{align*}
We finally used that $\alpha/2-1>-1$, that $\theta-d-\alpha>-1$ (since $\alpha<2$), and that 
$\theta-d-\alpha/2>0$ (since $\alpha<2$). Recalling that $x_1 \in (3/4,1)$, we conclude that $\cL_{32}\varphi$
is bounded.
\end{proof}

\section{Skorokhod topologies}\label{sko}

For $x_k,x \in \DD(\R_+,\R^d)$, we say that $x_k \to x$ for the $\JS$-topology,
see Jacod and Shiryaev~\cite[Chapter~VI]{jacod2013limit}, if
there exists a family of continuous increasing functions $\lambda_k=\R_+\to\R_+$ with $\lambda_k(0)=0$ such that
$$
\lim_k ||\lambda_k-I||_\infty=0 \quad \text{and for all $T>0$,} \quad \lim_k ||x_k\circ \lambda_k - x||_{\infty,T}=0,
$$
where $||\lambda_k-I||_\infty=\sup_{t\geq 0}|\lambda_k(t)-t|$ and 
$||x_k\circ \lambda_k - x||_{\infty,T}=\sup_{t\in [0,T]} |x_k(\lambda_k(t))-x(t)|$.

\vip

We now recall the notion of convergence in the $\MS$-topology, see Whitt~\cite[Chapter~12]{book_whitt}.  
For $x\in \DD(\R_+,\R^d)$, we define the completed graph $\Gamma_{x}$ of $x$ as
\[
 \Gamma_{x} = \big\{(t, y)\in\mathbb{R}_+ \times\mathbb{R}^d, \: y \in [x(t-), x(t)]\big\}.
\]
A parametric representation of $x$ is a continuous function $(u, r)$ mapping $\mathbb{R}_+$ 
onto $\Gamma_{x}$ such that
for any $0 \leq s \leq t$, either (i) $u(s) < u(t)$, or 
(ii) $u(s) = u(t)$ and for any $i=1, \cdots, d$, 
$|x_i(u(s) -) - r_i(s)| \leq |x_i(u(t)-) - r_i(t)|$.
Let $\Pi_{x}$ be the set of parametric  representations of $x$.

\vip
For $x_k, x \in \mathbb{D}(\mathbb{R}_+, \mathbb{R}^d)$, we say that $x_k \to x$ for the $\MS$-topology if 
there exists a sequence $(u_k, r_k) \in \Pi_{x_k}$ and $(u, r) \in \Pi_x$ such that
\[
\text{for all $T>0$,} \quad \lim_k (||u_k - u||_{\infty, T} + ||r_k -r||_{\infty, T}) = 0.
\]
The following lemma must be standard, but we found no precise reference.

\begin{lemma}\label{j1m1}
Consider a family of continuous piecewise affine elements $(x_k)_{k \geq 1}$ of $C(\R_+,\R^d)$: for some $0=t_0^k< t_1^k < \dots$ such that $\lim_n t^k_n=\infty$, we have, for all $n\geq 0$,
\begin{equation}\label{apt}
x_k(t)=x_k(t^k_n)+\frac{t-t^k_n}{t^k_{n+1}-t^k_n}(x_k(t^k_{n+1})-x_k(t^k_n))
\quad \text{for all $t\in [t^k_n,t^k_{n+1})$.} 
\end{equation}
For each $k\geq 1$, define 
$\bar x_k(t)=\sum_{n\geq 0}x_k(t^k_n)\indiq_{\{t \in [t^k_n,t^k_{n+1})\}}$ and, for $t\geq 0$, set
$m^k_t=\sum_{n\geq 1} \indiq_{\{t^k_n\leq t\}}$.
If $\bar x_k \to x \in \DD(\R_+,\R^d)$ 
for the $\JS$-topology and if for all $T>0$,
$$
\lim_k \sup_{n=0,\dots,m^k_T} (t^k_{n+1}-t^k_n)= 0,
$$
then $x_k\to x$ for the $\MS$-topology.
\end{lemma}

\begin{proof}
Since the $\JS$-topology is stronger than the $\MS$-topology, we have $\bar{x}_k \to x$ for 
the $\MS$-topology.
Hence it suffices to show that there exist $(u_k, r_k) \in \Pi_{x_k}$ and
$(\bar{u}_k, \bar{r}_k) \in \Pi_{\bar{x}_k}$ such that for all $T>0$, 
$\lim_k (||u_k - \bar u_k||_{\infty, T} + ||r_k -\bar r_k||_{\infty, T}) = 0$.

\vip

Since $x_k$ is continuous, we naturally 
define $(u_k,r_k) \in \Pi_{x_k}$ by setting $(u_k(t), r_k(t)) = (t, x_k(t))$.

\vip

We then define $(\bar{u}_k, \bar{r}_k)$:
for all $n \geq 0$, we set 
$$
A^k_n= k |x_k(t^k_{n+1})- x_k(t^k_{n})|+2
\quad \text{and} \quad
s^k_n=t^k_n+\frac{t^k_{n+1}-t^k_n}{A^k_n} \in (t^k_n,t^k_{n+1}),
$$
and, for $t \in [t^k_n,t^k_{n+1})$,
$$
(\bar u_k(t),\bar r_k(t))= \begin{cases}
\Big(t^k_n+A^k_n(t-t^k_n), x_k(t^k_n)\Big) & \text{if $t \in [t^k_n,s^k_n]$},\\[10pt]
\Big(t^k_{n+1},x_k(t^k_n)+\frac{t-s^k_n}{t^k_{n+1}-s^k_n}(x_k(t^k_{n+1})-x_k(t^k_n)\Big) 
& \text{if $t \in (s^k_n,t^k_{n+1})$}.
\end{cases}
$$
Let us check that $(\bar{u}_k, \bar{r}_k) \in \Pi_{\bar x_k}$. 
One easily checks that $(\bar{u}_k, \bar{r}_k)$ is a continuous bijection from $\R_+$ onto 
$$
\Gamma_{\bar x_k}= \cup_{n\geq 0} \Big[\Big([t^k_n,t^k_{n+1})\times \{x_k(t_n^k)\}\Big)\cup
\Big(\{t^k_{n+1}\}\times[x_k(t^k_n), x_k(t^k_{n+1})) ]\}\Big)\Big].
$$
Next for $0\leq s < t$, if $\bar u_k(s)=\bar u_k(t)$, then there is $n\geq 0$ such that
$s^k_n \leq s < t \leq t^k_{n+1}$, so that $\bar u_k(s)=\bar u_k(t)=t^k_{n+1}$, whence
$\bar x_{k}(\bar u_k(s)-)=x_k(t^k_{n})$. Thus for all $i=1,\dots,d$,
\begin{align*}
|\bar x_{k,i}(\bar u_k(s)-)- \bar r_{k,i}(s)|=&\Big|\frac{s-s^k_n}{t^k_{n+1}-s^k_n}\Big|
|x_{k,i}(t^k_{n+1})-x_{k,i}(t^k_n)|\\
\leq& \Big|\frac{t-s^k_n}{t^k_{n+1}-s^k_n}\Big|
|x_{k,i}(t^k_{n+1})-x_{k,i}(t^k_n)|\\
=& |\bar x_{k,i}(\bar u_k(t)-)- \bar r_{k,i}(t)|.
\end{align*}
For each 
$n\geq 0$, each $t \in  [t^k_n,t^k_{n+1})$, both $\bar u_k(t)$ and $u_k(t)$ belong to $[t^k_n,t^k_{n+1}]$,
so that
$$
\text{for all $T>0$,} \quad ||u_k - \bar{u}_k||_{\infty, T} \leq \sup_{n=0,\dots,m^k_T} (t^k_{n+1}-t^k_n),
$$
which tends to $0$ by assumption. 

\vip 
We now show that $||r_k-\bar r_k||_\infty\leq 1/k$ and this well complete the proof.
Fix $t\geq 0$ and let $n\geq 0$ such that $t \in [t^k_n,t^k_{n+1})$. Recall the definition of $A^k_n$,
that $r_k=x_k$ and~\eqref{apt}. If first $t\in  [t^k_n,s^k_n]$, then
$$
|r_k(t)-\bar r_k(t)|=\Big|\frac{t-t^k_n}{t^k_{n+1}-t^k_n}(x_k(t^k_{n+1})-x_k(t^k_n))\Big|
\leq \Big|\frac{s^k_n-t^k_n}{t^k_{n+1}-t^k_n}(x_k(t^k_{n+1})-x_k(t^k_n))\Big|\leq \frac 1k,
$$
while if  $t\in  [s^k_n,t^k_{n+1})$, then 
$$
|r_k(t)\!-\!\bar r_k(t)|\!=\!\Big|\Big(\frac{t-t^k_n}{t^k_{n+1}-t^k_n}-\frac{t-s^k_n}{t^k_{n+1}-s^k_n}\Big)(x_k(t^k_{n+1})
-x_k(t^k_n))\Big|\!\leq \! \Big|\frac{s^k_n-t^k_n}{t^k_{n+1}-t^k_n}(x_k(t^k_{n+1})\!-\!x_k(t^k_n))\Big|\!\leq\! \frac 1k.
$$
We used that the function $t\mapsto \frac{t-t^k_n}{t^k_{n+1}-t^k_n}-\frac{t-s^k_n}{t^k_{n+1}-s^k_n}$ is
decreasing on $[s^k_n,t^k_{n+1}]$, equals $\frac{s^k_n-t^k_n}{t^k_{n+1}-t^k_n}$ when $t=s^k_n$
and equals $0$ when $t=t^k_{n+1}$. 
\end{proof}

\begin{lemma}\label{compj1}
Consider a family $(x_k)_{k \geq 1}$ of elements of $\DD(\R_+,\R^d)$ such that $x_k\to x\in \DD(\R_+,\R^d)$
for the $\JS$-topology, and a family $(\rho_k)_{k \geq 1}$ of nondecreasing elements of  $\DD(\R_+,\R_+)$
such that $\sup_{[0,T]} |\rho_k(t)-t|=0$ for all $T>0$. Then $x_k\circ \rho_k \to x$ for the $\JS$-topology.
\end{lemma}

\begin{proof}
We denote by $\mathbb{D}^\uparrow$ the set of nondecreasing càdlàg functions from 
$\mathbb{R}_+ \to \mathbb{R}_+$. We have that $\rho_k\to \rho= \mathrm{id}_{\mathbb{R}_+}$ 
in $\DD^\uparrow$ for the $\JS$-topology
and that $x_k\to x$ in $\DD(\R_+,\R^d)$ for the $\JS$-topology. 
Since $\rho$ is continuous, we conclude by Whitt~\cite[Theorem 3.1]{whitt_useful} 
that $x_k\circ \rho_k \to x$ for the $\JS$-topology.
\end{proof}

\begin{lemma}\label{convL}
Consider a family $((z_n(u)_{u\geq 0})_{n\geq 1}$ of increasing elements of $\DD(\R_+,\R)$ such that $z_n(0)=0$ and
$\lim_{u\to \infty} z_n(u)=\infty$. Assume that  $((z_n(u))_{u\geq 0})_{n\geq 1}$
converges to some strictly increasing $(z(u))_{u\geq 0} \in \DD(\R_+,\R)$ for the $\JS$-topology, with 
$\lim_{u\to\infty} z(u)=\infty$.
For $t\geq 0$, set $y(t)=\inf\{u\geq 0 : z(u)>t\}$ and $y_n(t)=\inf\{u\geq 0 : z_n(u)>t\}$. 
\vip
(i) For all $T>0$, $\lim_n\sup_{[0,T]} |y_n(t)-y(t)| = 0$.
\vip
(ii) If $\lim_n t_n=t>0$, and $z(y(t)-)<t<z(y(t))$, then $\lim_n \Delta z_n(y_n(t_n))=\Delta z(y(t))$.
\end{lemma}

\begin{proof}
For (i), it suffices, by Dini's theorem and since $y$ is continuous (because $z$ is increasing)
to show that $\lim_n y_n(t)=y(t)$ for all fixed $t> 0$. Let us e.g. show that $\limsup_n y_n(t)\leq y(t)$,
the proof that $\liminf_n y_n(t)\geq y(t)$ being similar (but slightly easier). Fix $\e > 0$ 
and $u$ a continuity point of $z$ such that $y(t+\e)<u$. This implies that $t + \e \leq z(u)$, so that (since 
$\lim_n z_n(u)=z(u)$ because $u$ is a continuity point of $z$) $t < z_n(u)$ for all $n$ large enough.
Consequently, $y_n(t)\leq u$ for all $n$ large enough, whence $\limsup_n y_n(t)\leq u$. Since we can choose
$u$ arbitrarily close to $y(t+\e)$ (because the set of jump times of $z$ is at most countable),
we conclude that $\limsup_n y_n(t)\leq y(t+\e)$. It only remains to let
$\e\to 0$, using that $y$ is continuous.

\vip

For (ii), we e.g. show that $z_n(y_n(t_n))\to z(y(t))$, the proof that  $z_n(y_n(t_n)-)\to z(y(t)-)$ being similar.
Then sequence $z_n(y_n(t_n))$ being compact in $\R_+$ (since $z_n$ is locally uniformly bounded
and since  $y_n(t_n)\to y(t)$ by (i)), it suffices to show that the only adherent point of 
$z_n(y_n(t_n))$ is $z(y(t))$.
\vip
We first show that if $s_n\to s$, then the only possible 
adherent points of $z_n(s_n)$ are $z(s-)$ and $z(s)$. To this end, consider a time change $\lambda_n$ 
such that $||\lambda_n-I||_\infty\to 0$
and $||z_n - z \circ \lambda_n||_{\infty,T}\to 0$ (with e.g. $T=s+1$), write $z_n(s_n)=z_n(s_n)-z(\lambda_n(s_n))
+z(\lambda_n(s_n))$, use that $z_n(s_n)-z(\lambda_n(s_n))\to 0$ and that the only possible
adherent points of $z(\lambda_n(s_n))$ are $z(s-)$ and $z(s)$.
\vip

Since $y_n(t_n)\to y(t)$ by (i), we deduce that the only possible adherent points of 
$z_n(y_n(t_n))$ are $z(y(t)-)$ and $z(y(t))$.
But by definition, we have $z_n(y(t_n))\geq t_n \to t$. Since $z(y(t)-)<t$,
we conclude that the only adherent point of $z_n(y_n(t_n))$ is $z(y(t))$.
\end{proof}

Finally, we check the following easy fact.

\begin{lemma}\label{extra}
Assume that $x_n,x\in \DD(\R_+,\R^d)$ and $t_n,t\geq 0$ are such that $x_n\to x$ for the $\JS$-topology and
$t_n\to t$. We take the convention that $x_n(0-)=x_n(0)$ and $x(0-)=x(0)$.
\vip
(a) There exists $n_k\to \infty$ such that $x_{n_k}(t_{n_k})\to x(t)$ or
$x_{n_k}(t_{n_k})\to x(t-)$.
\vip
(b) There exists $n_k\to \infty$ such that $x_{n_k}(t_{n_k}-)\to x(t)$ or $x_{n_k}(t_{n_k}-)\to x(t-)$.
\end{lemma}

\begin{proof}
We consider $\lambda_n=\R_+\to\R_+$ continuous increasing, with $\lambda_n(0)=0$, 
$\lim_n ||\lambda_n-I||_\infty=0$ and $\lim_n ||y_n-x||_{\infty,T}=0$ for all $T>0$, where $y_n=x_n\circ \lambda_n$.
We set $s_n=\lambda_n^{-1}(t_n)$, so that $x_n(t_n)=y_n(s_n)$ and $x_n(t_n-)=y_n(s_n-)$. We have $\lim_n s_n=t$.
We can find a subsequence $n_k$ such that either (i) $s_{n_k}$ is nonincreasing or (ii) $s_{n_k}$ is strictly 
increasing. In the first case, $\lim x_{n_k}(t_{n_k})=x(t)$, because (for $k$ large enough)
$$
|x_{n_k}(t_{n_k})-x(t)|=|y_{n_k}(s_{n_k})-x(t)|\leq ||y_{n_k}-x||_{\infty,t+1}+|x(s_{n_k})-x(t)|\to 0
$$
since $x$ is càd. In the second case,  $\lim x_{n_k}(t_{n_k})=x(t-)$, because (for $k$ large enough)
$$
|x_{n_k}(t_{n_k})-x(t)|=|y_{n_k}(s_{n_k})-x(t)|\leq ||y_{n_k}-x||_{\infty,t+1}+|x(s_{n_k})-x(t-)|\to 0.
$$
This proves (a) and one can check (b) similarly.
\end{proof}

\section{From the scattering process to the scattering P.D.E.}\label{deredp}

Here we show the link between the scattering process introduced in Definition~\ref{dfsp}
and the scattering P.D.E.~\eqref{rescale_scattering_eq}.
We first provide a notion of weak solutions to~\eqref{rescale_scattering_eq}, 
see Jabir and Profeta~\cite[Theorem~4.2.1]{jp} and Bernou and Fournier~\cite[Definition 4]{bernou_fournier}
for similar considerations.
We introduce the sets $F_+=\{(x,v) \in \pDd\times\R^d :  v\cdot\bn_x>0\}$ and 
$F_-=\{(x,v) \in\pDd\times\R^d : v\cdot\bn_x<0\}$.

\begin{definition}
We say that a family $(f^\e_t)_{t\geq 0}$ of probability measures on $\cDd\times\R^d$ is a weak solution
to~\eqref{rescale_scattering_eq} if there exist some measures $\nu_+$ on
$\R_+\times F_+$ and $\nu_-$ on $\R_+\times F_-$ such that 
$\nu_+([0,T]\times F_+)+\nu_-([0,T]\times F_-)<\infty$ for all $T>0$ and 
such that for all $\varphi \in C^\infty_c(\R_+\times\cDd\times\R^d)$,
\begin{align}\label{wfs}
0=&\int_{\cDd\times\R^d} \varphi(0,x, v) f^\e_0(\dr x,\dr v) + \int_0^\infty\int_{\cDd\times \R^d}
\partial_t\varphi(t,x,v)f^\e_t(\dr x,\dr v) \dr t \notag \\
&+\e^{\frac{1-\alpha}\alpha}
\int_0^\infty\int_{\cDd\times \R^d} v\cdot \nabla_x\varphi(t,x,v)f^\e_t(\dr x,\dr v) \dr t \notag\\
&+\frac1\e \int_0^\infty\int_{\cDd\times\R^d} \Big(\int_{\R^d}\varphi(t,x,w)\rF(w)\dr w - \varphi(t,x,v)  
\Big)f^\e_t(\dr x,\dr v) \dr t\notag \\
&+\int_{\R_+\times F_+} \varphi(t,x,v)\nu_+(\dr t,\dr x,\dr v) 
-\int_{\R_+\times F_-} \varphi(t,x,v)\nu_-(\dr t,\dr x,\dr v)
\end{align}
and
\begin{equation}\label{wfs2}
\nu_+(\dr t,\dr x,\dr v)=2\rG(v)\indiq_{\{v\cdot\bn_x>0\}} \nu_-(\dr t,\dr x,\R^d)\dr v.
\end{equation}
\end{definition}

The following remark explains this definition.

\begin{remark}
If $(f^\e_t)_{t\geq 0}$ is a smooth weak solution to~\eqref{rescale_scattering_eq}, then it
solves~\eqref{rescale_scattering_eq}.
\end{remark}

\begin{proof} We assume that $f^\e_t$ has a smooth density that we still denote by $f^\e_t$.
An integration by parts shows that the first line of~\eqref{wfs} equals
$$
-\int_0^\infty\int_{\cDd\times\R^d}\varphi(t,x,v)\partial_tf^\e_t(x,v) \dr x\dr v \dr t.
$$
The Green formula implies that the second line of~\eqref{wfs} equals
\begin{align*}
&-\e^{\frac{1-\alpha}\alpha}\int_0^\infty\int_{\cDd\times \R^d} \varphi(t,x,v)
v\cdot \nabla_x f^\e_t(x,v) \dr x \dr v \dr t \\
&\hskip3cm- \e^{\frac{1-\alpha}\alpha}\int_0^\infty\int_{\pDd\times \R^d} 
\varphi(t,x,v)
(v\cdot \bn_x) f^\e_t(x,v) \dr x \dr v \dr t.
\end{align*}
Finally, the third line of~\eqref{wfs} equals
$$
\frac1\e \int_0^\infty\int_{\cDd\times\R^d}\varphi(t,x,v) 
\Big(\rF(v)\int_{\R^d}f^\e_t(x,w)\dr w - f^\e_t(x,v)  
\Big) \dr x \dr v \dr t.
$$
All in all, since~\eqref{wfs} holds true for any $\varphi\in C^\infty_c(\R_+\times\Dd\times\R^d)$
(with $\Dd$ open so that $\varphi$ vanishes on $\pDd$), we find
that for all $(t, x, v )\in \R_+\times\Dd\times\R^d$,
$$
\partial_t f^\e_t(x,v)+\e^{\frac{1-\alpha}\alpha}v\cdot \nabla_x f^\e_t(x,v)=\frac1\e
\Big(\rF(v)\int_{\R^d}f^\e_t(x,w)\dr w - f^\e_t(x,v)\Big).
$$
Hence $(f^\e_t)_{t\geq 0}$ solves the first line of~\eqref{rescale_scattering_eq}. Moreover, once this is seen,
\eqref{wfs} rewrites, for all $\varphi \in  C^\infty_c(\R_+\times\cDd\times\R^d)$,
\begin{align*}
&-\e^{\frac{1-\alpha}\alpha}\int_0^\infty\!\!\int_{\pDd\times \R^d}\! 
\varphi(t,x,v) (v\cdot \bn_x) f^\e_t(x,v) \dr x \dr v \dr t \\
&+ 
\int_{\R_+\times F_+} \varphi(t,x,v)\nu_+(\dr t,\dr x,\dr v) -\int_{\R_+\times F_-} \varphi(t,x,v)\nu_-(\dr t,\dr x,\dr v)
=0
\end{align*}
Thus $\e^{\frac{1-\alpha}{\alpha}}(v\cdot \bn_x) f^\e_t(x,v)\dr x\dr v\dr t= (\nu_+-\nu_-)(\dr t,\dr x,\dr v)$,
so that necessarily
\begin{gather*}
\nu_+(\dr t,\dr x,\dr v)=\e^{\frac{1-\alpha}{\alpha}}(v\cdot \bn_x) f^\e_t(x,v)\indiq_{\{v\cdot \bn_x>0\}}
\dr x\dr v\dr t,\\
\nu_-(\dr t,\dr x,\dr v)=\e^{\frac{1-\alpha}{\alpha}}|v\cdot \bn_x| f^\e_t(x,v)\indiq_{\{v\cdot \bn_x<0\}}
\dr x\dr v\dr t. 
\end{gather*}
Hence~\eqref{wfs2} tells us that for all $t\geq 0$, all $x \in \pDd$, all $v \in \R^d$, 
$$
(v\cdot \bn_x) f^\e_t(x,v)\indiq_{\{v\cdot \bn_x>0\}}=2\rG(v) \int_{w\cdot \bn_x<0} |w\cdot \bn_x|f^\e_t(x,w)\dr w,
$$
so that $(f^\e_t)_{t\geq 0}$ solves the second line of~\eqref{rescale_scattering_eq}.
\end{proof}

We end the paper with the

\begin{proof}[Proof of Remark~\ref{edp}] We fix $(x_0,v_0)\in \bE$ (recall~\eqref{E}), 
consider a measurable family
$(A_y)_{y \in \pDd}$ such that $A_y  \in \cI_y$ for each $y \in \pDd$ and divide the proof into 3 steps.
\vip

{\it Step 1.} Consider a Poisson measure $\rN_\e =\sum_{k\geq 1}\delta_{(S^\e_k,Y^\e_k)}$ 
on $\mathbb{R}_+ \times \mathbb{R}^d$ with intensity measure 
$\e^{-1}\dr t \mathrm{F}(\dr v)$, independent of a collection of i.i.d. $\rG_+$-distributed 
random variables $(W_n)_{n \geq 1}$, recall that $\rG_+(v)=2\rG(v)\indiq_{\{v\cdot\be_1>0\}}$.
Consider the càdlàg process $(\bX_t^\e, \bV_t^\e)_{t\geq0}$ solving
\begin{equation}\label{rea}
\left\lbrace
\begin{aligned}
\bX_t^\e & = x_0  + \e^{\frac{1-\alpha}\alpha}\int_0^t \bV_s^\e \dr s, \\
\bV_t^\e & = v_0  + \int_0^t\int_{\mathbb{R}^d}\Big(z - \bV_{s-}^\e\Big)\rN_\e (\dr s, \dr z) 
+ \sum_{n\geq 1}\Big(A_{\bX_{\tau_n^\e}^\e} W_n
- \bV_{\tau_n^\e-}^\e\Big)\indiq_{\{\tau_n \leq t\}}, \\
\tau_0^\e & = 0 \quad \text{and} 
\quad \tau_{n+1}^\e = \inf\{t> \tau_n^\e, \: \bX_t^\e \in \partial\mathcal{D}\}.
\end{aligned}
\right.
\end{equation}
The intensity of the Poisson measure $\rN_\e$
being finite on $[0,T]\times\R^d$ for all $T>0$, \eqref{rea} 
has a pathwise unique solution, which is an $\e$-scattering process starting from $(x,v)$, see 
Definition~\ref{dfsp}.
\vip
Indeed, \eqref{rea} tells us that the position process $\bX^\e$ moves according to its velocity 
$\e^{(1-\alpha)/\alpha}\bV^\e$; that the velocity process $\bV^\e$ is refreshed at rate
$\e^{-1}$, its new value being chosen according to $\rF$; and that when the position process reaches the boundary 
$\partial \mathcal{D}$ (at some time $\tau_n^\e$), it is restarted with a $\rG_{\bX_{\tau_n^\e}^\e}$-distributed
velocity (recall that for $y\in \pDd$ and $W\sim \rG_+(v)\dr v$, $A_yW\sim \rG_y(v)\dr v$).

\vip

The process built in Definition~\ref{dfsp} has exactly the same dynamics.

\vip

{\it Step 2.} Here we observe that $\E[M^\e_T]<\infty$ for all $T>0$, where
$M^\e_T=\sum_{n=1}^\infty \indiq_{\{\tau^\e_n\leq T\}}$. Since $M^\e_T\leq \bM^\e_T$, where $\bM^\e_T$ stands 
for the total number of jumps of
$(\bV^\e_t)_{t\geq 0}$ during $[0,T]$, this follows from Remark~\ref{scwd}.

\vip

{\it  Step 3.} For $\varphi\in C^\infty_c(\R_+\times \cDd\times\R^d)$ and for $T>0$, 
we deduce from~\eqref{rea} that
\begin{align*}
\varphi(T,\bX^\e_T,\bV^\e_T)=&\varphi(0,x_0,v_0)+\int_0^T \Big(\partial_t\varphi(t,\bX^\e_t,\bV^\e_t) 
+ \e^{\frac{1-\alpha}\alpha}\bV^\e_t\cdot \nabla_x\varphi(t,\bX^\e_t,\bV^\e_t)\Big) \dr t\\
&+\int_0^T\int_{\R^d} \Big(\varphi(t,\bX^\e_t,z)-\varphi(t,\bX^\e_t,\bV^\e_{t-})\Big) 
\rN_\e (\dr t,\dr z)\\
&+\sum_{n\geq 1} \Big(\varphi(\tau_n^\e,\bX^\e_{\tau_n^\e},A_{\bX^\e_{\tau_n^\e}}W_n)
-\varphi(\tau_n^\e,\bX^\e_{\tau_n^\e},\bV^\e_{\tau_n^\e-})\Big)\indiq_{\{\tau^\e_n \leq T\}}.
\end{align*}
Taking expectations and choosing $T$ such that Supp $\varphi\subset[0,T]\times\cDd\times\R^d$,
we get 
\begin{align*}
0=&\varphi(0,x_0,v_0)+\E\Big[\int_0^\infty \Big(\partial_t\varphi(t,\bX^\e_t,\bV^\e_t) 
+ \e^{\frac{1-\alpha}\alpha}\bV^\e_t\cdot \nabla_x\varphi(t,\bX^\e_t,\bV^\e_t)\Big) \dr t\Big]\\
&+\frac1\e
\E\Big[\int_0^\infty \Big(\int_{\R^d}\varphi(t,\bX^\e_t,w)\rF(w)\dr w 
- \varphi(t,\bX^\e_t,\bV^\e_{t})\Big) \dr t \Big]\\
&+\E\Big[\sum_{n\geq 1} \Big(\varphi(\tau_n^\e,\bX^\e_{\tau_n^\e},A_{\bX^\e_{\tau_n^\e}}W_n)
-\varphi(\tau_n^\e,\bX^\e_{\tau_n^\e},\bV^\e_{\tau_n^\e-})\Big)\Big]
\end{align*}
Denoting by $f^\e_t(\dr x,\dr v)=\PP(\bX^\e_t\in \dr x, \bV^\e_t \in \dr v)$, we see that  
$(f^\e_t)_{t\geq 0}$ satisfies~\eqref{wfs} with $f_0^\e=\delta_{(x_0,v_0)}$ and with the measures
\begin{gather*}
\nu_+(\dr t, \dr x, \dr v)=\sum_{n \geq 1} \PP\Big(\tau_n^\e\in \dr t,\bX^\e_{\tau_n^\e}\in \dr x, 
A_{\bX^\e_{\tau_n^\e}}W_n \in \dr v \Big),\\
\nu_-(\dr t, \dr x, \dr v)=\sum_{n \geq 1} \PP\Big(\tau_n^\e\in \dr t,\bX^\e_{\tau_n^\e}\in \dr x,
\bV^\e_{\tau_n^\e-}\in \dr v\Big),
\end{gather*}
which are respectively carried by $\R_+\times F_+$ and $\R_+\times F_-$. By Step 2, we have
$\nu_+([0,T]\times F_+)=\nu_-([0,T]\times F_-)=\E[M^\e_T]<\infty$, and this moreover allows us to justify 
the previous computations of the present step.
It remains to check~\eqref{wfs2}. But for each $n\geq 1$, since $W_n$ is independent
of $(\tau^\e_n,\bX^\e_{\tau^\e_n},\bV^\e_{\tau^\e_n-})$ and 
$A_xW_n \sim 2\rG(v)\indiq_{\{v\cdot \bn_x>0\}}\dr v$ for each $x \in \pDd$,
\begin{align*}
\PP\Big(\tau_n^\e \in \dr t, \bX^\e_{\tau_n^\e} \in \dr x,A_{\bX^\e_{\tau_n^\e}}W_n \in \dr v\Big)
= 2\rG(v)\indiq_{\{v\cdot \bn_x>0\}}\dr v 
\PP\Big(\tau_n^\e \in \dr t, \bX^\e_{\tau_n^\e} \in \dr x, \bV^\e_{\tau^\e_n-} \in \R^d \Big).
\end{align*}
Summing this identity on $n\geq 1$ precisely gives~\eqref{wfs2}.
\end{proof}

\bibliographystyle{abbrv}
\bibliography{biblio.bib}

\end{document}